\def\res{\hbox{ {\vrule height .3cm}{\leaders\hrule\hskip.3cm}}\hskip5.0\mu}
\newcommand\beqn{\begin{equation}}
\newcommand\eeqn{\end{equation}}
\newcommand\beqny{\begin{eqnarray}}
\newcommand\eeqny{\end{eqnarray}}
\newcommand\beqnyn{\begin{eqnarray*}}
\newcommand\eeqnyn{\end{eqnarray*}}
\renewcommand{\theequation}{\arabic{section}.\arabic{equation}}
\newtheorem{theorem}{Theorem}[section]
\newtheorem{lemma}[theorem]{Lemma}
\newtheorem{corollary}[theorem]{Corollary}
\newtheorem{definition}[theorem]{Definition}
\newtheorem{remark}[theorem]{Remark}
\def\b{\beta}          
\def\g{\gamma}           
\def\e{\epsilon}
\def\t{\tau}
\def\r{\rho} 
\def\s{\sigma}
\numberwithin{equation}{section}
\newcommand{\op}[1]{\operatorname{\text{\rm #1}}}
\begin{document}

\setlength\parskip{5pt}
\title[Fine properties of branch point singularities]{Fine properties of branch point singularities: \\ Dirichlet energy minimizing multi-valued functions}
\author{Brian Krummel \& Neshan Wickramasekera} 

\begin{abstract}
In the early 1980's Almgren developed a theory of
Dirichlet energy minimizing multi-valued functions, proving that the Hausdorff
dimension of the singular set (including branch points) of such a
function is at most $(n-2),$ where $n$ is the dimension of its domain. Almgren 
used this result in an essential way
to show that the same upper bound holds for the dimension of the singular
set of an area minimizing $n$-dimensional rectifiable current of arbitrary
codimension. In either case, the dimension bound is sharp.
Building on Almgren's work, we develop estimates  to study the asymptotic behaviour of a multi-valued Dirichlet energy minimizer on approach to its singular set. 
Our estimates imply that a Dirichlet energy minimizer at ${\mathcal H}^{n-2}$ a.e.\ point of its singular set has a unique set of homogeneous multi-valued cylindrical tangent functions (blow-ups) to which the minimizer, modulo a set of single-valued harmonic functions, decays exponentially fast upon rescaling. A corollary is that the singular set  is countably $(n-2)$-rectifiable.  Our work is inspired by the work of L.\ Simon (\cite{Sim93}) on the analysis of singularities of minimal submanifolds in multiplicity 1 classes, and uses some new estimates and strategies together with techniques from \cite{Wic14} to overcome additional difficulties arising from  higher multiplicity and low regularity of the minimizers in the presence of branch points. The results described here were announced in \cite{KrumWic1} where the special case of two-valued Dirichlet minimizing functions was treated.
\end{abstract}

\maketitle
\tableofcontents

\section{Introduction}

Our purpose here is to develop estimates to study the asymptotic behavior, on approach to branch points, of a $q$-valued ($q \geq 2$) locally Dirichlet energy minimizing function mapping a domain $\Omega \subset {\mathbb R}^{n}$ into the space ${\mathcal A}_{q}({\mathbb R}^{m})= \{\sum_{j=1}^{q} \llbracket a_{j} \rrbracket\, : \, a_{j} \in {\mathbb R}^{m}\}$, the space of unordered $q$-tuples of points $a_{1}, \ldots, a_{q}$  in ${\mathbb R}^{m}$ (identified with Dirac masses 
$\llbracket a_{j} \rrbracket$ at $a_{j}$, $j=1,  \ldots, q$). The results contained in the present article were announced in our earlier paper \cite{KrumWic1} in which, among other things, the special case $q=2$ of the work here was treated.

In the early 1980's Almgren, in the first part of his monumental work published posthumously as \cite{Almgren}, introduced the (non-linear) Sobolev space $W^{1, 2}(\Omega; {\mathcal A}_{q}({\mathbb R}^{m}))$ of $q$-valued functions, and studied regularity properties and the singular behavior of locally Dirichlet energy minimizing functions in $W^{1, 2}_{\rm loc}(\Omega; {\mathcal A}_{q}({\mathbb R}^{m}))$. This study provided what may be considered the ``linear theory'' necessary for his work, carried out in later parts of \cite{Almgren}, on the branch point singularities of a locally area minimizing integer multiplicity rectifiable current of codimension $\geq 2$, i.e.\   interior singularities at which the support of the current is not immersed and one tangent cone to the current is a plane of some integer multiplicity $\geq 2$. Indeed, one of the key results of Almgren's seminal work is that a certain linearization (or blowing-up) procedure performed on an $n$-dimensional area minimizer in ${\mathbb R}^{n+m}$ (or more generally in an $n+m$ dimensional Riemannian manifold) at a branch point with area density an integer $q \geq 2$ and a tangent plane $P$ (of multiplicity $q$) produces a $q$-valued locally Dirichlet energy minimizing function on  $P$, with its value at each point of $P$ equal to an  unordered $q$-tuple of points in the orthogonal complement of $P$. Thus, assuming without loss of generality that $P = \{0\} \times {\mathbb R}^{n}$, this linearization procedure leads to ${\mathcal A}_{q}({\mathbb R}^{m})$-valued locally Dirichlet energy minimizing functions on ${\mathbb R}^{n}$. 

Almgren's  work on locally energy minimizing functions  $u  \in W^{1, 2}_{\rm loc}(\Omega; {\mathcal A}_{q}({\mathbb R}^{m}))$ established two key results: the first is that $u$ is locally H\"older continuous on $\Omega$ with a uniform H\"older exponent depending only on $n, m, q$, and the second is that locally away from a relatively closed set ${\mathcal B}_{u} \subset \Omega$ of Hausdorff dimension $\leq n-2$  the values of $u$ are given by those of $q$ (single-valued) harmonic functions each taking values in ${\mathbb R}^{m}.$ Easy examples show that $u$ need not be Lipschitz, and that this dimension bound on ${\mathcal B}_{u}$ is the best possible general estimate. We shall call a point in ${\mathcal B}_{u}$ a branch point singularity of $u$; thus $Z \in {\mathcal B}_{u}$ if there is no neighborhood about $Z$ in which the values of $u$ are given by the values of $q$ harmonic functions. Using this linear theory and a much more delicate refinement of the blowing-up procedure mentioned above (in which the height of an area minimizing current relative to a carefully constructed ``centre manifold'' rather than relative to a planar tangent cone is blown up), Almgren then proved the fundamental result that the Hausdorff dimension of the interior branch set (in fact the Hausdorff dimension of the entire interior singular set) of a locally are minimizing integer multiplicity rectifiable current of dimension $n$ and codimension $\geq 2$ is at most $n-2.$  It had long been  known prior to the work \cite{Almgren} that when its codimension is 1, an area minimizer does not admit branch point singularities and that its singular set in fact has Hausdorff dimension at most $n-7.$ (See the recent work of De~Lellis and Spadaro (\cite{DeLSpa11}, \cite{DeLSpa-I}, \cite{DeLSpa-II}, \cite{DeLSpa-III}) for a more accessible, streamlined account of Almgren's theory, including connections of parts of it with PDE and more recent developments in metric geometry.)

Our work builds on Almgren's linear theory, and addresses the general question of obtaining what may be considered a first order  asymptotic expansion for a locally Dirichlet energy minimizing function $u \, : \, \Omega \to {\mathcal A}_{q}({\mathbb R}^{m})$ near a branch point. Low regularity of $u$ (recall that $u$ is no more than H\"older continuous in general) and the fact that branch points are non-isolated (except in dimension $n= 2$) add considerably to the difficulty of this task. Inspired by the seminal work of L.~Simon (\cite{Sim93}), we develop 
a number of new uniform integral estimates for $u$ for this purpose. These estimates, when combined with Almgren's theory, imply various results concerning the asymptotic behaviour of $u$ near generic points along ${\mathcal B}_{u},$ results on the structure of ${\mathcal B}_{u}$ as well as refinements of these results near special points in ${\mathcal B}_{u}$. For instance, letting ${\mathcal B}_{u, q}$ denote the set of branch points of multiplicity $q$ (i.e.\ points $Z \in {\mathcal B}_{u}$ such that $u(Z) = q\llbracket a \rrbracket$ for some $a \in {\mathbb R}^{m}$),  we obtain (in Theorem~A below) that for ${\mathcal H}^{n-2}$ a.e.\ $Z \in {\mathcal B}_{u, q}$, there is a unique ``tangent function'' $\varphi^{(Z)} \, : \, {\mathbb R}^{n} \to {\mathcal A}_{q}({\mathbb R}^{m})$ which is cylindrical (i.e.\ translation invariant along a subspace of ${\mathbb R}^{n}$ of dimension $n-2$) and homogeneous of some positive degree $\alpha$ (having the form 
$\alpha =  r_{Z}/q_{Z}$ for relatively prime positive integers $r_{Z}, q_{Z}$  with $q_{Z} \leq q$) such that $u,$ modulo a single-valued harmonic function $h$ (equal at every point $X$ to the average of the values of $u(X)$), asymptotically decays in $L^{2}$  to $\varphi^{(Z)}$ upon rescaling about $Z$; in view of continuity of $u$, this result immediately implies (in the corollary following Theorem~A) a similar asymptotic description for $u$ near ${\mathcal H}^{n-2}$ a.e.\ point along the entire branch set ${\mathcal B}_{u};$ concerning the structure of ${\mathcal B}_{u}$, we deduce (in Theorem~B) that ${\mathcal B}_{u}$ is countably $(n-2)$ rectifiable; and in case $q=2$, near a point $Z \in 
{\mathcal B}_{u} = {\mathcal B}_{u, 2}$ where there is a tangent function $\varphi^{(Z)}$ whose degree of homogeneity is $1/2$ (the lowest degree of homogeneity that can occur in case $q=2$), we obtain (in Theorem~C) that ${\mathcal B}_{u}$ is in fact an $n-2$ dimensional embedded $C^{1, \alpha}$ submanifold and that 
$u$ has a unique cylindrical tangent function $\varphi^{(Y)}$ with degree of homogeneity $1/2$ at every point $Y$ in ${\mathcal B}_{u}$ near $Z$.

In \cite{KrumWic1} we considered the special case $q=2$ of the present work as well as a certain class, related to stable minimal hypersurfaces, of non-minimizing 2-valued critical points of Dirichlet energy.  Except for the proof of Theorem~C which is contained in \cite{KrumWic1}, the present article however is self-contained. Although we follow the same broad strategy to establish the main decay estimates both in the case $q=2$ and for general $q$ considered here, there are a number of key steps that are either unnecessary in the case $q=2$, or require considerable additional effort or an entirely different approach to carry out for general $q$. For this reason,  the reader may wish to consult \cite{KrumWic1} first. 
The main reason for the substantial additional complexity  in the case $q \geq 3$ is that in this case, we generally have that $\overline{{\mathcal B}_{u} \setminus {\mathcal B}_{u, q}} \cap {\mathcal B}_{u, q} \neq \emptyset$, and consequently, unlike when $q=2$ in which case ${\mathcal B}_{u} = {\mathcal B}_{u, 2}$, it is no longer true that locally away from ${\mathcal B}_{u, q}$ the values of $u$ are given by $q$ single-valued harmonic functions.  

In particular, for this reason, it has been necessary here  to develop a new strategy, in place of the PDE theoretic arguments of \cite{Sim93} that were adapted in \cite{KrumWic1}, to classify ``homogeneous blow-ups''---i.e.\  homogeneous functions $w \, : \, {\mathbb R}^{n} \to {\mathcal A}_{q}({\mathbb R}^{m})$ produced by blowing up (certain) sequences of locally energy minimizing $q$-valued functions $u^{(\nu)}$ relative to sequences of homogeneous cylindrical $q$-valued functions $\varphi^{(\nu)}$, $\nu=1, 2, 3, \ldots,$ whenever both sequences $(u^{(\nu)})$ and $(\varphi^{(\nu)})$ converge to a fixed non-zero cylindrical homogeneous local energy minimizer $\varphi^{(0)} \, : \, {\mathbb R}^{n} \to {\mathcal A}_{q}({\mathbb R}^{m}) $. This classification (carried out in  Section~\ref{sec:homogblowup_sec} below) amounts to, in the language of \cite{Sim93} (or of \cite{AllardAlmgren}), verification of ``integrability of homogeneous Jacobi fields''  and is at the heart of the main decay estimates we prove. These blow-ups do not in general inherit the same local  energy minimizing property that $u^{(\nu)}$ are assumed to have, although they are locally energy minimizing away from the axis of $\varphi^{(0)}$. Nonetheless, we here establish two key facts concerning such a homogeneous blow-up $w$: (i) monotonicity of the Almgren frequency function, with base point on the axis of $\varphi^{(0)}$, associated with ${\widetilde w} = w - D_{x}\varphi^{(0)} \cdot \lambda(y)$ (Lemma~\ref{blowup-freq}) where $\lambda(y)$ is a certain 
${\mathbb R}^{2}$-valued linear function of $y$ and (ii) a  mean value property (identity \eqref{homogrep1_eqn4} and Lemma~\ref{homogrep1_mvp_lemma}) for the squared $y$-gradient of the Fourier coefficients, with respect to the $x$-variables, of the average of $w$. Here $(x,y)$ denotes coordinates in ${\mathbb R}^{n}$ with $y$ denoting the variables along the axis of $\varphi^{(0)}$. The classification of homogeneous blow-ups is accomplished using a combination of geometric and PDE theoretic arguments that rely on these two facts. 

The first of these facts (frequency monotonicity) is established with the help of a new energy comparison estimate (Lemma~\ref{competitor_lemma}) for local energy minimizers close to $\varphi^{(0)}$ and a uniform height estimate (inequality \eqref{homogrep2_eqn2}) for ${\widetilde w}(X)$ in terms of the distance of $X$ from the axis of $\varphi^{(0)}.$ The latter is based on several other estimates derived from a variant of the frequency monotonicity identity for $u^{(\nu)}$ (Lemma~\ref{keyest_identity}, referred to in the literature on free boundary problems as the Weiss monotonicity formula). The second fact (the mean value property) follows from a first variation estimate (Lemma~\ref{fourierest_lemma}, used also in \cite{KrumWic1} in a different way) for minimizers. The energy estimate of Lemma~\ref{competitor_lemma} implies energy stationarity of ${\widetilde w}$ with respect to a restricted class of deformations, namely, radial deformations in the domain (``squeeze deformations'' in the terminology of \cite{Almgren}) centered at any point on the axis of $\varphi^{(0)}$. 
As a rule of thumb, this stationarity condition is the more subtle of the two first variation identities behind monotonicity of frequency and the present context is no exception to this; in fact, interestingly, in the present context it holds only for radial deformations and not, in general, for arbitrary domain deformations. See the example discussed in Remark~\ref{example}. The other ingredient needed for monotonicity of frequency of ${\widetilde w}$ is 
the energy stationarity with respect to certain range variations (called ``squash deformations'' in \cite{Almgren}). In the present context however it is established non-variationally, based on the uniform height estimate (inequality \eqref{homogrep2_eqn2}) for ${\widetilde w}$ mentioned above.

A more comprehensive non-technical outline of the proofs of all of our main results (including the statements of the key estimates needed), comparing and contrasting our arguments to those of \cite{Sim93}, is given in Section~\ref{outline} below. 

\subsection{The work of Almgren}\label{Alm}

Let $u$ be a $q$-valued locally Dirichlet energy minimizing function on a domain $\Omega \subset {\mathbb R}^{n}$ taking values in ${\mathcal A}_{q}({\mathbb R}^{m})$ equipped with its usual metric. Recall that the branch set ${\mathcal B}_{u}$ of $u$ is defined by requiring that a point $Z \in \Omega \setminus {\mathcal B}_{u}$ if there is $\sigma >0$ and $q$ single-valued harmonic functions $u_{j} \, : \, B_{\sigma}(Z) \to {\mathbb R}^{m}$, $j =1, \ldots, q$,  such that everywhere on $B_{\sigma}(Z)$, the $q$ values of $u$ are  given by the values of these $q$ harmonic functions, i.e.\ $u(X) = \sum_{j=1}^{q} \llbracket u_{j}(X)\rrbracket$ for every $X \in B_{\sigma}(Z).$  In  [Alm83], Almgren established an existence theory  for $q$-valued Dirichlet energy minimizing functions (showing the existence of a minimizer in $W^{1,2}$ with given $q$-valued boundary data), and obtained sharp interior regularity estimates for them, including the sharp upper bound on the Hausdorff dimension of their singular sets.
Almgren's regularity theory says that $u$ is locally uniformly H\"older continuous, and moreover, away from a closed set $\Sigma_{u}  \subset \Omega$ of Hausdorff dimension $\leq  n- 2$, it is regular in the sense that near every point in $\Omega \setminus \Sigma_{u}$, the values of $u$ are locally given by those of $q$ single-valued harmonic functions, no two of which have a common value unless they are identical. The set $\Sigma_{u}$ is the singular set of $u$, which of course contains the branch set ${\mathcal B}_{u}$. 

Almgren's proof of the Hausdorff dimension bound on $\Sigma_{u}$ follows broadly the strategy for bounding the Hausdorff dimension of the singular set of a minimal submanifold in a multiplicity 1 class. This strategy in the case of minimal submanifolds is based on the existence of non-trivial, singular tangent cones at every singular point of the minimal submanifold, which is a consequence of the standard monotonicity formula and a lower bound on the volume density. In the setting of multi-valued Dirichlet energy minimizers, Almgren discovered and employed a fundamental monotonicity formula, namely, an expression for the $\rho$-derivative of  the ``frequency function'' 
\begin{equation}
N_{u, Z}(\rho) = \frac{\rho\int_{B_{\rho}(Z)} |Du|^{2}}{\int_{\partial\, B_{\rho}(Z)} |u|^{2}} \tag{$\star$}
\end{equation}
 associated with the energy minimizer $u$ and a given base point $Z \in \Omega$ (see Section~\ref{sec:frequency_sec} below); this expression---the analogue of the minimal surface monotonicity formula---shows that $N_{u, Z}(\cdot)$ is monotone non-decreasing and hence in particular that the limit ${\mathcal N}_{u}(Z) = \lim_{\rho \to 0} \, 
N_{u, z}(\rho)$ exists for each $Z \in \Omega$. Moreover, assuming without loss of generality that $u$ is average-free (by subtracting the single-valued harmonic average $h(X) = q^{-1}\sum_{j=1}^{q}u_{j}(X)$ from each of the values of $u(X)$ to obtain a $q$-valued function $u_{f}$ with $\Sigma_{u_{f}} = \Sigma_{u}$ and which is still energy minimizing), if 
$u(Z) = q \llbracket 0 \rrbracket$, then any rescaling sequence $u(Z + \rho_{j}X)/\|u(Z + \rho_{j}(\cdot))\|_{L^{2}(B_{1}(0))}$ with $\rho_{j} \to 0^{+}$ has a subsequence that converges to a non-zero tangent function $\varphi^{(Z)}$ (also referred to as a blow-up in the literature)  that is locally Dirichlet energy minimizing and homogeneous of degree $\alpha = {\mathcal N}_{\varphi^{(Z)}}(0) = {\mathcal N}_{u}(Z) >0$. It is not known whether $\varphi^{(Z)}$ is unique i.e.\ whether $\varphi^{(Z)}$ is independent of the sequence $\{\rho_{j}\}$ (except in case $n=2$, see \cite{DeLSpa11}  and Section~\ref{other} below, and now at generic points as shown here) but by energy minimality of $\varphi^{(Z)}$ the dimension of the subspace along which $\varphi^{(Z)}$ is translation invariant is at most $n-2.$ As shown by Almgren, this latter fact leads to the Hausdorff dimension bound ${\rm dim}_{\mathcal H} \, (\{Z \in \Omega \, : \, u(Z) = q\llbracket 0 \rrbracket\}) \leq n-2$, which in view of continuity of $u$ gives ${\rm dim}_{\mathcal H} \, \Sigma_{u} \leq n-2$ by  induction on $q$. 
 
In the second part of \cite{Almgren}, Almgren used this Hausdorff dimension estimate on $\Sigma_{u}$  to establish 
that the Hausdorff dimension of the interior singular set of an  area minimizing $n$-dimensional rectifiable current is at most $n- 2$. As illustrated by complex algebraic varieties such as $\{(z, w) \, : \, z^{2} = w^{3}\} \cap {\mathbb C} \times {\mathbb C}$, the value $n-2$ is the sharp Hausdorff dimension bound on the singular set of both energy minimizing multi-valued functions and area minimizing rectifiable currents of codimension $\geq 2$. In bounding the Hausdorff dimension of the singular set in either case, what is non-trivial is to estimate the size of the branch set. 
\subsection{The present work}
Our work, broadly speaking,  is aimed at studying the asymptotic behavior of $u$ on approach to its branch points, including the question of uniqueness of tangent functions along the branch set ${\mathcal B}_{u}$. We establish a number of new a priori estimates for $u$ with this purpose in mind. A main result  (Theorem~A below) implied by these estimates gives for 
${\mathcal H}^{n-2}$ a.\ e.\ point $Z \in {\mathcal B}_{u, q} = {\mathcal B}_{u} \cap \{u(Z) = q\llbracket a \rrbracket \;\; \mbox{for some $a \in {\mathbb R}^{m}$}\}$ what may be considered a ``first order'' asymptotic expansion of $u$ near $Z$, including uniqueness of its tangent function $\varphi^{(Z)}$  at $Z$ and an exponential rate of convergence of $u$ to $\varphi^{(Z)}$ in $L^{2}$ upon rescaling:

\noindent \textbf{Theorem A.} \emph{
Let $q \geq 2$ and let $u : \Omega \rightarrow \mathcal{A}_q(\mathbb{R}^m)$ be a locally $W^{1,2}$ locally Dirichlet energy minimizing function on an open subset $\Omega$ of $\mathbb{R}^n$. Let $\mathcal{B}_{u,q}$ denotes the set of points $X \in \mathcal{B}_u$ such that $u(X) = q \llbracket u_1(X) \rrbracket$ for some $u_1(X) \in \mathbb{R}^m$. Then for $\mathcal{H}^{n-2}$-a.e. point $Z \in \mathcal{B}_{u,q}$ there exist  a number $\rho_Z > 0$; an integer $N_{Z} \geq 1$; relatively prime positive integers $k_{Z}$, $q_{Z}$ with $N_{Z}q_{Z} \leq q$; a unique non-zero, homogeneous, average-free, Dirichlet energy minimizing function $\varphi^{(Z)} : \mathbb{R}^n \rightarrow \mathcal{A}_q(\mathbb{R}^m)$ given by  
\begin{equation*} 
	\varphi^{(Z)}(X) = \sum_{j=1}^q \llbracket \varphi^{(Z)}_j(X) \rrbracket 
	= (q - N_Z q_Z) \llbracket 0 \rrbracket + \sum_{l=1}^{N_Z} \op{Re}(c^{(Z)}_l (x_1+ix_2)^{k_Z/q_Z}) 
\end{equation*}
for some $c^{(Z)}_l \in \mathbb{C}^m \setminus \{0\}$, where $X = (x_1, x_2, \ldots, x_n);$ and an orthogonal rotation $Q_Z$ of $\mathbb{R}^n$ such that for each $X \in B_{\rho_Z}(0)$,
\begin{equation*}
	u(Z + X) = \sum_{j=1}^q \llbracket h(Z +X) + \varphi^{(Z)}_j(Q_{Z} X) + \epsilon^{(Z)}_j(X) \rrbracket 
\end{equation*} 
where $h \, : \, \Omega  \to {\mathbb R}^{m}$ is the (single-valued) harmonic function (independent of $Z$) equal to the average of the values of $u$, i.e.\ $h(X) = q^{-1}\sum_{j=1}^{q} u_{j}(X)$; and 
$\e_{j}^{(Z)}(X) \in {\mathbb R}^{m}$, $\sum_{j=1}^q \epsilon^{(Z)}_j(X) = 0$ for each $X \in B_{\r_{Z}}(0)$ and 
\begin{equation*}
	\sigma^{-n} \int_{B_{\sigma}(0)} \sum_{j=1}^q |\epsilon^{(Z)}_j(X)|^2 dX \leq C_Z \sigma^{2\frac{k_Z}{q_{Z}}+2\mu_Z}
\end{equation*}
for all $\sigma \in (0,\rho_Z)$ and some constants $\mu_Z, C_Z > 0$ independent of $\sigma$.
In case $n=2$, ${\mathcal B}_{u, q}$ consists of isolated points and the above conclusions (with $Q_{Z}$ equal to the identity) hold for every $Z \in {\mathcal B}_{u, q}.$}

It follows from the work of Micallef and White~\cite[Theorem 3.2]{MicWhi94} that the constants $c_{l}^{(Z)}$ in the conclusion of Theorem~A satisfy $c^{(Z)}_l \cdot c^{(Z)}_l = 0$ whenever $q_Z \neq 1.$ (See also the discussion in Remark~\ref{example} below). 

If $u_{f} \, : \, \Omega \to {\mathcal A}_{q}({\mathbb R}^{m})$ is defined by $u_{f}(X) = \sum_{j=1}^{q} \llbracket u_{j}(X) - h(X)\rrbracket$, then  $u_{f}$ is Dirichlet energy minimizing (\cite[Theorem 2.6]{Almgren}), and at any point $Z$ where the conclusions of Theorem~A hold, the integers $k_{Z}$, $q_{Z}$ in the theorem are determined by ${\mathcal N}_{u_{f}}(Z) = k_{Z}/q_{Z}$. 

Since $u$ is continuous (\cite[Theorem 2.13]{Almgren}), corresponding to each $Y \in \Omega \setminus {\mathcal B}_{u, q}$ there are a $\t_{Y} >0$, an integer $M_{Y} \geq 2$ and positive integers $p_{Y}^{(1)}, \ldots, p_{Y}^{(M_{Y})}$ with $\sum_{\ell=1}^{M_{Y}} p_{Y}^{(l)} = q$ 
such that $\left. u\right|_{B_{\t_{Y}}(Y)} = \sum_{\ell=1}^{M_{Y}} u_{\ell}$ where $u_{\ell} \, : \, B_{\t_{Y}}(Y) \to {\mathcal A}_{p_{Y}^{(\ell)}}({\mathbb R}^{m})$ is Dirichlet energy minimizing for each $\ell \in \{1, \ldots, M_{Y}\}$. Thus, Theorem~A and induction on $q$ immediately imply the following assertion concerning the entire branch set ${\mathcal B}_{u}$:    

\noindent \textbf{Corollary.} \emph{Let $q \geq 2$ and let $u : \Omega \rightarrow \mathcal{A}_q(\mathbb{R}^m)$ be a locally $W^{1,2}$ locally Dirichlet energy minimizing function on an open subset $\Omega$ of $\mathbb{R}^n$.  
For $\mathcal{H}^{n-2}$-a.e. point $Z \in \mathcal{B}_{u}$ there exist a number $\rho_{Z} >0$; an integer $M_{Z} \geq 1$;  positive integers $p_{Z}^{(1)}, \ldots, p_{Z}^{(M_{Z})}$ not all equal to 1 and with $\sum_{\ell=1}^{M_{Z}} p_{Z}^{(\ell)} = q$; positive integers 
$N_{Z}^{(1)}, \ldots, N_{Z}^{(M_{Z})}$;  relatively prime pairs of positive integers $(k_{Z}^{(1)}, q_{Z}^{(1)}), \ldots, (k_{Z}^{(M_{Z})}, q_{Z}^{(M_{Z})})$ with $N_{Z}^{(\ell)}q_{Z}^{(\ell)} \leq p_{Z}^{(\ell)}$ for each $\ell \in \{1, \ldots, M_{Z}\}$ and orthogonal rotations 
$Q_Z^{(1)}, \ldots, Q_{Z}^{(M_{Z})}$ of $\mathbb{R}^n$ such that for each $X \in B_{\rho_Z}(0)$,
\begin{equation*}
	u(Z + X) = \sum_{\ell=1}^{M_{Z}}\sum_{j=1}^{p_{Z}^{(\ell)}} \llbracket h_{Z}^{(\ell)}(Z +X) + \varphi^{(Z, \ell)}_{j}(Q_{Z}^{(\ell)}X) + \epsilon^{(Z, \ell)}_{j}(X) \rrbracket 
\end{equation*} 
where for each $\ell \in \{1, \ldots, M_{Z}\}$: 
\begin{itemize}
\item[(i)] $h_{Z}^{(\ell)} \, : \, B_{\rho_{Z}}(Z)  \to {\mathbb R}^{m}$ is a (single-valued) harmonic function 
\item[(ii)] either $p_{Z}^{\ell} = 1$ and $\varphi_{1}^{(Z, \ell)} \equiv 0,$ or $p_{Z}^{\ell} \geq 2$ and $\varphi^{(Z, \ell)} : \mathbb{R}^n \rightarrow \mathcal{A}_{p_{Z}^{(\ell)}}(\mathbb{R}^m)$ is a non-zero, homogeneous, average-free, Dirichlet energy minimizing function given by  
\begin{equation*} 
	\varphi^{(Z, \ell)}(X) = \sum_{j=1}^{p_{Z}^{(\ell)}} \llbracket \varphi^{(Z, \ell)}_j(X) \rrbracket 
	= (p_{Z}^{(\ell)} - N_Z^{(\ell)} q_Z^{(\ell)}) \llbracket 0 \rrbracket + \sum_{r=1}^{N_Z^{(\ell)}} \op{Re}(c^{(Z, \ell)}_r (x_1+ix_2)^{k_Z^{(\ell)}/q_Z^{(\ell)}}) 
\end{equation*}
for some $c^{(Z, \ell)}_r \in \mathbb{C}^m \setminus \{0\}$, where $X = (x_1, x_2, \ldots, x_n),$ and 
\item[(iii)] 
$\sum_{j=1}^{p_{Z}^{(\ell)}} \epsilon^{(Z)}_j(X) = 0$ at each point $X \in B_{\r_{Z}}(0)$ and 
\begin{equation*}
	\sigma^{-n} \int_{B_{\sigma}(0)} \sum_{j=1}^{p_{Z}^{(\ell)}} |\epsilon^{(Z, \ell)}_j(X)|^2 dX \leq C_Z \sigma^{2\frac{k_Z^{(\ell)}}{q_{Z}^{(\ell)}}+2\mu_{Z}}
\end{equation*}
for all $\sigma \in (0,\rho_Z)$ and some constants $\mu_Z, C_Z > 0$ independent of $\sigma$. 
\end{itemize}
In case $n=2$, the above conclusions (with $Q_{Z}^{(\ell)}$ equal to the identity for each $\ell$) hold for every $Z \in {\mathcal B}_{u}.$}


Our estimates leading to Theorem~A give also the following structure result for the branch set:  

\noindent \textbf{Theorem B.} \emph{
Let $u$ be a $q$-valued locally $W^{1,2}$ Dirichlet energy minimizing function on an open subset $\Omega$ of $\mathbb{R}^n$.  
For each closed ball $B \subset \Omega$, either $\mathcal{B}_u \cap B = \emptyset$ or ${\mathcal H}^{n-2} \, ({\mathcal B}_{u} \cap B) >0$ and $\mathcal{B}_u \cap B$ is a countably $(n-2)$-rectifiable set. In fact if we let $S_{0} = \emptyset$ and for each $k = 1,2,\ldots,q-1$, $S_k$ be the set of points $X \in \mathcal{B}_u$ such that $u(X) = \sum_{j=1}^k q_j \llbracket u_j(X) \rrbracket$ for precisely $k$ distinct values $u_j(X) \in \mathbb{R}^m$ and some positive integers $q_j$ with $\sum_{j=1}^k q_j = q$ (so $S_{1} = {\mathcal B}_{u, q}$), then the following hold:
\begin{itemize}
\item[(i)] $\mathcal{B}_u = S_1 \cup S_2 \cup \cdots \cup S_{q-1}$; 
\item [(ii)] for each $k \in \{1, \ldots, q\}$, $\bigcup_{0 \leq l \leq  k-1} S_l$ is a relatively closed subset of $\Omega$  and for each closed ball $B \subset \Omega \setminus \bigcup_{0 \leq l \leq  k-1} S_l$, $S_k \cap B = \cup_{j=1}^{N_{k}} L_{j}$ for some finite number $N_{k}$ and pairwise disjoint, locally compact sets $L_{1}, \ldots, L_{N_{k}}$ with $L_{j}$ 
locally $(n-2)$-rectifiable (having finite $(n-2)$-dimensional Hausdorff measure locally in a neighborhood of each point of $L_{j}$) for each $j \in \{1, \ldots, N_{k}\}$.
\end{itemize}}

In case $q=2$, much more is true near any point $Z \in {\mathcal B}_{u} = {\mathcal B}_{u, 2}$ where the expansion of $u(Z + (\cdot))$ as in Theorem~A is valid with $k_{Z} = 1$ (and $q_{Z} = 2$). The specific result we obtain in this case is the following, which was proven in \cite{KrumWic1} but we include its statement here for completeness:

\noindent 
{\bf Theorem C.} \emph{Let $u : \Omega \rightarrow \mathcal{A}_2(\mathbb{R}^m)$ be a (two-valued) locally $W^{1, 2}$, locally Dirichlet energy minimizing function. If $Z_{0} \in {\mathcal B}_{u} = {\mathcal B}_{u, 2}$ and the asymptotic expansion of $u(Z_{0}+ (\cdot))$ as in Theorem~A holds with $k_{Z_{0}} = 1$, then the asymptotic expansion as in Theorem~A is valid for \emph{each} $Z \in {\mathcal B}_{u} \cap B_{\r_{Z_{0}}/2}(Z_{0})$ with $k_{Z} = 1$, $C_{Z} = C_{Z_{0}}$, $\mu_{Z} = \mu_{Z_{0}}$ and $\r_{Z} = \r_{Z_{0}}/4$ (notation as in Theorem~A). Furthermore, letting $\e^{Z}$ be as in Theorem~A, we have in this case that
$$\sup_{B_{\s}(0)} |\e^{Z}|^{2} \leq C_{0} \, \s^{1 + \frac{\mu_{Z_{0}}}{n}}$$
for each $Z \in {\mathcal B}_{u} \cap B_{\r_{Z_{0}}/2}(Z_{0})$ and $\s \in (0, \r_{Z_{0}}/4)$, 
where  $C_{0}$ is independent of $\s$ and $Z$; we also have  that  
${\mathcal B}_{u} \cap B_{\r_{Z_{0}}/2}(Z_{0})$ is an $(n-2)$-dimensional $C^{1, \alpha}$ submanifold for some $\alpha = \alpha_{Z_{0}} \in (0, 1)$.}  

\emph{In fact, the asymptotic expansion as in Theorem~A  is valid at any point $Z_{0} \in {\mathcal B}_{u}$ at which one tangent function of $u_{f} = u - h$, after composing with an orthogonal rotation of ${\mathbb R}^{n}$,  has the form $\{\pm {\rm Re} \, \left(c(x_{1} + ix_{2})^{1/2}\right)\}$ with $c \in {\mathbb C}^{m} \setminus \{0\}$ a constant, or equivalently,  at any point $Z_{0} \in {\mathcal B}_{u}$ at which the frequency ${\mathcal N}_{u_{f}}(Z_{0})$ of $u_{f}$ is $1/2$}.

\subsection{Other related work on multi-valued Dirichlet minimizing functions}\label{other}
In case $n=2$, the conclusions of Theorem~A and Corollary above have previously been established by De Lellis and Spadaro (\cite{DeLSpa11}) by a different argument. 


In very recent work, De~Lellis, Marchese, Spadaro and Valtorta (\cite{DMSV}) have shown, by an entirely different method, Theorem~B with an important additional local uniform $(n-2)$-dimensional Minkowski content estimate on $S_{1} = {\mathcal B}_{u, q}$ and hence in particular finiteness of the $(n-2)$-dimensional Hausdorff measure of ${\mathcal B}_{u, q} \cap K$ for every compact $K \subset \Omega$;  note that this is a stronger conclusion than that given by Theorem~B(ii) above for $k=1$. They show this by employing a far reaching general method developed recently by A.~Naber and D.~Valtorta to study singular sets in various settings (\cite{NV16}, \cite{NV}). The Naber--Valtorta method does not require, nor does it seem to give in general, knowledge of the asymptotic behavior of the objects near their singularities or uniqueness of their tangents, but it gives under remarkably general hypotheses rectifiability and a  local uniform estimate on the Minkowski content (of the dimension relevant to the problem) of  the singular set.

It follows from the work of the first author (\cite{Krum}) that under the hypotheses of Theorem~C, ${\mathcal B}_{u} \cap B_{\r_{Z_{0}}/2}(Z_{0})$ is in fact real analytic. 

The  special case $q=2$ of the present work is contained in our previous work \cite{KrumWic1}. 
  



\section{An outline of the method and a guide to the article}\label{outline}  We adapt the method developed by L.\ Simon in his seminal work \cite{Sim93} on the asymptotic behaviour of minimal submanifolds near singularities. In \cite{Sim93},  a ``multiplicity 1 class'' ${\mathcal M}$ of $n$-dimensional minimal submanifolds $M$  is considered, with each $M \in {\mathcal M}$ 
assumed stationary for the $n$-dimensional area functional (i.e.\ the $n$-dimensional Hausdorff measure) in some open subset $U_{M} \subset {\mathbb R}^{n+m}$, and with the class ${\mathcal M}$ assumed to be closed under ambient rigid motions, homotheties and taking measure-theoretic (i.e.\ varifold) limits of its elements (see \cite{Sim93}, 1.3(a),(b) for the precise meaning of this).
In particular, the occurrence of tangent planes (or more general tangent cones) of multiplicity $> 1$, and hence also branch point singularities,  in elements of ${\mathcal M}$ is ruled out a priori. This is the key difference between \cite{Sim93} and our setting here. Considerable additional difficulties arise in our setting because of the presence of multiplicity $> 1$, branch points and low regularity resulting from the occurrence of branch points. Apart from these issues, difficulties arise also because of the differences in the variational structure of the two problems.  These difficulties are overcome by developing additional estimates, techniques and  replacements for PDE arguments of \cite{Sim93}, which include:  (i) two new estimates for $q$-valued Dirichlet minimizers based on  first variation and energy comparison considerations (Lemma~\ref{fourierest_lemma} and Lemma~\ref{competitor_lemma}); (ii) a new approach, necessitated by the presence of branch points of varying multiplicity, to the classification of homogeneous ($q$-valued) blow-ups $w$ of sequences of $q$-valued energy minimizers converging to a non-zero cylindrical $q$-valued function $\varphi^{(0)}$ (Lemma~\ref{homogrep_lemma})---an approach based on the two variational estimates referred to in (i) together with establishing monotonicity of the frequency function associated with $w - L$ (which we need, and show, only when $w$ is homogeneous), where $L$ is a certain function linear in the variable along the axis of $\varphi^{(0)}$; (iii) versions of various general techniques from \cite{Wic14}, used at a number of places to handle other issues arising from higher multiplicity.  

Below we shall outline the main points of the overall method, keeping the discussion as non-technical as possible and highlighting only the key estimates. We intend this discussion also as a guide to the article, and shall provide along the way references to the sections of the article where various assertions are proved. We start with a very brief description of the main ingredient of the method in its original form found in \cite{Sim93}. A reader unfamiliar with \cite{Sim93} may wish to consult our earlier paper \cite{KrumWic1} which considers the special case $q=2$ of the present work. In that case, while there are some difficulties arising from low regularity (i.e.\ the possibility of a tangent function to a minimizer having degree of homogeneity $< 1$) and from the differences in the variational structure of the problem, many of the issues that arise from higher multiplicity and branching in the general case are absent, and the argument is closer to that of \cite{Sim93}. 


The key step in Simon's work is an excess improvement lemma (\cite[Lemma 1]{Sim93}). To describe this lemma, let ${\mathbf C}^{(0)}  = {\mathbf C}^{(0)}_{0} \times L_{0}$ be a given ``cylindrical'' cone in ${\mathcal M}$, i.e.\ a cone with $L_{0}$ a subspace of dimension $k$ equal to the maximal dimension of the subspace of translation invariance for any singular cone in ${\mathcal M}$ (so that, importantly, the cross-section ${\mathbf C}^{(0)}_{0}$ has only an isolated singularity). Let us assume that $k  = n-2$, which is the case relevant to the present work (though \cite{Sim93} allows arbitrary $k \in \{0, 1, \ldots, n-1\}$ with an additional integrability hypothesis on the cylindrical cones in ${\mathcal M}$ in case $k \leq n-3$; in case $k \in \{n-1, n-2\}$ this integrability hypothesis is automatically satisfied since in this case the cross section of any cylindrical cone in ${\mathcal M}$ is either a union of rays in ${\mathbb R}^{m+1}$ (if $k= n-1$) or a union of two-dimensional planes in ${\mathbb R}^{m+2}$ (if $k=n-2$) meeting only at the origin).   \cite[Lemma 1]{Sim93} says that whenever ${\mathbf C}$, a stationary cylindrical cone, and $M \in {\mathcal M}$ are  sufficiently close (depending on ${\mathbf C}^{(0)}$) in $L^{2}$-distance and in mass  to ${\mathbf C}^{(0)}$ at scale 1 and $M$ has ``enough'' singularities with density at least the density of ${\mathbf C}$ at the origin (in a certain precise sense), there is a stationary cylindrical cone ${\mathbf C}^{(1)}$  and a fixed smaller scale $\theta \in (0, 1/2)$ such that  the $L^{2}$ distance (height excess) of $M$ to ${\mathbf C}^{(1)}$ at scale $\theta$, namely $E_{\theta}({\mathbf C}^{(1)}) = \sqrt{\theta^{-n-2}\int_{M\cap B_{\theta}} {\rm dist}^{2} \, (X,  {\mathbf C}^{(1)}) \,d{\mathcal H}^{n}(X)},$  is at most half of $E_{1}({\mathbf C})$, the $L^{2}$ distance of $M$ to ${\mathbf C}$ at scale 1. 

The proof of this lemma in \cite{Sim93} proceeds by considering a sequence $M_{j} \in {\mathcal M}$ and a sequence of stationary cylindrical cones ${\mathbf C}_{j},$ with both sequences converging to ${\mathbf C}^{(0)}$. The main steps of the proof are as follows: 
\begin{itemize}
\item [(i)] The multiplicity 1 hypothesis on ${\mathcal M}$ is used to apply Allard's regularity theorem to $M_{j}$ in the region outside a fixed but arbitrarily small neighborhood of the axis of ${\mathbf C}^{(0)}$ whenever the height-excess $E_{j} = \sqrt{\int_{M_{j} \cap B_{1}} {\rm dist}^{2} \, (X,  {\mathbf C}_{j}) d{\mathcal H}^{n}}$  is sufficiently small (i.e.\ $j$ is sufficiently large). This gives the (vector-valued) height of $M_{j}$ off ${\mathbf C}_{j}$ in this region as a function $u_{j}$ over ${\mathbf C}^{(0)}$  (taking values in $\left({\rm reg} \, {\mathbf C}^{(0)}\right)^{\perp}$) satisfying the minimal surface system and hence estimates on its derivatives. These estimates allow $u_{j}$ to be blown-up by $E_{j}$ producing a smooth function (a blow-up) $v \, : \, {\rm reg} \, {\mathbf C}^{(0)} \cap B_{1} \to \left({\rm reg} \, {\mathbf C}^{(0)}\right)^{\perp}$ with $E_{j}^{-1}u_{j} \to v$ (after passing to a subsequence) smoothly away from the axis of ${\mathbf C}^{(0)}$ and with $v$ harmonic over ${\rm reg} \, {\mathbf C}^{(0)} \cap B_{1}$. Note the key fact used here that $M_{j}$ for sufficiently large $j$ have no singularities away from any fixed  small neighborhood of the axis of ${\mathbf C}^{(0)}$, which follows from Allard's theorem in view of the multiplicity 1 hypothesis. 

\item [(ii)] Making use of the assumption that $M_{j}$ has enough singularities with density at least the density of ${\mathbf C}_{j}$ (which must concentrate near the axis of ${\mathbf C}_{j}$), the monotonicity formula is used to establish a number of fundamental integral estimates for $M_{j}$ including an estimate showing that $E_{j}$ does not concentrate near the axis of ${\mathbf C}_{j}$. 

\item [(iii)] The estimates in step (ii) imply that   
$v \in L^{2}_{\rm loc}\left({\mathbf C}^{(0)} \cap B_{1}\right)$, the convergence $E_{j}^{-1}u_{j} \to v$ is in $L^{2}_{\rm loc}\left({\mathbf C}^{(0)} \cap B_{1}\right)$ and hence that $v$ inherits the integral estimates corresponding to those in step (ii). These estimates (for $v$), Fourier analysis and PDE arguments are used in \cite{Sim93} to show that the homogeneous degree 1 blow-ups are cylindrical, and that a general blow-up $v$ upon rescaling at the origin decays in $L^{2}$ to a unique homogeneous degree 1 blow-up at a fixed exponential rate. 

\item [(iv)] In view of the  excess non-concentration estimate of step (ii), the decay estimate in step (iii) for the blow-ups leads directly to the desired excess improvement conclusion for $M_{j}$ for all sufficiently large $j$.   
\end{itemize}

Here we prove an analog of this lemma, Lemma~\ref{main_lemma} below, for the class (corresponding to ${\mathcal M}$) of average-free locally Dirichlet energy 
minimizing functions $u \, : \, B_{1}^{n}(0) \to {\mathcal A}_{q}({\mathbb R}^{m})$. The ``cylindrical cones'' in our context are the 
$q$-valued homogeneous functions on ${\mathbb R}^{n}$ of the specific form 
$$\varphi(X) = (q - Nq_{0}) \llbracket 0 \rrbracket + \sum_{j=1}^{N} \varphi_{j}(QX)$$ for some positive integers $N$, $q_{0}$ with $Nq_{0} \leq q$, an orthogonal transformation $Q \, : \, {\mathbb R}^{n} \to {\mathbb R}^{n}$ and for $\varphi_{j} \, : \, {\mathbb R}^{n} \to {\mathcal A}_{q_{0}}({\mathbb R}^{m})$ of the form $\varphi_j(X) = \op{Re}(c_j (x_1+ix_2)^{k/q_{0}})$
where $X = (x_{1}, x_{2}, y)$, $i = \sqrt{-1}$, $c_j \in \mathbb{C}^m \setminus \{0\}$ and $k$ is a positive integer relatively prime to $q_{0}$ (independent of $j$). (See Definitions~\ref{varphi0_defn}--\ref{Phi_defn} below.) Let us refer, in the rest of this discussion, to such a $\varphi$ as a \emph{cylindrical $q$-valued function}. 

In  Lemma~\ref{main_lemma}, we fix a non-zero locally Dirichlet energy minimizing cylindrical $q$-valued function 
$$\varphi^{(0)} = \sum_{j=1}^{J} m_{j}\varphi_{j}^{(0)}$$ 
where $\varphi_{j}^{(0)} = \op{Re}(c_j^{(0)} (x_1+ix_2)^{k/q_{0}})$, $c_{j}^{(0)} \in {\mathbb C}^{m}$ are distinct, and we allow the possibility that $\varphi^{(0)}_{j} \equiv 0$ for precisely one value of $j$. Let the degree of homogeneity of $\varphi^{(0)}$ be $\alpha$, so that $\alpha = k_{0}/q_{0}$ for relatively prime positive integers  $k_{0}, q_{0}$ with $q_{0} \leq q.$  In the context of Lemma~\ref{main_lemma}, the singular set of an average-free Dirichlet energy minimizing $q$-valued function $u$ is taken to be its zero set ${\mathcal B}_{u, q} = \{Z \in B_{1} \, : \, u(Z) = q\llbracket 0 \rrbracket\}$ (although of course ultimately the full singular set of $u$ is $\Sigma_{u}$ as defined in section~\ref{Alm} above). The frequency ${\mathcal N}_{u}(Z)$ plays the role of minimal surface density, and the improvement-of-excess assertion of the lemma is proved subject the the assumption that there are enough points $Z \in {\mathcal B}_{u, q}$ with ${\mathcal N}_{u}(Z) \geq \alpha.$ 


To prove  Lemma~\ref{main_lemma}, we consider a sequence of average-free, Dirichlet energy minimizing $q$-valued functions $u^{(\nu)}$ on $B^{n}_{1}(0)$, $\nu = 1, 2, 3, \ldots,$ and a sequence of average-free, homogeneous degree $\alpha$, $q$-valued cylindrical functions $\varphi^{(\nu)}$ on ${\mathbb R}^{n}$ (not assumed to be Dirichlet energy minimizing), with both sequences converging in $L^{2}(B_{1})$ to $\varphi^{(0)}$.  Let us outline the proof of Lemma~\ref{main_lemma} in four steps that correspond to steps (i)-(iv) above describing the work \cite{Sim93}. 

\noindent
{\bf Step 1.} We parameterize (in Section~\ref{sec:graphical_sec}) $u^{(\nu)}$ via multi-valued functions $v^{(\nu)}_{j, k}$ over the graphs of the ``components'' $\varphi^{(\nu)}_{j, k}$ (see Definition~\ref{varphi_defn}) of $\varphi^{(\nu)}$ away from a fixed arbitrarily small neighborhood of the axis $\{0\} \times {\mathbb R}^{n-2}$ of $\varphi^{(0)}$. This is analogous to step (i) above in  the approach of \cite{Sim93}. However, in the case $q \geq 3$, unlike in \cite{Sim93} or in \cite{KrumWic1}, the smallness of the $L^{2}$ excess 
$$E_{\nu} = \sqrt{\int_{B^{n}_{1}(0)} {\mathcal G}\, (u^{(\nu)}, \varphi^{(\nu)})^{2}}$$ (notation as in Section~\ref{sec:preliminaries_sec}) is not sufficient to guarantee a good parameterization of $u^{(\nu)}$ of this type.  This issue arises from the presence of higher multiplicity and branching away from the axis $\{0\} \times {\mathbb R}^{n-2}$ of $\varphi^{(0)}$, and it is overcome following an idea from~\cite{Wic14} which goes as follows:  we show (in Corollary \ref{graphical_cor} below) that such a parameterization is possible if we impose an additional condition, namely, Hypothesis~$(\star\star)$ in Section~\ref{sec:graphical_sec}, which says that $u^{(\nu)}$ is substantially closer in $L^{2}$ to $\varphi^{(\nu)}$ than it is to any other homogeneous degree $\alpha$, cylindrical $q$-valued function $\varphi$ with fewer components. Having to make this assumption has the  consequence that excess improvement of $u^{(\nu)}$ at a smaller scale can (eventually) only be established subject to $u^{(\nu)},$ $\varphi^{(\nu)}$ satisfying Hypothesis~$(\star\star)$, and indeed this is what we do initially (in Lemma~\ref{premain_lemma}).  However Hypothesis~$(\star\star)$ is undesirable for the purpose of iteration of this lemma since it is difficult to verify at a smaller scale. This issue is resolved by using Lemma~\ref{premain_lemma} itself to remove, in Lemma~\ref{main_lemma}, Hypothesis~$(\star\star)$  at the expense of weakening the conclusion to allow excess improvement to be guaranteed at one of a fixed, finite number of scales (as opposed to a single scale). Allowing such multiple scales is no obstacle for iteration of the lemma to reach  uniqueness and asymptotic decay conclusions.  

We note that in case $q \geq 3$, due to the presence of branch points of $u^{(\nu)}$ of multiplicity $< q$, the functions $v^{(\nu)}_{j,k}$ parameterizing $u^{(\nu)}$ necessarily have to be multi-valued. In particular (unlike in \cite{Sim93} or  in \cite{KrumWic1}), $v^{(\nu)}_{j, k}$ do not solve a PDE away from the axis of $\varphi^{(0)}$. It is however not difficult to see that the $v^{(\nu)}_{j, k}$, and hence also $E_{\nu}^{-1}v^{(\nu)}_{j, k}$, are locally energy minimizing (as functions on a domain in 
${\mathbb R}^{n}$) away from the axis of $\varphi^{(0)}$. This property makes Almgren's continuity and compactness results applicable to the  
$v^{(\nu)}_{j, k}$ to produce (passing to a subsequence) a blow-up 
$$w_{j, k} = \lim_{\nu \to \infty} \, E_{\nu}^{-1} v^{(\nu)}_{j, k},$$ 
where the convergence is uniform with energies of $E_{\nu}^{-1}v_{j, k}^{(\nu)}$ converging to the energy of $w_{j,k}$ locally away from the axis of $\varphi^{(0)}$ and away from $\partial \, B_{1}^{n}(0)$. 




\noindent
{\bf Step 2.} This step comprises the integral estimates established in Sections~\ref{sec:L2estimates_sec1} and ~\ref{sec:L2estimates_sec2}, and corresponds to step (ii) above in the approach of \cite{Sim93}. Among these estimates are the new results Lemma~\ref{fourierest_lemma} and Lemma~\ref{competitor_lemma} which are consequences of the first variation identity \eqref{freqidentity2} and a comparison function construction respectively. We shall discuss their role below (in Step 3). 
The other main results in this step, namely Theorem~\ref{keyest_thm}, Lemmas~\ref{keyest_cor}-\ref{nonconest_lemma} and Corollary~\ref{nonconest_cor} all establish estimates analogues to those in \cite{Sim93}. Their proofs however require additional ideas, and in particular rely  
also on general techniques developed in \cite{Wic14} to handle the difficulties arising from the presence of higher multiplicity. 

Theorem~\ref{keyest_thm} gives the basic estimate. It says that if $\varphi^{(0)}$ is as above with its degree of homogeneity $\alpha$, $u$ is a $q$-valued average-free Dirichlet energy minimizing function on $B_1(0)$ with ${\mathcal N}_{u}(0) \geq \alpha$ and $\varphi$ is a homogeneous degree $\alpha$ cylindrical $q$-valued function, and if $u$ and $\varphi$ are both sufficiently close to $\varphi^{(0)}$ in $L^2(B_1(0))$ (depending on $\varphi^{(0)}$),  then   
\begin{equation}\label{basic}
	\int_{B_{1/2}(0)} R^{2-n}\left| \frac{\partial (u/R^{\alpha})}{\partial R} \right|^2 
	\leq C \int_{B_1(0)} \mathcal{G}(u,\varphi)^2, 
\end{equation}
where $R(X) = |X|$ for $X \in {\mathbb R}^{n}$ and $C = C(n,m,\varphi^{(0)}) \in (0,\infty)$ is a constant. This is a consequence of the variational identities \eqref{freqidentity1} and \eqref{freqidentity2} satisfied by $u$ and $\varphi^{(0)}$. In particular, it uses Lemma~\ref{keyest_identity} giving the identity 
\begin{equation*}
	\frac{d}{d\rho} \left( \rho^{-2\alpha} (D_{u,Y}(\rho) - \alpha H_{u,Y}(\rho)) \right) 
	= 2\rho^{2-n} \int_{\partial B_{\rho}(Y)} \left| \frac{\partial (u/R_Y^{\alpha})}{\partial R_Y} \right|^2 
\end{equation*}
for all $Y \in B_{1}(0)$ and $\rho \in (0,\op{dist}(Y, \partial \, B_{1}(0))$, where $R_Y(X) = |X-Y|$ for $X \in \mathbb{R}^n$, $D_{u,Y}(\rho) = \rho^{2-n} \int_{B_{\rho}(Y)} |Du|^2$, and $H_{u,Y}(\rho) = \rho^{1-n} \int_{\partial B_{\rho}(Y)} |u|^2$. This is a variant of Almgren's frequency function monotonicity formula \eqref{freq_derivative} and is derived from the identities \eqref{freqidentity1} and \eqref{freqidentity2}.  (Although this identity appears not to have been used in the study of branch points prior to our work \cite{KrumWic1}, we have learnt since then that it has previously been used---for purposes unlike ours---in the literature on free boundary problems where it is known as the Weiss monotonicity formula and was first used by G.~Weiss (\cite{Wei}).)

The above basic estimate \eqref{basic} implies all of the estimates in Section~\ref{sec:L2estimates_sec2}. In particular, Corollary~\ref{nonconest_cor} is a key consequence of it, which says that the excess $E_{\nu}$ does not concentrate near the axis of $\varphi^{(0)}$ whenever there is no large gap in the set $\{Z \, : \, {\mathcal N}_{u}(Z) \geq \alpha\} \cap B_{1}(0)$. Also among the consequences of \eqref{basic} is Lemma~\ref{nonconest_lemma} giving the estimate 
\begin{equation}\label{basic_cor}
|\xi|^{2} + \int_{B_{1/2}(0) \cap \{r > \tau\}} \frac{\mathcal{G}(u(X), \varphi(X) - D_x \varphi(X) \cdot \xi)^2}{|X-Z|^{n+2\alpha-\sigma}} 
	\leq C \int_{B_1(0)} \mathcal{G}(u,\varphi)^2,
\end{equation}	
valid  for any  $\t \in (0, 1/2)$, $\sigma \in (0, 1/2q)$ and some fixed constant $C$ (depending on $n, m, \varphi^{(0)}, \sigma$) whenever $u$, $\varphi$ are sufficiently close to $\varphi^{(0)}$ (depending on $\t$, $\varphi^{(0)}$) and 
$Z = (\xi, \eta) \in {\mathbb R}^{2} \times {\mathbb R}^{n-2}$ is such that ${\mathcal N}_{u}(Z) \geq \alpha$. This estimate plays an important role in our proof of the classification theorem for homogeneous blow-ups (Theorem~\ref{homogrep_lemma}), discussed in Step 3 below. 

\noindent
{\bf Step 3.} This step comprises Sections~\ref{sec:blowupclass_sec}, \ref{sec:homogblowup_sec} and \ref{sec:asymptotics_sec}, and corresponds to step (iii) above in the approach of \cite{Sim93}. The main result in this step is Theorem~\ref{blowupdecay_lemma} which says that a blow-up $w = (w_{j, k})$ (as described in Step 1 above) corresponding to a sequence of energy minimizers $u^{(\nu)}$ having the property that  
\begin{equation}\label{no-large-gaps}
\{Z \, : \, {\mathcal N}_{u^{(\nu)}}(Z) \geq \alpha\} \cap B_{1}(0) \to \{0\} \times {\mathbb R}^{n-2} \cap B_{1}(0)
\end{equation} 
in Hausdorff distance as $\nu \to \infty$ must decay exponentially in $L^{2}$, upon rescaling about the origin, to a unique homogeneous degree $\alpha$ cylindrical $q$-valued function. The proof of this result is based on the uniform integral estimates for $w$ derived in Section~\ref{sec:blowupclass_sec} (by passing to the limit in the estimates in Sections~\ref{sec:L2estimates_sec1} and \ref{sec:L2estimates_sec2}), and classification of blow-ups that are homogeneous of degree $\alpha$ (Theorem~\ref{homogrep_lemma}).

In~\cite{Sim93}, the corresponding classification result (for homogeneous degree 1 blow-ups) uses a PDE argument based on the fact that the (single-valued) blow-ups 
satisfy the Jacobi field equation on ${\rm reg} \, {\mathbf C}^{(0)}$ (which in the case when ${\mathbf C}^{(0)}$ has an $(n-2)$-dimensional axis is just  Laplace's equation on the half-planes that make up ${\mathbf C}^{(0)}$). Here we need a new approach (at least for the case $q \geq 3$) since $u^{(\nu)}$ and hence also $w$ in general have branch points (of multiplicity $< q$) away from the axis of $\varphi^{(0)}$. We proceed as follows: 

First we note the direct consequence of the estimate \eqref{basic_cor} above giving that corresponding to any blow-up $w = (w_{j, k})$ arising from a sequence of minimizers $u^{(\nu)}$ satisfying \eqref{no-large-gaps}, there is a bounded function $\lambda: \, B_{1/2}(0) \cap \{0\} \times {\mathbb R}^{n-2} \to {\mathbb R}^{2}$ such that for $\sigma \in (0, 1/q)$ and each $(j, k)$, 
\begin{equation}\label{basic_cor_blowup}  
\sup_{(0, y) \in \{0\} \times {\mathbb R}^{n-2} \cap B_{1/2}(0)}\,	\int_{B_{1/2}(0)}  \frac{|w_{j,k}(X) - D_x \varphi^{(0)}_j(X) \cdot \lambda(y)|^2}{|X-(0,y)|^{n+2\alpha-\sigma}} dX \leq C 
\end{equation}
where $C = C(n,m,q,\alpha,\varphi^{(0)},\sigma) \in (0,\infty)$. Also, we use our  first variation result Lemma~\ref{fourierest_lemma}  (taken with $u = u^{(\nu)}$) to establish, still for an arbitrary blow-up $w = (w_{j,k})$ corresponding to $(u^{(\nu)})$ satisfying \eqref{no-large-gaps}, a certain mean value property (Lemma~\ref{homogrep1_mvp_lemma}) for the squared $y$-gradient of each of the first two Fourier coefficients 
(corresponding to $D_{x_{1}} \varphi^{(0)}$ and $D_{x_{2}} \varphi^{(0)}$)
of the (single valued) average of the values of $w$. This mean value property and a PDE argument is then used to show that whenever $w$ is homogeneous of degree $\alpha$, the function $\lambda$ in the estimate \eqref{basic_cor_blowup} is linear in $y$.  Consequently,  if we let ${\widetilde w} = (w_{j,k}(X)  - D_x \varphi^{(0)}_j(X) \cdot \lambda(y))$ where $X = (x,y)$, then $\widetilde{w}$ is homogeneous of degree $\alpha$ and is,  up to a multiplicative constant, the blow-up relative to $\varphi^{(\nu)}$ of the sequence $u^{(\nu)}(R_{\nu}X)$ for a suitable sequence of rotations $(R_{\nu})$ of ${\mathbb R}^{n}.$ Moreover,  it follows from \eqref{basic_cor_blowup} that  
\begin{equation}\label{L2decay}
\int_{B_{\rho}((0,y))} |{\widetilde w}|^{2} \leq C\rho^{n + 2\alpha - \sigma}
\end{equation} 
for each $(0, y) \in \{0\} \times {\mathbb R}^{n-2} \cap B_{1/4}(0)$ and $\rho \in (0, 1/4)$ and hence, by virtue of local energy minimality of $\widetilde{w}$ away from $\{0\} \times {\mathbb R}^{n-2}$, also that $\int_{B_{1}(0)} |D\widetilde{w}|^{2} < \infty.$ 

In the second step of the classification, we use the above $L^{2}$ decay estimate \eqref{L2decay} and our energy comparison result Lemma~\ref{competitor_lemma} (taken with $u = u^{(\nu)} \circ R_{\nu}$) to show that for each point $(0, y) \in \{0\} \times {\mathbb R}^{n-2} \cap B_{1/4}(0)$ and each function $\zeta \in C^{1}_{c}(B_{1/4}(0); {\mathbb R})$, 
\begin{equation*} \label{homogrep2_eqn4}
	\int_{B_{1/2}(0)} |D\widetilde{w}|^2 
	\leq \int_{B_{1/2}(0)} |D(\widetilde{w}(X+t \zeta(X) (X-(0,y))))|^2 
\end{equation*}
for all $t \in {\mathbb R}$ with $|t|$ small; in particular $\widetilde{w}$ is energy critical for \emph{radial} domain deformations of the type $X \mapsto X + t\zeta(X)(X - (0, y))$. (Note that $\widetilde{w}$ need not be locally energy minimizing in ${\mathbb R}^{n}$ although it is so in ${\mathbb R}^{n} \setminus \{0\} \times {\mathbb R}^{n-2}$; in fact, interestingly, even energy criticality of $\widetilde{w}$ need not hold for more general, non-radial  deformations of the type $X \mapsto X + t(\zeta^{1}(X), \ldots, \zeta^{n}(X))$,  $\zeta^{j} \in C^{1}_{c}(B_{1/4}(0)).$ See the example discussed in the remark at the end of Section~\ref{sec:homogblowup_sec}.) This allows us to establish that the Almgren frequency function 
\begin{equation*}
	\rho \mapsto N_{\widetilde{w},(0,y)}(\rho) = \frac{\rho^{2-n} \int_{B_{\rho}(0,y)} |D\widetilde{w}|^2}{\rho^{1-n} \int_{\partial B_{\rho}(0,y)} |\widetilde{w}|^2}
\end{equation*}
associated with ${\widetilde w}$ and each point $(0, y) \in \mathbb{R}^{n-2} \cap B_{1/4}(0)$ is monotonically increasing, which implies in particular that $\left( \frac{\sigma}{\rho} \right)^{2 N_{{\widetilde w}, (0, y)}(\rho)} \rho^{-n} \int_{B_{\rho}(0, y)} |\widetilde{w}|^2 \leq  \sigma^{-n} \int_{B_{\sigma}(0, y)} |{\widetilde w}|^2$
for each point $(0, y) \in \{0\} \times {\mathbb R}^{n-2} \cap B_{1/4}(0)$ and each $\sigma, \rho$ with $0 < \sigma \leq \rho \leq 1/2$. Hence by \eqref{basic_cor_blowup} we have that ${\mathcal N}_{\widetilde{w}}(0, y) = \lim_{\rho \to 0^{+}} N_{\widetilde{w}, (0, y)}(\rho) \geq \alpha$ for each point $(0, y) \in \{0\} \times {\mathbb R}^{n-2} \cap B_{1/4}(0).$ Standard arguments using again the monotonicity formula for $N_{{\widetilde w}, (0, y)} (\rho)$ then imply that $\widetilde{w}$ is translation invariant along $\{0\} \times {\mathbb R}^{n-2}$, which leads to the desired classification result for $w$. 

\noindent
{\bf Step 4.} A preliminary excess decay conclusion (Lemma~\ref{premain_lemma}) for $u^{(\nu)}$ for sufficiently large $\nu$, subject to Hypothesis~$(\star\star)$ (of section~\ref{sec:graphical_sec}) on $u^{(\nu)}$ and  $\varphi^{(\nu)}$ among other things, now follows directly from the excess non-concentration estimate of Corollary~\ref{nonconest_cor} and the decay estimate of Theorem~\ref{blowupdecay_lemma} for the blow-ups. As mentioned in Step 1, the final excess decay lemma (Lemma~\ref{main_lemma}) is readily obtained following the corresponding argument in \cite{Wic14} by using Lemma~\ref{premain_lemma} itself to weaken both its hypotheses and conclusions, namely to remove Hypothesis~$(\star\star)$ at the expense of allowing multiple scales  at one of which the excess improvement is asserted. 

Given Lemma~\ref{main_lemma}, to prove the main theorems (including Theorems A and B above)  we proceed as follows: By the work of Almgren, for every average-free, Dirichlet energy minimizing $q$-valued function $u$ and $\mathcal{H}^{n-2}$-a.e. $Z \in \mathcal{B}_{u, q}$, there exists a sequence of radii $\rho_j \downarrow 0$ such that  
\begin{equation*}
	\frac{u(Z + \rho_j X)}{\rho_{j}^{-n/2} \|u\|_{L^2(B_{\rho_j}(Z))}} \rightarrow \varphi^{(0)}
\end{equation*}
locally in $L^{2}$  for some nonzero, average-free, cylindrical, homogeneous degree $\alpha$, Dirichlet energy minimizing $q$-valued function $\varphi^{(0)}$, where $\alpha  = {\mathcal N}_{u}(Z)> 0$.  Thus after scaling enough, $u$ is close to $\varphi^{(0)}$ in $L^2(B_1(0))$. The main theorems then follow in Section~\ref{sec:finale_sec} by iteratively applying Lemma~\ref{main_lemma} considering at each stage the two alternatives it gives, in a manner completely analogous to~\cite{Sim93}.

\section{Preliminaries}\label{sec:preliminaries_sec}

\subsection{Multi-valued functions}  For each integer $q \geq 1$, $\mathcal{A}_q(\mathbb{R}^m)$ denotes the space of unordered $q$-tuples $\{a_{1}, \ldots, a_{q}\}$ of points $a_{j}\in {\mathbb R}^{m}$ (with repetition allowed); more precisely, identifying each $a_{j}$ with the Dirac mass $\llbracket a_{j}\rrbracket$ at $a_{j}$, 
$${\mathcal A}_{q}({\mathbb R}^{m}) = \left\{\sum_{j=1}^q \llbracket a_j \rrbracket \, : \, a_{1}, \ldots, a_{j} \in {\mathbb R}^{m}\right\}.$$ 
We equip $\mathcal{A}_q(\mathbb{R}^m)$ with the metric defined by 
\begin{equation*}
	\mathcal{G}(a,b) = \inf_{\sigma} \left( \sum_{j=1}^q |a_j - b_{\sigma(j)}|^2 \right)^{1/2}
\end{equation*}
for $a = \sum_{j=1}^q \llbracket a_j \rrbracket, b = \sum_{j=1}^q \llbracket b_j \rrbracket \in {\mathcal A}_{q}({\mathbb R}^{m})$, where the infimum is taken over all permutations $\sigma$ of $\{1,2,\ldots,q\}$.  We write  
\begin{equation*}
	|a| = \mathcal{G}(a, q \llbracket 0 \rrbracket) = \left( \sum_{j=1}^q |a_j|^2 \right)^{1/2}
\end{equation*}
for $a = \sum_{j=1}^q \llbracket a_j \rrbracket \in {\mathcal A}_{q}({\mathbb R}^{m})$.  We define the \textit{separation} of a point $a \in \mathcal{A}_q(\mathbb{R}^m)$ by ${\rm sep} \, a = +\infty$ if $a = q\llbracket a_{1}\rrbracket$ for some $a_{1} \in {\mathbb R}^{m}$ (which includes the case $q=1$), and  
\begin{equation} \label{separation_defn}
	\op{sep} a = \min_{i \neq j} |a_i - a_j| 
\end{equation}
if  $a = \sum_{j=1}^N q_j \llbracket a_j \rrbracket$ for some $N \geq 2,$ (uniquely defined) distinct $a_j \in \mathbb{R}^m$ and positive integers $q_j$ with $\sum_{j=1}^N q_j = q$. 

If $m_j,$ $q_{j}$ are positive integers and $a^{(j)} = \sum_{i=1}^{q_j} \llbracket a_i^{(j)} \rrbracket \in {\mathcal A}_{q_j}({\mathbb R}^{m})$ for $j = 1,2,\ldots,N$, then 
we define $\sum_{j}^{N} m_{j}a^{(j)}$ to be the point in ${\mathcal A}_{q}({\mathbb R}^{m})$ where $q = \sum_{j=1}^N m_j q_j$, given by 
$$\sum_{j=1}^{N} m_{j}a^{(j)} =\sum_{j=1}^N \sum_{i=1}^{q_j} m_j \llbracket a_i^{(j)} \rrbracket.$$
(Note that there is no canonical way to add a pair of general elements $a = \sum_{i=1}^q \llbracket a_i \rrbracket$ and $b = \sum_{i=1}^q \llbracket b_i \rrbracket$ in ${\mathcal A}_{q}({\mathbb R}^{m})$ to obtain a unique element $a + b$ in ${\mathcal A}_{q}({\mathbb R}^{m})$; an expression such as $\sum_{i=1}^q \llbracket a_i+b_i \rrbracket$ does not yield a well defined sum $a+ b$ since reordering the $a_i$'s or the $b_i$'s in this expression will yield different elements of ${\mathcal A}_{q}({\mathbb R}^{m})$.)  

If $\lambda \in {\mathbb R}$ and $a = \sum_{i=1}^q \llbracket a_i \rrbracket \in {\mathcal A}_q({\mathbb R}^{m})$, we define $\lambda a$ to be the point in ${\mathcal A}_{q}({\mathbb R}^{m})$ given by 
$$\lambda a = \sum_{i=1}^{q} \llbracket \lambda a_i \rrbracket .$$


A $q$-valued function on a set $\Omega \subseteq \mathbb{R}^n$ is a map $u : \Omega \rightarrow \mathcal{A}_q(\mathbb{R}^m)$. For $X \in \Omega$, we shall denote by $u_{j}(X)$ ($1 \leq j \leq q$) the ``$q$ values of $u(X)$'',  so that  $u(X) = \sum_{j=1}^q \llbracket u_j(X) \rrbracket$ for $X \in \Omega$.
The \textit{average} $u_a : \Omega \rightarrow \mathbb{R}^m$ of $u$ is the single-valued function defined by $u_a(X) = \frac{1}{q} \sum_{j=1}^q u_j(X)$ for $X \in \Omega$.  We say that $u$ is \textit{average-free} if $u_a(X) = 0$ for all $X \in \Omega$.  Of course we have $u(X) = \sum_{j=1}^{q} \llbracket u_a(X) + \left(u_f\right)_{j}(X)\rrbracket$ where $u_f : \Omega \rightarrow \mathcal{A}_q(\mathbb{R}^m)$  is defined by $u_f(X) = \sum_{j=1}^q \llbracket u_j(X) - u_a(X) \rrbracket$ for all $X \in \Omega$.  We shall call $u_f$ the \textit{average-free part} of $u$.  Observe that given any two $q$-valued functions $u, v : \Omega \rightarrow \mathcal{A}_q(\mathbb{R}^m)$ with respective averages $u_a, v_a$ and average-free parts $u_f, v_f$, 
\begin{equation} \label{norm_avg}
	\mathcal{G}(u(X),v(X))^2 = q |u_a(X) - v_a(X)|^2 + \mathcal{G}(u_f(X),v_f(X))^2
\end{equation}
for all $X \in \Omega$. 

If $u \, : \, \Omega \to {\mathcal A}_{q}({\mathbb R}^{m})$, then the graph of $u$ is the subset of $\Omega \times {\mathbb R}^{m}$ defined by 
${\rm graph} \, u = \{(X, u_{j}(X)) \, : \, X \in \Omega, \; 1 \leq j \leq q\}.$ 

If $m_j$ are positive integers and $u^{(j)} : \Omega \rightarrow \mathcal{A}_{q_{j}}(\mathbb{R}^m)$ for $j = 1,2,\ldots,N$, we define $$u = \sum_{j=1}^N m_j u^{(j)}$$ to be the function $u : \Omega \rightarrow \mathcal{A}_q(\mathbb{R}^m)$, where $q = \sum_{j=1}^N m_j q_j$, given by $u(X) = \sum_{j=1}^N \sum_{i=1}^{q_j} m_j \llbracket u_i^{(j)}(X) \rrbracket$ for each $X \in \Omega$, where $u^{(j)}(X) = \sum_{i=1}^{q_j} \llbracket u_i^{(j)}(X) \rrbracket$.

If $\lambda \in {\mathbb R}$ and $u \, : \, \Omega \to {\mathcal A}_{q}({\mathbb R}^{m})$, we define $\lambda u$ to be to be the function $\lambda u \, : \, \Omega \to {\mathcal A}_{q}({\mathbb R}^{m})$ given by $\lambda u(X) = \sum_{i=1}^{q} \llbracket \lambda u_i(X) \rrbracket$ for each $X \in \Omega$.

Let $\Omega \subseteq \mathbb{R}^n$ be open.  Since $\mathcal{A}_q(\mathbb{R}^m)$ is a metric space, we can define the space $C^0(\Omega;\mathcal{A}_q(\mathbb{R}^m))$ of continuous $q$-valued functions on $\Omega$ in the usual way. For each $\mu \in (0,1]$, we define $C^{0,\mu}(\Omega;\mathcal{A}_q(\mathbb{R}^m))$ to be the space of functions $u : \Omega \rightarrow \mathcal{A}_q(\mathbb{R}^m)$ such that for every $\Omega' \subset \subset \Omega$, 
\begin{equation*}
	[u]_{\mu,\Omega'} = \sup_{X,Y \in \Omega', \, X \neq Y} \frac{\mathcal{G}(u(X),u(Y))}{|X-Y|^{\mu}} < \infty. 
\end{equation*}
We say a function $u : \Omega \rightarrow \mathcal{A}_q(\mathbb{R}^m)$ is differentiable at $Y \in \Omega$ if there exists an affine function $\ell_Y : \mathbb{R}^n \rightarrow \mathcal{A}_q(\mathbb{R}^m)$, i.e. a $q$-valued function $\ell_Y$ of the form $\ell_Y(X) = \sum_{j=1}^q \llbracket A^Y_j X + b^Y_j \rrbracket$ for some $m \times n$ real-valued constant matrices $A^Y_j$ and constants $b^Y_j \in \mathbb{R}^m$, such that 
\begin{equation*}
	\lim_{X \rightarrow Y} \frac{\mathcal{G}(u(X),\ell_Y(X))}{|X-Y|} = 0. 
\end{equation*}
$\ell_Y$ is unique if it exists.  The derivative of $u$ at $Y$ is $Du(Y) = \sum_{j=1}^q \llbracket A^Y_j \rrbracket$.  We say that $u$ is strongly differentiable at $Y$ if additionally $A^Y_i = A^Y_j$ whenever $b^Y_i = b^Y_j$. 

Given $1 \leq p < \infty$, $L^p(\Omega;\mathcal{A}_q(\mathbb{R}^m))$ denotes the space of Lebesgue measurable functions $u : \Omega \rightarrow \mathcal{A}_q(\mathbb{R}^m)$ such that $\|u\|_{L^p(\Omega)} \equiv \|\mathcal{G}(u,q \llbracket 0 \rrbracket)\|_{L^p(\Omega)} < \infty$.  We equip $L^p(\Omega;\mathcal{A}_q(\mathbb{R}^m))$ with the metric  
\begin{equation*}
	d(u,v) = \left( \int_{\Omega} \mathcal{G}(u(X),v(X))^p \right)^{1/p}
\end{equation*}
for $u,v \in L^p(\Omega;\mathcal{A}_q(\mathbb{R}^m))$. 

The Sobolev space $W^{1,2}(\Omega;\mathcal{A}_q(\mathbb{R}^m))$ of $q$-valued functions is defined in~\cite[Definitions and terminology 2.1]{Almgren} as follows:  Let $N = N(m,q) \geq 1$ be an integer and fix $\bm{\xi} : \mathcal{A}_q(\mathbb{R}^{m}) \rightarrow \mathbb{R}^N$ a bi-Lipschitz embedding such that $\op{Lip}(\bm{\xi}) \leq 1$ and $\op{Lip}(\bm{\xi}^{-1} |_{\mathcal{Q}}) \leq C(m,q)$ where $\mathcal{Q} = \bm{\xi}(\mathcal{A}_q(\mathbb{R}^{m}))$, see~\cite[Theorem 1.2]{Almgren}.  $W^{1,2}(\Omega;\mathcal{A}_q(\mathbb{R}^m))$ is the space of Lebesgue measurable functions $u : \Omega \rightarrow \mathcal{A}_q(\mathbb{R}^m)$ such that $\bm{\xi} \circ u \in W^{1,2}(\Omega;\mathbb{R}^N)$.  (See~\cite[Definition 0.5]{DeLSpa11} for an equivalent characterization of $W^{1,2}(\Omega;\mathcal{A}_q(\mathbb{R}^m))$ in terms of Sobolev functions taking values in a metric space.) 

Every $u \in W^{1,2}(\Omega;\mathcal{A}_q(\mathbb{R}^{m}))$ is approximately strongly differentiable at $\mathcal{L}^n$-a.e.~$Y \in \Omega$ in the sense that there exists a Lebesgue measurable subset $\Omega_Y \subseteq \Omega$ such that $\Omega_Y$ has density one at $Y$ and $u |_{\Omega_Y}$ is strongly differentiable at $Y$.  The derivative of $u$ at $Y$ is $Du(Y) = D(u |_{\Omega_Y})(Y)$.  Whenever $u \in W^{1,2}(\Omega;\mathcal{A}_q(\mathbb{R}^{m}))$, $u \in L^{2}(\Omega;\mathcal{A}_q(\mathbb{R}^{m}))$ and $Du \in L^{2}(\Omega;\mathcal{A}_q(\mathbb{R}^{m \times n}))$.

\subsection{Multi-valued Dirichlet energy minimizing functions} 

\begin{definition} Let $\Omega \subset {\mathbb R}^{n}$ be open. A $q$-valued function $u \in W^{1,2}(\Omega;\mathcal{A}_q(\mathbb{R}^m))$ is said to be locally Dirichlet energy minimizing in $\Omega$ (or more simply locally energy minimizing) if
\begin{equation*}
	\int_{\Omega} |Du|^2 \leq \int_{\Omega} |Dv|^2 
\end{equation*}
for any open ball $B$ with $\overline{B} \subset \Omega$ and any $v \in W^{1,2}(\Omega;\mathcal{A}_q(\mathbb{R}^m))$ with $v = u$ a.e. on $\Omega \setminus B.$ 
\end{definition}

Almgren developed an existence and regularity theory for multivalued energy minimizing functions as an essential ingredient in his fundamental work~\cite{Almgren} on interior regularity of area minimizing rectifiable currents of arbitrary dimension and codimension $\geq 2$. Almgren's theory, contained in~\cite[Chapter 2]{Almgren}, in particular establishes the following results (see~\cite{DeLSpa11} for a nice, concise exposition of Almgren's work on Dirichlet minimizers as well as for an alternative, ``intrinsic'' viewpoint of the theory):

\begin{enumerate}
\item[(i)]~\cite[Theorem 2.2]{Almgren} (c.f.~\cite[Theorem 0.8]{DeLSpa11}), which asserts, for a bounded $C^1$ domain $\Omega \subset {\mathbb R}^{n}$, the existence in the Sobolev space $W^{1,2}(\Omega, {\mathcal A}_{q}({\mathbb R}^{m}))$ of a $q$-valued energy minimizing function with prescribed $q$-valued boundary data.

\item[(ii)]~\cite[Theorem 2.13]{Almgren} (c.f.~\cite[Theorems 0.9 and 3.9]{DeLSpa11}), according to which a $q$-valued locally energy minimizer is (${\mathcal L}^{n}$ a.e.\ equal to) a locally uniformly H\"{o}lder continuous function in the interior of its domain. More specifically, there exists $\mu = \mu(n,m,q) \in (0,1)$ such that if $u \in W^{1,2}(\Omega;\mathcal{A}_q(\mathbb{R}^m))$ is locally energy minimizing, then there exists $\widetilde{u} \, : \, \Omega \to {\mathcal A}_{q}({\mathbb R}^{m})$ with $\widetilde{u}\res\Omega^{\prime} \in C^{0,\mu}(\Omega';\mathcal{A}_q(\mathbb{R}^m))$ for each open set $\Omega' \subset \subset \Omega$ such that $u(X)= \widetilde{u}(X)$ 
for ${\mathcal L}^{n}$ a.e.\  $X \in \Omega.$ Henceforth we shall identify $u$ with $\widetilde{u}$ and simply write $u$ in place of $\widetilde{u}$. 

For a local Dirichlet minimizer $u$ as above, \cite[Theorem 2.13]{Almgren} furthermore establishes the estimate
\begin{equation*}
	\rho^{\mu} [u]_{\mu,B_{\rho/2}(X_0)} \leq C \rho^{1-n/2} \|Du\|_{L^2(B_{\rho}(X_0))}
\end{equation*}
for each ball $B_{\rho}(X_0)$ with $\overline{B_{\rho}(X_0)} \subseteq \Omega$, where $C = C(n,m,q) \in (0,\infty)$.  For such $u$, it then follows from standard interpolation inequalities and \eqref{freqidentity1} below that
\begin{equation} \label{regestimate}
	\sup_{B_{\rho/2}(X_0)} |u| + \rho^{\mu} [u]_{\mu,B_{\rho/2}(X_0)} + \rho^{1-n/2} \|Du\|_{L^2(B_{\rho/2}(X_0))} 
	\leq C \rho^{-n/2} \|u\|_{L^2(B_{\rho}(X_0))}
\end{equation}
for each ball $B_{\rho}(X_0)$ with $\overline{B_{\rho}(X_0)} \subseteq \Omega$, where $C = C(n,m,q) \in (0,\infty)$.

\item[(iii)]~\cite[Theorem 2.6]{Almgren} (c.f.~\cite[Lemma 3.23]{DeLSpa11}), which implies that if $u \in W^{1,2}(\Omega;\mathcal{A}_q(\mathbb{R}^m))$ is locally energy minimizing, then the average-free part $u_f$ of $u$ is locally energy minimizing. 
\end{enumerate}

Let  $u \in W^{1,2}(\Omega;\mathcal{A}_q(\mathbb{R}^m))$ be locally energy minimizing. We consider several types of (interior) singularities of $u$.   

First, following Almgren~\cite[Theorem 2.14]{Almgren}, we define the singular set of $u$, denoted $\Sigma_{u},$ by $\Sigma_{u} = \Omega \setminus {\rm reg} \, u$ where ${\rm reg} \, u$, the regular set of $u$, is the set of points $Y \in \Omega$ with the property that there is a number $\rho \in (0,\op{dist}(Y,\partial \Omega))$ and single-valued harmonic functions $u_j : B_{\rho}(Y) \rightarrow \mathbb{R}^m$, $j = 1,2,\ldots,q,$ such that 
\begin{enumerate}
\item[(a)] $u(X) = \sum_{j=1}^q \llbracket u_j(X) \rrbracket$ for $X \in B_{\rho}(Y)$ and 
\item[(b)] for $i \neq j$, either $u_i(X)  = u_j(X)$ for each $X \in B_{\rho}(Y)$ or $u_i(X) \neq u_j(X)$ for each $X \in B_{\rho}(Y);$  
\end{enumerate}
such $u_j$ are clearly unique up to relabeling if they exist.

We define $\Sigma_{u, q}$ to be the set of points $Y \in \Sigma_{u}$ such that $u(Y) = q \llbracket u_1(Y) \rrbracket$ for some $u_1(Y) \in \mathbb{R}^m$.  

Of course $\Sigma_{u} = \Sigma_{u_f}$ and $\Sigma_{u, q}= \Sigma_{u_{f}, q}  = \{ X \in \Sigma_{u_{f}} : u_f(X) = q \llbracket 0 \rrbracket \}.$ 

Finally, we define the \emph{branch set} $\mathcal{B}_u$ of $u$ to be the set of points $Y \in \Omega$ such that there is no $\rho \in (0,\op{dist}(Y,\partial \Omega))$ for which $u(X) = \sum_{j=1}^q \llbracket u_j(X) \rrbracket$ for some single-valued harmonic functions $u_j  \, : \, B_{\rho}(Y) \to \mathbb{R}^m$, $j = 1,2,\ldots,q$ and all $X \in B_{\rho}(Y)$.  

Clearly $\Sigma_{u}$ is a closed subset of $\Omega$, and $\Sigma_{u, q}$ and ${\mathcal B}_{u}$ are closed subsets of $\Sigma_{u}.$ 

Since $u$ is energy minimizing, it is not difficult to see that 
${\rm dim}_{\mathcal H} \, (\Sigma_{u} \setminus {\mathcal B}_{u}) \leq n-2$. Almgren's theory established, in addition to (i)-(iii) above,  the sharp Hausdorff dimension bound on ${\mathcal B}_{u}$: 

\begin{enumerate}
\item[(iv)]~\cite[Theorem 2.14]{Almgren} (c.f.~\cite[Theorem 0.11]{DeLSpa11}): If $u \in W^{1,2}(\Omega;\mathcal{A}_q(\mathbb{R}^m))$ is energy minimizing, then $\op{dim}_{\mathcal{H}}(\Sigma_{u}) \leq n-2$.
\end{enumerate}

The main question we address here is what can be said about the asymptotic behaviour of $u$ on approach to ${\mathcal B}_{u}$. Rather than focusing directly on $\mathcal{B}_u$, it is more convenient to study the entire singular set $\Sigma_{u}$ since $\Sigma_{u}$ satisfies the following ``persistence of singularities'' property in relation to convergent sequences of energy minimizers; namely, if $u_k,u \in W^{1,2}(\Omega;\mathcal{A}_q(\mathbb{R}^m))$ are energy minimizing functions such that $u_k \rightarrow u$ locally in $L^2$ (or equivalently, in view of \eqref{regestimate} above, uniformly on compact subsets of $\Omega$), then for each connected compact set $K \subset \Omega$ either $\Sigma_{u_k,q} \cap K = \emptyset$ for sufficiently large $k$ or $\Sigma_{u,q} \cap K \neq \emptyset$ or there exists an open set $U$ with $K \subset U \subseteq \Omega$ and harmonic function $h : U \rightarrow \mathbb{R}$ such that $u(X) = q \llbracket h(X) \rrbracket$ for all $X \in U$.  
This property does not hold if we replace $\Sigma_{u_k,q}$ and $\Sigma_{u,q}$ with $\mathcal{B}_{u_k,q} = \mathcal{B}_{u_k} \cap \Sigma_{u_k,q}$ and $\mathcal{B}_{u,q} = \mathcal{B}_u \cap \Sigma_{u,q}$ respectively.  (Consider for example $u_k, u : \mathbb{R}^2 \rightarrow \mathcal{A}_q(\mathbb{C}) \cong \mathcal{A}_q(\mathbb{R}^2)$ defined by $u_{k}(x_1,x_2) = ((x_1+ix_2)^q - 1/k)^{1/q}$ and $u(x_1,x_2) = \sum_{j=0}^{q-1} \llbracket e^{i 2\pi j/q} (x_1+ix_2) \rrbracket$ for $(x_1,x_2) \in \mathbb{R}^2$.)

\section{Monotonicity of frequency and its preliminary consequences} \label{sec:frequency_sec}

Let $u \in W^{1,2}(\Omega;\mathcal{A}_q(\mathbb{R}^m))$ be energy minimizing, $Y \in \Omega$, and $0 < \rho < \op{dist}(Y,\Omega)$.  Following \cite{Almgren}, we define the frequency function associated with $u$ and $Y$ by
\begin{equation} \label{freqency_definition1}
	N_{u,Y}(\rho) = \frac{D_{u,Y}(\rho)}{H_{u,Y}(\rho)}
\end{equation}
whenever $H_{u,Y}(\rho) \neq 0$, where 
\begin{equation} \label{freqency_definition2}
	D_{u,Y}(\rho) = \rho^{2-n} \int_{B_{\rho}(Y)} |Du|^2 \text{ and } H_{u,Y}(\rho) = \rho^{1-n} \int_{\partial B_{\rho}(Y)} |u|^2. 
\end{equation}

A fundamental discovery of Almgren (\hspace{1sp}\cite[Theorem 2.6]{Almgren}) is that if $u \in W^{1,2}(\Omega; {\mathcal A}_{q}({\mathbb R}^{m}))$  is a stationary point of the energy functional with respect to two types of deformations, namely the ``squash'' and ``squeeze'' deformations (see~\cite[Sections 2.4 and 2.5]{Almgren}), then for any point $Y \in \Omega$, $N_{u,Y}(\rho)$ is a monotone nondecreasing function of $\rho$ on any open interval $I$ on which it is defined.  Almgren's argument, briefly, is as follows: Stationarity of $u$ with respect to  squash and squeeze deformations respectively lead to the identities
\begin{gather}
	\int_{\Omega} |Du|^2 \zeta = -\int_{\Omega} u^{\kappa}_l D_i u^{\kappa}_l D_i \zeta, \label{freqidentity1} \\
	\int_{\Omega} \sum_{i,j=1}^n \left( \tfrac{1}{2} |Du|^2 \delta_{ij} - D_i u^{\kappa}_l D_j u^{\kappa}_l \right) D_i \zeta^j = 0, \label{freqidentity2}
\end{gather}
for all $\zeta,\zeta^1,\zeta^2,\ldots,\zeta^n \in C^1_c(\Omega;\mathbb{R})$, where $u(X) = \sum_{l=1}^q \llbracket u_l(X) \rrbracket,$ 
$u^{\kappa}_l$ denotes the $\kappa$-th coordinate of $u_l$, and we use the convention of summing over repeated indices.  By \eqref{freqidentity1} and \eqref{freqidentity2} with $\zeta^j(X) = \zeta(X) \,(X-Y)$ where $\zeta$ approximates the characteristic function of $B_{\rho}(Y)$ 
\begin{gather}
	\int_{B_{\rho}(Y)} |Du|^2 = \int_{\partial B_{\rho}(Y)} u^{\kappa}_l D_R u^{\kappa}_l, \label{freqidentity3} \\
	\frac{d}{d\rho} \left( \rho^{2-n} \int_{B_{\rho}(Y)} |Du|^2 \right) = 2\rho^{2-n} \int_{\partial B_{\rho}(Y)} |D_R u|^2, \label{freqidentity4} 
\end{gather}
for a.e. $\rho \in (0,\op{dist}(Y,\partial \Omega))$, where $R = R(X) = |X-Y|$
Thus by direct computation
\begin{equation}\label{freq_derivative}
	N_{u,Y}'(\rho) = \frac{2\rho^{1-2n}}{H_{u,Y}(\rho)^2} \left( \left( \int_{\partial B_{\rho}(Y)} |u|^2 \right) 
		\left( \int_{\partial B_{\rho}(Y)} R^2 |D_R u|^2 \right) - \left( \int_{\partial B_{\rho}(Y)} R u^{\kappa} D_R u^{\kappa} \right)^2 \right) 
\end{equation}
Thus $N'_{u,Y}(\rho) \geq 0$ for a.e. $\rho \in (0,\op{dist}(Y,\partial \Omega))$ by the Cauchy-Schwartz inequality.  Hence for any $Y \in \Omega$ and any $s$ and $t$ with $0 < s < t < \op{dist}(Y,\Omega)$, $N_{u,Y}(\rho)$ is monotonically increasing for $\rho \in (s,t)$ whenever $H_{u,Y}(\rho) \neq 0$ for all $\rho \in (s,t)$.  (See also Propositions 3.1 and 3.2 and Theorem 3.15 of~\cite{DeLSpa11} for details.)  Thus if for some $t \in (0,\op{dist}(Y,\Omega))$, $H_{u,Y}(\rho) \neq 0$ for all $\rho \in (0,t)$, then the limit
\begin{equation*}
	\mathcal{N}_u(Y) = \lim_{\rho \downarrow 0} N_{u,Y}(\rho)
\end{equation*}
exists. We call $\mathcal{N}_u(Y)$ the frequency of $u$ at $Y$ whenever it exists. 

The monotonicity of $N_{u,Y}$ has the following standard consequences.

\begin{lemma} \label{consequence_lemma}
Let $\Omega \subset \mathbb{R}^n$ be a connected open set and $u \in W^{1,2}(\Omega;\mathcal{A}_q(\mathbb{R}^m))$ be a non-zero energy minimizing function.  Then: 
\begin{enumerate} 
\item[(a)] $H_{u,Y}(\rho) \neq 0$ for each $Y \in \Omega$ and $\rho \in (0,\op{dist}(Y,\partial \Omega))$, $\mathcal{N}_u(Y)$ exists for each $Y \in \Sigma_u$, and
\begin{equation} \label{consequence_eqn1}
	\left( \frac{\sigma}{\rho} \right)^{2N_{u,Y}(\rho)} H_{u,Y}(\rho) \leq H_{u,Y}(\sigma) 
	\leq \left( \frac{\sigma}{\rho} \right)^{2\mathcal{N}_u(Y)} H_{u,Y}(\rho)
\end{equation}
for $0 < \sigma < \rho < \op{dist}(Y,\partial \Omega)$.  
\item[(b)] For each $Y \in \Omega$ and $0 < \sigma < \rho < \op{dist}(Y,\partial \Omega)$, 
\begin{equation} \label{consequence_eqn2} 
	\left( \frac{\sigma}{\rho} \right)^{2N_{u,Y}(\rho)} \rho^{-n} \int_{B_{\rho}(Y)} |u|^2 
	\leq \sigma^{-n} \int_{B_{\sigma}(Y)} |u|^2
	\leq \left( \frac{\sigma}{\rho} \right)^{2\mathcal{N}_u(Y)} \rho^{-n} \int_{B_{\rho}(Y)} |u|^2.
\end{equation}
\item[(c)] ${\mathcal N}$ is upper semi-continuous in the sense that if $u_k,u \in W^{1,2}(\Omega;\mathcal{A}_q(\mathbb{R}^m))$ are energy minimizing functions such that $u_k \rightarrow u$ in $L^2(\Omega';\mathcal{A}_q(\mathbb{R}^m))$ for all $\Omega' \subset \subset \Omega$, and if $Y_k, Y \in \Omega$ such that $Y_k \rightarrow Y$, then $\mathcal{N}_u(Y) \geq \limsup_{k \rightarrow \infty} \mathcal{N}_{u_k}(Y_k)$.
\item[(d)] If $u$ is average-free and $Y \in \Sigma_{u,q}$, then $\mathcal{N}_u(Y) \geq \mu$ for some $\mu = \mu(n,m,q) \in (0,1)$.
\item[(e)] For every $X$ such that $0 < \op{dist}(X,\Sigma_{u,q}) < \op{dist}(X,\partial \Omega)/4$, 
\begin{align} \label{consequence_eqn3}
	&|u(X)| + d(X)^{\mu} [u]_{\mu,B_{d(X)/2}(X)} + d(X)^{1-n/2} \|Du\|_{L^2(B_{d(X)/2}(X))} \\ 
	&\hspace{10mm} \leq 4^{\mathcal{N}_u(Y)} C d(X)^{\mathcal{N}_u(Y)} \rho^{-n/2-\mathcal{N}_u(Y)} \|u\|_{L^2(\Omega)} \nonumber
\end{align}
where $d(X) = \op{dist}(X,\Sigma_{u,q})$, $Y \in \Sigma_{u, q}$ with $|X - Y| = d(X)$, $\rho = \op{dist}(X,\partial \Omega)$, $\mu = \mu(n,m,q) \in (0,1)$, and $C = C(n,m,q) \in (0,\infty)$.
\end{enumerate}
\end{lemma}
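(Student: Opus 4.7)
The proof rests on the elementary ODE
\begin{equation*}
  (\log H_{u,Y})'(\rho) = \frac{2\,N_{u,Y}(\rho)}{\rho},
\end{equation*}
valid wherever $H_{u,Y}(\rho) > 0$. This follows by writing $H_{u,Y}(\rho) = \int_{S^{n-1}} |u(Y+\rho\omega)|^2\,d\omega$, differentiating in $\rho$, and using \eqref{freqidentity3} to identify the resulting boundary integral $\rho^{1-n}\int_{\partial B_\rho(Y)} u\cdot D_R u$ with $\rho^{-1}D_{u,Y}(\rho)$. Given the already-established monotonicity of $N_{u,Y}(\cdot)$, integrating this ODE from $\sigma$ to $\rho$ and inserting the two-sided bound $\mathcal{N}_u(Y) \leq N_{u,Y}(t) \leq N_{u,Y}(\rho)$ for $t\in(\sigma,\rho)$ yields \eqref{consequence_eqn1} at once. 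For the non-vanishing of $H_{u,Y}$, if $H_{u,Y}(\rho_0)=0$ then $u\equiv 0$ on $\partial B_{\rho_0}(Y)$, and inserting the zero extension into the minimizing inequality would force $u\equiv 0$ on $B_{\rho_0}(Y)$; the harmonic average $u_a$ would then vanish on the connected $\Omega$, and repeating the frequency argument at base points in $B_{\rho_0}(Y)$ would force $u_f$ to vanish everywhere too, contradicting $u\not\equiv 0$. Existence of $\mathcal{N}_u(Y)$ for $Y\in\Sigma_u$ then follows from the monotonicity.

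For (b), the plan is to pass from $H$ to $I_{u,Y}(\tau) := \tau^{-n}\int_{B_\tau(Y)}|u|^2$ via the radial identity $\int_{B_\tau(Y)}|u|^2 = \int_0^\tau r^{n-1}H_{u,Y}(r)\,dr$, which gives
\begin{equation*}
  \frac{d}{d\tau}\bigl(\tau^{-2\alpha}\,I_{u,Y}(\tau)\bigr) = \tau^{-2\alpha-1}\bigl[H_{u,Y}(\tau) - (n+2\alpha)\,I_{u,Y}(\tau)\bigr]
\end{equation*}
for any real $\alpha$. Feeding in the upper comparison $H_{u,Y}(r)\leq (r/\tau)^{2\mathcal{N}_u(Y)}H_{u,Y}(\tau)$ from (a) yields $I_{u,Y}(\tau) \leq H_{u,Y}(\tau)/(n+2\mathcal{N}_u(Y))$, so taking $\alpha = \mathcal{N}_u(Y)$ makes the displayed derivative nonnegative and delivers the upper bound in \eqref{consequence_eqn2}. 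The mirror computation, using the lower comparison $H_{u,Y}(r)\geq (r/\tau)^{2N_{u,Y}(\tau)}H_{u,Y}(\tau)$ (again from (a)), produces $H_{u,Y}(\tau)\leq (n+2N_{u,Y}(\tau))I_{u,Y}(\tau)$, which together with the monotonicity $N_{u,Y}(\tau)\leq N_{u,Y}(\rho)$ for $\tau\leq\rho$ gives $H_{u,Y}(\tau)\leq (n+2N_{u,Y}(\rho))I_{u,Y}(\tau)$ on $(0,\rho]$; taking $\alpha = N_{u,Y}(\rho)$ then makes the derivative nonpositive on $(0,\rho]$ and yields the lower bound.

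Parts (c) and (d) are short once these tools are in place. For (c), Almgren's interior estimate \eqref{regestimate} upgrades $L^2_{\mathrm{loc}}$ convergence $u_k\to u$ to $C^0_{\mathrm{loc}}$ and to weak $W^{1,2}_{\mathrm{loc}}$ convergence, so by a standard slicing argument $N_{u_k,Y_k}(\rho)\to N_{u,Y}(\rho)$ for a.e.\ $\rho > 0$; applying monotonicity $\mathcal{N}_{u_k}(Y_k)\leq N_{u_k,Y_k}(\rho)$ and letting $k\to\infty$ and then $\rho\downarrow 0$ gives the semicontinuity. For (d), the universal H\"older estimate in (ii) yields $H_{u,Y}(\sigma)\leq C\sigma^{2\mu}$ for $Y\in\Sigma_{u,q}$ and some $\mu = \mu(n,m,q)\in(0,1)$; were $N_{u,Y}(\rho)<\mu$ possible, the lower bound in \eqref{consequence_eqn1} would force $H_{u,Y}(\sigma)\geq c\,\sigma^{2N_{u,Y}(\rho)}$ with $2N_{u,Y}(\rho) < 2\mu$, contradicting the H\"older bound as $\sigma\downarrow 0$; hence $\mathcal{N}_u(Y)\geq\mu$. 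Finally, (e) follows by applying \eqref{regestimate} on $B_{d(X)/2}(X) \subset B_{d(X)}(X) \subset B_{2d(X)}(Y)$ for a closest $Y\in\Sigma_{u,q}$, and then converting $\|u\|_{L^2(B_{2d(X)}(Y))}$ into $\|u\|_{L^2(\Omega)}$ via the upper bound of (b) applied at $Y$ between scales $2d(X)$ and $\rho/2$ (legitimate since $\op{dist}(Y,\partial\Omega) > 3\rho/4$), which produces the advertised $d(X)^{\mathcal{N}_u(Y)}\rho^{-n/2-\mathcal{N}_u(Y)}$ rate.

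The main obstacle I anticipate is the non-vanishing claim in (a): multi-valuedness rules out a direct appeal to classical unique continuation, forcing a split into the harmonic average (real-analytic) and the energy-minimizing average-free part, with propagation of the vanishing of the latter through the monotonicity/ODE apparatus itself. Everything else reduces to bookkeeping around the two identities $(\log H_{u,Y})' = 2N_{u,Y}/\rho$ and $\int_{B_\rho(Y)}|u|^2 = \int_0^\rho r^{n-1}H_{u,Y}(r)\,dr$.
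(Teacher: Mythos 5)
Your proof follows essentially the same route as the paper's (very terse) proof: the log-derivative identity $N_{u,Y}(\rho) = \rho H'_{u,Y}(\rho)/(2H_{u,Y}(\rho))$ for (a), integration for (b), the compactness lemma (Lemma~\ref{compactness_lemma}) for (c), the universal H\"older exponent for (d), and \eqref{regestimate} plus (b) for (e). Your explicit ODE argument for (b) via $\int_{B_\tau(Y)}|u|^2 = \int_0^\tau r^{n-1}H_{u,Y}(r)\,dr$, and your use of (a) rather than (b) in (d), are both fine and essentially equivalent to what the paper sketches, and your part (e) reproduces the paper's chain of inequalities.

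The one step where the plan as written does not quite close is the non-vanishing claim in (a). After concluding that $u \equiv q\llbracket 0\rrbracket$ on $B_{\rho_0}(Y)$, you propose to ``repeat the frequency argument at base points in $B_{\rho_0}(Y)$'' to push $u_f \equiv 0$ out to all of $\Omega$. But the log-derivative argument at a base point $Y' \in B_{\rho_0}(Y)$ only controls $H_{u_f,Y'}(\sigma)$ for $\sigma < \op{dist}(Y',\partial\Omega)$, so this single step yields vanishing only on $\bigcup_{Y'\in B_{\rho_0}(Y)} B_{\op{dist}(Y',\partial\Omega)}(Y')$, which need not exhaust a thin connected $\Omega$. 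The clean fix is a connectedness argument: let $E = \{Y' \in \Omega : u \equiv q\llbracket 0\rrbracket \text{ near } Y'\}$, which is open and nonempty; to see $E$ is relatively closed, take a limit point $Y''$ of $E$ in $\Omega$, pick $Y'\in E$ with $|Y'-Y''|$ small, set $\rho^* = \inf\{\sigma>0 : H_{u,Y'}(\sigma)>0\}$, and observe that if $\rho^*<\op{dist}(Y',\partial\Omega)$ then $\log H_{u,Y'}(\rho) - \log H_{u,Y'}(\rho^*+\delta) = \int_{\rho^*+\delta}^{\rho} 2N_{u,Y'}(t)\,t^{-1}\,dt$ stays bounded as $\delta\downarrow 0$ (since $0\le N_{u,Y'}\le N_{u,Y'}(\rho)$) while the left side diverges; hence $u \equiv q\llbracket 0\rrbracket$ on $B_{\op{dist}(Y',\partial\Omega)}(Y')\ni Y''$, giving $Y''\in E$ and then $E=\Omega$. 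Note also that you need not split into $u_a$ and $u_f$ at all: the frequency monotonicity in Section~\ref{sec:frequency_sec} holds for any energy minimizer (average-free or not), so you can run this directly with $u$ and avoid invoking unique continuation for $u_a$ separately.
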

\begin{proof} 
(a) follows from the monotonicity of $N_{u,Y}$ and the fact that $N_{u,Y}(\rho) = \rho H'_{u,Y}(\rho)/2H_{u,Y}(\rho)$.  (b) follows from (a) via integration.  (c) follows from monotonicity of $N_{u,Y}$ and the continuity of Dirichlet energy under uniform limits (see Lemma~\ref{compactness_lemma} in the appendix).  (d) follows from (b) and the fact that there exists $\mu = \mu(n,m,q) \in (0,1)$ such that $u \in C^{0,\mu}(\Omega';\mathcal{A}_q(\mathbb{R}^m))$ for every Dirichlet minimizing $q$-valued function $u \in W^{1,2}(\Omega;\mathcal{A}_q(\mathbb{R}^m))$ and $\Omega' \subset \subset \Omega$.  (e) is a consequence of \eqref{regestimate} and (b):
\begin{align*}
	&|u(X)| + d(X)^{\mu} [u]_{\mu,B_{d(X)/4}(X)} + d(X)^{1-n/2} \|Du\|_{L^2(B_{d(X)/4}(X))} 
	\\&\hspace{10mm} \leq C d(X)^{-n/2} \|u\|_{L^2(B_{d(X)/2}(X))} 
	\leq C d(X)^{-n/2} \|u\|_{L^2(B_{2d(X)}(Y))} 
	\\&\hspace{10mm} \leq C \left( \frac{4d(X)}{\rho} \right)^{\mathcal{N}_u(Y)} \rho^{-n/2} \|u\|_{L^2(B_{\rho/2}(Y))} 
	\leq C \left( \frac{4d(X)}{\rho} \right)^{\mathcal{N}_u(Y)} \rho^{-n/2} \|u\|_{L^2(\Omega)} 
\end{align*}
where $Y \in \Sigma_{u,q}$ such that $d(X) = |X-Y|$ and $C = C(n,m,q) \in (0,\infty)$.  
\end{proof}

Now fix a non-zero, average-free, energy minimizing function $u \in W^{1,2}(\Omega;\mathcal{A}_q(\mathbb{R}^m))$ and $Y \in \Sigma_{u,q}$.  Let
\begin{equation*}
	u_{Y,\rho}(X) = \frac{u(Y+\rho X)}{\rho^{-n/2} \|u\|_{L^2(B_{\rho}(Y))}}
\end{equation*}
for $0 < \rho < \op{dist}(Y,\partial \Omega)$. If $\rho_k$ is a sequence of numbers with $\rho_k \rightarrow 0^+$, it follows from \eqref{regestimate} and \eqref{consequence_eqn2} that there exists an average-free function $\varphi \in W^{1,2}_{\rm loc}(\mathbb{R}^n;\mathcal{A}_q(\mathbb{R}^m)) \cap C^{0,\mu}_{\rm loc}(\mathbb{R}^n;\mathcal{A}_q(\mathbb{R}^m))$ such that after passing to a subsequence, $u_{Y,\rho_k} \rightarrow \varphi$ uniformly on $B_{\sigma}(0)$ for every $\sigma > 0$.  We say $\varphi$ is a \textit{blow up of $u$ at $Y$}.  It follows from \eqref{consequence_eqn2} that $\varphi$ is non-zero. By~\cite[Theorem 2.13]{Almgren}, $\varphi$ is energy minimizing on $\mathbb{R}^n$, $N_{\varphi,0}(\rho) = \mathcal{N}_{\varphi}(0) = \mathcal{N}_u(Y)$ for each $\rho > 0$ and $\varphi$ is homogeneous of degree $\mathcal{N}_u(Y)$, i.e. $\varphi(\lambda X) = \sum_{j=1}^q \llbracket \lambda^{\mathcal{N}_u(Y)} \varphi_j(X) \rrbracket$ for every $X \in \mathbb{R}^n$ and $\lambda > 0$, where we write $\varphi(X) = \sum_{j=1}^q \llbracket \varphi_j(X) \rrbracket$.


Suppose $\varphi \in W^{1,2}(\mathbb{R}^n;\mathcal{A}_q(\mathbb{R}^m))$ is any homogeneous degree $\alpha$ ($\geq \mu$), average-free, energy minimizing function.  For each $Y \in \mathbb{R}^n$, it follows from Lemma~\ref{consequence_lemma}(c) (applied with $u_k = u = \varphi$ and $Y_k = t_k Y$ for some sequence $t_k \rightarrow 0^+$) that $\mathcal{N}_{\varphi}(Y) \leq \mathcal{N}_{\varphi}(0)$.  Let
\begin{equation*}
	S(\varphi) = \{ Y \in \mathbb{R}^n : \mathcal{N}_{\varphi}(Y) = \mathcal{N}_{\varphi}(0) \}. 
\end{equation*}
It follows from~\cite[Theorem 2.14]{Almgren} that $S(\varphi)$ is a linear subspace in $\mathbb{R}^n$ and that $\varphi(X) = \varphi(X + Y)$ for all $Y \in S(\varphi)$.  Since the only average-free, energy minimizing functions in $W_{\rm loc}^{1,2}(\mathbb{R};\mathcal{A}_q(\mathbb{R}^m))$ are constant functions, $\dim S(\varphi) \leq n-2$;  if ${\rm dim} \, S(\varphi) = n-2$, we say that $\varphi$ is \emph{cylindrical}.

Let $u \in W^{1,2}(\Omega;\mathcal{A}_q(\mathbb{R}^m))$ be a nonzero, average-free, energy minimizing function.  For $j = 0,1,2,\ldots,n-2$, define $\Sigma^{(j)}_{u,q}$ to be the set of points $Y \in \Sigma_{u,q}$ such that $\dim S(\varphi) \leq j$ for every blow up $\varphi$ of $u$ at $Y$.  Observe that
\begin{equation*}
	\Sigma_{u,q} = \Sigma_{u,q}^{(n-2)} \supseteq \Sigma_{u,q}^{(n-3)} \supseteq \cdots \supseteq \Sigma_{u,q}^{(1)} \supseteq \Sigma_{u,q}^{(0)}. 
\end{equation*}

\begin{lemma} \label{alm_fed_lemma}
Let $u \in W^{1,2}(\Omega;\mathcal{A}_q(\mathbb{R}^m))$ be a nonzero, average-free, energy minimizing function.  For each $j = 1,2,\ldots,n$, $\Sigma_{u,q}^{(j)}$ has Hausdorff dimension at most $j$.  For $\alpha > 0$, $\{ Y \in \Sigma_{u,q}^{(0)} : \mathcal{N}_u(Y) = \alpha \}$ is discrete.
\end{lemma}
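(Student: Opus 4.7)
The plan is to deduce both conclusions from a single ``spine containment'' lemma, applying it directly for the discreteness assertion and through a Federer-type dimension reduction for the Hausdorff bound.

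\emph{Spine containment.} I would first establish: if $Y \in \Sigma_{u,q}$, $\rho_k \downarrow 0$ with $u_{Y,\rho_k} \to \varphi$ locally uniformly (so $\varphi$ is a blow-up of $u$ at $Y$, homogeneous of degree $\mathcal{N}_u(Y)$), and $Y_k = Y + \rho_k z_k \in \Sigma_{u,q}$ with $z_k \to z$ and $\mathcal{N}_u(Y_k) \to \mathcal{N}_u(Y)$, then $z \in S(\varphi)$. The argument is short. Direct computation using translation of the base point and linear scaling of the argument gives $\mathcal{N}_{u_{Y,\rho_k}}(z_k) = \mathcal{N}_u(Y_k)$. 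Applying Lemma~\ref{consequence_lemma}(c) to the convergent pair $u_{Y,\rho_k} \to \varphi$ at $z_k \to z$ then yields
\[
\mathcal{N}_\varphi(z) \;\geq\; \limsup_{k\to\infty} \mathcal{N}_{u_{Y,\rho_k}}(z_k) \;=\; \lim_{k\to\infty} \mathcal{N}_u(Y_k) \;=\; \mathcal{N}_u(Y) \;=\; \mathcal{N}_\varphi(0),
\]
while the opposite inequality $\mathcal{N}_\varphi(z) \leq \mathcal{N}_\varphi(0)$ is the one already noted in the paragraph preceding the lemma (by Lemma~\ref{consequence_lemma}(c) applied along rescalings of $\varphi$). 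Equality forces $z \in S(\varphi)$.

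\emph{Discreteness.} Suppose for contradiction that $S_\alpha := \{Y \in \Sigma_{u,q}^{(0)} : \mathcal{N}_u(Y) = \alpha\}$ has an accumulation point $Y_\star \in S_\alpha$, with $Y_k \in S_\alpha \setminus \{Y_\star\}$ and $Y_k \to Y_\star$. Set $\rho_k = |Y_k - Y_\star|$ and $z_k = (Y_k - Y_\star)/\rho_k$, so $|z_k| = 1$. Using Almgren's compactness recalled just before the lemma, after passing to a subsequence $u_{Y_\star,\rho_k} \to \varphi$ locally uniformly for some blow-up $\varphi$ at $Y_\star$ and $z_k \to z$ with $|z| = 1$. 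Since $\mathcal{N}_u(Y_k) = \mathcal{N}_u(Y_\star) = \alpha$ for all $k$, the spine containment lemma gives $z \in S(\varphi)$; hence $\dim S(\varphi) \geq 1$, contradicting $Y_\star \in \Sigma_{u,q}^{(0)}$.

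\emph{Dimension bound.} For $\dim_{\mathcal{H}} \Sigma_{u,q}^{(j)} \leq j$ I would run Federer-type dimension reduction. Assume $\mathcal{H}^{j+\delta}(\Sigma_{u,q}^{(j)}) > 0$ for some $\delta > 0$, and restrict to a Borel subset $A \subseteq \Sigma_{u,q}^{(j)}$ with $0 < \mathcal{H}^{j+\delta}(A) < \infty$. Since $\mathcal{N}_u$ is upper semicontinuous, it is Borel on $A$, and by Lusin's theorem applied to the Radon measure $\mathcal{H}^{j+\delta} \res A$ there is a closed $A' \subseteq A$ with $\mathcal{H}^{j+\delta}(A') > 0$ on which $\mathcal{N}_u$ is continuous. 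Select $Y_\star \in A'$ of positive upper $(j{+}\delta)$-density for $A'$ and scales $\rho_k \downarrow 0$ with $\mathcal{H}^{j+\delta}(A' \cap B_{\rho_k}(Y_\star)) \geq c \rho_k^{j+\delta}$ for some $c > 0$. Pass to a subsequence along which $u_{Y_\star,\rho_k} \to \varphi$ locally uniformly and the rescaled sets $A_k := \rho_k^{-1}(A' - Y_\star) \cap \overline{B_1(0)}$ converge in Hausdorff distance to a closed set $A_\infty \subseteq \overline{B_1(0)}$; a standard Federer-type covering argument shows $\mathcal{H}^{j+\delta}(A_\infty) > 0$. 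For any $z \in A_\infty$ pick $z_k \in A_k$ with $z_k \to z$; then $Y_k := Y_\star + \rho_k z_k \in A'$, $Y_k \to Y_\star$, and continuity of $\mathcal{N}_u$ on $A'$ gives $\mathcal{N}_u(Y_k) \to \mathcal{N}_u(Y_\star)$. The spine containment lemma then places $z \in S(\varphi)$, so $A_\infty \subseteq S(\varphi)$ and $\dim_{\mathcal{H}} S(\varphi) \geq j + \delta > j$, contradicting $Y_\star \in \Sigma_{u,q}^{(j)}$.

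The step requiring most care is arranging a sequence of singular points with converging frequencies: upper semicontinuity alone does not force $\mathcal{N}_u(Y_k) \to \mathcal{N}_u(Y_\star)$ along arbitrary approach, and this gap is closed by the Lusin-theorem reduction to a closed set of continuity together with the standard density-point extraction. The covering estimate producing positive $\mathcal{H}^{j+\delta}$-mass of the Hausdorff limit $A_\infty$ from the uniform lower bounds on $\mathcal{H}^{j+\delta}(A_k)$ is by now textbook in Federer-style reduction arguments, so I would cite it rather than reproduce it.
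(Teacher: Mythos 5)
The paper does not actually spell out a proof of this lemma; it only cites Almgren \cite[Theorem~2.26]{Almgren} and Simon \cite[\S3.4, Lemma~1]{Sim96}, so any detailed argument is necessarily ``a different route.''  Your spine-containment lemma --- using the scale invariance $\mathcal{N}_{u_{Y,\rho_k}}(z_k)=\mathcal{N}_u(Y+\rho_k z_k)$, Lemma~\ref{consequence_lemma}(c), and the reverse inequality $\mathcal{N}_\varphi(z)\leq\mathcal{N}_\varphi(0)$ --- is exactly the engine behind those cited arguments, and your discreteness proof from it is clean and complete.  The Federer dimension-reduction step is likewise in the right spirit, and you correctly identify the delicate point: upper semicontinuity alone gives only $\limsup_k\mathcal{N}_u(Y_k)\leq\mathcal{N}_u(Y_\star)$, whereas the spine argument needs the opposite inequality along the chosen sequence.

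The place where your write-up leaves a genuine gap is the Lusin step.  To apply Lusin's theorem to $\mathcal{H}^{j+\delta}\res A$ you need $A$ to be a Borel (or at least $\mathcal{H}^{j+\delta}$-measurable) subset of $\Sigma_{u,q}^{(j)}$ with $0<\mathcal{H}^{j+\delta}(A)<\infty$; and to produce such an $A$ from $\mathcal{H}^{j+\delta}(\Sigma_{u,q}^{(j)})>0$ one needs $\Sigma_{u,q}^{(j)}$ itself to be Borel or analytic, which you assert implicitly but do not verify.  The set $\Sigma_{u,q}^{(j)}$ is defined by a universal quantifier over all blow-ups at $Y$, so measurability is not immediate.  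It can be established --- a diagonal argument shows that $Y\mapsto\max\{\dim S(\varphi):\varphi\text{ a blow-up of }u\text{ at }Y\}$ is upper semicontinuous on $\Sigma_{u,q}$ (if $Y_k\to Y$ and $\varphi_k$ is a blow-up at $Y_k$ with $\dim S(\varphi_k)\geq j+1$, then after passing to a subsequence with $|Y_k-Y|\leq k^{-2}$ one finds scales $\tilde\rho_k\in[k|Y_k-Y|,1/k]$ with $u_{Y,\tilde\rho_k}\to\varphi$ and $\dim S(\varphi)\geq j+1$), whence $\Sigma_{u,q}^{(j)}$ is relatively open in the closed set $\Sigma_{u,q}$, hence Borel --- but this needs to be said.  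The classical Almgren/Simon presentation sidesteps the issue by using the upper-density lemma for \emph{arbitrary} sets together with a countable frequency partition $\Sigma^{(j)}=\bigcup_i\Sigma^{(j)}\cap\{\mathcal{N}_u\in[i/p,(i+1)/p)\}$ to control the oscillation of $\mathcal{N}_u$, at the cost of letting the base point of the blow-up vary with $p$ and taking a further limit.  Your Lusin route is arguably cleaner once measurability is in hand, but you should either supply the measurability argument or fall back on the reference's frequency-partition device; as written, this step is a real but repairable gap.
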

\begin{proof}
The well-known argument, due to Almgren (\cite[Theorem 2.26]{Almgren}) in the context of stationary integral varifolds, is based on upper semi-continuity of frequency and the fact that $\mathcal{N}_{\varphi}(0) = \mathcal{N}_u(Y)$ for any blow-up $\varphi$ of $u$ at $Y$.  See the proof of Lemma 1 in Section 3.4 of~\cite{Sim96} for
a nice, concise presentation of the argument in the context of energy minimizing maps.
\end{proof}

We shall later need the following result,  which is the analogue of ~\cite[Lemma~2.4]{Sim93}. 

\begin{lemma} \label{lemma2_4}
Let $K >0$.  There are functions $\delta : (0,1) \rightarrow (0,1)$ and $R : (0,1) \rightarrow (3,\infty)$ depending on $n$, $m$, $q$, and $K$ such that if $\alpha \in (0, K]$, $\epsilon \in (0, 1)$ and $u \in W^{1,2}(\Omega;\mathcal{A}_q(\mathbb{R}^m))$ is an average-free energy minimizing function with $B_{R(\varepsilon)}(0) \subseteq \Omega$, $0 \in \Sigma_{u,q}$, 
\begin{equation*}
	N_{u,0}(R(\varepsilon)) - \alpha < \delta(\varepsilon),
\end{equation*}
and if $Y \in \Sigma_{u,q} \cap \ B_1(0)$ and $\mathcal{N}_u(Y) \geq \alpha$, then the following hold:
\begin{enumerate}
\item[(i)] $0 \leq N_{u,Y}(\rho) - \alpha < \varepsilon^2$ for $\rho \in (0,R(\varepsilon)-3)$. 
\item[(ii)] For every $\rho \in (0,1]$, there is a energy minimizing average-free function $\varphi \in W^{1,2}(\mathbb{R}^n;\mathcal{A}_q(\mathbb{R}^m))$ (depending on $\rho$) that is homogeneous (of degree $\mathcal{N}_{\varphi}(0)$) such that $|\mathcal{N}_{\varphi}(0) - \alpha| < \varepsilon^2$ and
\begin{equation*}
	\int_{B_1(0)} \mathcal{G}(u_{Y,\rho},\varphi)^2 < \varepsilon^2. 
\end{equation*}
\item[(iii)] For every $\rho \in (0,1]$, either there is a non-zero energy minimizing average-free function $\varphi \in W^{1,2}(\mathbb{R}^n;\mathcal{A}_q(\mathbb{R}^m))$ (depending on $\rho$) such that $\varphi$ is homogeneous (of degree $\mathcal{N}_{\varphi}(0)$), $|\mathcal{N}_{\varphi}(0) - \alpha| < \varepsilon^2$, ${\rm dim} \, S(\varphi) = n-2$, and 
\begin{equation*}
	\int_{B_1(0)} \mathcal{G}(u_{Y,\rho},\varphi)^2 < \varepsilon^2 
\end{equation*}
or there is an $(n-3)$-dimensional subspace $L$ of $\mathbb{R}^n$ such that 
\begin{equation*}
	\{ X \in \Sigma_{u_{Y,\rho}} \cap \overline{B_1(0)}: \mathcal{N}_{u_{Y,\rho}}(X) \geq \alpha \} \subset \{ X \in \mathbb{R}^n : \op{dist}(X,L) < \varepsilon \}. 
\end{equation*}
\end{enumerate}
\end{lemma}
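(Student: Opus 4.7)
The plan is to prove Lemma~\ref{lemma2_4} by a standard contradiction-and-compactness argument, using frequency monotonicity and the facts collected in Lemma~\ref{consequence_lemma}. Suppose the conclusion fails; then for some $\varepsilon_0 \in (0,1)$ there are sequences $\delta_k \downarrow 0$, $R_k \uparrow \infty$, $\alpha_k \in (0, K]$, average-free energy minimizers $u^{(k)}$ on $B_{R_k}(0)$ with $0 \in \Sigma_{u^{(k)}, q}$ and $N_{u^{(k)}, 0}(R_k) < \alpha_k + \delta_k$, and points $Y^{(k)} \in \Sigma_{u^{(k)}, q} \cap B_1(0)$ with $\mathcal{N}_{u^{(k)}}(Y^{(k)}) \geq \alpha_k$, for which one of (i), (ii), (iii) fails. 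Pass to a subsequence so that $\alpha_k \to \alpha \in (0, K]$ and $Y^{(k)} \to Y^\infty \in \overline{B_1(0)}$; note that the hypothesis together with Lemma~\ref{consequence_lemma}(d) forces $\alpha_k \geq \mu - \delta_k$, so $\alpha$ is bounded away from $0$ provided $\delta(\varepsilon) \leq \mu/2$ in the final choice.

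After normalizing $\widetilde{u}^{(k)} = u^{(k)}/\|u^{(k)}\|_{L^2(B_1(0))}$ (nonzero by Lemma~\ref{consequence_lemma}(a)), Lemma~\ref{consequence_lemma}(b) together with the uniform upper bound $N_{u^{(k)}, 0}(\rho) \leq \alpha_k + \delta_k$ on $(0, R_k]$ yields a uniform $L^2$-bound on $\widetilde{u}^{(k)}$ over each compact subset of $\mathbb{R}^n$; Almgren's compactness theory (\cite[Theorem 2.13]{Almgren}) and the H\"older estimate \eqref{regestimate} then supply a further subsequence converging locally uniformly and in $W^{1,2}_{\rm loc}$ to an average-free energy minimizer $\varphi$ on $\mathbb{R}^n$. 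Since frequency is continuous under local $W^{1,2}$ convergence and the hypothesis forces $N_{\widetilde{u}^{(k)}, 0}(\rho) \to \alpha$ for every fixed $\rho > 0$, the limit $\varphi$ is homogeneous of degree $\alpha$ about $0$. Upper semi-continuity of frequency (Lemma~\ref{consequence_lemma}(c)) gives $\mathcal{N}_\varphi(Y^\infty) \geq \alpha = \mathcal{N}_\varphi(0)$; since $\varphi$ is homogeneous of degree $\alpha$ about $0$ one also has $\mathcal{N}_\varphi(Y^\infty) \leq \alpha$, so $Y^\infty \in S(\varphi)$. Consequently $\varphi$ is translation invariant along $\mathbb{R} Y^\infty$ and homogeneous of degree $\alpha$ about $Y^\infty$ as well.

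The three contradictions now follow from properties of $\varphi$. For (i), the lower bound $N_{u^{(k)}, Y^{(k)}}(\rho) \geq \alpha_k$ is immediate from monotonicity and $\mathcal{N}_{u^{(k)}}(Y^{(k)}) \geq \alpha_k$; for the upper bound, continuity of frequency gives $N_{\widetilde{u}^{(k)}, Y^{(k)}}(R') \to N_{\varphi, Y^\infty}(R') = \alpha$ for any fixed large $R'$, and monotonicity in $\rho$ plus a ``ball-exchange'' bound (using $B_\rho(Y^{(k)}) \subseteq B_{\rho+1}(0)$ together with Lemma~\ref{consequence_lemma}(a),(b)) transfers the control to all $\rho \in [R', R_k - 3]$ via the hypothesis at $0$. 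For (ii), given a scale $\rho_k \in (0,1]$ at which (ii) fails, apply the same compactness to the rescaled sequence $u^{(k)}_{Y^{(k)}, \rho_k}$ (which is again average-free and energy minimizing) and extract a subsequential limit $\varphi^{(\rho)}$; the limit is a nonzero homogeneous average-free energy minimizer of degree $\alpha$, contradicting the failure of (ii). For (iii), if the first alternative also fails then $\varphi^{(\rho)}$ satisfies $\dim S(\varphi^{(\rho)}) \leq n-3$; by homogeneity $\{X : \mathcal{N}_{\varphi^{(\rho)}}(X) \geq \alpha\} = S(\varphi^{(\rho)})$, and upper semi-continuity of frequency combined with \eqref{regestimate} forces the high-frequency singular set $\{X \in \overline{B_1(0)} : \mathcal{N}_{u^{(k)}_{Y^{(k)}, \rho_k}}(X) \geq \alpha_k\}$ to lie eventually in an $\varepsilon_0$-neighborhood of $L = S(\varphi^{(\rho)})$, the required $(n-3)$-dimensional subspace.

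The main obstacle I anticipate is the uniformity across scales in (ii) and (iii): the conclusion requires the existence of a suitable $\varphi$ for every $\rho \in (0, 1]$, not just at a single limiting scale. This is handled by a diagonal extraction that applies the compactness construction to the entire family $(u^{(k)}_{Y^{(k)}, \rho})_{\rho \in (0, 1]}$ simultaneously, using that the frequency bounds hold uniformly over all $\rho$ in the given range because $N_{u^{(k)}, 0}(R_k) - \alpha_k < \delta_k$ controls $N_{u^{(k)}, Y^{(k)}}$ at every intermediate scale. A subsidiary technical point is extending the upper bound in (i) up to $\rho$ near $R_k - 3$, where only the hypothesis at the base point $0$ is available; the ball-exchange comparison sketched above, combined with monotonicity at both base points, supplies the needed bridge.
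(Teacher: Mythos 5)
Your strategy for (i) differs from the paper's: the paper proves (i) by a \emph{direct} quantitative estimate, while you set up a contradiction argument with compactness. The two would be comparable if your compactness scheme could actually close, but there is a genuine gap exactly where you invoke the ``ball-exchange bound''.

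The difficulty is the case where the bad radii $\rho^{(k)} \in (0, R_k - 3)$ tend to infinity along with $R_k$. Compactness on a fixed ball does not cover this, so — as you correctly sense — one must compare $N_{u^{(k)},Y^{(k)}}(\rho)$ with $N_{u^{(k)},0}(\cdot)$ for $\rho$ near the top of the range. You cite Lemma~\ref{consequence_lemma}(a),(b) for this, but those only give pinching of $H_{u,Y}(\cdot)$ and of $\sigma^{-n}\int_{B_\sigma(Y)}|u|^2$ \emph{as $\sigma$ varies at a single fixed base point}. They say nothing about how $H_{u,Y}(\rho)$ compares to $H_{u,0}(\rho')$ for different base points $Y$ and $0$, and this is the crux: while $D_{u,Y}(\rho)\leq(1+|Y|/\rho)^{n-2}D_{u,0}(\rho+|Y|)$ is immediate from $B_\rho(Y)\subset B_{\rho+|Y|}(0)$, the analogous \emph{lower} bound $H_{u,Y}(\rho)\geq(1-|Y|/\rho)^n H_{u,0}(\rho-|Y|)$ is a statement about boundary integrals, to which ball-containment alone does not apply. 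The paper closes this gap by first proving the identity \eqref{identity}, which rewrites $H_{u,Z}(\rho)$ as $n\rho^{-n}\int_{B_\rho(Z)}|u|^2 + 2\rho^{-n}\int_0^\rho\sigma\int_{B_\sigma(Z)}|Du|^2$ — a combination of solid-ball integrals — so that ball-containment \emph{can} be applied termwise, and only then feeds in Lemma~\ref{consequence_lemma}(a) to get the two-sided bound $N_{u,Y}(\rho)\leq(1+1/\rho)^{n-2}(1-1/\rho)^{-n}((\rho+1)/(\rho-1))^{2K+2}N_{u,0}(\rho+|Y|)$. With that estimate the proof of (i) is then a one-line choice of $R(\varepsilon)$ and $\delta(\varepsilon)$, followed by monotonicity to extend downward in $\rho$. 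Your proposal would need exactly this ingredient, and as written it is missing.

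For (ii) and (iii) your compactness/contradiction scheme is broadly in the spirit of what the paper intends (the paper merely says (ii) is ``an easy consequence of the monotonicity of frequency function and the compactness property'' and that (iii) follows the argument of Lemma 2.4(iii) of \cite{Sim93}), so that part of your plan is reasonable — but it presupposes (i), which is where the gap lies. Two smaller remarks: the paper's direct argument for (i) is cleaner than a contradiction wrapper, since it automatically handles all $\rho\in(0,R(\varepsilon)-3)$ at once (prove the estimate near the top, then use monotonicity); and in your contradiction setup the sequential structure should really be: first establish (i) with shrinking $\varepsilon_k = 1/k$ so that the pinching genuinely goes to zero, and only then feed this into the compactness argument for (ii) and (iii) — otherwise the limit function need not be exactly homogeneous.
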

\begin{proof}
To prove (i), first observe that by the monotonicity of $N_{u,Y}$, $N_{u,Y}(\rho) \geq \alpha$ for all $\rho \in (0,R(\varepsilon)-1)$.  Clearly
\begin{equation} \label{lemma2_4_eqn1}
	D_{u,Y}(\rho) \leq \left( 1 + \frac{|Y|}{\rho} \right)^{n-2} D_{u,0}(\rho+|Y|) 
\end{equation}
for all $\rho \in (0,R(\varepsilon)-1)$.  By integration by parts and using $\int_{\partial B_{\sigma}(Z)} u^{\kappa}_l D_{R_{Z}} u^{\kappa}_l = \int_{B_{\sigma}(Z)} |Du|^2$, 
\begin{align}\label{identity}
	H_{u,Z}(\rho) &= \rho^{1-n} \int_{\partial B_{\rho}(Z)} |u|^2 
	\\&= \rho^{-n} \int_{\partial B_{\rho}(Z)} |u|^2 (X-Z) \cdot \frac{(X-Z)}{R_{Z}}\nonumber
	\\&= \rho^{-n} \int_{B_{\rho}(Z)} (n |u|^2 + 2R_{Z} u^{\kappa}_l D_{R_{Z}} u^{\kappa}_l)\nonumber
	\\&= n \rho^{-n} \int_{B_{\rho}(Z)} |u|^2 + 2 \rho^{-n} \int_0^{\rho} \sigma \int_{B_{\sigma}(Z)} |Du|^2 \nonumber
\end{align}
for all $Z \in B_1(0)$ and $\rho \in (0,R(\varepsilon)-|Z|)$.  Since for $\r > |Y|,$ 
\begin{eqnarray*}
&&\int_{0}^{\r} \s \int_{B_{\s}(Y)} |Du|^{2} \geq \int_{|Y|}^{\r} \s \int_{B_{\s - |Y|}(0)} |Du|^{2} = 
\int_{0}^{\r - |Y|}(\s + |Y|)\int_{B_{\s}(0)} |Du|^{2} \nonumber\\ 
&&\hspace{4in}\geq \int_{0}^{\r - |Y|} \s \int_{B_{\s}(0)}|Du|^{2},
\end{eqnarray*}
it follows from \eqref{identity} taken with $Z = Y$ that 
\begin{eqnarray*}
H_{u, Y}(\rho) &\geq& n\rho^{-n}\int_{B_{\rho - |Y|}(0)}|u|^{2} + 2 \rho^{-n}\int_{0}^{\r - |Y|} \s \int_{B_{\s}(0)}|Du|^{2}\\
&=& \left( 1 - \frac{|Y|}{\rho} \right)^n H_{u,0}(\rho-|Y|) 
\end{eqnarray*}
where in the last equality we have used again \eqref{identity}, with $\rho - |Y|$ in place of $\rho$ and  $Z = 0$. This and \eqref{consequence_eqn1} now imply that 
\begin{equation} \label{lemma2_4_eqn2}
	H_{u,Y}(\rho) 
	\geq \left( 1 - \frac{|Y|}{\rho} \right)^n \left( \frac{\rho-|Y|}{\rho+|Y|} \right)^{2\alpha+2\delta(\varepsilon)} H_{u,0}(\rho+|Y|) 
\end{equation}
for all $\rho \in (1,R(\varepsilon)-1)$.  By \eqref{lemma2_4_eqn1} and \eqref{lemma2_4_eqn2},
\begin{align*}
	N_{u,Y}(\rho) &\leq \left( 1 + \frac{|Y|}{\rho} \right)^{n-2} \left( 1 - \frac{|Y|}{\rho} \right)^{-n} 
		\left( \frac{\rho+|Y|}{\rho-|Y|} \right)^{2\alpha+2\delta(\varepsilon)} N_{u,0}(\rho+|Y|)
	\\&\leq \left( 1 + \frac{1}{\rho} \right)^{n-2} \left( 1 - \frac{1}{\rho} \right)^{-n} \left( \frac{\rho+1}{\rho-1} \right)^{2K+2} N_{u,0}(\rho+|Y|)
\end{align*}
for all $\rho \in (1,R(\varepsilon)-1)$.  Hence $N_{u,Y}(\rho) - \alpha < \varepsilon^2$ for all $\rho \in (R(\varepsilon)-2,R(\varepsilon)-1)$ provided $R(\varepsilon)$ is sufficiently large and $\delta(\varepsilon)$ sufficiently small.  By the monotonicity of $N_{u,Y}$, $N_{u,Y}(\rho) - \alpha < \varepsilon^2$ for all $\rho \in (0,R(\varepsilon)-1)$.

Given (i), the claim in (ii) is an easy consequence of the monotonicity of frequency function and the compactness property of Dirichlet minimizing $q$-valued functions.  The proof of (iii) is similar to the proof of Lemma 2.4(iii) of~\cite{Sim93}.
\end{proof}

\section{Statements of the main results}
\begin{definition}\label{varphi0_defn}
Here and subsequently, we shall fix a non-zero, homogeneous,  average-free, cylindrical locally energy minimizing function $\varphi^{(0)} \in W^{1, 2}_{\rm loc} \, ({\mathbb R}^{n}; {\mathcal A}_{q}({\mathbb R}^{m}))$ with $S(\varphi^{(0)}) = \{0\} \times {\mathbb R}^{n-2}.$ We note, as can easily be verified, that $\varphi^{(0)}$ has degree of homogeneity $\alpha = {\mathcal N}_{\varphi^{(0)}}(0) = r_{0} /q_{0}$ for relatively prime positive integers $r_{0}$, $q_{0}$ with $q_{0} \leq q$, and that 
\begin{equation}\label{cylindrical-1}
	\varphi^{(0)} = \sum_{j=1}^J m_j \varphi^{(0)}_j 
\end{equation}
for some $J \geq 1$, where $\varphi^{(0)}_j$ are distinct multi-valued functions, $m_{j}$---the \emph{multiplicity} of $\varphi^{(0)}_{j}$---is a positive integer $\leq q$ for each $j$, and either $\varphi^{(0)}_j : \mathbb{R}^n \rightarrow \mathbb{R}^m$ is the (single-valued) zero function $\varphi^{(0)}_j(\cdot) = \llbracket 0 \rrbracket$ or $\varphi^{(0)}_j : \mathbb{R}^n \rightarrow \mathcal{A}_{q_0}(\mathbb{R}^m)$ is the ($q_0$-valued) function given by 
\begin{equation}\label{cylindrical-2}
	\varphi^{(0)}_j(X) = \op{Re}(c^{(0)}_j (x_1+ix_2)^{\alpha})
\end{equation}
for some $c^{(0)}_j \in \mathbb{C}^m \setminus \{0\}$, where $i = \sqrt{-1}$ and $X = (x_{1}, x_{2}, y)$ with $y \in {\mathbb R}^{n-2}$. 
\end{definition}

It follows from \eqref{cylindrical-1} that $q_{0} \sum_{j=1}^{J}m_{j} = q$ in case $\varphi_{j}^{(0)} \not\equiv 0$ for each $j$, or $m_{j_{1}} + q_{0}\sum_{j=1, \, j \neq j_{1}}^{J}m_{j} = q$ in case   $\varphi_{j_{1}}^{(0)} = 0$ for some (unique) $j_{1}$; in particular, either $q_{0}J \leq q$ or $1 + q_{0}(J-1) \leq q$, and in either case 
$$1 \leq J \leq \lceil q/q_{0} \rceil.$$ Note also that since $\varphi^{(0)}$ is assumed to be locally energy minimizing, $\Sigma_{\varphi^{(0)}} = \Sigma_{\varphi^{(0)},q} = \{0\} \times \mathbb{R}^{n-2}$ and in particular ${\rm graph} \, \varphi_{j}^{(0)} \cap ({\mathbb R}^{n} \setminus \{0\} \times {\mathbb R}^{n-2}) \times {\mathbb R}^{m}$, $1 \leq j \leq J,$ are pairwise disjoint, embedded submanifolds of $(\mathbb{R}^n \setminus \{0\} \times \mathbb{R}^{n-2}) \times {\mathbb R}^{m}.$ Moreover, $\varphi_{j}^{(0)}$ is locally energy minimizing in ${\mathbb R}^{n}$ for each $j$.

Given a nonzero, average-free, energy minimizing function $u \in W^{1,2}(\Omega;\mathcal{A}_q(\mathbb{R}^m))$ and a point $Y \in \Sigma_u \setminus \Sigma_{u,q},$ by continuity of $u$ we can find $\rho > 0$, $k \in \{2, \ldots, q\}$ and positive integers $q_{1}, \ldots, q_{k}$ with $\sum_{j=1}^{k} q_{j} = q$ such that $u \res B_{\rho}(Y) = \sum_{j=1}^{k} u_{j}$ where for each $j$, $u_{j}  \in W^{1, 2}(B_{\rho}(Y); {\mathcal A}_{q_{j}}({\mathbb R}^{m}))$ is energy minimizing. This and the fact that $\Sigma_u^{(n-3)}$ has Hausdorff dimension $\leq n-3$ allow us to reduce the proofs of Theorems A and B to an analysis  of $u$ near points of $\Sigma_{u,q}^{(n-2)} \setminus \Sigma_{u,q}^{(n-3)}.$ 
On the other hand, for every point $Y \in \Sigma_{u,q}^{(n-2)} \setminus \Sigma_{u,q}^{(n-3)}$ there exists at least one blow up $\varphi$ of $u$ at $Y$ with ${\rm dim} \, S(\varphi) = n-2$. Since $\varphi$ is locally energy minimizing, after composing with an orthogonal change of coordinates on $\mathbb{R}^n$ taking $S(\varphi)$ to $\{0\}\times {\mathbb R}^{n-2}$, $\varphi$ takes the form of $\varphi^{(0)}$ as described above. Thus in order to understand the behavior of $u$ near a point $Y \in \Sigma_{u,q}^{(n-2)} \setminus \Sigma_{u,q}^{(n-3)}$, we are free to assume that for some $\rho >0$, $u_{Y, \rho}$  is close in $B_{1}(0)$ to a cylindrical minimizer of the form $\varphi^{(0)}$ described above.




\begin{definition} \label{varphi_defn}
Let $\varphi^{(0)}$ and the associated functions $\varphi_{1}^{(0)}, \ldots, \varphi_{J}^{(0)}$ and the numbers $$\alpha, q_{0}, J, m_{1}, \ldots, m_{J}$$ be as in Definition~\ref{varphi0_defn}.  Let $p_0 = J-1$ if $\varphi^{(0)}_{j_1} \equiv 0$ for some $j_1$ and $p_0 = J$ otherwise.  For $\varepsilon > 0$ such that 
\begin{eqnarray} \label{varphi_welldefined}
&4 \varepsilon^{2} < \min \left\{\displaystyle{\min_{1 \leq j, k \leq J,  \; j \neq k, \; \varphi_{j}^{(0)} \neq 0, \; \varphi^{(0)}_{k} \neq 0} \, \int_{B_{1}(0)} \mathcal{G}(\varphi^{(0)}_j(X),\varphi^{(0)}_k(X))^2 dX,} \right. \\
&\hspace{3.5in}	\left.\displaystyle{\min_{1 \leq j \leq J, \; \varphi^{(0)}_{j} \neq 0} \int_{B_{1}(0)} |\varphi_{j}^{(0)}(X)|^{2} \, dX}\right\}\nonumber
\end{eqnarray}
and for $p \in \{p_0, p_0+1, \ldots, \lceil q/q_0 \rceil \}$, we define $\Phi_{\varepsilon,p}(\varphi^{(0)})$ to be the set of functions $\varphi : \mathbb{R}^n \rightarrow \mathcal{A}_q(\mathbb{R}^m)$ satisfying the following requirements:
\begin{itemize}
\item[(a)] $\varphi$ is of the form 
\begin{equation*}
	\varphi = \sum_{j=1}^J \sum_{k=1}^{p_j} m_{j,k}  \varphi_{j,k} , 
\end{equation*}
where: 
\begin{itemize}
\item[(i)] for each $j \in \{1, \ldots, J\}$ and $k \in \{1, \ldots, p_{j}\}$,  either $\varphi_{j,k} : \mathbb{R}^n \rightarrow \mathbb{R}^m$ is the (single-valued) zero function $\varphi_{j,k}(\cdot) = \llbracket 0 \rrbracket$ or $\varphi_{j,k} : \mathbb{R}^n \rightarrow \mathcal{A}_{q_0}(\mathbb{R}^m)$ is the function given by 
\begin{equation*}
	\varphi_{j,k}(x_1,x_2,\ldots,x_n) = \op{Re}(c_{j,k} (x_1+ix_2)^{\alpha})
\end{equation*}
for some $c_{j,k} \in \mathbb{C}^m \setminus \{0\}$;
\item[(ii)] $\varphi_{j,k} \neq \varphi_{j',k'}$ whenever $(j,k) \neq (j^{\prime}, k^{\prime})$;
\item [(iii)] $p_j$ are positive integers such that $\sum_{j=1}^J p_j = p+1$ if $\varphi_{j_1,k_1}(\cdot) = 0$ for some $j_1,k_1$ and $\sum_{j=1}^J p_j = p$ otherwise;
\item[(iv)] $m_{j,k}$---the multiplicity of $\varphi_{j, k}$---is a positive integer. 
\end{itemize}
\item[(b)] if $\varphi_{j}^{(0)} \neq 0$ for each $j \in \{1, \ldots, J\}$, then $\varphi_{j, k} \neq 0$ for each $j \in \{1, \ldots, J\}$ and $k \in \{1, \ldots, p_{j}\}$, and 
\begin{equation*}
	\sum_{j=1}^J \sum_{k=1}^{p_j} \int_{B_1(0)} \mathcal{G}(\varphi_{j,k}(X),\varphi^{(0)}_j(X))^2 dX \leq \varepsilon^2, or 
\end{equation*} 
if $\varphi_{j_{1}}^{(0)} = 0$ for some (unique) $j_{1} \in \{1, \ldots, J\}$, then $\varphi_{j, k} \neq 0$ for each $j \in \{1, \ldots, J\} \setminus \{j_{1}\}$ and $k \in \{1, \ldots, p_{j}\}$, and 
\begin{equation*}
	\sum_{k = 1}^{p_{j_{1}}}\int_{B_{1}(0)} |\varphi_{j_{1}, k}(X)|^{2} dX +  \sum_{j=1, j \neq j_{1}}^J \sum_{k=1}^{p_j} \int_{B_1(0)} \mathcal{G}(\varphi_{j,k}(X),\varphi^{(0)}_j(X))^2 dX \leq \varepsilon^2. 
\end{equation*} 
\end{itemize}
We shall call the multivalued functions $\varphi_{j,k}$ the \textit{components of $\varphi$}.
 \end{definition}

\noindent
\begin{remark}{\rm (1) We do not assume that the functions $\varphi \in \Phi_{\varepsilon, p}(\varphi^{(0)})$ are locally energy minimizing, nor that $\varphi$ are average free although in case $q_{0} \geq 2$ the latter is automatically true by the specific form of $\varphi$.

\noindent
(2) If $\varphi \in \Phi_{\varepsilon,p}(\varphi^{(0)})$ then $\varphi$ has precisely $p$ nonzero components.  Since $\varepsilon = \varepsilon(\varphi^{(0)})$ is chosen sufficiently small to satisfy \eqref{varphi_welldefined}, it follows from (b) that for each $j \in \{1, \ldots, J\}$, precisely $p_{j}$ of the components of $\varphi$, labeled $\varphi_{j, 1}, \ldots, \varphi_{j, p_{j}}$, are near $\varphi_{j}^{(0)}$.

\noindent
(3) It follows from (b) that if $\varphi_{j_{1}, k_{1}} = 0$ for some $j_{1} \in \{1, \ldots, J\}$ and 
$k_{1} \in \{1, \ldots, p_{j_{1}}\}$, then $\varphi^{(0)}_{j_{1}} = 0$.

\noindent 
(4) $\varphi^{(0)} \in \Phi_{\varepsilon,p_0}(\varphi^{(0)})$.  
} \end{remark}

\begin{definition}\label{Phi-p_defn}
For $\varepsilon > 0$ and $p \in \{p_0,p_0+1,\ldots, \lceil q/q_0 \rceil \}$, let $\widetilde{\Phi}_{\varepsilon,p}(\varphi^{(0)})$ denote the set of all $q$-valued functions $\varphi(e^A X)$, $X \in \mathbb{R}^n,$ where $\varphi \in \Phi_{\varepsilon,p}(\varphi^{(0)})$ and $A$ is an $n \times n$ skew-symmetric matrix $A = (A_{ij})$ with $A_{ij} = 0$ if $i,j \leq 2$, $A_{ij} = 0$ if $i,j \geq 3$, and $|A| < \varepsilon$.  
\end{definition}

\begin{definition}\label{Phi_defn}
 Let $\Phi_{\varepsilon}(\varphi^{(0)}) = \bigcup_{p=p_0}^{\lceil q/q_0 \rceil} \Phi_{\varepsilon,p}(\varphi^{(0)})$ and  $\widetilde{\Phi}_{\varepsilon}(\varphi^{(0)}) = \bigcup_{p=p_0}^{\lceil q/q_0 \rceil} \widetilde{\Phi}_{\varepsilon,p}(\varphi^{(0)})$. 
\end{definition}

We now state the main lemma of this paper. This is analogous to \cite[Lemma 1]{Sim93}, except for the fact that in its conclusion (ii) we assert improvement of excess at one of a fixed  number of scales (as opposed to a single scale). The proof of this lemma will be given in Section \ref{sec:main_lemma_sec}, which involves a combination of ideas from \cite{Sim93}, \cite{Wic14} and some that are new here (see the discussion in Section~\ref{outline}). 

\begin{lemma} \label{main_lemma}
Let $\varphi^{(0)} : \mathbb{R}^n \rightarrow \mathcal{A}_q(\mathbb{R}^m)$ be as above.  Given $\vartheta_j \in (0,1/8)$ for $j = 1,2,\ldots, \lceil q/q_0 \rceil - p_0 + 1$ with $\vartheta_j < \vartheta_{j-1}/8$ for $j > 1$, there are $\delta_0 \in (0,1/4)$ and $\varepsilon_0 \in (0,1)$ depending only on $n$, $m$, $q$, $\alpha$, $\varphi^{(0)}$, and  $\vartheta_1,\ldots,\vartheta_{\lceil q/q_0 \rceil-p_0+1}$ such that if $u \in W^{1,2}(B_1(0);\mathcal{A}_q(\mathbb{R}^m))$ is an average-free, energy minimizing function with 
\begin{equation*}
	E \equiv \left(\int_{B_1(0)} \mathcal{G}(u,\varphi)^2\right)^{1/2} < \varepsilon_{0}
\end{equation*}
for some $\varphi \in \widetilde{\Phi}_{\varepsilon_{0}}(\varphi^{(0)})$, then either 
\begin{enumerate}
\item[(i)] $B_{\delta_0}(0,y_0) \cap \{ X \in B_{1/2}(0) \cap \Sigma_{u,q} : \mathcal{N}_u(X) \geq \alpha \} = \emptyset$ for some $y_0 \in B^{n-2}_{1/2}(0)$ or 

\item[(ii)] there is $j \in \{1,2,\ldots,\lceil q/q_0 \rceil-p_0+1\}$ and $\widetilde{\varphi} \in \widetilde{\Phi}_{\gamma \varepsilon_0}(\varphi^{(0)})$ such that 
\begin{equation*}
	\vartheta_j^{-n-2\alpha} \int_{B_{\vartheta_{j}}(0)} \mathcal{G}(u,\widetilde{\varphi})^2 
	\leq C_j \vartheta_j^{2\mu} \int_{B_1(0)} \mathcal{G}(u,\varphi)^2, 
\end{equation*}
where $\gamma \in [1,\infty)$ is a constant depending only on $n$, $m$, $q$, $\alpha$, $\varphi^{(0)}$ and $\vartheta_1, \ldots,\vartheta_{\lceil q/q_0 \rceil - p_0 + 1}$; $\mu \in (0,1)$  is a constant depending only on $n$, $m$, $q$, $\alpha$ and $\varphi^{(0)};$ $C_{1}, \ldots, C_{\lceil q/q_0 \rceil - p_0 + 1}$ are constants such that $C_{1}$ depends only on $n$, $m$, $q$, $\alpha$, $\varphi^{(0)},$ and for $j \geq 2$, $C_j$ depends only on $n$, $m$, $q$, $\alpha$, $\varphi^{(0)}$ and $\vartheta_1,\ldots,\vartheta_{j-1}$ (in particular, for $j \geq 2$, $C_{j}$ is independent of $\vartheta_{j}, \ldots, \vartheta_{\lceil q/q_0 \rceil - p_0 + 1}$.) 
\end{enumerate}
\end{lemma}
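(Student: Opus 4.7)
The plan is to obtain Lemma~\ref{main_lemma} in two stages. First I would establish a preliminary decay lemma (call it Lemma~\ref{premain_lemma}) that improves excess at a \emph{single} fixed scale $\vartheta \in (0,1/8)$ but under the extra Hypothesis~$(\star\star)$ from Section~\ref{sec:graphical_sec}, namely that $u$ is substantially closer in $L^2$ to $\varphi$ than to any cylindrical $\widetilde\varphi \in \widetilde{\Phi}_{\gamma\varepsilon_0}(\varphi^{(0)})$ with strictly fewer nonzero components. Then Lemma~\ref{main_lemma} will follow by a bounded iteration that removes $(\star\star)$ at the cost of allowing excess improvement at one of finitely many scales.

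For the preliminary lemma I would argue by contradiction. Suppose no decay at scale $\vartheta$ holds, and take sequences of average-free minimizers $u^{(\nu)}$ and cylindrical functions $\varphi^{(\nu)} \in \widetilde{\Phi}_{\varepsilon_\nu,p}(\varphi^{(0)})$ with $\varepsilon_\nu \downarrow 0$ satisfying $(\star\star)$ and failing condition (i), i.e.\ $\{Z \in B_{1/2}(0)\cap \Sigma_{u^{(\nu)},q} : \mathcal{N}_{u^{(\nu)}}(Z) \geq \alpha\}$ fills $B_{\delta_0}(0,y)$ for every $y\in B^{n-2}_{1/2}(0)$. By Corollary~\ref{graphical_cor}, Hypothesis~$(\star\star)$ and the smallness of the excess $E_\nu$ allow us to parameterize $u^{(\nu)}$ away from any small neighborhood of $\{0\}\times\mathbb{R}^{n-2}$ by multi-valued functions $v^{(\nu)}_{j,k}$ over the graphs of the components $\varphi^{(\nu)}_{j,k}$, and these $v^{(\nu)}_{j,k}$ are themselves locally energy minimizing off the axis. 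Almgren's compactness thus produces blow-ups $w_{j,k} = \lim E_\nu^{-1}v^{(\nu)}_{j,k}$ away from the axis, and the excess non-concentration estimate of Corollary~\ref{nonconest_cor} (which uses the no-gap hypothesis, so its loss would be exactly alternative~(i)) upgrades convergence to $L^2(B_{1/2}(0))$. Theorem~\ref{blowupdecay_lemma} then asserts that $w=(w_{j,k})$ decays exponentially in $L^2$, upon rescaling at the origin, to a unique cylindrical homogeneous degree $\alpha$ blow-up $\Psi$. Lifting $\Psi$ to a cylindrical function $\widetilde{\varphi}^{(\nu)} \in \widetilde{\Phi}_{\gamma\varepsilon_0}(\varphi^{(0)})$ at the pre-blow-up level and invoking the $L^2$ convergence of excess now yields improvement at scale $\vartheta$ by factor $\vartheta^{2\mu}$, contradicting the assumption. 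The heart of this step is Theorem~\ref{blowupdecay_lemma}, whose proof rests on the classification of homogeneous blow-ups (Theorem~\ref{homogrep_lemma}); this is where the energy comparison Lemma~\ref{competitor_lemma} and the mean-value property from Lemma~\ref{fourierest_lemma} are essential, because unlike in \cite{Sim93} the blow-ups are not harmonic and PDE arguments must be replaced by the frequency monotonicity for $\widetilde{w}=w-D_x\varphi^{(0)}\cdot\lambda(y)$.

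To pass from Lemma~\ref{premain_lemma} to Lemma~\ref{main_lemma}, I would iterate in the spirit of \cite{Wic14}. Given $(u,\varphi)$ with $\varphi \in \widetilde{\Phi}_{\varepsilon_0,p}(\varphi^{(0)})$, inspect whether Hypothesis~$(\star\star)_1$ holds: is $u$ substantially closer in $L^2(B_1(0))$ to $\varphi$ than to every $\varphi' \in \widetilde{\Phi}_{\gamma_1\varepsilon_0,p-1}(\varphi^{(0)})$ with strictly fewer nonzero components? If yes, apply Lemma~\ref{premain_lemma} at scale $\vartheta_1$ to obtain conclusion~(ii) with $j=1$. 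If no, there exists $\varphi' \in \widetilde{\Phi}_{\gamma_1 \varepsilon_0,p-1}(\varphi^{(0)})$ with $\int_{B_1(0)}\mathcal{G}(u,\varphi')^2 \leq C\int_{B_1(0)}\mathcal{G}(u,\varphi)^2$ for a controlled $C$; replace $\varphi$ by $\varphi'$, decrement $p$ by at least one, and repeat with the next threshold $\vartheta_2 < \vartheta_1/8$. Since $p \in \{p_0,\ldots,\lceil q/q_0\rceil\}$ the loop must terminate in at most $\lceil q/q_0\rceil-p_0+1$ steps, either by reaching a stage where $(\star\star)_j$ holds and Lemma~\ref{premain_lemma} yields improvement at $\vartheta_j$, or by triggering alternative~(i) at some stage (which then persists to the original scale since the gap in the frequency set is an intrinsic property of $u$). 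Because the reductions $\varphi\mapsto \varphi'$ cost only bounded factors in excess, the iteration produces the constants $C_j$ with the claimed dependencies—$C_j$ depending only on the previously fixed $\vartheta_1,\ldots,\vartheta_{j-1}$.

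I expect the main obstacle to lie in the preliminary lemma, and specifically in the classification step underlying Theorem~\ref{blowupdecay_lemma}: handling higher multiplicity branch points of $u^{(\nu)}$ off the axis forces $w$ to be genuinely multi-valued, so the standard PDE/Jacobi field approach of \cite{Sim93} is unavailable. The substitute—energy stationarity of $\widetilde{w}$ only for \emph{radial} deformations centered on the axis (see Remark~\ref{example}), together with the height estimate \eqref{homogrep2_eqn2} derived from the Weiss-type identity of Lemma~\ref{keyest_identity}—must be carefully combined to yield frequency monotonicity for $\widetilde{w}$ and hence translation invariance along $\{0\}\times\mathbb{R}^{n-2}$. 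A secondary technical point is uniform compatibility of the thresholds $\varepsilon_0$, $\delta_0$, $\gamma$ across the finitely many iteration stages, which dictates the decreasing choice $\vartheta_j < \vartheta_{j-1}/8$ in the statement.
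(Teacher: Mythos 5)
Your proposal follows the same two-stage strategy as the paper: first prove a preliminary excess-decay lemma (the paper's Lemma~\ref{premain_lemma}) at a single scale under the additional Hypothesis~$(\star\star)$ via a blow-up/compactness argument that relies on Corollary~\ref{nonconest_cor} and Theorem~\ref{blowupdecay_lemma}, and then remove $(\star\star)$ by the bounded iteration from \cite{Wic14}, decrementing the number of nonzero components of $\varphi$ at each stage and applying the preliminary lemma at successively smaller scales $\vartheta_j$. You also correctly identify the dependence structure of the constants $C_j$ and the central technical obstacle (classification of homogeneous blow-ups via frequency monotonicity for $\widetilde{w}$), so this matches the paper's approach.
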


By iteratively applying Lemma~\ref{main_lemma} in a manner completely analogous to the corresponding argument in \cite{Sim93}, we obtain the following result, from which Theorems A and B of the introduction will readily follow (see Section \ref{sec:finale_sec}).

\begin{theorem} \label{main_thm}
Let $\Omega$ be an open set in $\mathbb{R}^n$ and let $u \in W^{1,2}(\Omega;\mathcal{A}_q(\mathbb{R}^m))$ be a nonzero, average-free, energy minimizing function.  Then $\Sigma_{u,q}$ is countably $(n-2)$-rectifiable.  Moreover, if for $\alpha >0$ we let 
\begin{equation*}
	\Sigma_{u,q,\alpha} = \{ X \in \Sigma_{u,q} : \mathcal{N}_u(X) = \alpha \text{ and $u$ has a cylindrical blow-up at $X$} \}, 
\end{equation*} 
then: 
\begin{enumerate}
\item[(a)] For any compact set $K \subset \Omega$, $K \cap \Sigma_{u,q,\alpha} \neq \emptyset$ for only finitely many $\alpha > 0$.  
\item[(b)] For every $\alpha > 0$ and $\mathcal{H}^{n-2}$-a.e. $Z \in \Sigma_{u,q,\alpha}$, there exists a unique non-zero, average-free, cylindrical, homogeneous degree $\alpha$ energy minimizing function $\varphi^{(Z)} : \mathbb{R}^n \rightarrow \mathcal{A}_q(\mathbb{R}^m)$ and a number $\rho_Z > 0$ such that 
\begin{equation*}
	\rho^{-n} \int_{B_{\rho}(0)} \mathcal{G}(u(Z+X),\varphi^{(Z)}(X))^2 dX \leq C_Z \rho^{2\alpha+2\mu_Z}
\end{equation*}
for all $\rho \in (0,\rho_Z]$ and some constants $\mu_Z \in (0,1)$ and $C_Z \in (0,\infty)$ depending on $n$, $m$, $q$, $\alpha$, $u$, and $Z$.
\item[(c)] For every $\alpha > 0$ there is an open set $V_{\alpha} \supset \Sigma_{u,q,\alpha}$ such that $V_{\alpha} \cap \{ X \in \Sigma_{u,q} : \mathcal{N}_u(X) \geq \alpha \}$ has locally finite $\mathcal{H}^{n-2}$-measure; i.e.\ for each $Y \in V_{\alpha} \cap \{ X \in \Sigma_{u,q} : \mathcal{N}_u(X) \geq \alpha \}$, there is $\rho>0$ such that 
${\mathcal H}^{n-2} (B_{\rho}(Y) \cap \{ X \in \Sigma_{u,q} : \mathcal{N}_u(X) \geq \alpha \}) < \infty$. In particular, for each $\alpha >0$,  $\Sigma_{u, q, \alpha}$  has locally finite $\mathcal{H}^{n-2}$-measure.
\end{enumerate}
\end{theorem}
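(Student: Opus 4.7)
The plan is to deduce the theorem from Lemma~\ref{main_lemma} by an iteration in the spirit of \cite{Sim93}, with care to handle the multiple-scale nature of its conclusion (ii). First I would establish (b). Fix $\alpha > 0$ and $Z_{0} \in \Sigma_{u,q,\alpha}$. By definition there is a cylindrical blow-up $\varphi^{(0)}$ of $u$ at $Z_{0}$; after an orthogonal rotation sending $S(\varphi^{(0)})$ to $\{0\} \times \mathbb{R}^{n-2}$ we have $\alpha = r_{0}/q_{0}$ with $q_{0} \leq q$ and $\varphi^{(0)}$ in the form of Definition~\ref{varphi0_defn}. Using Lemma~\ref{consequence_lemma}(b), I would choose an initial radius $\rho_{0} = \rho_{0}(Z_{0})$ so that $u_{Z_{0},\rho_{0}}$ lies within $\varepsilon_{0}$ of $\varphi^{(0)}$ in $L^{2}(B_{1}(0))$, where $\varepsilon_{0}$ is as in Lemma~\ref{main_lemma} applied with scales $\vartheta_{1}, \ldots, \vartheta_{\lceil q/q_{0}\rceil - p_{0} + 1}$ chosen geometrically so that $C_{j} \vartheta_{j}^{2\mu} < \vartheta_{j}^{\mu}$ for every $j$.

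Next I would iterate Lemma~\ref{main_lemma} at $Z_{0}$ and successively smaller scales, keeping $\varphi^{(0)}$ fixed. At each application one of the alternatives (i) or (ii) holds. If (ii) holds at some scale $\vartheta_{j}$ with new approximant $\widetilde{\varphi} \in \widetilde{\Phi}_{\gamma \varepsilon_{0}}(\varphi^{(0)})$, I would rescale to $\vartheta_{j}\rho$ and reapply the lemma using $\widetilde{\varphi}$ at the new scale. The geometric choice of $\vartheta_{j}$ ensures that the improvement factors telescope so that, after $k$ such iterations yielding scales $\sigma_{k} \downarrow 0$, one has $\sigma_{k}^{-n-2\alpha} \int_{B_{\sigma_{k}}(Z_{0})} \mathcal{G}(u,\widetilde{\varphi}_{k})^{2} \leq C \sigma_{k}^{2\mu'}$ for some $\mu' > 0$. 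This Morrey-type decay together with a triangle-inequality argument within $\widetilde{\Phi}_{\gamma\varepsilon_{0}}(\varphi^{(0)})$ shows that $\{\widetilde{\varphi}_{k}\}$ is Cauchy and converges to a unique cylindrical minimizer $\varphi^{(Z_{0})}$ satisfying the decay asserted in (b).

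The main obstacle is ruling out alternative (i) at arbitrarily small scales, which I would handle by the measure-theoretic argument of \cite{Sim93}. When (i) holds at scale $\rho$ about $Z$, the set $\Sigma_{u,q,\geq\alpha} \cap B_{\rho/2}(Z)$ misses a fixed-sized ball $B_{\delta_{0}\rho}(y_{0})$ along the approximating axis, so its projection onto that $(n-2)$-plane has $(n-2)$-dimensional density strictly less than $\omega_{n-2}$ at $Z$. A Vitali/density argument then shows that the subset of $\Sigma_{u,q,\alpha}$ on which (i) recurs at arbitrarily small scales has lower $(n-2)$-density strictly less than $\omega_{n-2}$ at each of its points and is therefore $\mathcal{H}^{n-2}$-null. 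Outside this exceptional set (ii) holds at every sufficiently small scale, completing the proof of (b).

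For (a), any cylindrical blow-up has degree $r/q'$ with positive integers $r, q'$ and $q' \leq q$ by Definition~\ref{varphi0_defn}; upper semi-continuity of $\mathcal{N}_{u}$ (Lemma~\ref{consequence_lemma}(c)) gives $\sup_{K}\mathcal{N}_{u} < \infty$ on any compact $K \subset \Omega$, and only finitely many such rationals lie in $[\mu, \sup_{K}\mathcal{N}_{u}]$ where $\mu$ is the lower bound from Lemma~\ref{consequence_lemma}(d). For (c), having (a), near each $Z \in \Sigma_{u,q,\alpha}$ Lemma~\ref{main_lemma} and the iteration above confine $\{X : \mathcal{N}_{u}(X) \geq \alpha\}$ to successively shrinking tubular neighborhoods of the axes of the approximating cylindrical functions, and a Besicovitch covering of $V_{\alpha}$ yields the local $\mathcal{H}^{n-2}$-finiteness. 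Countable $(n-2)$-rectifiability of $\Sigma_{u,q}$ follows at the end: by (a), $\bigcup_{\alpha}\Sigma_{u,q,\alpha}$ is a countable union of strata, each possessing an approximate $(n-2)$-tangent plane $\mathcal{H}^{n-2}$-a.e.\ by (b); and $\Sigma_{u,q} \setminus \bigcup_{\alpha}\Sigma_{u,q,\alpha} \subseteq \Sigma_{u,q}^{(n-3)}$ has Hausdorff dimension $\leq n-3$ by Lemma~\ref{alm_fed_lemma}, hence is $\mathcal{H}^{n-2}$-negligible.
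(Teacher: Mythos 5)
Your overall strategy mirrors the paper's (iterate Lemma~\ref{main_lemma} in the manner of \cite{Sim93}, handle the gap alternative separately), but the crucial measure-theoretic step is not correct as you state it. You assert that the subset of $\Sigma_{u,q,\alpha}$ where alternative (i) of Lemma~\ref{main_lemma} recurs at arbitrarily small scales has lower $(n-2)$-density strictly less than $\omega_{n-2}$ at each of its points ``and is therefore $\mathcal{H}^{n-2}$-null.'' This implication is false. A set with lower $(n-2)$-density below $\omega_{n-2}$ at every point need not be $\mathcal{H}^{n-2}$-null; for instance, purely $(n-2)$-unrectifiable sets of positive finite measure have lower density bounded away from $\omega_{n-2}$ everywhere. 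Moreover, the standard density theorems (e.g.\ that upper density lies in $[2^{-s},1]$ $\mathcal{H}^s$-a.e.) require knowing a priori that the set is $\sigma$-finite for $\mathcal{H}^{n-2}$, which at this stage is precisely what is unknown. A Vitali argument does not close this gap.

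What the paper actually does is prove a quantitative decomposition lemma (Lemma~\ref{decomposition_lemma}): whenever $u$ is $\varepsilon$-close to some $\varphi^{(0)}$ at unit scale, the set $\{X \in \Sigma_{u,q}\cap B_1(0): \mathcal{N}_u(X)\geq\alpha\}$ splits as $S\cup T$, with $S$ contained in a $C^{1,\overline\mu}$ graph of measure at most $\omega_{n-2}$ and $T$ covered by balls $B_{\rho_j}(X_j)$ with $\sum_j\rho_j^{n-2}\leq 1-\delta$ for a fixed $\delta>0$. The proof of the decomposition is exactly the iteration you sketch (using your geometric choice of $\vartheta_j$ so $C_j\vartheta_j^\mu\leq 1$), but the key is that alternative (i), together with the graph control \eqref{decomposition_eqn6}, yields an explicit covering of the gap set with a definite fractional measure saving. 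Theorem~\ref{main_thm} then follows by iterating this covering at smaller and smaller scales, using the $\mu_{\rho_0}$-outer-measure bookkeeping as on pages 642--643 of \cite{Sim93}; the factor $(1-\delta)$ compounds to give exponential decay of the outer measure of the bad set, yielding nullity. This is a quantitatively different mechanism than a pointwise density bound.

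A second, smaller omission: you iterate Lemma~\ref{main_lemma} only after positioning yourself close to a cylindrical function, but at a nearby point $Z_i$ the rescaled function $u_{Z_i,\sigma_i}$ need not be close to \emph{any} cylindrical function. The paper handles this via Lemma~\ref{lemma2_4}(iii), which gives the dichotomy that $u_{Z_i,\sigma_i}$ is either $\varepsilon$-close to some cylindrical $\varphi$, or its set of high-frequency points is confined to an $\varepsilon$-tubular neighborhood of an $(n-3)$-plane (the latter feeding directly into the $\sum\rho_j^{n-2}\leq 1-\delta$ covering). Your proof needs to incorporate this non-cylindrical alternative explicitly into the iteration; without it the covering argument does not terminate. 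Your discussion of parts (a) and of the final reduction from $\Sigma_{u,q}$ to $\bigcup_\alpha\Sigma_{u,q,\alpha}$ via Lemma~\ref{alm_fed_lemma} is fine.
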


\section{A graphical representation and a blow-up procedure} \label{sec:graphical_sec}

Let $\varphi^{(0)}$ be as in Definition~\ref{varphi0_defn} and let $p \in \{p_0, \ldots, \lceil q/q_0 \rceil \}$. Let $\varphi \in \Phi_{\varepsilon, p}(\varphi^{(0)})$ for suffciently small $\varepsilon > 0$ depending on $\varphi^{(0)}$ and $u \in W^{1,2}(B_1(0);\mathcal{A}_q(\mathbb{R}^m))$ be an average-free energy minimizing function close to $\varphi$ in $L^2.$ In case $p > p_0$, assume also that $u$ is significantly closer in $L^{2}$ to $\varphi$ than it is to any other homogeneous cylindrical function having fewer nonzero components than $\varphi$ (in the sense of \eqref{graphical_eqn2} or \eqref{graphicalcor_eqn1} below for suitably small $\overline{\b}$, $\b_{0}$).  In 
Lemma~\ref{graphical_lemma} and  Corollary \ref{graphical_cor} below we will show, subject to these hypotheses, that  away from the singular axis $\{0\} \times \mathbb{R}^{n-2}$ of $\varphi$, we can parameterize ${\rm graph} \, u$ by an appropriate multivalued functions $v_{j,k}$ defined on ${\rm graph} \,  \varphi$ and satisfying certain estimates. These results will allow us to produce ``blow-ups'' of a sequence $(u_{j})$ of energy minimizers relative to a sequence of homogeneous functions $(\varphi_{j})$ as in definition~\ref{varphi_defn} whenever both sequences converge in $L^{2}$ to $\varphi^{(0)}$. We shall discuss this blow-up procedure also in this section.  

\subsection{A class of multi-valued functions on graphs of homogeneous cylindrical functions}\label{functions-on-graphs} 

Let $\Omega \subset \mathbb{R}^n \setminus \{0\} \times {\mathbb R}^{n-2}$ and let $\varphi \in \Phi_{\e, p}(\varphi^{(0)})$ be as in Definition~\ref{varphi_defn}. Note that for each component $\varphi_{j,k}$ of $\varphi$, ${\rm graph} \, \varphi_{j,k} |_{{\mathbb R}^{n} \setminus \{0\} \times {\mathbb R}^{n-2}}$ is an embedded submanifold of $(\mathbb{R}^n \setminus \{0\} \times \mathbb{R}^{n-2}) \times {\mathbb R}^{m},$ and recall that the component $\varphi_{j,k}$ has multiplicity $m_{j,k}$.  For various choices of $\Omega \subset {\mathbb R}^{n}\setminus \{0\}\times {\mathbb R}^{n-2}$, we shall be interested in the class of functions ${\mathcal F}_{\varphi, \Omega}$ which we define as $\mathcal{F}_{\varphi,\Omega} = \prod_{j=1}^J \prod_{k=1}^{p_j} C^0({\rm graph} \, \varphi_{j,k}|_{\Omega};\mathcal{A}_{m_{j,k}}(\mathbb{R}^m))$ so that an element $v = (v_{j,k}) \in \mathcal{F}_{\varphi,\Omega}$ consists of continuous functions $v_{j,k} : \op{graph} \varphi_{j,k} |_{\Omega} \rightarrow \mathcal{A}_{m_{j,k}}(\mathbb{R}^m)$ for each $j \in \{1, \ldots, J\}$ and $k \in \{1, \ldots, p_{j}\}$. 

Note that corresponding to any ball $B \subset \Omega$, there are single-valued harmonic functions $\varphi_{j,k,l} : B \rightarrow \mathbb{R}^m$ such that for each $X \in B$, 
\begin{equation} \label{varphi_localized}
	\varphi_{j,k}(X) = \sum_{l=1}^{q_{j, k}} \llbracket \varphi_{j,k,l}(X) \rrbracket
\end{equation}
where  
\begin{align}\label{varphi_values}
&q_{j, k} = q_{0} \;\; \text{if $\varphi_{j,k}$ is non-zero and}\\
& q_{j, k} = 1 \;\; \text{(with $\varphi_{j, k, l}(X) = \varphi_{j, k, 1}(X) = 0 \in {\mathbb R}^{m}$) if $\varphi_{j, k}$ is the zero function} \nonumber.
\end{align}
Hence if $v = (v_{j,k}) \in {\mathcal F}_{\varphi, \Omega}$ then for each fixed $j \in \{1, \ldots, J\}$, $k \in \{1, \ldots, p_{j}\}$, the function $v_{j, k}$ defines an ${\mathcal A}_{q_{j,k}m_{j, k}}({\mathbb R}^{m})$-valued function $\overline{v}_{j, k}$ on $\Omega$ via 
\begin{equation}\label{vbar}
\overline{v}_{j, k}(X) = \sum_{l=1}^{q_{j, k}} v_{j, k, l}(X) 
\end{equation}
for $X \in \Omega$, where, after choosing a ball $B \subset \Omega$ with $X \in B$ and $\varphi_{j, k, l}$ as in \eqref{varphi_localized},
\begin{equation}\label{v_localized}
v_{j, k,l}(X) = v_{j, k}(X, \varphi_{j, k, l}(X))
\end{equation}
for each $l \in \{1, \ldots, q_{j,k}\}$; thus $v_{j, k, l} \, : \, B \to {\mathcal A}_{m_{j, k}}({\mathbb R}^{m})$ and hence  
\begin{equation} \label{v_localized_1}
v_{j, k, l}(X) = \sum_{h=1}^{m_{j,k}} \llbracket v_{j,k,l,h}(X) \rrbracket
\end{equation}
for some $v_{j, k, l, h}(X) \in {\mathbb R}^{m}$, $1 \leq h \leq m_{j, k}$. 

We associate to a given function $v = (v_{j,k}) \in {\mathcal F}_{\varphi, \Omega}$ the function $u : \Omega \rightarrow \mathcal{A}_q(\mathbb{R}^m)$ defined, using the above notation, by   
\begin{equation} \label{phiplusv}
	u(X) = \sum_{j=1}^J \sum_{k=1}^{p_j} \sum_{l=1}^{ q_{j, k}}\sum_{h=1}^{m_{j,k}} \llbracket \varphi_{j,k,l}(X) + v_{j,k,l,h}(X) \rrbracket 
\end{equation}
for $X \in \Omega$. (Strictly speaking, per the discussion above, the definitions of $\overline{v}_{j, k}(X)$ and $u(X)$ require choosing a  ball $B \subset \Omega$ with $X \in B$, but it is clear that $\overline{v}_{j, k}(X)$, $u(X)$ are independent of the choice of $B$). 

\begin{definition} \label{compwisemin_defn}
Let $\Omega \subset \mathbb R^{n} \setminus \{0\} \times {\mathbb R}^{n-2}$ be open.  For each $j \in \{1,2,\ldots,J\}$ and $k \in \{1,2,\ldots,p_j\}$, let $m_{j,k}$ and $q_{j,k}$ be positive integers and $\varphi_{j,k} : \mathbb{R}^n \rightarrow \mathcal{A}_{q_{j,k}}(\mathbb{R}^m)$ be functions such that $\sum_{j=1}^J m_{j,k} q_{j,k} = q$ and either $q_{j,k} = 1$ and $\varphi_{j,k}(\cdot) = 0$ or $q_{j,k} = q_0$ and $\varphi_{j,k}(X) = \op{Re}(c_{j,k}(x_1+ix_2)^{\alpha})$ for some $c_{j,k} \in \mathbb{C}^m \setminus \{0\}$.  We say that $v = (v_{j,k})_{1 \leq j \leq J, 1 \leq k \leq p_{j}},$ where $v_{j, k} \, : \, {\rm graph} \, \varphi_{j,k}|_{\Omega} \to \mathcal{A}_{m_{j,k}}(\mathbb{R}^m),$ is \textit{component-wise minimizing} in $\Omega$ if for each ball $B \subset \subset \Omega$, each function 
$v_{j,k,l} \, : \, B \to {\mathcal A}_{m_{j, k}}({\mathbb R}^{m})$ as in \eqref{v_localized} is in $W^{1, 2}(B; {\mathcal A}_{m_{j, k}}({\mathbb R}^{m}))$ and is Dirichlet energy minimizing in $B.$
\end{definition}

\begin{remark} {\rm 
If $m_{j, k}$, $\varphi_{j, k}$ are as in Definition~\ref{compwisemin_defn}, we do not require that the function $\varphi = \sum_{j=1}^{J} \sum_{k=1}^{p_{j}} m_{j, k} \varphi_{j, k}$ be close to $\varphi^{(0)}$ (i.e.\  that $\varphi   \in \Phi_{\varepsilon}(\varphi^{(0)})$ for $\varepsilon > 0$ small).  This freedom will allow us to show that the blow-ups constructed below are component-wise minimizing. 
} \end{remark}

\subsection{Graphical representation of Dirichlet energy minimizers with small excess}\label{grahpcail} 

We can now state the main results of this section,  Lemma~\ref{graphical_lemma} and Corollary~\ref{graphical_cor} below.  In Lemma~\ref{graphical_lemma}  and subsequently we shall use the following notation: 
For every $\gamma \in (0,1)$, $\zeta \in \mathbb{R}^{n-2}$, $\rho > 0$, and $\kappa \in (0,1]$, we let 
\begin{equation} \label{annuli_defn}
	A_{\rho,\kappa}(\zeta) = \left\{ (x,y) \in \mathbb{R}^2 \times \mathbb{R}^{n-2} : (|x| - \rho)^2 + |y - \zeta|^2 < \kappa^2 (1-\gamma)^2 \rho^2/16 \right\} .
\end{equation}

\begin{lemma} \label{graphical_lemma} 
Let $\varphi^{(0)}$ be as in Definition~\ref{varphi0_defn} and let $p \in \{p_0, p_0+1,\ldots, \lceil q/q_0 \rceil \}$.  Given $\gamma,\kappa \in (0,1)$, there exist $\overline{\varepsilon}, \overline{\beta} \in (0,1)$ depending only on $n$, $m$, $q$, $p$, $\alpha$, $\varphi^{(0)}$, $\gamma$, and $\kappa$ such that the following holds true.  Suppose that $u \in W^{1,2}(A_{1,1}(0);\mathcal{A}_q(\mathbb{R}^m))$ is an average-free energy minimizing function, $\varphi \in \Phi_{\overline{\varepsilon},p}(\varphi^{(0)})$ is as in Definition~\ref{varphi_defn}, 
\begin{equation} \label{graphical_eqn1}
	\int_{A_{1,1}(0)} \mathcal{G}(u(X),\varphi^{(0)}(X))^2 dX < \overline{\varepsilon}^2 
\end{equation}
and that either \begin{itemize} 
\item[(i)] $p = p_0$ or 
\item[(ii)] $p > p_0$ and 
\begin{equation} \label{graphical_eqn2}
	\int_{A_{1,1}(0)} \mathcal{G}(u,\varphi)^2 
	\leq \overline{\beta} \inf_{\varphi' \in \bigcup_{p'=p_0}^{p-1} \Phi_{c \overline{\varepsilon},p'}(\varphi^{(0)})} \int_{A_{1,1}(0)} \mathcal{G}(u,\varphi')^2, 
\end{equation}
\end{itemize}
where $c = c(n,\gamma) = 3 \left( \int_{A_{1,1}(0)} |(x_1,x_2)|^{2\alpha} \big/ \int_{B_1(0)} |(x_1,x_2)|^{2\alpha} \right)^{1/2}$.  
Then:
\begin{itemize}
\item[(a)] when $p > p_0$, 
\begin{equation} \label{graphical_eqn3}
	\inf_{X \in S^1 \times \mathbb{R}^{n-2}} \op{sep} \varphi(X) \geq C \inf_{\varphi' \in \bigcup_{p'=p_0}^{p-1} \Phi_{c \overline{\varepsilon},p'}(\varphi^{(0)})} \int_{A_{1,1}(0)} \mathcal{G}(u,\varphi')^2
\end{equation}
where the separation $\op{sep}$ is defined by \eqref{separation_defn} and $C = C(n,m,q,p,\alpha,\varphi^{(0)},\gamma,\kappa) \in (0,\infty)$ is a constant;
\item[(b)]  for each $j \in \{1, \ldots, J\}$ and $k \in \{1, \ldots, p_{j}\}$, there exists a unique function $$v_{j,k} : \op{graph} \varphi_{j,k} |_{A_{1,\kappa}(0)} \rightarrow \mathcal{A}_{m_{j,k}}(\mathbb{R}^m)$$ such that $u$ is given by \eqref{phiplusv} in $\Omega = A_{1,\kappa}(0)$, $v= (v_{j,k})$ is component-wise minimizing in $A_{1, \kappa}(0)$, and 
\begin{equation} \label{graphical_eqn4}
	\sup_{A_{1,\kappa}(0)} |\overline{v}_{j,k}|^2 +  [\overline{v}_{j, k}]^{2}_{\mu,A_{1,\kappa}(0)} + \int_{A_{1,\kappa}(0)} |D \overline{v}_{j,k}|^2 \leq C \int_{A_{1,1}(0)} \mathcal{G}(u,\varphi)^2, 
\end{equation}
where $\overline{v}_{j,k}$ are as in \eqref{vbar}, $\mu  = \mu(n, m, q) \in (0, 1)$ and $C = C(n,m,q,p,\alpha,\varphi^{(0)},\gamma,\kappa) \in (0,\infty)$ is a constant.  
\end{itemize}
\end{lemma}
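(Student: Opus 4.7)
The annular region $A_{1,1}(0)$ sits at a fixed positive distance from the axis $\{0\}\times\mathbb{R}^{n-2}$, so each $\varphi^{(0)}_j$ restricted to it is real analytic and the graphs $\op{graph}\varphi^{(0)}_j$ for distinct $j$ are pairwise separated by a positive amount $c_0=c_0(\varphi^{(0)},\gamma)$. Since every non-zero component $\varphi_{j,k}$ has the explicit form $\op{Re}(c_{j,k}(x_1+ix_2)^{\alpha})$, $L^2$ closeness of $\varphi_{j,k}$ to $\varphi^{(0)}_j$ on $B_1(0)$ upgrades via the mean value property to a sup bound $\sup_{A_{1,1}(0)}\mathcal{G}(\varphi_{j,k},\varphi^{(0)}_j)\le C(\varphi^{(0)},\gamma)\overline{\varepsilon}$. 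Hence for $\overline{\varepsilon}$ small, components of $\varphi$ in distinct clusters (different $j$) are pointwise separated on $A_{1,1}(0)$ by at least $c_0/2$. What is not automatic is the separation within a single cluster; controlling it is exactly the content of (a), and this control is what unlocks the decomposition in (b).

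\noindent\textbf{Proof of (a).} Assume $p>p_0$ and set $s=\inf_{X\in S^1\times\mathbb{R}^{n-2}}\op{sep}\varphi(X)$. By the setup, for $\overline{\varepsilon}$ small the infimum is realized by a collision inside a single cluster: there exist $j$ and $k\ne k'$ with $\varphi_{j,k}$ and $\varphi_{j,k'}$ pointwise close (or, if $\varphi^{(0)}_j\equiv 0$, one of them close to the zero component). I would construct a merged function $\varphi'\in\bigcup_{p'<p}\Phi_{c\overline{\varepsilon},p'}(\varphi^{(0)})$ by deleting $\varphi_{j,k'}$ and adding its multiplicity to $\varphi_{j,k}$ (or absorbing the near-zero piece into the zero component). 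Homogeneity of the components yields the scaling bound $\int_{A_{1,1}(0)}\mathcal{G}(\varphi,\varphi')^2\le C s^2$. Combining with triangle inequality and Hypothesis (ii),
\begin{equation*}
\int_{A_{1,1}(0)}\mathcal{G}(u,\varphi')^2
\le 2\int_{A_{1,1}(0)}\mathcal{G}(u,\varphi)^2+2Cs^2
\le 2\overline{\beta}\int_{A_{1,1}(0)}\mathcal{G}(u,\varphi')^2+2Cs^2,
\end{equation*}
so choosing $\overline{\beta}<1/4$ gives $\int\mathcal{G}(u,\varphi')^2\le C s^2$. Since this quantity is at most $1$ by smallness of $\overline{\varepsilon}$, we conclude $s\ge C^{-1}\int\mathcal{G}(u,\varphi')^2$ as required in \eqref{graphical_eqn3}.

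\noindent\textbf{Construction of $v_{j,k}$ for (b).} When $p=p_0$ the setup already gives $s_0:=\inf_{A_{1,\kappa}(0)}\op{sep}\varphi\ge c_0/2$; when $p>p_0$, part (a) provides a similar positive lower bound $s_0=s_0(\varphi^{(0)},\gamma,\kappa)$. On any ball $B=B_r(X_0)\subset A_{1,\kappa}(0)$ with $r$ small compared with $s_0$ and $\op{dist}(X_0,\{0\}\times\mathbb{R}^{n-2})$, the multi-valued $\varphi_{j,k}$ admits the single-valued harmonic selections $\varphi_{j,k,l}$ of \eqref{varphi_localized}. Applying \eqref{regestimate} to $u$ and interpolating with the $L^2$ smallness of $\mathcal{G}(u,\varphi^{(0)})$, one obtains $\sup_B \mathcal{G}(u,\varphi)\le C\overline{\varepsilon}^{\theta}$ for some $\theta=\theta(n,m,q)\in(0,1)$; for $\overline{\varepsilon}$ small this is much less than $s_0$. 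Hence at each $X\in B$ the $q$ values of $u(X)$ partition canonically into clusters indexed by the $\varphi_{j,k,l}(X)$, with exactly $m_{j,k}$ values clustered around each selection. Continuity of $u$ and $\varphi_{j,k,l}$ makes this partition continuous in $X$, defining $m_{j,k}$-valued functions $v_{j,k,l}:B\to\mathcal{A}_{m_{j,k}}(\mathbb{R}^m)$. Local selections agree on overlapping balls by uniqueness of the partition, producing the global $v_{j,k}$ on $\op{graph}\varphi_{j,k}|_{A_{1,\kappa}(0)}$ satisfying \eqref{phiplusv}.

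\noindent\textbf{Minimality and estimates.} For any ball $B\subset\subset A_{1,\kappa}(0)$ on which the $\varphi_{j,k,l}$ are well-defined, replacing $v_{j,k,l}$ by a competitor $\widetilde{v}_{j,k,l}\in W^{1,2}(B;\mathcal{A}_{m_{j,k}}(\mathbb{R}^m))$ with the same trace on $\partial B$ gives, via \eqref{phiplusv}, a competitor $\widetilde{u}$ for $u$ that agrees with $u$ outside $B$ and stays close enough to $\varphi$ for the cluster structure to persist. Harmonicity of $\varphi_{j,k,l}$ together with integration by parts kills the cross terms $\int_B D\varphi_{j,k,l}\cdot D(v_{j,k,l,h}-\widetilde{v}_{j,k,l,h})$, yielding $\int_B(|Du|^2-|D\widetilde{u}|^2)=\int_B(|Dv_{j,k,l}|^2-|D\widetilde{v}_{j,k,l}|^2)$. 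Minimality of $u$ thus gives component-wise minimality of each $v_{j,k,l}$ in the sense of Definition~\ref{compwisemin_defn}. The $L^\infty$, H\"older and energy bounds in \eqref{graphical_eqn4} then follow from \eqref{regestimate} applied to each $v_{j,k,l}$ on balls of size comparable to $\op{dist}(\,\cdot\,,\{0\}\times\mathbb{R}^{n-2})$, combined with the $L^2$ bound $\int_B|\overline{v}_{j,k}|^2\le C\int_{A_{1,1}(0)}\mathcal{G}(u,\varphi)^2$ that is immediate from the construction in the previous paragraph, followed by a finite covering of $A_{1,\kappa}(0)$. The main obstacle throughout is the combinatorial step in paragraph three: verifying that exactly $m_{j,k}$ values of $u(X)$ lie in each cluster and that this multiplicity is locally constant in $X$. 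This requires both the uniform separation $s_0$ and the sup closeness of $u$ to $\varphi$ to rule out jumps as $X$ varies, and, when $q_0\ge 2$, a monodromy check ensuring that the $m_{j,k}$ branches of $v_{j,k,l}$ are correctly identified around loops in the annular region.
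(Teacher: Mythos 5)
Your argument for part~(a) has a genuine gap at the step \(\int_{A_{1,1}(0)}\mathcal{G}(\varphi,\varphi')^2\le Cs^2\). The quantity \(s=\inf_{X\in S^1\times\mathbb{R}^{n-2}}\op{sep}\varphi(X)\) is a \emph{pointwise} minimum, and this does not control the \(L^2\) (or the coefficient) distance between two distinct cylindrical components. Indeed, the graphs of two non-zero components \(\varphi_{j,k}(X)=\op{Re}(c_{j,k}(x_1+ix_2)^{\alpha})\) and \(\varphi_{j,k'}(X)=\op{Re}(c_{j,k'}(x_1+ix_2)^{\alpha})\) can intersect on \(S^1\times\mathbb{R}^{n-2}\) (for example when \(m=1\), or more generally whenever, for some \(l,l'\), \(\op{Re}\bigl((c_{j,k}e^{i2\pi\alpha l}-c_{j,k'}e^{i2\pi\alpha l'})e^{i\alpha\theta}\bigr)\) has a zero in \(\theta\)), giving \(s=0\) while \(|c_{j,k}-c_{j,k'}|\) and hence \(\int_{A_{1,1}(0)}\mathcal{G}(\varphi_{j,k},\varphi_{j,k'})^2\) remain of order one. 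So your merge-and-triangle-inequality argument cannot produce a \(\varphi'\in\bigcup_{p'<p}\Phi_{c\overline\varepsilon,p'}(\varphi^{(0)})\) that is within \(O(s^2)\) of \(\varphi\) in \(L^2\), and the claimed conclusion \(s\ge C^{-1}\inf\int\mathcal{G}(u,\varphi')^2\) (in particular, that \(s>0\)) does not follow.

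The positivity of \(s\) is precisely the non-trivial content of part~(a), and it cannot be read off the algebraic form of \(\varphi\); it requires the energy-minimality of \(u\). The paper's argument is an induction on \(p\) together with a compactness blow-up: one picks a near-optimal \(\phi\) with fewer nonzero components, blows up both \(u^{(\nu)}\) and \(\varphi^{(\nu)}\) relative to \(\phi\) to obtain a common component-wise minimizing, homogeneous, \(y\)-invariant blow-up \(w\), and uses the "corrected" cylindrical functions \(\widetilde\varphi^{(\nu)}\) built from \(w\) to transfer the separation problem to \(w\). The crucial estimate (equation~\eqref{graphical_eqn13} in the paper) states that a locally Dirichlet-minimizing cylindrical function \(\sum_h\llbracket\op{Re}(a_h(x_1+ix_2)^\alpha)\rrbracket\) with separated coefficients has a uniformly positive separation at \((1,0,0)\); this is proved by contradiction using that a limit with vanishing off-axis separation would be a locally minimizing function with a singular point off the axis \(\{0\}\times\mathbb{R}^{n-2}\), which is impossible. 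Your proposal entirely bypasses this minimality input, which is why it fails. The outline you give for part~(b) — local partition of the values of \(u\) using the separation, gluing the local selections, and the competitor/integration-by-parts argument for component-wise minimality — is essentially the paper's argument, but it is conditional on the quantitative positive separation that part~(a) must deliver.
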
 

\begin{remark} {\rm 
In Lemma~\ref{graphical_lemma}, $c$ is chosen so that $\varphi' \in \Phi_{c \overline{\varepsilon}}(\varphi^{(0)})$ if and only if $\int_{A_{1,1}(0)} \mathcal{G}(\varphi',\varphi^{(0)})^2 \leq (3\overline{\varepsilon})^2$. 
} \end{remark} 

To prove the excess decay lemma, Lemma~\ref{main_lemma}, we need a variant of Lemma~\ref{graphical_lemma}, Corollary~\ref{graphical_cor} below.  Let $\varphi^{(0)}$ be as in Definition~\ref{varphi0_defn} and let $p \in \{p_0, p_0+1, \ldots, \lceil q/q_0 \rceil \}$. In Corollary~\ref{graphical_cor} and in a number of other results in subsequent sections, we shall make the first or both of the following two hypotheses with a choice of appropriately small constants 
$\varepsilon_{0}$ and $\b_{0}$ (that depend on $\varphi^{(0)}$). 

\noindent
{\bf Hypothesis~$(\star)$:}
$u \in W^{1,2}(B_1(0);\mathcal{A}_q(\mathbb{R}^m))$ is an average-free locally energy minimizing function with 
 \begin{equation*}
	\int_{B_1(0)} \mathcal{G}(u(X),\varphi^{(0)}(X))^2 dX < \varepsilon_0^2. 
\end{equation*}

\noindent
{\bf Hypothesis~$(\star\star)$:} $u \in W^{1,2}(B_1(0);\mathcal{A}_q(\mathbb{R}^m))$ is an average-free locally energy minimizing function, $\varphi \in \Phi_{\varepsilon_0,p}(\varphi^{(0)})$ and either 
\begin{itemize}
\item[(i)] $p = p_0$ or 
\item[(ii)] $p > p_0$ and 
\begin{equation} \label{graphicalcor_eqn1}
	\int_{B_1(0)} \mathcal{G}(u,\varphi)^2 
	\leq \beta_0 \inf_{\varphi' \in \bigcup_{p'=p_0}^{p-1} \Phi_{3\varepsilon_0,p'}(\varphi^{(0)})} \int_{B_1(0)} \mathcal{G}(u,\varphi')^2. 
\end{equation}
\end{itemize}

\begin{corollary} \label{graphical_cor}  
Let $\varphi^{(0)}$ be as in Definition~\ref{varphi0_defn} and let $0 < \tau < \gamma < 1$.  There exist $\varepsilon_0, \beta_0 \in (0,1)$ depending only on $n$, $m$, $q$, $\alpha$, $\varphi^{(0)}$, $\gamma$, and $\tau$ such that if $u$, $\varphi$ satisfy 
Hypothesis~$(\star)$ and Hypothesis~$(\star\star)$, then:
\begin{itemize}
\item[(a)] when $p > p_0$, 
\begin{equation} \label{graphicalcor_eqn2}
	\inf_{X \in S^1 \times \mathbb{R}^{n-2}} \op{sep} \varphi(X) \geq C \inf_{\varphi' \in \bigcup_{p'=p_0}^{p-1} \Phi_{3\varepsilon_0,p'}(\varphi^{(0)})} \int_{B_1(0)} \mathcal{G}(u,\varphi')^2
\end{equation}
where the separation $\op{sep}$ is defined by \eqref{separation_defn} and $C = C(n,m,q,\alpha,\varphi^{(0)}) \in (0,\infty)$ is a constant independent of $\tau$; 
\item[(b)] for each $j \in \{1, \ldots, J\}$ and $k \in \{1, \ldots, p_{j}\}$, there exists a unique function 
$$v_{j,k} : \op{graph} \varphi_{j,k} |_{B_{\gamma}(0) \cap \{r > \tau\}} \rightarrow \mathcal{A}_{m_{j,k}}(\mathbb{R}^m)$$ 
such that $u$ is given by \eqref{phiplusv} in $B_{\gamma}(0) \cap \{r > \tau\}$, $v= (v_{j,k})$ is component-wise minimizing, and 
\begin{equation} \label{graphicalcor_eqn3}
	\tau^n \sup_{B_{\gamma}(0) \cap \{r > \tau\}} |\overline{v}_{j,k}|^2 + \tau^{n+2\mu} [\overline{v}_{j,k}]_{\mu,B_{\gamma}(0) \cap \{r > \tau\}}^2 
		+ \int_{B_{\gamma}(0) \cap \{r > \tau\}} r^2 |D \overline{v}_{j,k}|^2 
	\leq C \int_{B_1(0)} \mathcal{G}(u,\varphi)^2, 
\end{equation}
where $r = r(X) = |(x_1,x_2)|$ for $X \in B_1(0)$, $\overline{v}_{j,k}$ are as in \eqref{vbar}, and $C = C(n,m,q,\alpha,\varphi^{(0)},\gamma) \in (0,\infty)$ is a constant independent of $\tau$.  
\end{itemize}
\end{corollary}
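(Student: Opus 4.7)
The plan is to reduce Corollary~\ref{graphical_cor} to Lemma~\ref{graphical_lemma} by rescaling and a covering argument that exploits the fact that each $\varphi \in \Phi_{\varepsilon_0, p}(\varphi^{(0)})$ is cylindrical and homogeneous of degree $\alpha$, hence invariant under the rescaling $\varphi(\cdot) \mapsto r_0^{-\alpha}\varphi(r_0\,\cdot + (0, y_0))$ for any $r_0 > 0$ and $y_0 \in \mathbb{R}^{n-2}$.

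For each $Z_0 = (x_0, y_0) \in B_\gamma(0)\cap\{r>\tau\}$ with $r_0 = |x_0|$, define $\tilde u(X) = r_0^{-\alpha} u(r_0 X + (0, y_0))$. With the parameter $\gamma$ in Lemma~\ref{graphical_lemma} fixed at some value close to $1$ (in terms of the corollary's $\gamma$), one arranges that $A_{r_0, r_0}((0, y_0)) \subset B_1(0)$ for every admissible $Z_0$, so $\tilde u$ is an average-free local energy minimizer on a neighborhood of $A_{1,1}(0)$. The change-of-variables identity
\[
\int_{A_{1,1}(0)}\mathcal{G}(\tilde u, \varphi^{(0)})^2
= r_0^{-n-2\alpha}\int_{A_{r_0, r_0}((0, y_0))}\mathcal{G}(u, \varphi^{(0)})^2
\le \tau^{-n-2\alpha}\int_{B_1(0)}\mathcal{G}(u, \varphi^{(0)})^2
\]
combined with Hypothesis~$(\star)$ makes the left-hand side less than $\overline{\varepsilon}^2$ once $\varepsilon_0$ is chosen small enough in terms of $\tau$ and $\gamma$. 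In the case $p > p_0$ the comparison hypothesis \eqref{graphical_eqn2} for $(\tilde u, \varphi)$, upon rescaling, reduces to
\[
\int_{A_{r_0, r_0}((0, y_0))}\mathcal{G}(u, \varphi)^2
\le \overline{\beta}\inf_{\varphi'\in\bigcup_{p'<p}\Phi_{c\overline{\varepsilon},p'}(\varphi^{(0)})}
\int_{A_{r_0, r_0}((0, y_0))}\mathcal{G}(u, \varphi')^2.
\]
This follows from Hypothesis~$(\star\star)$ \eqref{graphicalcor_eqn1} once one establishes the uniform (in $r_0\in[\tau,\gamma]$ and admissible $y_0$) comparability of the $L^2(A_{r_0, r_0}((0, y_0)))$ and $L^2(B_1(0))$ distances between cylindrical homogeneous functions of degree $\alpha$; this is an elementary calculation with the $|x|^{2\alpha}$ radial weight these functions carry and their translation invariance in $y$. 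Combining this comparability with the triangle inequality and absorption of error terms delivers the required estimate provided $\varepsilon_0$ and $\beta_0$ are chosen small in terms of $\tau$ and $\gamma$.

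Having verified the hypotheses of Lemma~\ref{graphical_lemma}, the lemma provides unique $\tilde v_{j,k}$ on $\op{graph}\varphi_{j,k}|_{A_{1,\kappa}(0)}$ (with $\kappa$ fixed at, say, $1/2$) satisfying \eqref{graphical_eqn4}. Rescaling back via $\overline{\tilde v}_{j,k}(X) = r_0^{-\alpha}\overline v_{j,k}(r_0 X + (0, y_0))$ converts \eqref{graphical_eqn4} into
\[
\sup_{A_{r_0,\kappa r_0}((0,y_0))}|\overline v_{j,k}|^2
+ r_0^{2\mu}[\overline v_{j,k}]_\mu^2
+ r_0^{2-n}\!\!\int_{A_{r_0,\kappa r_0}((0,y_0))}|D\overline v_{j,k}|^2
\le C r_0^{-n}\!\!\int_{A_{r_0, r_0}((0,y_0))}\mathcal{G}(u, \varphi)^2.
\]
A Besicovitch-type cover of $B_\gamma(0)\cap\{r>\tau\}$ by such annular regions at scales $r_0\in[\tau,\gamma]$ with bounded overlap multiplicity, combined with the uniqueness assertion in Lemma~\ref{graphical_lemma} to glue consistently across overlaps, produces the global $v_{j,k}$ on $\op{graph}\varphi_{j,k}|_{B_\gamma(0)\cap\{r>\tau\}}$.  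Summing the localized estimates, using $r\sim r_0$ on each annular piece and $r_0\ge\tau$, yields \eqref{graphicalcor_eqn3}.  Part~(a) follows similarly from \eqref{graphical_eqn3} applied at scale $r_0 = 1$ together with the same $B_1$-versus-$A_{1,1}$ norm comparability applied to the competitor functions $\varphi'$.

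The principal obstacle is the uniform comparability of $L^2$ norms of cylindrical homogeneous functions on $A_{r_0, r_0}((0, y_0))$ versus on $B_1(0)$: without it, the parameters $\varepsilon_0$ and $\beta_0$ would have to depend badly on $r_0$ and $y_0$, precluding the uniform rescaling.  Fortunately this comparability is the only nontrivial ingredient beyond the routine scaling and covering machinery.
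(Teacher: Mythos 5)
Your proposal follows essentially the same route as the paper's proof: rescale around each point $(\xi,\zeta)\in B_\gamma(0)\cap\{r>\tau\}$ by $\rho=|\xi|$, verify the hypotheses of Lemma~\ref{graphical_lemma} on $A_{1,1}(0)$ via the homogeneity of cylindrical functions and a comparison of $L^2$ norms over $A_{\rho,1}(\zeta)$ and $B_1(0)$, then glue the resulting graphical representations using the uniqueness assertion in Lemma~\ref{graphical_lemma} and close with a covering argument.

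Two small points worth flagging. First, the rescaling $X\mapsto r_0X+(0,y_0)$ takes $A_{1,1}(0)$ to $A_{r_0,1}(y_0)$, not $A_{r_0,r_0}(y_0)$ as written; the second subscript in $A_{\rho,\kappa}$ is the fraction $\kappa\in(0,1]$ of the full annular thickness, not a length scale. Second, for part~(a) you propose applying Lemma~\ref{graphical_lemma} ``at scale $r_0=1$,'' but $A_{1,1}(0)\not\subset B_1(0)$ (it reaches out to $|X|\approx 1+(1-\gamma)/4$), so one cannot apply the lemma directly at unit scale; the paper fixes this by rescaling by a factor of $4$ (applying the lemma to $4^{-\alpha}u(X/4)$ and the annulus $A_{1/4,1}(0)\subset B_1(0)$), which also explains why the resulting constant $C$ in \eqref{graphicalcor_eqn2} is independent of $\tau$ --- the annulus used there is at a $\tau$-independent fixed scale. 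With those corrections the argument coincides with the paper's.
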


\begin{remark} \label{graphical_rmk} {\rm (a) Let $\Omega$ be an open subset of $\mathbb{R}^n$, $\varepsilon > 0$ be small, $p \in \{p_0+1, \ldots, \lceil q/q_0 \rceil \}$, $\varphi^{(0)}$ be as in Definition~\ref{varphi0_defn}, and $u \in L^2(\Omega;\mathcal{A}_q(\mathbb{R}^m))$.  By the compactness of $\Phi_{\varepsilon}(\varphi^{(0)})$, there exists $\phi \in \bigcup_{p'=p_0}^{p-1} \Phi_{\varepsilon,p'}(\varphi^{(0)})$ such that 
\begin{equation*}
	\int_{\Omega} \mathcal{G}(u,\phi)^2 
	= \inf_{\varphi' \in \bigcup_{p'=p_0}^{p-1} \Phi_{\varepsilon,p'}(\varphi^{(0)})} \int_{\Omega} \mathcal{G}(u,\varphi')^2. 
\end{equation*}
It follows that 
\begin{equation*}
	\left( \int_{\Omega} \mathcal{G}(\phi,\varphi^{(0)})^2 \right)^{1/2} 
	\leq \left( \int_{\Omega} \mathcal{G}(u,\phi)^2 \right)^{1/2}  + \left( \int_{\Omega} \mathcal{G}(u,\varphi^{(0)})^2 \right)^{1/2}  
	\leq 2 \left( \int_{\Omega} \mathcal{G}(u,\varphi^{(0)})^2 \right)^{1/2}. 
\end{equation*}
Thus, assuming Hypothesis~$(\star)$, \eqref{graphicalcor_eqn1} is equivalent to 
\begin{equation*}
	\int_{B_1(0)} \mathcal{G}(u,\varphi)^2 
	\leq \beta_0 \inf_{\varphi' \in \bigcup_{p'=p_0}^{p-1} \Phi_{\varepsilon,p'}(\varphi^{(0)})} \int_{B_1(0)} \mathcal{G}(u,\varphi')^2, 
\end{equation*}
for any $\varepsilon  > 2\varepsilon_0$ such that \eqref{varphi_welldefined} holds true. A similar assertion of course holds concerning  \eqref{graphical_eqn2}. 

\noindent (b) Let the hypotheses be as in Lemma~\ref{graphical_lemma}, and let $c_{j, k}$ correspond to $\varphi_{j, k}$ as in the Definition~\ref{varphi_defn}. It is clear that provided $\overline{\beta}$ is sufficiently small, \eqref{graphical_eqn2} implies that for each $j \in \{1, \ldots, J\}$,
\begin{equation} \label{graphical_eqn5}
	|c_{j,k} - c_{j,k'}|^2 \geq C(n,m,q,\alpha) \inf_{\varphi' \in \bigcup_{p'=p_0}^{p-1} \Phi_{c \overline{\varepsilon},p'}(\varphi^{(0)})} 
		\int_{A_{1,1}(0)} \mathcal{G}(u,\varphi')^2
\end{equation}
whenever $k \neq k'$ and $\varphi_{j,k}, \varphi_{j,k'}$ are both non-zero, and 
\begin{equation} \label{graphical_eqn6}
	|c_{j,k}|^2 \geq C(n,m,q,\alpha) \inf_{\varphi' \in \bigcup_{p'=p_0}^{p-1} \Phi_{c \overline{\varepsilon},p'}(\varphi^{(0)})} 
		\int_{A_{1,1}(0)} \mathcal{G}(u,\varphi')^2
\end{equation}
whenever $\varphi_{j,k}$ is non-zero. 
In principle, it is still possible that the graphs of different components of $\varphi$ might intersect or be close to intersecting. The estimate \eqref{graphical_eqn3} (and similarly \eqref{graphicalcor_eqn2}) is a quantitative statement ruling this out in the case that $\varphi$ is sufficiently close to a $q$-valued energy minimizing function $u$, i.e.\ when \eqref{graphical_eqn1} and \eqref{graphical_eqn2} hold true. 
} \end{remark}

We shall give the proofs of Lemma~\ref{graphical_lemma} and Corollary~\ref{graphical_cor} at the end of this section.  The proof of Lemma~\ref{graphical_lemma} will proceed by induction on $p$ and will make use of a blow-up procedure that inductively uses the lemma itself.  We shall next discuss this blow-up procedure. This procedure will again play a direct role in the proof of the excess decay lemma, Lemma~\ref{main_lemma}.

\subsection{Blow-ups of Dirichlet energy minimizers relative to cylindrical functions} \label{blow-up-procedure} 
Let $\Omega = A_{1,1}(0)$ or $\Omega = B_1(0)$.  Let $p \in \{p_0, p_0+1,\ldots, \lceil q/q_0 \rceil \}.$  Suppose that $\varepsilon_{\nu} \downarrow 0$ as $\nu \rightarrow \infty$ and that $0 < \beta_{\nu} \leq \overline{\beta}$ if $\Omega = A_{1,1}(0)$, where $\overline{\beta}$ is as in Lemma~\ref{graphical_lemma}, or $\beta_{\nu} \downarrow 0$ as $\nu \rightarrow \infty$ if $\Omega = B_1(0)$.  For $\nu = 1,2,3,\ldots$, let $u^{(\nu)} \in W^{1,2}(\Omega;\mathcal{A}_q(\mathbb{R}^m))$ be an average-free energy minimizing function and $\varphi^{(\nu)} \in \Phi_{\varepsilon_{\nu},p}(\varphi^{(0)})$ such that 
\begin{equation*}
	\int_{\Omega} \mathcal{G}(u^{(\nu)}(X),\varphi^{(0)}(X))^2 dX < \varepsilon_{\nu}^2 
\end{equation*}
and that either 
\begin{itemize}
\item[(i)] $p = p_0$ or 
\item[(ii)] $p > p_0$ and 
\begin{equation} \label{fineblowups_eqn1}
	\int_{\Omega} \mathcal{G}(u^{(\nu)},\varphi^{(\nu)})^2 
	\leq \beta_{\nu} \inf_{\varphi' \in \bigcup_{p'=p_0}^{p-1} \Phi_{c\varepsilon_\nu,p'}(\varphi^{(0)})} \int_{\Omega} \mathcal{G}(u^{(\nu)},\varphi')^2 ,
\end{equation}
where $c$ is as in Lemma~\ref{graphical_lemma} if $\Omega = A_{1,1}(0)$ and $c = 3$ if $\Omega = B_1(0)$.  
\end{itemize}
Choosing notation consistent with Definition~\ref{varphi_defn}, write 
\begin{equation*}
	\varphi^{(\nu)} = \sum_{j=1}^J \sum_{k=1}^{p_j} m_{j,k} \varphi^{(\nu)}_{j,k} 
\end{equation*} 
on $\mathbb{R}^n$, where $\varphi^{(\nu)}_{j,k}$ denote the components of $\varphi^{(\nu)}$ that are close to $\varphi^{(0)}_j$ and have multiplicity $m_{j,k}$.  After passing to a subsequence we may assume that $p_j$ and $m_{j,k}$ are independent of $\nu$.  By applying Lemma~\ref{graphical_lemma} if $\Omega = A_{1,1}(0)$ or Corollary \ref{graphical_cor} if $\Omega = B_1(0)$, there exist: 
\begin{itemize}
\item[(i)] open sets $\Omega_{\nu} \subset\subset \Omega \setminus \{0\} \times \mathbb{R}^{n-2}$ such that 
\begin{equation*}
	\Omega_{\nu} \subseteq \Omega_{\nu+1} \text{ for all } \nu, \hspace{10mm} \Omega = \bigcup_{\nu=1}^{\infty} \Omega_{\nu};
\end{equation*}
\item[(ii)] $m_{j,k}$-valued functions $v^{(\nu)}_{j,k} : \op{graph} \varphi^{(\nu)}_{j,k} |_{\Omega_{\nu}} \rightarrow \mathcal{A}_{m_{j,k}}(\mathbb{R}^m)$ such that \eqref{phiplusv} holds true with $\Omega_{\nu}$, $u^{(\nu)}$, $\varphi^{(\nu)}$, $v^{(\nu)}_{j,k}$ in place of $\Omega$, $u$, $\varphi$, $v_{j,k}$;
\item[(iii)] $v^{(\nu)} = (v^{(\nu)}_{j,k})$ is component-wise minimizing in $\Omega_{\nu}$, and 
\begin{equation} \label{fineblowups_eqn2}
	\sup_{\Omega'} |\overline{v}^{(\nu)}_{j,k}|^2 + [\overline{v}^{(\nu)}_{j,k}]_{\mu,\Omega'}^2 + \int_{\Omega'} |D\overline{v}^{(\nu)}_{j,k}|^2 
	\leq C \int_{\Omega} \mathcal{G}(u^{(\nu)},\varphi^{(\nu)})^2 
\end{equation}
whenever $\Omega' \subset\subset  \Omega_{\nu}$, where $\overline{v}^{(\nu)}_{j,k,l}$ is as in \eqref{vbar} with $\varphi^{(\nu)}$, $v^{(\nu)}$ in place of $\varphi$, $v$ and $C = C(n,m,q,\varphi^{(0)},\Omega',\Omega) \in (0,\infty)$ a constant independent of $\nu$.  
\end{itemize}
For each $\nu$, let 
\begin{equation*}
	E_{\nu} = \left( \int_{\Omega} \mathcal{G}(u^{(\nu)},\varphi^{(\nu)})^2 \right)^{1/2}.
\end{equation*}
By passing to a subsequence of $(\nu)$, we may assume that for each given $j \in \{1, \ldots, J\}$ and $k \in \{1, \ldots, p_{j}\}$ one of the following three possibilities holds: 
\begin{enumerate}
\item[(a)] $\varphi^{(0)}_j$ is not identically zero, 
\item[(b)] $\varphi^{(0)}_j \equiv 0$ and $\varphi^{(\nu)}_{j,k} \equiv 0$ for all $\nu = 1,2,3,\ldots$, and 
\item[(c)] $\varphi^{(0)}_j \equiv 0$ but $\varphi^{(\nu)}_{j,k}$ is not identically zero for all $\nu = 1,2,3,\ldots$. 
\end{enumerate}

We shall construct cylindrical functions $\varphi_{j, k}^{(\infty)}$ and functions (blow-ups) 
$$w_{j,k}: \op{graph} \varphi^{(\infty)}_{j,k} |_{\Omega \setminus \{0\} \times {\mathbb R}^{n-2}} \rightarrow \mathcal{A}_{m_{j,k}}(\mathbb{R}^m)$$
by considering these three cases as follows: 

\noindent \textbf{Case (a):} Let $X = (re^{i\theta},y)$ denote cylindrical coordinates on $\mathbb{R}^n$, where $r > 0$, $\theta \in \mathbb{R}$, and $y \in \mathbb{R}^{n-2}$.  Let $\varphi^{(\nu)}_{j,k}(re^{i\theta},y) = \op{Re}(c^{(\nu)}_{j,k} r^{\alpha} e^{i\alpha \theta})$ for $c^{(\nu)}_{j,k} \in \mathbb{C}^m$ with $|c^{(\nu)}_{j,k} - c^{(0)}_j| \leq C(n,q,\alpha) \varepsilon_{\nu}$ (where $c^{(0)}_j$ is as in Definition~\ref{varphi0_defn}).  
Observe that $\op{Re}(c^{(\nu)}_{j,k} r^{\alpha} e^{i\alpha \theta})$ and $\op{Re}(c^{(0)}_j r^{\alpha} e^{i\alpha \theta})$ are well-defined single-valued functions of $r > 0$, $\theta \in \mathbb{R}$, and $y \in \mathbb{R}^{n-2}$ which are $2\pi q_0$-periodic as functions of $\theta$.  In particular, for each $r > 0$, $\theta \in \mathbb{R}$, and $y \in \mathbb{R}^{n-2}$, $(re^{i\theta},y, \op{Re}(c^{(0)}_j r^{\alpha} e^{i\alpha \theta}))$ and $(re^{i\theta},y, \op{Re}(c^{(\nu)}_{j,k} r^{\alpha} e^{i\alpha \theta}))$ are well-defined points on ${\rm graph}\,\varphi^{(0)}$ and ${\rm graph}\,\varphi^{(\nu)}$ respectively.
Define $w^{(\nu)}_{j,k} : \op{graph} \varphi^{(0)}_j |_{\Omega_{\nu}} \rightarrow \mathcal{A}_{m_{j,k}}(\mathbb{R}^m)$ by 
\begin{equation*}
	w^{(\nu)}_{j,k}(re^{i\theta},y, \op{Re}(c^{(0)}_j r^{\alpha} e^{i\alpha \theta})) 
	= v^{(\nu)}_{j,k}(re^{i\theta},y, \op{Re}(c^{(\nu)}_{j,k} r^{\alpha} e^{i\alpha \theta}))/E_{\nu}
\end{equation*} 
for every $r > 0$, $\theta \in \mathbb{R}$, and $y \in \mathbb{R}^{n-2}$ such that $X = (re^{i\theta},y) \in \Omega_{\nu}$.  
By \eqref{fineblowups_eqn2} and the compactness of multivalued energy minimizing functions (see Lemma~\ref{compactness_lemma}), after passing to a subsequence, there exists a function $w_{j,k} : \op{graph} \varphi^{(0)}_j |_{\Omega \setminus \{0\} \times {\mathbb R}^{n-2}} \rightarrow \mathcal{A}_{m_{j,k}}(\mathbb{R}^m)$ such that 
\begin{equation*}
	w^{(\nu)}_{j,k} \to w_{j,k} \text{ uniformly on } {\rm graph}\,\varphi^{(0)} |_{\Omega'}
\end{equation*}
for each $\Omega' \subset\subset \Omega \setminus \{0\} \times \mathbb{R}^{n-2}$ and $(w_{j,k})$ is component-wise minimizing in $\Omega \setminus \{0\} \times {\mathbb R}^{n-2}$.

\noindent \textbf{Case (b):}  Define $w^{(\nu)}_{j,k} : \op{graph} \varphi^{(0)}_j |_{\Omega_{\nu}} \rightarrow \mathcal{A}_{m_{j,k}}(\mathbb{R}^m)$ by $w^{(\nu)}_{j,k}(X,0) = v^{(\nu)}_{j,k}(X,0)/E_{\nu}$ for all $X \in \Omega_{\nu}$.  By \eqref{fineblowups_eqn2} and the compactness of multivalued energy minimizing functions, after passing to a subsequence, there exists a function $w_{j,k} : \op{graph} \varphi^{(0)}_j |_{\Omega \setminus \{0\} \times {\mathbb R}^{n-2}} \rightarrow \mathcal{A}_{m_{j,k}}(\mathbb{R}^m)$ such that $w^{(\nu)}(\cdot,0) = v^{(\nu)}_{j,k}(\cdot,0)/E_{\nu} \to w_{j,k}(\cdot,0)$ uniformly on each compact subset of $\Omega \setminus \{0\} \times \mathbb{R}^{n-2}$ and $(w_{j,k})$ is component-wise minimizing in $\Omega \setminus \{0\} \times {\mathbb R}^{n-2}$.

\noindent \textbf{Case (c):}  This case is more complicated than the cases (a) and (b).  The difficulty is that since $\varphi_{j,k}^{(\nu)}$ is a nonzero branched $q_0$-valued function and $\varphi_j^{(0)}$ is the single-valued zero function, there is no good way to pair the values of $\varphi_{j,k}^{(\nu)}$ and $\varphi_j^{(0)}$ unless $q_0 = 1$.  
(We might, for example, attempt to define $w^{(\nu)}_{j,k} : {\rm graph}\,\varphi^{(0)}_j |_{\Omega_{\nu}} \rightarrow \mathcal{A}_{m_{j,k} q_0}(\mathbb{R}^m)$ by 
\begin{equation*}
	w^{(\nu)}_{j,k}(re^{i\theta},y,0) = \sum_{l=1}^{q_0} \sum_{h=1}^{m_{j,k}} \llbracket v^{(\nu)}_{j,k,l,h}(X)/E_{\nu} \rrbracket , 
\end{equation*}
where $v^{(\nu)}_{j,k}$ is as in \eqref{v_localized_1}, 
and let $w^{(\nu)}_{j,k}(\cdot,0) \to w_{j,k}(\cdot,0)$ uniformly on compact subsets of $\Omega \setminus \{0\} \times \mathbb{R}^{n-2}$.  But then we cannot make sense, as we shall need to, of a $q$-valued function $\widetilde{u}^{(\nu)}$ close to $u^{(\nu)}$ that takes (roughly) the form   
\begin{equation*}
	\widetilde{u}^{(\nu)}(X) = \sum_{j=1}^J \sum_{k=1}^{p_j} \llbracket \varphi^{(\nu)}_{j,k}(X) + E_{\nu} w_{j,k}(X,\varphi^{(0)}_j(X)) \rrbracket
\end{equation*}
since 
$\varphi^{(0)}_j$ is the single-valued zero function whereas $\varphi^{(\nu)}_{j,k}$ is a nonzero branched $q_0$-valued function for all $\nu$ and thus there is no canonical way to pair the values of $\varphi^{(\nu)}_{j,k}(X)$ and $w_{j,k}(X,0)$.) We proceed as follows:

Suppose that $\varphi^{(0)}_j \equiv 0$ and that $\varphi^{(\nu)}_{j,k}$ is not identically zero for all $\nu$.  Let $\varphi^{(\nu)}_{j,k}(re^{i\theta},y) = \op{Re}(c^{(\nu)}_{j,k} r^{\alpha} e^{i\alpha \theta})$ for $c^{(\nu)}_{j,k} \in \mathbb{C} \setminus \{0\}$.  After passing to a subsequence, let $c^{(\nu)}_{j,k}/|c^{(\nu)}_{j,k}| \rightarrow c^{(\infty)}_{j,k}$ and let $\varphi^{(\infty)}_{j,k}(re^{i\theta},y) = \op{Re}(c^{(\infty)}_{j,k} r^{\alpha} e^{i\alpha \theta})$.  Note that ${\rm graph}\, \varphi^{(\infty)}_{j,k} |_{\mathbb{R}^n \setminus \{0\} \times \mathbb{R}^{n-2}}$ is an immersed submanifold of $(\mathbb{R}^n \setminus \{0\} \times \mathbb{R}^{n-2}) \times \mathbb{R}^m$.  
Define $w^{(\nu)}_{j,k} : \op{graph} \varphi^{(\infty)}_{j,k} |_{\Omega_{\nu}} \rightarrow \mathcal{A}_{m_{j,k}}(\mathbb{R}^m)$ by 
\begin{equation*}
	w^{(\nu)}_{j,k}(re^{i\theta},y, \op{Re}(c^{(\infty)}_j r^{\alpha} e^{i\alpha \theta})) 
	= v^{(\nu)}_{j,k}(re^{i\theta},y, \op{Re}(c^{(\nu)}_{j,k} r^{\alpha} e^{i\alpha \theta}))/E_{\nu}
\end{equation*} 
for all $r > 0$, $\theta \in \mathbb{R}$, and $y \in \mathbb{R}^{n-2}$ such that $X = (re^{i\theta},y) \in \Omega_{\nu}$.  As before, after passing to a subsequence, there exists a function $w_{j,k} : \op{graph} \varphi^{(\infty)}_{j,k} |_{\Omega \setminus \{0\} \times {\mathbb R}^{n-2}} \rightarrow \mathcal{A}_{m_{j,k}}(\mathbb{R}^m)$ such that 
\begin{equation*}
	w^{(\nu)}_{j,k} \to w_{j,k} \text{ uniformly on } {\rm graph}\,\varphi^{(\infty)} |_{\Omega'}
\end{equation*}
for each $\Omega' \subset\subset \Omega \setminus \{0\} \times \mathbb{R}^{n-2}$ and $(w_{j,k})$ is component-wise minimizing in $\Omega \setminus \{0\} \times {\mathbb R}^{n-2}$. 

We will say that $w = (w_{j,k})$ is \textit{a blow-up of $u^{(\nu)}$ relative to $\varphi^{(\nu)}$ by the excess $E_{\nu}$}.  

In case $\varphi^{(0)}_j$ is not identically zero for some $j \in \{1, \ldots, J\},$ we let $\varphi^{(\infty)}_{j,k} = \varphi^{(0)}_j$ for each $k \in \{1, \ldots, p_{j}\}$. Similarly, in case $\varphi^{(0)}_j \equiv 0$ and $\varphi^{(\nu)}_{j,k} \equiv 0$ for some $j \in \{1, \ldots, J\}$ and $k \in \{1, \dots, p_{j}\}$, we let $\varphi^{(\infty)}_{j,k} \equiv 0.$ Thus in all three cases (a), (b) and (c) above, $w_{j,k}$ is a function on $\op{graph} \varphi^{(\infty)}_{j,k} |_{\Omega \setminus \{0\} \times {\mathbb R}^{n-2}}$ for all $j \in \{1, \ldots, J\}$ and $k \in \{1, \ldots, p_{j}\}$. 

\begin{remark} \label{homog blowup rmk} {\rm Suppose instead that $\Omega = \mathbb{R}^n$ and for $\varepsilon_{\nu} \downarrow 0$ and $\beta_{\nu} > 0$ sufficiently small we have $u^{(\nu)} \in \Phi_{\varepsilon_{\nu}}(\varphi^{(0)})$ (so $u^{(\nu)}$ is not necessarily energy minimizing but is cylindrical and homogeneous of degree $\alpha$) and $\varphi^{(\nu)} \in \Phi_{\varepsilon_{\nu}, p}(\varphi^{(0)})$ such that either (i) $p = p_0$ or (ii) $p > p_0$ and \eqref{fineblowups_eqn1} holds true.  Then by Lemma~\ref{cylindrical norm lemma} of the appendix, \eqref{fineblowups_eqn1}, and Remark~\ref{graphical_rmk}(b), each component of $u^{(\nu)}$ is uniformly $E_{\nu}$-close to a unique component of $\varphi^{(\nu)}$ in $B_1(0)$, where $E_{\nu} = \left( \int_{B_1(0)} \mathcal{G}(u^{(\nu)},\varphi^{(\nu)})^2 \right)^{1/2}$.  Thus we can use the above procedure to produce a blow-up $w = (w_{j,k})$ of $u^{(\nu)}$ relative to $\varphi^{(\nu)}$ in $\mathbb{R}^n$.  The functions $w_{j,k}(X,\varphi^{(\infty)}_{j,k}(X))$ will be cylindrical, homogeneous degree $\alpha$ and ${\mathcal A}_{m_{j, k}}({\mathbb R}^{m})$-valued, but $w$ will not necessarily be component-wise minimizing. 
} \end{remark}

\subsection{Proofs of Lemma~\ref{graphical_lemma} and Corollary~\ref{graphical_cor}}

\begin{proof}[Proof of Lemma~\ref{graphical_lemma}] 
We will prove Lemma~\ref{graphical_lemma} by induction on $p$.  Observe that in the case $p = p_0$, if $0 < \kappa < 1$ and $\overline{\varepsilon}$ is sufficiently small, it readily follows from \eqref{graphical_eqn1} and the estimate \eqref{regestimate} (which implies that a sequence of locally energy minimizing functions converging in $L^{2}$ is converging uniformly in the interior) that there exist unique functions $v_{j,k}$ satisfying \eqref{phiplusv} in $\Omega = A_{1,\kappa'}(0)$, where $0 < \kappa < \kappa' < 1$.  Let us check that $v = (v_{j,k})$ is component-wise minimizing in $A_{1,\kappa'}(0).$  The estimate \eqref{graphical_eqn4} on $A_{1,\kappa}(0)$ will then follow from the estimate \eqref{regestimate}.  Let $B$ be an arbitrary ball in $A_{1,\kappa'}(0).$  For each $X \in B$, let $\varphi_{j,k}(X) = \sum_{l=1}^{q_{j,k}} \llbracket \varphi_{j,k,l}(X) \rrbracket$ for smooth harmonic functions $\varphi_{j,k,l} : B \rightarrow \mathbb{R}^m$ and integers $q_{j, k} \in \{1, q_{0}\}$ as in \eqref{varphi_localized} and \eqref{varphi_values},  and let $v_{j,k,l}(X) = v_{j,k}(X,\varphi_{j,k,l}(X)) = \sum_{h=1}^{m_{j,k}} \llbracket v_{j,k,l,h}(X) \rrbracket$ 
as in \eqref{v_localized} and \eqref{v_localized_1} and notice that $v_{j,k,l} \in W^{1,2}(B;\mathcal{A}_{m_{j,k}}(\mathbb{R}^m)).$

For each $j,k,l$ let $\widetilde{v}_{j,k,l} \in W^{1,2}(B;\mathcal{A}_{m_{j,k}}(\mathbb{R}^m))$ such that $\widetilde{v}_{j,k,l}(X) = v_{j,k,l}(X)$ in a neighborhood of $\partial B$.  For each $X \in B$, express $\widetilde{v}_{j,k,l}(X) = \sum_{h=1}^{m_{j,k}} \llbracket \widetilde{v}_{j,k,l,h}(X) \rrbracket$ for $v_{j, k, l, h}(X) \in {\mathbb R}^{m}$.  We define a competitor $\widetilde{u} \in W^{1,2}(B;\mathcal{A}_q(\mathbb{R}^m))$ for $u$ by 
\begin{equation*}
	\widetilde{u}(X) = \sum_{j=1}^J \sum_{k=1}^{p_j} \sum_{l=1}^{ q_{j, k}}\sum_{h=1}^{m_{j,k}} \llbracket \varphi_{j,k,l}(X) + \widetilde{v}_{j,k,l,h}(X) \rrbracket 
\end{equation*}
for every $X \in B$.  Since $u$ is energy minimizing in $\Omega$, 
\begin{align} \label{DirMinToCWMin}
	&\int_B \sum_{j=1}^J \sum_{k=1}^{p_j} \sum_{l=1}^{q_{j,k}} (m_{j,k} |D\varphi_{j,k,l}|^2 + 2m_{j,k} D\varphi_{j,k,l} \cdot Dv_{j,k,l;a} + |Dv_{j,k,l}|^2) 
	= \int_B |Du|^2 \\&\hspace{10mm} \leq \int_B |D\widetilde{u}|^2 = 
	\int_B \sum_{j=1}^J \sum_{k=1}^{p_j} \sum_{l=1}^{q_{j,k}} (m_{j,k} |D\varphi_{j,k,l}|^2 + 2m_{j,k} D\varphi_{j,k,l} \cdot D\widetilde{v}_{j,k,l;a} 
		+ |D\widetilde{v}_{j,k,l}|^2), \nonumber 
\end{align}
where $v_{j,k,l;a}, \widetilde{v}_{j,k,l;a} : B \rightarrow \mathbb{R}^m$ are the single-valued functions $v_{j,k,l;a}(X) = \frac{1}{m_{j,k}} \sum_{h=1}^{m_{j,k}} v_{j,k,l,h}(X)$ and $\widetilde{v}_{j,k,l;a}(X) = \frac{1}{m_{j,k}} \sum_{h=1}^{m_{j,k}} \widetilde{v}_{j,k,l,h}(X)$.  Since $\varphi_{j,k,l}$ is a single-valued harmonic function and $\widetilde{v}_{j,k,l;a} = v_{j,k,l;a}$ near $\partial B$, 
\begin{equation*}
	\int_B D\varphi_{j,k,l} \cdot Dv_{j,k,l;a} = \sum_{k=1}^q \int_B D\varphi_{j,k,l} \cdot D\widetilde{v}_{j,k,l;a} 
\end{equation*}
for all $k$ and thus \eqref{DirMinToCWMin} implies that 
\begin{equation*}
	\int_B \sum_{j=1}^J \sum_{k=1}^{p_j} \sum_{l=1}^{q_{j,k}} |Dv_{j,k,l}|^2 
	\leq \int_B \sum_{j=1}^J \sum_{k=1}^{p_j} \sum_{l=1}^{q_{j,k}} |D\widetilde{v}_{j,k,l}|^2. 
\end{equation*}
Since each $\widetilde{v}_{j,k,l}$ is arbitrary, we conclude that each $v_{j,k,l}$ is energy minimizing in $B.$

Now let $p > p_0$ and assume by induction that the following holds true:
\begin{enumerate}
\item[(A1)] There exists $\widetilde{\varepsilon}, \widetilde{\beta} \in (0,1)$ depending only on $n$, $m$, $q$, $\varphi^{(0)}$, $p$, $\gamma$, and $\kappa$ such that if $p_0 \leq \widetilde{p} \leq p-1$, $\widetilde{u} \in W^{1,2}(A_{1,1}(0));\mathcal{A}_q(\mathbb{R}^m))$ is an average-free, energy minimizing function, $\widetilde{\varphi} \in \Phi_{\widetilde{\varepsilon},\widetilde{p}}(\varphi^{(0)})$ is such that \eqref{graphical_eqn1} holds  with $\widetilde{\varepsilon}$, $\widetilde{u}$ in place of $\overline{\varepsilon}$, $u$ and if either (i) $\widetilde{p} = p_0$ or (ii) $\widetilde{p} > p_0$ and \eqref{graphical_eqn2} holds  with $\widetilde{\beta}$, $\widetilde{\varepsilon}$, $\widetilde{p}$, $\widetilde{u}$, $\widetilde{\varphi}$ in place of $\overline\beta$, $\overline{\varepsilon}$, $p$, $u$, $\varphi$, then the conclusion of Lemma~\ref{graphical_lemma} holds with $\widetilde{\varepsilon}$, $\widetilde{p}$, $\widetilde{u}$, $\widetilde{\varphi}$ in place of $\overline{\varepsilon}$, $p$, $u$, $\varphi$.
\end{enumerate}
Let the hypotheses be as in Lemma~\ref{graphical_lemma}, and  select $s_{1} \in \{p_0,\ldots,p-1\}$ and $\psi^{(1)} \in \Phi_{c \overline{\varepsilon},s_{1}}(\varphi^{(0)})$ such that 
\begin{equation} \label{graphical_eqn7}
	\int_{A_{1,1}(0)} \mathcal{G}(u,\psi^{(1)})^2 
	< 2 \inf_{\varphi' \in \bigcup_{p'=p_0}^{p-1} \Phi_{c \overline{\varepsilon},p'}(\varphi^{(0)})} \int_{A_{1,1}(0)} \mathcal{G}(u,\varphi')^2. 
\end{equation}
Consider first the case that either (i) $s_{1} = p_0$ or (ii) $s_{1} > p_0$ and 
\begin{equation} \label{graphical_eqn8}
	\int_{A_{1,1}(0)} \mathcal{G}(u,\psi^{(1)})^2 
	\leq \widetilde{\beta} \inf_{\varphi' \in \bigcup_{p'=p_0}^{s_{1}-1} \Phi_{c^2 \overline{\varepsilon},p'}(\varphi^{(0)})} \int_{A_{1,1}(0)} \mathcal{G}(u,\varphi')^2. 
\end{equation}
We claim that in this case the conclusion of Lemma~\ref{graphical_lemma} holds.  To see this, let $\varepsilon_{\nu} \downarrow 0$ and $\beta_{\nu} \downarrow 0$ and for $\nu = 1,2,3,\ldots$ let $u^{(\nu)} \in W^{1,2}(B_1(0);\mathcal{A}_q(\mathbb{R}^m))$ be an average-free energy minimizing $q$-valued function, $\varphi^{(\nu)} \in \Phi_{\varepsilon_{\nu},p}(\varphi^{(0)})$, and $\phi^{(\nu)} \in \Phi_{\varepsilon_{\nu},s_{1}}(\varphi^{(0)})$ such that \eqref{graphical_eqn1}, \eqref{graphical_eqn2}, \eqref{graphical_eqn7}, and  \eqref{graphical_eqn8} hold true with $\varepsilon_{\nu}$, $\beta_{\nu}$, $u^{(\nu)}$, $\varphi^{(\nu)}$, and $\phi^{(\nu)}$ in place of $\overline{\varepsilon}$, $\overline{\beta}$, $u$, $\varphi$, and $\psi^{(1)}$.  We want to show that the conclusion of Lemma~\ref{graphical_lemma} holds with $u^{(\nu)}$ and $\varphi^{(\nu)}$ in place of $u$ and $\varphi$.  
Notice that if $s_1 > p_0$ then by the triangle inequality and \eqref{graphical_eqn2} 
\begin{equation*}
	\inf_{\varphi' \in \bigcup_{p'=p_0}^{s_{1}-1} \Phi_{c \overline{\varepsilon},p'}(\varphi^{(0)})} \int_{A_{1,1}(0)} \mathcal{G}(u^{(\nu)},\varphi')^2
	\leq 4 \inf_{\varphi' \in \bigcup_{p'=p_0}^{s_{1}-1} \Phi_{c \overline{\varepsilon},p'}(\varphi^{(0)})} \int_{A_{1,1}(0)} \mathcal{G}(\varphi^{(\nu)},\varphi')^2 
\end{equation*}
for $\nu$ sufficiently large and thus by applying the triangle inequality again using \eqref{graphical_eqn2} and \eqref{graphical_eqn8} 
\begin{align} \label{graphical_eqn9}
	\int_{A_{1,1}(0)} \mathcal{G}(\varphi^{(\nu)},\phi^{(\nu)})^2 
	&\leq 2 \int_{A_{1,1}(0)} \mathcal{G}(u^{(\nu)},\varphi^{(\nu)})^2 + 2 \int_{A_{1,1}(0)} \mathcal{G}(u^{(\nu)},\phi^{(\nu)})^2 
	\\&\leq 4 \int_{A_{1,1}(0)} \mathcal{G}(u^{(\nu)},\phi^{(\nu)})^2 \nonumber 
	\\&\leq 16 \widetilde{\beta} \inf_{\varphi' \in \bigcup_{p'=p_0}^{s_{1}-1} \Phi_{c^2 \overline{\varepsilon},p'}(\varphi^{(0)})} 
		\int_{A_{1,1}(0)} \mathcal{G}(\varphi^{(\nu)},\varphi')^2 \nonumber 
\end{align}
Now by (A1), \eqref{graphical_eqn1}, and \eqref{graphical_eqn8}, we can blow up $u^{(\nu)}$ relative to $\phi^{(\nu)}$.  By Remark~\ref{homog blowup rmk} and \eqref{graphical_eqn9}, we can blow up $\varphi^{(\nu)}$ relative to $\phi^{(\nu)}$.  By \eqref{graphical_eqn2}, $u^{(\nu)}$ and $\varphi^{(\nu)}$ blow up to the same blow-up $w = (w_{j,k})$, which is component-wise minimizing, homogeneous degree $\alpha$, and translation invariant along $\{0\} \times \mathbb{R}^{n-2}$.   

We want to use $w$ to construct a function $\widetilde{\varphi}^{(\nu)} \in \Phi_{2c\varepsilon_\nu}(\varphi^{(0)})$.  Let $E_{\nu} = \left( \int_{A_{1,1}(0)} \mathcal{G}(u^{(\nu)},\phi^{(\nu)})^2 \right)^{1/2}.$  Let $\phi^{(\nu)} = \sum_{j=1}^J \sum_{k=1}^{p_j} m_{j,k} \phi^{(\nu)}_{j,k}$ where $\phi^{(\nu)}_{j,k}$ are distinct components of $\phi^{(\nu)}$ with multiplicity $m_{j,k}$.  If $\varphi^{(0)}_j$ is not identically zero, let $\phi^{(\nu)}_{j,k}(X) = \op{Re}(c^{(\nu)}_{j,k} (x_1+ix_2)^{\alpha})$ for $c^{(\nu)}_{j,k} \in \mathbb{C}^m \setminus \{0\}$ with $|c^{(\nu)}_{j,k} - c^{(0)}_j| \leq C(n,m,q,\alpha)\, \varepsilon_{\nu}$ and let $c^{(\infty)}_{j,k} = c^{(0)}_j$.  If $\varphi^{(0)}_j$ is identically zero and $\phi^{(\nu)}_{j,k}$ is nonzero, then $\phi^{(\nu)}_{j,k}(X) = \op{Re}(c^{(\nu)}_{j,k} (x_1+ix_2)^{\alpha})$ for $c^{(\nu)}_{j,k} \in \mathbb{C}^m \setminus \{0\}$ and we let $c^{(\infty)}_{j,k} = \lim_{\nu \rightarrow \infty} c^{(\nu)}_{j,k}/|c^{(\nu)}_{j,k}|$ (as in the blow-up construction above).  For each $P \in {\rm graph}\,\phi^{(\infty)}_{j,k} |_{A_{1,1}(0)}$, let $w_{j,k}(P) = \sum_{h=1}^{m_{j,k}} \llbracket w_{j,k,h}(P) \rrbracket$ for some $w_{j,k,h}(P) \in \mathbb{R}^m$.  Define $\widetilde{\varphi}^{(\nu)} \in \Phi_{2c\varepsilon_\nu}(\varphi^{(0)})$ by 
\begin{align*}
	\widetilde{\varphi}^{(\nu)}(re^{i\theta},y) 
		=& \sum_{(j,k)\,:\,\phi^{(\nu)}_{j,k} \equiv 0} \,\sum_{h=1}^{m_{j,k}} \llbracket E_{\nu} w_{j,k,h}(re^{i\theta},y,0) \rrbracket
		\\&+ \sum_{(j,k)\,:\,\phi^{(\nu)}_{j,k} \not\equiv 0} \,\sum_{l=1}^{q_0} \sum_{h=1}^{m_{j,k}} 
			\llbracket \op{Re}(c^{(\nu)}_{j,k} r^{\alpha} e^{i\alpha(\theta+2\pi l)}) 
			+ E_{\nu} w_{j,k,h}(re^{i\theta},y,\op{Re}(c^{(\infty)}_{j,k} r^{\alpha} e^{i\alpha(\theta+2\pi l)})) \rrbracket 
\end{align*}
for each $(re^{i\theta},y) \in \mathbb{R}^m$.  
Observe that $w = (w_{j,k})$ is a blow-up of both $\varphi^{(\nu)}$ and $\widetilde{\varphi}^{(\nu)}$ with respect to $\phi^{(\nu)}$.  Furthermore, by Remark~\ref{graphical_rmk}(b), \eqref{graphical_eqn2}, \eqref{graphical_eqn7} and \eqref{graphical_eqn8}, if $\phi^{(\nu)}_{j,k}$ and $\phi^{(\nu)}_{j',k'}$ are nonzero components of $\phi^{(\nu)}$ then $|c^{(\nu)}_{j,k} - c^{(\nu)}_{j',k'}| \geq C(m,n,q,\alpha) \widetilde{\beta}^{-1} E_{\nu}$ whenever $c^{(\nu)}_{j,k} \neq c^{(\nu)}_{j',k'}$ and $|c^{(\nu)}_{j,k}| \geq C(m,n,q,\alpha) \widetilde{\beta}^{-1} E_{\nu}$.  Similarly by Remark~\ref{graphical_rmk}(b) and \eqref{graphical_eqn2} the distinct nonzero components of $\varphi^{(\nu)}$ are $L^{2}(B_{1}(0);\mathcal{A}_{q_0}(\mathbb{R}^{m}))$-distance $\geq C(m,n,q,\alpha) E_{\nu}$ apart and the nonzero components $\varphi^{(\nu)}$ have $L^{2}(B_{1}(0);\mathcal{A}_{q_0}(\mathbb{R}^{m}))$-norm $\geq C(m,n,q,\alpha) E_{\nu}$.  It follows that for large $\nu$, $\varphi^{(\nu)}$ and $\widetilde{\varphi}^{(\nu)}$ both have precisely $p$ nonzero components. 
Moreover, for large $\nu$, we can pair up the components of $\varphi^{(\nu)}$ and $\widetilde{\varphi}^{(\nu)}$ by expressing $\varphi^{(\nu)}$ and $\widetilde{\varphi}^{(\nu)}$ as 
\begin{equation*} 
	\varphi^{(\nu)} = \sum_{j=1}^J \sum_{k=1}^{\widehat{p}_j} \widehat{m}_{j,k} \varphi^{(\nu)}_{j,k} , \quad 
	\widetilde{\varphi}^{(\nu)} = \sum_{j=1}^J \sum_{k=1}^{\widehat{p}_j} \widehat{m}_{j,k} \widetilde{\varphi}^{(\nu)}_{j,k} 
\end{equation*}
where $\widehat{p}_j$ and $\widehat{m}_{j,k}$ are positive integers independent of $\nu$ satisfying $\sum_{k=1}^{\widehat{p}_j} \widehat{m}_{j,k} = m_j$, $\varphi^{(\nu)}_{j,k}$ are distinct components of $\varphi^{(\nu)}$ close to $\varphi^{(0)}_j$, $\widetilde{\varphi}^{(\nu)}_{j,k}$ are distinct components of $\widetilde{\varphi}^{(\nu)}$ close to $\varphi^{(0)}_j$, and for each $j,k$ 
\begin{equation} \label{graphical_eqn10}
	\varphi^{(\nu)}_{j,k} \equiv 0 \text{ if and only if } \widetilde{\varphi}^{(\nu)}_{j,k} \equiv 0, \hspace{10mm}
	\lim_{\nu \rightarrow \infty} \frac{1}{E_{\nu}} \sup_{A_{1,1}(0)} \mathcal{G}(\varphi^{(\nu)}_{j,k}, \widetilde{\varphi}^{(\nu)}_{j,k}) = 0.  
\end{equation}

We claim that there exists a constant $C = C(m,n,q,\alpha,\gamma) > 0$ such that 
\begin{equation} \label{graphical_eqn11}
	\inf_{X \in S^1 \times \{0\}} \op{sep} w_{j,k,l}(X) \geq C > 0. 
\end{equation}
By \eqref{graphical_eqn9} and \eqref{graphical_eqn10}, 
\begin{equation} \label{graphical_eqn12}
	\int_{A_{1,1}(0)} \mathcal{G}(\widetilde{\varphi}^{(\nu)},\phi^{(\nu)})^2 
	\leq 32 \widetilde{\beta} \inf_{\varphi' \in \bigcup_{p'=p_0}^{s_{1}-1} \Phi_{c \overline{\varepsilon},p'}(\varphi^{(0)})} \int_{A_{1,1}(0)} \mathcal{G}(\varphi^{(\nu)},\varphi')^2
\end{equation}
for $\nu$ sufficiently large.  Fix any ball $B = B_{(1-\gamma)/4}(X_0)$ with $X_0 \in S^1 \times \{0\}$.  Let $\phi^{(\nu)}_{j,k}(X) = \sum_{l=1}^{q_{j,k}} \llbracket \phi^{(\nu)}_{j,k,l}(X) \rrbracket$ for each $X \in B$ and some harmonic functions $\phi^{(\nu)}_{j,k,l}, \phi^{(\infty)}_{j,k,l} : B \rightarrow \mathbb{R}^m$ (like in~\eqref{varphi_localized}) and let $w_{j,k,l}(X) = w_{j,k}(X,\phi^{(\infty)}_{j,k,l}(X))$ for each $X \in B$  (like in~\eqref{v_localized}).  On $B$, $w_{j,k,l}$ is a locally Dirichlet energy minimizing and is given by 
\begin{equation*}
	w_{j,k,l}(x_1,x_2,y) = \sum_{h=1}^{m_{j,k}} \llbracket \op{Re}(a_{j,k,l,h} (x_1+ix_2)^{\alpha}) \rrbracket 
\end{equation*}
for some $a_{j,k,l,h} \in \mathbb{C}^m$.  Moreover, by \eqref{graphical_eqn12} we can apply Remark~\ref{graphical_rmk}(b) to $\widetilde{\varphi}^{(\nu)}_{j,k,l}$ to obtain 
\begin{equation*}
	|a_{j,k,l,h} - a_{j,k,l,h'}| \geq C(m,n,q,\alpha,\gamma) > 0. 
\end{equation*}
whenever $a_{j,k,l,h} \neq a_{j,k,l,h'}$.  Thus it suffices to prove the following general claim: let $B = B_{(1-\gamma)/4}(1,0,0)$ and suppose $f \in W^{1,2}(B;\mathcal{A}_Q(\mathbb{R}^m))$ is a locally Dirichlet energy minimizing function on $B$ given by 
\begin{equation*}
	f(x_1,x_2,y) = \sum_{h=1}^Q \llbracket \op{Re}(a_h (x_1+ix_2)^{\alpha}) \rrbracket 
\end{equation*}
for some $a_h \in \mathbb{C}^m$ satisfying,  for some constant $\Lambda \in [1,\infty)$, $\|f\|_{L^2(B)} \leq \Lambda$ and $|a_h - a_{h'}| \geq 1/\Lambda$ whenever $a_h \neq a_{h'}$. Then there exists a constant $C = C(m,n,Q,\alpha,\gamma,\Lambda) > 0$ such that 
\begin{equation} \label{graphical_eqn13}
	\op{sep} f(1,0,0) \geq C > 0. 
\end{equation}
Suppose to the contrary that there exists a constant $\Lambda \in [1,\infty)$ and a sequence of locally Dirichlet energy minimizing functions $f^{(\nu)} \in W^{1,2}(B;\mathcal{A}_Q(\mathbb{R}^m))$ such that 
\begin{equation*}
	f^{(\nu)}(x_1,x_2,y) = \sum_{h=1}^Q \llbracket \op{Re}(a_h^{(\nu)} (x_1+ix_2)^{\alpha}) \rrbracket 
\end{equation*}
for some $a_h^{(\nu)} \in \mathbb{C}^m$, $\|f^{(\nu)}\|_{L^2(B)} \leq \Lambda$, $|a^{(\nu)}_h - a^{(\nu)}_{h'}| \geq 1/\Lambda$ whenever $a_h \neq a_{h'}$, and 
\begin{equation} \label{graphical_eqn14}
	\lim_{\nu \rightarrow \infty} \op{sep} f^{(\nu)}(1,0,0) =0.
\end{equation}
\eqref{graphical_eqn14} implies that we can reorder the constants $a_h^{(\nu)}$ so that $a_1^{(\nu)} \neq a_2^{(\nu)}$ and 
\begin{equation*}
	\lim_{\nu \rightarrow \infty} |\op{Re}(a_1^{(\nu)} - a_2^{(\nu)})| 
	= \lim_{\nu \rightarrow \infty} \left. |\op{Re}(a_1^{(\nu)} (x_1+ix_2)^{\alpha}) - \op{Re}(a_2^{(\nu)} (x_1+ix_2)^{\alpha})| \right|_{X = (1,0,0)} 
	= 0. 	
\end{equation*}
Since $a_1^{(\nu)} \neq a_2^{(\nu)}$, we in fact have $|a^{(\nu)}_1 - a^{(\nu)}_2| \geq 1/\Lambda$.  After passing to a subsequence, $a^{(\nu)}_h \rightarrow a_h$ for each $h = 1,2,\ldots,Q$ and $f^{(\nu)} \rightarrow f$ uniformly in $B$ for a locally Dirichlet energy minimizing function $f : B \rightarrow \mathcal{A}_Q(\mathbb{R}^m)$ given by 
\begin{equation*}
	f(x_1,x_2,y) = \sum_{h=1}^Q \llbracket \op{Re}(a_h (x_1+ix_2)^{\alpha}) \rrbracket .
\end{equation*}
Moreover, $|a_1 - a_2| \geq 1/\Lambda$ but $\op{Re}(a_1) = \op{Re}(a_2)$, so $(1,0,0)$ must be a singular point of $f$, contradicting the fact that $f$ is locally Dirichlet energy minimizing.  Therefore \eqref{graphical_eqn13} holds true, which (by taking $f = w_{j,k,l}$) implies \eqref{graphical_eqn11}. 

To show conclusion (a), let $B \subset A_{1,1}(0)$ be an open ball.  
By (A1), $\inf_{X \in B} \op{sep} \phi^{(\nu)}(X) \geq C \beta_{\nu}^{-1} E_{\nu}$ for $\nu$ sufficiently large, where $C = C(n,m,q,\alpha,\varphi^{(0)},\gamma,\kappa) \in (0,\infty)$ is a constant.  It follows by the construction of $\widetilde{\varphi}^{(\nu)}$ and \eqref{graphical_eqn11} that 
\begin{align*}
	&\inf_{X \in B} \op{sep} \widetilde{\varphi}^{(\nu)}(X) 
	\\&\hspace{10mm} \geq \min\left\{ \inf_{X \in B} \op{sep} \phi^{(\nu)}(X) - 2E_{\nu} \max_{j,k,l} \sup_{X \in A_{1,1}(0)} |w_{j,k,l}(X)|, 
		E_{\nu} \min_{j,k,l} \inf_{X \in B} \op{sep} w_{j,k,l}(X) \right\}
	\\&\hspace{10mm} \geq C E_{\nu} 
\end{align*}
for some constant $C = C(n,m,q,\alpha,\varphi^{(0)},\gamma,\kappa) \in (0,\infty)$.  Hence by \eqref{graphical_eqn10}, 
\begin{equation*}
	\inf_{X \in B} \op{sep} \varphi^{(\nu)}(X) \geq \frac{1}{2} C E_{\nu} 
\end{equation*}
for $\nu$ sufficiently large.  Since $B$ is arbitrary, we have shown that conclusion (a) holds true. 

To show conclusion (b), observe that by conclusion (a) and $\lim_{\nu \rightarrow \infty} E_{\nu}^{-1} \sup_{A_{1,3/4}} \mathcal{G}(u^{(\nu)}, \varphi^{(\nu)}) = 0$, there exists unique functions $v^{(\nu)}_{j,k}$ satisfying \eqref{phiplusv} with $A_{1,\kappa'}(0)$, $u^{(\nu)}$, $\varphi^{(\nu)}$, and $v^{(\nu)}_{j,k}$ in place of $\Omega$, $u$, $\varphi$, and $v_{j,k}$, where $0 < \kappa < \kappa' < 1$.  By using the argument from before, we can show that $v^{(\nu)} = (v^{(\nu)}_{j,k})$ is component-wise minimizing in $A_{1,\kappa'}(0)$.  It then follows from the estimate \eqref{regestimate} that \eqref{graphical_eqn4} holds true with $v^{(\nu)}_{j,k}$ in place of $v_{j,k}$. 

If instead $s_1 > p_0$ and 
\begin{equation*}
	\int_{A_{1,1}(0)} \mathcal{G}(u,\psi^{(1)})^2 
	> \widetilde{\beta} \inf_{\varphi' \in \bigcup_{p'=p_0}^{s_1-1} \Phi_{c^2 \overline{\varepsilon},p'}(\varphi^{(0)})} \int_{A_{1,1}(0)} \mathcal{G}(u,\varphi')^2, 
\end{equation*}
we can select $s_2  \in \{p_0,\ldots,s_{1}-1\}$ and $\psi^{(2)} \in \Phi_{c^2 \overline{\varepsilon},s_2}(\varphi^{(0)})$ such that 
\begin{equation*}
	\int_{A_{1,1}(0)} \mathcal{G}(u,\psi^{(2)})^2 
	< 2 \inf_{\varphi' \in \bigcup_{p'=p_0}^{s_1-1} \Phi_{c^2 \overline{\varepsilon},p'}(\varphi^{(0)})} \int_{A_{1,1}(0)} \mathcal{G}(u,\varphi')^2 
\end{equation*}
and repeat the above argument.  It is clear that at most $p-p_0$ repetitions of the argument are necessary to reach the conclusion of Lemma~\ref{graphical_lemma}. 
\end{proof}

\begin{proof}[Proof of Corollary \ref{graphical_cor}] 
Let $(\xi,\zeta) \in B_{\gamma}(0) \cap \{r > \tau\}$ and $\rho = |\xi|$.  Since $A_{\rho,1}(\zeta) \subset B_1(0)$, we have by Hypothesis~$(\star)$ that
\begin{equation*}
	\int_{A_{\rho,1}(\zeta)} \mathcal{G}(u,\varphi^{(0)})^2 
	\leq \int_{B_1(0)} \mathcal{G}(u,\varphi^{(0)})^2 
	< \varepsilon_0^2.
\end{equation*}
When $p > p_0$, by the triangle inequality, Hypothesis~$(\star\star)$ implies that 
\begin{equation} \label{graphicalrmk_eqn1}
	\int_{B_1(0)} \mathcal{G}(u,\varphi)^2 
	\leq 4 \beta_0 \inf_{\varphi' \in \bigcup_{p'=p_0}^{p-1} \Phi_{3\varepsilon_0,p'}(\varphi^{(0)})} \int_{B_1(0)} \mathcal{G}(\varphi,\varphi')^2
\end{equation}
provided $\beta_0 \leq 1/4$.  Hence by again applying the triangle inequality,  
\begin{align*}
	\int_{A_{\rho,1}(\zeta)} \mathcal{G}(u,\varphi)^2 
	&\leq \int_{B_1(0)} \mathcal{G}(u,\varphi)^2 
	\leq 4 \beta_0 \inf_{\varphi' \in \bigcup_{p'=p_0}^{p-1} \Phi_{3\varepsilon_0,p'}(\varphi^{(0)})} \int_{B_1(0)} \mathcal{G}(\varphi,\varphi')^2 
	\\&\leq C \beta_0 \rho^{-n-2\alpha} \inf_{\varphi' \in \bigcup_{p'=p_0}^{p-1} \Phi_{3\varepsilon_0,p'}(\varphi^{(0)})} 
		\int_{A_{\rho,1}(\zeta)} \mathcal{G}(\varphi,\varphi')^2 
	\\&\leq 2C \beta_0 \rho^{-n-2\alpha} \int_{A_{\rho,1}(\zeta)} \mathcal{G}(u,\varphi)^2 + 2C \beta_0 \rho^{-n-2\alpha} 
		\inf_{\varphi' \in \bigcup_{p'=p_0}^{p-1} \Phi_{3\varepsilon_0,p'}(\varphi^{(0)})} \int_{A_{\rho,1}(\zeta)} \mathcal{G}(u,\varphi')^2 
\end{align*}
for some constant $C = C(n,\alpha,\gamma) \in (0,\infty)$ and thus 
\begin{equation} \label{graphicalcor_eqn5}
	\int_{A_{\rho,1}(\zeta)} \mathcal{G}(u,\varphi)^2 
	\leq 4C \beta_0 \rho^{-n-2\alpha} \inf_{\varphi' \in \bigcup_{p'=p_0}^{p-1} \Phi_{3\varepsilon_0,p'}(\varphi^{(0)})} 
		\int_{A_{\rho,1}(\zeta)} \mathcal{G}(u,\varphi')^2 
\end{equation}
provided $2C \beta_0 \rho^{-n-2\alpha} \leq 1/2$.  Therefore, noting that $\rho > \tau$, provided 
\begin{equation*}
	\varepsilon_0 \leq \min\left\{ \tau^{n/2+\alpha} \overline{\varepsilon}, \frac{c\overline{\varepsilon}}{3} \right\} , \quad 
	\beta_0 \leq \min\left\{\frac{1}{4}, \frac{\tau^{n+2\alpha} \overline{\beta}}{4C} \right\}
\end{equation*}
for $\overline{\varepsilon}$, $\overline{\beta}$, and $c$ as in Lemma~\ref{graphical_lemma}, we can apply Lemma~\ref{graphical_lemma} with $\rho^{-\alpha} u(\rho x, \zeta + \rho y)$ in place of $u$ to conclude that  there exist unique functions $v^{(\zeta,\rho)}_{j,k} : {\rm graph}\, \varphi_{j,k} |_{A_{\rho,1/2}(\zeta)} \rightarrow \mathcal{A}_{m_{j,k}}(\mathbb{R}^m)$ such that \eqref{phiplusv} holds true with $A_{\rho,1/2}(\zeta)$ and $v^{(\zeta,\rho)}$ in place of $\Omega$ and $v$ and 
\begin{equation} \label{graphicalcor_eqn4}
	\sup_{A_{\rho,1/2}(\zeta)} |\overline{v}^{(\zeta,\rho)}_{j,k}|^2 + \rho^{2\mu} [\overline{v}^{(\zeta,\rho)}_{j,k}]_{\mu,A_{\rho,1/2}(\zeta)}^2 
		+ \rho^{2-n} \int_{A_{\rho,1/2}(\zeta)} |D \overline{v}^{(\zeta,\rho)}_{j,k}|^2 
	\leq C \rho^{-n} \int_{A_{\rho,1}(\zeta)} \mathcal{G}(u,\varphi)^2, 
\end{equation}
where $\overline{v}^{(\zeta,\rho)}_{j,k}$ are as in \eqref{vbar} with $v^{(\zeta,\rho)}_{j,k}$ in place of $v_{j,k}$ and $C = C(n,m,q,p,\varphi^{(0)},\gamma) \in (0,\infty)$ is a constant.  We obtain functions $v_{j,k}$ satisfying \eqref{phiplusv} in $\Omega = B_1(0) \cap \{r > \tau\}$ by letting $v_{j,k} = v_{j,k}^{(\zeta,\rho)}$ on ${\rm graph}\, \varphi_{j,k} |_{A_{\rho,1/2}(\zeta)}$ and noting that the functions $v_{j,k}$ are well-defined by the uniqueness of $v^{(\zeta,\rho)}_{j,k}$.  The function $v = (v_{j,k})$ is component-wise minimizing by the argument in the proof of Lemma~\ref{graphical_lemma}.  The estimate \eqref{graphicalcor_eqn3} obviously follows from \eqref{graphicalcor_eqn4} and a covering argument. 

Finally, to see \eqref{graphicalcor_eqn2} when $p > p_0$, we take $\gamma = 1/2$ in Lemma~\ref{graphical_lemma}.  By applying Lemma~\ref{graphical_lemma} with $4^{-\alpha} u(X/4)$ in place of $u,$ 
\begin{equation} \label{graphicalcor_eqn6}
	\inf_{X \in S^1 \times \mathbb{R}^{n-2}} \op{sep} \varphi(X) 
		\geq C \inf_{\varphi' \in \bigcup_{p'=p_0}^{p-1} \Phi_{c \overline{\varepsilon},p'}(\varphi^{(0)})} \int_{A_{1/4,1}(0)} \mathcal{G}(u,\varphi')^2
\end{equation}
for some constant $C = C(m,n,q,\alpha,\varphi^{(0)}) \in (0,\infty)$.  By the triangle inequality and \eqref{graphicalcor_eqn5}
\begin{equation*}
	\inf_{\varphi' \in \bigcup_{p'=p_0}^{p-1} \Phi_{c \overline{\varepsilon},p'}(\varphi^{(0)})} \int_{A_{1/4,1}(0)} \mathcal{G}(u,\varphi')^2
	\geq \frac{1}{4} \inf_{\varphi' \in \bigcup_{p'=p_0}^{p-1} \Phi_{c \overline{\varepsilon},p'}(\varphi^{(0)})} \int_{A_{1/4,1}(0)} \mathcal{G}(\varphi,\varphi')^2
\end{equation*} 
provided $4C \beta_0 (1/4)^{-n-2\alpha} \leq 1$, and also by the triangle inequality and Hypothesis~$(\star\star)$ 
\begin{equation*}
	\inf_{\varphi' \in \bigcup_{p'=p_0}^{p-1} \Phi_{3 \varepsilon_0,p'}(\varphi^{(0)})} \int_{B_{1}(0)} \mathcal{G}(u,\varphi')^2
	\leq 4 \inf_{\varphi' \in \bigcup_{p'=p_0}^{p-1} \Phi_{3 \varepsilon_0,p'}(\varphi^{(0)})} \int_{B_{1}(0)} \mathcal{G}(\varphi,\varphi')^2 
\end{equation*} 
provided $\beta_0 \leq 1/4$.  Hence by the homogeneity of $\varphi$ and $\varphi'$ and Remark~\ref{graphical_rmk}(a)
\begin{equation} \label{graphicalcor_eqn7}
	\inf_{\varphi' \in \bigcup_{p'=p_0}^{p-1} \Phi_{c \overline{\varepsilon},p'}(\varphi^{(0)})} \int_{A_{1/4,1}(0)} \mathcal{G}(u,\varphi')^2
	\geq C \inf_{\varphi' \in \bigcup_{p'=p_0}^{p-1} \Phi_{3 \overline{\varepsilon},p'}(\varphi^{(0)})} \int_{B_{1}(0)} \mathcal{G}(u,\varphi')^2
\end{equation} 
for some constant $C = C(n,\alpha) \in (0,\infty)$ (provided $\varepsilon_0 \leq c \overline{\varepsilon}/3$).  Combining \eqref{graphicalcor_eqn6} and \eqref{graphicalcor_eqn7} yields \eqref{graphicalcor_eqn2}.
\end{proof}

\section{A priori estimates: Part I} \label{sec:L2estimates_sec1}

Let $\varphi^{(0)}$ be the homogeneous degree $\alpha$ cylindrical function as in Definition~\ref{varphi0_defn}.  In this section and the next we establish several key integral estimates for average free locally energy minimizing functions $u \in W^{1,2}(B_1(0);\mathcal{A}_q(\mathbb{R}^m))$ that are close to $\varphi^{(0)}$ in $L^2(B_1(0);\mathcal{A}_q(\mathbb{R}^m))$.   A number of these estimates are inspired by the results in \cite{Sim93}. These estimates will play a fundamental role in the proof of the main excess decay estimate for energy minimizers, Lemma~\ref{main_lemma}. 

The first result in this section is Theorem~\ref{keyest_thm}. Its role in Lemma~\ref{main_lemma} is two fold: first, its direct consequence for the blow-ups (produced as described in Section~\ref{blow-up-procedure}) is a key ingredient in the proof of our asymptotic decay estimate (Theorem~\ref{blowupdecay_lemma}) for the blow-ups, which plays an essential role in the proof of Lemma~\ref{main_lemma}.  Secondly, it will be used in Section~\ref{sec:L2estimates_sec2} to obtain various further estimates that will in particular rule out concentration near the set $B_{1/2}(0) \cap \{Z \, : \, {\mathcal N}_{u}(Z) \geq \alpha\}$ of $\int_{B_{1}(0)} {\mathcal G}(u, \varphi)^{2},$ the excess of $u$ relative to a cylindrical, homogeneous degree $\alpha$ function $\varphi$ close to $\varphi_{0}$. This non-concentration-of-excess implies that the convergence of the blow-up sequences is in $L^{2}_{\rm loc} (B_{1})$, and it is also of fundamental importance to obtaining excess improvement, namely option (ii) of the conclusion of Lemma~\ref{main_lemma}, subject to the assumption that option (i) of its conclusion fails.  The proof of Theorem~\ref{keyest_thm} is based on the variational identities \eqref{freqidentity3} and \eqref{freqidentity4} and in particular on a variant of the frequency function monotonicity formula, Lemma~\ref{keyest_identity} below. 

The other two results in this section, Lemma~\ref{fourierest_lemma} (giving an identity implied by energy stationarity of $u$) and Lemma~\ref{competitor_lemma} (giving an energy comparison estimate for $u$ implied by the energy minimizing property of $u$), will be needed for the classification of homogeneous degree $\alpha$ blow-ups (Lemma~\ref{homogrep_lemma}). This classification in the language of \cite{Sim93} (or \cite{AllardAlmgren}) provides  ``integrability of homogeneous degree $\alpha$ Jacobi fields,'' which is the reason behind exponential decay of $u$ to a unique cylindrical  function at any point where option (i) of the conclusion of Lemma~\ref{main_lemma} fails at all scales. Our proof of Lemma~\ref{homogrep_lemma} is based on establishing monotonicity of the frequency function $\rho \mapsto N_{w, Z}(\rho)$ associated with a homogeneous degree $\alpha$ blow-up $w$ for any $Z \in \{0\} \times {\mathbb R}^{n-2} =$ the axis of $\varphi^{(0)}$. This monotonicity requires stationarity of $w$ with respect to deformations of the domain variables that are radial from the point $Z$ (identity~\eqref{homogrep2_freqid2}), a fact that we deduce from Lemma~\ref{competitor_lemma}. It is interesting to note that this stationarity fails for more general, non-radial domain deformations (see the example discussed in Remark~\ref{example}).   

We will denote a general point $X \in \mathbb{R}^n$ as $X = (x,y)$, where $x \in \mathbb{R}^2$ and $y \in \mathbb{R}^{n-2}$, and let $x = re^{i\theta}$ for $r > 0$ and $\theta \in \mathbb{R}$.  Recall from Section \ref{sec:frequency_sec} that 
\begin{equation*}
	D_{u,Y}(\rho) = \rho^{2-n} \int_{B_{\rho}(Y)} |Du|^2, \quad 
	H_{u,Y}(\rho) = \rho^{1-n} \int_{\partial B_{\rho}(Y)} |u|^2, \quad 
	N_{u,Y}(\rho) = \frac{D_{u,Y}(\rho)}{H_{u,Y}(\rho)}. 
\end{equation*}
Note that since $u \in W^{1,2}(B_1(0);\mathcal{A}_q(\mathbb{R}^m))$, for each $Y \in B_1(0)$, $H_{u,Y}$ is $W^{1,1}$ and $D_{u,Y}$ is absolutely continuous.  Moreover, since $H'_{u,Y}(\rho) = 2\rho^{-1} D_{u,Y}(\rho)$ for a.e.~$\rho \in (0,1-|Y|)$, $H_{u,Y}$ is $C^1$.  

\begin{lemma} \label{keyest_identity} 
Let $\alpha \in \mathbb{R}$.  If $u \in W^{1,2}(B_1(0);\mathcal{A}_q(\mathbb{R}^m))$ is an average-free, energy minimizing $q$-valued function, then for each $Y \in B_1(0)$, 
\begin{equation*}
	\frac{d}{d\rho} \left( \rho^{-2\alpha} (D_{u,Y}(\rho) - \alpha H_{u,Y}(\rho)) \right) 
	= 2\rho^{2-n} \int_{\partial B_{\rho}(Y)} \left| \frac{\partial (u/R^{\alpha})}{\partial R} \right|^2 
\end{equation*}
for a.e.~$ \rho \in (0,1-|Y|)$, where $R(X) = |X - Y|$.
\end{lemma}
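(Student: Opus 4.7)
The proof will be a direct computation identifying both sides of the claimed identity with the same expression in terms of $D_{u,Y}(\rho)$, $H_{u,Y}(\rho)$, and the boundary integral $\int_{\partial B_\rho(Y)} |D_R u|^2$, using the two variational identities \eqref{freqidentity3} and \eqref{freqidentity4} already established.

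First, I will rewrite $H'_{u,Y}(\rho)$ in the form already noted in the excerpt, namely $H'_{u,Y}(\rho) = 2\rho^{-1} D_{u,Y}(\rho)$. This follows by differentiating $H_{u,Y}(\rho) = \rho^{1-n} \int_{\partial B_\rho(Y)} |u|^2$, noting that $\tfrac{d}{d\rho}\int_{\partial B_\rho(Y)} |u|^2 = \tfrac{n-1}{\rho}\int_{\partial B_\rho(Y)}|u|^2 + 2\int_{\partial B_\rho(Y)} u^\kappa_l D_R u^\kappa_l$, and then applying \eqref{freqidentity3}. The $C^1$-regularity of $H_{u,Y}$ and absolute continuity of $D_{u,Y}$ (remarked just before the lemma) justify these manipulations. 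Differentiating the left side then gives
\begin{equation*}
\frac{d}{d\rho}\bigl(\rho^{-2\alpha}(D_{u,Y} - \alpha H_{u,Y})\bigr) = \rho^{-2\alpha} D'_{u,Y}(\rho) - 4\alpha \rho^{-2\alpha-1} D_{u,Y}(\rho) + 2\alpha^2 \rho^{-2\alpha-1} H_{u,Y}(\rho),
\end{equation*}
and invoking \eqref{freqidentity4} for $D'_{u,Y}(\rho) = 2\rho^{2-n}\int_{\partial B_\rho(Y)}|D_R u|^2$ yields an explicit expression for the left side.

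Second, I will expand the integrand on the right side. Writing $u = \sum_l \llbracket u_l \rrbracket$, on $\partial B_\rho(Y)$ each sheet satisfies $\partial(u_l/R^\alpha)/\partial R = R^{-\alpha}(D_R u_l - \alpha R^{-1} u_l)$, so
\begin{equation*}
\left|\frac{\partial (u/R^\alpha)}{\partial R}\right|^2 = R^{-2\alpha}|D_R u|^2 - 2\alpha R^{-2\alpha-1} u^\kappa_l D_R u^\kappa_l + \alpha^2 R^{-2\alpha-2}|u|^2.
\end{equation*}
Restricting to $R = \rho$, integrating over $\partial B_\rho(Y)$, multiplying by $2\rho^{2-n}$, and using \eqref{freqidentity3} to replace $\int_{\partial B_\rho(Y)} u^\kappa_l D_R u^\kappa_l$ by $\int_{B_\rho(Y)}|Du|^2 = \rho^{n-2} D_{u,Y}(\rho)$ (and the definition of $H_{u,Y}$ for the last term) produces the same explicit expression, completing the proof.

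The computation is essentially routine once the two variational identities are in hand; there is no genuine obstacle. The only point requiring a little care is the regularity justification for differentiating $\rho^{-2\alpha}(D_{u,Y} - \alpha H_{u,Y})$ at almost every $\rho$, which is supplied by the $C^1$/absolute continuity statements recorded immediately before the lemma; beyond this, the proof is purely algebraic manipulation of boundary and interior integrals.
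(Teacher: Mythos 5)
Your proof is correct and follows exactly the route the paper indicates (the paper's proof is just the one-line instruction "Compute directly using the identities \eqref{freqidentity3} and \eqref{freqidentity4}"). Both your differentiation of the left-hand side (using $H'_{u,Y}(\rho) = 2\rho^{-1}D_{u,Y}(\rho)$ and \eqref{freqidentity4}) and your expansion of the integrand $\left|\partial(u/R^\alpha)/\partial R\right|^2$ on the right-hand side (reduced via \eqref{freqidentity3} and the definition of $H_{u,Y}$) produce the same expression $2\rho^{2-n-2\alpha}\int_{\partial B_\rho(Y)}|D_R u|^2 - 4\alpha\rho^{-2\alpha-1}D_{u,Y} + 2\alpha^2\rho^{-2\alpha-1}H_{u,Y}$, so the identity holds.
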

\begin{proof}
Compute directly using the identities \eqref{freqidentity3} and \eqref{freqidentity4}.
\end{proof} 

\begin{theorem} \label{keyest_thm}
Let $\varphi^{(0)}$ be as in Definition~\ref{varphi0_defn}.  Given $\gamma \in (0,1)$, there exists $\varepsilon_0 \in (0,1)$ depending only on $n$, $m$, $q$, $\alpha$, $\varphi^{(0)}$, and $\gamma$ such that if $\varphi \in \Phi_{\varepsilon_0}(\varphi^{(0)})$ and $u \in W^{1,2}(B_1(0);\mathcal{A}_q(\mathbb{R}^m))$ is an average-free, energy minimizing $q$-valued function with  $0 \in \Sigma_{u,q}$ and $\mathcal{N}_u(0) \geq \alpha$
then: 
\begin{align*}
	&(a) \hspace{5mm} \int_{B_{\gamma}(0)} R^{2-n} \left| \frac{\partial (u/R^{\alpha})}{\partial R} \right|^2 
	\leq C \int_{B_1(0)} \mathcal{G}(u,\varphi)^2, \\
	&(b) \hspace{5mm} \int_{B_{\gamma}(0)} |D_y u|^2 \leq C \int_{B_1(0)} \mathcal{G}(u,\varphi)^2 
\end{align*}
for some constant $C = C(n,m,q,\alpha,\varphi^{(0)},\gamma) \in (0,\infty)$, where $R = |X|$.  
\end{theorem}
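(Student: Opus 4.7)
Both estimates ultimately reduce to the fact that, on the regular set of $u$, each branch of the multi-valued difference $v := u - \varphi$ is a single-valued harmonic function and hence satisfies a Caccioppoli-type inequality. The role of $\varphi$ being (branchwise) harmonic and homogeneous of degree $\alpha$ is to provide the cancellations needed to pass from quantities involving $u$ to quantities involving $v$.

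For (a), I would begin by integrating Lemma~\ref{keyest_identity} over $(0,\gamma)$ to obtain
\begin{equation*}
2\int_{B_\gamma(0)} R^{2-n}\left|\frac{\partial (u/R^\alpha)}{\partial R}\right|^2 \;=\; F(\gamma) - F(0^+), \qquad F(\rho) := \rho^{-2\alpha}\bigl(D_{u,0}(\rho) - \alpha H_{u,0}(\rho)\bigr).
\end{equation*}
The hypothesis $\mathcal{N}_u(0) \geq \alpha$ combined with monotonicity of the Almgren frequency (Lemma~\ref{consequence_lemma}(a)) gives $N_{u,0}(\rho) \geq \alpha$ for every $\rho$, hence $F(\rho) \geq 0$ throughout and in particular $F(0^+) \geq 0$. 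Thus it suffices to prove $F(\gamma) \leq C\int_{B_1(0)}\mathcal{G}(u,\varphi)^2$. For this, the key cancellation identity I would establish is
\begin{equation*}
D_{u,0}(\gamma) - \alpha H_{u,0}(\gamma) \;=\; D_{v,0}(\gamma) - \alpha H_{v,0}(\gamma),
\end{equation*}
obtained by expanding $|Du|^2 = |D\varphi|^2 + 2D\varphi\cdot Dv + |Dv|^2$ and $|u|^2 = |\varphi|^2 + 2\varphi\cdot v + |v|^2$ branchwise, and then integrating the cross terms by parts against each other using the identities $\Delta \varphi_j = 0$ and $D_R \varphi_j = \alpha \varphi_j/R$ that are valid branchwise (since each component of $\varphi$ is harmonic and homogeneous of degree $\alpha$). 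A standard Caccioppoli inequality applied to each branch of the harmonic function $v$ gives $\int_{B_\gamma}|Dv|^2 \leq C\int_{B_1}|v|^2$, and the trace inequality combined with Caccioppoli yields $\int_{\partial B_\gamma}|v|^2 \leq C\int_{B_1}|v|^2$. Substituting into the identity above produces $|D_{v,0}(\gamma) - \alpha H_{v,0}(\gamma)| \leq C\int_{B_1}|v|^2 = C\int_{B_1}\mathcal{G}(u,\varphi)^2$, completing (a).

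For (b), I would exploit that each component of $\varphi \in \Phi_{\varepsilon_0}(\varphi^{(0)})$ is of the form $\op{Re}(c(x_1+ix_2)^\alpha)$ and hence independent of $y = (x_3,\ldots,x_n)$, so that $D_y\varphi = 0$ and consequently $D_y u = D_y v$ on the regular set of $u$. A componentwise Caccioppoli inequality applied to $v$ then gives directly
\begin{equation*}
\int_{B_\gamma(0)} |D_y u|^2 \;=\; \int_{B_\gamma(0)} |D_y v|^2 \;\leq\; \int_{B_\gamma(0)} |Dv|^2 \;\leq\; C\int_{B_1(0)} |v|^2 \;=\; C\int_{B_1(0)} \mathcal{G}(u,\varphi)^2.
\end{equation*}

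\emph{The main technical obstacle} is justifying the integration by parts and the Caccioppoli estimate for the multi-valued difference $v = u-\varphi$, which is not literally a multi-valued function across branch points of $u$ or on the singular axis $\{0\}\times\mathbb{R}^{n-2}$ of $\varphi$. Away from the axis the graphical representation in Corollary~\ref{graphical_cor} provides an appropriate branchwise pairing, making $v$ component-wise minimizing. Across branch points of $u$ and near the axis, this pairing can fail; one must either truncate to the regular set and pass to the limit, exploiting the $(n-2)$-dimensional size of the exceptional locus to control boundary error terms by capacity-type arguments, or apply the first variation identities \eqref{freqidentity1} and \eqref{freqidentity2} separately to $u$ and to $\varphi$ and then subtract, thereby avoiding the need for a global pointwise definition of $v$. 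The case $\alpha = k_0/q_0$ with $q_0 \geq 2$, where the branches of $\varphi$ genuinely permute around the axis, is where the multi-valued technicalities are most delicate.
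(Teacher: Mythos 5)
Your single-valued intuition is formally correct, and your identification of the cancellation mechanism (branchwise harmonicity of $\varphi$ plus $D_R\varphi = \alpha\varphi/R$) is exactly the right idea. But the way you propose to globalize it does not close, and the gap is not merely technical.

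\textbf{The pairing defining $v=u-\varphi$ does not exist globally, and your two proposed workarounds do not repair this.} When $q_0 \geq 2$, the nonzero components of $\varphi$ are permuted nontrivially upon circling the axis $\{0\}\times\mathbb{R}^{n-2}$; consequently there is no smooth global branchwise pairing of the values of $u$ with those of $\varphi$ on any full neighborhood of the axis, no matter how small the singular set of $u$ is. Truncating to the regular set of $u$ and estimating the error by capacity addresses the branch set of $u$ but not the monodromy of $\varphi$, which is a topological rather than a measure-theoretic obstruction. Your second fallback---applying \eqref{freqidentity1} and \eqref{freqidentity2} separately to $u$ and $\varphi$ and subtracting---also fails, because the cross term $\int_{B_\gamma} D\varphi\cdot Dv$ that you need to cancel genuinely couples $u$ and $\varphi$ and does not appear in either identity applied individually. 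Moreover, even on a region where a smooth local pairing $v$ is available, a fixed pairing satisfies $|v(X)|^2 \geq \mathcal{G}(u(X),\varphi(X))^2$ pointwise (with strict inequality in general), so your final equality $\int_{B_1}|v|^2 = \int_{B_1}\mathcal{G}(u,\varphi)^2$ goes in the wrong direction: the chain of inequalities bounds the left side of (a) by $C\int|v|^2$, which a priori \emph{dominates}, rather than is dominated by, $C\int\mathcal{G}(u,\varphi)^2$. Obtaining $\int|v|^2 \lesssim \int\mathcal{G}(u,\varphi)^2$ requires precisely the kind of controlled graphical parameterization supplied by Lemma~\ref{graphical_lemma}/Corollary~\ref{graphical_cor}, and those are defined only on annuli away from the axis where a smallness hypothesis on the excess holds.

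\textbf{What the paper does instead.} Rather than evaluate $F(\gamma) = \gamma^{-2\alpha}(D_{u,0}(\gamma) - \alpha H_{u,0}(\gamma))$ as a boundary quantity, the paper multiplies the differential inequality \eqref{keyest_eqn2} by a cutoff $\psi(\rho)^2$ with $\psi' $ supported in $[\gamma,(1+\gamma)/2]$ and integrates, using the coarea formula to convert to interior integrals against $\psi(R)\psi'(R)$ supported in an annular shell. Applying \eqref{freqidentity1} and \eqref{freqidentity2} with radial test fields $\zeta^j = \psi(R)^2 x_j$ (only in the $x$-directions) and manipulating yields \eqref{keyest_eqn9}, whose right-hand side lives entirely in the shell $\{\gamma < R < (1+\gamma)/2\}$. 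This shell is then covered by Besicovitch annuli $A_{\rho,\kappa}(\zeta)$, each of which lies away from the axis. In each annulus a dichotomy is run: if the normalized excess $\rho^{-n-2\alpha}\int_{A_{\rho,1}(\zeta)}\mathcal{G}(u,\varphi)^2$ is not small, the interior $W^{1,2}$ estimates for energy minimizers give the required bound outright; if it is small, Lemma~\ref{graphical_lemma} (applied via an inner dichotomy on the number of components $p$, iterating at most $p-p_0$ times to handle the failure of the separation hypothesis) furnishes a component-wise minimizing parameterization $v_{j,k}$ with the crucial $L^\infty$--$L^2$ control \eqref{keyest_eqn13}, and the cancellations you anticipated are then achieved by integrating by parts in the angular variable $\theta$ using $D_{\theta\theta}\varphi_{j,k} + \alpha^2\varphi_{j,k} = 0$ (see \eqref{keyest_eqn15}--\eqref{keyest_eqn16}), exploiting that the partition-of-unity weight $\chi_{(\rho,\zeta)}$ is $\theta$-independent. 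So the cancellation mechanism is the one you want, but it is executed annulus by annulus with only local pairings, not with a global difference $v$.
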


\begin{proof} 
By Lemma~\ref{keyest_identity}, 
\begin{align} \label{keyest_eqn1}
	&\frac{d}{d\rho} \left( \rho^{n-2} (D_{u,0}(\rho) - \alpha H_{u,0}(\rho)) \right) 
	= \frac{d}{d\rho} \left( \rho^{n-2+2\alpha} \cdot \rho^{-2\alpha} (D_{u,0}(\rho) - \alpha H_{u,0}(\rho)) \right) \\
	&\hspace{10mm} = (n-2+2\alpha) \rho^{n-3} (D_{u,0}(\rho) - \alpha H_{u,0}(\rho)) 
		+ 2 \rho^{2\alpha} \int_{\partial B_{\rho}(0)} \left| \frac{\partial (u/R^{\alpha})}{\partial R} \right|^2 \nonumber 
\end{align}
for a.e.~$ \rho \in (0,1)$.  Again by Lemma~\ref{keyest_identity} and the fact that $\mathcal{N}_u(0) \geq \alpha$, 
\begin{equation*}
	\rho^{-2\alpha} (D_{u,0}(\rho) - \alpha H_{u,0}(\rho)) 
	\geq 2 \int_{B_{\rho}(0)} R^{2-n} \left| \frac{\partial (u/R^{\alpha})}{\partial R} \right|^2
\end{equation*}
for all $ \rho \in (0,1)$.  Thus \eqref{keyest_eqn1} gives us 
\begin{equation} \label{keyest_eqn2}
	\frac{d}{d\rho} \left( \rho^{n-2} (D_{u,0}(\rho) - \alpha H_{u,0}(\rho)) \right) 
	\geq 2 \frac{d}{d\rho} \left( \rho^{n-2+2\alpha} \int_{B_{\rho}(0)} R^{2-n} \left| \frac{\partial (u/R^{\alpha})}{\partial R} \right|^2 
		\right) 
\end{equation}
for a.e.~$ \rho \in (0,1)$. 

Let $\psi : [0,\infty) \rightarrow \mathbb{R}$ be a smooth function with $\psi(t) = 1$ for $t \in [0,\gamma]$, $\psi(t) = 0$ for $t \geq (1 + \gamma)/2$, and $0 \leq \psi'(t) \leq 3/(1 - \gamma)$ for $t \in [0,\infty)$.  Multiplying both sides of \eqref{keyest_eqn2} by $\psi(\rho)^2$ and integrating yields
\begin{align} \label{keyest_eqn3}
	&\int_0^1 \frac{d}{d\rho} \left( \rho^{n-2} (D_{u,0}(\rho) - \alpha H_{u,0}(\rho)) \right) \psi(\rho)^2 d\rho \\
	&\hspace{10mm} \geq 2 \int_0^1 \frac{d}{d\rho} \left( \rho^{n-2+2\alpha} \int_{B_{\rho}(0)} 
		R^{2-n} \left| \frac{\partial (u/R^{\alpha})}{\partial R} \right|^2 \right) \psi(\rho)^2 d\rho \nonumber \\ 
	&\hspace{10mm} = -4 \int_0^1 \rho^{n-2+2\alpha} \int_{B_{\rho}(0)} 
		R^{2-n} \left| \frac{\partial (u/R^{\alpha})}{\partial R} \right|^2 \psi(\rho) \psi'(\rho) d\rho \nonumber \\ 
	&\hspace{10mm} = -4 \int_{\gamma}^{(1+\gamma)/2} \rho^{n-2+2\alpha} \int_{B_{\rho}(0)} 
		R^{2-n} \left| \frac{\partial (u/R^{\alpha})}{\partial R} \right|^2 \psi(\rho) \psi'(\rho) d\rho \nonumber \\ 
	&\hspace{10mm} \geq -4 \gamma^{n-2+2\alpha} \int_{B_{\gamma}(0)} R^{2-n} \left| \frac{\partial (u/R^{\alpha})}{\partial R} \right|^2 	\int_{\gamma}^{(1+\gamma)/2} \psi(\rho) \psi'(\rho) d\rho \nonumber \\ 
	&\hspace{10mm} = 2 \gamma^{n-2+2\alpha} \int_{B_{\gamma}(0)} R^{2-n} \left| \frac{\partial (u/R^{\alpha})}{\partial R} \right|^2 . \nonumber
\end{align}
By the coarea formula, 
\begin{equation} \label{keyest_eqn4}
	\int_0^1 \frac{d}{d\rho} \left( \rho^{n-2} D_{u,0}(\rho) \right) \psi(\rho)^2 d\rho 
	= \int_0^1 \int_{\partial B_{\rho}(0)} |Du|^2 \psi(\rho)^2 d\rho 
	= \int |Du|^2 \psi(R)^2 
\end{equation}
and by integration by parts and the coarea formula again
\begin{align} \label{keyest_eqn5}
	\int_0^1 \frac{d}{d\rho} \left( \rho^{n-2} H_{u,0}(\rho) \right) \psi(\rho)^2 d\rho 
	&= -2 \int_0^1 \rho^{n-2} H_{u,0}(\rho) \psi(\rho) \psi'(\rho) d\rho \\ 
	&= -2 \int_0^1 \int_{\partial B_{\rho}(0)} \rho^{-1} |u|^2 \psi(\rho) \psi'(\rho) d\rho \nonumber \\ 
	&= -2 \int R^{-1} |u|^2 \psi(R) \psi'(R). \nonumber 
\end{align}
By \eqref{keyest_eqn3}, \eqref{keyest_eqn4}, and \eqref{keyest_eqn5}, 
\begin{equation} \label{keyest_eqn6}
	\int (|Du|^2 \psi(R)^2 + 2\alpha R^{-1} |u|^2 \psi(R) \psi'(R)) 
	\geq C \int_{B_{\gamma}(0)} R^{2-n} \left| \frac{\partial (u/R^{\alpha})}{\partial R} \right|^2
\end{equation} 
for $C = C(n,\alpha,\gamma) \in (0,\infty)$. 

Now let $(\zeta_1,\ldots,\zeta_n) = \psi(R)^2 (x_1,x_2,0,\ldots,0)$ in \eqref{freqidentity2} to obtain 
\begin{equation} \label{keyest_eqn7}
	\int \left( |Du|^2 - |D_x u|^2 \right) \psi(R)^2 
	= -2 \int \left( \tfrac{1}{2} r^2 |Du|^2 - r^2 |D_r u|^2 - r D_r u_l^{\kappa} (y \cdot D_y u_l^{\kappa}) \right) R^{-1} \psi(R) \psi'(R)
\end{equation}
where $r = |x|$, $u(X) = \sum_{l=1}^q \llbracket u_l(X) \rrbracket$ with $u_l$ locally defined and differentiable in $B_1(0) \setminus {\mathcal B}_{u}$, $u_l^{\kappa}$ denotes the $\kappa$-th coordinate of $u_l$, and we use the convention of summing over repeated indices.  Let $\zeta = \psi(R)^2$ in \eqref{freqidentity1} to obtain
\begin{equation} \label{keyest_eqn8}
	\int |Du|^2 \psi(R)^2 = -2 \int \left( r u_l^{\kappa} D_r u_l^{\kappa} + u_l^{\kappa} (y \cdot D_y u_l^{\kappa}) \right) R^{-1} \psi(R) \psi'(R).
\end{equation}
By multiplying \eqref{keyest_eqn8} by $\alpha$ and adding it to \eqref{keyest_eqn7}, and adding also $\int 2\alpha^2 R^{-1} |u|^2 \psi(R) \psi'(R)$ to both
sides, we obtain 
\begin{align*}
	&\int \left( (\alpha |Du|^2 + |D_y u|^2) \psi(R)^2 + 2\alpha^2 R^{-1} |u|^2 \psi(R) \psi'(R) \right) \nonumber \\
	&\hspace{10mm} = -2 \int \left( \tfrac{1}{2} r^2 |Du|^2 - r D_r u_l^{\kappa} (r D_r u_l^{\kappa} - \alpha u_l^{\kappa}) - \alpha^2 |u|^2 - (y \cdot D_y u_l^{\kappa}) (r D_r u_l^{\kappa} - \alpha u_l^{\kappa}) \right) R^{-1} \psi(R) \psi'(R). 
\end{align*}
By Cauchy's inequality, 
\begin{align*} 
	&\int \left( \left( \alpha |Du|^2 + \tfrac{1}{2} |D_y u|^2 \right) \psi(R)^2 + 2\alpha^2 R^{-1} |u|^2 \psi(R) \psi'(R) \right) \\
	&\hspace{10mm} \leq -2 \int \left( \tfrac{1}{2} r^2 |Du|^2 - r D_r u_l^{\kappa} (r D_r u_l^{\kappa} - \alpha u_l^{\kappa}) - \alpha^2 |u|^2 \right) R^{-1} \psi(R) \psi'(R) \nonumber \\
	&\hspace{20mm} + 2 \int |r D_r u - \alpha u|^2 \psi'(R)^2. \nonumber 
\end{align*} 
Hence by~\eqref{keyest_eqn6} and the definition of $\psi$, 
\begin{align} \label{keyest_eqn9}
	&C \int_{B_{\gamma}(0)} R^{2-n} \left| \frac{\partial (u/R^{\alpha})}{\partial R} \right|^2 + \frac{1}{2} \int_{B_{\gamma}(0)} |D_y u|^2 \\ 
	&\hspace{10mm} \leq -2 \int \left( \tfrac{1}{2} r^2 |Du|^2 - r D_r u_l^{\kappa} (r D_r u_l^{\kappa} - \alpha u_l^{\kappa}) - \alpha^2 |u|^2 \right) R^{-1} \psi(R) \psi'(R) \nonumber \\
	&\hspace{20mm} + 2 \int |r D_r u - \alpha u|^2 \psi'(R)^2 \nonumber 
\end{align}
for $C = C(n,\alpha,\gamma) \in (0,\infty)$.

Now let, for $\rho, \kappa \in (0, 1)$ and $\zeta \in {\mathbb R}^{n-2}$,  $A_{\rho, \kappa}(\zeta)$ be the annulus defined by $A_{\rho, \kappa}(\zeta) = \{(re^{i\theta}, y) \, : \, 0 \leq \theta < 2\pi, (r, y) \in B^{n-1}_{\frac{1}{4}\kappa(1 - \gamma)\rho}(\rho, \zeta)\}$ and note that  
$A_{\rho, \kappa}(\zeta) \cap A_{\rho^{\prime}, \kappa}(\zeta^{\prime}) \neq \emptyset   \iff B^{n-1}_{\frac{1}{4}\kappa(1 - \gamma)\rho}(\rho, \zeta) \cap B^{n-1}_{\frac{1}{4}\kappa(1 - \gamma)\rho^{\prime}}(\rho^{\prime}, \zeta^{\prime}) \neq \emptyset \nonumber
\iff |(\rho, \zeta) - (\rho^{\prime}, \zeta^{\prime})| < \frac{1}{4}\kappa(1 - \gamma)(\rho + \rho^{\prime})$. 
 By applying the Besicovitch covering theorem to the collection of closed balls $\{\overline{B^{n-1}_{(1-\gamma)\rho/16}(\rho,\zeta)} \, : \, \rho> 0, \zeta \in {\mathbb R}^{n-2}, \rho^{2} + |\zeta|^{2} < \frac{(3 + \gamma)^{2}}{16}\},$ we find countable 
 collections $\mathcal{I}_1, \mathcal{I}_2, \ldots, \mathcal{I}_N$, where $N \leq C(n) < \infty$, of points $(\rho,\zeta)$ with $\rho > 0$,  $\zeta \in \mathbb{R}^{n-2}$ and $\rho^2 + |\zeta|^2 \leq (3+\gamma)^2/16$ such that 
 $\{ \overline{A_{\rho,1/4}(\zeta)} : (\rho,\zeta) \in \mathcal{I}_j \}$ is a collection of pairwise disjoint annuli for each $j = 1,2,\ldots,N$ and $B_{(3+\gamma)/4}(0) \setminus \{0\} \times {\mathbb R}^{n-2} \subset \bigcup_{(\rho,\zeta) \in \mathcal{I}} \overline{A_{\rho,1/4}(\zeta)}$ where $\mathcal{I} = \mathcal{I}_1 \cup \mathcal{I}_2 \cup \cdots \cup \mathcal{I}_{N}.$  Observe that if $A_{\rho,1}(\zeta) \cap A_{\rho',1}(\zeta') \neq \emptyset$ then $\tfrac{3+\gamma}{5-\gamma} \rho \leq \rho' \leq \tfrac{5-\gamma}{3+\gamma} \rho$ and $|(\rho, \zeta) - (\rho^{\prime}, \zeta^{\prime})| < c\rho -\frac{1}{16}(1-\gamma)\rho^{\prime}$ where $c = \frac{1-\gamma}{16} \left(4 + \frac{5(5 - \gamma)}{3+\gamma}\right),$ whence 
 $B^{n-1}_{\frac{(1-\gamma)(3+\gamma)\rho}{16(5-\gamma)}}(\rho^{\prime}, \zeta^{\prime}) \subset B^{n-1}_{(1 - \gamma)\rho^{\prime}/16}(\rho^{\prime}, \zeta^{\prime}) \subset B^{n-1}_{c\rho}(\rho, \zeta).$ Since   
 the balls $B^{n-1}_{(1 - \gamma)\rho^{\prime}/16}(\rho^{\prime}, \zeta^{\prime})$ for $(\rho^{\prime}, \zeta^{\prime}) \in {\mathcal I}_{j}$ are pairwise disjoint, it follows  that for each $j$ and each $(\rho,\zeta) \in \mathcal{I}_{j}$, 
\begin{equation}
{\rm card} \, \{(\rho^{\prime}, \zeta^{\prime}) \in {\mathcal I}_{j} \, : \, A_{\rho,1}(\zeta) \cap A_{\rho',1}(\zeta') \neq \emptyset\} \leq M
\end{equation}
for some constant $M = M(n, \gamma),$ and consequently, for each $j$ there exists an integer $m_{j} \leq M+1$ and disjoint sets ${\mathcal I}_{j, k} \subset {\mathcal I}_{j}$ ($1 \leq k \leq m_{j}$) such that ${\mathcal I}_{j} = \cup_{k=1}^{m_{j}} {\mathcal I}_{j, k}$ and 
$\{A_{\rho, 1}(\zeta) \, : \, (\rho, \zeta) \in {\mathcal I}_{j, k}\}$ is pairwise disjoint for each $k \in \{1, \ldots, m_{j}\}$. Let $\{\widetilde\chi_{(\rho, \zeta)}\}_{(\rho, \zeta) \in {\mathcal I}}$ be a smooth partition of unity subordinate to the collection of balls $\{B^{n-1}_{(1 - \gamma)\rho/16}(\rho, \zeta) \, : \, (\rho, \zeta) \in {\mathcal I}\}$ (with, in particular,  ${\rm spt} \, \widetilde\chi_{(\rho, \zeta)} \subset B^{n-1}_{(1-\gamma)\rho/16}(\rho, \zeta)$). For $r >0$, $0 \leq \theta < 2\pi$ and $y \in {\mathbb R}^{n-2}$, 
let $\chi_{(\rho, \zeta)}(re^{i\theta}, y) = \widetilde\chi_{(\rho, \zeta)}(r, y)$ so we have that ${\rm spt} \, \chi_{(\rho, \zeta)} \subset A_{\rho, 1/4}(\zeta)$  and $\sum_{(\rho, \zeta) \in {\mathcal I}} \chi_{(\rho, \zeta)} \equiv 1$ on 
$\cup_{(\rho, \zeta) \in {\mathcal I}} A_{\rho, 1/4}(\zeta).$ We claim that 
\begin{align} \label{keyest_eqn10}
	&-\int \left( \tfrac{1}{2} r^2 |Du|^2 - r D_r u_l^{\kappa} (r D_r u_l^{\kappa} - \alpha u_l^{\kappa}) - \alpha^2 |u|^2 \right) R^{-1} \psi(R) \psi'(R) \chi_{(\rho, \zeta)}
	\\&\hspace{10mm} + \int |r D_r u - \alpha u|^2 \psi'(R)^2 \chi_{(\rho, \zeta)}
	\leq C \int_{A_{\rho,1}(\zeta)} \mathcal{G}(u,\varphi)^2  \nonumber 
\end{align}
for each $(\rho, \zeta) \in {\mathcal I}$ and some constant $C = C(n,m,q,\alpha,\varphi^{(0)},\gamma) \in (0,\infty)$. 
Then by summing \eqref{keyest_eqn10} over $(\rho, \zeta) \in {\mathcal I} = \cup_{j=1}^{N} \cup_{k=1}^{m{j}}{\mathcal I}_{j, k}$ and keeping in mind that 
${\rm spt} \, \psi \subset B_{(1+\gamma)/2}(0)$, we deduce that  
\begin{align} \label{keyest_eqn11}
	&- \int \left( \tfrac{1}{2} r^2 |Du|^2 - r D_r u_l^{\kappa} (r D_r u_l^{\kappa} - \alpha u_l^{\kappa}) - \alpha^2 |u|^2 \right) R^{-1} \psi(R) \psi'(R) 
	\\&\hspace{10mm} + \int |r D_r u - \alpha u|^2 \psi'(R)^2 
	\leq C \int_{B_{1}(0)} \mathcal{G}(u,\varphi)^2 \nonumber 
\end{align}
for some constant $C = C(n,m,q,\alpha,\varphi^{(0)},\gamma) \in (0,\infty)$.  Combining \eqref{keyest_eqn9} and \eqref{keyest_eqn11} yields the conclusion of Theorem~\ref{keyest_thm}.

To prove \eqref{keyest_eqn10}, fix $(\rho,\zeta) \in {\mathcal I}$  and let $\chi = \chi_{(\rho, \zeta)}$. Let $\overline{\varepsilon}$ and $\overline{\beta}$ be as in Lemma~\ref{graphical_lemma}.  Let $p$ be the integer such that $\varphi \in \Phi_{\varepsilon_0,p}(\varphi^{(0)})$.  If 
\begin{equation*}
	\rho^{-n-2\alpha} \int_{A_{\rho,1}(\zeta)} \mathcal{G}(u,\varphi)^2 
	\geq \frac{1}{16} \left( \frac{\overline{\beta}}{2} \right)^q \overline{\varepsilon}^2, 
\end{equation*}
by the $W^{1,2}$ estimates on $u$, 
\begin{align*}
	\int_{A_{\rho,1/2}(\zeta)} (|u|^2 + r^2 |Du|^2) 
	&\leq C \int_{A_{\rho,1}(\zeta)} |u|^2 
	\\&\leq 2C \left( \int_{A_{\rho,1}(\zeta)} |\varphi|^2 + \int_{A_{\rho,1}(\zeta)} \mathcal{G}(u,\varphi)^2 \right) 
	\\&\leq C \rho^{n+2\alpha} + C \int_{A_{\rho,1}(\zeta)} \mathcal{G}(u,\varphi)^2
	\\&\leq C \int_{A_{\rho,1}(\zeta)} \mathcal{G}(u,\varphi)^2
\end{align*}
for $C = C(n,m,q,\alpha,\varphi^{(0)},\gamma) \in (0,\infty)$ and \eqref{keyest_eqn10} follows. 

Suppose instead  that
\begin{equation} \label{keyest_eqn12}
	\rho^{-n-2\alpha} \int_{A_{\rho,1}(\zeta)} \mathcal{G}(u,\varphi)^2 
	< \frac{1}{16} \left( \frac{\overline{\beta}}{2} \right)^q \overline{\varepsilon}^2 
\end{equation}
and that either (i) $p = p_0$ or (ii) $p > p_0$ and 
\begin{equation*}
	\int_{A_{\rho,1}(\zeta)} \mathcal{G}(u,\varphi)^2 
	\leq \overline{\beta} \inf_{\varphi' \in \bigcup_{p'=p_0}^{p-1} \Phi_{c \overline{\varepsilon},p'}(\varphi^{(0)})} 
		\int_{A_{\rho,1}(\zeta)} \mathcal{G}(u,\varphi')^2, 
\end{equation*}
where $c$ is as in the statement of Lemma~\ref{graphical_lemma}.  Note that provided $\varepsilon_0 \leq \overline{\varepsilon}/4$, \eqref{keyest_eqn12} implies that $\rho^{-n-2\alpha} \int_{A_{\rho,1}(\zeta)} \mathcal{G}(u,\varphi^{(0)})^2 < \overline{\varepsilon}^2$.  Thus by Lemma~\ref{graphical_lemma} there exists $v_{j,k} : \op{graph} \varphi_{j,k} |_{A_{1,1/2}(0)} \rightarrow \mathcal{A}_{m_{j,k}}(\mathbb{R}^m)$, where $\varphi_{j,k}$ are the components of $\varphi$ with multiplicity $m_{j,k}$ as in Definition~\ref{varphi_defn}, such that $u$ is given by \eqref{phiplusv} on $A_{\rho,1/2}(\zeta)$, $v = (v_{j,k})$ is component-wise minimizing, and 
\begin{equation} \label{keyest_eqn13}
	\sup_{A_{\rho,1/2}(\zeta)} |v_{j,k,l,h}|^2 + \rho^{-n}\int_{A_{\rho,1/2}(\zeta)} r^2 |Dv_{j,k,l,h}|^2 \leq C \rho^{-n} \int_{A_{\rho,1}(\zeta)} \mathcal{G}(u,\varphi)^2 
\end{equation}
for some constant $C = C(n,m,q,\alpha,\varphi^{(0)},\gamma) \in (0,\infty)$.  Here  $v_{j,k,l,h}$ is as in \eqref{v_localized_1} where $\varphi_{j,k,l}$ is as in \eqref{varphi_localized}.  Since $u$ is given by~\eqref{phiplusv} and $v_{j,k,l,h}$ satisfy \eqref{keyest_eqn13}, we have that 
\begin{align} \label{keyest_eqn14}
	&-\int \left( \tfrac{1}{2} r^2 |Du|^2 - r D_r u_l^{\kappa} (r D_r u_l^{\kappa} - \alpha u_l^{\kappa}) - \alpha^2 |u|^2 \right) R^{-1} \psi(R) \psi'(R) \chi
	\\&\hspace{10mm} +  \int |r D_r u - \alpha u|^2 \psi'(R)^2 \chi 
	\leq \int \left(\tfrac{1}{2}r^2 |D\varphi_{j,k,l}|^2  - \alpha^{2} |\varphi_{j, k. l}|^{2}\right)R^{-1} \psi(R) \psi'(R) \chi \nonumber \\
	&\hspace{20mm} + \int\left( r^2 D\varphi_{j,k,l}^{\kappa} \cdot Dv_{j,k,l;a}^{\kappa} - r D_r \varphi_{j,k,l}^{\kappa} (r D_r v_{j,k,l;a}^{\kappa} - \alpha v_{j,k,l;a}^{\kappa}) \right) R^{-1} \psi(R) \psi'(R) \chi \nonumber \\
	&\hspace{30mm} - \int 2\alpha^2 \varphi_{j,k,l}^{\kappa} v_{j,k,l;a}^{\kappa} R^{-1} \psi(R) \psi'(R) \chi
	+ C \int_{A_{\rho,1}(\zeta)} \mathcal{G}(u,\varphi)^2 \nonumber 
\end{align}
where $v_{j,k,l;a} = \frac{1}{m_{j,k}} \sum_{h=1}^{m_{j,k}} v_{j,k,l,h}.$
Since $D_{\theta \theta} \varphi_{j,k} + \alpha^2 \varphi_{j,k} = 0$ in $A_{\rho,1/2}(\zeta)$ and $\chi(re^{i\theta},y)$ is independent of $\theta$, by integration by parts in the $\theta$ variable, 
\begin{align} \label{keyest_eqn15}
	&\int_{A_{\rho,1/4}(\zeta)} \left( \tfrac{1}{2} r^2 |D\varphi_{j,k,l}|^2 - \alpha^2 |\varphi_{j,k,l}|^2 \right) R^{-1} \psi(R) \psi'(R) \chi \\
	&\hspace{10mm} = \frac{1}{2} \int_{A_{\rho,1/4}(\zeta)} \left( |D_{\theta} \varphi_{j,k,l}|^2 - \alpha^2 |\varphi_{j,k,l}|^2 \right) R^{-1} \psi(R) \psi'(R) \chi \nonumber \\
	&\hspace{10mm} = \frac{-1}{2} \int_{A_{\rho,1/4}(\zeta)} \left( \varphi_{j,k,l}^{\kappa} D_{\theta \theta} \varphi_{j,k,l}^{\kappa} + \alpha^2 |\varphi_{j,k,l}|^2 \right) R^{-1} \psi(R) \psi'(R) \chi = 0 \nonumber
\end{align}
for every $j$ and $k$ and 
\begin{align} \label{keyest_eqn16}
	\hspace{15mm}&\hspace{-15mm}  \int_{A_{\rho,1/4}(\zeta)} \left( r^2 D\varphi_{j,k,l}^{\kappa} \cdot Dv_{j,k,l;a}^{\kappa} - r D_r \varphi_{j,k,l}^{\kappa} (r D_r v_{j,k,l;a}^{\kappa} - \alpha v_{j,k,l;a}^{\kappa}) - 2\alpha^2 \varphi_{j,k,l}^{\kappa} v_{j,k,l;a}^{\kappa} \right) R^{-1} \psi(R) \psi'(R) \chi \nonumber \\
	&\hspace{10mm} = \int_{A_{\rho,1/4}(\zeta)} \left( D_{\theta} \varphi_{j,k,l}^{\kappa} D_{\theta} v_{j,k,l;a}^{\kappa} - \alpha^2 \varphi_{j,k,l}^{\kappa} v_{j,k,l;a}^{\kappa} \right) R^{-1} \psi(R) \psi'(R) \chi \nonumber \\
	&\hspace{10mm} = - \int_{A_{\rho,1/4}(\zeta)} \left( D_{\theta \theta} \varphi_{j,k,l}^{\kappa} v_{j,k,l;a}^{\kappa} + \alpha^2 \varphi_{j,k,l}^{\kappa} v_{j,k,l;a}^{\kappa} \right) R^{-1} \psi(R) \psi'(R) \chi = 0 
\end{align}
for every $j$ and $k,$ so  \eqref{keyest_eqn10} follows from \eqref{keyest_eqn14} and \eqref{keyest_eqn16}.  

If instead \eqref{keyest_eqn12} holds true but $p > p_0$ and 
\begin{equation} \label{keyest_eqn18}
	\int_{A_{\rho,1}(\zeta)} \mathcal{G}(u,\varphi)^2 
	> \overline{\beta} \inf_{\varphi' \in \bigcup_{p'=p_0}^{p-1} \Phi_{c \overline{\varepsilon},p'}(\varphi^{(0)})} \int_{A_{\rho,1}(\zeta)} \mathcal{G}(u,\varphi')^2, 
\end{equation}
choose $p_1 \in \{p_0,p_0+1,\ldots,p-1\}$ and $\varphi^{(1)} \in \Phi_{c\overline{\varepsilon},p_1}(\varphi^{(0)})$ such that 
\begin{equation} \label{keyest_eqn19}
	\int_{A_{\rho,1}(\zeta)} \mathcal{G}(u,\varphi^{(1)})^2 
	\leq 2 \inf_{\varphi' \in \bigcup_{p'=p_0}^{p-1} \Phi_{c \overline{\varepsilon},p'}(\varphi^{(0)})} \int_{A_{\rho,1}(\zeta)} \mathcal{G}(u,\varphi')^2. 
\end{equation}
Notice that by \eqref{keyest_eqn12}, \eqref{keyest_eqn18} and \eqref{keyest_eqn19}, 
\begin{equation*}
	\rho^{-n-2\alpha} \int_{A_{\rho,1}(\zeta)} \mathcal{G}(u,\varphi^{(1)})^2 
	\leq \frac{2}{\overline{\beta}} \rho^{-n-2\alpha} \int_{A_{\rho,1}(\zeta)}\mathcal{G}(u,\varphi)^2 < \frac{\overline{\varepsilon}^2}{16}
\end{equation*}
and thus by the triangle inequality $\varphi^{(1)} \in \Phi_{\overline{\varepsilon}/2,p_1}(\varphi^{(0)})$ and $\rho^{-n-2\alpha} \int_{A_{\rho,1}(\zeta)} \mathcal{G}(u,\varphi^{(0)})^2 < \overline{\varepsilon}^2$ provided $\varepsilon_0 \leq \overline{\varepsilon}/4$.  Now if either $p_1 = p_0$ or $p_1 > p_0$ and 
\begin{equation} \label{keyest_eqn17}
	\int_{A_{\rho,1}(\zeta)} \mathcal{G}(u,\varphi^{(1)})^2 
	\leq \overline{\beta} \inf_{\varphi' \in \bigcup_{p'=p_0}^{p_1-1} \Phi_{c \overline{\varepsilon},p'}(\varphi^{(0)})} \int_{A_{\rho,1}(\zeta)} \mathcal{G}(u,\varphi')^2, 
\end{equation}
then we can repeat the argument above with $\varphi^{(1)}$ in place of $\varphi$ to get 
\begin{align*}
	&-2 \int \left( \tfrac{1}{2} r^2 |Du|^2 - r D_r u_l^{\kappa} (r D_r u_l^{\kappa} - \alpha u_l^{\kappa}) - \alpha^2 |u|^2 \right) R^{-1} \psi(R) \psi'(R) \chi 
	+ 2 \int |r D_r u - \alpha u|^2 \psi'(R)^2 \chi 
	\\& \hspace{10mm} \leq C \int_{A_{\rho,1}(\zeta)} \mathcal{G}(u,\varphi^{(1)})^2 
	\leq \frac{2C}{\overline{\beta}} \int_{A_{\rho,1}(\zeta)} \mathcal{G}(u,\varphi)^2 
\end{align*}
for some constant $C = C(n,m,q,\alpha,\varphi^{(0)},\gamma) \in (0,\infty)$.  If $p_1 > p_0$ and \eqref{keyest_eqn17} is false, then we can choose $p_2 \in \{p_0,p_0+1,\ldots,p_1-1\}$ and $\varphi^{(2)} \in \Phi_{\overline{\varphi},p_2}(\varphi^{(0)})$ such that 
\begin{equation*}
	\int_{A_{\rho,1}(\zeta)} \mathcal{G}(u,\varphi^{(2)})^2 
	\leq 2 \inf_{\varphi' \in \bigcup_{p'=p_0}^{p_1-1} \Phi_{c \overline{\varepsilon},p'}(\varphi^{(0)})} \int_{A_{\rho,1}(\zeta)} \mathcal{G}(u,\varphi')^2 
\end{equation*}
and repeat the above argument.  It is clear that at most $p-p_0$ repetitions are needed to reach \eqref{keyest_eqn10}. 
\end{proof}

\begin{lemma} \label{fourierest_lemma} 
Let $\varphi^{(0)}$ be as in Definition~\ref{varphi0_defn}.  Given $\delta \in (0,1/12)$, there exists $\varepsilon_0, \beta_0 \in (0,1)$ depending only on $n$, $m$, $q$, $\alpha$, $\varphi^{(0)}$ and $\delta$ such that the following holds: Suppose that $\varphi \in \Phi_{\varepsilon_0,p}(\varphi^{(0)})$ for some $p \in \{p_0,p_0+1,\ldots, \lceil q/q_0 \rceil \}$ and that $u \in W^{1,2}(B_1(0);\mathcal{A}_q(\mathbb{R}^m))$ is an average-free energy minimizing function satisfying Hypothesis~$(\star)$, Hypothesis~$(\star\star)$ of Section~\ref{sec:graphical_sec} and the condition 
\begin{equation} \label{fourierest_eqn1}
	B_{\delta}(0,y_0) \cap \{ X \in B_{1/2}(0) \cap \Sigma_{u,q} : \mathcal{N}_u(X) \geq \alpha \} \neq \emptyset 
\end{equation} 
for all $y_0 \in B^{n-2}_{1/2}(0)$.  Let $v_{j,k} : \op{graph} \varphi_{j,k} |_{B_{1/2}(0) \cap \{r > \delta\}} \rightarrow \mathcal{A}_{m_{j,k}}(\mathbb{R}^m)$ be as in Corollary \ref{graphical_cor} with $\gamma = 1/2$ and $\tau = \delta$.  Then for each function $\zeta(x,y) = \widetilde{\zeta}(|x|,y)$ where $\widetilde{\zeta} = \widetilde{\zeta}(r,y) \in C^2_c(B^{n-1}_{1/2}(0))$ with $D_r \widetilde{\zeta}(r,y) = 0$ whenever $r \leq \delta$, we have that 
\begin{align*} 
	&\left| \sum_{j=1}^J \sum_{k=1}^{p_j} m_{j,k} \int_{B^{n-2}_{1/2}(0)} \int_{\delta}^{\infty} \int_0^{2\pi} \sum_{l=1}^{q_{j,k}} 
		r^{2\alpha-1} D(r^{2-2\alpha} \, v_{j,k,l;a}^{\kappa} \, D_{\iota} \varphi_{j,k,l}^{\kappa}) \cdot DD_{y_{\nu}} \zeta \,d\theta \,dr \,dy \right| 
	\\&\hspace{10mm} \leq C \left( \delta^{-2} \int_{B_1(0)} \mathcal{G}(u,\varphi)^2 
		+ \delta^{2\alpha} \left( \int_{B_1(0)} \mathcal{G}(u,\varphi)^2 \right)^{1/2} \right) \sup_{B_{1/2}(0)} |DD_{y_{\nu}} \zeta|
\end{align*}
for each $\iota = 1,2$, each $\nu = 1,2,\ldots,n-2$, and some constant $C = C(n,m,q,\alpha,\varphi^{(0)}) \in (0,\infty)$, where $\varphi_{j,k,l}$ is as in \eqref{varphi_localized} and $v_{j,k,l;a} = \frac{1}{m_{j,k}} \sum_{h=1}^{m_{j,k}} v_{j,k,l,h}$ with $v_{j,k,l,h}$ as in \eqref{v_localized_1}.
\end{lemma}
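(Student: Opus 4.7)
The plan is to derive the stated identity by applying the inner variation identity \eqref{freqidentity2} to $u$ with a compactly supported vector field whose $\iota$-th component (for each $\iota\in\{1,2\}$) is built from $r^{2-2\alpha}D_{y_\nu}\zeta$ (with a smooth cutoff of $r^{2-2\alpha}$ on $\{r\leq\delta/2\}$ if $\alpha<1$ to ensure $C^1_c$ regularity), and then using the graphical parameterization from Corollary~\ref{graphical_cor} on $\{r>\delta\}\cap B_{1/2}(0)$ to substitute $u = \sum_{j,k,l,h}\llbracket \varphi_{j,k,l} + v_{j,k,l,h}\rrbracket$ and expand the resulting integral by order in $v$.

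Expanded this way, the zeroth-order (pure-$\varphi$) contribution vanishes after integration by parts in $y_\nu$: each $\varphi_{j,k,l}$ is independent of $y$, the test vector field carries a factor of $D_{y_\nu}\zeta$, and for each fixed $x$ the $y$-integral is that of a total $y_\nu$-derivative of a compactly supported function. After summing the identity $\sum_h v_{j,k,l,h} = m_{j,k}\,v_{j,k,l;a}$ and integrating by parts using $\Delta\varphi_{j,k,l}=0$ together with the Euler relation $r\partial_r\varphi_{j,k,l} = \alpha\varphi_{j,k,l}$, the first-order (linear in $v$) term reorganises precisely into the weighted-divergence expression on the left-hand side of the conclusion; the factor $r^{2-2\alpha}$ inside the divergence is exactly what the Euler relation produces when grouped with the $D\varphi\cdot Dv_{a}$, $D_i\varphi\, D_\iota v_a$ and $D_i v_a\, D_\iota\varphi$ combinations coming from \eqref{freqidentity2}. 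The quadratic-in-$v$ bulk term is bounded by $C\delta^{-2}\int_{B_1(0)}\mathcal{G}(u,\varphi)^2\cdot \sup|DD_{y_\nu}\zeta|$ via the estimate \eqref{graphicalcor_eqn3}, which gives $\int r^2|Dv_{j,k,l,h}|^2\leq C\int\mathcal{G}(u,\varphi)^2$, combined with $r\geq\delta$ on the graphical region.

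The main obstacle will be controlling the terms arising from the region $\{r\leq\delta\}$, where the graphical decomposition is unavailable, together with the boundary terms at $\{r=\delta\}$ produced by the integration by parts in the linear step. The hypothesis $D_r\widetilde\zeta(r,y)=0$ for $r\leq\delta$ is crucial here: it forces $D_\iota\zeta = D_\iota D_{y_\nu}\zeta = 0$ on $\{r\leq\delta\}$, killing the $i\in\{1,2\}$ contributions; what survives is essentially $\int_{\{r\leq\delta\}} D_yu \cdot D_\iota u \cdot D_y D_{y_\nu}\zeta$, together with analogous boundary pieces on $\{r=\delta\}$ from the integration by parts. These are estimated by Cauchy--Schwarz, Theorem~\ref{keyest_thm}(b) (bounding $\int_{B_{1/2}}|D_yu|^2$ by the excess), and the frequency-monotonicity consequence $\int_{B_\rho(Z)}|Du|^2\leq C\rho^{n-2+2\alpha}$ valid at base points $Z\in\Sigma_{u,q}\cap B_{1/2}(0)\cap\{\mathcal{N}_u\geq\alpha\}$, whose existence densely along $\{0\}\times\mathbb{R}^{n-2}\cap B_{1/2}(0)$ is guaranteed by hypothesis~\eqref{fourierest_eqn1}. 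A Besicovitch covering of $\{r\leq\delta\}\cap B_{1/2}(0)$ by $\sim\delta^{2-n}$ balls of radius $\sim\delta$ centered at such $Z$, followed by a Cauchy--Schwarz pairing of $|D_yu|$ (square-integrable with bound $C(\int\mathcal{G}(u,\varphi)^2)^{1/2}$ by Theorem~\ref{keyest_thm}(b)) against $|D_\iota u|$ (square-integrable on the covering with bound $C\delta^{2\alpha}$ by the frequency estimate above), yields the desired $\delta^{2\alpha}(\int\mathcal{G}(u,\varphi)^2)^{1/2}$ error contribution.
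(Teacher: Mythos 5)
Your plan follows the same overall strategy as the paper: apply the inner variation identity \eqref{freqidentity2} with a test vector field pointing in the $e_\iota$ direction, use the graphical decomposition $u=\varphi+v$ of Corollary~\ref{graphical_cor} on $\{r>\delta\}$, kill the pure-$\varphi$ term by an integration by parts in $y_\nu$, bound the quadratic-in-$v$ term via \eqref{graphicalcor_eqn3}, control the $\{r\le\delta\}$ contribution using the no-gap hypothesis \eqref{fourierest_eqn1}, frequency monotonicity, and Theorem~\ref{keyest_thm}(b), and finally identify the linear-in-$v$ term with the weighted-divergence expression on the left-hand side of the conclusion up to the error $\mathcal{R}$. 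That is indeed the paper's proof.

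However, there is an internal inconsistency in your description of the key step, and taken literally it leads to a gap. You open by saying the test vector field has $\iota$-th component built from $r^{2-2\alpha}D_{y_\nu}\zeta$ (with a cutoff near the axis), and a few sentences later you say ``the factor $r^{2-2\alpha}$ inside the divergence is exactly what the Euler relation produces.'' These cannot both be the mechanism: if the weight $r^{2-2\alpha}$ is already in the test function, there is nothing for the Euler relation to produce; if it emerges from the Euler relation, the test function does not carry that weight. The paper actually uses the \emph{unweighted} field $\zeta^j(X)=\delta_{\iota j}D_{y_\nu}\zeta(r,y)$. The expression $r^{2\alpha-1}D\bigl(r^{2-2\alpha}v_{j,k,l;a}^{\kappa}D_\iota\varphi_{j,k,l}^{\kappa}\bigr)\cdot DD_{y_\nu}\zeta$ appears only afterward, as the starting point of a chain of algebraic identities: one expands the derivative, substitutes the variational identity \eqref{fourierest_eqn6} for the $rD_\iota\varphi\,Dv_a\cdot DD_{y_\nu}\zeta$ piece, uses harmonicity and homogeneity of $\varphi$ to rewrite the remaining bulk integrand as $\bigl(-\alpha D_\iota(v_a\varphi)+\operatorname{div}(x_\iota v_a D\varphi)\bigr)\,r^{-1}D_rD_{y_\nu}\zeta$, and then integrates by parts so that the Euler relation $D_r\varphi=\alpha r^{-1}\varphi$ annihilates both the remaining bulk term and the $\{r=\delta\}$ boundary term, leaving just $\mathcal{R}$.

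If instead you really use the weighted test field $\zeta^\iota=\chi(r)\,r^{2-2\alpha}D_{y_\nu}\zeta$, two new problems arise. First, $D\zeta^\iota$ contains the term $(2-2\alpha)r^{1-2\alpha}\widehat r\,D_{y_\nu}\zeta$, which destroys the exact cancellation in the zeroth-order and linear steps: you would be asserting stationarity with respect to a deformation that is no longer a constant multiple of the one in \eqref{freqidentity2} restricted to $\{r>\delta\}$, and the extra transport term is not obviously absorbable into $\mathcal{R}$. Second, and more concretely, the quadratic-in-$v$ estimate breaks for large $\alpha$: the integrand would carry $r^{2-2\alpha}|DD_{y_\nu}\zeta|+r^{1-2\alpha}|D_{y_\nu}\zeta|$ rather than simply $|DD_{y_\nu}\zeta|$, so pairing with $\int r^2|Dv|^2\le C\int\mathcal{G}(u,\varphi)^2$ from \eqref{graphicalcor_eqn3} on $\{r>\delta\}$ produces factors $\delta^{-2\alpha}$ and $\delta^{-1-2\alpha}$ respectively; for $\alpha\ge 1$ these exceed the claimed $\delta^{-2}$, so the conclusion of the lemma would not follow. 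Also note, as a small consistency check, that $D_i(r^{2-2\alpha}D_{y_\nu}\zeta)$ for $i\in\{1,2\}$ does \emph{not} vanish on $\{r\le\delta\}$ even though $D_rD_{y_\nu}\zeta$ does, because of the derivative hitting $r^{2-2\alpha}$; your claim that the $i\in\{1,2\}$ contributions vanish on $\{r\le\delta\}$ relies on using the unweighted field, which is another indication that the opening line of your proposal should be dropped in favour of the paper's choice of test function followed by the post-hoc algebraic identity.
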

\begin{proof}  
Fix $\iota \in \{1,2\}$ and replace $\zeta^j$ with $\delta_{\iota j} D_{y_{\nu}} \zeta(r,y)$ in \eqref{freqidentity2} and recall that $u$ is given by \eqref{phiplusv} on $B_{1/2}(0) \cap \{r \geq \delta\}$ to obtain  
\begin{align*}
	&\frac{-1}{2} \int_{B_{1/2}(0) \cap \{r \geq \delta\}} (m_{j,k} |D\varphi_{j,k,l}|^2 
		+ 2m_{j,k} D\varphi_{j,k,l}^{\kappa} \cdot Dv_{j,k,l,a}^{\kappa} + |Dv_{j,k,l,h}|^2) D_{\iota} D_{y_{\nu}} \zeta 
	\\&\hspace{10mm} + \int_{B_{1/2}(0) \cap \{r \geq \delta\}} (m_{j,k} D_{\iota} \varphi_{j,k,l}^{\kappa} D\varphi_{j,k,l}^{\kappa} 
		+ m_{j,k} D_{\iota} \varphi_{j,k,l}^{\kappa} Dv_{j,k,l,a}^{\kappa}) \cdot DD_{y_{\nu}} \zeta 
	\\&\hspace{10mm} + \int_{B_{1/2}(0) \cap \{r \geq \delta\}} (m_{j,k} D_{\iota} v_{j,k,l,a}^{\kappa} D\varphi_{j,k,l}^{\kappa} 
		+ D_{\iota} v_{j,k,l,h}^{\kappa} Dv_{j,k,l,h}^{\kappa}) \cdot DD_{y_{\nu}} \zeta 
	\\&\hspace{10mm} = -\int_{B_{1/2}(0) \cap \{r \leq \delta\}} D_{\iota} u_l^{\kappa} D_y u_l^{\kappa} \cdot D_y D_{y_{\nu}} \zeta, 
\end{align*}
where we use the convention of summing over $j,k,l$ and repeated indices.  Observe that, since $\varphi$ is independent of $y$, by integration by parts 
\begin{equation*}
	\int_{B_{1/2}(0) \cap \{r \geq \delta\}} \left( \frac{-1}{2} |D\varphi_{j,k,l}|^2 D_{\iota} D_{y_{\nu}} \zeta 
		+ D_{\iota} \varphi_{j,k,l}^{\kappa} D\varphi_{j,k,l}^{\kappa} \cdot DD_{y_{\nu}} \zeta \right) = 0 
\end{equation*}
for all $j,k,l$.  Thus 
\begin{align} \label{fourierest_eqn2}
	\hspace{20mm} & \hspace{-20mm} m_{j,k} \int_{B_{1/2}(0) \cap \{r \geq \delta\}} 
		(-D\varphi_{j,k,l}^{\kappa} \cdot Dv_{j,k,l,a}^{\kappa} D_{\iota} D_{y_{\nu}} \zeta 
		+ D_{\iota} \varphi_{j,k,l}^{\kappa} Dv_{j,k,l,a}^{\kappa} \cdot DD_{y_{\nu}} \zeta 
		+ D_{\iota} v_{j,k,l,a}^{\kappa} D\varphi_{j,k,l}^{\kappa} \cdot DD_{y_{\nu}} \zeta) \nonumber \\
	&\hspace{-10mm} = -\int_{B_{1/2}(0) \cap \{r \geq \delta\}} \left( \frac{-1}{2} |Dv_{j,k,l,h}|^2 D_{\iota} D_{y_{\nu}} \zeta 
		+ D_{\iota} v_{j,k,l,h}^{\kappa} Dv_{j,k,l,h}^{\kappa} \cdot DD_{y_{\nu}} \zeta \right) \nonumber \\ 
	& - \int_{B_{1/2}(0) \cap \{r \leq \delta\}} D_{\iota} u_l^{\kappa} D_y u_l^{\kappa} \cdot D_y D_{y_{\nu}} \zeta.  
\end{align}
By the estimates on $v$ in Corollary \ref{graphical_cor}, 
\begin{equation*}
	\int_{B_{1/2}(0) \cap \{r \geq \delta\}} r^2 |Dv_{j,k,l,h}|^2 \leq C \int_{B_1(0)} \mathcal{G}(u,\varphi)^2 
\end{equation*}
for some constant $C = C(n,m,q,\alpha,\varphi^{(0)}) \in (0,\infty)$, so 
\begin{equation} \label{fourierest_eqn3}
	\int_{B_{1/2}(0) \cap \{r \geq \delta\}} |Dv_{j,k,l,h}|^2 |DD_{y_{\nu}} \zeta| 
	\leq C \delta^{-2} \int_{B_1(0)} \mathcal{G}(u,\varphi)^2 \sup_{B_{1/2}(0)} |DD_{y_{\nu}} \zeta| 
\end{equation}
for some constant $C = C(n,m,q,\alpha,\varphi^{(0)}) \in (0,\infty)$.  By \eqref{fourierest_eqn1}, for every $y \in B^{n-2}_{1/2}(0)$ there exists a $Z \in B_{\delta}(0,y) \cap B_{1/2}(0)$ such that $\mathcal{N}_u(Z) \geq \alpha$ and thus by \eqref{regestimate} and \eqref{consequence_eqn2},
\begin{equation*}
	\int_{B_{2\delta}(0,y)} |Du|^2 \leq \int_{B_{3\delta}(Z)} |Du|^2 \leq C\delta^{-2} \int_{B_{6\delta}(Z)} |u|^2 \leq C \delta^{n-2+2\alpha} \int_{B_{1/2}(Z)} |u|^2 
	\leq C\delta^{n-2+2\alpha} 
\end{equation*}
for some constant $C = C(n,m,q,\alpha,\varphi^{(0)}) \in (0,\infty)$ whence by a standard covering argument, 
\begin{equation} \label{fourierest_eqn4}
	\int_{B_{1/2}(0) \cap \{r \leq \delta\}} |Du|^2 \leq C\delta^{2\alpha} 
\end{equation}
for some constant $C = C(n,m,q,\alpha,\varphi^{(0)}) \in (0,\infty)$.  By \eqref{fourierest_eqn4} and Theorem~\ref{keyest_thm}(b), 
\begin{align} \label{fourierest_eqn5}
	&\left| \int_{B_{1/2}(0) \cap \{r \leq \delta\}} D_{\iota} u_l^{\kappa} D_y u_l^{\kappa} \cdot D_y D_{y_{\nu}} \zeta \right| \\
	&\hspace{10mm} \leq \left( \int_{B_{1/2}(0) \cap \{r \leq \delta\}} |Du|^2 \right)^{1/2} \left( \int_{B_{1/2}(0)} |D_y u|^2 \right)^{1/2} \sup_{B_{1/2}(0)} |DD_{y_{\nu}} \zeta| \nonumber \\
	&\hspace{10mm} \leq C\delta^{2\alpha} \left( \int_{B_1(0)} \mathcal{G}(u,\varphi)^2 \right)^{1/2} \sup_{B_{1/2}(0)} |DD_{y_{\nu}} \zeta| \nonumber
\end{align}
for some constant $C = C(n,m,q,\alpha,\varphi^{(0)}) \in (0,\infty)$.  Using \eqref{fourierest_eqn3} and \eqref{fourierest_eqn5} to bound the right-hand side of
\eqref{fourierest_eqn2}, we get 
\begin{align} \label{fourierest_eqn6}
	&m_{j,k} \int_{B_{1/2}(0) \cap \{r \geq \delta\}} 
		(-D\varphi_{j,k,l}^{\kappa} \cdot Dv_{j,k,l,a}^{\kappa} D_{\iota} D_{y_{\nu}} \zeta 
		+ D_{\iota} \varphi_{j,k,l}^{\kappa} Dv_{j,k,l,a}^{\kappa} \cdot DD_{y_{\nu}} \zeta) 
	\\&\hspace{10mm} + m_{j,k} \int_{B_{1/2}(0) \cap \{r \geq \delta\}} D_{\iota} v_{j,k,l,a}^{\kappa} D\varphi_{j,k,l}^{\kappa} \cdot DD_{y_{\nu}} \zeta 
		= \mathcal{R} \nonumber 
\end{align}
where 
\begin{equation*}
	|\mathcal{R}| \leq C \left( \delta^{-2} \int_{B_1(0)} \mathcal{G}(u,\varphi)^2 
		+ \delta^{2\alpha} \left( \int_{B_1(0)} \mathcal{G}(u,\varphi)^2 \right)^{1/2} \right) \sup_{B_{1/2}(0)} |DD_{y_{\nu}} \zeta|
\end{equation*}
for some constant $C = C(n,m,q,\alpha,\varphi^{(0)}) \in (0,\infty)$.  Now observe that, using the fact that $\varphi$ is homogeneous degree $\alpha$ and independent of $y$ and $\zeta$ depends only on $r$ and $y$,
\begin{align*}
	&m_{j,k} \int_{B^{n-2}_{1/2}(0)} \int_{\delta}^{\infty} \int_0^{2\pi q_0} 
		r^{2\alpha-1} D(r^{2-2\alpha} v_{j,k,l,a}^{\kappa} D_{\iota} \varphi_{j,k,l}^{\kappa}) \cdot DD_{y_{\nu}} \zeta \,d\theta \,dr \,dy 
	\\&\hspace{10mm} = m_{j,k} \int_{B^{n-2}_{1/2}(0)} \int_{\delta}^{\infty} \int_0^{2\pi q_0} 
		\left( (1-\alpha) v_{j,k,l,a}^{\kappa} D_{\iota} \varphi_{j,k,l}^{\kappa} D_r D_{y_{\nu}} \zeta 
		+ r D_{\iota} \varphi_{j,k,l}^{\kappa} Dv_{j,k,l,a}^{\kappa} \cdot DD_{y_{\nu}} \zeta \right) \,d\theta \,dr \,dy 
\end{align*}
Using \eqref{fourierest_eqn6} to substitute for the integral of $r D_{\iota} \varphi_{j,k,l}^{\kappa} Dv_{j,k,l,a}^{\kappa} \cdot DD_{y_{\nu}} \zeta $, 
\begin{align*}
	&m_{j,k} \int_{B^{n-2}_{1/2}(0)} \int_{\delta}^{\infty} \int_0^{2\pi q_0} 
		r^{2\alpha-1} D(r^{2-2\alpha} v_{j,k,l,a}^{\kappa} D_{\iota} \varphi_{j,k,l}^{\kappa}) \cdot DD_{y_{\nu}} \zeta \,d\theta \,dr \,dy 
	\\&\hspace{10mm} = m_{j,k} \int_{B^{n-2}_{1/2}(0)} \int_{\delta}^{\infty} \int_0^{2\pi q_0} 
		\left( (1-\alpha) v_{j,k,l,a}^{\kappa} D_{\iota} \varphi_{j,k,l}^{\kappa} D_r D_{y_{\nu}} \zeta 
		+ r Dv_{j,k,l,a}^{\kappa} \cdot D\varphi_{j,k,l}^{\kappa} D_{\iota} D_{y_{\nu}} \zeta \right) \,d\theta \,dr \,dy 
	\\&\hspace{20mm} - m_{j,k} \int_{B^{n-2}_{1/2}(0)} \int_{\delta}^{\infty} \int_0^{2\pi q_0} 
		r D_{\iota} v_{j,k,l,a}^{\kappa} D\varphi_{j,k,l}^{\kappa} \cdot DD_{y_{\nu}} \zeta \,d\theta \,dr \,dy + \mathcal{R}. 
\end{align*}
Again since $\varphi$ is homogeneous degree $\alpha$,  locally given by harmonic functions away from $\{r = 0\}$, and independent of $y$ and $\zeta$ depends only on $r$ and $y$, 
\begin{align*}
	&m_{j,k} \int_{B^{n-2}_{1/2}(0)} \int_{\delta}^{\infty} \int_0^{2\pi q_0} 
		r^{2\alpha-1} D(r^{2-2\alpha} v_{j,k,l,a}^{\kappa} D_{\iota} \varphi_{j,k,l}^{\kappa}) \cdot DD_{y_{\nu}} \zeta \,d\theta \,dr \,dy 
	\\&\hspace{10mm} = m_{j,k} \int_{B^{n-2}_{1/2}(0)} \int_{\delta}^{\infty} \int_0^{2\pi q_0} 
		\left( (1-\alpha) v_{j,k,l,a}^{\kappa} D_{\iota} \varphi_{j,k,l}^{\kappa} 
		+ x_{\iota} Dv_{j,k,l,a}^{\kappa} \cdot D\varphi_{j,k,l}^{\kappa} \right) D_r D_{y_{\nu}} \zeta \,d\theta \,dr \,dy 
		\\&\hspace{20mm} - m_{j,k} \int_{B^{n-2}_{1/2}(0)} \int_{\delta}^{\infty} \int_0^{2\pi q_0} 
		\alpha D_{\iota} v_{j,k,l,a}^{\kappa} \varphi_{j,k,l}^{\kappa} D_r D_{y_{\nu}} \zeta \,d\theta \,dr \,dy + \mathcal{R} 
	\\&\hspace{10mm} = m_{j,k} \int_{B^{n-2}_{1/2}(0)} \int_{\delta}^{\infty} \int_0^{2\pi q_0} 
		\left( -\alpha D_{\iota} (v_{j,k,l,a}^{\kappa} \varphi_{j,k,l}^{\kappa}) 
		+ \op{div}(x_{\iota} v_{j,k,l,a}^{\kappa} D\varphi_{j,k,l}^{\kappa}) \right) D_r D_{y_{\nu}} \zeta \,d\theta \,dr \,dy + \mathcal{R} 
	\\&\hspace{10mm} = m_{j,k} \int_{B_{1/2}(0) \cap \{r \geq \delta\}} 
		\left( -\alpha D_{\iota} (v_{j,k,l,a}^{\kappa} \varphi_{j,k,l}^{\kappa}) 
		+ \op{div}(x_{\iota} v_{j,k,l,a}^{\kappa} D\varphi_{j,k,l}^{\kappa}) \right) r^{-1} D_r D_{y_{\nu}} \zeta + \mathcal{R}. 
\end{align*}
By integrating by parts we deduce from the preceding line that
\begin{align*}
	&m_{j,k} \int_{B^{n-2}_{1/2}(0)} \int_{\delta}^{\infty} \int_0^{2\pi q_0} 
		r^{2\alpha-1} D(r^{2-2\alpha} v_{j,k,l,a}^{\kappa} D_{\iota} \varphi_{j,k,l}^{\kappa}) \cdot DD_{y_{\nu}} \zeta 
	\\&\hspace{10mm} = -m_{j,k} \int_{B_{1/2}(0) \cap \{r = \delta\}} 
		\left( -\alpha \frac{x_{\iota}}{r} v_{j,k,l,a}^{\kappa} \varphi_{j,k,l}^{\kappa} 
		+ x_{\iota} v_{j,k,l,a}^{\kappa} D_r \varphi_{j,k,l}^{\kappa} \right) r^{-1} D_r D_{y_{\nu}} \zeta 
	\\&\hspace{20mm} - m_{j,k} \int_{B_{1/2}(0) \cap \{r \geq \delta\}} 
		\left( -\alpha \frac{x_{\iota}}{r} v_{j,k,l,a}^{\kappa} \varphi_{j,k,l}^{\kappa} 
		+ x_{\iota} v_{j,k,l,a}^{\kappa} D_r \varphi_{j,k,l}^{\kappa} \right) D_r (r^{-1} D_r D_{y_{\nu}} \zeta) + \mathcal{R}. 
\end{align*}
Since $\varphi$ is homogeneous degree $\alpha$, $D_r \varphi = \alpha r^{-1} \varphi$ and thus
\begin{equation*}
	m_{j,k} \int_{B^{n-2}_{1/2}(0)} \int_{\delta}^{\infty} \int_0^{2\pi q_0} 
		r^{2\alpha-1} D(r^{2-2\alpha} v_{j,k,l,a}^{\kappa} D_{\iota} \varphi_{j,k,l}^{\kappa}) \cdot DD_{y_{\nu}} \zeta \,d\theta \,dr \,dy = \mathcal{R}, 
\end{equation*}
completing the proof.
\end{proof}

\begin{lemma} \label{competitor_lemma} 
Let $\varphi^{(0)}$ be as in Definition~\ref{varphi0_defn}.  Given $\delta \in (0,1/16)$, there exists $\varepsilon_0, \beta_0 \in (0,1)$ depending only on $n$, $m$, $q$, $\alpha$, $\varphi^{(0)}$, and $\delta$ such that the following holds true.  Suppose that $u$, $\varphi$ satisfy Hypothesis~$(\star)$ and Hypothesis~$(\star\star)$ of Section~\ref{sec:graphical_sec}.   
Let $v_{j,k} : \op{graph} \varphi_{j,k} |_{B_{3/4}(0) \cap \{r > \delta/8\}} \rightarrow \mathcal{A}_{m_{j,k}}(\mathbb{R}^m)$ be as in Corollary \ref{graphical_cor} with $\gamma = 1/8$ and $\tau = \delta/8$ and let $v_{j,k,l}$ be as in \eqref{v_localized}.   Then for all $\zeta_{j,k} \in C^1_c(B_{1/16}(0))$ with $|\zeta_{j,k}| \leq 1/16$ and $|D\zeta_{j,k}| \leq 1$ and for all $Z \in B_{1/16}(0)$ with $\op{dist}(Z,\{0\} \times \mathbb{R}^{n-2}) < \delta/2$, 
\begin{equation} \label{competitor_eqn2}
	\int_{B_{1/16}(0) \cap \{r > \delta\}} |Dv_{j,k,l}|^2 
	\leq \int_{B_{1/16}(0) \cap \{r > \delta\}} |D\widetilde{v}_{j,k,l}|^2 + C \delta^{-2} \int_{B_{1/8}(0) \cap \{\delta/8 < r < 2\delta\}} |v_{j,k,l}|^2
\end{equation}
for some constant $C = C(n,m,q) \in (0,\infty)$, where $\widetilde{v}_{j,k,l}(X) = v_{j,k,l}(X+\zeta_{j,k}(X) (X-Z))$ in $B_{1/4}(0) \cap \{r > \delta/2\}$ and we use the convention of summing over $j,k,l$ and repeated indices.
\end{lemma}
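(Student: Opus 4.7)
The plan is to construct a competitor $\widetilde u$ for $u$ that agrees with $u$ outside a neighborhood of the singular axis $\{r=0\}$ but replaces $v_{j,k}$ by the pushforward $\widetilde v_{j,k}$ on $\{r>\delta\}$, interpolated with $v_{j,k}$ across a narrow transition annulus. Invoking the Dirichlet energy minimizing property of $u$, together with harmonicity of the components $\varphi_{j,k,l}$ of $\varphi$ to eliminate cross terms, will then yield the desired energy comparison for $v$ alone, modulo an error localized to the transition annulus.

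Concretely, fix a radial cutoff $\psi_{1}=\psi_{1}(r)$ with $\psi_{1}\equiv 1$ on $\{r>\delta\}$, $\psi_{1}\equiv 0$ on $\{r<\delta/2\}$, $0\le\psi_{1}\le 1$, and $|D\psi_{1}|\le C\delta^{-1}$. On $B_{1/16}(0)\cap\{r>\delta/2\}$, for each sheet index $(j,k,l,h)$, set $\widetilde v^{\psi}_{j,k,l,h}(X)=\psi_{1}(X)\widetilde v_{j,k,l,h}(X)+(1-\psi_{1}(X))v_{j,k,l,h}(X)$, where $\widetilde v_{j,k,l,h}(X)=v_{j,k,l,h}(X+\zeta_{j,k}(X)(X-Z))$ are the sheets obtained by pulling back the sheets of $v_{j,k,l}$ via the near-identity map $\Phi_{j,k}(X)=X+\zeta_{j,k}(X)(X-Z)$; the unambiguous pairing of sheets of $v$ and $\widetilde v$ is available locally because Corollary~\ref{graphical_cor} guarantees that distinct component graphs of $\varphi$ remain separated on the graphical region. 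Define $\widetilde u(X)=\sum_{j,k,l,h}\llbracket\varphi_{j,k,l}(X)+\widetilde v^{\psi}_{j,k,l,h}(X)\rrbracket$ on $B_{1/16}\cap\{r>\delta/2\}$, and $\widetilde u=u$ elsewhere. Then $\widetilde u\in W^{1,2}$, $\widetilde u=u$ on $\{r<\delta/2\}$ (where $\psi_{1}=0$ so $\widetilde v^{\psi}=v$), and $\widetilde u=u$ outside $B_{1/16}$ (where $\zeta_{j,k}=0$ so $\widetilde v=v$). Energy minimality of $u$ then gives $\int_{B_{1/16}\cap\{r>\delta/2\}}|Du|^{2}\le\int_{B_{1/16}\cap\{r>\delta/2\}}|D\widetilde u|^{2}$. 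Expanding $|Du|^{2}$ and $|D\widetilde u|^{2}$ via the graphical formula \eqref{phiplusv}, the $|D\varphi_{j,k,l}|^{2}$ contributions cancel, and the cross terms sum to $2\sum_{j,k,l}m_{j,k}\int D\varphi_{j,k,l}\cdot D(\widetilde v^{\psi}_{j,k,l;a}-v_{j,k,l;a})$, which vanishes by integration by parts since $\widetilde v^{\psi}_{j,k,l;a}-v_{j,k,l;a}$ is compactly supported in the interior of $B_{1/16}\cap\{r>\delta/2\}$ and $\varphi_{j,k,l}$ is locally harmonic (integration by parts is performed on balls in the graphical region where each $\varphi_{j,k,l}$ is single-valued, and assembled). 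We thus obtain $\int|Dv_{j,k,l,h}|^{2}\le\int|D\widetilde v^{\psi}_{j,k,l,h}|^{2}$ over $B_{1/16}\cap\{r>\delta/2\}$.

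Splitting the right-hand side into $\{r>\delta\}$, where $\widetilde v^{\psi}=\widetilde v$, and the transition $\{\delta/2<r<\delta\}$, on which $|D\widetilde v^{\psi}|^{2}\le 3(|Dv|^{2}+|D\widetilde v|^{2}+|D\psi_{1}|^{2}|\widetilde v-v|^{2})$, the desired \eqref{competitor_eqn2} will follow once the transition contributions are bounded by $C\delta^{-2}\int_{\{\delta/8<r<2\delta\}\cap B_{1/8}}|v_{j,k,l}|^{2}$. The terms $\int|Dv|^{2}$ and $\int|D\widetilde v|^{2}$ on the transition annulus are handled via a Caccioppoli estimate: since each $v_{j,k,l}$ is component-wise Dirichlet minimizing, covering the transition by balls of radius $\sim\delta/8$ whose doubles are contained in $\{\delta/8<r<2\delta\}\cap B_{1/8}$ yields $\int_{\text{annulus}}|Dv_{j,k,l}|^{2}\le C\delta^{-2}\int_{\{\delta/8<r<2\delta\}\cap B_{1/8}}|v_{j,k,l}|^{2}$, and the same for $\widetilde v_{j,k,l}$ (which is itself component-wise minimizing as the pullback of $v$ by the near-identity bi-Lipschitz map $\Phi_{j,k}$) after a change of variables $Y=\Phi_{j,k}(X)$ whose Jacobian is bounded above and below for $\varepsilon_{0}$ small. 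The last term $\delta^{-2}\int|\widetilde v-v|^{2}$ is handled by the crude pointwise bound $|\widetilde v-v|^{2}\le 2(|\widetilde v|^{2}+|v|^{2})$ together with the same change of variables to replace $\int|\widetilde v|^{2}$ by $C\int_{\{\delta/8<r<2\delta\}}|v|^{2}$. Assembling these yields the claimed estimate. The main obstacle is the careful handling of the multi-valued structure in two places: making pointwise sense of the convex combination defining $\widetilde v^{\psi}$ sheet-by-sheet (valid because $\Phi_{j,k}$ is close to the identity, permitting an unambiguous pairing), and assembling the cross-term integration by parts from local contributions on balls despite the non-simply-connected topology and the monodromy of $\varphi_{j,k,l}$ around the axis. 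A subsidiary technical point is verifying that $\Phi_{j,k}$ maps the relevant regions into the domain $\{r>\delta/8\}$ of $v_{j,k,l}$, which is arranged by choosing $\varepsilon_{0},\beta_{0}$ sufficiently small in terms of $\delta$ so that $|\Phi_{j,k}(X)-X|$ is negligible compared to $\delta$.
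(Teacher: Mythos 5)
Your broad strategy — build a competitor by interpolating between $v_{j,k,l}$ and its pullback $\widetilde v_{j,k,l}$ across a narrow transition annulus, apply energy minimality of $u$, kill the cross terms with $D\varphi_{j,k,l}$ by harmonicity, and absorb the interpolation error into a Caccioppoli-type bound localized to the annulus — is exactly the structure of the paper's proof, and the bookkeeping you do on the $\{r>\delta\}$ region, the cross-term integration by parts on the single-valued averages $v_{j,k,l;a}$, and the change-of-variables control for $\widetilde v_{j,k,l}$ are all essentially the paper's steps.

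However, there is a genuine gap in how you construct the interpolant on the transition region $\{\delta/2<r<\delta\}$. You define it sheet-by-sheet: $\widetilde v^{\psi}_{j,k,l,h}=\psi_1\,\widetilde v_{j,k,l,h}+(1-\psi_1)\,v_{j,k,l,h}$. This is not well-defined when $m_{j,k}\ge 2$. The function $v_{j,k,l}$ is $\mathcal{A}_{m_{j,k}}(\mathbb{R}^m)$-valued and can have branch points inside $B_{1/16}(0)\cap\{\delta/2<r<\delta\}$; near such a point there is no continuous selection of sheets $v_{j,k,l,h}$, and even where local selections exist, the near-identity map $\Phi_{j,k}$ moves the branch set, so there is no consistent pairing of the values of $v_{j,k,l}(X)$ with those of $v_{j,k,l}(\Phi_{j,k}(X))$. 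The separation argument you invoke — that Corollary~\ref{graphical_cor} keeps the component graphs of $\varphi$ apart — addresses only the separation of the $\varphi_{j,k}$ (indexed by $(j,k)$), not the collisions of values within a single $m_{j,k}$-valued function $v_{j,k,l}$; it gives you nothing about the branch set of $v_{j,k,l}$ itself. This is precisely the extra difficulty of $q\ge 3$ that makes the $v_{j,k}$ genuinely multi-valued, and the paper resolves it by interpolating not sheet-by-sheet but in Almgren's embedding: taking $w_{j,k,l}(X)=(\bm{\xi}^{-1}\circ\bm{\rho})\bigl[(1-\chi(r))\,\bm{\xi}[v_{j,k,l}(X)]+\chi(r)\,\bm{\xi}[v_{j,k,l}(\Phi_{j,k}(X))]\bigr]$, where $\bm{\xi}:\mathcal{A}_{m_{j,k}}(\mathbb{R}^m)\to\mathbb{R}^N$ is the bi-Lipschitz embedding and $\bm{\rho}:\mathbb{R}^N\to\mathcal{Q}$ the Lipschitz retraction onto its image. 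The Lipschitz bounds on $\bm{\xi}$, $\bm{\xi}^{-1}|_{\mathcal{Q}}$, $\bm{\rho}$ then give the Caccioppoli-type estimate \eqref{competitor_eqn7} on the transition annulus directly. Once you replace your convex-combination interpolant by this one, the rest of your argument goes through.
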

\begin{proof}
Recall from Chapter 2 of~\cite{Almgren} that for every positive integer $q$ there exists a positive integer $N = N(m,q)$, an injective Lipschitz map $\bm{\xi} : \mathcal{A}_q(\mathbb{R}^m) \rightarrow \mathbb{R}^N$ such that $\op{Lip} \bm{\xi} \leq 1$ and $\op{Lip}((\bm{\xi} |_{\mathcal{Q}})^{-1}) \leq C(m,q)$, and a Lipschitz map $\bm{\rho} : \mathbb{R}^N \rightarrow \mathcal{Q}$ such that $\bm{\rho} |_{\mathcal{Q}}$ is the identity map, where $\mathcal{Q} = \bm{\xi}(\mathcal{A}_q(\mathbb{R}^m))$ (as a slight abuse of notation we omit the dependence on $q$, which is obvious from the context).  Let $\chi : [0,\infty) \rightarrow [0,1]$ be a smooth function such that $\chi(r) = 0$ for $r \in [0,\delta/2]$, $\chi(r) = 1$ for $r \geq \delta$, and $|\chi'(r)| \leq 3/\delta$.  

For each $j$ and $k$, define a function $w_{j,k} : \op{graph} \varphi_{j,k} |_{B_{1/16}(0) \cap \{r > \delta/2\}} \rightarrow \mathcal{A}_{m_{j,k}}(\mathbb{R}^m)$ as follows.  Let $\theta_0 \in [0,2\pi)$ arbitrary and let $W' = B_{1/16}(0) \cap \{ (re^{i\theta},y) : r > \delta/2, \,|\theta - \theta_0| < \pi/4 \}$ and $W = B_{1/8}(0) \cap \{ (re^{i\theta},y) : r > \delta/4, \,|\theta - \theta_0| < \pi/2 \}$.  Let $\varphi_{j,k}(X) = \sum_{l=1}^{q_{j,k}} \llbracket \varphi_{j,k,l}(X) \rrbracket$ for all $X \in W$ where $\varphi_{j,k,l} : W \rightarrow \mathbb{R}^m$ are single-valued harmonic functions (as in~\eqref{varphi_localized}).  Let $v_{j,k,l}(X) = v_{j,k}(X,\varphi_{j,k,l}(X))$ for all $X \in W$ (as in~\eqref{v_localized}).  Let $\xi$ be the projection of $Z$ onto $\mathbb{R}^2 \times \{0\}$.  Since $\theta_0$ is arbitrary, in order to define $w_{j,k}$, it suffices to define $w_{j,k,l}(X) = w_{j,k}(X,\varphi_{j,k,l}(X))$ for each $X \in W'$ and $l = 1,2,\ldots,q_{j,k}$.  Notice that for every $X = (x,y) \in W'$, $|\zeta_{j,k}(X)| \,|X-Z| \leq 1/128$ and $|\zeta_{j,k}(X)| \,|x-\xi| \leq 1/16 \cdot (|x| + \delta/2) < |x|/8$ and thus $X + \zeta_{j,k}(X) (X-Z) \in W$.  For each $X = (x,y) \in W' \cap \{r \geq \delta\}$, define  
\begin{equation} \label{competitor_eqn3}
	w_{j,k,l}(X) = w_{j,k}(X,\varphi_{j,k,l}(X)) = v_{j,k,l}(X + \zeta_{j,k}(X)(X-Z)) .
\end{equation}
For each $X = (x,y) \in W' \cap \{\delta/2 < r < \delta\}$, define  
\begin{align} \label{competitor_eqn4}
	w_{j,k,l}(X) &= w_{j,k}(X,\varphi_{j,k,l}(X))  
	\\&= (\bm{\xi}^{-1} \circ \bm{\rho})[(1-\chi(r)) \, \bm{\xi}[v_{j,k,l}(X)] + \chi(r) \, \bm{\xi}[v_{j,k,l}(X + \zeta_{j,k}(X)(X-Z))]] . \nonumber
\end{align}
Define $\widetilde{u}  \in W^{1,2}(B_1(0);\mathcal{A}_q(\mathbb{R}^m))$ by 
\begin{equation*}
	\widetilde{u}(X) = \sum_{j=1}^J \sum_{k=1}^{p_j} \sum_{l=1}^{q_{j,k}} \sum_{h=1}^{m_{j,k}} \left\llbracket \varphi_{j,k,l}(X) + w_{j,k,l,h}(X) \right\rrbracket 
\end{equation*}
for every $X \in B_{1/16}(0) \cap \{r > \delta/2\}$, where $w_{j,k,l}(X) = \sum_{h=1}^{m_{j,k}} \llbracket w_{j,k,l,h}(X) \rrbracket$ for $w_{j,k,l,h}(X) \in \mathbb{R}^m$, and $\widetilde{u} = u$ on $(B_1(0) \setminus B_{1/16}(0)) \cup (B_{1/16}(0) \cap \{r \leq \delta/2\})$. 

Since $u$ is energy minimizing and $u = \widetilde{u}$ in $(B_1(0) \setminus B_{1/16}(0)) \cup (B_{1/16}(0) \cap \{r \leq \delta/2\})$, 
\begin{equation*}
	\int_{B_{1/16}(0) \cap \{r > \delta/2\}} |D\varphi_{j,k,l} + Dv_{j,k,l,h}|^2 
	\leq \int_{B_{1/16}(0) \cap \{r > \delta/2\}} |D\varphi_{j,k,l} + Dw_{j,k,l,h}|^2, 
\end{equation*}
using the convention of summing over $j,k,l,h$.  After some cancellations, 
\begin{align} \label{competitor_eqn5}
	\int_{B_{1/16}(0) \cap \{r > \delta\}} |Dv_{j,k,l}|^2
	\leq {}&{} 2m_{j,k} \int_{B_{1/16}(0) \cap \{r > \delta/2\}} D\varphi_{j,k,l} \cdot (Dw_{j,k,l;a} - Dv_{j,k,l;a}) \\&
		+ \int_{B_{1/16}(0) \cap \{r > \delta/2\}} |Dw_{j,k,l}|^2, \nonumber 
\end{align}
where $w_{j,k,l;a}(X) = \frac{1}{m_{j,k}} \sum_{h=1}^{m_{j,k}} w_{j,k,l,h}(X)$ denotes the average of the values of $w_{j,k,l}(X)$.  Since $\varphi_{j,k,l}$ and $v_{j,k,l;a}$ are (single-valued) harmonic functions defined locally in $B_{1/16}(0) \cap \{r > \delta/2\}$ and $v_{j,k,l;a} = w_{j,k,l;a}$ on $\partial (B_{1/16}(0) \cap \{r > \delta/2\})$, by integration by parts, 
\begin{equation} \label{competitor_eqn6}
	m_{j,k} \int_{B_{1/16}(0) \cap \{r > \delta/2\}}  D\varphi_{j,k,l} \cdot (Dw_{j,k,l;a} - Dv_{j,k,l;a}) = 0. 
\end{equation}
Note that since $|\zeta_{j,k}| \leq 1/16$ and $|D\zeta_{j,k}| \leq 1$, by the inverse function theorem, $X \in B_{1/16}(0) \mapsto X + \zeta_{j,k}(X) (X-Z)$ is invertible.  By \eqref{competitor_eqn4}, $\bm{\xi}, \bm{\xi}^{-1}, \bm{\rho}$ being Lipschitz, and the estimates of \eqref{regestimate}, 
\begin{align} \label{competitor_eqn7}
	\int_{B_{1/16}(0) \cap \{\delta/2 < r < \delta\}} |Dw_{j,k,l}|^2 
	&\leq C \int_{B_{5/64}(0) \cap \{\delta/4 < r < 5\delta/4\}} (r^{-2} |v_{j,k,l}|^2 + |Dv_{j,k,l}|^2) \\
	&\leq C \delta^{-2} \int_{B_{1/8}(0) \cap \{\delta/8 < r < 2\delta\}} |v_{j,k,l}|^2 \nonumber 
\end{align}
for some constant $C = C(n,m,q) \in (0,\infty)$.  By combining \eqref{competitor_eqn3}, \eqref{competitor_eqn5}, \eqref{competitor_eqn6}, and \eqref{competitor_eqn7}, we get \eqref{competitor_eqn2}. 
\end{proof}

\section{A priori estimates: Part II}\label{sec:L2estimates_sec2}
Let $\varphi^{(0)}$ be as in Definition~\ref{varphi0_defn} and recall that the degree of homogeneity of $\varphi^{(0)}$ is $\alpha.$ Let $u$ be an average free Dirichlet energy minimizer  and let  $\varphi \in \Phi_{\varepsilon_0}(\varphi^{(0)})$ for some appropriately small $\epsilon_{0} > 0$. In this section (in Lemma~\ref{keyest_cor}, Lemma~\ref{branchdist_lemma}, Lemma~\ref{nonconest_lemma} and Corollary~\ref{nonconest_cor} below), we draw some important corollaries of Lemma~\ref{keyest_thm}(a), giving in particular an estimate on the distance of the set $\Sigma_{u, q, \alpha}^{+} = B_{1/2} \cap \{Z \, : \, {\mathcal N}_{u}(Z) \geq \alpha\}$ from the axis $\{0\} \times {\mathbb R}^{n-2}$ of $\varphi$  (Lemma~\ref{branchdist_lemma}), and integral estimates implying non-concentration of the excess 
$\int_{B_{1}} {\mathcal G} \, (u, \varphi)^{2}$ near 
$\Sigma_{u, q, \alpha}^{+}$ (Lemma~\ref{keyest_cor} and Lemma~\ref{nonconest_lemma}). All of the results in this section use the corresponding arguments in~\cite{Sim93}, although because of the presence of higher multiplicity the proofs of Lemma~\ref{branchdist_lemma} and Lemma~\ref{nonconest_lemma}  have to proceed via a strategy used in \cite{Wic14}. This strategy involves a preliminary result, Lemma~\ref{branchdist2_lemma}, which gives a weaker bound  on the distance of $\Sigma_{u, q, \alpha}^{+}$ to $\{0\} \times \mathbb{R}^{n-2}$ than does Lemma~\ref{branchdist_lemma}.  The proof of this preliminary result involves a blow-up argument which relies on certain conditional versions of Lemma~\ref{branchdist_lemma} and Lemma~\ref{nonconest_lemma} themselves. In the end, an induction argument (inducting on the number of distinct non-zero components of $\varphi$) will prove both Lemma~\ref{branchdist_lemma} and Lemma~\ref{nonconest_lemma} simultaneously in the required generality.  

\begin{lemma} \label{keyest_cor}
Let $\varphi^{(0)}$ be as in Definition~\ref{varphi0_defn}.  Given $\gamma,\sigma \in (0,1)$, there exists $\varepsilon_0 \in (0,1)$ depending only on $n$, $m$, $q$, $\alpha$, $\varphi^{(0)}$ and $\gamma$ such that if $\varphi \in \Phi_{\varepsilon_0}(\varphi^{(0)})$ and if $u \in W^{1,2}(B_1(0);\mathcal{A}_q(\mathbb{R}^m))$ is an average-free, energy minimizing $q$-valued function with  $0 \in \Sigma_{u,q}$ and 
$\mathcal{N}_u(0) \geq \alpha$
then 
\begin{equation*}
	\int_{B_{\gamma}(0)} R^{-n-2\alpha+\sigma} \mathcal{G}(u,\varphi)^2 \leq C \int_{B_1(0)} \mathcal{G}(u,\varphi)^2 
\end{equation*}
for some constant $C = C(n,m,q,\alpha,\varphi^{(0)},\gamma,\sigma) \in (0,\infty)$, where $R = |X|$.  
\end{lemma}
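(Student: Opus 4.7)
The statement is a Hardy-type reformulation of the basic energy estimate Theorem~\ref{keyest_thm}(a). Because $\varphi$ is homogeneous of degree $\alpha$, $\varphi(R\omega)/R^{\alpha}=\varphi(\omega)$ for all $R>0$ and $\omega\in S^{n-1}$, and rewriting in polar coordinates one sees
\begin{equation*}
\int_{B_{\gamma}(0)} R^{-n-2\alpha+\sigma}\mathcal{G}(u,\varphi)^{2}\,dX = \int_0^{\gamma} R^{\sigma-1}\,g(R)\,dR, \quad g(R):=\int_{S^{n-1}} \mathcal{G}\!\left(u(R\omega)/R^{\alpha},\,\varphi(\omega)\right)^{2} d\omega.
\end{equation*}
So the question reduces to controlling $\int_0^{\gamma} R^{\sigma-1}g(R)\,dR$ by $E^{2}:=\int_{B_1(0)}\mathcal{G}(u,\varphi)^{2}$.

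First I would use the coarea formula to pick a slightly larger radius $\gamma_1\in(\gamma,(1+\gamma)/2)$ with $\int_{\partial B_{\gamma_1}(0)}\mathcal{G}(u,\varphi)^{2}\leq \tfrac{2}{1-\gamma}E^{2}$, giving $g(\gamma_1)\leq C(n,\alpha,\gamma)E^{2}$. Because $s\mapsto u(s\omega)/s^{\alpha}$ is absolutely continuous in $s$ for a.e.\ $\omega$ (using a continuous local labeling of the branches off the singular set) and $\varphi(\omega)$ is $s$-independent, Minkowski's inequality yields
\begin{equation*}
g(R)^{1/2}\leq g(\gamma_1)^{1/2}+\int_R^{\gamma_1} h(s)\,ds, \qquad h(s):=\left(\int_{S^{n-1}}\left|\tfrac{\partial(u/R^{\alpha})}{\partial R}(s\omega)\right|^{2}d\omega\right)^{1/2},
\end{equation*}
and Theorem~\ref{keyest_thm}(a), applied with $\gamma_1$ in place of $\gamma$, provides $\int_0^{\gamma_1} s\,h(s)^{2}\,ds\leq CE^{2}$ with $C=C(n,m,q,\alpha,\varphi^{(0)},\gamma)$.

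The bound now follows from a Hardy-type integration by parts. Setting $F(R)=\int_R^{\gamma_1} h(s)\,ds$, Cauchy--Schwarz yields $F(R)^{2}\leq CE^{2}\log(\gamma_1/R)$, which ensures $R^{\sigma}F(R)^{2}\to 0$ as $R\downarrow 0$. Integrating $\int_0^{\gamma_1}R^{\sigma-1}F(R)^{2}\,dR$ by parts against $R^{\sigma-1}=(R^{\sigma}/\sigma)'$ and using $F'=-h$, the boundary terms vanish and a further Cauchy--Schwarz step gives
\begin{equation*}
\int_0^{\gamma_1}R^{\sigma-1}F(R)^{2}\,dR\leq \tfrac{4}{\sigma^{2}}\int_0^{\gamma_1}R^{\sigma+1}h(R)^{2}\,dR\leq \tfrac{4\gamma_1^{\sigma}}{\sigma^{2}}\int_0^{\gamma_1}s\,h(s)^{2}\,ds\leq \tfrac{C}{\sigma^{2}}E^{2}.
\end{equation*}
Combined with $g(R)\leq 2F(R)^{2}+2g(\gamma_1)$ and $\int_0^{\gamma_1}R^{\sigma-1}\,dR=\gamma_1^{\sigma}/\sigma$, this yields the desired estimate on $\int_0^{\gamma_1}R^{\sigma-1}g(R)\,dR$ with a constant of the claimed form, and the bound for $\int_0^{\gamma}R^{\sigma-1}g(R)\,dR$ follows a fortiori.

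No step is genuinely obstructive. The only mild technical point is the Minkowski/triangle-inequality step for multi-valued $u$: on the full-measure set of angles $\omega\in S^{n-1}$ for which the ray $\{s\omega:s\in(0,\gamma_1)\}$ avoids the singular set of $u$, one chooses continuous scalar branches $u_{j}(s\omega)$, applies the scalar fundamental theorem of calculus to $u_{j}/s^{\alpha}$, and sums in $j$ via Minkowski to recover the displayed inequality; the remainder of the argument is a direct Hardy-type manipulation of Theorem~\ref{keyest_thm}(a).
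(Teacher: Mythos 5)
Your proof is correct and produces the stated estimate with constants of the claimed form. It follows the same weighted Hardy-inequality logic as the paper's proof (and applies Theorem~\ref{keyest_thm}(a) at essentially the same point), but packages it in polar coordinates rather than via a divergence identity. The paper takes $\zeta^i = \psi(R)^2\,\eta_\delta(R)\,R^{-n+\sigma-2\alpha}\,\mathcal{G}(u,\varphi)^2\,X^i$ in $\int_{\mathbb{R}^n} D_i\zeta^i = 0$, expands, uses the pointwise bound $|D_R(R^{-2\alpha}\mathcal{G}(u,\varphi)^2)|\leq 2\,\mathcal{G}(u/R^\alpha,\varphi/R^\alpha)\,|D_R(u/R^\alpha)|$, absorbs via Cauchy--Schwarz and Theorem~\ref{keyest_thm}(a), and then lets $\delta\downarrow 0$. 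The key technical point is that $\mathcal{G}(u,\varphi)^2\in W^{1,1}_{\rm loc}(B_1(0))$ (which follows from continuity of $u,\varphi$ and ${\rm dim}_{\mathcal{H}}\Sigma_u\leq n-2$), so the divergence identity is applied directly on $\mathbb{R}^n$ and the issue of selecting branches along rays never arises. Your version instead passes to spherical averages, applies the fundamental theorem of calculus to $s\mapsto u(s\omega)/s^\alpha$ along almost every ray, and carries out a one-dimensional Hardy inequality with an explicit integration by parts against $R^{\sigma-1}$. This is valid: since ${\rm dim}_{\mathcal{H}}\Sigma_u\leq n-2$, almost every ray from the origin misses $\Sigma_u$, and Fubini gives absolute continuity of the branches along a.e.\ ray, so the Minkowski and FTC steps go through; the a priori decay $R^\sigma F(R)^2\to 0$ you establish from $F(R)^2\leq CE^2\log(\gamma_1/R)$ justifies the boundary terms in the integration by parts. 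The paper's formulation is slightly cleaner because it replaces the ray-by-ray branch-selection argument with the single observation that $\mathcal{G}(u,\varphi)^2$ is in $W^{1,1}_{\rm loc}$; otherwise the two proofs are implementations of the same Hardy estimate.
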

\begin{proof}
Recall that for any (single-valued) vector field $\zeta = (\zeta^1,\ldots,\zeta^n) \in W^{1,1}_0(\mathbb{R}^n)$, 
\begin{equation*}
	\int_{\mathbb{R}^n} D_i \zeta^i = 0. 
\end{equation*}
Note that since $u,\varphi \in C^0(B_1(0);\mathcal{A}_q(\mathbb{R}^m)) \cap W^{1,2}(B_1(0);\mathcal{A}_q(\mathbb{R}^m))$ and the singular sets of $u$ and $\varphi$ have Hausdorff dimension at most $n-2$, it is easy to check that $\mathcal{G}(u,\varphi)^2 \in W^{1,1}_{\text{loc}}(B_1(0))$.  Taking $\zeta^i = \psi(R)^2 \eta_{\delta}(R) R^{-n+\sigma-2\alpha} \mathcal{G}(u,\varphi)^2 X^i$ in this where $\psi$ is as in the proof of Theorem~\ref{keyest_thm} and for each $\delta > 0$, $\eta_{\delta} \in C^1([0,\infty))$ is a non-decreasing function such that $\eta_{\delta}(t) = 0$ for $t \in [0,\delta/2]$, $\eta_{\delta}(t) = 1$ for $t \in [\delta,\infty)$ and $|D\eta_{\delta}| \leq 3/\delta$ for all $t \in [0,\infty)$, we obtain
\begin{align} \label{keyest_cor_eqn1}
	&\sigma \int \psi(R)^2 \eta_{\delta}(R) R^{-n+\sigma-2\alpha} \mathcal{G}(u,\varphi)^2 
	= -\int \psi(R)^2 \eta_{\delta}(R) R^{1-n+\sigma} D_R (R^{-2\alpha} \mathcal{G}(u,\varphi)^2) \\
	&\hspace{10mm} - 2 \int \psi(R) \psi'(R) \eta_{\delta}(R) R^{1-n+\sigma-2\alpha} \mathcal{G}(u,\varphi)^2
	- \int \psi(R)^2 \eta'_{\delta}(R) R^{1-n+\sigma-2\alpha} \mathcal{G}(u,\varphi)^2. \nonumber
\end{align}
Observe that 
\begin{equation*}
	|D_R (R^{-2\alpha} \mathcal{G}(u,\varphi)^2)| 
	= |D_R (\mathcal{G}(u/R^{\alpha},\varphi/R^{\alpha})^2)| 
	\leq 2 \mathcal{G}(u/R^{\alpha},\varphi/R^{\alpha}) |D_R (u/R^{\alpha})| 
\end{equation*} 
a.e. in $B_1(0)$. Thus using the Cauchy-Schwartz inequality in \eqref{keyest_cor_eqn1} and using Theorem~\ref{keyest_thm}, we obtain (after dropping the last term on the right hand side), 
\begin{align} \label{keyest_cor_eqn2}
	\int \psi(R)^2 \eta_{\delta}(R) R^{-n+\sigma-2\alpha} \mathcal{G}(u,\varphi)^2 
	&\leq \frac{9}{\sigma^2} \int \left( \psi(R)^2 R^{2-n+\sigma} |D_R (u/R^{\alpha})|^2 
		+ \psi'(R)^2 R^{2-n+\sigma-2\alpha} \mathcal{G}(u,\varphi)^2 \right) \nonumber \\
	&\leq C \int \mathcal{G}(u,\varphi)^2 
\end{align}
for some constant $C = C(n,m,q,\alpha,\gamma,\sigma) \in (0,\infty)$.  
Letting $\delta \downarrow 0$ in \eqref{keyest_cor_eqn2} using the monotone convergence theorem gives the desired conclusion.
\end{proof} 

The next two main results, Lemma~\ref{branchdist_lemma} and Lemma~\ref{nonconest_lemma}, concern a point $Z = (\xi,\zeta) \in \Sigma_{u,q} \cap B_{1/2}(0)$ such that $\mathcal{N}_u(Z) \geq \alpha$.  We will first state these results and then prove them both simultaneously by an inductive argument  with the help of a preliminary estimate given in Lemma~\ref{branchdist2_lemma}.

\begin{lemma} \label{branchdist_lemma}
Let $\varphi^{(0)}$ be as in Definition~\ref{varphi0_defn} and $p \in \{{p_0}, {p_0}+1,\ldots, \lceil q/q_0 \rceil \}$.  There exists $\varepsilon_0 \in (0,1)$ depending only on $n$, $m$, $q$, $\alpha$ and $\varphi^{(0)}$ such that if $u$ satisfies Hypothesis~$(\star)$ of Section~\ref{sec:graphical_sec},
$\varphi \in \Phi_{\varepsilon_0,p}(\varphi^{(0)})$ and if $Z \in \Sigma_{u,q} \cap B_{1/2}(0)$ with $\mathcal{N}_u(Z) \geq \alpha$, then 
\begin{align*}
	&(a) \hspace{5mm} \op{dist}^2(Z,\{0\} \times \mathbb{R}^{n-2}) \leq C \int_{B_1(0)} \mathcal{G}(u,\varphi)^2, \\ 
	&(b) \hspace{5mm} \int_{B_1(0)} \mathcal{G}(u(X),\varphi(X-Z))^2 dX \leq C \int_{B_1(0)} \mathcal{G}(u,\varphi)^2 
\end{align*}
for some constant $C = C(n,m,q,\alpha,\varphi^{(0)}) \in (0,\infty)$. 
\end{lemma}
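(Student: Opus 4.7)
I would prove Lemma~\ref{branchdist_lemma} simultaneously with (the as-yet-unstated) Lemma~\ref{nonconest_lemma} by induction on the number $p$ of nonzero components of $\varphi\in\Phi_{\varepsilon_0,p}(\varphi^{(0)})$, from $p=p_0$ up to $\lceil q/q_0\rceil$. At each step a preliminary weaker bound (to be recorded as Lemma~\ref{branchdist2_lemma}), asserting merely that $\op{dist}(Z,\{0\}\times\mathbb{R}^{n-2})\to 0$ as the excess tends to zero, is established first, and then boot-strapped to the quadratic form of (a) by a second compactness argument. This is the scheme from \cite{Wic14} adapted to multi-valued branching.

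The reduction of (b) to (a) is essentially routine: by the triangle inequality,
\[
\int_{B_1}\mathcal{G}(u(X),\varphi(X-Z))^2\,dX \leq 2\int_{B_1}\mathcal{G}(u,\varphi)^2 + 2\int_{B_1}\mathcal{G}(\varphi(X),\varphi(X-Z))^2\,dX,
\]
and a direct Hardy-type estimate yields $\int_{B_1}\mathcal{G}(\varphi(X),\varphi(X-Z))^2\,dX\leq C(\varphi^{(0)})|Z|^2$ whenever (a) holds, using that $|D\varphi|^2\in L^1_{\rm loc}$ (since $\alpha>0$) and that $\varphi$ is locally a sum of harmonic branches away from the axis.

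For (a), I would argue by contradiction. Take sequences $u^{(\nu)},\varphi^{(\nu)}\in\Phi_{\varepsilon_\nu,p}(\varphi^{(0)})$ with $\varepsilon_\nu\downarrow 0$, $E_\nu:=\|\mathcal{G}(u^{(\nu)},\varphi^{(\nu)})\|_{L^2(B_1)}\to 0$, and points $Z_\nu=(\xi_\nu,\zeta_\nu)\in\Sigma_{u^{(\nu)},q}\cap B_{1/2}(0)$ with $\mathcal{N}_{u^{(\nu)}}(Z_\nu)\geq\alpha$ yet $|\xi_\nu|/E_\nu\to\infty$. By \eqref{regestimate} and $\Sigma_{\varphi^{(0)}}=\{0\}\times\mathbb{R}^{n-2}$ extract a subsequence with $\xi_\nu\to 0$, $\zeta_\nu\to\zeta_\infty$. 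If Hypothesis~$(\star\star)$ holds along the sequence (automatic for $p=p_0$), I would apply Corollary~\ref{graphical_cor} and the blow-up procedure of Section~\ref{blow-up-procedure} to produce a blow-up $w=(w_{j,k})$ which is component-wise minimizing on $B_1\setminus(\{0\}\times\mathbb{R}^{n-2})$, and pass Theorem~\ref{keyest_thm} together with Lemma~\ref{keyest_cor} to the limit to obtain $L^2$ control of $w$ near the axis. The upper semi-continuity of frequency (Lemma~\ref{consequence_lemma}(c)) at $(0,\zeta_\infty)$ combined with the hypothesis $|\xi_\nu|/E_\nu\to\infty$ should then force on $w$ a tangential invariance incompatible with its energy-minimizing structure and with $\varphi^{(0)}$ being cylindrical. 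When instead Hypothesis~$(\star\star)$ fails along a subsequence, a better approximant $\varphi'^{(\nu)}\in\bigcup_{p'<p}\Phi_{3\varepsilon_0,p'}(\varphi^{(0)})$ exists, and the induction hypothesis applied to $u^{(\nu)},\varphi'^{(\nu)}$ together with the triangle inequality closes the case.

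The main obstacle will be identifying the contradiction in the blow-up step for $q\geq 3$. In the two-valued setting of~\cite{KrumWic1} the blow-up branches are scalar harmonic functions and one can proceed by PDE arguments; in the multi-valued case each $w_{j,k}$ can itself branch and one has to work via the component-wise parameterization. The cleanest route is to decouple the two scales $|\xi_\nu|$ and $E_\nu$: rescale $u^{(\nu)}$ around $Z_\nu$ at scale $|\xi_\nu|$, use the frequency monotonicity \eqref{consequence_eqn2} to transfer information down to $E_\nu$-scales, and invoke the weighted integral estimate of Lemma~\ref{keyest_cor} to show that the two limits cannot coexist unless $|\xi_\nu|=O(E_\nu)$, giving the required contradiction.
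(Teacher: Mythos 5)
Your reduction of (b) to (a) is essentially the paper's: by \eqref{translate_eqn3} and the change-of-variables estimate \eqref{branchdist_eqn4} one gets $\int_{B_1}\mathcal{G}(u,\varphi(\cdot-Z))^2 \leq 2\int_{B_1}\mathcal{G}(u,\varphi)^2 + C|\xi|^2$, and then (a) closes the loop. The framework of simultaneous induction with Lemma~\ref{nonconest_lemma} and a preliminary Lemma~\ref{branchdist2_lemma} is also correct, though you slightly mis-describe Lemma~\ref{branchdist2_lemma}: its content is the quantitative comparison $\op{dist}^2(Z,\{0\}\times\mathbb{R}^{n-2}) \leq \delta\inf_{\varphi'}\int\mathcal{G}(u,\varphi')^2$ over coarser classes $\bigcup_{p'<p}\Phi_{3\varepsilon_0,p'}(\varphi^{(0)})$, needed so that $|\xi|\ll\inf_{S^1}\op{sep}\varphi$ via Corollary~\ref{graphical_cor}(a); it is not merely a qualitative statement that distance tends to zero.

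For (a), however, your proposed blow-up/contradiction argument has a genuine gap and is not the route the paper takes. The paper proves (a) by a \emph{direct quantitative} argument: first a compactness-based ``non-degeneracy claim'' (\eqref{branchdist_eqn1}) stating that for every $a\in S^1$ the set $\{X\in B_\rho(Z): \delta_1|a||x|^{\alpha-1}\leq|D_x\varphi(X)\cdot a|\}$ has measure $\geq\delta_1\rho^n$, which is a geometric fact about cylindrical functions near $\varphi^{(0)}$ (and is true precisely because $\varphi^{(0)}$ has no extra translation invariance beyond $\{0\}\times\mathbb{R}^{n-2}$); next the triangle inequality from observation \eqref{translate_eqn2}, giving $|D_x\varphi(X)\cdot\xi| \leq \mathcal{G}(u(X),\varphi(X-Z)) + \mathcal{G}(u(X),\varphi(X)) + C|x|^{\alpha-2}|\xi|^2$; then Lemma~\ref{keyest_cor} re-centered at $Z$ to gain a power $\rho^{3/2}$ over $\rho^{n+2\alpha-2}$; and finally an absorption of the quadratic-in-$|\xi|$ error terms by choosing $\rho$ small. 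Nowhere does the paper extract a blow-up or argue by contradiction for (a) itself. Your proposal misses the non-degeneracy claim entirely, and it is circular as stated: the existence of a finite limit $\lambda(z)=\lim\xi_\nu/E_\nu$ in the blow-up class (Lemma~\ref{blowup_est_lemma}(c)) is \emph{established using Lemma~\ref{branchdist_lemma}(a)}, so you cannot use it to prove (a). When $|\xi_\nu|/E_\nu\to\infty$ the blow-up $w$ relative to $\varphi^{(\nu)}$ exists but carries no usable ``tangential invariance'' contradiction, because the Jacobi modes $D_x\varphi^{(0)}_j\cdot\lambda$ are perfectly admissible elements of $\mathfrak{B}$; they are not incompatible with energy minimization or with $\varphi^{(0)}$ being cylindrical. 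Rescaling at scale $|\xi_\nu|$, as you suggest at the end, also fails: the error $\mathcal{G}(\varphi(\cdot-Z),\varphi)$ contributes an $O(1)$ term to the rescaled excess over $B_{|\xi_\nu|}(Z_\nu)$, so the rescaled sequence does not converge to the expected translate of $\varphi^{(0)}$.
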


\begin{lemma} \label{nonconest_lemma}
Let $\varphi^{(0)}$ be as in Definition~\ref{varphi0_defn} and let $p \in \{p_0, p_0+1,\ldots, \lceil q/q_0 \rceil \}$.  Given $0 < \tau < \gamma < 1$ and $\sigma \in (0,2/q)$, there exists $\varepsilon_0 \in (0,1)$ depending only on $n$, $m$, $q$, $\alpha$, $\varphi^{(0)}$, $\gamma$, and $\tau$ such that if $u$ satisfies Hypothesis~$(\star)$ of Section~\ref{sec:graphical_sec}, 
$\varphi \in \Phi_{\varepsilon_0,p}(\varphi^{(0)})$, and if $Z \in \Sigma_{u,q} \cap B_{1/2}(0)$ with $\mathcal{N}_u(Z) \geq \alpha$, then 
\begin{equation} \label{nonconest_concl1}
	\int_{B_{\gamma}(0)} R_Z^{2-n} \left| \frac{\partial (u/R_Z^{\alpha})}{\partial R_Z} \right|^2 
	\leq C \int_{B_1(0)} \mathcal{G}(u,\varphi)^2,
\end{equation}
where $R_Z = |X-Z|$ and $C = C(n,m,q,\alpha,\varphi^{(0)}) \in (0,\infty)$ is a constant.  Furthermore, 
\begin{equation} \label{nonconest_concl2}
	\int_{B_{\gamma}(0)} \frac{\mathcal{G}(u,\varphi)^2}{|X-Z|^{n-2+2/q-\sigma}} 
	+ \int_{B_{\gamma}(0) \cap \{r > \tau\}} \frac{\mathcal{G}(u(X), \varphi(X) - D_x \varphi(X) \cdot \xi)^2}{|X-Z|^{n+2\alpha-\sigma}} 
	\leq C \int_{B_1(0)} \mathcal{G}(u,\varphi)^2, 
\end{equation}
where $\xi$ is the projection of $Z$ onto $\mathbb{R}^2 \times \{0\},$ 
$$\varphi(X) - D_x \varphi(X) \cdot \xi = \sum_{j= 1}^{J} \sum_{k=1}^{p_{j}} \sum_{l = 1}^{q_{j, k}} \llbracket \varphi_{j, k, l}(X) - D_{x} \varphi_{j, k, l}(X) \cdot \xi\rrbracket$$
for $X \in \{r(X) > 0\}$ (with $\varphi_{j, k, l}$ and $q_{j, k}$ as in \eqref{varphi_localized} and \eqref{varphi_values})
and $C = C(n,m,q,\alpha,\varphi^{(0)},\sigma) \in (0,\infty)$ is a constant.  In particular, the constants $C$ are independent of $\tau$. 
\end{lemma}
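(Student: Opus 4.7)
Following the inductive scheme sketched in the section introduction, I would prove Lemma~\ref{nonconest_lemma} simultaneously with Lemma~\ref{branchdist_lemma}, by induction on the number $p$ of distinct non-zero components of $\varphi$. At the base case $p = p_{0}$, the translated $\varphi$ is itself very close to $\varphi^{(0)}$ and both lemmas reduce quickly to applications of Theorem~\ref{keyest_thm} and Lemma~\ref{keyest_cor}. For the inductive step, assuming both lemmas at levels $p' < p$, a preliminary weaker distance bound on $|\xi|$ (to be stated as Lemma~\ref{branchdist2_lemma}) is first established via a blow-up/contradiction argument that uses the inductively available estimates; this then powers the proofs of both Lemmas~\ref{branchdist_lemma} and~\ref{nonconest_lemma} at level $p$.

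Granting the inductive framework, \eqref{nonconest_concl1} is obtained by translation. Set $\tilde u(\tilde X) = u(Z + \tilde X)$, still an average-free Dirichlet minimizer on $B_{1/2}(0)$ with $0 \in \Sigma_{\tilde u,q}$ and $\mathcal{N}_{\tilde u}(0) \geq \alpha$. After rescaling $B_{1/2}(0)$ up to $B_{1}(0)$, Theorem~\ref{keyest_thm}(a) applies with comparison $\varphi \in \Phi_{\varepsilon_{0}}(\varphi^{(0)})$, and the required $L^{2}$ closeness
\begin{equation*}
\int_{B_{1/2}(0)} \mathcal{G}(\tilde u, \varphi)^{2} \,d\tilde X = \int_{B_{1/2}(Z)} \mathcal{G}(u(X), \varphi(X-Z))^{2}\, dX \leq C \int_{B_{1}(0)} \mathcal{G}(u,\varphi)^{2}
\end{equation*}
follows from Lemma~\ref{branchdist_lemma}(b), available at level $p$ via the inductive step. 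Undoing the change of variables yields \eqref{nonconest_concl1}. The second integral in \eqref{nonconest_concl2} is produced by the same translation, now applying Lemma~\ref{keyest_cor} to get
\begin{equation*}
\int_{B_{\gamma}(0)} \mathcal{G}(u(X), \varphi(X-Z))^{2}/|X-Z|^{n+2\alpha-\sigma}\, dX \leq C \int_{B_{1}(0)} \mathcal{G}(u,\varphi)^{2},
\end{equation*}
and then replacing $\varphi(X-Z)$ with $\varphi(X) - D_{x}\varphi(X)\cdot\xi$ on $\{r > \tau\}$ via second-order Taylor expansion of each branch. The branch-wise remainder satisfies $|R|^{2} \leq C|\xi|^{4} r^{2\alpha-4}$ whenever $r > 2|\xi|$, and Lemma~\ref{branchdist_lemma}(a) combined with a choice of $\varepsilon_{0}$ small depending on $\tau$ forces $|\xi| < \tau/2$, so the remainder bound holds throughout $\{r > \tau\}$. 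A direct change-of-coordinates calculation then bounds the weighted $L^{2}$ norm of $R$ by a quantity of the form $C|\xi|^{4}\tau^{\sigma-4}$; combined with $|\xi|^{2} \leq C\int\mathcal{G}(u,\varphi)^{2}$, picking $\varepsilon_{0} \leq c\tau^{(4-\sigma)/2}$ makes this $\leq C\int\mathcal{G}(u,\varphi)^{2}$ with $C$ independent of $\tau$.

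The first integral in \eqref{nonconest_concl2} has the weaker weight $|X-Z|^{-(n-2+2/q-\sigma)}$ and no Taylor correction. Splitting $\mathcal{G}(u,\varphi)^{2} \leq 2\mathcal{G}(u, \varphi(\cdot-Z))^{2} + 2\mathcal{G}(\varphi(\cdot-Z), \varphi)^{2}$, the first summand is controlled by the translated Lemma~\ref{keyest_cor} estimate above, since $|X-Z|^{-(n-2+2/q-\sigma)} \leq |X-Z|^{-(n+2\alpha-\sigma)}$ for $|X-Z| < 1$ (as $\alpha \geq 1/q$). The second summand is bounded by $\mathcal{G}(\varphi(X-Z), \varphi(X))^{2} \leq C\min(|\xi|^{2}r^{2\alpha-2},\, |\xi|^{2\alpha} + r^{2\alpha})$ and integrated against the less singular weight; integrability at the axis $\{r=0\}$ is secured by $\sigma > 2/q - 2$ (automatic for $\sigma > 0$), and the small-$|\xi|$ bound from Lemma~\ref{branchdist_lemma}(a) closes the estimate. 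The principal technical obstacle throughout is securing $\tau$-independence of $C$ in the second integral of \eqref{nonconest_concl2}: the Taylor expansion is only quantitatively useful when $|\xi|$ is much smaller than $\tau$, so it is essential that the lemma allows $\varepsilon_{0}$ to depend on $\tau$ and hence that Lemma~\ref{branchdist_lemma}(a) can enforce the needed smallness of $|\xi|$ before the weighted integral is formed.
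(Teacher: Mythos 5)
Your proposal is correct and follows essentially the same path as the paper's proof: both establish Lemma~\ref{nonconest_lemma} and Lemma~\ref{branchdist_lemma} simultaneously by induction on $p$ (with the preliminary distance bound of Lemma~\ref{branchdist2_lemma} powering the step), both obtain \eqref{nonconest_concl1} and \eqref{nonconest_concl2} by translating to $Z$ and invoking Theorem~\ref{keyest_thm}, Lemma~\ref{keyest_cor}, Lemma~\ref{branchdist_lemma}(b), and the Taylor expansion \eqref{translate_eqn1}, and both use the smallness of $|\xi|$ (forced by Lemma~\ref{branchdist_lemma}(a) and a $\tau$-dependent choice of $\varepsilon_0$) to absorb the remainder. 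The one small slip is that your suggested choice $\varepsilon_0 \leq c\tau^{(4-\sigma)/2}$ makes $\varepsilon_0$ depend on $\sigma$, which the statement does not permit; since $\sigma \in (0,2/q)$ and $\tau < 1$, the $\sigma$-independent choice $\varepsilon_0 \leq c\tau^2$ (as the paper effectively makes) already gives $|\xi|^2 \leq C\tau^{4-\sigma}$ and closes the remainder estimate.
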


We will prove Lemma~\ref{branchdist_lemma} and Lemma~\ref{nonconest_lemma} by induction on $p$, so let $p \in \{p_{0}+1, \ldots,  \lceil q/q_0 \rceil \}$ and assume that: 
\begin{enumerate}
\item[(A2)] whenever $\widetilde{p} \in \{{p_0}, {p_0}+1, \ldots, p-1\}$, Lemma~\ref{branchdist_lemma} and Lemma~\ref{nonconest_lemma} hold true with $\widetilde{p}$ in place of $p$.  
\end{enumerate}

To prove Lemma~\ref{branchdist_lemma}(a), we need the following preliminary bound on the distance of $Z$ from $\{0\} \times \mathbb{R}^{n-2}$: 

\begin{lemma} \label{branchdist2_lemma}
Let $\varphi^{(0)}$ be as in Definition~\ref{varphi0_defn} and assume that (A2) holds true for some $p \in \{p_0+1, p_0+2, \ldots, \lceil q/q_0 \rceil \}$.  For every $\delta \in (0,1/2)$, there exists $\varepsilon_0, \beta_0 \in (0,1)$ depending only on $n$, $m$, $q$, $p$, $\alpha$, $\varphi^{(0)}$ and $\delta$ such that if $u$, $\varphi$ satisfy Hypothesis~$(\star)$ and Hypothesis~$(\star\star)$ of Section~\ref{sec:graphical_sec}, and if $Z \in \Sigma_{u,q} \cap B_{1/2}(0)$ with $\mathcal{N}_u(Z) \geq \alpha$, then 
\begin{equation} \label{branchdist2_eqn2}
	\op{dist}^2(Z,\{0\} \times \mathbb{R}^{n-2}) 
	\leq \delta \inf_{\varphi' \in \bigcup_{p'=p_0}^{p-1} \Phi_{3\varepsilon_0,p'}(\varphi^{(0)})} \int_{B_1(0)} \mathcal{G}(u,\varphi')^2. 
\end{equation}
\end{lemma}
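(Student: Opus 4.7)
The plan is to argue by contradiction, combining the inductive hypothesis (A2) with a blow-up analysis relative to a near-optimal competitor drawn from the lower strata $\bigcup_{p' < p} \Phi_{3\varepsilon_0, p'}(\varphi^{(0)})$. Suppose the conclusion fails. Then there exist $\delta \in (0,1/2)$ and sequences $\varepsilon_\nu, \beta_\nu \downarrow 0$, average-free energy-minimizing $u^{(\nu)}$ on $B_1(0)$, $\varphi^{(\nu)} \in \Phi_{\varepsilon_\nu, p}(\varphi^{(0)})$ satisfying Hypothesis~$(\star)$ and Hypothesis~$(\star\star)$, and $Z^{(\nu)} = (\xi_\nu, \zeta_\nu) \in \Sigma_{u^{(\nu)}, q} \cap B_{1/2}(0)$ with $\mathcal{N}_{u^{(\nu)}}(Z^{(\nu)}) \geq \alpha$ and $d_\nu^2 := |\xi_\nu|^2 > \delta E_\nu^2$, where
\[
E_\nu^2 := \inf_{\varphi' \in \bigcup_{p'=p_0}^{p-1} \Phi_{3\varepsilon_\nu, p'}(\varphi^{(0)})} \int_{B_1(0)} \mathcal{G}(u^{(\nu)}, \varphi')^2 .
\]
Pick $\phi^{(\nu)} \in \Phi_{3\varepsilon_\nu, p'_\nu}(\varphi^{(0)})$ with $p'_\nu \leq p-1$ realizing this infimum up to a factor of $2$; after a subsequence $p'_\nu \equiv p'$. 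By \eqref{regestimate} and Hypothesis~$(\star)$, $u^{(\nu)} \to \varphi^{(0)}$ uniformly on compact subsets, so $u^{(\nu)}(Z^{(\nu)}) = q\llbracket 0 \rrbracket$ forces $Z^{(\nu)} \to Z^{(\infty)} \in \{0\} \times \mathbb{R}^{n-2}$ along a further subsequence.

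Now blow $u^{(\nu)}$ and $\varphi^{(\nu)}$ up simultaneously relative to $\phi^{(\nu)}$ by the excess $E_\nu$ via the procedure of Section~\ref{blow-up-procedure}; if \eqref{fineblowups_eqn1} fails for the initial $\phi^{(\nu)}$, iterate by descending to a still smaller stratum exactly as in the proof of Lemma~\ref{graphical_lemma}. Hypothesis~$(\star\star)$ gives $\int_{B_1} \mathcal{G}(u^{(\nu)}, \varphi^{(\nu)})^2 \leq \beta_\nu E_\nu^2 = o(E_\nu^2)$, so $u^{(\nu)}$ and $\varphi^{(\nu)}$ share a common blow-up $w = (w_{j',k'})$ on $\op{graph}\phi^{(\infty)}|_{B_1(0) \setminus \{0\} \times \mathbb{R}^{n-2}}$ (see Remark~\ref{homog blowup rmk} for the blow-up of $\varphi^{(\nu)}$). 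Since $\varphi^{(\nu)}$ and $\phi^{(\nu)}$ are cylindrical, $\alpha$-homogeneous about $0$, and translation invariant along $\{0\} \times \mathbb{R}^{n-2}$, the limit $w$ inherits all three properties.

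Invoke (A2) twice. Lemma~\ref{branchdist_lemma}(a) applied to $u^{(\nu)}$, $\phi^{(\nu)}$, $Z^{(\nu)}$ yields $d_\nu^2 \leq C \int_{B_1} \mathcal{G}(u^{(\nu)}, \phi^{(\nu)})^2 \leq 4 C E_\nu^2$, so $\widehat{\xi}_\nu := \xi_\nu / E_\nu \in \mathbb{R}^2$ is bounded; pass to a subsequence with $\widehat{\xi}_\nu \to \widehat{\xi}^{(\infty)}$, noting $|\widehat{\xi}^{(\infty)}|^2 \geq \delta > 0$. The second estimate in \eqref{nonconest_concl2} of Lemma~\ref{nonconest_lemma}, applied for a fixed $\sigma \in (0, 2/q)$ and any $\tau \in (0, \gamma)$, reads
\[
\int_{B_\gamma(0) \cap \{r > \tau\}} \frac{\mathcal{G}(u^{(\nu)}(X), \phi^{(\nu)}(X) - D_x \phi^{(\nu)}(X) \cdot \xi_\nu)^2}{|X - Z^{(\nu)}|^{n+2\alpha-\sigma}} dX \leq C E_\nu^2
\]
with $C$ independent of $\tau$ and $\nu$. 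Using Corollary~\ref{graphical_cor} to rewrite the integrand componentwise, dividing by $E_\nu^2$, applying Fatou's lemma, letting $\tau \downarrow 0$, and using the pointwise convergence $v^{(\nu)}/E_\nu \to w$ off the axis, $\phi^{(\nu)} \to \phi^{(\infty)}$ and $\widehat{\xi}_\nu \to \widehat{\xi}^{(\infty)}$, yields
\[
\int_{B_\gamma(0)} \frac{|w(X) + D_x \phi^{(\infty)}(X) \cdot \widehat{\xi}^{(\infty)}|^2}{|X - Z^{(\infty)}|^{n+2\alpha-\sigma}} dX \leq C ,
\]
interpreted componentwise on $\op{graph}\phi^{(\infty)}$.

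The contradiction follows by a homogeneity/integrability comparison. Using translation invariance of $w$ and $\phi^{(\infty)}$ along the axis, we may (after shifting the domain of integration) assume $Z^{(\infty)} = 0$, so $w$ and $D_x \phi^{(\infty)} \cdot \widehat{\xi}^{(\infty)}$ are pure homogeneous functions of $X$ of degrees $\alpha$ and $\alpha - 1$, respectively. Then $|w(X)|^2 / |X|^{n + 2\alpha - \sigma} = O(|X|^{\sigma - n})$ is integrable near $0$ on $\{r > 0\}$, while $|D_x \phi^{(\infty)}(X) \cdot \widehat{\xi}^{(\infty)}|^2 / |X|^{n + 2\alpha - \sigma} = O(|X|^{\sigma - n - 2})$ is not, provided $\sigma < 2$ and the degree-$(\alpha - 1)$ term is not identically zero. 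The finiteness of the integral therefore forces $D_x \phi^{(\infty)}(X) \cdot \widehat{\xi}^{(\infty)} \equiv 0$. Since each non-zero component of $\phi^{(\infty)}$ has the form $\op{Re}(c(x_1 + ix_2)^\alpha)$ with $c \in \mathbb{C}^m \setminus \{0\}$ and directional $x$-derivative $\op{Re}(\alpha c (v_1 + iv_2)(x_1 + ix_2)^{\alpha - 1})$, which vanishes identically only when $v = 0$, we conclude $\widehat{\xi}^{(\infty)} = 0$, contradicting $|\widehat{\xi}^{(\infty)}|^2 \geq \delta$. The main obstacle will be the careful pointwise identification in the limit---matching the multi-valued graphical parameterizations of $u^{(\nu)}$ over $\phi^{(\nu)}$ with the claimed limit across the three cases (a), (b), (c) of the blow-up construction in Section~\ref{blow-up-procedure}---so as to legitimately apply Fatou to the rescaled integrand.
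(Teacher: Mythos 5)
Your overall strategy is the same as the paper's (contradiction, near-optimal competitor $\phi^{(\nu)}$ from a lower stratum, invoke (A2) via Lemma~\ref{branchdist_lemma}(a) to normalize $\widehat{\xi}_{\nu} = \xi_{\nu}/E_{\nu}$, pass the weighted $L^2$ estimate of Lemma~\ref{nonconest_lemma} to the blow-up, then use homogeneity/non-integrability of $D_x\varphi^{(0)}$ to force $\widehat{\xi}^{(\infty)}=0$), and the final non-integrability argument is fine. However, there is a genuine gap in how you handle the case where the initially chosen $\phi^{(\nu)}$ does not satisfy the analogue of \eqref{fineblowups_eqn1}; your instruction to ``iterate by descending to a still smaller stratum exactly as in the proof of Lemma~\ref{graphical_lemma}'' does not close the argument, and the difficulty is not where you flag it (the Fatou/graphical-identification step).

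The problem with the descent is quantitative. When you replace $\phi^{(\nu)}$ (near-optimal in $\bigcup_{p'\leq p-1}$) by a competitor $\phi'^{(\nu)}$ from a smaller stratum, the relevant excess increases, $E'_{\nu}\geq E_{\nu}$, and the contradiction hypothesis $|\xi_{\nu}|^2 > \delta E_{\nu}^2$ does \emph{not} transfer to a lower bound on $|\xi_{\nu}|^2 / E_{\nu}'^2$. If you insist that the $(\star\star)$-type condition for the pair $(u^{(\nu)},\phi^{(\nu)})$ hold with the same vanishing $\beta_{\nu}$, its failure only tells you $E'_{\nu} \lesssim \beta_{\nu}^{-1/2} E_{\nu}$, so the ratio $E'_{\nu}/E_{\nu}$ can blow up along the sequence and $\widehat{\xi}_{\nu}$ relative to the new excess can collapse to $0$ --- erasing precisely the contradiction you need. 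The paper's proof is structured to avoid this: it first proves an intermediate \emph{conditional} statement (Lemma~\ref{branchdist3_lemma}) subject to a separation hypothesis \eqref{branchdist3_eqn1} with a \emph{fixed} threshold $\gamma_0$ depending only on $(n,m,q,p,\alpha,\varphi^{(0)},\delta)$, and then assembles Lemma~\ref{branchdist2_lemma} by an explicit finite induction in which the failure of the $\gamma^{(j)}$-inequality at level $j$ supplies a bounded, $\nu$-independent control $E'_{\nu}\leq (\gamma^{(j)})^{-1/2}E_{\nu}$, and the cumulative loss $\gamma^{(1)}\cdots\gamma^{(j_0-1)}$ is pre-compensated by running the conditional lemma at lower levels with the shrunken constant $\gamma^{(1)}\cdots\gamma^{(j-1)}\delta$. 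The iteration in Lemma~\ref{graphical_lemma} is qualitative (its conclusion permits the constant to degrade by a bounded factor at each step), so it cannot be invoked ``exactly'' here where a prescribed $\delta$ must survive. You would need to introduce the analogue of Lemma~\ref{branchdist3_lemma} and the inductive choice of $\gamma^{(j)}$ to make your descent rigorous.
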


In order to prove Lemma~\ref{branchdist2_lemma}, we first need to establish the following:

\begin{lemma} \label{branchdist3_lemma}
Let $\varphi^{(0)}$ be as in Definition~\ref{varphi0_defn} and assume that (A2) holds true for some $p \in \{p_0+1, p_0+2, \ldots, \lceil q/q_0 \rceil \}$.  For every $\delta \in (0,1/2)$, there exists $\varepsilon_0, \beta_0, \gamma_0 \in (0,1)$ depending only on $n$, $m$, $q$, $p$, $\alpha$, $\varphi^{(0)}$ and $\delta$ such that if $u$, $\varphi$ satisfy Hypothesis~$(\star)$ and Hypothesis~$(\star\star)$ of Section~\ref{sec:graphical_sec}, 
$Z \in \Sigma_{u,q} \cap B_{1/2}(0)$ and $\mathcal{N}_u(Z) \geq \alpha$ then 
\begin{equation} \label{branchdist3_eqn2-1}
	\op{dist}^2(Z,\{0\} \times \mathbb{R}^{n-2}) 
	\leq \delta \inf_{\varphi' \in  \Phi_{3\varepsilon_0,p_{0}}(\varphi^{(0)})} \int_{B_1(0)} \mathcal{G}(u,\varphi')^2. 
\end{equation}
If additionally  there is $s  \in \{p_{0}+1, \ldots, p-1\}$ with 
\begin{equation} \label{branchdist3_eqn1}
	\inf_{\varphi' \in \bigcup_{p'=p_0}^{s} \Phi_{3\varepsilon_0,p'}(\varphi^{(0)})} \int_{B_1(0)} \mathcal{G}(u,\varphi')^2 
		\leq \gamma_0 \inf_{\varphi' \in \bigcup_{p'=p_0}^{s-1} \Phi_{3\varepsilon_0,p'}(\varphi^{(0)})} \int_{B_1(0)} \mathcal{G}(u,\varphi')^2
\end{equation}
then 
\begin{equation} \label{branchdist3_eqn2}
	\op{dist}^2(Z,\{0\} \times \mathbb{R}^{n-2}) 
	\leq \delta \inf_{\varphi' \in \bigcup_{p'=p_0}^{s} \Phi_{3\varepsilon_0,p'}(\varphi^{(0)})} \int_{B_1(0)} \mathcal{G}(u,\varphi')^2. 
\end{equation}
\end{lemma}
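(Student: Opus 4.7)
The plan is to prove the lemma by contradiction and a blow-up argument, relying essentially on the inductive hypothesis (A2), in the spirit of the corresponding arguments of \cite{Sim93} and \cite{Wic14}. I describe the approach for \eqref{branchdist3_eqn2-1}; the case \eqref{branchdist3_eqn2} is handled by the same scheme with a near-minimizer $\widehat\varphi^{(\nu)}\in\bigcup_{p'=p_0}^s\Phi_{3\varepsilon_\nu,p'}(\varphi^{(0)})$ in place of $\widetilde\varphi^{(\nu)}$ below, the gap hypothesis \eqref{branchdist3_eqn1} supplying (with $\beta_0$ replaced by $2\gamma_0$) the Hypothesis~$(\star\star)$ required to invoke (A2) at the appropriate level $\widetilde p\le s\le p-1$.

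Fix $\delta$ and suppose \eqref{branchdist3_eqn2-1} fails. Then sequences $\varepsilon_\nu,\beta_\nu\to 0$ and counterexample triples $(u^{(\nu)},\varphi^{(\nu)},Z^{(\nu)})$ exist satisfying the hypotheses with $\varphi^{(\nu)}\in\Phi_{\varepsilon_\nu,p}(\varphi^{(0)})$ but $d_\nu^2:=\op{dist}^2(Z^{(\nu)},\{0\}\times\mathbb{R}^{n-2})>\delta\widetilde E_\nu^2$, where $\widetilde E_\nu^2:=\inf_{\varphi'\in\Phi_{3\varepsilon_\nu,p_0}(\varphi^{(0)})}\int_{B_1(0)}\mathcal G(u^{(\nu)},\varphi')^2$. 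Choose near-minimizers $\widetilde\varphi^{(\nu)}\in\Phi_{3\varepsilon_\nu,p_0}(\varphi^{(0)})$ with excess at most $2\widetilde E_\nu^2$. Since Hypothesis~$(\star\star)$ is automatic at level $\widetilde p=p_0$, the inductive hypothesis (A2) applied with $\widetilde\varphi^{(\nu)}$ in place of $\varphi$ yields via Lemma~\ref{branchdist_lemma}(a) the crude bound $d_\nu^2\le 2C\widetilde E_\nu^2$; hence $d_\nu/\widetilde E_\nu\in[\sqrt\delta,\sqrt{2C}]$ and, after passing to a subsequence, $d_\nu/\widetilde E_\nu\to\lambda>0$, $\xi^{(\nu)}/\widetilde E_\nu\to\bar\xi$ with $|\bar\xi|=\lambda$, and $\zeta^{(\nu)}\to\zeta^{(\infty)}$, writing $Z^{(\nu)}=(\xi^{(\nu)},\zeta^{(\nu)})\in\mathbb{R}^2\times\mathbb{R}^{n-2}$. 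Next, the blow-up construction of Section~\ref{blow-up-procedure} applied to $u^{(\nu)}$ relative to $\widetilde\varphi^{(\nu)}$ by the excess $\widetilde E_\nu$ (no gap condition is required at level $p_0$) produces a blow-up $w=(w_{j,k})$ with each $w_{j,k}$ component-wise Dirichlet minimizing off $\{0\}\times\mathbb{R}^{n-2}$. Dividing the non-concentration estimate \eqref{nonconest_concl2} from Lemma~\ref{nonconest_lemma} (via (A2)) by $\widetilde E_\nu^2$ and passing to the limit yields
\[
\int_{B_\gamma(0)\cap\{r>\tau\}}\frac{\bigl(w_{j,k}(X,\varphi^{(0)}_j(X))-D_x\varphi^{(0)}_j(X)\cdot\bar\xi\bigr)^2}{|X-(0,\zeta^{(\infty)})|^{n+2\alpha-\sigma}}\,dX\le C
\]
uniformly in $\tau\in(0,\gamma)$ and $\sigma\in(0,2/q)$, together with $L^2$-decay of $w$ about $(0,\zeta^{(\infty)})$ from the first term of \eqref{nonconest_concl2}; these force the rigid identification $w_{j,k}(X,\varphi^{(0)}_j(X))=D_x\varphi^{(0)}_j(X)\cdot\bar\xi$ identically on the domain of $w_{j,k}$.

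The hardest step will be converting this rigid identification into the final contradiction with the near-minimality of $\widetilde\varphi^{(\nu)}$ within $\Phi_{3\varepsilon_\nu,p_0}(\varphi^{(0)})$. The tilt $D_x\varphi^{(0)}\cdot\bar\xi$ has homogeneity degree $\alpha-1$, whereas the tangent directions at $\widetilde\varphi^{(\nu)}$ inside $\Phi_{3\varepsilon_\nu,p_0}(\varphi^{(0)})$ (variations of the coefficients $c_j^{(\nu)}$) are degree $\alpha$; the two Fourier modes in $\theta$ differ by the nonzero integer $q_0$, so they are $L^2(S^1)$-orthogonal and the tilt cannot be absorbed literally by a $\Phi$-competitor. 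The contradiction is instead extracted by combining the identification with the near-alignment of $u^{(\nu)}$ to $\varphi^{(\nu)}$: the identification forces $u^{(\nu)}$ to agree, up to $o(\widetilde E_\nu)$ in $L^2(B_1\setminus\{0\}\times\mathbb{R}^{n-2})$, with the $x$-translate $\widetilde\varphi^{(\nu)}(\,\cdot\,-\xi^{(\nu)})$ whose homogeneity axis passes through $Z^{(\nu)}$; a second, finer blow-up rescaled at $Z^{(\nu)}$ by $d_\nu$ then produces a limit which simultaneously (i) must be energy minimizing and homogeneous of degree $\alpha$ with axis $\{0\}\times\mathbb{R}^{n-2}$ (by frequency monotonicity and Lemma~\ref{consequence_lemma}(c) applied at $Z^{(\nu)}$, where $\mathcal N_{u^{(\nu)}}(Z^{(\nu)})\ge\alpha$) and (ii) equals $\varphi^{(0)}(X+\bar\xi/\lambda)$ by the $L^2$-agreement with the translate, which translated cylindrical function is not in the form of $\varphi^{(0)}$ since $\bar\xi/\lambda$ is a unit vector in $\mathbb{R}^2\times\{0\}$. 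This contradiction closes the argument; the analogous contradiction for \eqref{branchdist3_eqn2} uses the same blow-up with $\widehat\varphi^{(\nu)}$ in place of $\widetilde\varphi^{(\nu)}$ and (A2) invoked at level $\widetilde p\le s\le p-1$.
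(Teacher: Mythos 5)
There is a genuine gap. Your claim that the estimates derived from Lemma~\ref{nonconest_lemma} ``force the rigid identification $w_{j,k}(X,\varphi^{(0)}_j(X))=D_x\varphi^{(0)}_j(X)\cdot\bar\xi$ identically'' is not correct. A weighted $L^2$ bound of the form
\[
\int_{B_\gamma(0)\cap\{r>\tau\}}\frac{\bigl|w-D_x\varphi^{(0)}\cdot\bar\xi\bigr|^2}{|X-(0,\zeta^{(\infty)})|^{n+2\alpha-\sigma}}\,dX\le C(\sigma)
\]
does not determine $w$: for instance any function equal to $D_x\varphi^{(0)}\cdot\bar\xi$ plus a bounded degree-$\alpha$ homogeneous function satisfies it. (The first term in \eqref{nonconest_concl2} only gives decay with the weaker exponent $n-2+2/q-\sigma$, which buys even less.) Consequently, the elaborate ``hardest step'' you sketch --- a second blow-up at $Z^{(\nu)}$ designed to convert the identification into a contradiction with cylindricity --- is built on a false premise and does not close the argument.

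The missing idea, which is what makes the paper's proof short, is that the blow-up $w$ is itself homogeneous of degree $\alpha$ and translation invariant along $\{0\}\times\mathbb{R}^{n-2}$. This follows because Hypothesis~$(\star\star)$ forces $\int_{B_1(0)}\mathcal G(u^{(\nu)},\varphi^{(\nu)})^2\le\beta_\nu E_\nu^2$ where $E_\nu$ is the excess of $u^{(\nu)}$ relative to the near-minimizer $\phi^{(\nu)}\in\Phi_{3\varepsilon_\nu,s}(\varphi^{(0)})$; hence $u^{(\nu)}$ is $o(E_\nu)$-close in $L^2$ to the cylindrical homogeneous $\varphi^{(\nu)}$, so $w$ coincides with the blow-up of $\varphi^{(\nu)}$ relative to $\phi^{(\nu)}$ (Remark~\ref{homog blowup rmk}), which is cylindrical and homogeneous of degree $\alpha$. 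Once this is known, the weighted integral of $|w|^2$ alone is finite (by homogeneity and translation invariance), so by the triangle inequality the weighted integral of $|D_x\varphi^{(0)}\cdot\lambda|^2$ is also finite. Since $D_x\varphi^{(0)}$ is homogeneous of degree $\alpha-1$, this integral diverges unless $\lambda=0$, contradicting $|\lambda|^2\ge\delta/2$. No ``rigid identification'' and no second blow-up is needed. Your treatment of the two conclusions and your reading of how the gap condition \eqref{branchdist3_eqn1} feeds into Hypothesis~$(\star\star)$ are otherwise sound, so the repair amounts to supplying the homogeneity of $w$ and replacing the final paragraph of your argument with the triangle-inequality/orthogonality step just described.
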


\begin{proof}
We prove Lemma~\ref{branchdist3_lemma} by contradiction.  Fix $\delta > 0$ and without loss of generality fix $s \in \{p_{0},\ldots,p-1\}$.  Suppose $\varepsilon_{\nu} \downarrow 0$, $\beta_{\nu} \downarrow 0$, and $\gamma_{\nu} \downarrow 0$ and for $\nu = 1,2,3,\ldots$, $u^{(\nu)} \in W^{1,2}(B_1(0);\mathcal{A}_q(\mathbb{R}^m))$ is an average-free energy minimizing $q$-valued function, $\varphi^{(\nu)} \in \Phi_{\varepsilon_{\nu},p}(\varphi^{(0)})$ and $Z_{\nu} = (\xi_{\nu},\zeta_{\nu}) \in \Sigma_{u^{(\nu)},q} \cap B_{1/2}(0)$ with ${\mathcal N}_{u^{(\nu)}}(Z_{\nu}) \geq \alpha$,  and that Hypothesis~$(\star)$, Hypothesis~$(\star\star)$, \eqref{branchdist3_eqn1}  hold with 
$\varepsilon_{\nu}$, $\beta_{\nu}$, $\gamma_{\nu}$, $u^{(\nu)}$, $\varphi^{(\nu)}$ in place of $\varepsilon_0$, $\beta_0$, $\gamma_{0}$, $u$, $\varphi$ and yet  
\begin{equation} \label{branchdist3_eqn4}
	|\xi_{\nu}|^2 > \delta \inf_{\varphi' \in \bigcup_{p'=p_0}^{s} \Phi_{3\varepsilon_{\nu},p'}(\varphi^{(0)})} \int_{B_1(0)} \mathcal{G}(u^{(\nu)},\varphi')^2. 
\end{equation}

Select $\phi^{(\nu)} \in \bigcup_{p^{\prime} = p_{0}}^{s} \Phi_{3\varepsilon_{\nu},p^{\prime}}(\varphi^{(0)})$ such that 
\begin{equation} \label{branchdist3_eqn3}
	\int_{B_1(0)} \mathcal{G}(u^{(\nu)},\phi^{(\nu)})^2 
	< 2 \inf_{\varphi' \in \bigcup_{p'=p_0}^{s} \Phi_{3\varepsilon_{\nu},p'}(\varphi^{(0)})} \int_{B_1(0)} \mathcal{G}(u^{(\nu)},\varphi')^2. 
\end{equation}
Note that then by \eqref{branchdist3_eqn1}, $\phi^{(\nu)} \in \Phi_{3\varepsilon_{\nu}, s}(\varphi^{(0)})$. In view of Hypothesis~$(\star)$,  \eqref{branchdist3_eqn1} and \eqref{branchdist3_eqn3}, we can blow up $u^{(\nu)}$  relative to $\phi^{(\nu)}$ by the excess $E_{\nu} = \left( \int_{B_1(0)} \mathcal{G}(u^{(\nu)},\phi^{(\nu)})^2 \right)^{1/2}$ to obtain a blow-up $w = (w_{j,k})$ that is a multi-valued function on the graph of some 
$\phi^{(\infty)} = (\phi^{(\infty)}_{j,k})$ obtained as in the blow-up procedure described in Section~\ref{blow-up-procedure}.
By Hypothesis~$(\star\star)$, we have that $\int_{B_{1}(0)} {\mathcal G}(u^{(\nu)}, \varphi^{(\nu)})^{2} \leq \beta_{\nu} \int_{B_{1}(0)}{\mathcal G}(u^{(\nu)}, \phi^{(\nu)})^{2}$ so $w$ is homogeneous of degree $\alpha$ and translation invariant along $\{0\} \times \mathbb{R}^{n-2}$.   By the assumption (A2) above and Lemma~\ref{branchdist_lemma}(a), after passing to a subsequence, $\xi_{\nu}/E_{\nu}$ converge to some $\lambda \in \mathbb{R}^2$ which satisfies,  by \eqref{branchdist3_eqn3} and \eqref{branchdist3_eqn4}, 
\begin{equation} \label{branchdist3_eqn5}
	|\lambda|^2 \geq \delta/2. 
\end{equation}
Clearly after passing to a subsequence $\zeta_{\nu}$ converge to some $\zeta$ in $\overline{B^{n-2}_{1/2}(0)}$.  By (A2) and Lemma~\ref{nonconest_lemma} with $u^{(\nu)}$ and $\phi^{(\nu)}$ in place of $u$ and $\varphi$, for every $\tau > 0$ and $\nu$ sufficiently large (depending on $\tau$), 
\begin{equation*}
	\int_{B_{3/4}(0) \cap \{r > \tau\}} \frac{\mathcal{G}(u^{(\nu)}(X), \phi^{(\nu)}(X) - D_x \phi^{(\nu)}(X) \cdot \xi_{\nu})^2}{|X-Z_{\nu}|^{n+2\alpha-\sigma}} 
	\leq C \int_{B_1(0)} \mathcal{G}(u^{(\nu)},\phi^{(\nu)})^2, 
\end{equation*}
so by dividing by $E_{\nu}^2$ and letting $\nu \rightarrow \infty$ using the monotone convergence theorem, 
\begin{equation*}
	\sum_{j=1}^J \sum_{k=1}^{p_j} \sum_{l=1}^{q_{j, k}}\sum_{h=1}^{m_{j, k}}\int_{B_{3/4}(0)} \frac{|w_{j,k, l , h}(X) - D_x \varphi^{(0)}_{j, l}(X) \cdot \lambda|^2}{|X-(0,\zeta)|^{n+2\alpha-\sigma}}  \leq C 
\end{equation*}
for some constant $C = C(n,m,q,\alpha,\varphi^{(0)},\sigma) \in (0,\infty)$. Here for $X \in B_{3/4}(0) \setminus \{0\} \times {\mathbb R}^{n-2}$, $w_{j, k, l, h}(X)$ are such that $w_{j, k}(X, \phi^{(\infty)}_{j, k, l}(X)) = \sum_{h=1}^{m_{j, k}} \llbracket w_{j, k, l, h}(X)\rrbracket$ where $\phi_{j, k, l}^{(\infty)}$ ($1 \leq l \leq q_{j, k}$) and $q_{j, k}$ are defined by \eqref{varphi_localized} and \eqref{varphi_values} taken with $\phi^{(\infty)}$ in place of $\varphi$; and also for $j$ such that $\varphi^{(0)}_{j} \neq 0$, $q_{j, k} = q_{0}$ and $\varphi_{j, l}^{(0)}$ ($1 \leq l \leq q_{0}$) are harmonic functions locally defined near $X$ such that 
$\varphi_{j}^{(0)}(X) = \sum_{l=1}^{q_{0}} \llbracket \varphi_{j, l}^{(0)}(X)\rrbracket$; and for $j$ such that $\varphi^{(0)}_{j} = 0$, $\varphi^{(0)}_{j, l} = 0$ for $1 \leq l \leq q_{j, k}$. 

Since $w_{j,k}(X,\phi^{(\infty)}_{j,k,l}(X))$ is homogeneous of degree $\alpha$ and translation invariant along $\{0\} \times \mathbb{R}^{n-2}$, it follows from the preceding estimate that 
\begin{equation} \label{branchdist3_eqn6}
	\sum_{j=1}^J \sum_{k=1}^{p_j} \int_{B_{3/4}(0)} \frac{|D_x \varphi^{(0)}_j(X) \cdot \lambda|^2}{|X-(0,\zeta)|^{n+2\alpha-\sigma}}  \leq C. 
\end{equation}
In view of the homogeneity of $\varphi^{(0)}$ and $L^2$ orthogonality of $D_1 \varphi^{(0)}_j(e^{i\theta},y)$ and $D_2 \varphi^{(0)}_j(e^{i\theta},y)$, \eqref{branchdist3_eqn6} implies that $\lambda = 0$, contradicting \eqref{branchdist3_eqn5}.
\end{proof}

\begin{proof}[Proof of Lemma~\ref{branchdist2_lemma}]
For each $p' \in \{p_0+1, p_0+2, \ldots, p \}$ and $\delta \in (0,1/2)$, let $\varepsilon(p',\delta)$, $\beta(p',\delta)$, and $\gamma(p',\delta)$ denote $\varepsilon_0$, $\beta_0$, and $\gamma_0$ as in Lemma~\ref{branchdist3_lemma} with $p'$ in place of $p$.  Fix $\delta \in (0,1/2)$.  For each $j = 1,2,\ldots,p-p_0$, inductively define 
\begin{gather*}
	\varepsilon^{(1)} = \epsilon(p,\delta), \quad \beta^{(1)} = \beta(p,\delta), \quad \gamma^{(1)} = \gamma(p,\delta) \\
	\varepsilon^{(j)} = \epsilon(p-j+1, \gamma^{(1)} \cdots \gamma^{(j-1)} \delta), \quad 
	\beta^{(j)} = \beta(p-j+1,\gamma^{(1)} \cdots \gamma^{(j-1)} \delta), \\ 
	\gamma^{(j)} = \gamma(p-j+1,\gamma^{(1)} \cdots \gamma^{(j-1)} \delta) .
\end{gather*} 
Then define 
\begin{equation*}
	\varepsilon_0 = \min\{\epsilon^{(1)},\ldots,\epsilon^{(p-p_0)}\}, \quad 
	\beta_0 = \min\{\beta^{(1)},\ldots,\beta^{(p-p_0)}\}.
\end{equation*}
If $p = p_0+1$ or $p \geq p_0+2$ and 
\begin{equation*}
	\inf_{\varphi' \in \bigcup_{p'=p_0}^{p-1} \Phi_{3\varepsilon_0,p'}(\varphi^{(0)})} \int_{B_1(0)} \mathcal{G}(u,\varphi')^2 
		\leq \gamma^{(1)} \inf_{\varphi' \in \bigcup_{p'=p_0}^{p-2} \Phi_{3\varepsilon_0,p'}(\varphi^{(0)})} \int_{B_1(0)} \mathcal{G}(u,\varphi')^2, 
\end{equation*}
then by Lemma~\ref{branchdist3_lemma} we obtain \eqref{branchdist2_eqn2}.  Otherwise, we can find $j_0 \in \{2,3,\ldots,p-p_0\}$ such that 
\begin{equation*}
	\inf_{\varphi' \in \bigcup_{p'=p_0}^{p-j} \Phi_{3\varepsilon_0,p'}(\varphi^{(0)})} \int_{B_1(0)} \mathcal{G}(u,\varphi')^2 
		> \gamma^{(j)} \inf_{\varphi' \in \bigcup_{p'=p_0}^{p-j-1} \Phi_{3\varepsilon_0,p'}(\varphi^{(0)})} \int_{B_1(0)} \mathcal{G}(u,\varphi')^2, 
\end{equation*}
for $j = 1,2,\ldots,j_0-1$ and either $j_0 = p-p_0$ or $j_0 > p-p_0$ and 
\begin{equation*}
	\inf_{\varphi' \in \bigcup_{p'=p_0}^{p-j_0} \Phi_{3\varepsilon_0,p'}(\varphi^{(0)})} \int_{B_1(0)} \mathcal{G}(u,\varphi')^2 
		\leq \gamma^{(j_0)} \inf_{\varphi' \in \bigcup_{p'=p_0}^{p-j_0-1} \Phi_{3\varepsilon_0,p'}(\varphi^{(0)})} \int_{B_1(0)} \mathcal{G}(u,\varphi')^2. 
\end{equation*}
Thus by Lemma~\ref{branchdist3_lemma} we obtain 
\begin{align*}
	\op{dist}^2(Z,\{0\} \times \mathbb{R}^{n-2}) 
	&\leq \gamma^{(1)} \cdots \gamma^{(j_0-1)} \delta \inf_{\varphi' \in \bigcup_{p'=p_0}^{p-j_0} \Phi_{3\varepsilon_0,p'}(\varphi^{(0)})} 
		\int_{B_1(0)} \mathcal{G}(u,\varphi')^2
	\\&\leq \delta \inf_{\varphi' \in \bigcup_{p'=p_0}^{p-1} \Phi_{3\varepsilon_0,p'}(\varphi^{(0)})} \int_{B_1(0)} \mathcal{G}(u,\varphi')^2 . \qedhere
\end{align*}
\end{proof}

 Let $\varepsilon > 0$, $\varphi^{(0)}$ be as in Definition~\ref{varphi0_defn}, $u \in W^{1,2}(B_1(0);\mathcal{A}_q(\mathbb{R}^m))$ be an average-free, energy minimizing function such that 
\begin{equation} \label{translate_hyp}
	\int_{B_1(0)} \mathcal{G}(u,\varphi^{(0)})^2 < \varepsilon^2,  
\end{equation}
and let $\varphi \in \Phi_{\varepsilon, p}(\varphi^{(0)})$ for some $p \in \{p_{0}, \ldots, \lceil q/q_0 \rceil\}$.  We make the following observations which we shall rely on in the proofs of Lemma~\ref{branchdist_lemma} and Lemma~\ref{nonconest_lemma} below:  
\begin{itemize}
\item[(1)] For $X = (x,y)$ with $x \neq 0$, $\varphi$ decomposes into $q$ smooth, homogeneous degree $\alpha$ single-valued functions $\varphi_j$ on $B_{|x|/2}(X)$ so that 
$\varphi(Y) = \sum_{j=1}^{q}\llbracket \varphi_{j}(Y)\rrbracket$ for $Y \in B_{|x|/2}(X)$. Applying Taylor's theorem to $\varphi_{j},$  we then have that for $Z = (\xi, \zeta)$ with $|\xi| < |x|/2$, 
$$\varphi(X -Z) = \sum_{j=1}^{q} \llbracket \varphi_{j}(X) - D_{x}\varphi_{j}(X) \cdot \xi + {\mathcal R}_{j}(x, \xi)\rrbracket$$ where $|{\mathcal R}_{j}(x, \xi)| \leq C |x|^{\alpha - 2}|\xi|^{2}$ with $C = C(\alpha, \sup_{S^{1}} |D^{2}\varphi|)$.  Thus 
$${\mathcal G}(\varphi(X - Z), \varphi(X) - D_{x}\varphi(X) \cdot \xi) \leq C|x|^{\alpha - 2}|\xi|^{2},$$ where $C = C(q,\alpha,\sup_{S^{1}} |D^{2}\varphi|)$ and by definition
$$\varphi(Y) - D_{x}\varphi(Y) \cdot \xi = \sum_{j=1}^{q} \llbracket \varphi_{j}(Y) - D_{x}\varphi_{j}(Y) \cdot \xi \rrbracket$$ for $Y \in B_{|x|/2}(X)$. It follows from this and the triangle inequality that provided $\varepsilon$ is sufficiently small depending only on $n$, $m$, $q$, $\alpha$, and $\varphi^{(0)}$, we have that if $Z = (\xi, \zeta)$ with $|\xi| \leq |x|/2$ then 
\begin{equation} \label{translate_eqn1}
	\mathcal{G}(u(X),\varphi(X-Z)) = \mathcal{G}(u(X),\varphi(X) - D_x \varphi(X) \cdot \xi) + \mathcal{R}, 
\end{equation}
where $|\mathcal{R}| \leq C |x|^{\alpha-2} |\xi|^2$ for some constant $C = C(\varphi^{(0)}) \in (0,\infty)$; 

\item[(2)] Let $\tau \in (0,1/2)$, $X = (x, y)$ with $|x| \geq \tau$ and $Z = (\xi, \zeta)$ with $|\xi| < |x|/2$ and ${\mathcal N}_{u}(Z) \geq \alpha.$ For any $\delta \in (0, 1/2)$, if $\varepsilon = \varepsilon(n, m, q, \alpha, \varphi^{(0)}, \delta)$ is sufficiently small  and additionally if either (i) $p = p_0$ or (ii) $p > p_0$ and 
\begin{equation*}
	\int_{B_1(0)} \mathcal{G}(u,\varphi)^2 
	\leq \beta \inf_{\varphi' \in \bigcup_{p'=p_0}^{p-1} \Phi_{3\varepsilon_0,p'}(\varphi^{(0)})} \int_{B_1(0)} \mathcal{G}(u,\varphi')^2 
\end{equation*}
for $\beta  = \beta(n, m, q, \alpha, \varphi^{(0)}, \delta)$ sufficiently small, then by Corollary~\ref{graphical_cor}(a) and Lemma~\ref{branchdist2_lemma}, we have that 
$$|\xi| \leq C \delta  \, \inf_{S^{1}} {\rm sep} \, \varphi \leq C\delta |x|^{-\alpha} \, {\rm sep} \, \varphi(X) \leq C\delta \t^{-\alpha} \, {\rm sep} \, \varphi(X)$$
where $C = C(n, m, q, \alpha, \varphi^{(0)}),$ whence, for a choice of $\delta = \delta(n, m, q, \alpha, \varphi^{(0)}, \tau)$ sufficiently small, it follows that 
${\mathcal G}(\varphi(X), \varphi(X) - D_{x}\varphi(X) \cdot \xi) = |D_{x}\varphi(X) \cdot \xi|.$ Using this together with the triangle inequality, we deduce from  \eqref{translate_eqn1} the following:

\emph{For any given $\tau \in (0, 1/4)$, there exist $\varepsilon = \varepsilon(n, m, q, \alpha, \varphi^{(0)}, \tau)$ and $\beta = \beta(n, m, q, \alpha, \varphi^{(0)}, \tau)$ such that if \eqref{translate_hyp} holds, $\varphi \in \Phi_{\varepsilon, p}(\varphi^{(0)})$ for some $p \in \{p_{0}, \ldots, \lceil q/q_0 \rceil\}$ and if either (i) $p=p_{0}$ or (ii) $p > p_{0}$ and  $\int_{B_1(0)} \mathcal{G}(u,\varphi)^2 
	\leq \beta \inf_{\varphi' \in \bigcup_{p'=p_0}^{p-1} \Phi_{3\varepsilon_0,p'}(\varphi^{(0)})} \int_{B_1(0)} \mathcal{G}(u,\varphi')^2$, then  
for any $X = (x, y) \in B_{1}(0)$ with $|x| \geq \tau$ and any $Z = (\xi, \zeta) \in B_{1}(0)$ with $|\xi| \leq |x|/2$ and ${\mathcal N}_{u}(Z) \geq \alpha$, we have that  
\begin{equation} \label{translate_eqn2}
	\mathcal{G}(u(X),\varphi(X-Z)) \geq |D_x \varphi(X) \cdot \xi| - \mathcal{G}(u(X),\varphi(X)) - C |x|^{\alpha-2} |\xi|^2 
\end{equation}
where $C = C(\varphi^{(0)}) \in (0,\infty)$.}

\item[(3)] By the triangle inequality and the fundamental theorem of calculus, 
\begin{align} \label{translate_eqn3}
	\left| \mathcal{G}(u(X),\varphi(X-Z)) - \mathcal{G}(u(X),\varphi(X)) \right| 
	&\leq \mathcal{G}(\varphi(X-Z),\varphi(X)) \\
	&\leq \left( \int_0^1 |D\varphi(X-tZ)|^2 dt \right)^{1/2} |\xi| \nonumber 
\end{align}
for a.e. $X = (x,y) \in B_1(0)$;   
also,  by the continuity estimate \eqref{regestimate}, there exists $\tau = \tau(\varepsilon) \in (0,1)$ with $\tau(\varepsilon) \rightarrow 0$ as $\varepsilon \downarrow 0$ such that 
$|\xi| < \tau$ for every  $Z = (\xi,\zeta) \in \Sigma_{u,q} \cap B_{1/2}(0)$. 
Thus using \eqref{translate_eqn3} with $\varphi^{(0)}$ in place of $\varphi$, we deduce that 
\begin{equation*}
	4^{-n-2\alpha} \int_{B_{1/4}(Z)} \mathcal{G}(u(X),\varphi^{(0)}(X-Z))^2 dX 
	\leq C \int_{B_1(0)} \mathcal{G}(u,\varphi^{(0)})^2 + C |\xi|^2 
	\leq C (\varepsilon^2 + \tau^{2}(\varepsilon)) 
\end{equation*}
where $C = C(n,\varphi^{(0)}) \in (0,\infty)$, and hence Theorem~\ref{keyest_thm} and Lemma~\ref{keyest_cor} hold with $4^{\alpha} u(Z+X/4)$ in place of $u$ provided 
$\varepsilon = \varepsilon(n, m, q, \alpha, \varphi^{(0)})$ is sufficiently small and ${\mathcal N}_{u}(Z) \geq \alpha$. 
\end{itemize}

\begin{proof}[Proof of Lemma~\ref{branchdist_lemma}]
First we will prove Lemma~\ref{branchdist_lemma}(a). We may assume, for some $\beta_0 \in (0,1)$ to be determined depending only on $n$, $m$, $q$, $p$, $\alpha$, and $\varphi^{(0)},$ that 
\begin{equation} \label{branchdist_eqn01}
	\int_{B_1(0)} \mathcal{G}(u,\varphi)^2 \leq \beta_0 \inf_{\varphi' \in \bigcup_{p'=p_{0}}^{p-1} \Phi_{3\varepsilon_0,p'}(\varphi^{(0)})} \int_{B_1(0)} \mathcal{G}(u,\varphi')^2
\end{equation}
for if the reverse inequality holds, then we can select $s \in \{p_0,p_0+1,\ldots,p-1\}$ and $\phi \in \Phi_{3\varepsilon_0,s}(\varphi^{(0)})$ such that 
\begin{equation*}
	\int_{B_1(0)} \mathcal{G}(u,\phi)^2 \leq 2 \inf_{\varphi' \in \bigcup_{p'=p_0}^{p-1} \Phi_{\varepsilon_0,p'}(\varphi^{(0)})} \int_{B_1(0)} \mathcal{G}(u,\varphi')^2 
\end{equation*}
and conclude from (A2) that 
\begin{equation*}
	\op{dist}^2(Z,\{0\} \times \mathbb{R}^{n-2}) 
	\leq C \int_{B_1(0)} \mathcal{G}(u,\phi)^2 
	\leq \frac{2C}{\beta_0} \int_{B_1(0)} \mathcal{G}(u,\varphi)^2
\end{equation*}
for some constant $C = C(n,m,q,\alpha,\varphi^{(0)}) \in (0,\infty)$. 

\noindent
\emph{Claim:  There is a constant $\delta_1 = \delta_1(\varphi^{(0)}) > 0$ such that the following holds: for every $\rho \in (0,1/4)$, there is $\varepsilon_0 = \varepsilon_0(\varphi^{(0)},\rho) > 0$ such that if $u$ satisfies Hypothesis~$(\star)$ and if $\varphi \in \Phi_{\varepsilon_0,p}(\varphi^{(0)})$, then for every $a \in \mathbb{R}^2$ and every $Z = (\xi,\zeta) \in \Sigma_{u, q} \cap B_{1/2}(0)$, }
\begin{equation} \label{branchdist_eqn1} 
	\mathcal{L}^n \{ X \in B_{\rho}(Z) : \delta_1 |a| |x|^{\alpha-1} \leq |D_x \varphi(X) \cdot a| \} \geq \delta_1 \rho^n. 
\end{equation}
To see this we argue by contradiction, so suppose the assertion is false; then for any given $\delta_1 > 0$ there is $\rho \in (0,1/4)$ such that with $\varepsilon_{\nu} = 1/\nu$, there exists $\varphi^{(\nu)} \in \Phi_{\varepsilon_{\nu}, p}(\varphi^{(0)})$,  $a_{\nu} \in S^1$, a locally energy minimizing function $u^{(\nu)}$ and a point $Z_{\nu} \in \Sigma_{u^{(\nu)}, q} \cap B_{1/2}(0)$ so that Hypothesis~$(\star)$ holds with $\varepsilon_{\nu}$, $u^{(\nu)}$ in place of $\varepsilon_0$,$u$ and 
\begin{equation*} 
	\mathcal{L}^n \{ X \in B_{\rho}(Z_{\nu}) : \delta_1 |x|^{\alpha-1} \leq |D_x \varphi^{(\nu)}(X) \cdot a_{\nu}| \} < \delta_1 \rho^n. 
\end{equation*}
After passing to a subsequence, $\varphi^{(\nu)} \rightarrow \varphi^{(0)}$ in $C^1$ on compact subsets of $\mathbb{R}^n \setminus \{0\} \times \mathbb{R}^{n-2}$, $Z_{\nu} \rightarrow Z$ for some $Z \in \{0\} \times \mathbb{R}^{n-2} \cap \overline{B_{1/2}(0)}$ (since $u^{(\nu)} \to \varphi^{(0)}$ uniformly on $\overline{B_{1/2}(0)}$), and $a_{\nu} \rightarrow a$ with $a \in S^1,$ whence  
\begin{equation} \label{branchdist_eqn2} 
	\mathcal{L}^n \{ X \in B_{\rho}(Z) : \delta_1 |x|^{\alpha-1} \leq |D_x \varphi^{(0)}(X) \cdot a| \} \leq \delta_1 \rho^n. 
\end{equation}
Thus we have shown that if the claim is false, then for every $\delta_1 > 0$ there are a number $\rho > 0$, a point $Z \in \{0\} \times \mathbb{R}^{n-2}$ 
and a point $a \in S^1$ such that \eqref{branchdist_eqn2} holds, or equivalently (by translating and rescaling),  
\begin{equation*}
	\mathcal{L}^n \{X \in B_{1}(0) : \delta_1 |x|^{\alpha-1} \leq |D_x \varphi^{(0)}(X) \cdot a| \} \leq \delta_1.
\end{equation*}
Using this with $\delta_1 = 1/\nu$, we deduce that for each $\nu=1, 2, 3, \ldots,$ there is a point $a_{\nu} \in S^1$ such that 
\begin{equation*} 
	\mathcal{L}^n \{ X \in B_1(0) : (1/\nu) |x|^{\alpha-1} \leq |D_x \varphi^{(0)}(X) \cdot a_{\nu}| \} < 1/\nu. 
\end{equation*}
After passing to a subsequence, $a_{\nu} \rightarrow a$ where $a \in S^1$ and 
\begin{equation*} 
	D_x \varphi^{(0)}(X) \cdot a = 0 \text{ a.e. on } B_1(0), 
\end{equation*}
but no such $a$ exists in view of the definition of $\varphi^{(0)}$ (Definition~\ref{varphi0_defn}). This contradiction establishes the claim.

Let $Z = (\xi, \zeta) \in \Sigma_{u, q} \cap B_{1/2}(0)$ be such that ${\mathcal N}_{u}(Z) \geq \alpha.$  With $\delta_1$ as in the claim, choose $\kappa = \kappa(n) >0$ such that $\mathcal{L}^n(B^2_{\kappa \delta_1^{1/2} \rho}(0) \times B^{n-2}_{\rho}(0)) < \delta_1 \rho^n/2$.  Let $\rho > 0$ to be chosen.  Assume $|\xi| \leq \rho$ (provided $\varepsilon_{0}$ is sufficiently small depending on $\rho$).  Take $a = \xi$ in \eqref{branchdist_eqn1} and use \eqref{translate_eqn2} with $\tau = \kappa \delta_1^{1/2} \rho$ (which we may do in view of \eqref{branchdist_eqn01} provided $\beta_{0}$ is sufficiently small depending on $\rho$) to deduce that for some set $S \subseteq B_{\rho}(Z) \cap \{ (x,y) : |x| \geq \kappa \delta_1^{1/2} \rho \}$ with $\mathcal{L}^n(S) \geq \delta_1 \rho^n/2$,
\begin{align} \label{branchdist_eqn3} 
	&c \rho^{n+2\alpha-2} |\xi|^2 
	\leq \int_S |x|^{2\alpha-2} |\xi|^2 \leq \delta_1^{-2} \int_{B_{\rho}(Z) \cap \{|x| \geq \kappa \delta_1^{1/2} \rho \}} |D_x \varphi(X) \cdot \xi|^2 \\
	&\hspace{10mm} \leq 3\delta_1^{-2} \int_{B_{\rho}(Z)} \mathcal{G}(u(X),\varphi(X-Z))^2 dX 
			+ 3\delta_1^{-2} \int_{B_{\rho}(Z)} \mathcal{G}(u(X),\varphi(X))^2 dX \nonumber \\
		&\hspace{10mm} + 3C \delta_1^{-2} \int_{B_{\rho}(Z) \cap \{|x| \geq 2|\xi|\}} |x|^{2\alpha-4} |\xi|^4 
			+ \delta_1^{-2} \int_{B_{\rho}(Z) \cap \{|x| \leq 2|\xi|\}} |D_x \varphi(X)|^2 |\xi|^2, \nonumber 
\end{align}
where $C = C(\varphi^{(0)}) \in (0,\infty)$ is a constant and $c = \kappa^{2\alpha-2} \delta_1^{\alpha}/2$ if $\alpha \geq 1$ and $c = 2^{2\alpha-3} \delta_1$ if $\alpha < 1$ (as $\kappa \delta_1^{1/2} \rho \leq |x| \leq |\xi|+\rho \leq 2\rho$ for all $X = (x, y)  \in S$).

We need to bound the terms on the right-hand side of \eqref{branchdist_eqn3}.  For the first term, we note that by Lemma~\ref{keyest_cor} with $4^{\alpha} u(Z+X/4)$ in place of $u$ and $\sigma = 1/2$ and by \eqref{translate_eqn3}, 
\begin{align*}
	&\rho^{-n-2\alpha+1/2} \int_{B_{\rho}(Z)} \mathcal{G}(u(X),\varphi(X-Z))^2 dX 
	\leq C \int_{B_1(0)} \mathcal{G}(u(X),\varphi(X-Z))^2 dX 
	\\&\hspace{10mm} \leq C \int_{B_1(0)} \mathcal{G}(u(X),\varphi(X))^2 dX + C |\xi|^2 \int_{B_1(0)} \int_0^1 |D\varphi(x-t\xi)|^2 dt \, dX
\end{align*}
for $C = C(n,m,q,\alpha,\varphi^{(0)}) \in (0,\infty)$.  Using the change of variable $x' = x-t\xi$, 
\begin{align} \label{branchdist_eqn4} 
	\int_{B_1(0)} \int_0^1 |D\varphi(x-t\xi)|^2 dt \,dX 
	&\leq C \sup_{\partial B^2_1(0) \times \mathbb{R}^{n-2}} |D\varphi|^2 \int_{B^2_1(0)} \int_0^1 |x-t\xi|^{2\alpha-2} dt \, dx \\
	&\leq C \sup_{\partial B^2_1(0) \times \mathbb{R}^{n-2}} |D\varphi|^2 \int_0^1 \int_{B^2_{1+t|\xi|}(0)} |x'|^{2\alpha-2} dx' \, dt \nonumber \\
	&\leq C \sup_{\partial B^2_1(0) \times \mathbb{R}^{n-2}} |D\varphi|^2 \nonumber 
\end{align}
for $C = C(n,\alpha) \in (0,\infty)$.  Hence 
\begin{equation} \label{branchdist_eqn5} 
	\rho^{-n-2\alpha+1/2} \int_{B_{\rho}(Z)} \mathcal{G}(u(X),\varphi(X-Z))^2 dX 
	\leq C \int_{B_1(0)} \mathcal{G}(u(X),\varphi(X))^2 dX + C |\xi|^2
\end{equation}
for some constant $C = C(n,m,q,\alpha,\varphi^{(0)}) \in (0,\infty)$.  For the third term on the right-hand side of \eqref{branchdist_eqn3}, by direct computation considering the cases where $\alpha < 1$, $\alpha = 1$, and $\alpha > 1$ separately, 
\begin{equation} \label{branchdist_eqn6} 
	\int_{B_{\rho}(Z) \cap \{|x| \geq 2|\xi|\}} |x|^{2\alpha-4} |\xi|^4 \leq C \rho^{n-2} |\xi|^4 (|\xi|^{2\alpha-2-1/q} + \rho^{2\alpha-2-1/q})
\end{equation}
for some constant $C = C(n,\alpha,q) \in (0,\infty)$ provided $B_{\rho}(Z) \cap \{|x| \geq 2|\xi|\} \neq \emptyset$.  In fact, in the cases
where $\alpha < 1$ or $\alpha > 1$ we can bound the left-hand side of \eqref{branchdist_eqn6} by $C \rho^{n-2} |\xi|^4 (|\xi|^{2\alpha-2} + \rho^{2\alpha-2})$ and when $\alpha = 1$ we can bound the left-hand side of \eqref{branchdist_eqn6} by $C \rho^{n-2} |\xi|^4 |\log|\xi|| \leq C \rho^{n-2} |\xi|^{4-1/q}$.  For the last term on the right-hand side of \eqref{branchdist_eqn3}, 
\begin{equation} \label{branchdist_eqn7} 
	\int_{B_{\rho}(Z) \cap \{|x| \leq 2|\xi|\}} |D_x \varphi(X)|^2 |\xi|^2 
	\leq C \rho^{n-2} \int_{B^2_{2|\xi|}(0)} |x|^{2\alpha-2} |\xi|^2 dx \leq C \rho^{n-2} |\xi|^{2\alpha+2}
\end{equation}
for $C = C(n,\alpha,\varphi^{(0)}) \in (0,\infty)$.  Therefore, by \eqref{branchdist_eqn3}, \eqref{branchdist_eqn5}, \eqref{branchdist_eqn6}, and \eqref{branchdist_eqn7},   
\begin{equation} \label{branchdist_eqn8} 
	\rho^{n+2\alpha-2} |\xi|^2 \leq C \int_{B_1(0)} \mathcal{G}(u,\varphi)^2 
		+ C (\rho^{3/2} + \rho^{-2\alpha} |\xi|^{2\alpha-1/q} + \rho^{-2-1/q} |\xi|^2 + \rho^{-2\alpha} |\xi|^{2\alpha}) \rho^{n+2\alpha-2} |\xi|^2
\end{equation}
for some constant $C = C(n,m,q,\alpha,\varphi^{(0)}) \in (0,\infty)$.  Since for any given $\tau > 0$ we may choose $\varepsilon_0 = \varepsilon_0(\varphi^{(0)},\tau) \in (0,1)$ sufficiently small to ensure that 
$|\xi| < \tau$, by choosing $\rho = \rho(n, m, q, \alpha, \varphi^{(0)})$ and $\tau = \tau(n, m, q, \alpha, \varphi^{(0)})$ small enough that $\tau < \rho$ and $C(\rho^{3/2} + \rho^{-2\alpha} \tau^{2\alpha-1/q} + \rho^{-2-1/q} \tau^2 + \rho^{-2\alpha} \tau^{2\alpha}) < 1/2$, we conclude from \eqref{branchdist_eqn8} that whenever $\varepsilon_0 = \varepsilon_0(n, m, q, \alpha, \varphi^{(0)})$ is sufficiently small, the hyptheses of the theorem imply that 
\begin{equation} \label{branchdist_eqn9} 
	|\xi|^2 \leq C \int_{B_1(0)} \mathcal{G}(u(X),\varphi(X))^2 dX 
\end{equation}
for some constant $C = C(n,m,q,\alpha,\varphi^{(0)}) \in (0,\infty)$. This is conclusion (a). Conclusion (b), namely the bound
\begin{equation*} 
	\int_{B_1(0)} \mathcal{G}(u(X),\varphi(X-Z))^2 dX \leq C \int_{B_1(0)} \mathcal{G}(u(X),\varphi(X))^2 dX 
\end{equation*}
for some constant $C = C(n,m,q,\alpha,\varphi^{(0)}) \in (0,\infty),$ follows directly from \eqref{translate_eqn3}, \eqref{branchdist_eqn4}  and \eqref{branchdist_eqn9}. 
\end{proof}

\begin{proof}[Proof of Lemma~\ref{nonconest_lemma}]
The estimate \eqref{nonconest_concl1} is an immediate consequence of Theorem~\ref{keyest_thm}(a).  To bound the first term on the left-hand side of \eqref{nonconest_concl2}, first notice that by \eqref{translate_eqn3}, 
\begin{align} \label{nonconest_lemma_eqn1}
	&\int_{B_{\gamma}(0)} \frac{\mathcal{G}(u(X),\varphi(X))^2}{|X-Z|^{n-2+2/q-\sigma}} dX 
	\leq 2 \int_{B_{1/4}(Z)} \frac{\mathcal{G}(u(X),\varphi(X-Z))^2}{|X-Z|^{n-2+2/q-\sigma}} dX \\
		&\hspace{10mm} + 2 \int_{B_{1/4}(Z)} \int_0^1 \frac{|D_x \varphi(x-t\xi,y)|^2 |\xi|^2}{|X-Z|^{n-2+2/q-\sigma}} dt \, dx \, dy
		+ 4^{n-1-\sigma} \int_{B_1(0)} \mathcal{G}(u,\varphi)^2 \nonumber 
\end{align}
We need to bound the terms on the right-hand side of \eqref{nonconest_lemma_eqn1}.  For the first term, observe that by Lemma~\ref{keyest_cor} with $4^{\alpha} u(Z+X/4)$ in place of $u$ and Lemma~\ref{branchdist_lemma}(b), 
\begin{align} \label{nonconest_lemma_eqn2}
	\int_{B_{1/4}(Z)} \frac{\mathcal{G}(u(X),\varphi(X-Z))^2}{|X-Z|^{n+2\alpha-\sigma}} dX
	&\leq C \int_{B_{1/2}(Z)} \mathcal{G}(u(X),\varphi(X-Z))^2 dX \\
	&\leq C \int_{B_1(0)} \mathcal{G}(u(X),\varphi(X))^2 dX\nonumber
\end{align}
for $C = C(n,m,q,\alpha,\varphi^{(0)},\sigma) \in (0,\infty)$.  For the second term on the right-hand side of \eqref{nonconest_lemma_eqn1}, in case $\alpha \geq 1$ we have by Lemma~\ref{branchdist_lemma} that
\begin{align*} 
	\int_{B_{1/4}(Z)} \int_0^1 \frac{|x-t\xi|^{2\alpha-2} |\xi|^2}{|X-Z|^{n-2+2/q-\sigma}} dt \, dx \, dy 
	&\leq \int_{B_{1/4}(Z)} \int_0^1 \frac{|\xi|^2}{|X-Z|^{n-2+2/q-\sigma}} dt \, dx \, dy \\
	&\leq C |\xi|^2 \leq C \int_{B_1(0)} \mathcal{G}(u,\varphi)^2 \nonumber 
\end{align*}
where $C = C(n,m,q,\alpha,\varphi^{(0)},\sigma) \in (0,\infty)$, and in case $\alpha < 1$, we have that 
\begin{align*} 
	&\int_{B_{1/4}(Z)} \int_0^1 \frac{|x-t\xi|^{2\alpha-2} |\xi|^2}{|X-Z|^{n-2+2/q-\sigma}} dt \, dx \, dy \\ 
	&\hspace{10mm} \leq \int_0^1 \int_{B_{1/4}(Z)} \frac{|\xi|^2}{|x-t\xi|^{2-2\alpha} |x-\xi|^{2/q-\sigma/2} |y-\zeta|^{n-2-\sigma/2}} dx \, dy \, dt \nonumber \\ 
	&\hspace{10mm} \leq \int_0^1 \int_{B^{n-2}_{1/4}(\zeta)} \int_{B^2_{1/4}(\xi) \cap \{|x-t\xi| \leq |x-\xi|\}} 
			\frac{|\xi|^2}{|x-t\xi|^{2+2/q-2\alpha-\sigma/2} |y-\zeta|^{n-2-\sigma/2}} dx \, dy \, dt \nonumber \\ 
		&\hspace{20mm} + \int_0^1 \int_{B^{n-2}_{1/4}(\zeta)} \int_{B^2_{1/4}(\xi) \cap \{|x-t\xi| \geq |x-\xi|\}} 
			\frac{|\xi|^2}{|x-\xi|^{2+2/q-2\alpha-\sigma/2} |y-\zeta|^{n-2-\sigma/2}} dx \, dy \, dt \nonumber \\ 
	&\hspace{10mm} \leq C |\xi|^2 \leq C \int_{B_1(0)} \mathcal{G}(u,\varphi)^2 \nonumber 
\end{align*}
where $C = C(n,m,q,\alpha,\varphi^{(0)},\sigma) \in (0,\infty)$.  

To bound the second term on the left-hand side of \eqref{nonconest_concl2}, notice that by \eqref{translate_eqn1}, assuming that $\varepsilon_0$ is small enough that $|\xi| < \tau/2$, 
\begin{align*}
	&\int_{B_{\gamma}(0) \cap \{r > \tau\}} \frac{\mathcal{G}(u(X), \varphi(X) - D_x \varphi(X) \cdot \xi)^2}{|X-Z|^{n+2\alpha-\sigma}} dX 
	\\&\hspace{10mm} \leq 2 \int_{B_{1/4}(Z)}  \frac{\mathcal{G}(u(X),\varphi(X-Z))^2}{|X-Z|^{n+2\alpha-\sigma}} dX 
		+ C \tau^{2\alpha-4} |\xi|^4 \int_{B_{1/4}(Z) \cap \{r > \tau\}} \frac{1}{|X-Z|^{n+2\alpha-\sigma}} dX 
		\\&\hspace{20mm} + C 4^{n+2\alpha-\sigma} \int_{(B_{\gamma}(0) \setminus B_{1/4}(Z)) \cap \{r > \tau\}} (\mathcal{G}(u,\varphi)^2 + |x|^{2\alpha-2} |\xi|^2) 
\end{align*}
for $C = C(\varphi^{(0)}) \in (0,\infty)$, so applying \eqref{nonconest_lemma_eqn2} and Lemma~\ref{branchdist_lemma}, 
\begin{align} \label{nonconest_lemma_eqn5}
	&\int_{B_{\gamma}(0) \cap \{r > \tau\}} \frac{\mathcal{G}(u(X), \varphi(X) - D_x \varphi(X) \cdot \xi)^2}{|X-Z|^{n+2\alpha-\sigma}} dX \\
	&\hspace{10mm} \leq C \int_{B_1(0)} \mathcal{G}(u,\varphi)^2 + C \tau^{2\alpha-4} |\xi|^4 \int_{B_{1/4}(Z) \cap \{r > \tau\}} \frac{1}{|X-Z|^{n+2\alpha-\sigma}} dX \nonumber 
\end{align}
for $C = C(n,m,q,\alpha,\varphi^{(0)},\sigma) \in (0,\infty)$.  By Lemma~\ref{branchdist_lemma}, $|\xi| \leq C \varepsilon_0$ for $C = C(n,m,q,\alpha,\varphi^{(0)}) \in (0,\infty)$.  Take $\varepsilon_0 \leq \tau^2/2C$ so that $|X-Z| \geq \tau/2$ if $X = (x,y)$ with $|x| > \tau$ and $|\xi|^2/\tau^4 < 1$ and thus 
\begin{align} \label{nonconest_lemma_eqn6}
	\tau^{2\alpha-4} |\xi|^4 \int_{B_{1/4}(Z) \cap \{r > \tau\}} \frac{1}{|X-Z|^{n+2\alpha-\sigma}} dX
	&\leq \frac{4^{\alpha} |\xi|^4}{\tau^4} \int_{B_{1/4}(Z)} \frac{1}{|X-Z|^{n-\sigma}} dX \\
	&\leq C |\xi|^2 \leq C \int_{B_1(0)} \mathcal{G}(u,\varphi)^2 \nonumber 
\end{align}
for $C = C(n,m,q,\alpha,\varphi^{(0)},\sigma) \in (0,\infty)$, where the last inequality follows from Lemma~\ref{branchdist_lemma}.  By \eqref{nonconest_lemma_eqn5} and \eqref{nonconest_lemma_eqn6}, 
\begin{equation*}
	\int_{B_{\gamma}(0) \cap \{r > \tau\}} \frac{\mathcal{G}(u(X), \varphi(X) - D_x \varphi(X) \cdot \xi)^2}{|X-Z|^{n+2\alpha-\sigma}} dX 
	\leq C \int_{B_1(0)} \mathcal{G}(u,\varphi)^2 
\end{equation*}
for $C = C(n,m,q,\alpha,\varphi^{(0)},\sigma) \in (0,\infty)$.
\end{proof}

\begin{corollary} \label{nonconest_cor} 
Let $\varphi^{(0)}$ be as in Definition~\ref{varphi0_defn} and let $p \in \{p_0, p_0+1,\ldots, \lceil q/q_0 \rceil \}$.  Given $\delta \in (0,1/2)$, there exists $\varepsilon_0 \in (0,1)$ depending only on $n$, $m$, $q$, $\alpha$, $\varphi^{(0)}$ such that if $u$ satisfies Hypothesis~$(\star)$ of Section~\ref{sec:graphical_sec}, $\varphi \in \Phi_{\varepsilon_{0}, p}(\varphi^{(0)})$ 
and if 
\begin{equation} \label{nonconest_cor_eqn1} 
	B_{\delta}(0,y_0) \cap \{ X \in B_{1/2}(0) \cap \Sigma_{u,q} : \mathcal{N}_u(X) \geq \alpha \} \neq \emptyset 
\end{equation} 
for all $y_0 \in B^{n-2}_{1/2}(0)$,  then 
\begin{equation} \label{nonconest_cor_eqn2} 
	\int_{B_{1/2}(0)} \frac{\mathcal{G}(u,\varphi)^2}{r_{\delta}^{2/q-\sigma}} 
	\leq C \int_{B_1(0)} \mathcal{G}(u,\varphi)^2,  
\end{equation}
for any $\sigma \in (0, 1/q)$, where $r = |x|$, $r_{\delta} = \max\{r,\delta\}$, and $C = C(n,m,q,\alpha,\varphi^{(0)},\sigma) \in (0,\infty)$ is a constant independent of $\delta$. \end{corollary}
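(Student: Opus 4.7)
The proof is based on an averaging/summation argument similar to those in \cite{Sim93}. Given the hypothesis \eqref{nonconest_cor_eqn1}, which guarantees the existence of frequency $\geq \alpha$ singular points everywhere near the axis $\{0\} \times \mathbb{R}^{n-2}$, the idea is to apply Lemma~\ref{nonconest_lemma} at a whole family of such points $Z$ distributed along the axis, and combine the pointwise weights $|X-Z|^{-(n-2+2/q-\sigma)}$ they produce into the single axial weight $r_\delta^{-(2/q-\sigma)}$. Concretely, I begin by selecting a maximal $\delta$-separated set $\{y_i\}_{i \in I} \subset B^{n-2}_{1/2}(0)$. By separation and maximality, $|I| \leq C(n)\delta^{-(n-2)}$ and the balls $B^{n-2}_\delta(y_i)$ cover $B^{n-2}_{1/2}(0)$. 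Using \eqref{nonconest_cor_eqn1}, for each $i$ I pick $Z_i = (\xi_i, \zeta_i) \in \Sigma_{u, q} \cap B_{1/2}(0)$ with $\mathcal{N}_u(Z_i) \geq \alpha$ and $|\xi_i|^2 + |\zeta_i - y_i|^2 < \delta^2$.

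Choosing $\varepsilon_0$ no larger than the $\varepsilon_0$ of Lemma~\ref{nonconest_lemma} with $\gamma = 1/2$ and (any fixed) $\tau = 1/4$, the first of the estimates in \eqref{nonconest_concl2} applies at each $Z_i$, giving a bound with a single constant $C = C(n,m,q,\alpha,\varphi^{(0)},\sigma)$ independent of $i$ and $\delta$. Multiplying by $\delta^{n-2}$ and summing over $i \in I$, then interchanging sum and integral, yields
\begin{equation*}
\int_{B_{1/2}(0)} \mathcal{G}(u,\varphi)^2 \, \Psi_\delta(X) \, dX \leq C\,\delta^{n-2}|I| \int_{B_1(0)}\mathcal{G}(u,\varphi)^2 \leq C \int_{B_1(0)} \mathcal{G}(u,\varphi)^2,
\end{equation*}
where $\Psi_\delta(X) := \delta^{n-2}\sum_{i \in I} |X-Z_i|^{-(n-2+2/q-\sigma)}$ and $C$ is independent of $\delta$. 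The key matter is therefore to show the pointwise lower bound $\Psi_\delta(X) \geq c\,r_\delta^{-(2/q-\sigma)}$ for $X \in B_{1/2}(0)$, with $c$ independent of $\delta$.

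This lower bound---the main obstacle in the argument---is purely geometric. For $X = (x,y)$, the triangle inequality combined with $|\xi_i| < \delta \leq r_\delta$ and $|\zeta_i - y_i| < \delta$ gives $|X - Z_i| \leq 3 r_\delta + |y - y_i|$, so it suffices to lower bound the number of $y_i$'s close to $y$. A standard volume comparison using the $\delta$-separation of $\{y_i\}$---namely that $\mathrm{vol}(B^{n-2}_{\rho}(y) \cap B^{n-2}_{1/2}(0)) \geq c(n) \rho^{n-2}$ even for $y$ near $\partial B^{n-2}_{1/2}(0)$ (seen by finding a small ball of radius $\rho/2$ inside the intersection, slightly shifted towards the origin), compared against the covering balls $B^{n-2}_\delta(y_i)$ of volume $\sim \delta^{n-2}$---yields $|\{i : |y - y_i| \leq C_0 r_\delta\}| \geq c_1(n) (r_\delta/\delta)^{n-2}$ whenever $r_\delta$ lies below a fixed threshold. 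For such $i$, $|X - Z_i| \leq C' r_\delta$, and substituting produces $\Psi_\delta(X) \geq c(n,q,\sigma)\, r_\delta^{-(2/q-\sigma)}$ in the small $r_\delta$ regime. The region where $r_\delta$ exceeds the threshold is handled trivially, since there $r_\delta^{-(2/q-\sigma)}$ is bounded and the estimate reduces to $\int_{B_{1/2}(0)} \mathcal{G}(u,\varphi)^2 \leq \int_{B_1(0)} \mathcal{G}(u,\varphi)^2$. Finally, in the degenerate case $n = 2$ the axis is a single point and the hypothesis provides a single $Z \in B_\delta(0) \cap \Sigma_{u, q}$ with $\mathcal{N}_u(Z) \geq \alpha$; applying Lemma~\ref{nonconest_lemma} directly to this $Z$ and using $|X - Z| \leq r + \delta \leq 2 r_\delta$ yields \eqref{nonconest_cor_eqn2} at once. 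The delicate feature making the argument work is the precise balancing of scaling factors: the $\delta^{n-2}$ prefactor exactly compensates both the cardinality bound $|I| \lesssim \delta^{-(n-2)}$ in the summation and the $\sim (r_\delta/\delta)^{n-2}$ local concentration of $y_i$'s, producing a constant genuinely independent of $\delta$.
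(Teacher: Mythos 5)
Your proof is correct, and it takes a route that is parallel to but organized differently from the paper's argument. Both proofs feed off the first estimate of \eqref{nonconest_concl2} in Lemma~\ref{nonconest_lemma} applied at points $Z$ of frequency $\geq\alpha$ supplied by hypothesis~\eqref{nonconest_cor_eqn1}, but they aggregate these pointwise weighted bounds differently. The paper works scale by scale: for each $\rho\in[\delta,1/16]$ it covers $B_{1/2}(0)\cap\{r\leq\rho\}$ by $N\leq C\rho^{2-n}$ balls $B_{2\rho}(0,y_j)$, applies Lemma~\ref{nonconest_lemma} at a frequency-$\geq\alpha$ point $Z_j\in B_\delta(0,y_j)$, sums over $j$ to obtain the intermediate nonconcentration estimate $\rho^{-2/q+\sigma}\int_{B_{1/2}(0)\cap\{r\leq\rho\}}\mathcal{G}(u,\varphi)^2\leq C\int_{B_1(0)}\mathcal{G}(u,\varphi)^2$ for all $\rho\in[\delta,1/16]$, and only then produces the weight $r_\delta^{-(2/q-\sigma)}$ by replacing $\sigma$ with $\sigma/2$, multiplying by $\rho^{\sigma/2-1}$, and integrating over $\rho$. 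You instead fix the single finest scale $\delta$: you choose a maximal $\delta$-separated net $\{y_i\}$ on $B^{n-2}_{1/2}(0)$, sum the Lemma~\ref{nonconest_lemma} estimates at the corresponding $Z_i$ weighted by $\delta^{n-2}$, and then prove the pointwise kernel bound $\delta^{n-2}\sum_i|X-Z_i|^{-(n-2+2/q-\sigma)}\geq c\,r_\delta^{-(2/q-\sigma)}$ by counting the $y_i$ within distance $O(r_\delta)$ of $y$ via volume comparison. The two mechanisms are dual: your $\delta$-independence rests on $|I|\lesssim\delta^{-(n-2)}$ cancelling the $\delta^{n-2}$ prefactor together with the cumulative kernel decay encoding the $r_\delta$-dependence, while the paper's rests on the uniform boundedness of $\int_\delta^{1/16}\rho^{\sigma/2-1}\,d\rho$. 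Your version is closer in spirit to the classical summation arguments of~\cite{Sim93}; the paper's version has the minor structural advantage that the intermediate inequality at a single scale $\rho$ (the paper's \eqref{nonconest_cor_eqn4}) is isolated as a reusable statement. Both yield the same constant dependence, and both degenerate gracefully when $n=2$ as you note.
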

\begin{proof} 
Let $\rho \in [\delta,1/16]$ and cover $B_{1/2}(0) \cap \{r \leq \rho\}$ by a finite collection $\{ B_{2\rho}(0,y_j) \}_{j=1,2,\ldots,N}$ of balls with $y_j \in B^{n-2}_{1/2}(0)$ and $N \leq C \rho^{2-n}$ for some constant $C = C(n) \in (0,\infty)$.  By \eqref{nonconest_cor_eqn1}, for each $j = 1,2,\ldots,N$ there exists $Z_j \in B_{\delta}(0,y_j) \cap \Sigma_{u,q}$ with $\mathcal{N}_u(Z_j) \geq \alpha$ and thus by Lemma~\ref{nonconest_lemma}, 
\begin{equation} \label{nonconest_cor_eqn3}
	\rho^{-n+2-2/q+\sigma} \int_{B_{2\rho}(0,y_j)} \mathcal{G}(u,\varphi)^2 \leq C \int_{B_1(0)} \mathcal{G}(u,\varphi)^2 
\end{equation}
for $j = 1,2,\ldots,N$.  Sum \eqref{nonconest_cor_eqn3} over $j = 1,2,\ldots,N$ to get 
\begin{equation} \label{nonconest_cor_eqn4}
	\rho^{-2/q+\sigma} \int_{B_{1/2}(0) \cap \{r \leq \rho\}} \mathcal{G}(u,\varphi)^2 \leq C \int_{B_1(0)} \mathcal{G}(u,\varphi)^2 
\end{equation}
for all $\rho \in [\delta,1/16]$ and some $C = C(n,m,q,\alpha,\varphi^{(0)}) \in (0,\infty)$.  Now replace $\sigma$ with $\sigma/2$ in this inequality, multiply it by $\rho^{\sigma/2-1}$ and integrate over $\rho \in [\delta,1/16]$ to get \eqref{nonconest_cor_eqn2}. 
\end{proof}

\section{The blow-up class} \label{sec:blowupclass_sec} 

In this section we deduce, directly from the estimates in Sections~\ref{sec:L2estimates_sec1} and \ref{sec:L2estimates_sec2}, a number of key estimates for the blow-ups (as in Definition~\ref{blowupclass_defn} below) obtained by the procedure described in Section~\ref{blow-up-procedure}.

\subsection{Definitions and notation}  \label{sec:blowupclass_subsec}  Fix $\varphi^{(0)}$ and let the numbers $\alpha, q_{0}, J, p_{0}, m_{1}, \ldots, m_{J}$ and the functions $\varphi_{1}^{(0)}, \ldots, \varphi_{J}^{(0)}$ be associated with $\varphi^{(0)}$ as in Definition~\ref{varphi0_defn}.   

\begin{definition}\label{parameterspace_defn} Let ${\mathfrak D}$ be the set of ordered collections $\varphi^{(\infty)} = (m_{j,k}, \varphi^{(\infty)}_{j, k})_{1 \leq j \leq J, \,1 \leq k \leq p_{j}}$ consisting of functions $\varphi^{(\infty)}_{j,k}$ and positive integers $m_{j, k}$  where $p_1,\ldots,p_{J}$ are positive integers $($depending on $\varphi^{(\infty)}),$  and for each $j$:
\begin{itemize}
\item[(a)] if $\varphi_{j}^{(0)}$ is non-zero then $\varphi_{j, k}^{(\infty)} = \varphi_{j}^{(0)}$ for each $k \in \{1, \ldots, p_{j}\}$ and $\sum_{k=1}^{p_j} m_{j,k} = m_j$;
\item[(b)] if $\varphi_{j}^{(0)} \equiv 0$ then for each $k \in \{1, \ldots, p_{j}\}$ either $\varphi_{j, k}^{(\infty)} \equiv 0$ or $\varphi^{(\infty)}_{j,k}(X) = \op{Re}(c^{(\infty)}_{j,k} (x_1+ix_2)^{\alpha})$ for some $c^{(\infty)}_{j,k} \in \mathbb{C}^m$ with $|c^{(\infty)}_{j,k}| = 1.$  Moreover, in this case, $\sum_{k=1}^{p_j} m_{j,k} q_{j,k} = m_j,$ where $q_{j,k} = 1$ if $\varphi_{j, k}^{(\infty)} \equiv 0$ and $q_{j,k} = q_0$ if $\varphi_{j, k}^{(\infty)}$ is non-zero.
\end{itemize}
\end{definition}	



\begin{definition} \label{blowupclass_defn}
 The blow-up class $\mathfrak{B}$ consists of all functions $w$  such that $w$ is the blow-up in $B_{1}(0)$, in the sense of Section~\ref{blow-up-procedure}, of a sequence $(u^{(\nu)})_{\nu=1}^{\infty}$ relative to a sequence $(\varphi^{(\nu)})_{\nu = 1}^{\infty}$ by the excesses $E_{\nu} = \left( \int_{B_1(0)} \mathcal{G}(u^{(\nu)},\varphi^{(\nu)})^2 \right)^{1/2}$, where: 
 \begin{itemize}
\item[(a)]  for each $\nu$, $u^{(\nu)} \in W^{1,2}(B_1(0);\mathcal{A}_q(\mathbb{R}^m))$ is an average-free locally energy minimizing $q$-valued function with ${\mathcal N}_{u_{\nu}}(0) \geq \alpha$; 
\end{itemize}
and for some sequences $\varepsilon_{\nu} \downarrow 0$, $\beta_{\nu} \downarrow 0$, $\delta_{\nu} \downarrow 0$, 
\begin{itemize}
\item[(b)] $\varphi^{(\nu)} \in \Phi_{\varepsilon_{\nu}}(\varphi^{(0)})$; 
\item[(c)] for each $y_0 \in B^{n-2}_{1}(0)$ and sufficiently large $\nu$, 
\begin{equation} \label{blowupclass_eqn1}
	B_{\delta_{\nu}}(0,y_0) \cap \{ X \in B_{1}(0) \cap \Sigma_{u^{(\nu)},q} : \mathcal{N}_{u^{(\nu)}}(X) \geq \alpha \} \neq \emptyset;
\end{equation} 
	\item[(d)]  for each $\nu$ 
\begin{equation} \label{blowupclass_eqn2}
	\int_{B_1(0)} \mathcal{G}(u^{(\nu)},\varphi^{(0)})^2 < \varepsilon_{\nu}^2; 
\end{equation}
\item[(e)] for each $\nu$, either (i) $p^{(\nu)} = p_0$ or (ii) $p^{(\nu)} > p_0$ and 
\begin{equation} \label{blowupclass_eqn3}
	\int_{B_1(0)} \mathcal{G}(u^{(\nu)},\varphi^{(\nu)})^2 
	\leq \beta_{\nu} \inf_{\varphi' \in \bigcup_{p'=p_{0}}^{p^{(\nu)}-1} \Phi_{3\varepsilon_{\nu},p'}(\varphi^{(0)})} \int_{B_1(0)} \mathcal{G}(u^{(\nu)},\varphi')^2
\end{equation}
where  $p^{(\nu)}$ is the number of non-zero components of $\varphi^{(\nu)}$ (see Definition~\ref{varphi_defn}).
\end{itemize}
\end{definition}

\begin{remark}\label{blow-up-conditions}
It follows (see the discussion in Section~\ref{blow-up-procedure}) that if $w \in {\mathfrak B}$ and if the functions $\varphi^{(\nu)} \in \Phi_{\varepsilon_{\nu}}(\varphi^{(0)})$ correspond to $w$ as in Definition~\ref{blowupclass_defn}, then there exists $\varphi^{(\infty)} = (m_{j,k}, \varphi^{(\infty)}_{j, k})_{1 \leq j \leq J, \,1 \leq k \leq p_{j}} \in {\mathfrak D}$ such that: 
\begin{enumerate}
\item[(A)]  for each $j \in \{1, \ldots, J\}$ and $k \in \{1, \ldots, p_{j}\}$, $m_{j, k}$ is the multiplicity of the component $\varphi^{(\nu)}_{j,k} \, : \, {\mathbb R}^{n} \to {\mathcal A}_{q_{j, k}}({\mathbb R}^{m})$ of $\varphi^{(\nu)}$  in accordance with Definition~\ref{varphi_defn} (taken with $\varphi^{(\nu)}$ in place of $\varphi$);
 \item[(B)] for each $j \in \{1, \ldots, J\}$ and $k \in \{1, \ldots, p_{j}\}$, precisely one of the following holds: 
		\begin{enumerate}
			\item[(a)] $\varphi^{(0)}_j$ is non-zero, $\varphi^{(\infty)}_{j, k} = \varphi^{(0)}_{j}$, $q_{j, k} = q_{0},$ and $\varphi^{(\nu)}_{j,k} \rightarrow \varphi^{(0)}_j$ uniformly on $B_1(0)$ as $\nu \rightarrow \infty$; 
			\item[(b)] $\varphi^{(0)}_j \equiv 0$, $\varphi^{(\nu)}_{j,k} \equiv 0$  for each $\nu$, $\varphi^{(\infty)}_{j,k} \equiv 0$ and $q_{j,k} = 1$;
			\item[(c)] $\varphi_j^{(0)} \equiv 0$, $\varphi^{(\nu)}_{j,k}(X) = \op{Re}(c^{(\nu)}_{j,k} (x_1+ix_2)^{\alpha})$ for each $\nu$ and some $c^{(\nu)}_{j,k} \in \mathbb{C}^m \setminus \{0\}$, $\varphi^{(\infty)}_{j, k} = \op{Re}(c^{(\infty)}_{j,k} (x_1+ix_2)^{\alpha})$ where $c^{(\infty)}_{j,k} = \lim_{\nu \rightarrow \infty} c^{(\nu)}_{j,k}/|c^{(\nu)}_{j,k}|$, and $q_{j, k} = q_{0}$. 
	\end{enumerate}		
Moreover, if (b) holds true for some $j$ and $k$ then $p^{(\nu)} = \sum_{j=1}^{J} p_{j} - 1$ for each $\nu$; otherwise $p^{(\nu)} = \sum_{j=1}^{J} p_{j}$ for each $\nu$. 

\item[(C)] $w = (w_{j, k})$ where $w_{j,k} : {\rm graph} \, \varphi^{(\infty)}_{j,k} |_{B_1(0) \setminus \{0\} \times {\mathbb R}^{n-2}} \rightarrow \mathcal{A}_{m_{j,k}}(\mathbb{R}^m)$. 
\end{enumerate} 
Thus $\varphi^{(\infty)}$ is determined by the sequence $(\varphi^{(\nu)})$ in the manner described by (A) and (B). 
 \end{remark}
 
 \begin{definition}
 For $\varphi^{(\infty)} \in {\mathfrak D}$, let ${\mathfrak B}(\varphi^{(\infty)})$ denote the set of all $w \in {\mathfrak B}$ associated with $\varphi^{(\infty)}$ as in Remark~\ref{blow-up-conditions}.  
 \end{definition} 
 
We have, of course, that  ${\mathfrak B} = \cup_{\varphi^{(\infty)} \in {\mathfrak D}} {\mathfrak B}(\varphi^{(\infty)}).$

In the following definition, we shall identify a general point $(x_{1}, x_{2}) \in {\mathbb R}^{2}$ with $re^{i\theta}$  where $(r, \theta)$ ($r \geq 0$, $0 \leq \theta < 2\pi$) are the polar coordinates of $(x_{1}, x_{2}).$ 

\begin{definition} \label{homogclass_defn} 
Let ${\mathfrak L} = \cup_{\varphi^{(\infty)} \in {\mathfrak D}} {\mathfrak L}(\varphi^{(\infty)})$ where, for $\varphi^{(\infty)} = (m_{j,k}, \varphi^{(\infty)}_{j, k})_{1 \leq j \leq J, \,1 \leq k \leq p_{j}} \in {\mathfrak D}$, 
$\mathfrak{L}(\varphi^{(\infty)})$ is the set of all functions $\psi = (\psi_{j,k})$ such that $$\psi_{j,k} : {\rm graph} \, \varphi^{(\infty)}_{j,k} |_{B_1(0) \setminus \{0\} \times {\mathbb R}^{n-2}} \rightarrow \mathcal{A}_{m_{j,k}}(\mathbb{R}^m)$$ 
and
\begin{enumerate}
\item[(a)] If $\varphi^{(0)}_j$ is non-zero, then 
\begin{equation} \label{homogclass_eqn1}
	\psi_{j,k}(re^{i\theta},y,\op{Re}(c^{(0)}_j r^{\alpha} e^{i\alpha \theta})) 
	= \sum_{l=1}^{m_{j,k}} \llbracket \op{Re}(a_{j,k,l} r^{\alpha} e^{i\alpha \theta} + \alpha c^{(0)}_j r^{\alpha-1} e^{i(\alpha-1)\theta} (b \cdot y)) \rrbracket 
\end{equation}
for some $a_{j,k,l} \in \mathbb{C}^m$ and some $b \in \mathbb{C}^{n-2}$, with $b$ independent of $j$ and $k$.  (Recall that in this case $\varphi^{(\infty)}_{j,k}(X) = \varphi^{(0)}_j(X) = \op{Re}(c^{(0)}_j (x_1+ix_2)^{\alpha})$ for some $c_{j}^{(0)} \in {\mathbb C}^{m} \setminus \{0\}$ and all 
$X  = (x_{1}, x_{2}, y) \in {\mathbb R}^{n}$);

\item[(b)] If $\varphi^{(0)}_j$ and $\varphi^{(\infty)}_{j,k}$ are both identically zero, then either $\psi_{j,k}(re^{i\theta},y,0) = m_{j,k} \llbracket 0 \rrbracket$ or 
\begin{equation} \label{homogclass_eqn2}
	\psi_{j,k}(re^{i\theta},y,0) 
	= (m_{j,k} - q_0 N_{j,k}) \llbracket 0 \rrbracket 
		+ \sum_{l=1}^{N_{j,k}} \sum_{h=0}^{q_0-1} \llbracket \op{Re}(a_{j,k,l} r^{\alpha} e^{i\alpha (\theta + 2\pi h)}) \rrbracket 
\end{equation}
for some integer $N_{j,k}$ with $1 \leq N_{j,k} \leq m_{j,k}/q_0$ and some $a_{j,k,l} \in \mathbb{C}^m \setminus \{0\}$;

\item[(c)] If $\varphi^{(0)}_j$ is identically zero and $\varphi^{(\infty)}_{j,k}$ is nonzero, then 
\begin{equation} \label{homogclass_eqn3}
	\psi_{j,k}(re^{i\theta},y,\op{Re}(c^{(\infty)}_{j,k} r^{\alpha} e^{i\alpha \theta})) 
	= \sum_{l=1}^{m_{j,k}} \llbracket \op{Re}(a_{j,k,l} r^{\alpha} e^{i\alpha \theta}) \rrbracket 
\end{equation}
for some $a_{j,k,l} \in \mathbb{C}^m$.

\end{enumerate} \end{definition}

Let the notation be as above. We need the following further notation, which is completely analogous to that of Section~\ref{functions-on-graphs}.  For each $\varphi^{(\infty)} = (m_{j,k}, \varphi^{(\infty)}_{j, k}) \in {\mathfrak D}$ and any ball $B \subset B_{1}(0) \setminus \{0\} \times \mathbb{R}^{n-2}$, there are single-valued harmonic functions $\varphi^{(\infty)}_{j,k,l} : B \rightarrow \mathbb{R}^m$ such that on $B$, 
\begin{equation} \label{varphi_infty_localized}
	\varphi^{(\infty)}_{j,k}(X) = \sum_{l=1}^{q_{j, k}} \llbracket \varphi^{(\infty)}_{j,k,l}(X) \rrbracket
\end{equation}
where  
\begin{align}\label{varphi_infty_values}
&q_{j, k} = q_{0} \;\; \text{if $\varphi^{(\infty)}_{j,k}$ is non-zero and}\\
& q_{j, k} = 1 \;\; \text{(with $\varphi^{(\infty)}_{j, k, l}(X) = \varphi^{(\infty)}_{j, k, 1}(X) = 0 \in {\mathbb R}^{m}$) if $\varphi^{(\infty)}_{j, k}$ is the zero function} \nonumber.
\end{align}
We may express $w = (w_{j,k}) \in \mathfrak{B}(\varphi^{(\infty)})$ as 
\begin{equation} \label{w_localized}
	w_{j,k}(X,\varphi^{(\infty)}_{j,k,l}(X)) = w_{j,k,l}(X) = \sum_{h=1}^{m_{j,k}} \llbracket w_{j,k,l,h}(X) \rrbracket 
\end{equation}
for $X \in B_{1}(0) \setminus \{0\} \times {\mathbb R}^{n-2}$ where $\varphi^{(\infty)}$ is as in Remark~\ref{blow-up-conditions} and, by the component-wise energy minimality of $w$ away from $\{0 \} \times {\mathbb R}^{n-2}$, for $\mathcal{L}^n$-a.e.\ point $Y \in B_{1}(0)$ (in fact for $Y$ away from the union of the $(n-2)$-dimensional singular set of $w_{j, k, l}$ and the axis $\{0\} \times {\mathbb R}^{n-2}$) $w_{j, k, l,h}$ can be chosen to be smooth ${\mathbb R}^{m}$-valued  functions defined locally near $Y$.  Similarly, we may express $\psi = (\psi_{j,k}) \in \mathfrak{L}(\varphi^{(\infty)})$ away from $\{0\} \times {\mathbb R}^{n-2}$ as 
\begin{equation} \label{psi_localized}
	\psi_{j,k}(X,\varphi^{(\infty)}_{j,k,l}(X)) = \psi_{j,k,l}(X) = \sum_{h=1}^{m_{j,k}} \llbracket \psi_{j,k,l,h}(X) \rrbracket 
\end{equation}
with $\psi_{j,k,l,h}$ being given by smooth ${\mathbb R}^{m}$-valued functions locally near $\mathcal{L}^n$-a.e.\ point in $B_{1}(0)$. 

Let $w \in {\mathfrak B}$ and let $\varphi^{(\nu)}$, $u^{(\nu)}$ correspond to $w$ as in Definition~\ref{blowupclass_defn}. We shall use the following convention: if in an open ball $B \subset \subset B_{1}(0) \setminus \{0\} \times \mathbb{R}^{n-2}$ we express 
\begin{equation}\label{varphi_nu_localized}
\varphi^{(\nu)}_{j,k}(X) = \sum_{l=1}^{q_{j,k}} \llbracket \varphi^{(\nu)}_{j,k,l}(X) \rrbracket
\end{equation}
 as in \eqref{varphi_localized} taken with $\varphi^{(\nu)}$ in place of $\varphi$ and we express $\varphi^{(\infty)}_{j,k}$ as in \eqref{varphi_infty_localized} for single-valued harmonic functions $\varphi^{(\nu)}_{j,k,l} : B \rightarrow \mathbb{R}^m$ and $\varphi^{(\infty)}_{j,k,l} : B \rightarrow \mathbb{R}^m$, then 
\begin{enumerate}
	\item[(a)] when $\varphi^{(0)}_j$ is nonzero, $\varphi^{(\nu)}_{j,k,l} \rightarrow \varphi^{(\infty)}_{j,k,l}$ uniformly on $B$ as $\nu \rightarrow \infty$; 
	\item[(b)] when $\varphi^{(0)}_j$ and $\varphi^{(\nu)}_{j, k}$ are identically zero, $\varphi^{(\nu)}_{j,k,1}(X) = \varphi^{(\infty)}_{j,k,1}(X) = 0$ 
		on $B$;
	\item[(c)] when $\varphi^{(0)}_j$ is identically zero and $\varphi^{(\nu)}_{j,k}(X) = \op{Re}(c^{(\nu)}_{j,k}(x_1+ix_2)^{\alpha})$ for $c^{(\nu)}_{j,k} \in \mathbb{C}^m \setminus \{0\}$, $\varphi^{(\nu)}_{j,k,l}/|c^{(\nu)}_{j,k}| \rightarrow \varphi^{(\infty)}_{j,k,l}$ uniformly on $B$ as $\nu \rightarrow \infty$. 
\end{enumerate}
Thus if we express $v^{(\nu)}_{j,k}$ is as in Corollary~\ref{graphical_cor} with $u = u^{(\nu)}$ and $\varphi = \varphi^{(\nu)}$ and we let $v^{(\nu)}_{j,k,l}(X) = v^{(\nu)}_{j,k}(X,\varphi^{(\nu)}_{j,k,l}(X))$ as in \eqref{v_localized} and $w_{j,k,l}(X) = w_{j,k}(X,\varphi^{(\infty)}_{j,k,l}(X))$ as in \eqref{w_localized}, then $v^{(\nu)}_{j,k,l}/E_{\nu} \rightarrow w_{j,k,l}$ uniformly on $B$ as $\nu \rightarrow \infty$. 



\begin{definition} \label{homogprojection_defn}
Let $w \in {\mathfrak B}$ and $\rho \in (0, 1/2]$. We say that $\psi \in {\mathfrak L}$ is a projection of $w$ onto ${\mathfrak L}$ in $B_{\rho}(0)$ if 
\begin{itemize}
	\item[(i)] there exists $\varphi^{(\infty)} \in {\mathfrak D}$ such that $w = (w_{j,k}) \in {\mathfrak B}(\varphi^{(\infty)})$ and $\psi = (\psi_{j,k}) \in \mathfrak{L}(\varphi^{(\infty)})$;
	\item[(ii)] $\int_{B_{\rho}(0)}\sum_{j=1}^{J} \sum_{k=1}^{p_{j}}\sum_{l=1}^{q_{j, k}}  |w_{j, k, l}(X)|^{2} < \infty$ and 
	\item[(iii)] 
\begin{align*}
	&\int_{B_{\rho}(0)} \sum_{j=1}^J \sum_{k=1}^{p_j} \sum_{l=1}^{q_{j,k}} \mathcal{G}(w_{j,k,l}(X),\psi_{j,k,l}(X))^2 \,dX 
	\\&\hspace{10mm} = \inf_{\psi' \in \mathfrak{L}(\varphi^{(\infty)})} 
		\int_{B_{\rho}(0)} \sum_{j=1}^J \sum_{k=1}^{p_j} \sum_{l=1}^{q_{j,k}} \mathcal{G}(w_{j,k,l}(X),\psi'_{j,k,l}(X))^2 \,dX,  
\end{align*}
where $w_{j,k,l}$ is as in \eqref{w_localized}, $\psi_{j,k,l}$ is as in \eqref{psi_localized} and $\psi'_{j,k,l}$ is as in \eqref{psi_localized} with $\psi'$ in place of $\psi$.  
\end{itemize} \end{definition}

We shall establish below (in Lemma~\ref{blowup_norm_conv_lemma})  that if $w \in {\mathfrak B}$, then in fact $$\int_{B_{1/2}(0)} \sum_{j=1}^J \sum_{k=1}^{p_j} \sum_{l=1}^{q_{j,k}} |w_{j,k,l}|^2 \leq 1,$$  so the requirement (ii) in the above definition will always be satisfied. It is easy to see that given $w \in {\mathfrak B}$ and $\rho \in (0, 1/2]$, at least one projection $\psi$ of $w$ onto $\mathfrak{L}$ in $B_{\rho}(0)$ exists;  we do not assert that $\psi$ is the unique projection of $w$ onto $\mathfrak{L}$ in $B_{\rho}(0)$.

\subsection{Elements of $\mathfrak{L}$ as blow-ups of cylindrical functions}
In the following lemma and subsequently, we let $r(X) = |(x_{1}, x_{2})|$ for $X  = (x_{1}, x_{2}, y) \in {\mathbb R}^{n}$ where $(x_{1}, x_{2}) \in {\mathbb R}^{2}, y \in {\mathbb R}^{n-2}$. The lemma establishes the elementary fact that each $\psi \in {\mathfrak L}$ arises as the ``blow-up'' of a sequence of  cylindrical functions in $\widetilde{\Phi}_{\varepsilon}(\varphi^{(0)})$ (for any $\varepsilon$ satisfying \eqref{varphi_welldefined}) converging to $\varphi^{(0)}$. 
\begin{lemma} \label{blowup2L_lemma} 
Let $\psi  = (\psi_{j, k}) \in \mathfrak{L}$. For each $\nu =1, 2, 3, \ldots,$ let $\varphi^{(\nu)} \in \Phi_{\varepsilon_{\nu}}(\varphi^{(0)})$ where $\varepsilon_{\nu} \to 0^{+}$ as $\nu \to \infty$. Suppose that $\varphi^{(\nu)}$ satisfy the requirements of Remark~\ref{blow-up-conditions}(A)(B), taken with $\varphi^{(\infty)}  = (m_{j, k}, \varphi^{(\infty)}_{j, k}) \in {\mathfrak D}$  for which $\psi \in {\mathfrak L}(\varphi^{(\infty)})$. Let $(E_{\nu})$ be a sequence of numbers with $E_{\nu} \to 0^{+}$. 
For  each $\nu$ there exists $\widetilde{\varphi}^{(\nu)} \in \widetilde\Phi_{\varepsilon_{\nu} + C E_{\nu}}(\varphi^{(0)})$ where $C = C_{1}(n, q, \alpha, \varphi^{(0)})\|\psi\|_{L^{2}(B_{1}(0))}$ such that: 
\begin{itemize}
\item[(i)] for any ball $B \subset \{X \, : \, |r(X)| \geq 4E_{\nu}|b|\}$ and any $X  \in B$,  
\begin{equation} \label{blowup2L_eqn5}
	\widetilde{\varphi}^{(\nu)}_{j,k}(X) 
	= \sum_{l=1}^{q_{j,k}} \sum_{h=1}^{m_{j,k}} \llbracket \varphi^{(\nu)}_{j,k,l}(X) + E_{\nu} \psi_{j,k,l,h}(X) + \mathcal{R}_{j,k,l,h}(X) \rrbracket
\end{equation}
for ${\mathcal R}_{j, k, l, h}(X) \in {\mathbb R}^{m}$ with 
\begin{equation}\label{homog_error}
	|\mathcal{R}_{j,k,l,h}(X)| \leq C E_{\nu} \varepsilon_{\nu} |b| r(X)^{\alpha-1} + C E_{\nu}^2 |a| |b| r(X)^{\alpha-1} + C E_{\nu}^{2} |b|^{2} r(X)^{\alpha-2}, 
\end{equation}
where $a = (a_{j, k, l})$ and $b$ are as in Definition~\ref{homogclass_defn}, $\psi_{j, k, l, h}$ and $\varphi^{(\nu)}_{j, k, l}$ are as in \eqref{psi_localized} and \eqref{varphi_nu_localized} respectively, and $C = C(n, q, \alpha, \varphi^{(0)})$;
\item[(ii)] For $\tau \in (0, 1/4)$ and $\nu$ sufficiently large, 
\begin{equation}\label{blowup2L_eqn6}
	\int_{B_{1/2}(0) \cap \{r < \tau\}} \mathcal{G}(\widetilde{\varphi}^{(\nu)},\varphi^{(\nu)})^2 
	\leq C \|\psi\|^{2}_{L^{2}(B_{1}(0))}\tau^{2\alpha} E_{\nu}^2
\end{equation}
where $C = C(n, q, \alpha, \varphi^{(0)})$; 
\item[(iii)] \begin{equation} \label{blowup2L_eqn1}
	\int_{B_{1/2}(0)} \sum_{j=1}^J \sum_{k=1}^{p_j} \sum_{l=1}^{q_{j,k}} |\psi_{j,k,l}(X)|^2 \,dX 
		= \lim_{\nu \rightarrow \infty} \frac{1}{E_{\nu}^2} \int_{B_{1/2}(0)} \mathcal{G}(\widetilde{\varphi}^{(\nu)}(X),\varphi^{(\nu)}(X))^2 \,dX, 
\end{equation}
where $\psi_{j,k,l}$ is as in \eqref{psi_localized}. 

Moreover, if $\psi$ is as in Definition~\ref{homogclass_defn} with $b=0$, then $\widetilde{\varphi}^{(\nu)} \in \Phi_{\varepsilon_{\nu} + C E_{\nu}}(\varphi^{(0)})$ where $C = C_{1}(n, q, \alpha)\|\psi\|_{L^{2}(B_{1}(0))}$. 
\end{itemize}
\end{lemma}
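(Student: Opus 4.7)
The plan is to construct $\widetilde{\varphi}^{(\nu)}$ in two steps: first pull apart the multiplicity-$m_{j,k}$ branches of $\varphi^{(\nu)}_{j,k}$ into $m_{j,k}$ distinct branches whose coefficients capture the data $(a_{j,k,l})$ of $\psi$, obtaining an intermediate cylindrical function $\varphi^{(1,\nu)} \in \Phi_{\varepsilon_\nu + CE_\nu}(\varphi^{(0)})$; then compose with a rotation $e^{A_\nu}$ of $\mathbb{R}^n$ to absorb the ``tilt'' vector $b$ from case~(a) of Definition~\ref{homogclass_defn}, setting $\widetilde{\varphi}^{(\nu)}(X) = \varphi^{(1,\nu)}(e^{A_\nu} X)$. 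Writing $b = b^{(1)} + ib^{(2)}$ with $b^{(1)}, b^{(2)} \in \mathbb{R}^{n-2}$, the matrix $A_\nu$ is the $n\times n$ skew-symmetric matrix with $A^{(\nu)}_{1, k+2} = -A^{(\nu)}_{k+2, 1} = E_\nu b^{(1)}_k$ and $A^{(\nu)}_{2, k+2} = -A^{(\nu)}_{k+2, 2} = E_\nu b^{(2)}_k$ for $k = 1, \ldots, n-2$ and zero in all remaining slots; in particular $|A_\nu| \leq C E_\nu |b|$ and $A_\nu$ has the block structure required in Definition~\ref{Phi-p_defn}, so composition with $e^{A_\nu}$ maps $\Phi_{\varepsilon_\nu + CE_\nu}(\varphi^{(0)})$ into $\widetilde{\Phi}_{\varepsilon_\nu + CE_\nu}(\varphi^{(0)})$.

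The intermediate function $\varphi^{(1, \nu)}$ would be built component-by-component using the three cases of Definition~\ref{homogclass_defn}. In case~(a), where $\varphi^{(0)}_j$ is nonzero, I replace the component $m_{j,k}\varphi^{(\nu)}_{j,k}$ by $\sum_{l=1}^{m_{j,k}} \op{Re}\bigl((c^{(\nu)}_{j,k} + E_\nu a_{j,k,l})(x_1+ix_2)^\alpha\bigr)$. In case~(b), where $\varphi^{(0)}_j \equiv \varphi^{(\nu)}_{j,k} \equiv 0$, I replace the zero component by $(m_{j,k} - q_0 N_{j,k})\llbracket 0 \rrbracket + \sum_{l=1}^{N_{j,k}} \op{Re}\bigl(E_\nu a_{j,k,l}(x_1+ix_2)^\alpha\bigr)$, matching the form~\eqref{homogclass_eqn2}. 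In case~(c), where $\varphi^{(0)}_j \equiv 0$ but $\varphi^{(\nu)}_{j,k}$ is nonzero, I again replace $m_{j,k}\varphi^{(\nu)}_{j,k}$ by $\sum_{l=1}^{m_{j,k}} \op{Re}\bigl((c^{(\nu)}_{j,k} + E_\nu a_{j,k,l})(x_1+ix_2)^\alpha\bigr)$. By construction $\varphi^{(1,\nu)}$ satisfies Definition~\ref{varphi_defn} with $L^2$-distance to $\varphi^{(0)}$ of order $\varepsilon_\nu + CE_\nu|a|$, and homogeneity of $\psi$ gives $|a| \leq C\|\psi\|_{L^2(B_1(0))}$, so $\varphi^{(1,\nu)} \in \Phi_{\varepsilon_\nu + CE_\nu}(\varphi^{(0)})$ for all sufficiently large $\nu$.

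To verify~(i), I would Taylor-expand $\varphi^{(\nu)}_{j,k} \circ e^{A_\nu}$ about $X$ on the region $\{r(X) \geq 4E_\nu|b|\}$. Since $e^{A_\nu}$ shifts $(x_1, x_2)$ by $E_\nu(b^{(1)}\cdot y, b^{(2)}\cdot y)$ plus terms of size $O(E_\nu^2|b|^2)$, and $\varphi^{(\nu)}_{j,k}$ is smooth off the axis, the first-order term in the expansion reproduces $\alpha\,\op{Re}\bigl(c^{(0)}_j(x_1+ix_2)^{\alpha-1}(b\cdot y)\bigr)E_\nu$ with error $C E_\nu\varepsilon_\nu|b| r^{\alpha-1}$ coming from $|c^{(\nu)}_{j,k} - c^{(0)}_j| \leq C\varepsilon_\nu$, the second-order term contributes $CE_\nu^2|b|^2 r^{\alpha-2}$, and the interaction between the $E_\nu a_{j,k,l}$ perturbation from $\varphi^{(1,\nu)}$ and the rotation contributes $CE_\nu^2|a||b| r^{\alpha-1}$. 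Combining these with the main $E_\nu a_{j,k,l}$ contribution of $\varphi^{(1,\nu)} - \varphi^{(\nu)}$ reproduces $E_\nu \psi_{j,k,l,h}(X)$ on the graph of $\varphi^{(\infty)}_{j,k}$, giving~\eqref{blowup2L_eqn5} with the error bound~\eqref{homog_error}. For~(ii), the pointwise bound $\mathcal{G}(\widetilde{\varphi}^{(\nu)}, \varphi^{(\nu)}) \leq CE_\nu(|a|+|b|)r^\alpha$ on $B_1(0)$ (which follows from the explicit form of $\varphi^{(1,\nu)} - \varphi^{(\nu)}$ together with Lipschitz control of $\varphi^{(\nu)} \circ e^{A_\nu} - \varphi^{(\nu)}$) and the bound $|a|^2 + |b|^2 \leq C\|\psi\|^2_{L^2(B_1(0))}$ yield~\eqref{blowup2L_eqn6} upon integrating $r^{2\alpha}$ in polar coordinates.

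For~(iii), I would combine~(i) and~(ii) with dominated convergence: on $\{r > 0\}$ the pointwise limit of $E_\nu^{-2}\mathcal{G}(\widetilde{\varphi}^{(\nu)}, \varphi^{(\nu)})^2$ is $\sum_{j,k,l}|\psi_{j,k,l}|^2$ by~(i), while the contribution from $\{r < \tau\}$ is uniformly bounded by $C\|\psi\|^2_{L^2}\tau^{2\alpha}$ by~(ii), which vanishes as $\tau \to 0$; this yields~\eqref{blowup2L_eqn1}. The final assertion is immediate: when $b = 0$ we have $A_\nu = 0$ so $\widetilde{\varphi}^{(\nu)} = \varphi^{(1,\nu)} \in \Phi_{\varepsilon_\nu + CE_\nu}(\varphi^{(0)})$. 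The main obstacle will be careful bookkeeping through the three cases of Definition~\ref{homogclass_defn}, in particular isolating the linear-in-$E_\nu$ term of exactly the right form and separating the three distinct error contributions in~\eqref{homog_error} (the $\varepsilon_\nu|b|$ term from imperfect matching of $c^{(\nu)}_{j,k}$ to $c^{(0)}_j$, the $|a||b|$ cross-term from rotation of the perturbed branches, and the pure second-order $|b|^2 r^{\alpha-2}$ term).
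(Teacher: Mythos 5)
Your construction mirrors the paper's exactly: the ``pulled-apart'' intermediate function $\varphi^{(1,\nu)}$ (obtained by adding $E_\nu a_{j,k,l}$ to the coefficients of each component) is precisely the function the paper writes as $\widetilde{\varphi}^{(\nu)}\circ e^{-E_\nu B}$, your $A_\nu$ is $E_\nu B$ with $B$ as in \eqref{B_matrix}, and parts (i) and (iii) are handled the same way (Taylor expansion off the axis, then dominated convergence together with the $\tau^{2\alpha}$ tube estimate).

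There is, however, a genuine error in your treatment of (ii). You claim the pointwise bound $\mathcal{G}(\widetilde{\varphi}^{(\nu)},\varphi^{(\nu)}) \leq C E_\nu(|a|+|b|)r^{\alpha}$ on $B_1(0)$, and then propose to integrate $r^{2\alpha}$. This cannot be right: by \eqref{homogclass_eqn1}, the rotation-induced term in $\psi_{j,k,l,h}$ has the form $\alpha c_j^{(0)} r^{\alpha-1}e^{i(\alpha-1)\theta}(b\cdot y)$, which is of order $|b|\,r^{\alpha-1}$ (not $|b|\,r^{\alpha}$) for $|y|$ of order one. Your appeal to ``Lipschitz control of $\varphi^{(\nu)}\circ e^{A_\nu}-\varphi^{(\nu)}$'' in fact produces exactly this bound, since $|\nabla\varphi^{(\nu)}| \sim r^{\alpha-1}$ and the displacement $|e^{A_\nu}X - X|$ is $O(E_\nu|b|)$ independently of $r$. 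Had the bound $r^{\alpha}$ been true you would have obtained $\tau^{2\alpha+2}$, not $\tau^{2\alpha}$; the exponent $2\alpha$ in the stated conclusion is only consistent with integrating $r^{2\alpha-2}$, which comes from the $r^{\alpha-1}$ decay. Moreover, the Taylor expansion in (i) (and hence the pointwise comparison) only holds for $r \geq 4E_\nu|b|$; the tube $\{r < 4E_\nu|b|\}$ has to be handled separately, by bounding both $\varphi^{(\nu)}$ and $\widetilde{\varphi}^{(\nu)}$ there crudely by $C r^{\alpha}$ and $C(E_\nu|b|)^{\alpha}$ respectively. Once you split into $\{4E_\nu|b| < r < \tau\}$ with integrand $\lesssim E_\nu^2 r^{2\alpha-2}$ and $\{r < 4E_\nu|b|\}$ with integrand $\lesssim r^{2\alpha}$, the estimate $\tau^{2\alpha}E_\nu^2$ does follow for $\nu$ large; but as written your pointwise claim and the integration exponent are both incorrect.
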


\begin{proof}
Let $\psi$ be as in Definition~\ref{homogclass_defn}.  Let  
\begin{equation} \label{B_matrix}
	B = \left(\begin{matrix}
		0 & 0 & \op{Re}(b) \\
		0 & 0 & \op{Im}(b) \\
		-\op{Re}(b^{\rm T}) & -\op{Im}(b^{\rm T}) & 0 
	\end{matrix}\right)
\end{equation}
where $b$ is as in Definition~\ref{homogclass_defn} represented as a row vector and $b^{\rm T}$ is its transpose.  We define 
\begin{equation*}
	\widetilde{\varphi}^{(\nu)}(X) = \sum_{j=1}^J \sum_{k=1}^{p_j} \widetilde{\varphi}^{(\nu)}_{j,k}(X) 
\end{equation*}
for all $X \in \mathbb{R}^n$ where $\widetilde{\varphi}^{(\nu)}_{j,k} : \mathbb{R}^n \rightarrow \mathcal{A}_{m_{j,k} q_{j,k}}(\mathbb{R}^m)$ are homogeneous degree $\alpha$ functions such that  
\begin{enumerate}
\item[(a)]  if $\varphi^{(0)}_j$ is nonzero, then 
\begin{equation} \label{blowup2L_eqn2}
	\widetilde{\varphi}^{(\nu)}_{j,k}(e^{-E_{\nu} B} X) 
	= \sum_{l=1}^{m_{j,k}} \sum_{h=0}^{q_0-1} \llbracket \op{Re}((c^{(\nu)}_{j,k} + E_{\nu} a_{j,k,l}) r^{\alpha} e^{i\alpha (\theta + 2\pi h)}) \rrbracket 
\end{equation}
where $a_{j,k,l}$ are as in Definition~\ref{homogclass_defn}(a) and $\varphi^{(\nu)}_{j,k}(X) = \op{Re}(c^{(\nu)}_{j,k} (x_1+ix_2)^{\alpha})$ for $c^{(\nu)}_{j,k} \in \mathbb{C}^m$ (with $|c^{(\nu)}_{j,k} - c^{(0)}_j| \leq C(n,q,\alpha) \varepsilon_{\nu}$); 

\item[(b)] if $\varphi^{(0)}_j$ and $\varphi^{(\infty)}_{j,k}$ are both identically zero, then $\widetilde{\varphi}^{(\nu)}_{j,k}(e^{-E_{\nu} B} X) = E_{\nu} \psi_{j,k}(X,0)$, where $\psi_{j,k}$ is as in Definition~\ref{homogclass_defn}(b); and 

\item[(c)] if $\varphi^{(0)}_j$ is identically zero and $\varphi^{(\infty)}_{j,k}$ is nonzero, then $\widetilde{\varphi}^{(\nu)}_{j,k}$ is given by \eqref{blowup2L_eqn2} where $a_{j,k,l}$ are as in Definition~\ref{homogclass_defn}(c) and $c^{(\nu)}_{j,k} \in \mathbb{C}^m \setminus \{0\}$ are such that $\varphi^{(\nu)}_{j,k}(X) = \op{Re}(c^{(\nu)}_{j,k} (x_1+ix_2)^{\alpha})$ (so that, per Remark~\ref{blow-up-conditions}(B)(c), 
$c^{(\infty)}_{j,k} = \lim_{\nu\rightarrow\infty} c^{(\nu)}_{j,k}/|c^{(\nu)}_{j,k}|$).
\end{enumerate}

By the pairwise orthogonality of $\cos(\alpha \theta)$, $\sin(\alpha \theta)$, $\cos((\alpha-1)\theta)$, and $\sin((\alpha-1)\theta)$ in $L^2([0,2\pi q_0])$, 
\begin{equation} \label{blowup2L_eqn3}
	\sum_l |a_{j,k,l}|^2 + |b|^2 |c^{(0)}_j|^2 \leq C(n,q,\alpha) \int_{B_{1/2}(0)} \sum_{l=1}^{q_{j,k}} |\psi_{j,k,l}|^2
\end{equation}
for $j \in \{1, \ldots, J\}$ such that $\varphi_{j}^{(0)}$ is non-zero and $k \in \{1, \ldots, p_{j}\}$, and 
\begin{equation} \label{blowup2L_eqn3-1}
	\sum_l |a_{j,k,l}|^2 \leq C(n,q,\alpha) \int_{B_{1/2}(0)}\sum_{l=1}^{q_{j,k}} |\psi_{j,k,l}|^2
\end{equation}
for $j, k$ such that $\varphi^{(0)}_{j} \equiv 0$ and $\psi_{j, k} (\cdot) \not\equiv m_{j, k} \llbracket 0 \rrbracket$.  It readily follows that $\widetilde{\varphi}^{(\nu)} \in \widetilde{\Phi}_{\varepsilon_{\nu} + C E_{\nu}}(\varphi^{(0)})$ where $C = C_{1}(n, q, \alpha, \varphi^{(0)})\|\psi\|_{L^{2}(B_{1}(0))}.$  Moreover $\widetilde{\varphi}^{(\nu)} \in {\Phi}_{\varepsilon_{\nu} + CE_{\nu}}(\varphi^{(0)})$ where $C = C_{1}(n, q, \alpha)\|\psi\|_{L^{2}(B_{1}(0))}$ in case $\psi$ is as in Definition~\ref{homogclass_defn} with $b=0$. 

By Taylor's theorem, for each smooth function $f : B_{\rho}(X_0) \rightarrow \mathbb{R}^m$ we have 
\begin{align} \label{blowup2L_eqn4}
	f(e^{E_{\nu} B} X) ={}& f(X) + \nabla f(X) \cdot E_{\nu} B X + E_{\nu}^2 \int_0^1 (1-t) \,\nabla f(e^{t E_{\nu} B} X) [e^{t E_{\nu} B} B^2 X] \,dt \\&
		+ E_{\nu}^2 \int_0^1 (1-t) \,\nabla^2 f(e^{t E_{\nu} B} X)[e^{t E_{\nu} B} B X,e^{t E_{\nu} B} B X] \,dt \nonumber 
\end{align}
for all $X$ such that $e^{t E_{\nu} B} X \in B_{\rho}(X_0)$ for all $t \in [0,1]$, where $\nabla f(X)[v] = \nabla_v f(X)$ and $\nabla^2 f(X)[v,v] = \nabla_v \nabla_v f(X)$ at a point $X$ in a direction $v$.  Since $\widetilde{\varphi}^{(\nu)}_{j, k} \circ e^{-E_{\nu} B}$ is a regular $q$-valued function away from $\{0\} \times \mathbb{R}^{n-2}$, we can apply \eqref{blowup2L_eqn4} with $\widetilde{\varphi}^{(\nu)}_{j,k} \circ e^{-E_{\nu} B} |_{B_{r(X_0)/2}(X_0)}$ in place of $f(X)$ in order to expand $\widetilde{\varphi}^{(\nu)}$.  In particular, notice that $|r(e^{t E_{\nu} B} X) - r(X)| \leq |e^{t E_{\nu} B} X - X| \leq E_{\nu} |b|$ for all $X \in B_1(0)$ and $t \in [0,1]$.  Hence by \eqref{blowup2L_eqn4} and the definition of $\widetilde{\varphi}^{(\nu)}_{j,k}$, for all $X \in B_1(0)$ with $r(X) \geq 4 E_{\nu} |b|$, 
\begin{equation*} 
	\widetilde{\varphi}^{(\nu)}_{j,k}(X) 
	= \sum_{l=1}^{q_{j,k}} \sum_{h=1}^{m_{j,k}} \llbracket \varphi^{(\nu)}_{j,k,l}(X) + E_{\nu} \psi_{j,k,l,h}(X) + \mathcal{R}_{j,k,l,h}(X) \rrbracket
\end{equation*}
where $\varphi^{(\nu)}_{j,k,l}$ is as in \eqref{varphi_localized} with $\varphi^{(\nu)}$ in place of $\varphi$, $\psi_{j,k,l,h}$ is as in \eqref{psi_localized}, and 
\begin{equation*}
	|\mathcal{R}_{j,k,l,h}(X)| \leq C E_{\nu} \varepsilon_{\nu} |b| r(X)^{\alpha-1} + C E_{\nu}^2 |a| |b| r(X)^{\alpha-1} + C E_{\nu}^2 |b|^2 r(X)^{\alpha-2} 
\end{equation*}
for some constant $C = C(n,q,\alpha,\varphi^{(0)}) \in (0,\infty)$.  Thus, for each $\tau \in (0,1/4)$ and $\nu$ sufficiently large (in particular large enough that $4E_{\nu} |b| < \tau$)
\begin{align*} \label{blowup2L_eqn6}
	\int_{B_{1/2}(0) \cap \{r < \tau\}} \mathcal{G}(\widetilde{\varphi}^{(\nu)},\varphi^{(\nu)})^2 
	&\leq C \|\psi\|^{2}_{L^{2}(B_{1}(0))}\int_{B_{1/2}(0) \cap \{4E_{\nu} |b| < r < \tau\}} E_{\nu}^2 r^{2\alpha-2} 
		+ C \int_{B_{1/2}(0) \cap \{r < 4E_{\nu} |b|\}} r^{2\alpha}  
	\\&\leq C \|\psi\|^{2}_{L^{2}(B_{1}(0))}\tau^{2\alpha} E_{\nu}^2 \nonumber 
\end{align*}
where $C = C(n, q, \alpha, \varphi^{(0)})$. Also directly from the definition of $\psi$,   
\begin{equation} \label{blowup2L_eqn7}
	\int_{B_{1/2}(0) \cap \{r < \tau\}} \sum_{j=1}^J \sum_{k=1}^{p_j} \sum_{l=1}^{q_{j,k}} |\psi_{j,k,l}|^2
	\leq C \int_{B_{1/2}(0) \cap \{r < \tau\}} r^{2\alpha-2} 
	\leq C \tau^{2\alpha} 
\end{equation}
for $\nu$ large enough that $4E_{\nu} < \tau$, where $C = C(n,q,\alpha,\varphi^{(0)},\psi) \in (0,\infty)$.  
By \eqref{blowup2L_eqn5},  \eqref{homog_error}, \eqref{blowup2L_eqn6} and \eqref{blowup2L_eqn7}, we have \eqref{blowup2L_eqn1}.
\end{proof}

\subsection{Main estimates for the blow-ups}
We shall now derive estimates for $w \in \mathfrak{B}$ from the estimates in Sections \ref{sec:L2estimates_sec1} and \ref{sec:L2estimates_sec2}.  These estimates will form the basis of our asymptotic analysis of the blow-ups (carried out in Sections~\ref{sec:homogblowup_sec} and \ref{sec:asymptotics_sec}) which in turn will play a key role in the proof (given in Section~\ref{sec:main_lemma_sec}) of the main excess decay result, Lemma~\ref{main_lemma}. 

\begin{lemma} \label{blowup_norm_conv_lemma}
Let $w = (w_{j,k}) \in \mathfrak{B}$ be the blow-up of $u^{(\nu)}$ relative to $\varphi^{(\nu)}$ and excess $E_{\nu} = \left( \int_{B_1(0)} \mathcal{G}(u^{(\nu)},\varphi^{(\nu)}) \right)^{1/2}$ as in Definition~\ref{blowupclass_defn}, and let $\psi \in {\mathfrak L}$.  Suppose that there exists $\varphi^{(\infty)} = (m_{j, k}, \varphi^{(\infty)}_{j, k}) \in {\mathfrak D}$ such that $w \in {\mathfrak B}(\varphi^{(\infty)})$, $\psi \in {\mathfrak L}(\varphi^{(\infty)})$ and the sequence $(\varphi^{(\nu)})$ is associated with $\varphi^{(\infty)}$ in accordance with Remark~\ref{blow-up-conditions}(A)(B).  For each $\nu = 1, 2, 3, \ldots,$ let  $\widetilde{\varphi}^{(\nu)}$ be the function corresponding to $\varphi^{(\nu)}$, $\psi$, $E_{\nu}$ given by Lemma~\ref{blowup2L_lemma}.  Then for every $\rho \in (0,1/2]$, 
\begin{equation} \label{blowup_norm_conv_eqn1}
	\lim_{\nu \rightarrow \infty} E_{\nu}^{-2} \int_{B_{\rho}(0)} \mathcal{G}(u^{(\nu)},\widetilde{\varphi}^{(\nu)})^2 
	= \int_{B_{\rho}(0)} \sum_{j=1}^J \sum_{k=1}^{p_j} \sum_{l=1}^{q_{j,k}} \mathcal{G}(w_{j,k,l},\psi_{j,k,l})^2, 
\end{equation} 
where $w_{j,k,l}$ is as in \eqref{w_localized} and $\psi_{j,k,l}$ is as in \eqref{psi_localized}.  In particular, 
\begin{equation*}
	\lim_{\nu \rightarrow \infty} E_{\nu}^{-2} \int_{B_{\rho}(0)} \mathcal{G}(u^{(\nu)},\varphi^{(\nu)})^2 
	= \int_{B_{\rho}(0)} \sum_{j=1}^J \sum_{k=1}^{p_j} \sum_{l=1}^{q_{j,k}} |w_{j,k,l}|^2
\end{equation*}
and so
\begin{equation} \label{blowup_norm_conv_eqn2}
\int_{B_{1/2}(0)} \sum_{j=1}^J \sum_{k=1}^{p_j} \sum_{l=1}^{q_{j,k}} |w_{j,k,l}|^2 \leq 1.
\end{equation}

\end{lemma}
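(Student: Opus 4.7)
The strategy is to split $B_{\rho}(0) = (B_{\rho}(0)\cap\{r>\tau\}) \cup (B_{\rho}(0)\cap\{r\leq\tau\})$ for a small parameter $\tau>0$, treat the outer region by pointwise/uniform convergence together with dominated convergence, and control the inner region via the non-concentration estimates available for both $u^{(\nu)}$ and for $\widetilde{\varphi}^{(\nu)}$.

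On $B_{\rho}(0)\cap\{r>\tau\}$, Corollary~\ref{graphical_cor} (applicable for large $\nu$ by condition (e) of Definition~\ref{blowupclass_defn}) produces the multi-valued parameterizations $v^{(\nu)}_{j,k}$ of $u^{(\nu)}$ over $\operatorname{graph}\varphi^{(\nu)}_{j,k}$. In a small ball $B\subset\{r>\tau/2\}$, writing $v^{(\nu)}_{j,k,l}$ and $\varphi^{(\nu)}_{j,k,l}$ as in \eqref{v_localized} and \eqref{varphi_nu_localized}, and using the expansion \eqref{blowup2L_eqn5} of $\widetilde{\varphi}^{(\nu)}$ from Lemma~\ref{blowup2L_lemma}(i), one has
\[
	E_\nu^{-2}\mathcal{G}(u^{(\nu)}(X),\widetilde{\varphi}^{(\nu)}(X))^2
	=\sum_{j,k,l,h}\bigl|E_\nu^{-1}v^{(\nu)}_{j,k,l,h}(X)-\psi_{j,k,l,h}(X)-E_\nu^{-1}\mathcal{R}_{j,k,l,h}(X)\bigr|^2
\]
for $X\in B$. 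By the blow-up construction $E_\nu^{-1}v^{(\nu)}_{j,k,l,h}\to w_{j,k,l,h}$ uniformly on compact subsets of $B_1(0)\setminus\{0\}\times\mathbb{R}^{n-2}$, while the remainder term satisfies $E_\nu^{-1}|\mathcal{R}_{j,k,l,h}|\le C(\varepsilon_\nu+E_\nu)\tau^{\alpha-2}\to 0$ uniformly on $\{r\ge\tau/2\}$. Hence the integrand converges uniformly on $B_{\rho}(0)\cap\{r>\tau\}$ to $\sum_{j,k,l}\mathcal{G}(w_{j,k,l},\psi_{j,k,l})^2$, and so
\begin{equation}\label{outer_limit}
	\lim_{\nu\to\infty}E_\nu^{-2}\int_{B_{\rho}(0)\cap\{r>\tau\}}\mathcal{G}(u^{(\nu)},\widetilde{\varphi}^{(\nu)})^2
	=\int_{B_{\rho}(0)\cap\{r>\tau\}}\sum_{j,k,l}\mathcal{G}(w_{j,k,l},\psi_{j,k,l})^2.
\end{equation}

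For the inner region $\{r\le\tau\}$, the key point is a uniform (in $\nu$) bound of order $\tau^{\mu}$ for some $\mu>0$ on both sides. For the left-hand side, the triangle inequality gives
\[
	E_\nu^{-2}\int_{B_{\rho}(0)\cap\{r<\tau\}}\mathcal{G}(u^{(\nu)},\widetilde{\varphi}^{(\nu)})^2
	\le 2E_\nu^{-2}\int_{B_{1/2}(0)\cap\{r<\tau\}}\mathcal{G}(u^{(\nu)},\varphi^{(\nu)})^2
	+2E_\nu^{-2}\int_{B_{1/2}(0)\cap\{r<\tau\}}\mathcal{G}(\varphi^{(\nu)},\widetilde{\varphi}^{(\nu)})^2.
\]
The second integral is bounded by $C\tau^{2\alpha}E_\nu^2$ by Lemma~\ref{blowup2L_lemma}(ii). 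For the first, condition (c) of Definition~\ref{blowupclass_defn} allows Corollary~\ref{nonconest_cor} to be applied to $u^{(\nu)}$ with $\delta=\delta_\nu\downarrow 0$; fixing $\sigma\in(0,1/q)$ and noting that on $\{r<\tau\}$ with $\delta_\nu\le\tau$ we have $r_{\delta_\nu}\le\tau$, this yields
\[
	\int_{B_{1/2}(0)\cap\{r<\tau\}}\mathcal{G}(u^{(\nu)},\varphi^{(\nu)})^2
	\le\tau^{2/q-\sigma}\int_{B_{1/2}(0)}\frac{\mathcal{G}(u^{(\nu)},\varphi^{(\nu)})^2}{r_{\delta_\nu}^{2/q-\sigma}}
	\le C\tau^{2/q-\sigma}E_\nu^2.
\]
Together we obtain $E_\nu^{-2}\int_{B_{\rho}(0)\cap\{r<\tau\}}\mathcal{G}(u^{(\nu)},\widetilde{\varphi}^{(\nu)})^2\le C(\tau^{2/q-\sigma}+\tau^{2\alpha})$ for all large $\nu$. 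For the right-hand side, direct computation from Definition~\ref{homogclass_defn} bounds $\int_{B_{1/2}(0)\cap\{r<\tau\}}\sum|\psi_{j,k,l}|^2\le C\tau^{2\alpha}$, while applying \eqref{outer_limit} with $\psi=0$ (so that $\widetilde{\varphi}^{(\nu)}\equiv\varphi^{(\nu)}$) and Fatou's lemma gives $\int_{B_{\rho}(0)\cap\{r<\tau\}}\sum|w_{j,k,l}|^2\le C\tau^{2/q-\sigma}$. Combining these with the triangle inequality controls the tail of the right-hand integral by $C(\tau^{2/q-\sigma}+\tau^{2\alpha})$.

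The assertion \eqref{blowup_norm_conv_eqn1} then follows by first sending $\nu\to\infty$ using \eqref{outer_limit}, and then sending $\tau\to 0^+$. Specializing to $\psi\equiv 0$ (so $\widetilde{\varphi}^{(\nu)}=\varphi^{(\nu)}$) and $\rho=1/2$ gives
\[
	\int_{B_{1/2}(0)}\sum_{j,k,l}|w_{j,k,l}|^2
	=\lim_{\nu\to\infty}E_\nu^{-2}\int_{B_{1/2}(0)}\mathcal{G}(u^{(\nu)},\varphi^{(\nu)})^2
	\le\lim_{\nu\to\infty}E_\nu^{-2}\int_{B_1(0)}\mathcal{G}(u^{(\nu)},\varphi^{(\nu)})^2=1,
\]
which is \eqref{blowup_norm_conv_eqn2}. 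The main obstacle is the inner-region estimate: one must ensure that the non-concentration bound for $u^{(\nu)}$ against $\varphi^{(\nu)}$ (Corollary~\ref{nonconest_cor}) is compatible in order of magnitude with the tail estimate for $\widetilde{\varphi}^{(\nu)}$ against $\varphi^{(\nu)}$ (Lemma~\ref{blowup2L_lemma}(ii)), so that both vanish as $\tau\to 0$ uniformly in $\nu$.
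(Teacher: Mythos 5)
Your proof is correct and follows essentially the same route as the paper: decompose into an outer region $\{r>\tau\}$ handled by the blow-up convergence plus the expansion \eqref{blowup2L_eqn5}/\eqref{homog_error}, and an inner region $\{r\le\tau\}$ controlled uniformly in $\nu$ by Corollary~\ref{nonconest_cor} and \eqref{blowup2L_eqn6}, then send $\nu\to\infty$ and afterwards $\tau\to 0^+$. The paper's own proof is simply a one-paragraph pointer to these ingredients, so you have supplied the intended details.

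One minor imprecision: the displayed identity
\[
	E_\nu^{-2}\mathcal{G}(u^{(\nu)}(X),\widetilde{\varphi}^{(\nu)}(X))^2
	=\sum_{j,k,l,h}\bigl|E_\nu^{-1}v^{(\nu)}_{j,k,l,h}(X)-\psi_{j,k,l,h}(X)-E_\nu^{-1}\mathcal{R}_{j,k,l,h}(X)\bigr|^2
\]
should be interpreted with $\mathcal{G}$ at the level of the $\mathcal{A}_{m_{j,k}}(\mathbb{R}^m)$-valued objects $v^{(\nu)}_{j,k,l}$ and $\psi_{j,k,l}$ rather than as a sum over a fixed indexing $h$: the $\mathcal{G}$-distance still optimizes the matching of the $m_{j,k}$ values within each component $(j,k,l)$, even once $\nu$ is large enough that the optimal pairing respects the decomposition into $(j,k,l)$-components. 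This does not affect the argument, since $E_\nu^{-1}v^{(\nu)}_{j,k,l}\to w_{j,k,l}$ in the $\mathcal{G}$-metric and the remainder vanishes uniformly on $\{r\ge\tau\}$, so the limit of the $(j,k,l)$-summand is precisely $\mathcal{G}(w_{j,k,l},\psi_{j,k,l})^2$, but the equality sign as written would need this caveat.
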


\begin{proof}
By Corollary \ref{nonconest_cor}, for every $\tau \in (0,1/4)$ and $\nu$ sufficiently large (depending on $\tau$), 
\begin{equation*} 
	\int_{B_{1/2}(0) \cap \{r < \tau\}} \mathcal{G}(u^{(\nu)},\varphi^{(\nu)})^2 \leq C \tau^{1/q} E_{\nu}^2,  
\end{equation*}
for some constant $C = C(n,m,q,\alpha,\varphi^{(0)}) \in (0,\infty)$.  In view of this, the first conclusion \eqref{blowup_norm_conv_eqn1} follows from the definition of blow-up, \eqref{blowup2L_eqn5} and the estimates \eqref{homog_error} and  
\eqref{blowup2L_eqn6}. The second conclusion follows by setting $\psi = 0$ in the first.
\end{proof}

\begin{lemma} \label{blowup_est_lemma} 
The following hold true for each $w = (w_{j,k}) \in \mathfrak{B}$: 
\begin{enumerate}
\item[(a)] for each $\psi \in \mathfrak{L}(\varphi^{(\infty)})$, where $\varphi^{(\infty)} \in {\mathfrak D}$ is such that $w \in {\mathfrak B}(\varphi^{(\infty)})$,  and each $\rho \in (0,1/2]$ and $\gamma \in (0,1/2]$, 
\begin{align} 
	\label{blowup_est1} &\int_{B_{\gamma \rho}(0)} \sum_{j=1}^J \sum_{k=1}^{p_j} \sum_{l=1}^{q_{j,k}} 
		R^{2-n} \left| \frac{\partial (w_{j,k,l}/R^{\alpha})}{\partial R} \right|^2 
		\leq C \rho^{-n-2\alpha} \int_{B_{\rho}(0)} \sum_{j=1}^J \sum_{k=1}^{p_j} \sum_{l=1}^{q_{j,k}} \mathcal{G}(w_{j,k,l},\psi_{j,k,l})^2, \\
	\label{blowup_est2} &\int_{B_{\gamma \rho}(0)} \sum_{j=1}^J \sum_{k=1}^{p_j} \sum_{l=1}^{q_{j,k}} \frac{\mathcal{G}(w_{j,k,l},\psi_{j,k,l})^2}{r^{2+1/q}} 
		\leq C \rho^{-2-1/q} \int_{B_{\rho}(0)} \sum_{j=1}^J \sum_{k=1}^{p_j} \sum_{l=1}^{q_{j,k}} \mathcal{G}(w_{j,k,l},\psi_{j,k,l})^2, 
\end{align}
where $w_{j,k,l}$ is as in \eqref{w_localized}, $\psi_{j,k,l}$ is as in \eqref{psi_localized}, $R = |X|$, $r = |(x_1,x_2)|$, and $C = C(n,m,q,\alpha,\varphi^{(0)},\gamma) \in (0,\infty)$ is a constant;
\item[(b)] 
\begin{equation} \label{blowup_est3} 
	\int_{B_{1/2}(0)} \sum_{j=1}^J \sum_{k=1}^{p_j} \sum_{l=1}^{q_{j,k}} |D_y w_{j,k,l}|^2 \leq C, 
\end{equation}
where $X = (x,y)$ for $x = (x_1,x_2)$ and $y = (x_3,\ldots,x_n)$ and $C = C(n,m,q,\alpha,\varphi^{(0)}) \in (0,\infty)$ is a constant; 
\item[(c)] there exists a function $\lambda : B^{n-2}_{1/4}(0) \rightarrow \mathbb{R}^2$ such that for  each $\sigma \in (0,1/q)$, 
\begin{equation} \label{blowup_est4}
	\sup_{z \in B^{n-2}_{1/4}(0)} \int_{B_{1/2}(0)} \sum_{j=1}^J \sum_{k=1}^{p_j} \sum_{l=1}^{q_{j,k}} \sum_{h=1}^{m_{j, k}}\frac{|w_{j,k,l, h}(X) - D_x \varphi^{(0)}_{j, l}(X) \cdot \lambda(z)|^2}{
		|X-(0,z)|^{n+2\alpha-\sigma}} dX \leq C 
\end{equation}
and $\|\lambda\|_{C^{1-\sigma/2}(B_{1/2}(0))} \leq C$, where $C = C(n,m,q,\alpha,\varphi^{(0)},\sigma) \in (0,\infty)$.  

Moreover, $\lambda(z)$ is unique subject to the condition  
$$\int_{B_{1/2}(0)} \sum_{j=1}^J \sum_{k=1}^{p_j} \sum_{l=1}^{q_{j,k}} \sum_{h=1}^{m_{j, k}}\frac{|w_{j,k,l,h}(X) - D_x \varphi^{(0)}_{j, l}(X) \cdot \lambda(z)|^2}{
		|X-(0,z)|^{n+2\alpha-\sigma}} dX < \infty \;\;\; \mbox{for some $\sigma \in (0, 1/q)$}.$$ 
Here for $X \in B_{1/2}(0) \setminus \{0\} \times {\mathbb R}^{n-2}$, 
$w_{j, k, l, h}(X) \in {\mathbb R}^{m}$ are as in \eqref{w_localized}; 
for $j$ such that $\varphi^{(0)}_{j} \neq 0$, $q_{j, k} = q_{0}$ and $\varphi^{(0)}_{j, l}$ $(1 \leq l \leq q_{0})$ are ${\mathbb R}^{m}$ valued harmonic functions locally defined near $X$ such that 
$\varphi^{(0)}_{j}(X)  = \sum_{l=1}^{q_{0}} \llbracket \varphi^{(0)}_{j, l}(X)\rrbracket$; and for $j$ such that $\varphi^{(0)}_{j} = 0$, $\varphi^{(0)}_{j, l} = 0$ for $1 \leq l \leq q_{j, k}$.
\end{enumerate}

 \end{lemma}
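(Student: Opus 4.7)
The plan is to derive all three parts by passing to the limit in the \emph{a priori} estimates of Sections~\ref{sec:L2estimates_sec1} and \ref{sec:L2estimates_sec2} applied to the approximating sequence $(u^{(\nu)}, \varphi^{(\nu)})$ whose blow-up is $w$. Fix $w = (w_{j,k}) \in \mathfrak{B}(\varphi^{(\infty)})$ and let $u^{(\nu)}$, $\varphi^{(\nu)}$, $E_\nu = (\int_{B_1(0)} \mathcal{G}(u^{(\nu)}, \varphi^{(\nu)})^2)^{1/2}$ be as in Definition~\ref{blowupclass_defn}. Given $\psi \in \mathfrak{L}(\varphi^{(\infty)})$, let $\widetilde{\varphi}^{(\nu)} \in \widetilde{\Phi}_{\varepsilon_\nu + CE_\nu}(\varphi^{(0)})$ be the homogeneous degree $\alpha$ functions produced by Lemma~\ref{blowup2L_lemma}. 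By the expansion \eqref{blowup2L_eqn5}, away from the axis $\{0\}\times\mathbb{R}^{n-2}$ we have $E_\nu^{-1}(v^{(\nu)}_{j,k,l} - E_\nu \psi_{j,k,l}) \to w_{j,k,l} - \psi_{j,k,l}$ locally uniformly (and in $W^{1,2}_{\mathrm{loc}}$), where $v^{(\nu)}_{j,k,l}$ parameterizes $u^{(\nu)}$ over $\widetilde{\varphi}^{(\nu)}$ via Corollary~\ref{graphical_cor}. Moreover, Lemma~\ref{blowup_norm_conv_lemma} identifies the $L^2$ limit $E_\nu^{-2}\int_{B_\rho(0)}\mathcal{G}(u^{(\nu)}, \widetilde{\varphi}^{(\nu)})^2 \to \int_{B_\rho(0)}\sum \mathcal{G}(w_{j,k,l}, \psi_{j,k,l})^2$ for each $\rho \in (0,1/2]$.

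For \eqref{blowup_est1} and \eqref{blowup_est2}, apply Theorem~\ref{keyest_thm}(a) and Lemma~\ref{keyest_cor}, respectively, to $u^{(\nu)}$ paired with $\widetilde{\varphi}^{(\nu)}$, after reducing from $\widetilde{\Phi}$ to $\Phi$ via the orthogonal rotation built into Definition~\ref{Phi-p_defn} (legal since $B_\rho(0)$ is rotation-invariant). Applied at scale $\rho$, Theorem~\ref{keyest_thm}(a) rescales to
\begin{equation*}
  \int_{B_{\gamma\rho}(0)} R^{2-n}\Bigl|\tfrac{\partial(u^{(\nu)}/R^\alpha)}{\partial R}\Bigr|^2
  \leq C\rho^{-n-2\alpha}\int_{B_\rho(0)} \mathcal{G}(u^{(\nu)}, \widetilde{\varphi}^{(\nu)})^2.
\end{equation*}
Since each branch of $\widetilde{\varphi}^{(\nu)}$ is homogeneous of degree $\alpha$, it contributes nothing to $\partial(\cdot/R^\alpha)/\partial R$, so on regular branches the derivative reduces to $\partial(v^{(\nu)}_{j,k,l,h}/R^\alpha)/\partial R$; dividing by $E_\nu^2$, applying Fatou's lemma on the left (using the a.e.\ pointwise convergence away from the axis) and Lemma~\ref{blowup_norm_conv_lemma} on the right yields \eqref{blowup_est1}. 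The estimate \eqref{blowup_est2} is obtained in the same manner from Lemma~\ref{keyest_cor}, combined with a Hardy-type slicing along rays orthogonal to $\{0\}\times\mathbb{R}^{n-2}$ that exploits the specific form of $\psi \in \mathfrak{L}$, which subtracts off the leading homogeneous contribution to $w$ near the axis and thereby upgrades the radial non-concentration weight to the axis-distance weight $r^{-2-1/q}$. Part (b), \eqref{blowup_est3}, follows from Theorem~\ref{keyest_thm}(b) applied with $\psi \equiv 0$: since $\varphi^{(\nu)}$ depends only on $x$, each $D_y u^{(\nu)}$ reduces on regular branches to $D_y v^{(\nu)}_{j,k,l,h}$, which after rescaling converges to $D_y w_{j,k,l,h}$, and Lemma~\ref{blowup_norm_conv_lemma} identifies the right-hand side limit as $\int_{B_1(0)} \sum |w_{j,k,l}|^2 \leq 1$.

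For \eqref{blowup_est4}, the density condition \eqref{blowupclass_eqn1} in Definition~\ref{blowupclass_defn} produces, for each $z \in B^{n-2}_{1/4}(0)$, points $Z_\nu(z) = (\xi_\nu(z), \zeta_\nu(z)) \in \Sigma_{u^{(\nu)},q}$ with $\mathcal{N}_{u^{(\nu)}}(Z_\nu(z)) \geq \alpha$ and $|Z_\nu(z)-(0,z)| \leq \delta_\nu \to 0$; Lemma~\ref{branchdist_lemma}(a) applied on $B_{1/2}((0,z))$ gives $|\xi_\nu(z)| \leq CE_\nu$, so $\lambda_\nu(z) := \xi_\nu(z)/E_\nu$ is uniformly bounded in $\mathbb{R}^2$. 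A diagonal subsequence produces $\lambda_\nu \to \lambda$ on a countable dense subset of $B^{n-2}_{1/4}(0)$. Applying Lemma~\ref{nonconest_lemma} to $(u^{(\nu)}, \varphi^{(\nu)})$ at base point $Z_\nu(z)$, dividing by $E_\nu^2$, passing to the limit in $\nu$ and letting $\tau \downarrow 0$ by monotone convergence yields \eqref{blowup_est4} on the dense set; the weighted $L^2$ bound then extends $\lambda$ to all of $B^{n-2}_{1/4}(0)$, with the claimed H\"older modulus $1-\sigma/2$ read off by comparing the estimate at $(0,z_1)$ and $(0,z_2)$ and inverting the resulting weighted $L^2$ inequality for $D_x\varphi^{(0)}_{j,l}\cdot (\lambda(z_1)-\lambda(z_2))$ by an elementary cylindrical-coordinate computation. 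Uniqueness of $\lambda(z)$ follows likewise: any nonzero difference $\lambda(z)-\lambda'(z)$ would force the integral $\int |D_x\varphi^{(0)}_{j,l}(X)\cdot(\lambda-\lambda')|^2/|X-(0,z)|^{n+2\alpha-\sigma}\,dX$ to be finite, but this integral is $+\infty$ since $|D_x\varphi^{(0)}_{j,l}(X)| \sim r^{\alpha-1}$ for at least one $(j,l)$ with $\varphi^{(0)}_j \not\equiv 0$, reducing via cylindrical coordinates to $\int r^{\sigma-2}\,dr$, which diverges near $r=0$.

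The main obstacles will be (i) sharpening the radial non-concentration of Lemma~\ref{keyest_cor} to the axis-distance weight $r^{-2-1/q}$ claimed in \eqref{blowup_est2}, which requires exploiting the specific cancellation built into the class $\mathfrak{L}$ together with the axial non-concentration of Corollary~\ref{nonconest_cor}; and (ii) establishing the claimed explicit H\"older exponent $1-\sigma/2$ for $\lambda$, dictated by the weight in \eqref{blowup_est4}. The remaining difficulties are technical: justifying the pointwise a.e.\ convergence needed for Fatou's lemma (which follows from the local $C^1$ convergence of $v^{(\nu)}_{j,k,l}/E_\nu \to w_{j,k,l}$ away from the axis provided by the Hölder continuity estimates of \eqref{regestimate} and standard interior estimates for component-wise minimizers), and verifying that the orthogonal rotations in the definition of $\widetilde{\Phi}$ do not interfere with any of the scaling-covariant estimates being applied.
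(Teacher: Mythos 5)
Your overall strategy --- pass to the limit in the a priori estimates of Sections~\ref{sec:L2estimates_sec1}--\ref{sec:L2estimates_sec2} applied to $(u^{(\nu)},\varphi^{(\nu)})$, with the auxiliary homogeneous functions $\widetilde{\varphi}^{(\nu)}$ from Lemma~\ref{blowup2L_lemma} encoding the choice of $\psi$ --- is exactly the paper's route, and your treatments of \eqref{blowup_est1}, \eqref{blowup_est3} and \eqref{blowup_est4} match it in all essentials (including the precomposition by a small rotation $e^{-E_\nu B}$ to reduce $\widetilde\Phi$ to $\Phi$ before invoking Theorem~\ref{keyest_thm}, the use of Lemma~\ref{branchdist_lemma}(a) to get $|\xi_\nu|\le CE_\nu$, and the uniqueness / H\"older arguments for $\lambda$ via the $L^2$-orthogonality of $D_1\varphi^{(0)}_j$ and $D_2\varphi^{(0)}_j$).

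The one genuine gap is in \eqref{blowup_est2}. You propose to derive it from Lemma~\ref{keyest_cor} --- the \emph{radial} non-concentration estimate with weight $R^{-n-2\alpha+\sigma}$ centred at the origin --- and then ``upgrade'' to an axis-distance weight via a Hardy-type slicing argument which you flag as a main obstacle. That conversion is not the missing step, because it has already been carried out: Corollary~\ref{nonconest_cor} gives the required \emph{axial} non-concentration estimate directly, with an $r_\delta$-weight (established precisely by applying Lemma~\ref{nonconest_lemma} at a covering family of axis points and summing). The paper's proof simply applies Corollary~\ref{nonconest_cor} to $(u^{(\nu)},\widetilde{\varphi}^{(\nu)})$ at scale $\rho$, divides by $E_\nu^2$, and invokes Fatou, Lemma~\ref{gradient_convergence}, and Lemma~\ref{blowup_norm_conv_lemma} exactly as you do for \eqref{blowup_est1}; no additional ``cancellation in $\mathfrak L$'' argument is needed at the blow-up level since the cylindrical functions $\widetilde\varphi^{(\nu)}$ are already accounted for at the $\nu$ level. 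So the item you list as obstacle (i) is not an obstacle but a pointer to the ingredient you should be citing instead of Lemma~\ref{keyest_cor}. (You also have a small arithmetic slip in the uniqueness argument for $\lambda$ --- after substituting $y-z=rs$ in cylindrical coordinates the $r$-integral is $\int_0^\varepsilon r^{\sigma-3}\,dr$, not $\int_0^\varepsilon r^{\sigma-2}\,dr$ --- though both diverge, so the conclusion is unaffected.)
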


\begin{proof}
Let $\varphi^{(\infty)} \in {\mathfrak D}$, $w \in {\mathfrak B}(\varphi^{(\infty)})$ and $\psi \in {\mathfrak L}(\varphi^{(\infty)})$, and suppose that $w$ is the blow-up of $u^{(\nu)}$ relative to $\varphi^{(\nu)}$ by the excess $E_{\nu} = \left( \int_{B_1(0)} \mathcal{G}(u^{(\nu)},\varphi^{(\nu)}) \right)^{1/2}$ as in Definition~\ref{blowupclass_defn}.  Let $\widetilde{\varphi}^{(\nu)}$ be the function corresponding to $\psi$,  $\varphi^{(\nu)}$ and $E_{\nu}$ as in Lemma~\ref{blowup2L_lemma}.  

To see \eqref{blowup_est1}, note that $\mathcal{N}_{u^{(\nu)}}(0) \geq \alpha$ and thus by Theorem~\ref{keyest_thm}(a) (applied with the ``scaled and rotated'' functions $\rho^{-\alpha}u^{(\nu)}(\rho e^{-E_{\nu}B}X)$ and $\widetilde{\varphi}^{(\nu)}(e^{-E_{\nu}B}X)$ in place of $u$ and $\varphi$, and then making the change of coordinates  $\rho e^{-E_{\nu}B}X \mapsto X$), 
\begin{equation} \label{blowup_est1_eqn1} 
	\int_{B_{\gamma \rho}(0)} R^{2-n} \left| \frac{\partial (u^{(\nu)}/R^{\alpha})}{\partial R} \right|^2 
	\leq C \rho^{-n-2\alpha} \int_{B_{\gamma \rho}(0)} \mathcal{G}(u^{(\nu)},\widetilde{\varphi}^{(\nu)})^2 
\end{equation}
for some constant $C = C(n,m,q,\alpha,\varphi^{(0)},\gamma) \in (0,\infty)$.  By the homogeneity of $\varphi^{(\nu)}$, for each $\tau >0$ and all sufficiently large $\nu$, 
\begin{equation}\label{blowup_est1_eqn1_1} 
	\left| \frac{\partial (u^{(\nu)}/R^{\alpha})}{\partial R} \right|^2 
		= E_{\nu}^2 \sum_{j=1}^J \sum_{k=1}^{p_j} \sum_{l=1}^{q_{j,k}} \left| \frac{\partial (w^{(\nu)}_{j,k,l}/R^{\alpha})}{\partial R} \right|^2 
\end{equation}
a.e.\ in $B_{\gamma\rho}(0) \cap \{r(X) > \tau\}$, where $w^{(\nu)}_{j,k} : \left.{\rm graph}\,\varphi^{(\infty)}_{j,k}\right|_{B_{3/4}(0) \cap \{r(X) > \tau\}} \rightarrow \mathcal{A}_{m_{j,k}}(\mathbb{R}^m)$ is as in Section~\ref{blow-up-procedure} and we represent $w^{(\nu)}_{j, k, l}(X) = w^{(\nu)}_{j,k}(X,\varphi^{(\infty)}_{j,k,l}(X)) = \sum_{h=1}^{m_{j,k}} \llbracket w^{(\nu)}_{j,k,l,h}(X) \rrbracket$ for $w^{(\nu)}_{j,k,l,h}$ are smooth ${\mathbb R}^{m}$-valued functions defined locally near a.e.~point in $B_{3/4}(0)$ (as in \eqref{w_localized}) and the right hand side of the above is computed with respect to the functions $w^{(\nu)}_{j, k, l, h}$.  By Lemma~\ref{gradient_convergence} in the appendix, 
\begin{equation*} 
	\lim_{\nu \rightarrow \infty} \sum_{j=1}^J \sum_{k=1}^{p_j} \sum_{l=1}^{q_{j,k}} \left| \frac{\partial (w^{(\nu)}_{j,k,l}/R^{\alpha})}{\partial R} \right|^2 
	= \sum_{j=1}^J \sum_{k=1}^{p_j} \sum_{l=1}^{q_{j,k}} \left| \frac{\partial (w_{j,k,l}/R^{\alpha})}{\partial R} \right|^2 
\end{equation*}
pointwise a.e.~in $B_{\gamma \rho}(0)$.  Thus we obtain \eqref{blowup_est1} by using \eqref{blowup_est1_eqn1_1} in \eqref{blowup_est1_eqn1}, dividing both sides of the resulting inequality by $E_{\nu}^2$ and first letting $\nu \rightarrow \infty$ and then letting $\tau \to 0^{+}$, using Fatou's lemma, Lemma~\ref{gradient_convergence} and Lemma~\ref{blowup_norm_conv_lemma}. 

The estimates \eqref{blowup_est2} and \eqref{blowup_est3} similarly follow from Corollary \ref{nonconest_cor} and Theorem~\ref{keyest_thm}(b) respectively, Fatou's lemma, Lemma~\ref{gradient_convergence} and Lemma~\ref{blowup_norm_conv_lemma}. 

To show \eqref{blowup_est4}, let $z \in B^{n-2}_{1/4}(0)$ and let $\tau >0$.  By \eqref{blowupclass_eqn1}, for each $\nu = 1,2,3,\ldots$ there exists $Z_{\nu} \in B_{\delta_{\nu}}(0,z) \cap B_{1/2}(0) \cap \Sigma_{u^{(\nu)},q}$ with $\mathcal{N}_{u^{(\nu)}}(Z_{\nu}) \geq \alpha$.  By Lemma~\ref{nonconest_lemma} with $u^{(\nu)}$ and $\varphi^{(\nu)}$ in place of $u$ and $\varphi$, 
\begin{equation} \label{blowup_est4_eqn1} 
	\int_{B_{1/2}(0) \cap \{r > \tau\}} \frac{\mathcal{G}(u^{(\nu)}(X), \varphi^{(\nu)}(X) - D_x \varphi^{(\nu)}(X) \cdot \xi_{\nu})^2}{
		|X-Z_{\nu}|^{n+2\alpha-\sigma}} dX \leq C E_{\nu}^2 
\end{equation}
for sufficiently large $\nu$, where $\xi_{\nu}$ is the orthogonal projection of $Z_{\nu}$ onto $\mathbb{R}^2 \times \{0\}$ and $C = (n,m,q,\alpha,\varphi^{(0)},\sigma) \in (0,\infty)$ is a constant. 
By Lemma~\ref{branchdist_lemma}(a), $|\xi_{\nu}| \leq C E_{\nu}$ for some constant $C = C(n,m,q,\alpha,\varphi^{(0)}) \in (0,\infty)$ and so after passing to a subsequence $\xi_{\nu}/E_{\nu}$ converges to some $\lambda(z) \in \mathbb{R}^2$.  Thus by dividing both sides of \eqref{blowup_est4_eqn1} by $E_{\nu}^2$ and letting $\nu \rightarrow \infty$ using Fatou's lemma and Lemma~\ref{blowup_norm_conv_lemma}, we obtain \eqref{blowup_est4} for some $\lambda(z) \in \mathbb{R}^2$. 

To see the asserted uniqueness of $\lambda(z)$, fix $z \in B^{n-2}_{1/4}(0)$, let $\lambda_{1}, \lambda_{2} \in {\mathbb R}^{2}$ and suppose that for $i = 1, 2$, 
$$\int_{B_{1/2}(0)} \sum_{j=1}^J \sum_{k=1}^{p_j} \sum_{l=1}^{q_{j,k}} \sum_{h=1}^{m_{j, k}}\frac{|w_{j,k,l, h}(X) - D_x \varphi^{(0)}_{j, l}(X) \cdot \lambda_{i}|^2}{
		|X-(0,z)|^{n+2\alpha-\sigma_{i}}} dX < \infty$$ for some $\sigma_{1}, \sigma_{2}$ with $0 < \sigma_{1} \leq \sigma_{2} <1/q.$
By the triangle inequality, this implies that 
\begin{equation*}
	\int_{B_{1/2}(0)} \sum_{j=1}^J \sum_{k=1}^{p_{j}}\sum_{l =1}^{q_{j, k}} \frac{|D_x \varphi^{(0)}_{j, l}(X) \cdot (\lambda_1 - \lambda_2)|^2}{|X-(0,z)|^{n+2\alpha-\sigma_{2}}} dX < \infty. 
\end{equation*}
In view of the fact that $\varphi^{(0)}$ is homogeneous of degree $\alpha$ and independent of the variable $y \in {\mathbb R}^{n-2}$  and that  $D_1 \varphi^{(0)}_j(e^{i\theta},y)$ and $D_2 \varphi^{(0)}_j(e^{i\theta},y)$ are $L^2$ orthogonal (as functions of $\theta$),  this implies that $\lambda_1 = \lambda_2$.  

Finally, to see that $\lambda \in C^{0,1-\sigma/2}(B_{1/4}(0);\mathbb{R}^2)$, first notice that $\lambda$ is bounded by construction.  Let $z_1,z_2 \in B_{1/4}(0)$ be two distinct points.  Using \eqref{blowup_est4} with $z = z_1,z_2$, the fact that $|X-(0,z_i)| \leq |z_1-z_2|$ for all $X \in B_{\tfrac{|z_1-z_2|}{2}}\big(0,\tfrac{z_1+z_2}{2}\big)$, and the triangle inequality, 
\begin{equation*}
	\int_{B_{\tfrac{|z_1-z_2|}{2}}\big(0,\tfrac{z_1+z_2}{2}\big)} \sum_{j=1}^J \sum_{k=1}^{p_{j}}\sum_{l = 1}^{q_{j, k}}|D_x \varphi^{(0)}_{j, l}(X) \cdot (\lambda(z_1) - \lambda(z_2))|^2 dX 
		\leq C |z_1-z_2|^{n+2\alpha-\sigma}  
\end{equation*}
which in view of the fact that $\varphi^{(0)}$ is homogeneous of degree $\alpha$ and is independent of the variable $y \in {\mathbb R}^{n-2}$ and that $D_1 \varphi^{(0)}_j(e^{i\theta},y)$, $D_2 \varphi^{(0)}_j(e^{i\theta},y)$ are $L^2$ orthogonal implies that
\begin{equation*}
	|\lambda(z_1) - \lambda(z_2)| \leq C |z_1-z_2|^{1-\sigma/2} 
\end{equation*}
for some constant $C = C(n,m,q,\alpha,\varphi^{(0)},\sigma) \in (0,\infty)$. 
\end{proof}

\begin{lemma} \label{blowup_est5_lemma}  
Let $w = (w_{j,k}) \in \mathfrak{B}$.  Let $\zeta(x,y) = \widetilde{\zeta}(|x|,y)$ where $\widetilde{\zeta} = \widetilde{\zeta}(r,y) \in C^2_c(B^{n-1}_{1/2}(0))$ with $D_r \widetilde{\zeta}(r,y) = 0$  in a neighborhood of the axis $\{r=0\}$.  Then using cylindrical coordinates $X = (re^{i\theta},y)$ on $\mathbb{R}^n$, 
\begin{equation} \label{blowup_est5}
	\sum_{j=1}^J \sum_{k=1}^{p_j} m_{j,k} \int_{B^{n-2}_{1/2}(0)} \int_0^{\infty} \int_0^{2\pi} \sum_{l=1}^{q_{j,k}}
		r^{2\alpha-1} D(r^{2-2\alpha} \, w_{j,k,l;a} \cdot D_{\iota} \varphi^{(0)}_{j,l}) \cdot DD_{y_h} \zeta \,d\theta \,dr \,dy = 0
\end{equation}
for all $\iota = 1,2$ and $h = 1,2,\ldots,n-2$, where $\varphi^{(0)}_{j}(X) = \sum_{l=1}^{q_0} \llbracket \varphi^{(0)}_{j,l}(X) \rrbracket$ (as in \eqref{varphi_localized}) if $\varphi^{(0)}_j$ is nonzero and $\varphi^{(0)}_{j,l}(X) = 0$ otherwise and $w_{j,k,l;a}(X) = \frac{1}{m_{j,k}} \sum_{h=1}^{m_{j,k}} w_{j,k,l,h}(X)$ for $w_{j,k,l,h}(X)$ as in \eqref{w_localized}. 
\end{lemma}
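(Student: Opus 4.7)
The plan is to obtain the identity \eqref{blowup_est5} as a limit of the approximate identity of Lemma~\ref{fourierest_lemma} applied to the sequence of minimizers that realizes the blow-up $w$. Fix $\iota \in \{1,2\}$, $h \in \{1,\ldots,n-2\}$ and a test function $\zeta(x,y)=\widetilde\zeta(|x|,y)$ as in the statement. Let $u^{(\nu)}$, $\varphi^{(\nu)}$, $E_\nu$, $\varepsilon_\nu$, $\beta_\nu$, $\delta_\nu$ be the sequences associated with $w\in\mathfrak B(\varphi^{(\infty)})$ as in Definition~\ref{blowupclass_defn}, and let $v^{(\nu)}_{j,k,l}$ correspond to $(u^{(\nu)},\varphi^{(\nu)})$ via Corollary~\ref{graphical_cor}.

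First I would pick an auxiliary sequence $\delta'_\nu\downarrow 0$ with $\delta'_\nu\ge\delta_\nu$ and $\delta'_\nu \gg E_\nu^{1/2}$ (e.g.\ $\delta'_\nu = \max\{\delta_\nu, E_\nu^{1/3}\}$). By Definition~\ref{blowupclass_defn}(d)(e), hypotheses~$(\star)$ and~$(\star\star)$ hold for $u^{(\nu)}$, $\varphi^{(\nu)}$ for all large $\nu$ (with constants $\varepsilon_0(\delta'_\nu),\beta_0(\delta'_\nu)$ from Lemma~\ref{fourierest_lemma}), and Definition~\ref{blowupclass_defn}(c) ensures that condition \eqref{fourierest_eqn1} holds at scale $\delta'_\nu$ for all large $\nu$. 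Lemma~\ref{fourierest_lemma} therefore yields
\begin{equation*}
\bigl|I_\nu(\delta'_\nu)\bigr| \leq C\Bigl(\delta_\nu'{}^{-2} E_\nu^2 + \delta_\nu'{}^{2\alpha} E_\nu\Bigr)\sup_{B_{1/2}(0)}|DD_{y_h}\zeta|,
\end{equation*}
where $I_\nu(\delta)$ denotes the left-hand side of the inequality in Lemma~\ref{fourierest_lemma} with $\delta$ as the inner integration radius and $u=u^{(\nu)}$, $\varphi=\varphi^{(\nu)}$. Dividing by $E_\nu$ gives $|E_\nu^{-1}I_\nu(\delta'_\nu)| \le C(\delta_\nu'{}^{-2}E_\nu + \delta_\nu'{}^{2\alpha})\sup|DD_{y_h}\zeta|\to 0$.

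Next I would analyze the left-hand side in the limit $\nu\to\infty$, separating the three possibilities (a)--(c) of Remark~\ref{blow-up-conditions}. Fix any $\delta\in(0,1/4)$. For $\nu$ large enough that $\delta'_\nu<\delta$, split $I_\nu(\delta'_\nu) = I_\nu(\delta) + [I_\nu(\delta'_\nu) - I_\nu(\delta)]$. On $\{r>\delta\}$, the uniform convergence $v^{(\nu)}_{j,k,l}/E_\nu \to w_{j,k,l}$ on compact subsets of $B^n_1(0)\setminus\{0\}\times{\mathbb R}^{n-2}$ (with $L^2$ convergence of gradients, by component-wise energy minimality and standard elliptic estimates) together with the behavior of $\varphi^{(\nu)}_{j,k,l}$ in cases (a), (b), (c) implies that $E_\nu^{-1}I_\nu(\delta)$ converges to the corresponding expression with $w_{j,k,l;a}$ and $\varphi^{(0)}_{j,l}$ in place of $v^{(\nu)}_{j,k,l;a}/E_\nu$ and $\varphi^{(\nu)}_{j,k,l}$. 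In cases (b) and (c) the integrands vanish in the limit (in case (c) because $D\varphi^{(\nu)}_{j,k,l}\sim |c^{(\nu)}_{j,k}|r^{\alpha-1}\to 0$ while $v^{(\nu)}_{j,k,l;a}/E_\nu$ stays bounded), and these match the corresponding integrands in \eqref{blowup_est5} which are zero because $\varphi^{(0)}_{j,l}\equiv 0$ there. For the tail $I_\nu(\delta'_\nu)-I_\nu(\delta)$, the estimates \eqref{fourierest_eqn3}--\eqref{fourierest_eqn5} in the proof of Lemma~\ref{fourierest_lemma} bound its modulus by $C E_\nu\sup|DD_{y_h}\zeta|$ uniformly in $\delta\in[\delta'_\nu,1/4]$, so it disappears in the limit. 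Passing to the limit $\nu\to\infty$ thus produces, for every fixed $\delta\in(0,1/4)$,
\begin{equation*}
\bigl|L(\delta)\bigr|\le 0,\qquad L(\delta):=\sum_{j,k}m_{j,k}\int_{B^{n-2}_{1/2}(0)}\!\int_\delta^\infty\!\int_0^{2\pi}\sum_l r^{2\alpha-1}D\bigl(r^{2-2\alpha}w_{j,k,l;a}\cdot D_\iota\varphi^{(0)}_{j,l}\bigr)\cdot DD_{y_h}\zeta,
\end{equation*}
i.e.\ $L(\delta)=0$ for every $\delta\in(0,1/4)$.

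Finally I would pass to the limit $\delta\to 0$. Expanding the derivative using $D\varphi^{(0)}\sim r^{\alpha-1}$ and $D^2\varphi^{(0)}\sim r^{\alpha-2}$ on $\{r>0\}$, the integrand is dominated by $C(\|\zeta\|_{C^2})\, r^{\alpha}\bigl(|w|+r|Dw|\bigr)$ near the axis, which is locally integrable in view of \eqref{blowup_est3} and the estimate \eqref{blowup_est1} applied with a suitable $\psi$ (so that $|Dw|\in L^2_{\rm loc}$). Dominated convergence therefore yields $L(\delta)\to L(0)$, which is precisely the left-hand side of \eqref{blowup_est5}. The main technical point of the argument is justifying the interchange of the two limits $\nu\to\infty$ and $\delta\to 0$, which is handled by the explicit choice of $\delta'_\nu\to 0$ with $\delta_\nu'{}^{-2}E_\nu\to 0$, coupled with the non-concentration estimate of Corollary~\ref{nonconest_cor} (and its blow-up version in Lemma~\ref{blowup_est_lemma}) ensuring that no mass of the integrand is lost near the axis; the remainder of the proof is a routine case-by-case verification of the pointwise limits of the integrands.
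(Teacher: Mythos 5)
Your proof follows essentially the same strategy as the paper: apply the first-variation estimate of Lemma~\ref{fourierest_lemma} to the blow-up sequence, divide by $E_\nu$, send $\nu\to\infty$, and then send the inner cutoff to zero. However, you have introduced an unnecessary complication by taking an adaptive cutoff $\delta'_\nu\downarrow 0$, and that extra layer makes the argument harder to close cleanly. The paper's version is simpler: since $D_r\widetilde{\zeta}$ vanishes in a fixed neighborhood $\{r<\delta_0\}$ of the axis, one may \emph{fix} $\delta\in(0,\delta_0]$, apply Lemma~\ref{fourierest_lemma} with this fixed $\delta$ (the hypotheses $(\star)$, $(\star\star)$, \eqref{fourierest_eqn1} hold for all large $\nu$ because $\varepsilon_\nu,\beta_\nu,\delta_\nu\to 0$ while the constants $\varepsilon_0(\delta),\beta_0(\delta)$ are fixed), pass $\nu\to\infty$ using the $C^1$ convergence $v^{(\nu)}_{j,k,l;a}/E_\nu\to w_{j,k,l;a}$ on $B_{1/2}(0)\cap\{r\ge\delta\}$ (compactness of single-valued harmonic functions) to obtain $|L(\delta)|\le C\delta^{2\alpha}$, and finally let $\delta\downarrow 0$ by dominated convergence using $D_x\zeta=0$ near $\{r=0\}$, $D_yw\in L^2$ (by \eqref{blowup_est3}) and $D\varphi^{(0)}\in L^2$. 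There is no need to manage two simultaneously moving scales.

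The one genuinely flawed step in your version is the treatment of the tail $I_\nu(\delta'_\nu)-I_\nu(\delta)$. The estimates \eqref{fourierest_eqn3}--\eqref{fourierest_eqn5} from the proof of Lemma~\ref{fourierest_lemma} do not directly bound this annular integral, and the bound $CE_\nu\sup|DD_{y_h}\zeta|$ you cite does \emph{not} make the tail ``disappear in the limit'': after dividing by $E_\nu$ it contributes an $O(1)$ term, so you would only conclude $|L(\delta)|\le C$, not $L(\delta)=0$. What actually holds (using $D_r\widetilde{\zeta}=0$ on the annulus, so the integrand reduces to $rD_y v_{j,k,l;a}\cdot D_\iota\varphi_{j,k,l}\,D_yD_{y_h}\zeta$, then Cauchy--Schwarz together with $\int_{B_{1/2}}|D_yu^{(\nu)}|^2\le CE_\nu^2$ from Theorem~\ref{keyest_thm}(b)) is a bound of the form $CE_\nu\,\delta^{\alpha+1}\sup|D^2\zeta|$. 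With that correction one gets $|L(\delta)|\le C\delta^{\alpha+1}$, and the $\delta\downarrow 0$ step then closes the argument, but the intermediate claim $L(\delta)=0$ for each fixed $\delta$ is not justified and is in fact false in general.
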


\begin{proof}
Let $w$ be the blow-up of $u^{(\nu)}$ relative to $\varphi^{(\nu)}$ and excess $E_{\nu} = \left( \int_{B_1(0)} \mathcal{G}(u^{(\nu)},\varphi^{(\nu)}) \right)^{1/2}$ as in Definition~\ref{blowupclass_defn}.  Let $\delta > 0$ be any positive number such that $D_r \widetilde{\zeta}(r,y) = 0$ for $r \leq \delta$.  Let $v^{(\nu)}_{j,k} : {\rm graph}\, \varphi^{(\nu)}_{j,k} |_{B_{1/2}(0)} \rightarrow \mathcal{A}_{m_{j,k}}(\mathbb{R}^m)$ be as in Corollary \ref{graphical_cor} with $\gamma = 1/2$, $\tau = \delta$, $u = u^{(\nu)}$, and $\varphi = \varphi^{(\nu)}$.  Let $v^{(\nu)}_{j,k,l;a}(X) = \frac{1}{m_{j,k}} \sum_{h=1}^{m_{j,k}} v^{(\nu)}_{j,k,l,h}(X)$ where $v_{j,k,l,h}(X)$ is as in \eqref{v_localized_1} with $\varphi^{(\nu)}_{j,k,l}$ and $v^{(\nu)}_{j,k}$ in place of $\varphi_{j,k,l}$ and $v_{j,k}$.  By Lemma~\ref{fourierest_lemma} with $u^{(\nu)}$ and $\varphi^{(\nu)}$ in place of $u$ and $\varphi$, 
\begin{align*}
	&\left| \sum_{j=1}^J \sum_{k=1}^{p_j} m_{j,k} \int_{B^{n-2}_{1/2}(0)} \int_{\delta}^{\infty} \int_0^{2\pi} \sum_{l=1}^{q_{j,k}} 
		r^{2\alpha-1} D(r^{2-2\alpha} \, v^{(\nu)}_{j,k,l;a} \cdot D_{\iota} \varphi^{(\nu)}_{j,k,l}) \cdot DD_{y_h} \zeta \,d\theta \,dr \,dy \right| 
		\\&\hspace{10mm} \leq C (\delta^{-2} E_{\nu}^2 + \delta^{2\alpha} E_{\nu}) 
\end{align*}
for all $\iota = 1,2$ and $h = 1,2,\ldots,n-2$, where 
$C = C(n,m,q,\alpha,\varphi^{(0)},\zeta) \in (0,\infty)$ is a constant.  By the compactness of (single-valued) harmonic functions, $v^{(\nu)}_{j,k,l;a}/E_{\nu} \rightarrow w_{j,k,l;a}$ in $C^1(B_{1/2}(0) \cap \{r \geq \delta\};\mathbb{R}^m)$ (using the notation and conventions from Subsection \ref{sec:blowupclass_subsec}).  Thus by dividing by $E_{\nu}$ and letting $\nu \rightarrow \infty$, 
\begin{align} \label{blowup_est5_eqn1}
	&\left| \sum_{j=1}^J \sum_{k=1}^{p_j} m_{j,k} \int_{B^{n-2}_{1/2}(0)} \int_{\delta}^{\infty} \int_0^{2\pi} \sum_{l=1}^{q_{j,k}} 
		r^{2\alpha-1} D(r^{2-2\alpha} \, w_{j,k,l; a} \cdot D_{\iota} \varphi^{(0)}_{j,l}) \cdot DD_{y_h} \zeta \,d\theta \,dr \,dy \right| 
		\\&\hspace{10mm} \leq C \delta^{2\alpha} \nonumber 
\end{align}
for all $\iota = 1,2$ and $h = 1,2,\ldots,n-2$.  By using the dominated convergence theorem together with the fact that $D_x \zeta = 0$ in a neighborhood of $\{r=0\}$, $D_{y}w_{j, k, l; a} \in L^{2}(B_{1/2}(0); {\mathbb R})$ (by \ref{blowup_est3}) and $D\varphi^{(0)} \in L^2(B_{1/2}(0);\mathcal{A}_q(\mathbb{R}^m)),$ we can let $\delta \downarrow 0$ in \eqref{blowup_est5_eqn1} to obtain \eqref{blowup_est5}. 
\end{proof}

\begin{lemma} \label{blowup_est6_lemma}  
For each $\delta \in (0,1/4)$, $w = (w_{j,k}) \in \mathfrak{B}$, $z \in B^{n-2}_{1/16}(0)$, and $\zeta_{j,k} \in C^1_c(B_{1/16}(0))$ with $|\zeta_{j,k}| \leq 1/16$ and $|D\zeta_{j,k}| \leq 1$, 
\begin{align} \label{blowup_est6}
	\int_{B_{1/16}(0) \cap \{r > \delta\}} \sum_{j=1}^J \sum_{k=1}^{p_j} \sum_{l=1}^{q_{j,k}} |Dw_{j,k,l}|^2 
	\leq {}& \int_{B_{1/16}(0) \cap \{r > \delta\}} \sum_{j=1}^J \sum_{k=1}^{p_j} \sum_{l=1}^{q_{j,k}} |D\widetilde{w}_{j,k,l}|^2 \\&
		+ C \delta^{-2} \int_{B_{1/8}(0) \cap \{\delta/8 < r < 2\delta\}} \sum_{j=1}^J \sum_{k=1}^{p_j} \sum_{l=1}^{q_{j,k}} |w_{j,k,l}|^2, \nonumber 
\end{align}
where $w_{j,k,l}$ is as in \eqref{w_localized}, $\widetilde{w}_{j,k,l}(X) = w_{j,k,l}(X+\zeta_{j,k}(X) (X-(0,z)))$, and $C = C(n,m,q,\alpha,\varphi^{(0)}) \in (0,\infty)$ (independent of $\delta$). 
 \end{lemma}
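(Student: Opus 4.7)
The plan is to derive the estimate by applying Lemma~\ref{competitor_lemma} along a blow-up sequence and passing to the limit. I would start by fixing a representation of $w$: let $u^{(\nu)}$ be average-free energy minimizers with $\mathcal{N}_{u^{(\nu)}}(0) \geq \alpha$, and $\varphi^{(\nu)} \in \Phi_{\varepsilon_{\nu}}(\varphi^{(0)})$, such that $w$ is the blow-up of $(u^{(\nu)})$ relative to $(\varphi^{(\nu)})$ by the excesses $E_{\nu} = \left(\int_{B_1(0)} \mathcal{G}(u^{(\nu)},\varphi^{(\nu)})^2\right)^{1/2}$ in the sense of Definition~\ref{blowupclass_defn}. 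By Definition~\ref{blowupclass_defn}(d),(e), Hypotheses~$(\star)$ and $(\star\star)$ of Section~\ref{sec:graphical_sec} hold for $u^{(\nu)}$ and $\varphi^{(\nu)}$ when $\nu$ is sufficiently large. Set $Z = (0,z)$, which has $\op{dist}(Z,\{0\}\times\mathbb{R}^{n-2}) = 0 < \delta/2$. Lemma~\ref{competitor_lemma} then furnishes the functions $v^{(\nu)}_{j,k,l}$ (those of Corollary~\ref{graphical_cor} with $\gamma = 1/8$, $\tau = \delta/8$) for which, summing over $j,k,l$,
\begin{equation*}
\int_{B_{1/16}(0) \cap \{r > \delta\}} |Dv^{(\nu)}_{j,k,l}|^2 \leq \int_{B_{1/16}(0) \cap \{r > \delta\}} |D\widetilde{v}^{(\nu)}_{j,k,l}|^2 + C\delta^{-2}\int_{B_{1/8}(0) \cap \{\delta/8 < r < 2\delta\}} |v^{(\nu)}_{j,k,l}|^2,
\end{equation*}
where $\widetilde{v}^{(\nu)}_{j,k,l}(X) = v^{(\nu)}_{j,k,l}(\Phi(X))$ and $\Phi(X) = X + \zeta_{j,k}(X)(X-Z)$.

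I would then divide by $E_{\nu}^2$ and pass to the limit $\nu \to \infty$ in each of the three terms. From the construction in Section~\ref{blow-up-procedure}, $v^{(\nu)}_{j,k,l}/E_\nu \to w_{j,k,l}$ uniformly on compact subsets of $B_1(0) \setminus \{0\}\times\mathbb{R}^{n-2}$, which immediately handles the last (zeroth-order) term. For the gradient terms, the key point is that each $v^{(\nu)}_{j,k,l}/E_\nu$ is itself locally Dirichlet energy minimizing on $B_{3/4}(0) \cap \{r > \delta/8\}$ by the component-wise minimality established in Corollary~\ref{graphical_cor}, so the compactness result Lemma~\ref{compactness_lemma} from the appendix --- convergence of Dirichlet energies under uniform convergence of multi-valued energy minimizers --- yields
\begin{equation*}
E_\nu^{-2}\int_{\Omega'} |Dv^{(\nu)}_{j,k,l}|^2 \longrightarrow \int_{\Omega'} |Dw_{j,k,l}|^2
\end{equation*}
for every compact $\Omega' \subset\subset B_{3/4}(0) \cap \{r > \delta/8\}$. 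Applying this with $\Omega' = \overline{B_{1/16}(0) \cap \{r > \delta\}}$ disposes of the left-hand side.

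The main (albeit routine) obstacle will be the middle term, since $\widetilde{v}^{(\nu)}_{j,k,l}$ is itself \emph{not} energy minimizing and so Lemma~\ref{compactness_lemma} does not apply to it directly. To handle it I would change variables through $\Phi$, which is a $C^1$-diffeomorphism of $B_{1/16}(0)$ by the bounds $|\zeta_{j,k}| \leq 1/16$ and $|D\zeta_{j,k}| \leq 1$ together with the inverse function theorem, and which maps $B_{1/16}(0) \cap \{r > \delta\}$ into $B_{1/8}(0) \cap \{r > \delta/2\}$. Writing
\begin{equation*}
\int_{B_{1/16}(0) \cap \{r > \delta\}} |D\widetilde{v}^{(\nu)}_{j,k,l}|^2 = \int_{\Phi(B_{1/16}(0) \cap \{r > \delta\})} |Dv^{(\nu)}_{j,k,l}(Y)\cdot D\Phi(\Phi^{-1}(Y))|^2\, |\det D\Phi(\Phi^{-1}(Y))|^{-1}\, dY,
\end{equation*}
convergence of Dirichlet energies on the image region (again by Lemma~\ref{compactness_lemma}, since the image lies inside the energy-minimizing domain) together with the smoothness and $\nu$-independence of $\Phi$ produces the limit $E_\nu^{-2}\int |D\widetilde{v}^{(\nu)}_{j,k,l}|^2 \to \int |D\widetilde{w}_{j,k,l}|^2$. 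Combining the three limits yields the claimed inequality.
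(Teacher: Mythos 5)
Your overall outline matches the paper's: apply Lemma~\ref{competitor_lemma} along the blow-up sequence, divide by $E_\nu^2$, and pass to the limit term by term. For the first and third terms your argument is fine. The gap is in the middle term. Having changed variables you are left needing convergence of the \emph{weighted} integral $E_\nu^{-2}\int_{\Phi(\cdot)} |Dv^{(\nu)}_{j,k,l}(Y)\cdot A(Y)|^2\,\mu(Y)\,dY$, where $A(Y)=D\Phi(\Phi^{-1}(Y))$ and $\mu(Y)=|\det D\Phi(\Phi^{-1}(Y))|^{-1}$ are non-constant. Lemma~\ref{compactness_lemma} only asserts convergence of the plain Dirichlet integral $\int_{\Omega'}|Du^{(k')}|^2 \to \int_{\Omega'}|Du|^2$; it does \emph{not} by itself give convergence of integrals of a general continuous quadratic form in $Du^{(\nu)}$. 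What the change-of-variables reduction really requires is strong $L^2_{\rm loc}$ (or pointwise a.e.) convergence of the gradients $Dv^{(\nu)}_{j,k,l}/E_\nu \to Dw_{j,k,l}$ away from $\Sigma_w$ and $\{0\}\times\mathbb{R}^{n-2}$. That is precisely the content of Lemma~\ref{gradient_convergence}, which the appendix supplies for this purpose; the upgrade from energy convergence to $L^2$ gradient convergence is non-trivial for multi-valued minimizers and is established there via the local decomposition into single-valued harmonic pieces, not by citing Lemma~\ref{compactness_lemma} alone.

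For comparison, the paper's own treatment of the middle term does not change variables; instead it uses Lemma~\ref{gradient_convergence} together with the chain rule to obtain pointwise a.e. convergence of $E_\nu^{-2}|D\widetilde{v}^{(\nu)}_{j,k,l}|^2 \to |D\widetilde{w}_{j,k,l}|^2$, and then applies a generalized dominated convergence argument with the dominating sequence furnished by the strong $L^1$ convergence of $E_\nu^{-2}|Dv^{(\nu)}_{j,k,l}|^2 \to |Dw_{j,k,l}|^2$ on the slightly larger set $B_{3/32}(0)\cap\{r>\delta/2\}$ (Lemma~\ref{compactness_lemma}). Your change-of-variables route is a legitimate and arguably cleaner alternative to this dominated-convergence step, and it would go through once you replace the appeal to Lemma~\ref{compactness_lemma} at the final step with an appeal to Lemma~\ref{gradient_convergence}. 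You should also note that $\Phi$ depends on $(j,k)$ through $\zeta_{j,k}$, so the image region varies with $(j,k)$; this is harmless since all image regions lie in $B_{1/8}(0)\cap\{r>\delta/2\}$, but it deserves a sentence.
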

 
\begin{proof}
Let $w$ be the blow-up of $u^{(\nu)}$ relative to $\varphi^{(\nu)}$ and excess $E_{\nu} = \left( \int_{B_1(0)} \mathcal{G}(u^{(\nu)},\varphi^{(\nu)}) \right)^{1/2}$ as in Definition~\ref{blowupclass_defn}.  Let $v^{(\nu)}_{j,k} : {\rm graph}\, \varphi^{(\nu)}_{j,k} |_{B_{1/2}(0)} \rightarrow \mathcal{A}_{m_{j,k}}(\mathbb{R}^m)$ be as in Corollary \ref{graphical_cor} with $\gamma = 1/8$, $\tau = \delta/8$, $u = u^{(\nu)}$, and $\varphi = \varphi^{(\nu)}$.  Let $v_{j,k,l}(X)$ is as in \eqref{v_localized} with $\varphi^{(\nu)}$ and $v^{(\nu)}_{j,k}$ in place of $\varphi$ and $v_{j,k}$.  By Lemma~\ref{competitor_lemma} with $u^{(\nu)}$ and $\varphi^{(\nu)}$ in place of $u$ and $\varphi$, 
\begin{align} \label{blowup_est6_eqn1}
	\int_{B_{1/16}(0) \cap \{r > \delta\}} \sum_{j=1}^J \sum_{k=1}^{p_j} \sum_{l=1}^{q_{j,k}} |Dv^{(\nu)}_{j,k,l}|^2 
	\leq {}& \int_{B_{1/16}(0) \cap \{r > \delta\}} \sum_{j=1}^J \sum_{k=1}^{p_j} \sum_{l=1}^{q_{j,k}} |D\widetilde{v}^{(\nu)}_{j,k,l}|^2 \\&
		+ C \delta^{-2} \int_{B_{1/8}(0) \cap \{\delta/8 < r < 2\delta\}} \sum_{j=1}^J \sum_{k=1}^{p_j} \sum_{l=1}^{q_{j,k}} |v^{(\nu)}_{j,k,l}|^2, \nonumber 
\end{align}
where $\widetilde{v}^{(\nu)}_{j,k,l}(X) = v^{(\nu)}_{j,k,l}(X+\zeta_{j,k}(X) (X-(0,z)))$ and $C = C(n,m,q,\alpha,\varphi^{(0)}) \in (0,\infty)$.  By the continuity of Dirichlet energy under uniform limits of Dirichlet energy minimizers (see Lemma~\ref{compactness_lemma} of the appendix), 
\begin{equation} \label{blowup_est6_eqn2}
	\lim_{\nu \rightarrow \infty} \frac{1}{E_{\nu}^2} \sum_{j=1}^J \sum_{k=1}^{p_j} \sum_{l=1}^{q_{j,k}} |Dv^{(\nu)}_{j,k,l}|^2 
		= \sum_{j=1}^J \sum_{k=1}^{p_j} \sum_{l=1}^{q_{j,k}} |Dw_{j,k,l}|^2
\end{equation}
strongly in $L^1(B_{3/32}(0) \cap \{r > \delta/2\})$.  Moreover, by Lemma~\ref{gradient_convergence} of the appendix and the chain rule, 
\begin{equation} \label{blowup_est6_eqn3}
	\lim_{\nu \rightarrow \infty} \frac{1}{E_{\nu}^2} \sum_{j=1}^J \sum_{k=1}^{p_j} \sum_{l=1}^{q_{j,k}} |D\widetilde{v}^{(\nu)}_{j,k,l}|^2 
		= \sum_{j=1}^J \sum_{k=1}^{p_j} \sum_{l=1}^{q_{j,k}} |D\widetilde{w}_{j,k,l}|^2 
\end{equation}
pointwise a.e.~in $B_{1/16}(0) \cap \{r > \delta\}$.  Hence, by dividing both sides of \eqref{blowup_est6_eqn1} by $E_{\nu}^2$ and let $\nu \rightarrow \infty$ using \eqref{blowup_est6_eqn2}, \eqref{blowup_est6_eqn3}, and the dominated convergence theorem, we obtain \eqref{blowup_est6}. 
\end{proof}

\section{Classification of homogeneous blow-ups}\label{sec:homogblowup_sec} 

The main result of this section is the following:

\begin{theorem} \label{homogrep_lemma} 
Let $w  = (w_{j, k}) \in \mathfrak{B}$ be homogeneous of degree $\alpha$ in the sense that $$X \mapsto \sum_{l=1}^{q_{j,k}} \sum_{h=1}^{m_{j,k}} \llbracket w_{j,k,l,h}(X) \rrbracket$$ is an ${\mathcal A}_{m_{j, k}q_{j, k}}({\mathbb R}^{m})$-valued homogeneous degree $\alpha$ function of $X \in {\mathbb R}^{n} \setminus \{0\} \times {\mathbb R}^{n-2}$ for every $j$ and $k$, where $w_{j,k,l,h}$ is as in \eqref{w_localized}.  
Then $w \in \mathfrak{L}$.
\end{theorem}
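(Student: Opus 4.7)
The plan is to follow the four-step strategy sketched in Section~\ref{outline}, combining the integral estimates of Lemma~\ref{blowup_est_lemma} (especially \eqref{blowup_est4}), the first-variation identity of Lemma~\ref{blowup_est5_lemma}, and the radial-deformation energy comparison of Lemma~\ref{blowup_est6_lemma}.

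\textbf{Step 1 (linearity of $\lambda$).} Lemma~\ref{blowup_est_lemma}(c) produces a bounded H\"older continuous function $\lambda:B^{n-2}_{1/4}(0)\to\mathbb R^{2}$ satisfying \eqref{blowup_est4}. The degree-$\alpha$ homogeneity of $w$, together with the uniqueness assertion in Lemma~\ref{blowup_est_lemma}(c), forces $\lambda(\rho z)=\rho\lambda(z)$ for $\rho>0$, so $\lambda$ extends continuously to a positively $1$-homogeneous function on all of $\mathbb R^{n-2}$. The linearity of $\lambda$ in $y$ is the key analytic step. I would test \eqref{blowup_est5} against $\zeta(X)=\chi(r)\eta(y)$ with $\chi\in C^{2}_{c}((0,\infty))$ and $\eta\in C^{2}_{c}(B^{n-2}_{1/2}(0))$, and perform the $r$--$\theta$ integration. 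Using that the Fourier coefficient of $w_{j,k,l;a}$ along $D_{\iota}\varphi^{(0)}_{j,l}$ is precisely $\lambda^{\iota}(y)$ times a nonzero $(j,k,l)$-dependent constant, the $L^{2}$-orthogonality of the remaining angular modes kills every other term, leaving $c(\chi)\int_{B^{n-2}_{1/2}(0)}\lambda^{\iota}(y)\,\Delta_{y}\eta(y)\,dy=0$ with $c(\chi)\neq 0$. Hence $\lambda^{\iota}$ is weakly harmonic in $y$; together with $1$-homogeneity this gives $\lambda^{\iota}(y)=b^{\iota}\cdot y$ for some $b^{\iota}\in\mathbb R^{n-2}$, i.e.\ $\lambda$ is linear.

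\textbf{Step 2 (the shifted blow-up $\widetilde w$, its $L^{2}$ decay and radial stationarity).} Set $\widetilde w_{j,k,l,h}(X)=w_{j,k,l,h}(X)-D_{x}\varphi^{(0)}_{j,l}(X)\cdot\lambda(y)$, with $\varphi^{(0)}_{j,l}\equiv 0$ whenever $\varphi^{(0)}_{j}\equiv 0$. Since $\lambda$ is linear, $\widetilde w$ is again homogeneous of degree $\alpha$, and \eqref{blowup_est4} now reads
\begin{equation*}
\int_{B_{\rho}((0,y))}\sum_{j,k,l,h}|\widetilde w_{j,k,l,h}|^{2}\leq C\rho^{n+2\alpha-\sigma}\qquad\forall\,(0,y)\in B_{1/4}(0),\ \rho\in(0,1/4),\ \sigma\in(0,1/q).
\end{equation*}
As noted in Section~\ref{outline}, $\widetilde w\in\mathfrak B$: up to a multiplicative constant it is the blow-up of $u^{(\nu)}\circ R_{\nu}$ relative to $\varphi^{(\nu)}$ for rotations $R_{\nu}$ of $\mathbb R^{n}$ whose skew-symmetric generators encode $\lambda$. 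Hence Lemma~\ref{blowup_est6_lemma} applies to $\widetilde w$. For $(0,y)$ on the axis with $|y|<1/16$ and $\zeta\in C^{1}_{c}(B_{1/16}(0))$ with $|\zeta|\leq 1$, $|D\zeta|\leq 1$, I would apply \eqref{blowup_est6} with $\zeta_{j,k}=t\zeta$ and $z=y$; covering $\{\delta/8<r<2\delta\}$ by $O(\delta^{-(n-2)})$ axial balls of radius $\sim\delta$ and inserting the decay above bounds the error term by $C\delta^{2\alpha-\sigma}\to 0$ as $\delta\downarrow 0$. Passing to the limit and varying $t$ yields the energy-critical relation for the radial deformations $X\mapsto X+t\zeta(X)(X-(0,y))$.

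\textbf{Step 3 (frequency monotonicity and translation invariance).} Radial stationarity about $(0,y)$, combined with \eqref{freqidentity1} applied component-wise to $\widetilde w$ on regions avoiding the axis and then passed to the limit via the $L^{2}$ decay above, yields
\begin{gather*}
\int_{B_{\rho}((0,y))}|D\widetilde w|^{2}=\int_{\partial B_{\rho}((0,y))}\widetilde w\cdot D_{R_{(0,y)}}\widetilde w,\\
\frac{d}{d\rho}\left(\rho^{2-n}\int_{B_{\rho}((0,y))}|D\widetilde w|^{2}\right)=2\rho^{2-n}\int_{\partial B_{\rho}((0,y))}|D_{R_{(0,y)}}\widetilde w|^{2}
\end{gather*}
for a.e.\ $\rho\in(0,1/16)$, where $R_{(0,y)}(X)=|X-(0,y)|$. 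Cauchy--Schwarz then gives monotonicity of $\rho\mapsto N_{\widetilde w,(0,y)}(\rho)$. The decay of Step~2 forces $\mathcal N_{\widetilde w}(0,y)\geq\alpha$, while a direct comparison of the spherical integrals at large $\rho$ with those centred at $0$ (using that $\widetilde w$ is homogeneous of degree $\alpha$ about $0$ and $|y|<1/16$) gives $\lim_{\rho\to\infty}N_{\widetilde w,(0,y)}(\rho)=\alpha$, hence $N_{\widetilde w,(0,y)}(\rho)\equiv\alpha$. Equality in the Cauchy--Schwarz step then forces $R_{(0,y)}D_{R_{(0,y)}}\widetilde w=\alpha\widetilde w$, i.e.\ $\widetilde w$ is $\alpha$-homogeneous about every axial point $(0,y)$. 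Two-centre $\alpha$-homogeneity implies translation invariance of $\widetilde w$ along $\{0\}\times\mathbb R^{n-2}$, by the standard Almgren argument (cf.~\cite[Theorem 2.14]{Almgren}).

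\textbf{Step 4 (classification and conclusion).} With $\widetilde w$ translation invariant along the axis, $\alpha$-homogeneous, and component-wise Dirichlet minimizing on $\mathbb R^{n}\setminus\{0\}\times\mathbb R^{n-2}$, each $\widetilde w_{j,k,l}$ descends to an $\mathcal A_{m_{j,k}}(\mathbb R^{m})$-valued, $\alpha$-homogeneous Dirichlet minimizer on $\mathbb R^{2}\setminus\{0\}$. Separation of variables in polar coordinates classifies such functions as sums of terms $\op{Re}(a_{j,k,l,h}(x_{1}+ix_{2})^{\alpha})$ with $a_{j,k,l,h}\in\mathbb C^{m}$, possibly together with the appropriate number of zero summands, matching the $b=0$ entries of Definition~\ref{homogclass_defn}(a), (b), (c). Adding back $D_{x}\varphi^{(0)}_{j,l}\cdot\lambda(y)$ with $\lambda(y)=b\cdot y$, $b\in\mathbb C^{n-2}$, recovers the full form \eqref{homogclass_eqn1}--\eqref{homogclass_eqn3}, so $w\in\mathfrak L$. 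The main obstacle is Step~3: since $\widetilde w$ is not energy minimizing on all of $\mathbb R^{n}$ (only off the axis), Almgren's variational proof of the frequency monotonicity formula is unavailable, and the radial stationarity of Step~2, together with the height bound encoded in the $L^{2}$ decay, is exactly what replaces the missing ``squash''/``squeeze'' variations at axial points $(0,y)$.
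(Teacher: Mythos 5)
Your overall four-step skeleton matches the paper's strategy (linearity of $\lambda$, reduction to $\lambda=0$ via rotations, frequency monotonicity at axial points from the radial-variation estimate, and the classification of translation-invariant homogeneous minimizers). Steps 2--4 are essentially correct and track the paper's Lemma~\ref{blowup-freq} and the special-case argument.

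However, Step 1 contains a genuine gap. You assert that ``the Fourier coefficient of $w_{j,k,l;a}$ along $D_{\iota}\varphi^{(0)}_{j,l}$ is precisely $\lambda^{\iota}(y)$ times a nonzero $(j,k,l)$-dependent constant,'' and then test \eqref{blowup_est5} with separated variables $\zeta=\chi(r)\eta(y)$ to deduce that $\lambda^{\iota}$ is weakly harmonic in $y$. This first assertion is false: the Fourier coefficient $\widetilde{w}^{\iota}_{j,k}(r,y)$ is a genuine function of $r$ as well as $y$ (it is positively $1$-homogeneous in $(r,y)$, not a function of $y$ alone), and it only approaches a multiple of $\lambda^{\iota}(y)$ in an averaged sense as $r\to 0$ via \eqref{blowup_est4}. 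Consequently, after the $r$--$\theta$ integration of \eqref{blowup_est5}, the integrand still involves the unknown $r$-dependence of $W^{\iota}(r,y)=\sum_{j,k}m_{j,k}\widetilde{w}^{\iota}_{j,k}(r,y)$, and there is no universal factor $c(\chi)$ to pull out. What \eqref{blowup_est5} actually encodes is that $D_{y_h}W^{\iota}$ satisfies the degenerate elliptic equation $\int D_{y_h}W^{\iota}\,\op{div}(|r|^{2\alpha-1}\overline{D}\zeta)\,dr\,dy=0$ on $\mathbb{R}^{n-1}$, and the paper's proof requires the two dedicated steps you omitted: the weighted mean-value property (Lemma~\ref{homogrep1_mvp_lemma}) and the Liouville-type theorem for bounded homogeneous-degree-zero solutions of this degenerate operator (Lemma~\ref{homogrep1_liousville_lemma}), the latter being nontrivial precisely because the coefficient $|r|^{2\alpha-1}$ degenerates or blows up along $\{r=0\}$. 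Once $D_{y_h}W^{\iota}$ is shown to be constant, one still must extract the linearity of $\lambda$ from \eqref{blowup_est4} via the $L^{2}$-orthogonality of $D_{1}\varphi^{(0)}$ and $D_{2}\varphi^{(0)}$, which is a separate (if short) step. Without these ingredients your deduction of harmonicity of $\lambda^{\iota}$, and hence its linearity, does not go through.
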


An analogous result was proven in~\cite{Sim93} using Fourier analysis and PDE techniques, a method we adapted in \cite{KrumWic1} to prove Theorem~\ref{homogrep_lemma} in the special case $q=2$, taking advantage of the fact that  in that case the blow-ups $w \in {\mathfrak B}$ are, away from the axis of $\varphi^{(0)}$, single valued functions over ${\rm graph} \, \varphi^{(0)}$.  Since this latter fact is no longer true in the present setting (of general $q$), the proof requires a new approach and considerably more effort. 

 Recall that for each $z \in B^{n-2}_{1/4}(0)$, $w$ satisfies \eqref{blowup_est4} for a unique $\lambda(z) \in \mathbb{R}^2$. The proof of Theorem~\ref{homogrep_lemma} involves the following main steps:
\begin{enumerate}
        \item[1.] Prove Theorem~\ref{homogrep_lemma} in the special case that \eqref{blowup_est4} holds true with $\lambda(\cdot) = 0$. 
	\item[2.] Show in the general case that  $\lambda(z)$ is a linear function of $z \in \mathbb{R}^{n-2}$, and thus by composing each $u^{(\nu)}$ with a suitable rotation where $(u^{(\nu)})$ is a sequence of locally energy minimizing functions corresponding to $w$ per Definition~\ref{blowupclass_defn}, the proof reduces to the special case $\lambda(\cdot) = 0$. 
	\end{enumerate}

For Step 1, we use the energy comparison estimate of Lemma~\ref{blowup_est6_lemma} to establish monotonicity of the frequency function associated with $w$ at base points on $\{0\} \times \mathbb{R}^{n-2},$ and deduce from \eqref{blowup_est4} 
(with $\lambda(z) = 0$) that $w$ has frequency $\geq \alpha$ at each point of $\{0\} \times \mathbb{R}^{n-2}$ and hence that $w$ is independent of $y \in \mathbb{R}^{n-2}$.  From this the conclusion  that $w \in \mathfrak{L}$ will readily follow.  For Step 2, we first use Lemma~\ref{blowup_est5_lemma} to establish an $L^{2}$ mean value property for the $y$-derivatives of certain Fourier coefficients of the average of $w$, and then use this mean value property and a PDE argument. 
\subsection{Case $\lambda = 0:$ frequency monotonicity}


Suppose $w \in \mathfrak{B}$  and that \eqref{blowup_est4} holds true with $\lambda(z) = 0$, that is 
\begin{equation} \label{homogrep2_eqn1}
	\int_{B_{1/2}(0)} \sum_{j=1}^J \sum_{k=1}^{p_j} \sum_{l=1}^{q_{j,k}} 
		\frac{|w_{j,k,l}(X)|^2}{|X-(0,z)|^{n+2\alpha-\sigma}} dX \leq C 
\end{equation}
for all $z \in B^{n-2}_{1/4}(0)$ and $\sigma \in (0,1/q)$ and some constant $C = C(n,m,q,\alpha,\varphi^{(0)},\sigma) \in (0,\infty)$.  By \eqref{homogrep2_eqn1} and \eqref{regestimate}, for every $\sigma \in (0,1/q)$,  
\begin{equation} \label{homogrep2_eqn2}
	\sum_{j=1}^J \sum_{k=1}^{p_j} \sum_{l=1}^{q_{j,k}} |w_{j,k,l}(X)| + \left( |x|^{2-n} \int_{B_{|x|/2}(X)} \sum_{j=1}^J \sum_{k=1}^{p_j} \sum_{l=1}^{q_{j,k}} 
		|Dw_{j,k,l}|^2 \right)^{1/2} \leq C |x|^{\alpha-\sigma/2}
\end{equation}
for all $X = (x,y) \in B_{1/8}(0) \setminus \{0\} \times {\mathbb R}^{n-2}$ and $\sigma \in (0,1/q)$ where $C = C(n,m,q,\alpha,\varphi^{(0)},\sigma) \in (0,\infty)$. It follows from this and a covering argument that 
\begin{equation}\label{homogrep2_eqn2_1}
\int_{B_{1/16}(0)} \sum_{j=1}^{J}\sum_{k=1}^{p_{j}}\sum_{l=1}^{q_{j, k}}|Dw_{j, k, l}|^{2} < C, 
\end{equation}
where $C = C(n, m, q, \alpha, \varphi^{(0)}).$ 
\begin{lemma}\label{blowup-freq}
Suppose $w = (w_{j,k}) \in \mathfrak{B}$ is such that \eqref{homogrep2_eqn1} holds true.  Let $\overline{w} : B_{1/2}(0) \rightarrow \mathcal{A}_{q}(\mathbb{R}^m)$ be defined by 
\begin{equation*}
	\overline{w}(X) = \sum_{j=1}^{J} \sum_{k=1}^{p_j} \sum_{l=1}^{q_{j,k}} \sum_{h=1}^{m_{j,k}} \llbracket w_{j,k,l,h}(X) \rrbracket
\end{equation*}
for all $X \in B_{1/2}(0)$, where $w_{j,k,l,h}$ are as in \eqref{w_localized}.  Then either $w_{j,k} \equiv 0$ in $B_{1/2}$ for all $j$ and $k,$ or for each $z \in B^{n-2}_{1/16}(0)$, 
\begin{equation} \label{homogrep2_freq}
	N_{\overline{w}, \, (0,z)}(\rho) = \frac{\rho^{2-n} \int_{B_{\rho}(0,z)} |D\overline{w}|^2}{\rho^{1-n} \int_{\partial B_{\rho}(0,z)} |\overline{w}|^2}
\end{equation}
is monotone nondecreasing as a function of $\rho \in (0,1/16)$.  Moreover, in the latter case $N_{\overline{w}, \, (0,z)}(\rho) = \alpha$ for all $\rho \in (0,1/16)$ and some $\alpha \geq 0$ if and only if $\overline{w}((0,z) + X)$ is homogeneous of degree $\alpha$ as a function of $X = (x,y) \in B_{1/16}(0)$. 
\end{lemma}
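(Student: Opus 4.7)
The plan is to establish Almgren-type frequency monotonicity for $\overline{w}$ centered at arbitrary points $(0,z) \in \{0\} \times \mathbb{R}^{n-2}$. The classical derivation (recounted in Section~\ref{sec:frequency_sec}) requires two first variation identities: the ``squash'' identity \eqref{freqidentity1} and the ``squeeze'' identity \eqref{freqidentity2}. Since $\overline{w}$ is locally energy minimizing only away from the axis $\{0\} \times \mathbb{R}^{n-2}$, both identities have to be re-derived in the present setting, each requiring the height bound \eqref{homogrep2_eqn2} to control the contribution from cutoffs near the axis.

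First I would establish the squeeze identity. Applying Lemma~\ref{blowup_est6_lemma} with a common choice $\zeta_{j,k} = t\zeta$ for a scalar $\zeta \in C^1_c(B_{1/16}(0))$ and $t \in \mathbb{R}$ small, and using both signs of $t$, one obtains
\begin{equation*}
\int_{B_{1/16}(0)} \Big(\tfrac{1}{2}|D\overline{w}|^2 \delta_{ij} - D_i \overline{w}^{\kappa}_l D_j \overline{w}^{\kappa}_l\Big) D_i (\zeta (X-(0,z))^j) = 0 \tag{\ensuremath{\star}} \label{homogrep2_freqid2}
\end{equation*}
provided the error term $C\delta^{-2} \int_{\{\delta/8 < r < 2\delta\}} |w|^2$ in Lemma~\ref{blowup_est6_lemma} tends to $0$ as $\delta \to 0$. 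By \eqref{homogrep2_eqn2}, $|w_{j,k,l}(X)| \leq C|x|^{\alpha - \sigma/2}$, so the error is $O(\delta^{2\alpha-\sigma})$ which vanishes upon choosing $\sigma < 2\alpha$. For the squash identity, $\overline{w}$ satisfies $\int |D\overline{w}|^2 \zeta = -\int \overline{w}^{\kappa}_l D_i \overline{w}^{\kappa}_l D_i \zeta$ whenever ${\rm spt}\,\zeta \subset \mathbb{R}^n \setminus \{0\} \times \mathbb{R}^{n-2}$ by local energy minimality away from the axis. To extend to general $\zeta$, replace $\zeta$ by $\zeta \eta_\delta$ for a cutoff $\eta_\delta$ vanishing for $r < \delta/2$ and equal to $1$ for $r \geq \delta$. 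The resulting correction $\int \overline{w}^{\kappa}_l D_i \overline{w}^{\kappa}_l \zeta D_i \eta_\delta$ is estimated, via Cauchy-Schwarz combined with \eqref{homogrep2_eqn2} and a covering argument on the annulus $A_\delta = \{\delta/2 < r < \delta\}$ (yielding $\|\overline{w}\|_{L^2(A_\delta)} \leq C \delta^{\alpha-\sigma/2+1}$ and $\|D\overline{w}\|_{L^2(A_\delta)} \leq C \delta^{\alpha-\sigma/2}$), by $C\delta^{2\alpha-\sigma}$, which vanishes as $\delta \to 0$.

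With both identities available on $B_{1/16}((0,z))$, taking $\zeta$ to approximate the characteristic function of $B_\rho((0,z))$ yields the analogues of \eqref{freqidentity3} and \eqref{freqidentity4} with $\overline{w}$, $(0,z)$ in place of $u$, $Y$. Monotonicity of $N_{\overline{w},(0,z)}(\rho)$ then follows by direct computation using the Cauchy-Schwarz inequality, exactly as in \eqref{freq_derivative}. For the dichotomy, if $H_{\overline{w},(0,z)}(\rho_0) = 0$ for some $\rho_0 \in (0,1/16)$, then the squash identity (via \eqref{freqidentity3}) gives $\int_{B_{\rho_0}((0,z))} |D\overline{w}|^2 = 0$, so $\overline{w} \equiv 0$ on $B_{\rho_0}((0,z))$. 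Since each $w_{j,k,l}$ is Dirichlet energy minimizing on balls disjoint from the axis, Lemma~\ref{consequence_lemma}(a) (applied componentwise) together with a standard unique continuation/propagation argument extends this to $w_{j,k} \equiv 0$ on $B_{1/2}(0)$ for all $j,k$. Finally, constancy $N_{\overline{w},(0,z)}(\rho) \equiv \alpha$ forces equality in the Cauchy-Schwarz bound on $N'$, so that $(X-(0,z)) \cdot D\overline{w}(X) = \alpha\,\overline{w}(X)$, which is precisely homogeneity of degree $\alpha$ of $\overline{w}((0,z)+\cdot)$; the converse is immediate.

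The main obstacle is the squeeze identity~\eqref{homogrep2_freqid2}. The function $\overline{w}$ is not globally energy minimizing, and (as noted in the outline preceding Remark~\ref{example}) stationarity in fact fails for generic non-radial domain deformations; the only available substitute is the energy comparison inequality Lemma~\ref{blowup_est6_lemma}, whose inner error term forces us to use the full strength of the pointwise height decay \eqref{homogrep2_eqn2}. All subsequent steps are then essentially routine adaptations of the standard Almgren frequency machinery.
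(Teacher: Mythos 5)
Your proposal is correct and follows essentially the same route as the paper's proof: reduce the problem to establishing the squash identity \eqref{homogrep2_freqid1} and the radial squeeze identity \eqref{homogrep2_freqid2}, prove the squash identity via cutoffs $\chi_\delta$ near the axis controlled by the height bound \eqref{homogrep2_eqn2}, prove the radial squeeze identity by combining Lemma~\ref{blowup_est6_lemma} (applied with $\zeta_{j,k}=t\zeta$) with the same height bound and letting $\delta \downarrow 0$ before differentiating in $t$, and then invoke the standard Almgren frequency machinery of Section~\ref{sec:frequency_sec}. The error rate $O(\delta^{2\alpha-\sigma})$ in the cutoff estimates, the handling of the degenerate case via the vanishing of $H_{\overline{w},(0,z)}$, and the concluding rigidity argument for constant frequency all match the paper's treatment.
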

\begin{proof} 
Recall from Section \ref{sec:frequency_sec} that it suffices to show that $\overline{w}$ satisfies \eqref{freqidentity1} and \eqref{freqidentity2} for an appropriate class of test functions, i.e. 
\begin{gather}
	\int_{\mathbb{R}^n} |D\overline{w}|^2 \zeta = -\int_{\mathbb{R}^n} \overline{w}^{\kappa}_l D_i \overline{w}^{\kappa}_l D_i \zeta, 
		\label{homogrep2_freqid1} \\
	\int_{\mathbb{R}^n} \left( \tfrac{1}{2} |D\overline{w}^{\kappa}_l|^2 e_i - D_i \overline{w}^{\kappa}_l D\overline{w}^{\kappa}_l \right) 
		\cdot D_i (\zeta(X) \, (X-(0,z))) = 0, \label{homogrep2_freqid2}
\end{gather}
for all $\zeta \in C^1_c(B_{1/16}(0))$ and $z \in \mathbb{R}^{n-2}$, where $e_1,e_2,\ldots,e_n$ denotes the standard basis for $\mathbb{R}^n$ and we use the convention of summing over repeated induces. 

Since $w$ is component-wise minimizing, \eqref{homogrep2_freqid1} holds true whenever $\zeta \in C^1_c(B_{1/16}(0) \setminus \{0\} \times \mathbb{R}^{n-2})$.  For each $\delta \in (0,1/32)$, let $\widetilde{\chi}_{\delta} : [0,\infty) \rightarrow \mathbb{R}$ be a smooth function such that $0 \leq \widetilde{\chi}_{\delta}(r) \leq 1$, $\widetilde{\chi}_{\delta}(r) = 0$ for all $r \in [0,\delta/2]$, $\widetilde{\chi}_{\delta}(r) = 1$ for all $r \geq \delta$, and $|\widetilde{\chi}'_{\delta}(r)| \leq 3/\delta$. Define $\chi_{\delta} : \mathbb{R}^n \rightarrow \mathbb{R}$ by $\chi_{\delta}(x,y) = \widetilde{\chi}_{\delta}(|x|)$.  Let $\zeta \in C^1_c(B_{1/16}(0);\mathbb{R})$ be arbitrary and replace $\zeta$ in \eqref{homogrep2_freqid1} by $\chi_{\delta} \zeta$ to get, using \eqref{homogrep2_eqn2} and \eqref{homogrep2_eqn2_1}, that
\begin{align} \label{homogrep2_eqn3}
	\left| \int_{B_{1/16}(0)} (|D_i \overline{w}^{\kappa}_l|^2 \zeta + \overline{w}^{\kappa}_l D_i \overline{w}^{\kappa}_l D_i \zeta) \chi_{\delta} \right|
	&\leq \int_{B_{1/16}(0)} |\overline{w}| |D\overline{w}| |D\chi_{\delta}| |\zeta| 
	\\&\leq C \delta^{2\alpha-\sigma} \sup_{B_{1/16}(0)} |\zeta| \nonumber 
\end{align}
for every $\sigma \in (0,1/2q)$ and some constant $C = C(n,m,q,\alpha,\varphi^{(0)},\sigma) \in (0,\infty)$.  
In view of \eqref{homogrep2_eqn2_1} we can let $\delta \downarrow 0$ in \eqref{homogrep2_eqn3} to obtain \eqref{homogrep2_freqid1}.  Note that this argument does not work to prove \eqref{homogrep2_freqid2}, so we instead argue as follows. 

Let $\zeta \in C^1_c(B_{1/16}(0);\mathbb{R})$, $z \in B^{n-2}_{1/16}(0)$, and $\tau > 0$ be arbitrary.  Recall that $w$ satisfies \eqref{blowup_est6} and thus (by taking in \eqref{blowup_est6} $\zeta_{j, k} = \zeta$ for every $j$ and $k$) we have that for every $t \in (-\epsilon,\epsilon)$, where $\epsilon > 0$ depends on $\|\zeta\|_{C^1(B_{1/16}(0))}$,  
\begin{equation*}
	\int_{B_{1/16}(0) \cap \{r > \delta\}} |D\overline{w}|^2 
	\leq \int_{B_{1/16}(0) \cap \{r > \delta\}} |D(\overline{w}(X+t \zeta(X) (X-(0,z))))|^2 
		+ C \delta^{-2} \int_{B_{1/8}(0) \cap \{\delta/8 < r < 2\delta\}} |\overline{w}|^2. 
\end{equation*}
By \eqref{homogrep2_eqn2}, for every $t \in (-\epsilon,\epsilon)$, 
\begin{equation*}
	\int_{B_{1/16}(0) \cap \{r > \delta\}} |D\overline{w}|^2 
	\leq \int_{B_{1/16}(0) \cap \{r > \delta\}} |D(\overline{w}(X+t \zeta(X) (X-(0,z))))|^2 + C \delta^{2\alpha-\sigma}  
\end{equation*}
for some constant $C = C(n,m,q,\alpha,\varphi^{(0)}, \sigma) \in (0,\infty)$.  Recall that by \eqref{homogrep2_eqn2_1}, $\int_{B_{1/16}(0)} |D\overline{w}|^{2} < \infty$.  Hence we can let $\delta \downarrow 0$ and use the monotone convergence theorem to conclude that for every $t \in (-\epsilon,\epsilon)$, 
\begin{equation}\label{homogrep2_radial}
	\int_{B_{1/16}(0)} |D\overline{w}|^2 
	\leq \int_{B_{1/16}(0)} |D(\overline{w}(X+t \zeta(X) (X-(0,z))))|^2 
\end{equation}
whence $\left.\frac{d}{dt}\right|_{t=0}  \int_{B_{1/16}(0)} |D(\overline{w}(X+t \zeta(X) (X-(0,z))))|^2 = 0.$ By direct calculation, this gives \eqref{homogrep2_freqid2}.  
\end{proof}

By Lemma~\ref{blowup-freq}, the standard consequences of the monotonicity of frequency as in Section \ref{sec:frequency_sec} hold true with $\overline{w}$ in place of $u$ (and $(0,z)$ in place of $Y$).  Using this, we prove Theorem~\ref{homogrep_lemma} in the special case when $\lambda = 0$ as follows:  

\begin{proof}[Proof of Theorem~\ref{homogrep_lemma} when $\lambda = 0$]
Let $w \in \mathfrak{B}$ be homogeneous of degree $\alpha$ and suppose that \eqref{homogrep2_eqn1} holds.  The conclusion holds trivially if $n=2$, so suppose that $n \geq 3$. 

For each $z \in \mathbb{R}^{n-2}$, define $N_{\overline{w},(0,z)}$ by \eqref{homogrep2_freq} and let $\mathcal{N}_{\overline{w}}(0,z) = \lim_{\rho \downarrow 0} N_{\overline{w},(0,z)}(\rho)$.  By  \eqref{homogrep2_eqn2} and \eqref{consequence_eqn1} (with $\overline{w}$, $(0,z)$ in place of $u$, $Y$), $\mathcal{N}_{\overline{w}}(0,z) \geq \alpha$ for all $z \in \mathbb{R}^{n-2}$.  Since $\overline{w}$ is homogeneous of degree $\alpha$, it follows that $\mathcal{N}_{\overline{w}}(0,z) = \alpha$ for all $z \in \mathbb{R}^{n-2}$ and moreover that $\overline{w}(x,y+z) = \overline{w}(x,y)$ for all $(x,y) \in \mathbb{R}^n$ and $z \in \mathbb{R}^{n-2}$.  It readily follows that for each $j$ and $k$ 
\begin{equation} \label{homogrep2_eqn5}
	\sum_{l=1}^{q_{j,k}} \sum_{h=1}^{m_{j,k}} \llbracket w_{j,k,l,h}(re^{i\theta},y) \rrbracket = (m_{j,k} q_{j,k} - q_0 N_{j,k}) \llbracket 0 \rrbracket 
		+ \sum_{l=1}^{N_{j,k}} \sum_{h=0}^{q_0-1} \llbracket \op{Re}(a_{j,k,l} r^{\alpha} e^{i\alpha (\theta + 2\pi h)}) \rrbracket 
\end{equation}
on $B_{1/2}(0)$ for some integer $N_{j,k}$ with $1 \leq N_{j,k} \leq m_{j,k} q_{j,k}/q_0$ and some $a_{j,k,l} \in \mathbb{C}^m \setminus \{0\}$, where $w_{j,k,l,h}$ are as in \eqref{w_localized}.  
Let $\varphi^{(\infty)} = (m_{j, k}, \varphi^{(\infty)}) \in {\mathfrak D}$ be such that $w \in {\mathfrak B}(\varphi^{(\infty)}).$ When $\varphi^{(\infty)}_{j,k}$ is identically zero, \eqref{homogrep2_eqn5} immediately implies that $w_{j,k}$ takes the form of $\psi_{j,k}$ in \eqref{homogclass_eqn2}.  When $\varphi^{(\infty)}_{j,k}$ is not identically zero, $w_{j,k}$ is component-wise minimizing and thus $w_{j,k,l}$ are regular on each ball $B \subset \subset B_{1/2}(0) \setminus \{0\} \times \mathbb{R}^{n-2}$.  Hence we can use \eqref{homogrep2_eqn5} to represent $w_{j,k}$ in the form of $\psi_{j,k}$ in \eqref{homogclass_eqn1} and \eqref{homogclass_eqn3} with $b = 0$, so $w \in \mathfrak{L}$. 
\end{proof}

Before proceeding to prove Theorem~\ref{homogrep_lemma} in its full generality, let us point out the following interesting fact which illustrates the subtlety of the  variational property \eqref{homogrep2_freqid2} of $w$:

\begin{remark}\label{example} {\rm Although \eqref{homogrep2_radial} above  says that $\overline{w}$ is energy stationary (in fact minimizing) for domain deformations that are radial from any given point on $\{0\} \times {\mathbb R}^{n-2}$, $\overline{w}$ need \emph{not} be stationary for more general domain deformations of the type 
$\zeta_{t} \, : \, X \mapsto X + t(\zeta^{1}(X), \ldots, \zeta^{n}(X)),$ $X \in B_{1/2}(0)$, $t \in (-\e, \e),$ where $\zeta^{j} \in C^{1}_{c}(B_{1/2}(0))$ are arbitrary.  To see this, consider for instance 
$\overline{w}(x,y) = \op{Re}(c (x_1+ix_2)^{1/2})$ where $c = a+ib$ for $a,b \in \mathbb{R}^m$.  Let $c = (c_1,\ldots,c_m) \in \mathbb{C}^m$ with $c_k \in \mathbb{C}$, $k = 1,\ldots,m$, and let 
\begin{equation*}
	\gamma = c \cdot c = \sum_{k=1}^m c_k^2 = (|a|^2 - |b|^2) + 2i a \cdot b 
\end{equation*}
  Writing $\overline{w}_{t}(X) = \overline{w}(\zeta_{t}(X))$ for $\zeta_{t}$ as above, it is easy to see that $\left.\frac{d}{dt} \right|_{t=0} \int_{B_{1/2}(0)} |D\overline{w}_{t}|^{2} = 0$ if and only if 
\begin{equation} \label{eqn1} 
	\int_{B_1(0)} \left( |D\overline{w}|^2 D_i \zeta^i - 2 D_i \overline{w} \cdot D_j \overline{w} D_i \zeta^j \right) = 0.
\end{equation}
On the other hand, by direct calculation, \eqref{eqn1} holds if and only if  $\g = 0$, i.e.\ if and only if $|a| = |b|$ and $a \cdot b = 0$.  Indeed, since 
\begin{equation*}
	D_1 \overline{w} = \frac{1}{2} \op{Re}(c (x_1+ix_2)^{-1/2}) \hspace{5mm} 	
	D_2 \overline{w}= \frac{1}{2} \op{Re}(ic (x_1+ix_2)^{-1/2}) = \frac{-1}{2} \op{Im}(c (x_1+ix_2)^{-1/2})
\end{equation*}
we have 
\begin{align*}
	|D_1 \overline{w}|^2 - |D_2 \overline{w}|^2 &= \sum_{k=1}^m \frac{1}{4} \op{Re}((c_k r^{-1/2} e^{-i\theta/2})^2) 
		= \frac{1}{4} r^{-1} \op{Re}(\gamma e^{-i\theta}), \\
	2 D_1 \overline{w} \cdot D_2 \overline{w} &= \sum_{k=1}^m \frac{-1}{4} \op{Im}((c_k r^{-1/2} e^{-i\theta/2})^2) 
		= \frac{-1}{4} r^{-1} \op{Im}(\gamma e^{-i\theta}). 
\end{align*}
Hence 
\begin{align*} 
	&\int_{B_1(0)} \left( |D\overline{w}|^2 D_i \zeta^i - 2 D_i \overline{w} \cdot D_j \overline{w}D_i \zeta^j \right) 
	\\&= \lim_{\varepsilon \downarrow 0} \int_{\partial B_{\varepsilon}(0)} 
		\left( |D\overline{w}|^2 x_j - 2 D_i \overline{w} \cdot D_j \overline{w} \,x_i \right) \frac{\zeta^j}{r}
	\\&= \lim_{\varepsilon \downarrow 0} -\int_{\partial B_{\varepsilon}(0)} 
		\left( (|D_1 \overline{w}|^2 - |D_2 \overline{w}|^2) (x_1 \zeta^1 - x_2 \zeta^2) + 2 D_1 \overline{w} \cdot D_2 \overline{w} (x_1 \zeta^2 + x_2 \zeta^1) \right) \frac{1}{r}
	\\&=  -\int_0^{2\pi} 
		\left( \frac{1}{4} \op{Re}(\gamma e^{-i\theta}) (\cos(\theta) \zeta^1(0) - \sin(\theta) \zeta^2(0)) 
			- \frac{1}{4} \op{Im}(\gamma e^{-i\theta}) (\cos(\theta) \zeta^2 (0)+ \sin(\theta) \zeta^1(0)) \right) d\theta
	\\&=  -\int_0^{2\pi} \frac{1}{4} (\op{Re}(\gamma) \zeta^1(0) - \op{Im}(\gamma) \zeta^2(0)) 
	\\&= -\frac{\pi}{2} (\op{Re}(\gamma) \zeta^1(0) - \op{Im}(\gamma) \zeta^2(0))
\end{align*}
where $r = |x|$, so  \eqref{eqn1} holds for all $\zeta^1,\zeta^2 \in C_c^1(B_{1/2}(0))$ if and only if $\gamma = 0$. 

Now let $\varphi^{(0)} : \mathbb{R}^2 \rightarrow \mathcal{A}_2(\mathbb{R}^3)$ be given by 
\begin{equation*}
	\varphi^{(0)}(x) = \op{Re}(a(e_1+ie_2) (x_1+ix_2)^{1/2}) 
\end{equation*}
for all $x = (x_1,x_2) \in \mathbb{R}^2$, where $a > 0$ and $e_1,e_2,e_3$ is the standard basis for $\mathbb{R}^3$.  For each $t \in \mathbb{R}$, let $u_t : B_1(0) \rightarrow \mathcal{A}_2(\mathbb{R}^3)$ be given by
\begin{equation*}
	u_t(x) = \op{Re}(a (\cos(t) e_1 + i e_2 + \sin(t) e_3) (x_1+ix_2)^{1/2}) 
\end{equation*}
for all $x = (x_1,x_2) \in B_1(0)$.  Notice that $u_{t}  \to \varphi^{(0)}$ as $t \to 0$ and, since $\cos(t) e_1 + \sin(t) e_3$ and $e_2$ are an orthonormal pair of vectors, each of $u_{t}$ and $\varphi^{(0)}$ is energy minimizing.  The blow-up of $u_t$ relative to $\varphi^{(0)}$ as $t \downarrow 0$ is (up to a constant multiple) 
\begin{equation*}
	\overline{w} = \op{Re}(a e_3 (x_1+ix_2)^{1/2}) 
\end{equation*}
which by the discussion above does not satisfy \eqref{eqn1}.
} \end{remark}

\subsection{Reduction to the case $\lambda = 0$} \label{sec:homogblowup_subsec1}  Let $w = (w_{j, k}) \in {\mathfrak B}$ and suppose that $w$ is homogeneous of degree $\alpha.$ Our first goal is to show that the function $\lambda$ corresponding to $w$ as in \eqref{blowup_est4} is linear in $z$. To achieve this, we proceed as follows: 

For each $j = 1,2,\ldots,J$, $k = 1,2,\ldots,p_j$, and $\iota = 1,2$, define $\widetilde{w}_{j,k}^{\iota} : B^{n-1}_{1/2}(0) \cap \{r > 0\} \rightarrow \mathbb{R}$ by 
\begin{equation*}
	\widetilde{w}^{\iota}_{j,k}(r,y) = \frac{1}{\pi q_0} \int_0^{2\pi} \sum_{l=1}^{q_0} 
		r^{1-\alpha} \, w_{j,k,l;a}(re^{i\theta},y) \cdot D_{\iota} \varphi^{(0)}_{j,l}(e^{i\theta}) d\theta 
\end{equation*}
if $\varphi^{(0)}_j$ is not identically zero and $\widetilde{w}^{\iota}_{j,k}(r,y) = 0$ otherwise, where $\varphi^{(0)}_{j,l}(X) = \sum_{l=1}^{q_0} \llbracket \varphi^{(0)}_{j,l}(X) \rrbracket$ (as in \eqref{varphi_localized}) if $\varphi^{(0)}_j$ is nonzero and $w_{j,k,l;a}(X) = \frac{1}{m_{j,k}} \sum_{h=1}^{m_{j,k}} w_{j,k,l,h}(X)$ with $w_{j,k,l,h}$ as in \eqref{w_localized}.  Since $w$ is homogeneous of degree $\alpha$, $\widetilde{w}^{\iota}_{j,k}$ is homogeneous of degree one and thus we can extend $\widetilde{w}^{\iota}_{j,k}$ to a homogeneous degree one function on $\mathbb{R}^{n-1} \cap \{r > 0\}$.  We also extend $\widetilde{w}^{\iota}_{j,k}$ to a function on $\mathbb{R}^{n-1} \setminus \{r=0\}$ by $\widetilde{w}^{\iota}_{j,k}(r,y) = \widetilde{w}^{\iota}_{j,k}(-r,y)$ for all $r < 0$, $y \in \mathbb{R}^{n-2}$.  Since each $w_{j,k,l;a}$ is a locally defined harmonic function, each $\widetilde{w}^{\iota}_{j,k}$ is smooth on $\mathbb{R}^{n-1} \setminus \{r= 0\}$.  By \eqref{blowup_est4} with $\sigma = 1/(2q)$, 
\begin{equation*}
	\int_{B_{\rho}(0,z)} \sum_{j=1}^J \sum_{k=1}^{p_j} \sum_{l=1}^{q_{j,k}} |w_{j,k,l}|^2 \leq C |\lambda(z)|^2 \rho^{n+2\alpha-2} + C \rho^{n+2\alpha-\sigma} \leq C\rho^{n+2\alpha - 2}
\end{equation*}
for all $z \in B^{n-2}_{1/4}(0)$, $\rho \in (0,1/4]$ and some constant $C = C(n,m,q,\alpha,\varphi^{(0)}) \in (0,\infty)$.  Thus by the Schauder estimates, 
\begin{equation} \label{homogrep1_eqn1} 
	\sum_{j=1}^J \sum_{k=1}^{p_j} \sum_{l=1}^{q_{j,k}} |w_{j,k,l}(X)| \leq C |x|^{\alpha - 1}
\end{equation}
for all $X = (x,y) \in B_{1/8}(0) \setminus \{0\} \times {\mathbb R}^{n-2}$ where $C = C(n,m,q,\alpha,\varphi^{(0)}) \in (0,\infty)$.  By \eqref{homogrep1_eqn1}, $\widetilde{w}^{\iota}_{j,k} \in L^{\infty}(B^{n-1}_{1/8}(0))$ for each $j$, $k$, and $\iota$.  Also, by \eqref{blowup_est3}, 
\begin{equation} \label{homogrep1_eqn2} 
	\int_{B^{n-1}_{1/2}(0)} |r|^{2\alpha-1} |D_y \widetilde{w}^{\iota}_{j,k}|^2 \,dr\,dy < \infty. 
\end{equation}

For each $\iota = 1,2$, define $W^{\iota} : \mathbb{R}^{n-1} \rightarrow \mathbb{R}$ by 
\begin{equation*}
	W^{\iota}(r,y) = \sum_{j=1}^J \sum_{k=1}^{p_j} m_{j,k} \widetilde{w}^{\iota}_{j,k}(r,y) . 
\end{equation*}
Clearly, $W^{\iota} \in L^{\infty}(B^{n-1}_{1/8}(0)) \cap C^{\infty}(B^{n-1}_{1/8}(0) \setminus \{r=0\})$ and $W^{\iota}$ is homogeneous of degree one. By \eqref{blowup_est5}, for each function 
$\zeta \in C^2_c(\{(r, y) \, : \, r \geq 0, \, y \in {\mathbb R}^{n-2}, \, r^{2} + |y|^{2} < 1/4\})$ with $D_r \zeta(r,y) = 0$ in a neighborhood of $\{r = 0\}$, 
\begin{equation*}
	\int_{\{(r, y) \, : \, r \geq 0, \, y \in {\mathbb R}^{n-2},\,  r^{2} + |y|^{2} < 1/4\}} r^{2\alpha-1} \overline{D}W^{\iota} \cdot \overline{D}D_{y_h} \zeta \,dr \,dy = 0 
\end{equation*}
where $\overline{D}$ is the gradient operator on ${\mathbb R}^{n-1} = \{(r, y) \, : \, r \in {\mathbb R}, y \in {\mathbb R}^{n-1}\}.$ Since $\zeta$ is arbitrary, for each function  
$\zeta \in C^2_c(\{(r, y) \, : \, r \geq 0, y \in {\mathbb R}^{n-2}, r^{2} + |y|^{2} < 1/4\})$ with $D_r \zeta(r,y) = 0$ in a neighborhood of $\{r = 0\}$, 
\begin{equation*}
	\int_{\{(r, y) \, : \, r \geq 0, \, y \in {\mathbb R}^{n-2}, \,  r^{2} + |y|^{2} < 1/4\})} r^{2\alpha-1} \overline{D}W^{\iota} \cdot \overline{D}\delta_{h,\tau} \zeta \,dr \,dy = 0 
\end{equation*}
for each $\iota = 1,2$, $h = 1,\ldots,n-2$ and $\tau \neq 0$,  where $\delta_{h, \tau}$ is the difference quotient operator given by  
\begin{equation*}
	\delta_{h,\tau} f(r,y) = \frac{f(r,y+\tau e_h) - f(r,y)}{\tau}, 
\end{equation*}
where $e_1,e_2,\ldots,e_{n-2}$ denote the standard basis vectors for $\mathbb{R}^{n-2}$.  By integration by parts for difference quotients,  
\begin{equation*}
	\int_{\{(r, y) \, : \, r \geq 0, \, y \in {\mathbb R}^{n-2}, \, r^{2} + |y|^{2} < 1/4\}} r^{2\alpha-1} \overline{D}\delta_{h,\tau} W^{\iota} \cdot \overline{D}\zeta \,dr \,dy = 0 
\end{equation*}
and then by integration by parts, 
\begin{equation} \label{homogrep1_eqn3_1}
	\int_{\{(r, y) \, : \, r \geq 0, \, y \in {\mathbb R}^{n-2}, \, r^{2} + |y|^{2} < 1/4\}} \delta_{h,\tau} W^{\iota} \op{div}(r^{2\alpha-1} \overline{D}\zeta) \,dr \,dy = 0 
\end{equation}
for each $\iota = 1,2$, $h = 1,\ldots,n-2$, and $\tau \neq 0$.  Since $W^{\iota}$ is even in the $r$ variable, this implies that 
\begin{equation} \label{homogrep1_eqn3}
	\int_{B_{1/2}^{n-1}(0)} \delta_{h,\tau} W^{\iota}  \op{div}(|r|^{2\alpha-1} \overline{D}\zeta) \,dr \,dy = 0 
\end{equation}
for each $\zeta \in C^2_c(B^{n-1}_{1/2}(0))$ with $\zeta(r, y) = \zeta(-r, y)$ and $D_{r}\zeta(r, y)$ in a neighborhood of $\{r = 0\}$ and for each $\iota = 1,2$, $h = 1,\ldots,n-2$, and $\tau \neq 0$. Now take any function $\zeta \in C^2_c(B^{n-1}_{1/2}(0))$ with $\zeta(r, y) = \zeta(-r, y)$ and note that $D_r \zeta(0,y) = 0$ for all $y \in B^{n-2}_{1/2}(0)$.  There is a sequence of functions $\zeta_k \in C^2_c(B^{n-1}_{1/2}(0))$ such that for each $k$, $\zeta_{k}(r, y) = \zeta_{k}(-r, y)$, $D_r \zeta_k = 0$ in a neighborhood of $\{r=0\}$, $\overline{D}^j \zeta_k \rightarrow \overline{D}^j \zeta$ pointwise in $B^{n-1}_{1/2}(0) \setminus \{r=0\}$ as $k \rightarrow \infty$ for $j = 0,1,2$, and $\zeta_k$ and its derivatives up to order two are uniformly bounded on $B^{n-1}_{1/2}(0)$.  Hence, since $W^{\iota}$ is bounded, we can substitute $\zeta_k$ for $\zeta$ in \eqref{homogrep1_eqn3} and apply the dominated convergence theorem to deduce 
that \eqref{homogrep1_eqn3} holds true whenever $\zeta \in C^1_c(B^{n-1}_{1/2}(0))$ with $\zeta(r, y) = \zeta(-r, y)$.  By \eqref{homogrep1_eqn2} and the dominated convergence theorem again, we can let $\tau \rightarrow 0$ in \eqref{homogrep1_eqn3} to deduce that 
\begin{equation} \label{homogrep1_eqn4}
	\int_{B^{n-1}_{1/2}(0)} D_{y_h} W^{\iota} \op{div}(|r|^{2\alpha-1} \overline{D}\zeta) \,dr \,dy = 0 
\end{equation}
for all $\iota = 1,2$, $h = 1,\ldots,n-2$, and $\zeta \in C^2_c(B^{n-1}_{1/2}(0))$ with $\zeta(r, y) = \zeta(-r, y)$. We are thus in a position to apply the results of the next two lemmas with $W = D_{y_{h}} W^{\iota}$. 


\begin{lemma} \label{homogrep1_mvp_lemma}
Let $(r, y)$ denote a general point in ${\mathbb R}^{n-1}$ with $r \in {\mathbb R}$ and $y \in {\mathbb R}^{n-2}$.  Let $z \in {\mathbb R}^{n-2}$ and $\rho_{0} > 0$.  Suppose that $W \in C^{\infty}(B^{n-1}_{\rho_{0}}(0,z) \setminus \{r=0\})$ satisfies $W(r,y) = W(-r,y)$, 
\begin{equation*}
	\int_{B^{n-1}_{\rho_{0}}(0,z)} |r|^{2\alpha-1} |W(r,y)| \,dr\,dy < \infty 
\end{equation*}
and 

\begin{equation} \label{homogrep1_pde}
	\int_{B^{n-1}_{\rho_{0}}(0,z)} W \,\op{div}(|r|^{2\alpha-1} \overline{D}\zeta) \,dr \,dy = 0
\end{equation}
for all $\zeta \in C^2_c(B^{n-1}_{\rho_{0}}(0,z))$ with $\zeta(r, y) = \zeta(-r, y)$.
Then 

\begin{align}
	 \label{homogrep1_mvp1} &\frac{1}{\sigma^{n+2\alpha-3}} \int_{\partial B^{n-1}_{\sigma}(0,z)} |r|^{2\alpha-1} W(r,y) \,dr\,dy 
		= \frac{1}{\rho^{n+2\alpha-3}} \int_{\partial B^{n-1}_{\rho}(0,z)} |r|^{2\alpha-1} W(r,y) \,dr\,dy \;\; {\rm and}\\
	\label{homogrep1_mvp2} &\frac{1}{\sigma^{n+2\alpha-2}} \int_{B^{n-1}_{\sigma}(0,z)} |r|^{2\alpha-1} W(r,y) \,dr\,dy 
		= \frac{1}{\rho^{n+2\alpha-2}} \int_{B^{n-1}_{\rho}(0,z)} |r|^{2\alpha-1} W(r,y) \,dr\,dy
\end{align}
for all $0 < \sigma \leq \rho < \rho_{0}$. 
\end{lemma}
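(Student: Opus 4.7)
My plan is to test the PDE with a radial cutoff to reduce to a weighted one-dimensional ODE for the spherical weighted average of $W$, then classify solutions of the adjoint ODE, and finally use the freedom to pick test functions that are nonzero at $s=0$ to eliminate the spurious solution.

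To set things up, I will choose $\zeta(r,y) = \phi(R)$ with $R = \sqrt{r^2 + |y-z|^2}$ and $\phi \in C_c^2([0, \rho_0))$ satisfying $\phi'(0) = 0$. The condition $\phi'(0) = 0$ is precisely what is needed for $\zeta$ to be $C^2$ at the apex $(0,z)$ after even reflection in $r$, so such $\zeta$ lie in the admissible class for \eqref{homogrep1_pde}. A direct computation, using $\zeta_r = \phi'(R)r/R$ and the formula $\Delta\zeta = \phi''(R) + \frac{n-2}{R}\phi'(R)$ on $\mathbb{R}^{n-1}$, gives
\[
\op{div}\bigl(|r|^{2\alpha-1}\overline{D}\zeta\bigr) = |r|^{2\alpha-1}\bigl(\phi''(R) + \tfrac{\beta}{R}\phi'(R)\bigr), \qquad \beta := n+2\alpha-3.
\]
Substituting into \eqref{homogrep1_pde} and applying the coarea formula then yields
\[
\int_0^{\rho_0}\bigl(\phi''(s) + \tfrac{\beta}{s}\phi'(s)\bigr)F(s)\,ds = 0, \qquad F(s) := \int_{\partial B^{n-1}_s(0,z)}|r|^{2\alpha-1}W\,d\mathcal{H}^{n-2},
\]
for every such $\phi$, which is the distributional statement $L^{*}F = 0$ with $L\phi = \phi''+(\beta/s)\phi'$, tested against an enlarged class of $\phi$'s.

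I then restrict to $\phi \in C_c^{\infty}((0,\rho_0))$ (trivially satisfying $\phi'(0)=0$) to conclude that $F$ solves the homogeneous ODE $L^{*}F=0$ on $(0,\rho_0)$ in the distributional sense. The indicial polynomial $p^{2}-(1+\beta)p+\beta=0$ has roots $1$ and $\beta$, so $F(s)=c_{1}s+c_{2}s^{\beta}$ on $(0,\rho_0)$ (with the usual $s\log s$ replacement when $\beta=1$). A short computation shows that for either solution family $F'(s)-\tfrac{\beta}{s}F(s)$ equals a constant proportional to the coefficient of the ``bad'' solution $s$ (or $s\log s$ when $\beta=1$), and that $F(0^{+})=0$ since $\beta>0$.

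The decisive step is to reintroduce test functions with $\phi(0)\neq 0$ but $\phi'(0)=0$. A careful Lagrange/Green identity, integrating by parts twice on $(0,\rho_0)$ and collecting boundary terms at $s=0$ (the only nontrivial endpoint, since $\phi$ vanishes near $\rho_0$), gives
\[
0 = \int_0^{\rho_0} L\phi \cdot F\,ds = c_{1}(1-\beta)\,\phi(0) \qquad (\text{or } c_{2}\,\phi(0) \text{ when } \beta=1).
\]
Since $\phi(0)$ is arbitrary, the coefficient of the spurious solution must vanish, leaving $F(s) = c\,s^{\beta}$ for some constant $c$ on $(0,\rho_0)$. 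This is exactly \eqref{homogrep1_mvp1}, and integrating in the radial variable (via coarea) then yields \eqref{homogrep1_mvp2}.

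The main obstacle is the endpoint analysis at $s=0$: the weighted ODE has a genuinely two-dimensional solution space and the ``extra'' solution $F \propto s$ is integrable and distributionally compatible with the PDE away from the axis, so the conclusion would fail without the enlarged test function class allowed by the even-symmetry hypothesis on $\zeta$. I will need to verify that the boundary term $\phi(0)\cdot[F'-(\beta/s)F]_{s=0^{+}}$ is well defined and captures precisely the coefficient of the bad solution---this is where the parity assumption $\zeta(r,y)=\zeta(-r,y)$, which forces $\phi'(0)=0$ but leaves $\phi(0)$ free, plays its essential role.
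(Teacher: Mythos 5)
Your argument is correct, and your calculation $\op{div}(|r|^{2\alpha-1}\overline{D}\zeta) = |r|^{2\alpha-1}\bigl(\phi''(R) + \tfrac{\beta}{R}\phi'(R)\bigr)$ with $\beta = n+2\alpha-3$ is right, as is the observation that $\phi'(0)=0$ is exactly what makes $\zeta(r,y)=\phi(R)$ a $C^2$ function of $(r,y)$. However, your route is genuinely different from the one the paper takes. The paper chooses a single, specially tailored radial test function, namely $\zeta(r,y) = \int_R^{\rho_0} t^{3-n-2\alpha}\xi(t)\,dt$ with $\xi\in C^1_c((0,\rho_0))$, which is constant near the apex; for this choice, $\op{div}(|r|^{2\alpha-1}\overline{D}\zeta) = -|r|^{2\alpha-1}R^{3-n-2\alpha}\xi'(R)$, so that after the coarea formula one reads off immediately that $\rho^{3-n-2\alpha}F(\rho)$ has vanishing distributional derivative, hence is constant. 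This is a built-in integrating factor and gives the first-order "constancy" statement in one step, bypassing any analysis of a two-dimensional solution space. Your approach uses generic radial $\phi$ with $\phi'(0)=0$, derives the second-order distributional ODE $L^*F=0$ on $(0,\rho_0)$ (using, quite correctly, hypoellipticity of the ODE to upgrade the $L^1$ function $F$ to a classical solution $c_1 s+c_2 s^\beta$), and then removes the spurious solution $\propto s$ via a Green/Lagrange identity in which the boundary term at $s=0^+$ is $\phi(0)\cdot(F'-\beta F/s) = \phi(0)c_1(1-\beta)$, so that testing with $\phi(0)\neq 0$ forces $c_1=0$; the hypothesis $\int_{B_{\rho_0}}|r|^{2\alpha-1}|W|<\infty$ is what makes $\int_0^{\rho_0} L\phi\cdot F\,ds$ absolutely convergent when $\phi(0)\neq 0$, and $\beta>0$ ensures $F(0^+)=0$ so that the $\phi'(\epsilon)F(\epsilon)$ boundary term vanishes. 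Both proofs are valid. The paper's is shorter and avoids classifying the solution space; yours is somewhat more illuminating in that it makes explicit why, without the parity hypothesis on $\zeta$ (which lets $\phi(0)$ be free while forcing $\phi'(0)=0$), there would be a second solution of the ODE that the PDE alone cannot exclude.
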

\begin{proof}
Fix $z \in {\mathbb R}^{n-2}$ and $\rho_{0} >0$.  For $\xi \in C^{1}_{c}({\mathbb R})$ with ${\rm spt} \, \xi \subset (0, \rho_{0})$, and $(r, y) \in {\mathbb R}^{n-1}$, let 
\begin{equation*}
	\zeta(r,y) = \int_R^{\rho_{0}} t^{3-n-2\alpha} \xi(t) \,dt
\end{equation*}
where $R = \sqrt{r^2+|y-z|^2}$. Then $\zeta \in C^{2}_{c}({\mathbb R}^{n-1})$, ${\rm spt} \, \zeta \subset B_{\rho_{0}}^{n-1}(0, z),$ $\zeta$ is constant near the point $(0, z)$  and $\zeta(r, y) = \zeta(-r, y)$ for all $r > 0$ and $y \in \mathbb{R}^{n-2}$.  Also, for $r > 0$, by direct calculation
\begin{equation*}
	\op{div}(r^{2\alpha-1} D \zeta) 
	= -\op{div}(r^{2\alpha-1} R^{2-n-2\alpha} \xi(R) \,(r,y)) 
	= -r^{2\alpha-1} R^{3-n-2\alpha} \,\xi'(R).
\end{equation*}
Substituting this into \eqref{homogrep1_pde} gives that
\begin{equation*}
	\int_{B^{n-1}_{\rho_{0}}(0,z)} |r|^{2\alpha-1} R^{3-n-2\alpha} W(r,y) \,\xi'(R) \,dr \,dy = 0
\end{equation*}
which, by the coarea formula, is equivalent to 
\begin{equation*}
	\int_{0}^{\rho_0} \left( \rho^{3-n-2\alpha} \int_{\partial B^{n-1}_{\rho}(0,z)} |r|^{2\alpha-1} W \right) \xi'(\rho) \,d\rho = 0.
\end{equation*}
This says that the function  $\rho \mapsto \rho^{3-n-2\alpha} \int_{\partial B^{n-1}_{\rho}(0,z)} |r|^{2\alpha-1} W$ has zero distributional derivative and therefore must be constant on $(0,\rho_0)$.  This gives us \eqref{homogrep1_mvp1}.  The identity \eqref{homogrep1_mvp2} follows from \eqref{homogrep1_mvp1} via integration.
\end{proof}

\begin{lemma} \label{homogrep1_liousville_lemma}
Let $n \geq 3$ and let $(r,y)$ denote a general point in ${\mathbb R}^{n-1}$ with $r \in {\mathbb R}$ and $y \in {\mathbb R}^{n-2}$. Suppose that $W \in C^{\infty}(\mathbb{R}^{n-1} \setminus \{r=0\})$ is a homogeneous degree zero function satisfying $W(r,y) = W(-r,y)$ on $\mathbb{R}^{n-1}$, \begin{equation} \label{homogrep1_L2}
	\int_{B^{n-1}_1(0)} |r|^{2\alpha-1} |W(r,y)|^2 \,dr\,dy < \infty, 
\end{equation}
and 
\begin{equation} \label{homogrep1_pde2}
	\int_{{\mathbb R}^{n-1}} W \,\op{div}(|r|^{2\alpha-1} \overline{D}\zeta) \,dr \,dy = 0
\end{equation}
for any $\zeta \in C^2_c({\mathbb R}^{n-1})$ with $\zeta(r, y) = \zeta(-r, y)$.  Then $W$ is a constant function on $\mathbb{R}^{n-1}$. 
\end{lemma}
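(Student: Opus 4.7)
To prove Lemma~\ref{homogrep1_liousville_lemma}, the plan is to reformulate the problem on the unit sphere $S^{n-2}\subset {\mathbb R}^{n-1}$ and establish the corresponding Liouville-type statement there via a weighted Caccioppoli estimate. By degree-zero homogeneity, $W(X) = \tilde W(X/|X|)$ for $X\neq 0$, where $\tilde W := W|_{S^{n-2}}$ lies in $C^\infty(S^{n-2}\setminus\{\omega_1=0\})$ and satisfies $\tilde W(-\omega_1,\omega') = \tilde W(\omega_1,\omega')$; writing $X = R\omega$ with $R = |X|$ and $\omega = (\omega_1,\omega')\in S^{n-2}$ so that $r = R\omega_1$, the integrability \eqref{homogrep1_L2} factors as
\begin{equation*}
\left(\int_0^1 R^{n+2\alpha-3}\,dR\right)\left(\int_{S^{n-2}} |\omega_1|^{2\alpha-1}|\tilde W|^2\,d\omega\right) < \infty,
\end{equation*}
and since $n+2\alpha-3>-1$ we obtain $\tilde W \in L^2(S^{n-2},|\omega_1|^{2\alpha-1}\,d\omega)$. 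A direct computation in polar coordinates (using $\overline{D} W = R^{-1}\nabla_{S^{n-2}}\tilde W$, which holds because the radial derivative of a degree-zero function vanishes) gives the identity
\begin{equation*}
\op{div}(|r|^{2\alpha-1}\overline{D} W)(X) = R^{2\alpha-3}\op{div}_{S^{n-2}}\!\bigl(|\omega_1|^{2\alpha-1}\nabla_{S^{n-2}}\tilde W\bigr)(\omega).
\end{equation*}
Testing \eqref{homogrep1_pde2} against product test functions $\zeta(X) = \phi(R)\chi(\omega)$ with $\phi \in C^\infty_c((0,\infty))$ and $\chi \in C^2(S^{n-2})$ even in $\omega_1$, and separating variables, reduces the distributional equation to the spherical weak form
\begin{equation*}
\int_{S^{n-2}} \tilde W\,\op{div}_{S^{n-2}}\!\bigl(|\omega_1|^{2\alpha-1}\nabla_{S^{n-2}}\chi\bigr)\,d\omega = 0
\end{equation*}
valid for every $\chi\in C^2(S^{n-2})$ satisfying $\chi(-\omega_1,\omega') = \chi(\omega_1,\omega')$.

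Next I will derive a Caccioppoli-type energy inequality on $S^{n-2}$. Choose even cutoffs $\eta_\epsilon \in C^\infty(S^{n-2})$ with $\eta_\epsilon\equiv 0$ on $\{|\omega_1|\leq\epsilon\}$, $\eta_\epsilon\equiv 1$ on $\{|\omega_1|\geq 2\epsilon\}$, and $|\nabla_{S^{n-2}}\eta_\epsilon|\leq C\epsilon^{-1}$. Since $\eta_\epsilon$ vanishes in a neighbourhood of the degenerate set $\{\omega_1=0\}$, the function $\chi := \tilde W\eta_\epsilon^2$ is $C^2$ on $S^{n-2}$ and even in $\omega_1$, hence admissible in the spherical weak form; integrating by parts twice (valid since the integrand is supported where $\tilde W$ is smooth) and applying Cauchy--Schwarz to the resulting cross term yields
\begin{equation*}
\int_{S^{n-2}} |\omega_1|^{2\alpha-1}\eta_\epsilon^2|\nabla_{S^{n-2}}\tilde W|^2\,d\omega \leq 4\int_{S^{n-2}} |\omega_1|^{2\alpha-1}|\tilde W|^2|\nabla_{S^{n-2}}\eta_\epsilon|^2\,d\omega.
\end{equation*}

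The main obstacle is to show that the right-hand side tends to zero as $\epsilon\downarrow 0$; its integrand is supported in the thin strip $\{\epsilon<|\omega_1|<2\epsilon\}$ on which $|\omega_1|^{2\alpha-1}|\nabla_{S^{n-2}}\eta_\epsilon|^2 \leq C\epsilon^{2\alpha-3}$. For $\alpha\geq 1$ this strip has weighted $(n-2)$-dimensional measure $O(\epsilon^{2\alpha})$, so combined with the $L^2$ bound on $\tilde W$ the right-hand side is $O(\epsilon^{2\alpha-2})$ and vanishes in the limit. For $\alpha\in(0,1)$ the weight $|\omega_1|^{2\alpha-1}$ is of Muckenhoupt $A_2$-class transversal to $\{\omega_1=0\}$; viewing $\tilde W$, even-extended across $\{\omega_1=0\}$, as a weak solution of a degenerate elliptic equation with $A_2$ weight in the sense of Fabes--Kenig--Serapioni, the corresponding De~Giorgi--Nash--Moser theory yields $\tilde W \in L^\infty_{\rm loc}(S^{n-2})$, which together with the $O(\epsilon^{2\alpha})$ weighted measure of the strip again gives the required vanishing. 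In either case, passing to the limit $\epsilon\downarrow 0$ via monotone convergence on the left forces $\int_{S^{n-2}}|\omega_1|^{2\alpha-1}|\nabla_{S^{n-2}}\tilde W|^2\,d\omega = 0$. Hence $\tilde W$ is locally constant on each open hemisphere $\{\pm\omega_1>0\}$, and the even symmetry $\tilde W(-\omega_1,\omega') = \tilde W(\omega_1,\omega')$ forces the two constants to coincide, giving $\tilde W$ constant on $S^{n-2}$ and therefore $W$ constant on ${\mathbb R}^{n-1}$ as required.
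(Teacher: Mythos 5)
Your reduction to a weak equation on $S^{n-2}$, the choice of test function $\chi=\tilde W\eta_\epsilon^2$, and the resulting Caccioppoli inequality
\begin{equation*}
\int_{S^{n-2}} |\omega_1|^{2\alpha-1}\eta_\epsilon^2|\nabla_{S^{n-2}}\tilde W|^2\,d\omega \leq 4\int_{S^{n-2}} |\omega_1|^{2\alpha-1}|\tilde W|^2|\nabla_{S^{n-2}}\eta_\epsilon|^2\,d\omega
\end{equation*}
are all correct, but the step at which you claim the right-hand side vanishes as $\epsilon\downarrow 0$ does not hold, and this is precisely where the content of the lemma lies. The right-hand side is supported in the strip $\{\epsilon<|\omega_1|<2\epsilon\}$, whose $|\omega_1|^{2\alpha-1}$-weighted measure on $S^{n-2}$ is $\approx\epsilon^{2\alpha}$, and $|\nabla_{S^{n-2}}\eta_\epsilon|^2\approx\epsilon^{-2}$ there; so \emph{even granting} $\tilde W\in L^\infty$, the right-hand side is of order $\epsilon^{2\alpha-2}$. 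For $\alpha<1$ this tends to $+\infty$, not zero; for $\alpha=1$ it is merely $O(1)$; only for $\alpha>1$ does it vanish. Since $\alpha<1$ occurs for genuine branching (e.g.\ $\alpha = 1/2$ when $q_0 = 2$, $k_0 = 1$), your argument does not cover the main case. In the regime $\alpha\geq1$ there are further problems: the weight $|\omega_1|^{2\alpha-1}$ is then not $A_2$ transverse to $\{\omega_1=0\}$, so the Fabes--Kenig--Serapioni theory does not give $\tilde W\in L^\infty_{\rm loc}$, and the hypothesis \eqref{homogrep1_L2} alone yields only $\int_{\{\epsilon<|\omega_1|<2\epsilon\}}|\omega_1|^{2\alpha-1}|\tilde W|^2 = o(1)$ rather than the $O(\epsilon^{2\alpha})$ your argument requires. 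The underlying difficulty is that the equator $\{\omega_1=0\}\subset S^{n-2}$ is not removable for the degenerate operator $\op{div}(|\omega_1|^{2\alpha-1}\nabla_{S^{n-2}}\cdot)$ by a naive cutoff under only \eqref{homogrep1_L2}: one would need either an a priori weighted energy bound near the equator (which is what you are trying to prove) or a modulus of continuity of $\tilde W - c$ at the equator strictly better than $|\omega_1|^{1-\alpha}$, neither of which is produced.

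The paper sidesteps this issue by a structurally different argument that does not rely on energy estimates. It first derives from the weak equation the mean-value identities \eqref{homogrep1_mvp1} and \eqref{homogrep1_mvp2} of Lemma~\ref{homogrep1_mvp_lemma}, which supply quantitative control of $W$ near $\{r=0\}$; these suffice, together with the strong maximum principle, when $W$ is continuous. In general, the paper sets $\psi(z)=W(1,z)$, Fourier-expands $\psi$ on $S^{n-3}$ (for $n\geq4$; $n=3$ is done by explicit quadrature), reduces to a second-order ODE in $s=|z|$ for each mode $\gamma_k$, runs the Frobenius method at infinity to classify the possible growth, and uses \eqref{homogrep1_mvp2} to eliminate the divergent branch and return to the continuous case. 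It is precisely the mean value identity \eqref{homogrep1_mvp2} --- which your proof does not use --- that provides the boundary control at the degenerate set that the cutoff estimate cannot.
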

\begin{proof}
Let us first suppose that $W \in C^0(\mathbb{R}^{n-1} \setminus \{0\})$.  Then by homogeneity, $W$ attains its maximum and minimum values at points in $B^{n-1}_1(0) \setminus \{0\}$.  By the strong maximum principle, $W$ must attain its maximum and minimum values on $\{r=0\} \setminus \{0\}$.  However, by \eqref{homogrep1_mvp2} and homogeneity and continuity of $W$, for each $z \in \mathbb{R}^{n-2} \setminus \{0\}$,  
\begin{align*}
	W(0,z) &= \lim_{\rho \downarrow 0} \frac{1}{c_{n,\alpha} \rho^{n+2\alpha-2}} \int_{B^{n-1}_{\rho}(0,z)} |r|^{2\alpha-1} W 
	= \lim_{\rho \rightarrow \infty} \frac{1}{c_{n,\alpha} \rho^{n+2\alpha-2}} \int_{B^{n-1}_{\rho}(0,z)} |r|^{2\alpha-1} W
	\\&= \lim_{\rho \rightarrow \infty} \frac{1}{c_{n,\alpha} \rho^{n+2\alpha-2}} \int_{B^{n-1}_{\rho}(0,0)} |r|^{2\alpha-1} W 
	= \frac{1}{c_{n,\alpha}} \int_{B^{n-1}_1(0,0)} |r|^{2\alpha-1} W, 
\end{align*}
where $c_{n,\alpha} = \int_{B^{n-1}_1(0)} |r|^{2\alpha-1}$. Thus $W$ is constant on $\mathbb{R}^{n-1}$. 

For the general case, observe that $W(r,y) = \psi(y/r)$ on ${\mathbb R}^{n-1} \cap \{r >0\}$ where $\psi \in C^{\infty}(\mathbb{R}^{n-2})$ is given by $\psi(z) = W(1, z)$, $z \in {\mathbb R}^{n-2}$.  By \eqref{homogrep1_pde2}, $\op{div}(r^{2\alpha-1} \,DW) = 0$ in $\mathbb{R}^{n-1} \cap \{r > 0\}$, so $\psi$ satisfies 
\begin{equation} \label{homogrep1_eqn5}
	\Delta \psi(z) + \sum_{i,j=1}^{n-2} z_i z_j D_{ij} \psi(z) - (2\alpha-3) \sum_{i=1}^{n-2} z_i D_i \psi(z) = 0
\end{equation}
for all $z \in \mathbb{R}^{n-2}$.  When $n = 3$, one can explicitly solve \eqref{homogrep1_eqn5} by integration to find that 
\begin{equation} \label{homogrep1_eqn6}
	\psi(z) = C_1 + C_2 \int_0^z (1+t^2)^{\frac{2\alpha-3}{2}} \,dt 
\end{equation}
for constants $C_1,C_2 \in \mathbb{R}$.  Hence either $\lim_{z \rightarrow \infty} \psi(z) = \pm\infty$ or $\lim_{z \rightarrow \infty} \psi(z)$ exists and is finite.  Equivalently (since $W$ is even in the $r$ variable) $\lim_{(r,y) \rightarrow (0,1)} W(r,y) = \pm\infty$ or $\lim_{(r,y) \rightarrow (0,1)} W(r,y)$ exists and is finite.  But by \eqref{homogrep1_mvp2} 
\begin{equation*}
	\lim_{\rho \downarrow 0} \frac{1}{\rho^{n+2\alpha-2}} \int_{B^{n-1}_{\rho}(0,1)} |r|^{2\alpha-1} W(r,y) \,dr\,dy 
		= 2^{n+2\alpha-2} \int_{B^{n-1}_{1/2}(0,1)} |r|^{2\alpha-1} W(r,y) \,dr\,dy < \infty 
\end{equation*}
so $\lim_{(r,y) \rightarrow (0,1)} W(r,y)$ exists and is finite.  By symmetry, $\lim_{(r,y) \rightarrow (0,-1)} W(r,y)$ exists and is finite. Hence $W$ extends to a continuous function on $\mathbb{R}^2 \setminus \{0\}$, and  therefore $W$ is constant on $\mathbb{R}^2$.  (Alternatively, one can use integration by parts and \eqref{homogrep1_eqn6} to show that $W$ satisfies \eqref{homogrep1_pde2} only if $C_2 = 0$.)

Suppose $n \geq 4$.  Since $\psi$ is smooth on $\mathbb{R}^{n-2}$, we can represent $\psi$ by the Fourier series expansion 
\begin{equation*}
	\psi(s\omega) = \sum_{k=1}^{\infty} \gamma_k(s) \phi_k(\omega)
\end{equation*}
for all $s > 0$ and $\omega \in \mathbb{S}^{n-3}$, where $\phi_k$ denote $L^{2}$ orthonormal eigenfunctions of the (negative) Laplacian on $\mathbb{S}^{n-3}$. Thus 
$\Delta_{S^{n-3}} \phi_k + \mu_k\phi_k = 0$ on $S^{n-3}$ for $0 = \mu_1 < \mu_2 \leq \mu_3 \leq \mu_4 \cdots$ and $\gamma_k(s) = \int_{\mathbb{S}^{n-3}} \psi(s \omega) \phi_k(\omega) d\omega$.  Notice that since $\psi \in C^{\infty}(\mathbb{R}^{n-2})$, $\gamma_k \in C^0([0,\infty)) \cap C^{\infty}((0,\infty))$ with 
\begin{equation*}
	\gamma_k(0) = \int_{\mathbb{S}^{n-3}} \psi(0) \phi_k(\omega) d\omega = \begin{cases} \psi(0) &\text{if } k = 1 \\ 0 &\text{if } k \geq 2 . \end{cases}
\end{equation*}
By \eqref{homogrep1_eqn5}, $\gamma_k$ satisfies 
\begin{equation} \label{homogrep1_eqn7}
	(1+s^2) \,\gamma''_k(s) + \left( \frac{n-3}{s} - (2\alpha-3) s \right) \gamma'_k(s) - \frac{\mu_k}{s^2} \,\gamma_k(s) = 0
\end{equation}
for all $s > 0$.  By applying the Frobenius method (see~\cite[Theorem 4.5]{Teschl}) to \eqref{homogrep1_eqn7} to obtain a series expansion at infinity 
\begin{gather*} 
	\gamma_k(s) = C_1 f_1(1/s) + C_2 s^{2\alpha-2} f_2(1/s) \quad \text{if } 2\alpha \not\in \mathbb{N}, \\
	\gamma_k(s) = (C_1 + C_2 c_{k,\alpha} \log(s)) f_1(1/s) + C_2 s^{2\alpha-2} f_2(1/s) \quad \text{if } 2\alpha \in \mathbb{N}, \, \alpha \geq 1, \\
	\gamma_k(s) = C_1 f_1(1/s) + (C_1 c_{k,1/2} \log(s) + C_2) s^{-1} f_2(1/s) \quad \text{if } \alpha = 1/2, 
\end{gather*}
where $C_1,C_2 \in \mathbb{R}$ are arbitrary constants, $c_{k,\alpha} \in \mathbb{R}$ are constants depending on $k$ and $\alpha$, and $f_1,f_2$ are real-analytic at the origin such that $f_j(0) = 1$ and $f_j(s) = f_j(-s)$ for $j = 1,2$.  In particular, either $\lim_{s \rightarrow \infty} \gamma_k(s) = \pm \infty$ or $\lim_{s \rightarrow \infty} \gamma_k(s)$ exists and is finite.  For each $k \geq 2$, by using the definition of $\gamma_k$ and $L^{2}$ orthogonality of $1$ and $\phi_k$, 
\begin{align} \label{homogrep1_eqn8}
	|\gamma_k(s)| &= \left| \int_{\mathbb{S}^{n-3}} \psi(s \omega) \phi_k(\omega) \,d\omega \right|
	\leq \left| \int_{\mathbb{S}^{n-3}} \psi(0) \phi_k(\omega) d\omega \right| + \int_{\mathbb{S}^{n-3}} |\psi(s \omega) - \psi(0)| \,|\phi_k(\omega)| \,d\omega 
	\\&\leq 0 + C |s| = C |s| \nonumber 
\end{align}
for all $0 < |s| \leq 1$, where $C \in (0,\infty)$ is a constant independent of $s$.  

For each $k$, let $\psi_k : \mathbb{R}^{n-2} \rightarrow \mathbb{R}$ be the function given by $\psi_k(z) = \gamma_k(|z|) \phi_k(z/|z|)$ for $z \in \mathbb{R}^{n-2} \setminus \{0\}$, $\psi_1(0) = \psi(0)$ and $\psi_k(0) = 0$ if $k > 1$.  Let $W_k : \mathbb{R}^{n-1} \setminus \{r=0\} \rightarrow \mathbb{R}$ be the function given by $W_k(r,y) = \psi_k(y/r)$ for $r \neq 0$ and $y \in \mathbb{R}^{n-2}$ (and in particular for $r \neq 0$, $W_1(r,0) = \psi(0)$ and $W_k(r,0) = 0$ if $k > 1$).  We want to show that $W_k$ satisfies \eqref{homogrep1_pde2} for any $\zeta \in C^2_c({\mathbb R}^{n-1})$ with $\zeta(r, y) = \zeta(-r, y)$ and that $W_k$ extends to a continuous function on $\mathbb{R}^{n-1} \setminus \{0\}$.  To see this, note first that it readily follows from \eqref{homogrep1_eqn7} that $\psi_k$ is a smooth solution to \eqref{homogrep1_eqn5} in ${\mathbb R}^{n-2} \setminus \{0\}$.  In particular, by integration by parts $\psi_k$ satisfies 
\begin{equation} \label{homogrep1_eqn9}
	\int_{{\mathbb R}^{n-2}} \psi_k \left( \Delta \zeta + \sum_{i,j=1}^{n-2} D_{ij} (z_i z_j \zeta) + (2\alpha-3) \sum_{i=1}^{n-2} D_i (z_i \zeta) \right) = 0
\end{equation}
for all $\zeta \in C^2_c({\mathbb R}^{n-2} \setminus \{0\})$.  To extend $\psi_k$ to a solution to \eqref{homogrep1_eqn9} in ${\mathbb R}^{n-2}$, first observe that when $k = 1$ we have $\psi_k(z) = \gamma_1(|z|)$.  Since $\gamma_1$ is bounded at $0$, $\gamma_1$ extends to a smooth solution to \eqref{homogrep1_eqn7} on $\mathbb{R}$ such that $\gamma_1(s) = \gamma_1(-s)$.  Thus $\psi_k(z) = \gamma_1(|z|)$ is a smooth radial solution to \eqref{homogrep1_eqn5} in ${\mathbb R}^{n-2}$.  Next suppose $k \geq 2$.  For each $\delta > 0$ let $\eta_{\delta} \in C^2([0,\infty))$ be a cutoff function such that $\eta_{\delta}(t) = 0$ for $t \in [0,\delta/2]$, $\eta_{\delta}(t) = 1$ for $t \in [\delta,\infty)$, and $|D\eta_{\delta}| \leq C\delta^{-1}$ and $|D^2 \eta_{\delta}| \leq C\delta^{-2}$ for some constant $C = C(n) \in (0,\infty)$.  By replacing $\zeta(z)$ with $\eta_{\delta}(|z|) \,\zeta(z)$ in \eqref{homogrep1_eqn9} and using \eqref{homogrep1_eqn8}, for each $\zeta \in C^2_c({\mathbb R}^{n-2})$ and sufficient small $\delta > 0$ 
\begin{align*}
	&\left| \int_{{\mathbb R}^{n-2}} \psi_k \,\eta_{\delta}(|z|) \left( \Delta \zeta + \sum_{i,j=1}^{n-2} D_{ij} (z_i z_j \zeta) 
		+ (2\alpha-3) \sum_{i=1}^{n-2} D_i (z_i \zeta) \right) \right| 
	\\&\hspace{10mm} \leq C(n) \,\delta^{-2} \,\|\zeta\|_{C^1({\mathbb R}^{n-2})} \int_{B^{n-2}_{\delta}(0)} |\psi_k| \leq C \delta^{n-3} 
\end{align*}
where $C \in (0,\infty)$ is a constant independent of $\delta$.  Letting $\delta \downarrow 0$, we conclude that $\psi_k$ satisfies \eqref{homogrep1_eqn9} for all $\zeta \in C^2_c({\mathbb R}^{n-2})$.  By elliptic regularity, $\psi_k$ is a smooth solution to \eqref{homogrep1_eqn5} in ${\mathbb R}^{n-2}$.  It follows that $W_k$ is a smooth solution to $\op{div}(|r|^{2\alpha-1} \overline{D}W_k) = 0$ in ${\mathbb R}^{n-1} \setminus \{r = 0\}$.  

Take any $\zeta \in C^2_c({\mathbb R}^{n-1} \setminus \{y=0\})$ with $\zeta(r, y) = \zeta(-r, y)$ and $D_r \zeta(0,y) = 0$ for all $y \in {\mathbb R}^{n-2}$.  We represent $\zeta$ by the Fourier series expansion 
\begin{equation*}
	\zeta(r,s\omega) = \sum_{k=1}^{\infty} \xi_{k}(r,s) \phi_{k}(\omega) \quad\text{where}\quad 
	\xi_{k}(r,s) = \int_{S^{n-3}} \zeta(r,s\omega) \phi_{k}(\omega) \,d\omega  
\end{equation*}
for $r,s > 0$ and $\omega \in \mathbb{S}^{n-3}$.  Define $\zeta_{k} \in C^2_c({\mathbb R}^{n-1})$ by $\zeta_{k}(r,s\omega) = \xi_{k}(r,s) \phi_{k}(\omega)$ for each $r,s > 0$ and $\omega \in \mathbb{S}^{n-3}$.  Since $\zeta \in C^2_c(\mathbb{R}^{n-1} \setminus \{y=0\})$, $\xi_{k} \in C^2_c(\mathbb{R}^2 \setminus \mathbb{R} \times \{0\})$.  Moreover, using the fact that $D_r \zeta(0,y) = 0$ for all $y \in {\mathbb R}^{n-2}$, we see that $|r|^{1-2\alpha} \op{div}(|r|^{2\alpha-1}D \zeta) = \Delta \zeta + (2\alpha-1) r^{-1} D_r \zeta$ is bounded and similarly $|r|^{1-2\alpha} \op{div}(|r|^{2\alpha-1}D \zeta_{k})$ is bounded.  We have the Fourier series expansion  
\begin{align*}
	|r|^{1-2\alpha} \op{div}(|r|^{2\alpha-1} \overline{D}\zeta) 
	&= \sum_{k=1}^{\infty} \left( |r|^{1-2\alpha} \op{div}(|r|^{2\alpha-1} \overline{D}\xi_{k}(r,s)) - \mu_{k} s^{-2} \xi_{k}(r,s) \right) \phi_{k}(\omega) 
	\\&= \sum_{k=1}^{\infty} |r|^{1-2\alpha} \op{div}(|r|^{2\alpha-1} \overline{D}\zeta_{k}) ,
\end{align*}
where $s = |y|$ and $\omega = y/|y|$.  Since $W$ satisfies \eqref{homogrep1_pde2} for all $\zeta \in C^2_c({\mathbb R}^{n-1})$ with $\zeta(r, y) = \zeta(-r, y)$, we can replace $\zeta$ with $\zeta_{k}$ in \eqref{homogrep1_pde} and use the fact that $\phi_{k}$ is an $L^{2}$ orthonormal basis to obtain 
\begin{align*}
	\int_{{\mathbb R}^{n-1}} W_k \,\op{div}(|r|^{2\alpha-1} \overline{D}\zeta) \,dr \,dy
	&= \int_{{\mathbb R}^{n-1}} W_k \,\op{div}(|r|^{2\alpha-1} \overline{D}\zeta_k) \,dr \,dy 
	\\&= \int_{{\mathbb R}^{n-1}} W \,\op{div}(|r|^{2\alpha-1} \overline{D}\zeta_k) \,dr \,dy = 0. 
\end{align*}
Hence $W_k$ satisfies \eqref{homogrep1_pde2} for $\zeta \in C^2_c({\mathbb R}^{n-1} \setminus \{y=0\})$ with $\zeta(r, y) = \zeta(-r, y)$ and $D_r \zeta(0,y) = 0$ for all $y \in {\mathbb R}^{n-2}.$ It follows that $W_k$  satisfies \eqref{homogrep1_pde2} for all $\zeta \in C^2_c({\mathbb R}^{n-1} \setminus \{0\})$ with $\zeta(r, y) = \zeta(-r, y)$. 

Recall that by the series expansion $\psi_k$ at infinity, either $\lim_{s \rightarrow \infty} \gamma_k(s) = \infty$ or $\lim_{s \rightarrow \infty} \gamma_k(s)$ exists and is finite.  Thus either $\lim_{(r,y) \rightarrow (0,z)} W_k(r,y) = \infty$ for all $z \in {\mathbb S}^{n-3}$ such that $\phi_k(z) \neq 0$ or $\lim_{(r,y) \rightarrow (0,z)} W_k(r,y)$ exists and is finite for all $z \in {\mathbb S}^{n-3}$.  By \eqref{homogrep1_mvp2} with $W_k$ in place of $W$ 
\begin{equation*}
	\lim_{\rho \downarrow 0} \frac{1}{\rho^{n+2\alpha-2}} \int_{B^{n-1}_{\rho}(0,z)} |r|^{2\alpha-1} W_k(r,y) \,dr\,dy 
		= 2^{n+2\alpha-2} \int_{B^{n-1}_{1/2}(0,z)} |r|^{2\alpha-1} W_k(r,y) \,dr\,dy < \infty 
\end{equation*}
whenever $z \in \mathbb{S}^{n-3}$ with $\phi_k(z) \neq 0$.  Hence $\lim_{(r,y) \rightarrow (0,z)} W_k(r,y)$ exists and is finite for all $z \in \mathbb{S}^{n-3}$, and in particular $W_k$ extends to a continuous function on $\mathbb{R}^{n-1} \setminus \{0\}$. 

For each $\delta > 0$, let $\eta_{\delta} \in C^1([0,\infty))$ is a cutoff function such that $\eta_{\delta}(t) = 0$ for $t \in [0,\delta/2]$, $\eta_{\delta}(t) = 1$ for $t \in [\delta,\infty)$, and $|D\eta_{\delta}| \leq C\delta^{-1}$ and $|D^2 \eta_{\delta}| \leq C\delta^{-2}$ for some constant $C = C(n) \in (0,\infty)$.  By replacing $\zeta(r,y)$ in \eqref{homogrep1_pde2} with $\eta_{\delta}(|(r,y)|) \,\zeta(r,y)$, 
\begin{equation*}
	\int_{{\mathbb R}^{n-1}} W_k \,\eta_{\delta}(|(r,y)|) \,\op{div}(|r|^{2\alpha-1} \overline{D}\zeta) \,dr \,dy 
	\leq C(n) \,\delta^{n-4+2\alpha} \|W_k\|_{L^{\infty}(B_1(0))} \|\zeta\|_{C^1({\mathbb R}^{n-1})} 
\end{equation*}
for any $\zeta \in C^2_c({\mathbb R}^{n-1})$ with $\zeta(r, y) = \zeta(-r, y)$.  By letting $\delta \downarrow 0$, we conclude that $W_k$ satisfies \eqref{homogrep1_pde2} for each 
$\zeta \in C^2_c({\mathbb R}^{n-1})$ with $\zeta(r, y) = \zeta(-r, y)$.  

Now for each integer $k \geq 2$, $W_k$ is a homogeneous degree zero function that is continuous on $\mathbb{R}^{n-1} \setminus \{0\}$ and that satisfies \eqref{homogrep1_pde2} for $\zeta \in C^2_c({\mathbb R}^{n-1})$ with $\zeta(r, y) = \zeta(-r, y)$.  Consequently, $W_k(r,y) = \gamma_k(|y|/r) \phi_k(y/|y|)$ is constant.  Hence for each $k \geq 2$, since $\phi_k$ in nonconstant, $W_{k} = 0$. 
Since $W_1$ is constant, it follows that $W$ is constant on $\mathbb{R}^{n-1}$. 
\end{proof}

In view of \eqref{homogrep1_eqn2} and \eqref{homogrep1_eqn4}, we may apply Lemma~\ref{homogrep1_liousville_lemma} with $D_{y_h} W^{\iota}$  in place of $W$ to conclude that $D_{y_h} W^{\iota}$ is constant 
on $\mathbb{R}^{n-1}$ for $\iota = 1, 2$ and $h= 1, \ldots, n-2$.  Thus we may express $W = (W^{1}, W^{2})^{T}$ as 
\begin{equation} \label{homogrep1_eqn10}
	W(r,y) = \sum_{j=1}^J \sum_{k=1}^{p_j} m_{j,k} \widetilde{w}_{j,k}(r,y) = W(r,0) + c Ay, 
\end{equation}
where $\widetilde{w}_{j, k} = (\widetilde{w}^{1}_{j, k}, \widetilde{w}^{2}_{j, k})^{T}$ and $A$ is a constant $2 \times (n-2)$ matrix and for convenience we set  
\begin{equation*}
	c = \sum_{j=1}^{J} \sum_{k=1}^{p_j} \alpha^2 m_{j,k} |c^{(0)}_j|^2, 
\end{equation*}
where $c^{(0)}_j$ is as in \eqref{cylindrical-2} if $\varphi^{(0)}_{j}$ is nonzero and $c^{(0)}_{j} = 0$ otherwise.  Moreover, since $W^{\iota}$ is homogeneous degree one, $W(r,0) = ar$ where $a = W(1,0) \in \mathbb{R}^{2}$.  (While the particular value of $a$ is not important, one can use the fact that each $w_{j,k,l;a}$ is locally harmonic to show that $\op{div}(r^{2\alpha-1} \overline{D}W^{\iota}) = 0$ weakly in $\mathbb{R}^n \cap \{r > 0\}$ and thus $a = 0$ if $\alpha \neq 1/2$.  If $\alpha = 1/2$, 
$a$ may be non-zero and this is consistent with the definition of $\mathfrak{L}$.)

Let $z \in B^{n-2}_{1/4}(0)$.  By \eqref{blowup_est4}, the $L^2$ orthogonality of $D_1 \varphi^{(0)}_{j,k}(e^{i\theta})$ and $D_2 \varphi^{(0)}_{j,k}(e^{i\theta})$, and the fact that
\begin{equation*}
	\frac{1}{\pi q_0} \int_0^{2\pi q_0} |D_{\iota} \varphi^{(0)}_{j,k}(e^{i\theta})|^2 \,d\theta 
	= \alpha^2 |c^{(0)}_j|^2 \cdot \frac{1}{\pi q_0} \int_0^{2\pi q_0} \cos^2(\alpha \theta) \,d\theta 
	= \alpha^2 |c^{(0)}_j|^2
\end{equation*} 
for $\iota = 1,2$, we have 
\begin{equation*}
	\int_{B^{n-1}_{1/2}(0)} \frac{r^{2\alpha-1}}{|(r,y-z)|^{n+2\alpha-\sigma}} \left| \widetilde{w}_{j,k}(r,y) - \alpha^2 |c^{(0)}_j|^2 \lambda(z) \right|^2 dr dy \leq C 
\end{equation*}
for all $j$ and $k$ for which $\varphi^{(0)}_j$ is not identically zero, where $C = C(n,m,q,\alpha,\varphi^{(0)},\sigma) \in (0,\infty)$ is a constant.  By the triangle inequality, 
\begin{equation*}
	\int_{B^{n-1}_{1/2}(0)} \frac{r^{2\alpha-1}}{|(r,y-z)|^{n+2\alpha-\sigma}} \left| \sum_{j=1}^{J} \sum_{k=1}^{p_j} m_{j,k} (\widetilde{w}_{j,k}(r,y) - \alpha^2 |c^{(0)}_j|^2 \lambda(z)) \right|^2 dr dy \leq C  
\end{equation*}
for some constant $C = C(n,m,q,\alpha,\varphi^{(0)},\sigma) \in (0,\infty)$ (recall that $\widetilde{w}^1_{j,k} = \widetilde{w}^2_{j,k} = 0$ if $\varphi^{(0)}_j$ is identically zero).  In other words, by \eqref{homogrep1_eqn10}, 
\begin{equation*}
	\int_{B^{n-1}_{1/2}(0)} \frac{r^{2\alpha-1}}{|(r,y-z)|^{n+2\alpha-\sigma}} \left| Ay - \lambda(z) \right|^2 dr dy \leq C < \infty, 
\end{equation*}
which implies that $\lambda(z) = Az$. 

Thus, taking $\lambda(z) = Az$ in \eqref{blowup_est4}, we conclude the following: \emph{If $w \in {\mathfrak B}$ is homogeneous of degree $\alpha$, then there exists a constant   $2 \times (n-2)$ real matrix $A$ such that 
\begin{equation}\label{linear_lambda}
	\int_{B_{1/2}(0)} \sum_{j=1}^J \sum_{k=1}^{p_j} \sum_{l=1}^{q_{j,k}} \sum_{h=1}^{m_{j, k}}
		\frac{|w_{j,k,l}(X) - D_x \varphi^{(0)}_{j, l}(X) \cdot Az|^2}{|X-(0,z)|^{n+2\alpha-\sigma}} dX \leq C 
\end{equation}
for all $\sigma \in (0,1/q)$ and some constant $C = C(n,m,q,\alpha,\varphi^{(0)},\sigma) \in (0,\infty)$.}  

\begin{proof}[Proof of Theorem~\ref{homogrep_lemma} (for general $\lambda$)] Let  $w  = (w_{j, k}) \in {\mathfrak B}$ and suppose that $w$ is homogeneous of degree $\alpha$.   Let $A$ be the matrix as in \eqref{linear_lambda}. Write $A = (\op{Re}(b) \hspace{2mm} \op{Im}(b))^{T}$ for some $b \in {\mathbb C}^{n-2}$, and let $B$ be the $n \times n$ matrix defined by $b$ as in \eqref{B_matrix}. For $\nu =1, 2, 3, \ldots,$ let 
$u^{(\nu)}$, $\varphi^{(\nu)}$, $\varepsilon_{\nu}$, $\beta_{\nu}$ and $\delta_{\nu}$ correspond to $w$ as in Defintion~\ref{blowupclass_defn}, and let 
$E_{\nu} = \left( \int_{B_1(0)} \mathcal{G}(u^{(\nu)},\varphi^{(\nu)})^2 \right)^{1/2}.$ We have that for any 
$\varphi \in \Phi_{\varepsilon}(\varphi^{(0)})$ (where $\varepsilon$ is any small number as in Definition~\ref{varphi_defn}), 
\begin{align*} 
	&\int_{B_1(0)} \mathcal{G}(u^{(\nu)} \circ e^{-E_{\nu} B},\varphi)^2 
	\\&\hspace{10mm} \leq 2 \int_{B_1(0)} \mathcal{G}(u^{(\nu)} \circ e^{-E_{\nu} B},\varphi \circ e^{-E_{\nu} B})^2 
		+ 2 \int_{B_1(0)} \mathcal{G}(\varphi \circ e^{-E_{\nu} B},\varphi)^2 \nonumber 
	\\&\hspace{10mm} \leq 2 \int_{B_1(0)} \mathcal{G}(u^{(\nu)},\varphi)^2 
		+ 2 E_{\nu}^2 |B|^2 \int_{B_1(0)} \int_0^1 |\nabla \varphi(e^{-t E_{\nu} B} X)|^2 \,dt \,dX \nonumber 
	\\&\hspace{10mm} \leq 2 \int_{B_1(0)} \mathcal{G}(u^{(\nu)},\varphi)^2 
		+ 2 E_{\nu}^2 |B|^2 \int_{B_1(0)} |\nabla \varphi(X)|^2 \,dX \nonumber 
\end{align*}
so in particular 
\begin{equation}\label{homogrep1_eqn11}
\int_{B_1(0)} \mathcal{G}(u^{(\nu)} \circ e^{-E_{\nu} B},\varphi^{(0)})^2 \leq C_{1}\epsilon_{\nu}^{2} 
\end{equation}
and 
\begin{equation}\label{homogrep1_eqn12}
\int_{B_1(0)} \mathcal{G}(u^{(\nu)} \circ e^{-E_{\nu} B},\varphi^{(\nu)})^2 \leq CE_{\nu}^{2} 
\end{equation}
where $C_{1}$, $C \in (0,\infty)$ are  constants depending only on $n$, $m$, $q$, $\alpha$, $\varphi^{(0)}$, and $B$.  Moreover, in case $p^{(\nu)} > p_{0}$ (notation as in Definition~\ref{blowupclass_defn} and Definition~\ref{varphi_defn}), we have by \eqref{blowupclass_eqn3}, \eqref{homogrep1_eqn11} and Remark~\ref{graphical_rmk}(a) that 
\begin{align*} 
	\int_{B_1(0)} \mathcal{G}(u^{(\nu)},\varphi^{(\nu)})^2 &\leq
	\beta_{\nu} \inf_{\varphi' \in \bigcup_{p'=p_{0}}^{p^{(\nu)}-1} \Phi_{3\varepsilon_{\nu},p'}(\varphi^{(0)})} \int_{B_1(0)} \mathcal{G}(u^{(\nu)},\varphi')^2
	\\&\leq \beta_{\nu} \inf_{\varphi' \in \bigcup_{p'=p_{0}}^{p^{(\nu)}-1} \Phi_{3\sqrt{C_{1}}\varepsilon_{\nu},p'}(\varphi^{(0)})} \int_{B_1(0)} \mathcal{G}(u^{(\nu)},\varphi')^2
\end{align*}
with $\beta_{\nu} \to 0$ as $\nu \to \infty$. It follows from this and the triangle inequality that for each $\nu$ with $p^{(\nu)} > p_{0}$ and each 
$\varphi' \in \bigcup_{p'=p_{0}}^{p^{(\nu)}-1} \Phi_{3\sqrt{C_{1}}\varepsilon_{\nu},p'}(\varphi^{(0)})$, 
\begin{align*}
&\int_{B_1(0)} \mathcal{G}(u^{(\nu)},\varphi^{(\nu)})^2 
\\&\hspace{10mm}\leq 2\beta_{\nu}\left(\int_{B_1(0)} \mathcal{G}(u^{(\nu)},\varphi^{\prime}\circ e^{E_{\nu}B})^2 + \int_{B_1(0)} \mathcal{G}(\varphi^{\prime} \circ e^{E_{\nu}B},\varphi^{\prime})^2\right)
\\&\hspace{10mm}\leq 2\beta_{\nu} \left(\int_{B_1(0)} \mathcal{G}(u^{(\nu)} \circ e^{-E_{\nu}B},\varphi^{\prime})^2 + CE_{\nu}^{2}\right)
\end{align*}
where $C = C(n, m,  q, \alpha, \varphi^{(0)}, B) \in (0, \infty)$, whence $E_{\nu}^{2} \leq 4\beta_{\nu}   \int_{B_1(0)} \mathcal{G}(u^{(\nu)} \circ e^{-E_{\nu}B},\varphi')^2$ and therefore, in view of \eqref{homogrep1_eqn12}, 
 \begin{equation}\label{homogrep1_eqn13}
	\int_{B_1(0)} \mathcal{G}(u^{(\nu)} \circ e^{-E_{\nu}B},\varphi^{(\nu)})^2 
	\leq C\beta_{\nu} \inf_{\varphi' \in \bigcup_{p'=p_{0}}^{p^{(\nu)}-1} \Phi_{3\sqrt{C_{1}}\varepsilon_{\nu},p'}(\varphi^{(0)})} \int_{B_1(0)} \mathcal{G}(u^{(\nu)}\circ e^{-E_{\nu} B},\varphi')^2.
\end{equation}

The inequalities \eqref{homogrep1_eqn11} and \eqref{homogrep1_eqn13}  and the definition of ${\mathfrak B}$ imply that  there exists $w^{\star} \in {\mathfrak B}$ such that, passing to a subsequence of $(\nu)$, $w^{\star}$ is the blow-up of $u^{(\nu)} \circ e^{-E_{\nu} B}$ relative to $\varphi^{(\nu)}$ and excess 
$$E_{\nu}^{\star} = \sqrt{\int_{B_1(0)} \mathcal{G}(u^{(\nu)} \circ e^{-E_{\nu} B},\varphi^{(\nu)})^2}.$$  
Let $\tau > 0$ arbitrary.  
Let $v^{(\nu)}_{j,k,l}$ be as in \eqref{v_localized} corresponding to $v^{(\nu)}_{j,k}$ as in Corollary~\ref{graphical_cor} with $u = u^{(\nu)}$ and $\varphi = \varphi^{(\nu)}$.  
Taylor's theorem \eqref{blowup2L_eqn4} (applied locally away from the axis $\{0\} \times \mathbb{R}^{n-2}$ with $\varphi^{(\nu)}$ in place of $f$ and $-B$ in place of $B$) gives us that for sufficiently large $\nu$ and all $X \in B_{1-\tau}(0) \cap \{r > \tau\}$,
\begin{align*}
	u^{(\nu)}(e^{-E_{\nu} B} X) 
	&= \sum_{j=1}^{J} \sum_{k=1}^{p_j} \sum_{l=1}^{q_{j,k}} \sum_{h=1}^{m_{j,k}} 
		\llbracket \varphi^{(\nu)}_{j,k,l}(e^{-E_{\nu} B} X) + v^{(\nu)}_{j,k,l,h}(e^{-E_{\nu} B} X) \rrbracket
	\\&= \sum_{j=1}^{J} \sum_{k=1}^{p_j} \sum_{l=1}^{q_{j,k}} \sum_{h=1}^{m_{j,k}} 
		\llbracket \varphi^{(\nu)}_{j,k,l}(X) - E_{\nu} D_x \varphi^{(\nu)}_{j,k,l}(X) \cdot Ay + v^{(\nu)}_{j,k,l,h}(e^{-E_{\nu} B}X) + \mathcal{R}^{(\nu)}_{j,k,l,h}(X) \rrbracket
\end{align*}
where $E_{\nu}^{-1} \mathcal{R}^{(\nu)}_{j,k,l,h}(X) \rightarrow 0$ as $\nu \rightarrow \infty$.  Therefore, by the definition of blow-up, the fact that the convergence $E_{\nu}^{-1}\llbracket v_{j, k, l, h}^{(\nu)}(\cdot)\rrbracket \to \llbracket w_{j, k, l, h}(\cdot)\rrbracket$ is uniform in $B_{1-\t}(0) \cap \{r > \tau\}$ and the arbitrariness of $\t$, we have that 
\begin{equation}\label{homogrep1_eqn15}
c^{\star}w^{\star} = (w_{j,k} - D\varphi^{(0)}_j(X) \cdot Ay) 
\end{equation}
in $B_{1}(0) \setminus \{0\} \times {\mathbb R}^{n-2}$ for some constant $c^{\star} \in [0, C]$ with $C$ as in \eqref{homogrep1_eqn12}; in fact $c^{\star}= \lim_{\nu \to \infty} \, E_{\nu}^{-1}E^{\star}_{\nu}$ where, by \eqref{homogrep1_eqn12}, the limit exists after passing to a subsequence. (The notation in \eqref{homogrep1_eqn15} means the following: if $\varphi^{(\infty)} = (m_{j, k}, \varphi^{(\infty)}_{j, k})_{1 \leq j \leq J, 1\leq k \leq p_{j}}) \in {\mathfrak D}$ is determined by the sequence 
$(\varphi^{(\nu)})$ in the manner described in Remark~\ref{blow-up-conditions} (so that $w, w^{\star} \in {\mathfrak B}(\varphi^{(\infty)}$)), and if we write $w^{\star} = (w^{\star}_{j, k})$, then for  $j \in \{1, \ldots, J\}$, $k \in \{1, \ldots, p_{j}\},$ 
$$c^{\star}w^{\star}_{j, k}(X) = \sum_{l=1}^{q_{0}}\sum_{h=1}^{m_{j, k}}\llbracket w_{j, k, l, h}(X) - D\varphi^{(0)}_{j, l}(X) \cdot Ay\rrbracket$$
if $\varphi^{(0)}_{j} \neq 0,$ where  $\varphi^{(0)}_{j}(X) = \sum_{l=1}^{q_{0}} \llbracket \varphi^{(0)}_{j, l}(X)\rrbracket$ and  $w_{j, k, l, h}$ is as in \eqref{w_localized}, and 
$$c^{\star}w^{\star}_{j, k}= w_{j, k}$$
if $\varphi^{(0)}_{j} = 0$.)

If $c^{\star} = 0$ in \eqref{homogrep1_eqn15} there is nothing further to prove, so assume that $c^{\star} >0.$ It follows from \eqref{homogrep1_eqn15} that $w^{\star}$ is homogeneous of degree $\alpha$. Furthermore, since $w^{\star} \in {\mathfrak B}$, using again \eqref{homogrep1_eqn15} together with Lemma~\ref{blowup_est_lemma}(c) (asserting uniqueness of $\lambda$ associated with $w$) and \eqref{linear_lambda}, we see  
that \eqref{blowup_est4} holds with $w = w^{\star}$ and $\lambda = 0$.  Thus the conclusion of Theorem~\ref{homogrep_lemma} follows from its validity in the special case $\lambda = 0$ (established above) and \eqref{homogrep1_eqn15}. 
\end{proof}

\section{Asymptotic decay for the blow-ups} \label{sec:asymptotics_sec}

The main result of this section is a decay estimate for the blow-ups, Theorem~\ref{blowupdecay_lemma}, which will be the basis for the proof (given in Section~\ref{sec:main_lemma_sec}) of the main excess improvement result Lemma~\ref{main_lemma}.  The general idea of the proof of Theorem~\ref{blowupdecay_lemma} is similar to that of the corresponding result in Section 4 of~\cite{Sim93}, and is based on a hole-filling argument that relies on the estimate \eqref{blowup_est1} and the key estimate in Lemma~\ref{holefilling_lemma} below. In contrast to \cite{Sim93} however, the proof of Lemma~\ref{holefilling_lemma} is complicated by  issues arising from higher multiplicity; unlike the corresponding argument in~\cite{Sim93}, Lemma~\ref{holefilling_lemma} requires a two step argument where the first step is Lemma~\ref{holefilling0_lemma} below giving the same conclusions as Lemma~\ref{holefilling_lemma} subject to additional hypotheses.    

\begin{lemma} \label{holefilling_lemma}
Let $w \in \mathfrak{B}$ and $\psi$ be a projection (as in Definition~\ref{homogprojection_defn}) of $w$ onto $\mathfrak{L}$ in $B_{1/2}(0)$.  Then 
\begin{equation} \label{holefilling_est}
	\int_{B_{1/2}(0)} \sum_{j=1}^J \sum_{k=1}^{p_j} \sum_{l=1}^{q_{j,k}} \mathcal{G}(w_{j,k,l},\psi_{j,k,l})^2  
	\leq C \int_{B_{1/2}(0) \setminus B_{1/8}(0)} \sum_{j=1}^J \sum_{k=1}^{p_j} \sum_{l=1}^{q_{j,k}} \left| \frac{\partial (w_{j,k,l}/R^{\alpha})}{\partial R} \right|^2 
\end{equation}
for some constant $C  = C(n, m, q, \alpha, \varphi^{(0)}) \in (0,\infty),$  where $w_{j,k,l}$ is as in \eqref{w_localized} and $\psi_{j,k,l}$ is as in \eqref{psi_localized}. 
\end{lemma}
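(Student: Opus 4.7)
I would prove Lemma~\ref{holefilling_lemma} by a contradiction and compactness argument that reduces the claim to the classification Theorem~\ref{homogrep_lemma} of homogeneous blow-ups. If the conclusion fails, there are sequences $w^{(\nu)} \in \mathfrak{B}$ with projections $\psi^{(\nu)}$ onto $\mathfrak{L}$ in $B_{1/2}(0)$ such that, since both sides of \eqref{holefilling_est} are quadratic in $w$ and every property of $\mathfrak{B}$ used below (component-wise minimality on $B_1(0) \setminus (\{0\} \times \mathbb{R}^{n-2})$ and the estimates of Lemma~\ref{blowup_est_lemma}) is scale invariant, one may normalize so that
\[
	\int_{B_{1/2}(0)} \sum_{j,k,l} \mathcal{G}(w^{(\nu)}_{j,k,l},\psi^{(\nu)}_{j,k,l})^2 = 1, \qquad
	\int_{B_{1/2}(0) \setminus B_{1/8}(0)} \sum_{j,k,l} \Bigl|\tfrac{\partial (w^{(\nu)}_{j,k,l}/R^{\alpha})}{\partial R}\Bigr|^2 \longrightarrow 0.
\]
After passing to a subsequence the discrete data of $\varphi^{(\infty),(\nu)} \in \mathfrak{D}$ stabilizes to a single $\varphi^{(\infty)}$; the finite-dimensional parameters describing $\psi^{(\nu)} \in \mathfrak{L}(\varphi^{(\infty)})$ are uniformly bounded by the normalization together with Lemma~\ref{blowup_norm_conv_lemma} and therefore converge to some $\psi \in \mathfrak{L}(\varphi^{(\infty)})$; and the non-concentration estimate \eqref{blowup_est2} applied to $\mathcal{G}(w^{(\nu)},\psi^{(\nu)})^2$, combined with the component-wise compactness of Dirichlet minimizers (Lemma~\ref{compactness_lemma}), produces a limit $w \in \mathfrak{B}(\varphi^{(\infty)})$ with $w^{(\nu)} \to w$ uniformly on compact subsets of $B_1(0) \setminus (\{0\} \times \mathbb{R}^{n-2})$ and in $L^2(B_{1/2}(0))$.

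Passing to the limit yields $\int_{B_{1/2}(0)} \sum \mathcal{G}(w_{j,k,l},\psi_{j,k,l})^2 = 1$ together with $\partial(w_{j,k,l}/R^\alpha)/\partial R \equiv 0$ on the annulus $B_{1/2}(0) \setminus B_{1/8}(0)$. Consequently $w_{j,k,l}$ is positively homogeneous of degree $\alpha$ along every ray meeting that annulus; since every ray from the origin into $B_1(0) \setminus (\{0\} \times \mathbb{R}^{n-2})$ crosses the annulus and since each single-valued sheet of $w_{j,k,l}$ is locally harmonic (by component-wise minimality, property (iii) of Section~\ref{blow-up-procedure}) and hence real-analytic, unique continuation extends the homogeneity to all of $B_1(0) \setminus (\{0\} \times \mathbb{R}^{n-2})$. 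Theorem~\ref{homogrep_lemma} then gives $w \in \mathfrak{L}(\varphi^{(\infty)})$. Lower semicontinuity of the $L^2$-projection functional under the convergence $w^{(\nu)} \to w$ ensures that $\psi$ is itself a projection of $w$ onto $\mathfrak{L}(\varphi^{(\infty)})$, but since $w$ already lies in $\mathfrak{L}(\varphi^{(\infty)})$ one must have $\psi = w$ and hence $\int \mathcal{G}(w,\psi)^2 = 0$, contradicting the normalization.

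\textbf{Main obstacle.} The delicate point is actually securing that the limit $w$ lies in $\mathfrak{L}$. While Theorem~\ref{homogrep_lemma} does the heavy lifting once homogeneity on the entire punctured ball is established, extending the homogeneity from the annulus to the full domain is subtle when $q_0 \geq 2$, because the sheets of $w_{j,k,l}$ are then globally multi-valued with branching and the component structure may degenerate along the approximating sequence. This is most plausibly what forces the two-step strategy that the authors highlight: Lemma~\ref{holefilling0_lemma} presumably establishes \eqref{holefilling_est} under an auxiliary non-degeneracy hypothesis (in the spirit of Hypothesis~$(\star\star)$ of Section~\ref{sec:graphical_sec}) that precludes collapse of components in the blow-up limit, and Lemma~\ref{holefilling_lemma} then removes that hypothesis through a finite iteration on the number of components, exactly analogous to the passage from Lemma~\ref{premain_lemma} to Lemma~\ref{main_lemma}. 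A secondary technical issue is that the excess $E_\nu$ in Definition~\ref{blowupclass_defn} is intrinsic to the approximating sequence, so arbitrary positive rescalings of a given $w \in \mathfrak{B}$ need not themselves lie in $\mathfrak{B}$; this is absorbed by observing that all estimates used in the argument are quadratic in $w$, so the contradiction may equivalently be run in the cone generated by $\mathfrak{B}$.
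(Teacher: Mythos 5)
Your overall strategy — contradiction, compactness, homogeneity on the annulus extended by unique continuation, then Theorem~\ref{homogrep_lemma} — correctly identifies the shape of the argument, and you have also correctly anticipated the two-step structure forced by component degeneration (Lemma~\ref{holefilling0_lemma} under a Hypothesis~$(\star\star)$-type nondegeneracy assumption on the number of components, followed by a finite iteration on $s$ in the style of Lemma~\ref{main_lemma}), as well as the reduction to the $b=0$ case by composing $u^{(\nu)}$ with $e^{-E_{\nu}B}$. However, there is a genuine gap at the heart of the compactness step.

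The problem is your normalization ``$\int_{B_{1/2}(0)} \sum \mathcal{G}(w^{(\nu)},\psi^{(\nu)})^2 = 1$.'' Setting $F_{\nu}^2 := \int_{B_{1/2}(0)} \sum \mathcal{G}(w^{(\nu)},\psi^{(\nu)})^2$, the contradiction hypothesis only says that $F_{\nu}^{-2}\int_{B_{1/2}\setminus B_{1/8}} |\partial(w^{(\nu)}/R^{\alpha})/\partial R|^2 \to 0$; it puts no lower bound on $F_{\nu}$, and in general $F_{\nu}\to 0$ (indeed this is the generic case: $w^{(\nu)}$ and its projection $\psi^{(\nu)}$ converge to a common $\psi\in\mathfrak{L}$). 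If you rescale $w^{(\nu)}$ by $F_{\nu}^{-1}$ to force the normalization, you leave $\mathfrak{B}$ and, more fatally, destroy the uniform $L^2$ bound $\int_{B_{1/2}}\sum|w^{(\nu)}_{j,k,l}|^2\le 1$ from Lemma~\ref{blowup_norm_conv_lemma}: after rescaling the $L^2$ norms blow up like $F_{\nu}^{-1}$ and you can no longer extract a convergent subsequence. If instead you do not rescale, the limit $w$ of $w^{(\nu)}$ satisfies $\int\mathcal{G}(w,\psi)^2 = \lim F_{\nu}^2 = 0$, i.e.\ $w=\psi$ trivially, and there is no contradiction. Your remark about ``running the argument in the cone generated by $\mathfrak{B}$'' names this obstruction but does not resolve it: the quadratic scaling of the two integrals is irrelevant since the compactness ingredients (in particular \eqref{blowup_norm_conv_eqn2}, the graphical representation, and the non-concentration estimate \eqref{blowup_est2}) are not scale invariant within the cone.

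What the paper actually does is perform a \emph{second blow-up} rather than take a limit of $w^{(\nu)}$. Each $w^{(\nu)}$ arises from sequences $u^{(\nu,\kappa)}$, $\varphi^{(\nu,\kappa)}$; by Lemma~\ref{blowup2L_lemma} one builds $\widetilde{\varphi}^{(\nu,\kappa)}$ encoding the correction $\psi^{(\nu)}$, selects a diagonal subsequence $\kappa=\kappa(\nu)$, and then blows up $u^{(\nu)}=u^{(\nu,\kappa(\nu))}$ \emph{relative to $\widetilde{\varphi}^{(\nu)}$ by the smaller excess $E_{\nu}F_{\nu}$}. This produces a genuine element $\widetilde{w}\in\mathfrak{B}$ to which all the a priori estimates of Section~\ref{sec:blowupclass_sec} apply; moreover $\widetilde{w}$ captures $F_{\nu}^{-1}(w^{(\nu)}-\psi^{(\nu)})$ in the sense of \eqref{holefilling0_eqn16}. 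The radial-derivative hypothesis then makes $\widetilde{w}$ homogeneous on the annulus, unique continuation (Lemma~\ref{unique continuation lemma}) extends this, and Theorem~\ref{homogrep_lemma} gives $\widetilde{w}\in\mathfrak{L}$; the contradiction is then extracted via a telescoping estimate \eqref{holefilling0_eqn17} combined with non-concentration \eqref{blowup_est2}. The renormalized second blow-up is precisely what your proposal is missing, and without it the argument does not close.
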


For Lemma~\ref{holefilling0_lemma} we need the following additional notation:




\begin{definition} 
For each $\varphi^{(\infty)} \in {\mathfrak D}$, $\mathfrak{L}_0(\varphi^{(\infty)})$ is the set of all $\psi \in \mathfrak{L}(\varphi^{(\infty)})$ such that $\psi$ is as in Definition~\ref{homogclass_defn} with $b = 0$.
\end{definition}

\begin{definition} \label{L0s defn}
For each $\psi  = (\psi_{j, k})\in \mathfrak{L}_0(\varphi^{(\infty)})$, we call the functions $0$ and $\op{Re}(a_{j,k,l} r^{\alpha} e^{i\alpha \theta})$ as in Definition~\ref{homogclass_defn} the components of $\psi_{j,k}$.  

Let $\varphi^{(\infty)} = (m_{j, k}, \varphi^{(\infty)}_{j, k}) \in {\mathfrak D}$ and let $s_0$ be the number of (not necessarily distinct) nonzero functions $\varphi^{(\infty)}_{j,k}$ for $j \in \{1, \ldots, J\}$ and $k \in \{1, \ldots, p_{j}\}$.  For each $s \in \{s_0,s_0+1,\ldots,\lceil q/q_0 \rceil\}$, we let $\mathfrak{L}_{0,s}(\varphi^{(\infty)})$ denote the set of all $\psi  = (\psi_{j, k}) \in \mathfrak{L}_0(\varphi^{(\infty)})$ such that $s = \sum_{j=1}^J \sum_{k=1}^{p_{j}} s_{j,k}$ where: \begin{enumerate}
	\item[(i)] if $\varphi^{(\infty)}_{j,k}$ is nonzero, $s_{j,k}$ is the number of distinct components of $\psi_{j,k}$ (not counting multiplicity);
	\item[(ii)] if $\varphi^{(\infty)}_{j,k}$ is identically zero, $s_{j,k}$ is the number of distinct nonzero components of $\psi_{j,k}$ (not counting multiplicity). 
\end{enumerate}
\end{definition}

\begin{remark} \label{L0 comp rmk}  
Let $\varphi^{(\infty)} = (m_{j,k}, \varphi^{(\infty)}_{j,k}) \in \mathfrak{D}$, $s_{0}$ be as in Definition~\ref{L0s defn}, $s \in \{s_0,s_0+1,\ldots,\lceil q/q_0 \rceil\}$, and $\psi \in \mathfrak{L}_{0,s}(\varphi^{(\infty)}).$  For each $\nu = 1,2,3,\ldots$, let $u^{(\nu)}$, $\varphi^{(\nu)}$, and $E_{\nu}$ be as in Definition~\ref{blowupclass_defn}.  Assume $\varphi^{(\nu)}$ satisfies the requirements of Remark~\ref{blow-up-conditions}(A)(B) taken with $\varphi^{(\infty)} \in \mathfrak{D}$ for which $\psi \in \mathfrak{L}(\varphi^{(\infty)})$.  Let $\widetilde{\varphi}^{(\nu)}$ be as in Lemma~\ref{blowup2L_lemma} corresponding to $\varphi^{(\nu)}$, $E_{\nu}$ and $\psi$.  By Remark~\ref{blow-up-conditions}(B), $\varphi^{(\infty)}_{j,k} \equiv 0$ if and only if $\varphi^{(\nu)}_{j,k} \equiv 0$ for all $\nu$ and thus $\varphi^{(\nu)}$ has precisely $s_0$ nonzero components, where $s_0$ is as in Definition~\ref{L0s defn}.  Moreover, we constructed $\widetilde{\varphi}^{(\nu)}$ in Lemma~\ref{blowup2L_lemma} so that $\widetilde{\varphi}^{(\nu)} = \sum_{j=1}^J \sum_{k=1}^{p_j} \widetilde{\varphi}^{(\nu)}_{j,k}$ where $\widetilde{\varphi}^{(\nu)}_{j,k}$ is given by \eqref{blowup2L_eqn2} with $B = 0$ if $\varphi^{(\infty)}_{j,k}$ is nonzero and $\widetilde{\varphi}^{(\nu)}_{j,k} = E_{\nu} \psi_{j,k}(X,0)$ if $\varphi^{(\infty)}_{j,k} \equiv 0$.  It follows, using Remark~\ref{graphical_rmk}(b), that $\widetilde{\varphi}^{(\nu)}_{j,k}$ has a zero component if and only if $\varphi^{(\infty)}_{j,k} \equiv 0$ and $\psi_{j,k}$ has a zero component.  It follows that each $\widetilde{\varphi}^{(\nu)}_{j,k}$ has precisely $s_{j,k}$ distinct nonzero components, where $s_{j,k}$ is as in Definition~\ref{L0s defn}.  Therefore, recalling Remark~\ref{graphical_rmk}(b), $\widetilde{\varphi}^{(\nu)}$ has precisely $s = \sum_{j=1}^J \sum_{k=1}^{p_{j}} s_{j,k}$ distinct nonzero components.
\end{remark}

\begin{lemma} \label{holefilling0_lemma}
For every $M \in [1,\infty)$ there exists $\overline{\beta} \in (0,1)$ and $\overline{C} \in (0,\infty)$ depending only on $n$, $m$, $q$, $\alpha$, $\varphi^{(0)}$ and $M$ such that the following holds true:  Suppose that $\varphi^{(\infty)}  = (m_{j, k}, \varphi^{(\infty)}_{j, k})\in {\mathfrak D}$, $w \in \mathfrak{B}(\varphi^{(\infty)})$ and $\psi \in \mathfrak{L}_{0,s}(\varphi^{(\infty)})$ (where $s \in \{s_{0}, \ldots, s_{0}+ 1, \ldots, \lceil q/q_0 \rceil\}$ with $s_{0}$ corresponding to $\varphi^{(\infty)}$ as in Definition~\ref{L0s defn}) are such that  
\begin{equation} \label{holefilling0_hyp1}
	\int_{B_{1/2}(0)} \sum_{j=1}^J \sum_{k=1}^{p_j} \sum_{l=1}^{q_{j,k}} \mathcal{G}(w_{j,k,l},\psi_{j,k,l})^2 
		\leq M \inf_{\psi' \in \mathfrak{L}(\varphi^{(\infty)})} \int_{B_{1/2}(0)} \sum_{j=1}^J \sum_{k=1}^{p_j} \sum_{l=1}^{q_{j,k}} \mathcal{G}(w_{j,k,l},\psi'_{j,k,l})^2,
\end{equation}
and that either (i) $s = s_0$, or (ii) $s > s_0$ and 
\begin{equation} \label{holefilling0_hyp2}
	\int_{B_{1/2}(0)} \sum_{j=1}^J \sum_{k=1}^{p_j} \sum_{l=1}^{q_{j,k}} \mathcal{G}(w_{j,k,l},\psi_{j,k,l})^2 
		\leq \overline{\beta} \inf_{\psi' \in \bigcup_{s'=s_0}^{s-1} \mathfrak{L}_{0,s'}(\varphi^{(\infty)})} 
			\int_{B_{1/2}(0)} \sum_{j=1}^J \sum_{k=1}^{p_j} \sum_{l=1}^{q_{j,k}} \mathcal{G}(w_{j,k,l},\psi'_{j,k,l})^2,  
\end{equation}
where $w_{j,k,l}$ is as in \eqref{w_localized}, $\psi_{j,k,l}$ is as in \eqref{psi_localized} and $\psi'_{j,k,l}$ is as in \eqref{psi_localized} with $\psi'$ in place of $\psi$.  Then \eqref{holefilling_est} holds true with $C = \overline{C}$. 
\end{lemma}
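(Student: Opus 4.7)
\textbf{Proof plan for Lemma~\ref{holefilling0_lemma}.} The plan is to argue by contradiction using a blow-up scheme, exploiting the $L^2$ near-orthogonality of $w - \psi$ to $\mathfrak{L}(\varphi^{(\infty)})$ implicit in the closest-point hypothesis \eqref{holefilling0_hyp1}. Suppose the conclusion fails for some $M \in [1,\infty)$: then for each $k = 1,2,\ldots$ there exist $\varphi^{(\infty,k)} \in \mathfrak{D}$, $w^{(k)} \in \mathfrak{B}(\varphi^{(\infty,k)})$ and $\psi^{(k)} \in \mathfrak{L}_{0,s}(\varphi^{(\infty,k)})$ satisfying \eqref{holefilling0_hyp1} (and, if $s>s_0$, also \eqref{holefilling0_hyp2} with $\overline{\beta}=1/k$) but with
\begin{equation*}
	F_k^2 := \int_{B_{1/2}(0)} \sum_{j,k,l} \mathcal{G}(w^{(k)}_{j,k,l},\psi^{(k)}_{j,k,l})^2
	> k \int_{B_{1/2}(0)\setminus B_{1/8}(0)} \sum_{j,k,l} \left|\frac{\partial(w^{(k)}_{j,k,l}/R^\alpha)}{\partial R}\right|^2.
\end{equation*}
Since the ``type data'' $(J,p_j,m_{j,k},q_{j,k})$ defining membership of $\mathfrak{D}$ takes only finitely many values, and the coefficients $c^{(\infty,k)}_{j,k}$ satisfy $|c^{(\infty,k)}_{j,k}|\in\{0,1\}$, passing to a subsequence we may assume $\varphi^{(\infty,k)}\to\varphi^{(\infty)}$ in $\mathfrak{D}$, with $\varphi^{(\infty)}_{j,k}$ taking the limiting form.

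Next I construct a new element of the blow-up class out of the normalized differences. For each $k$, let $u^{(k,\nu)},\varphi^{(k,\nu)},E_{k,\nu}$ realize $w^{(k)}$ as in Definition~\ref{blowupclass_defn}, and apply Lemma~\ref{blowup2L_lemma} to $\psi^{(k)}\in\mathfrak{L}_0$ (so that $b=0$ and hence $\widetilde{\varphi}^{(k,\nu)} \in \Phi_{\varepsilon_{k,\nu}+CE_{k,\nu}}(\varphi^{(0)})$, i.e.\ no rotation is needed). By Lemma~\ref{blowup_norm_conv_lemma}, $E_{k,\nu}^{-2}\int_{B_1(0)}\mathcal{G}(u^{(k,\nu)},\widetilde{\varphi}^{(k,\nu)})^2 \to F_k^2$ as $\nu\to\infty$. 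I choose $\nu_k\uparrow\infty$ so rapidly that, setting $\widetilde{u}^{(k)}=u^{(k,\nu_k)}$, $\widetilde{\varphi}^{(k)}=\widetilde{\varphi}^{(k,\nu_k)}$, $\widetilde{E}_k=\|\mathcal{G}(\widetilde{u}^{(k)},\widetilde{\varphi}^{(k)})\|_{L^2}$: (i) $\widetilde{E}_k/(E_{k,\nu_k}F_k)\to 1$; (ii) the sequence $(\widetilde{u}^{(k)},\widetilde{\varphi}^{(k)})$ satisfies the blow-up hypotheses of Definition~\ref{blowupclass_defn} with parameters $\to 0$. For condition \eqref{blowupclass_eqn3} in case $s>s_0$, I invoke hypothesis \eqref{holefilling0_hyp2}: together with Remark~\ref{L0 comp rmk} (which says $\widetilde{\varphi}^{(k)}$ has $s$ nonzero components), the assumption $\overline{\beta}=1/k\to 0$ translates, via the correspondence of Lemma~\ref{blowup2L_lemma} between $L^2$-distances of $\widetilde{u}^{(k,\nu)}$ to its cylindrical approximants and of $w^{(k)}$ to its projections in $\mathfrak{L}_{0,s'}$, into the required infimum condition for $\widetilde{\varphi}^{(k)}$. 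Extracting a further subsequence, $\widetilde{u}^{(k)}$ relative to $\widetilde{\varphi}^{(k)}$ by excess $\widetilde{E}_k$ has a blow-up $w^\ast \in \mathfrak{B}(\varphi^{(\infty)})$. By the expansion \eqref{blowup2L_eqn5}--\eqref{homog_error}, $w^\ast$ is exactly the $L^2_{\rm loc}$-limit on $B_{1/2}\setminus\{0\}\times\mathbb{R}^{n-2}$ of $F_k^{-1}(w^{(k)}-\psi^{(k)})$, and $\|w^\ast\|_{L^2(B_{1/2})}^2=1$ by construction.

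Now I show $w^\ast \in \mathfrak{L}$. Since $\psi^{(k)}$ is homogeneous of degree $\alpha$, the radial derivative $\partial(\psi^{(k)}/R^\alpha)/\partial R$ vanishes, so the contradiction hypothesis together with Fatou-type passage (as in Lemma~\ref{blowup_norm_conv_lemma} and the gradient-convergence argument of Lemma~\ref{blowup_est_lemma}(a)) yields $\int_{B_{1/2}\setminus B_{1/8}}\sum|\partial(w^\ast_{j,k,l}/R^\alpha)/\partial R|^2=0$. Thus $w^\ast$ is homogeneous of degree $\alpha$ on the annulus. Since each $w^\ast_{j,k,l}$ is component-wise locally energy-minimizing on $B_1(0)\setminus\{0\}\times\mathbb{R}^{n-2}$, it is locally (in any ball missing the branch set and the axis) a sum of real-analytic harmonic functions; vanishing of $\partial(w^\ast/R^\alpha)/\partial R$ on an annular piece propagates along rays by unique continuation, extending homogeneity to all of $B_{1/2}(0)\setminus\{0\}\times\mathbb{R}^{n-2}$. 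Therefore $w^\ast \in \mathfrak{B}(\varphi^{(\infty)})$ is homogeneous of degree $\alpha$, and Theorem~\ref{homogrep_lemma} gives $w^\ast \in \mathfrak{L}(\varphi^{(\infty)})$.

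Finally I derive the contradiction via the closest-point property. Because $w^\ast \in \mathfrak{L}(\varphi^{(\infty)})$, its component parameters $(a^\ast_{j,k,l}, b^\ast)$ let me form, for each sufficiently large $k$, an element $\widetilde{\psi}^{(k)}\in\mathfrak{L}(\varphi^{(\infty,k)})$ obtained from $\psi^{(k)}$ by adding $F_k a^\ast_{j,k,l}$ to each coefficient of $\psi^{(k)}_{j,k}$ and inserting a $y$-linear term $F_k\,\alpha c^{(\infty,k)}_{j,k}r^{\alpha-1}e^{i(\alpha-1)\theta}(b^\ast\cdot y)$ wherever $\varphi^{(\infty,k)}_{j,k}\not\equiv 0$; this is well-defined and lies in $\mathfrak{L}(\varphi^{(\infty,k)})$ as $k$ is large. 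By construction,
\begin{equation*}
	F_k^{-2}\int_{B_{1/2}(0)}\sum_{j,k,l}\mathcal{G}(w^{(k)}_{j,k,l},\widetilde{\psi}^{(k)}_{j,k,l})^2
	= \int_{B_{1/2}(0)}\sum_{j,k,l}\bigl|F_k^{-1}(w^{(k)}_{j,k,l}-\psi^{(k)}_{j,k,l})-w^\ast_{j,k,l}\bigr|^2 \longrightarrow 0.
\end{equation*}
Combined with \eqref{holefilling0_hyp1},
\begin{equation*}
	1 = F_k^{-2}\int_{B_{1/2}(0)}\sum\mathcal{G}(w^{(k)},\psi^{(k)})^2 \leq M\cdot F_k^{-2}\int_{B_{1/2}(0)}\sum\mathcal{G}(w^{(k)},\widetilde{\psi}^{(k)})^2 \longrightarrow 0,
\end{equation*}
which is the desired contradiction. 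The main technical obstacles I expect are: (a) carrying out Step (ii) of the diagonal extraction, i.e.\ translating \eqref{holefilling0_hyp2} into the Definition~\ref{blowupclass_defn}(e) infimum condition for the sequence $\widetilde{\varphi}^{(k)}$, which requires carefully controlling the multiplicity $s$ along the passage from $\mathfrak{L}$ to $\Phi$; and (b) the unique-continuation extension of homogeneity in the presence of the multi-valued branch set, where care is required to ensure the ray-wise analytic extension argument respects the branching.
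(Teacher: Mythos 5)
Your overall strategy coincides with the paper's: argue by contradiction, realize each $w^{(k)}$ as a blow-up, construct a diagonal sequence, use Lemma~\ref{blowup2L_lemma} to build the modified cylindrical functions $\widetilde{\varphi}^{(k)}$, extract a second blow-up, prove it is homogeneous of degree $\alpha$ and so lies in $\mathfrak{L}$ via Theorem~\ref{homogrep_lemma}, and finally build a competitor in $\mathfrak{L}$ that contradicts the closest-point hypothesis \eqref{holefilling0_hyp1}. Your two flagged obstacles are real and correspond to portions of the paper's argument: (a) is precisely what the paper's equations \eqref{holefilling0_eqn7}--\eqref{holefilling0_eqn14} handle, where a case distinction ($p_0 < s_0 = s$ versus $p_0 \le s_0 < s$) is necessary, and in the second case \eqref{holefilling0_hyp2} is used together with \eqref{holefilling0_eqn8}--\eqref{holefilling0_eqn13} to transfer the closeness condition from $\mathfrak{L}_{0,s'}$ to $\Phi_{\cdot,p'}$; (b) is a routine application of the unique continuation Lemma~\ref{unique continuation lemma} to each (component-wise energy minimizing) piece $\widetilde{w}_{j,k,l}$, so is less of a concern than you suggest.

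The genuine gap in your write-up is the very last step, the asserted limit
\[
F_k^{-2}\int_{B_{1/2}(0)}\sum_{j,k,l}\mathcal{G}(w^{(k)}_{j,k,l},\widetilde{\psi}^{(k)}_{j,k,l})^2 \;\longrightarrow\; 0,
\]
which you justify only ``by construction.'' The blow-up construction gives you only locally uniform convergence of $F_k^{-1}(w^{(k)}-\psi^{(k)})$ to $w^{\ast}$ on compact subsets of $B_{1/2}(0)\setminus\{0\}\times\mathbb{R}^{n-2}$; nothing in the blow-up definition rules out concentration of $\mathcal{G}(w^{(k)},\widetilde{\psi}^{(k)})^2$ near the axis $\{r=0\}$. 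The paper addresses this by combining two ingredients you do not mention: first, a reverse hole-filling inequality (equation \eqref{holefilling0_eqn17}) obtained from the fundamental theorem of calculus in the radial variable and homogeneity of $\widetilde{\psi}^{(\nu)}$, which relates $\int_{B_{1/2}}\mathcal{G}(w^{(\nu)},\widetilde{\psi}^{(\nu)})^2$ to $\int_{B_{1/4}}\mathcal{G}(w^{(\nu)},\widetilde{\psi}^{(\nu)})^2$ plus the radial derivative term; and second, the non-concentration estimate \eqref{blowup_est2} applied with $\psi=\widetilde{\psi}^{(\nu)}$ to control the near-axis region so that, after splitting $B_{1/4}$ into $\{r<\tau\}\cup\{r>\tau\}$ and absorbing, the right-hand side of \eqref{holefilling0_eqn17} divided by $F_\nu^2$ tends to zero. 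Without this absorption/non-concentration argument the final contradiction is not established. You could alternatively try to invoke Lemma~\ref{blowup_norm_conv_lemma} applied to the second blow-up $w^{\ast}$ with $\psi'=w^{\ast}$, which already incorporates the non-concentration estimate, but then you need a second diagonal passage (with $\nu_k$ chosen as in the paper's \eqref{holefilling0_eqn4} so that the approximation error is $o(F_k)$, not merely $o(1)$) to transfer the conclusion back from $u^{(k,\nu_k)}$ and the cylindrical approximants to $w^{(k)}$ and $\widetilde{\psi}^{(k)}$; this scaling by $F_k$ is essential because $F_k$ may tend to $0$, and your sketch does not make the quantitative choice of $\nu_k$ explicit.
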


\begin{proof}
Observe that it suffices to fix $s \in \{s_0,s_0+1,\ldots,\lceil q/q_0 \rceil\}$ and prove Lemma~\ref{holefilling0_lemma} with $\overline{\beta}$ and $\overline{C}$ depending on $s$.  Suppose that for some fixed $M \in [1,\infty)$ and $s \in \{s_0,s_0+1,\ldots,\lceil q/q_0 \rceil\}$ and each $\nu =1, 2, 3, \ldots,$ there exist $\overline{\beta}_{\nu}>0$, $\varphi^{(\nu,\infty)} \in {\mathfrak D}$, $w^{(\nu)} = (w^{(\nu)}_{j,k}) \in \mathfrak{B}(\varphi^{(\nu,\infty)})$, $\psi^{(\nu)} = (\psi^{(\nu)}_{j,k}) \in \mathfrak{L}_{0,s}(\varphi^{(\nu,\infty)})$ such that $\overline{\beta}_{\nu} \downarrow 0$ and \eqref{holefilling0_hyp1}, \eqref{holefilling0_hyp2} hold true with $\varphi^{(\nu,\infty)}$, $w^{(\nu)}$, $\psi^{(\nu)}$, $\overline{\beta}_{\nu}$ in place of $\varphi^{(\infty)}$, $w$, $\psi$, $\overline{\beta}$ respectively, and yet  
\begin{equation} \label{holefilling0_eqn1}
	\int_{B_{1/2}(0) \setminus B_{1/8}(0)} \sum_{j=1}^J \sum_{k=1}^{p_j} \sum_{l=1}^{q_{j,k}} 
		\left| \frac{\partial (w^{(\nu)}_{j,k,l}/R^{\alpha})}{\partial R} \right|^2 
	\leq \frac{1}{\nu} \int_{B_{1/2}(0)} \sum_{j=1}^J \sum_{k=1}^{p_j} \sum_{l=1}^{q_{j,k}} \mathcal{G}(w^{(\nu)}_{j,k,l},\psi^{(\nu)}_{j,k,l})^2 
\end{equation}
for all $\nu$, where $w^{(\nu)}_{j,k,l}$ is as in \eqref{w_localized} with $w = w^{(\nu)}$ and $\psi_{j,k,l}^{(\nu)}$ is as in \eqref{psi_localized} with $\psi = \psi^{(\nu)}$.  After passing to a subsequence, we can take $m_{j,k}$ (corresponding to $\varphi^{(\nu,\infty)}$) to be independent of $\nu$.  Moreover, we may assume for each $j$ and $k$ that either $\varphi^{(\nu,\infty)}_{j,k}$ is nonzero for all $\nu$ or $\varphi^{(\nu,\infty)}_{j,k}$ is identically zero for all $\nu$. For each $\nu$, let 
\begin{equation*}
	F_{\nu} = \left( \int_{B_{1/2}(0)} \sum_{j=1}^J \sum_{k=1}^{p_j} \sum_{l=1}^{q_{j,k}} \mathcal{G}(w^{(\nu)}_{j,k,l},\psi^{(\nu)}_{j,k,l})^2 \right)^{1/2}. 
\end{equation*}

Since $w^{(\nu)} \in \mathfrak{B}$, there exist a sequence of average-free locally energy minimizing $q$-valued functions $(u^{(\nu,\kappa)})_{\kappa = 1}^{\infty}$ and cylindrical functions $\varphi^{(\nu,\kappa)} \in \Phi_{1/\kappa,p}(\varphi^{(0)})$ as in Definition~\ref{blowupclass_defn} such that $w^{(\nu)}$ is the blow-up of $u^{(\nu,\kappa)}$ relative to $\varphi^{(\nu,\kappa)}$ by the excess $E_{\nu,\kappa} = \left( \int_{B_1(0)} \mathcal{G}(u^{(\nu,\kappa)},\varphi^{(\nu,\kappa)})^2 \right)^{1/2}.$   By Remark \ref{blow-up-conditions}(A)(B) and the fact that $\varphi^{(\nu,\kappa)} \in \Phi_{1/\kappa,p}(\varphi^{(0)})$, we may assume that $\varphi^{(\nu,\kappa)}$ have components $\varphi^{(\nu,\kappa)}_{j,k}$ with multiplicity $m_{j,k}$ and for each $j \in \{1,\ldots,J\}$ and $k \in \{1,\ldots,p_j\}$ one of the following hold true: 
\begin{enumerate}
	\item[(a)]  $\varphi^{(0)}_j$ is nonzero, $\varphi^{(\nu,\infty)}_{j,k}(X) = \varphi^{(0)}_j(X) = \op{Re}(c^{(0)}_j (x_1+ix_2)^{\alpha})$ for all $\nu$, and $\varphi^{(\nu,\kappa)}_{j,k}(X) = \op{Re}(c^{(\nu,\kappa)}_{j,k} (x_1+ix_2)^{\alpha})$ for each $\nu,\kappa$ and some $c^{(\nu,\kappa)}_{j,k} \in \mathbb{C}^m \setminus \{0\}$ with $|c^{(\nu,\kappa)}_{j,k} - c^{(0)}_j| < C(n,q,\alpha) \,\nu^{-1}$; 
	\item[(b)] $\varphi^{(0)}_j(X) = \varphi^{(\nu,\infty)}_{j,k}(X) = \varphi^{(\nu,\kappa)}_{j,k}(X) = 0$ for all $\nu,\kappa$; 
	\item[(c)] $\varphi^{(0)}_j$ is identically zero but $\varphi^{(\nu,\infty)}_{j,k}(X) = \op{Re}(c^{(\nu,\infty)}_{j,k} (x_1+ix_2)^{\alpha})$ for some $c^{(\nu,\infty)}_{j,k} \in \mathbb{C}^m \setminus \{0\}$ and $\varphi^{(\nu,\kappa)}_{j,k}(X) = \op{Re}(c^{(\nu,\kappa)}_{j,k} (x_1+ix_2)^{\alpha})$ for all $\nu,\kappa$ and some $c^{(\nu,\kappa)}_{j,k} \in \mathbb{C}^m \setminus \{0\}$ with $\left| c^{(\nu,\kappa)}_{j,k}/|c^{(\nu,\kappa)}_{j,k}| - c^{(\nu,\infty)}_j \right| < 1/\nu$.
\end{enumerate}
Let $\tau_{\nu,\kappa} \downarrow 0$ as $\kappa \rightarrow \infty$ and $v^{(\nu,\kappa)}_{j,k} : {\rm graph}\,\varphi^{(\nu,\kappa)}_{j,k} |_{B_{1/2}(0) \cap \{r > \tau_{\nu,\kappa}\}} \rightarrow \mathcal{A}_{m_{j,k}}(\mathbb{R}^m)$ be as in Corollary \ref{graphical_cor} with $\gamma = 1/2$, $\tau = \tau_{\nu,\kappa}$, $u = u^{(\nu,\kappa)}$, and $\varphi = \varphi^{(\nu,\kappa)}$.  Let $v^{(\nu,\kappa)}_{j,k,l}$ be as in \eqref{v_localized} with $\varphi^{(\nu,\kappa)}$ and $v^{(\nu,\kappa)}$ in place of $\varphi$ and $v$.  For each $\nu$ and $\kappa$, let $\widetilde{\varphi}^{(\nu,\kappa)}$ be as constructed in Lemma~\ref{blowup2L_lemma} with $\varphi^{(\nu,\kappa)}$, $\psi^{(\nu)}$, and $E_{\nu,\kappa}$ in place of $\varphi^{(\nu)}$, $\psi$, and $E_{\nu}$.  Observe that by the definition of blow-up, 
\begin{equation} \label{holefilling0_eqn2}
	\lim_{\kappa \rightarrow \infty} \sum_{j=1}^J \sum_{k=1}^{p_j} \sum_{l=1}^{q_{j,k}} 
		\mathcal{G}\left( \frac{v^{(\nu,\kappa)}_{j,k,l}(X)}{E_{\nu,\kappa}}, w^{(\nu)}_{j,k,l}(X) \right) = 0
\end{equation}
uniformly on $B_{1/2}(0) \cap \{r > \tau\}$ for each $\tau > 0$.  Moreover, by Lemma~\ref{blowup_norm_conv_lemma}, 
\begin{equation} \label{holefilling0_eqn3}
	F_{\nu}^2 = \int_{B_{1/2}(0)} \sum_{j=1}^J \sum_{k=1}^{p_j} \sum_{l=1}^{q_{j,k}} \mathcal{G}(w^{(\nu)}_{j,k,l},\psi^{(\nu)}_{j,k,l})^2 
		= \lim_{\kappa \rightarrow \infty} \frac{1}{E_{\nu,\kappa}^2} \int_{B_{1/2}(0)} \mathcal{G}(u^{(\nu,\kappa)},\widetilde{\varphi}^{(\nu,\kappa)})^2. 
\end{equation}

Select diagonal sequences $u^{(\nu,\kappa(\nu))}$, $\varphi^{(\nu,\kappa(\nu))}$, $\widetilde{\varphi}^{(\nu,\kappa(\nu))}$ and $E_{(\nu,\kappa(\nu))}$ by choosing $\kappa = \kappa(\nu)$ large enough such that, with $u^{(\nu,\kappa(\nu))}$, $\varphi^{(\nu,\kappa(\nu))}$ in place of $u^{(\nu)}$, $\varphi^{(\nu)}$, conditions (a)--(e) of Definition~\ref{blowupclass_defn} are satisfied for $\delta_{\nu}, \varepsilon_{\nu}, \beta_{\nu} \downarrow 0$ and, in view of  \eqref{holefilling0_eqn2} and \eqref{holefilling0_eqn3}, also such that
\begin{gather}
	\sup_{B_{1/2}(0) \cap \{r > 1/\nu\}} \sum_{j=1}^J \sum_{k=1}^{p_j} \sum_{l=1}^{q_{j,k}} 
		\mathcal{G}\left( \frac{v^{(\nu,\kappa(\nu))}_{j,k,l}(X)}{E_{\nu,\kappa(\nu)}}, w^{(\nu)}_{j,k,l}(X) \right) 
		< \frac{1}{\nu} F_{\nu} \label{holefilling0_eqn4}\;\;\; {\rm and} \\
	\lim_{\nu \rightarrow \infty} \frac{1}{E_{\nu,\kappa(\nu)}^2 F_{\nu}^2} \int_{B_{1/2}(0)} 
		\mathcal{G}(u^{(\nu,\kappa(\nu))},\widetilde{\varphi}^{(\nu,\kappa(\nu))}) = 1. \label{holefilling0_eqn5}
\end{gather}
Set $u^{(\nu)} = u^{(\nu,\kappa(\nu))}$, $\varphi^{(\nu)} = \varphi^{(\nu,\kappa(\nu))}$, $\widetilde{\varphi}^{(\nu)} = \widetilde{\varphi}^{(\nu,\kappa(\nu))}$, and $E_{\nu} = E_{(\nu,\kappa(\nu))}$. 

Observe that by taking $\psi' = 0$ in \eqref{holefilling0_hyp1} and applying \eqref{blowup_norm_conv_eqn2} gives us  
\begin{equation*}
	F_{\nu}^2 = \int_{B_{1/2}(0)} \sum_{j=1}^J \sum_{k=1}^{p_j} \sum_{l=1}^{q_{j,k}} \mathcal{G}(w_{j,k,l},\psi_{j,k,l})^2 
		\leq M \int_{B_{1/2}(0)} \sum_{j=1}^J \sum_{k=1}^{p_j} \sum_{l=1}^{q_{j,k}} |w_{j,k,l}|^2 \leq M 
\end{equation*}
and thus by \eqref{holefilling0_eqn5}
\begin{equation} \label{holefilling0_eqn6}
	\int_{B_{1/2}(0)} \mathcal{G}(u^{(\nu)},\widetilde{\varphi}^{(\nu)})^2 \leq 2 E_{\nu}^2 F_{\nu}^2 \leq 2 M E_{\nu}^2. 
\end{equation}
for all sufficiently large $\nu$. 

By the triangle inequality, \eqref{blowupclass_eqn2} and \eqref{holefilling0_eqn6}, 
\begin{equation*}
	\int_{B_{1/2}(0)} \mathcal{G}(\widetilde{\varphi}^{(\nu)},\varphi^{(0)})^2 
	\leq 2 \int_{B_{1/2}(0)} \mathcal{G}(u^{(\nu)},\widetilde{\varphi}^{(\nu)})^2 + 2 \int_{B_{1/2}(0)} \mathcal{G}(u^{(\nu)},\varphi^{(0)})^2 
	\leq 2 (2M+1) \varepsilon_{\nu}^2.
\end{equation*}
Hence by Lemma~\ref{blowup2L_lemma}, Remark~\ref{L0 comp rmk} and $\psi^{(\nu)} \in \mathfrak{L}_{0,s}(\varphi^{(\nu,\infty)})$, we have that $\widetilde{\varphi}^{(\nu)} \in \Phi_{\sqrt{2(2M+1)} \varepsilon_{\nu},s}(\varphi^{(0)})$.  We claim that either (i) $s = p_0$ or (ii) $s > p_0$ and 
\begin{equation} \label{holefilling0_eqn7}
	\int_{B_1(0)} \mathcal{G}(u^{(\nu)},\widetilde{\varphi}^{(\nu)})^2 
		\leq \max\{2 M \beta_{\nu},64 \overline{\beta}_{\nu}\} \inf_{\varphi' \in \bigcup_{p'=1}^{p-1} \Phi_{3 \sqrt{2(2M+1)} \varepsilon_{\nu},p'}(\varphi^{(0)})} 
		\int_{B_1(0)} \mathcal{G}(u^{(\nu)},\varphi')^2. 
\end{equation}
To verify this, suppose that $s > p_{0}.$ It suffices to separately consider the cases $p_0 < s_0 = s$ and $p_0 \leq s_0 < s$ and show that \eqref{holefilling0_eqn7} holds true in each case.  If $p_0 < s_0 = s$, \eqref{holefilling0_eqn6} and \eqref{blowupclass_eqn3} give us \eqref{holefilling0_eqn7}.  On the other hand, if $p_0 \leq s_0 < s$, by the triangle inequality, \eqref{holefilling0_hyp2} implies that 
\begin{equation} \label{holefilling0_eqn8}
	\int_{B_{1/2}(0)} \sum_{j=1}^J \sum_{k=1}^{p_j} \sum_{l=1}^{q_{j,k}} \mathcal{G}(w^{(\nu)}_{j,k,l},\psi^{(\nu)}_{j,k,l})^2 
		\leq 4 \overline{\beta}_{\nu} \inf_{\psi' \in \bigcup_{s'=s_0}^{s-1} \mathfrak{L}_{0,s'}} 
			\int_{B_{1/2}(0)} \sum_{j=1}^J \sum_{k=1}^{p_j} \sum_{l=1}^{q_{j,k}} \mathcal{G}(\psi^{(\nu)}_{j,k,l},\psi'_{j,k,l})^2 
\end{equation}
for $\nu$ sufficiently large.  By \eqref{holefilling0_eqn3} and \eqref{holefilling0_eqn8}, 
\begin{align} \label{holefilling0_eqn9}
	\int_{B_{1/2}(0)} \mathcal{G}(u^{(\nu)},\widetilde{\varphi}^{(\nu)})^2 
	&\leq 2 E_{\nu}^2 \int_{B_{1/2}(0)} \sum_{j=1}^J \sum_{k=1}^{p_j} \sum_{l=1}^{q_{j,k}} \mathcal{G}(w^{(\nu)}_{j,k,l},\psi^{(\nu)}_{j,k,l})^2  
	\\&\leq 8 \overline{\beta}_{\nu} E_{\nu}^2 \inf_{\psi' \in \bigcup_{s'=s_0}^{s-1} \mathfrak{L}_{0,s'}} 
		\int_{B_{1/2}(0)} \sum_{j=1}^J \sum_{k=1}^{p_j} \sum_{l=1}^{q_{j,k}} \mathcal{G}(\psi^{(\nu)}_{j,k,l},\psi'_{j,k,l})^2 \nonumber 
\end{align}
for $\nu$ sufficiently large.  Let $\widehat{\varphi}^{(\nu)} \in \bigcup_{p'=p_0}^{s-1} \Phi_{3 \sqrt{2(2M+1)} \varepsilon_{\nu},p'}(\varphi^{(0)})$ be such that 
\begin{equation} \label{holefilling0_eqn10}
	\int_{B_{1/2}(0)} \mathcal{G}(\widetilde{\varphi}^{(\nu)},\widehat{\varphi}^{(\nu)})^2 
		< 2 \inf_{\varphi' \in \bigcup_{p'=p_0}^{s-1} \Phi_{3 \sqrt{2(2M+1)} \varepsilon_{\nu},p'}(\varphi^{(0)})} 
		\int_{B_{1/2}(0)} \mathcal{G}(\widetilde{\varphi}^{(\nu)},\varphi')^2. 
\end{equation}
By Remark~\ref{L0 comp rmk} we have $\varphi^{(\nu)} \in \Phi_{\varepsilon_{\nu},s_0}(\varphi^{(0)})$ and by assumption $s_0 < s$.  Thus we may take $\varphi' = \varphi^{(\nu)}$ in \eqref{holefilling0_eqn10} to get 
\begin{equation} \label{holefilling0_eqn11}
	\int_{B_{1/2}(0)} \mathcal{G}(\widetilde{\varphi}^{(\nu)},\widehat{\varphi}^{(\nu)})^2 
		< 2 \int_{B_{1/2}(0)} \mathcal{G}(\widetilde{\varphi}^{(\nu)},\varphi^{(\nu)})^2. 
\end{equation}
By the triangle inequality, \eqref{holefilling0_eqn11} and \eqref{holefilling0_eqn6}, 
\begin{align} \label{holefilling0_eqn12}
	\int_{B_{1/2}(0)} \mathcal{G}(\widehat{\varphi}^{(\nu)},\varphi^{(\nu)})^2 
	&\leq 2 \int_{B_{1/2}(0)} \mathcal{G}(\widehat{\varphi}^{(\nu)},\widetilde{\varphi}^{(\nu)})^2  
		+ 2 \int_{B_{1/2}(0)} \mathcal{G}(\widetilde{\varphi}^{(\nu)},\varphi^{(\nu)})^2 
	\\&\leq 6 \int_{B_{1/2}(0)} \mathcal{G}(\widetilde{\varphi}^{(\nu)},\varphi^{(\nu)})^2  \nonumber
	\\&\leq 12 \int_{B_{1/2}(0)} \mathcal{G}(u^{(\nu)},\widetilde{\varphi}^{(\nu)})^2  
		+ 12 \int_{B_{1/2}(0)} \mathcal{G}(u^{(\nu)},\varphi^{(\nu)})^2 \nonumber
	\\&\leq 12 (2M+1) E_{\nu}^2. \nonumber 
\end{align}
By \eqref{blowupclass_eqn2}, \eqref{blowupclass_eqn3} and Corollary \ref{graphical_cor}(a) we know that the separation between the distinct values of $\varphi^{(\nu)}(x,y)$ is $\geq C \beta_{\nu}^{-1} |x|^{\alpha} E_{\nu}$ for all $(x,y) \in B_{1/2}(0)$, where $C = C(m,n,q,\alpha,\varphi^{(0)}) \in (0,\infty)$ is a constant, and thus by \eqref{holefilling0_eqn12} there exists $\widehat{\psi}^{(\nu)} = (\widehat{\psi}^{(\nu)}_{j,k}) \in \bigcup_{s'=s_0}^{s-1} \mathfrak{L}_{0,s'}(\varphi^{(\infty)})$ such that 
\begin{gather}
	\widehat{\varphi}^{(\nu)}(X) = \sum_{j=1}^{J} \sum_{k=1}^{p_j} \sum_{l=1}^{q_{j,k}} \sum_{h=1}^{m_{j,k}}
		\llbracket \varphi^{(\nu)}_{j,k,l}(X) + E_{\nu} \,\widehat{\psi}^{(\nu)}_{j,k,l,h}(X) \rrbracket \text{ for all } X \in B_{1/2}(0), \nonumber \\
	\label{holefilling0_eqn13} \int_{B_{1/2}(0)} \mathcal{G}(\widetilde{\varphi}^{(\nu)},\widehat{\varphi}^{(\nu)})^2
		= E_{\nu}^2 \int_{B_{1/2}(0)} \sum_{j=1}^J \sum_{k=1}^{p_j} \sum_{l=1}^{q_{j,k}} \mathcal{G}(\psi^{(\nu)}_{j,k,l},\widehat{\psi}^{(\nu)}_{j,k,l})^2, 
\end{gather}
where $\varphi^{(\nu)}_{j,k,l}$ is as in \eqref{varphi_localized} with $\varphi^{(\nu)}$ in place of $\varphi$ and $\widehat{\psi}^{(\nu)}_{j,k,l,h}$ is as in \eqref{psi_localized} with $\widehat{\psi}^{(\nu)}$ in place of $\psi$.  Hence by \eqref{holefilling0_eqn9} taking $\psi' = \widehat{\psi}^{(\nu)}$, \eqref{holefilling0_eqn13} and \eqref{holefilling0_eqn10},  
\begin{equation} \label{holefilling0_eqn14}
	\int_{B_{1/2}(0)} \mathcal{G}(u^{(\nu)},\widetilde{\varphi}^{(\nu)})^2 
	\leq 16 \overline{\beta}_{\nu} \inf_{\varphi' \in \bigcup_{p'=p_0}^{s-1} \Phi_{3 \sqrt{2(2M+1)} \varepsilon_{\nu},p'}(\varphi^{(0)})} 
		\int_{B_{1/2}(0)} \mathcal{G}(\widetilde{\varphi}^{(\nu)},\varphi')^2  
\end{equation}
for $\nu$ sufficiently large.  By the triangle inequality, \eqref{holefilling0_eqn14} implies that \eqref{holefilling0_eqn7} holds true. 

Therefore, since $u^{(\nu)}$ and $\varphi^{(\nu)}$ satisfy conditions (a)--(e) of Definition~\ref{blowupclass_defn}, $\widetilde{\varphi}^{(\nu)} \in \Phi_{\sqrt{2(2M+1)} \varepsilon_{\nu},s}(\varphi^{(0)})$, and \eqref{holefilling0_eqn5} and \eqref{holefilling0_eqn7} hold true, we can let $\widetilde{w}$ be the blow-up of $u^{(\nu)}$ relative to $\widetilde{\varphi}^{(\nu)}$ and excess $E_{\nu} F_{\nu}$ in $B_{1/2}(0)$.  

To understand $\widetilde{w}$ more concretely, let us consider $j,k$ such that $\varphi^{(0)}_j$ is nonzero.  Recall that $\varphi^{(\nu,\infty)}_{j,k}(X) = \varphi^{(0)}_j(X) = \op{Re}(c^{(0)}_j (x_1+ix_2)^{\alpha})$.  Recalling Definition~\ref{homogclass_defn}, we can express $\psi^{(\nu)}_{j,k}$ as 
\begin{equation*} 
	\psi^{(\nu)}_{j, k}(re^{i\theta},y,\op{Re}(c^{(0)}_j r^{\alpha}e^{i\alpha \theta})) 
	= \sum_{l=1}^{L_{j,k}} m_{j,k,l} \llbracket \op{Re}(a^{(\nu)}_{j,k,l} r^{\alpha} e^{i\alpha \theta}) \rrbracket 
\end{equation*}
for a positive integer $L_{j, k}$, distinct $a^{(\nu)}_{j,k,l} \in \mathbb{C}^m$ and positive integer multiplicities $m_{j,k,l}$ such that $\sum_{l=1}^{L_{j,k}} m_{j,k,l} = m_{j,k}$ (with $L_{j, k}$, $m_{j, k, l}$ taken to be independent of $\nu$ by passing to a subsequence).  Thus by the construction of $\widetilde{\varphi}^{(\nu)}$ in Lemma~\ref{blowup2L_lemma}, $\widetilde{\varphi}^{(\nu)}$ has distinct components 
\begin{equation} \label{holefilling0_eqn15}
	\widetilde{\varphi}^{(\nu)}_{j,k,l}(X) = \llbracket \op{Re}((c^{(\nu)}_{j,k} + E_{\nu} a^{(\nu)}_{j,k,l}) (x_1+ix_2)^{\alpha}) \rrbracket 
\end{equation}
with multiplicity $m_{j,k,l}$  near $\varphi^{(\nu)}_{j,k}$ (not to be confused with the notation of \eqref{varphi_localized}), where $\varphi^{(\nu)}_{j,k}(X) = \op{Re}(c^{(\nu)}_{j,k} (x_1+ix_2)^{\alpha})$ for $c^{(\nu)}_{j,k} \in \mathbb{C}^m$ with $|c^{(\nu)}_{j,k} - c^{(0)}_j| \leq C(n,q,\alpha) \varepsilon_{\nu}$.  Then we can use Corollary \ref{graphical_cor} to express $u^{(\nu)}$ as the graph of $\widetilde{v}^{(\nu)}_{j,k,l} : {\rm graph}\, \widetilde{\varphi}^{(\nu)}_{j,k,l} |_{\Omega_{\nu}} \rightarrow \mathcal{A}_{m_{j,k,l}}(\mathbb{R}^m)$, where $\Omega_{\nu} = B_{1/2-\tau_{\nu}}(0) \cap \{r > \tau_{\nu}\}$ for $\tau_{\nu} \downarrow 0$, and then use the blow-up procedure of Subsection \ref{blow-up-procedure} to take a limit of $\widetilde{v}^{(\nu)}_{j,k,l}/(E_{\nu} F_{\nu})$.  Let us denote the resulting limit function as $\widetilde{w}_{j,k,l} : {\rm graph}\, \widetilde{\varphi}^{(\infty)}_{j,k,l} |_{B_{1/2}(0)} \rightarrow \mathcal{A}_{m_{j,k,l}}(\mathbb{R}^m)$ where $\widetilde{\varphi}^{(\infty)}_{j,k,l} = \varphi^{(0)}_j$.  Notice that by \eqref{holefilling0_eqn15}, 
\begin{equation*}
	v^{(\nu)}_{j,k}(re^{i\theta},y, \op{Re}(c^{(\nu)}_{j,k} r^{\alpha} e^{i\alpha \theta})) 
	= \sum_{l=1}^{L_{j,k}} \sum_{h=1}^{m_{j,k,l}} \llbracket \op{Re}(E_{\nu} a^{(\nu)}_{j,k,l} r^{\alpha} e^{i\alpha \theta}) 
		+ \widetilde{v}^{(\nu)}_{j,k,l,h}(re^{i\theta},y, \op{Re}(\widetilde{c}^{(\nu)}_{j,k,l} r^{\alpha} e^{i\alpha \theta})) \rrbracket 
\end{equation*}
whenever $(re^{i\theta},y) \in \Omega_{\nu}$, where $\widetilde{c}^{(\nu)}_{j,k,l} = c^{(\nu)}_{j,k} + E_{\nu} a^{(\nu)}_{j,k,l}$ and $\widetilde{v}^{(\nu)}_{j,k,l} = \sum_{h=1}^{m_{j,k,l}} \llbracket \widetilde{v}^{(\nu)}_{j,k,l,h} \rrbracket$ on ${\rm graph}\, \widetilde{\varphi}^{(\nu)}_{j,k} |_{ \Omega_{\nu}}$ for a Lebesgue measurable function $\widetilde{v}^{(\nu)}_{j,k,l,h} : {\rm graph}\, \widetilde{\varphi}^{(\nu)}_{j,k} |_{ \Omega_{\nu}} \rightarrow \mathbb{R}^m$.  Thus by \eqref{holefilling0_eqn4}, 
\begin{equation} \label{holefilling0_eqn16}
	\lim_{\nu \rightarrow \infty} \frac{1}{F_{\nu}} \,\mathcal{G}\left( w^{(\nu)}_{j,k},\, 
		\sum_{l=1}^{L_{j,k}} \sum_{h=1}^{m_{j,k,l}} \llbracket \op{Re}(a^{(\nu)}_{j,k,l} r^{\alpha} e^{i\alpha \theta}) + F_{\nu} \widetilde{w}_{j,k,l,h} \rrbracket 
		\right) = 0 
\end{equation}
uniformly on compact subsets of $B_{1/2}(0) \setminus \{0\} \times \mathbb{R}^{n-2}$, where $w^{(\nu)}_{j,k}$ and $\widetilde{w}_{j,k,l,h}$ are evaluated at $(re^{i\theta},y, \op{Re}(c^{(0)}_j r^{\alpha} e^{i\alpha \theta}))$. 

The justification of \eqref{holefilling0_eqn16} in case $\varphi^{(0)}_j$ is identically zero but $\varphi^{(\infty)}_{j,k}$ is not identically zero is essentially the same (in which case $\varphi^{(\nu)}_{j,k}(X) = \op{Re}(c^{(\nu)}_{j,k} (x_1+ix_2)^{\alpha})$, $\varphi^{(\nu,\infty)}_{j,k}(X) = \op{Re}(c^{(\nu,\infty)}_{j,k} (x_1+ix_2)^{\alpha})$ and $\widetilde{\varphi}^{(\infty)}_{j,k,l}(X) = \op{Re}(\widetilde{c}^{(\infty)}_{j,k,l} (x_1+ix_2)^{\alpha})$ where $c^{(\nu)}_{j,k}, c^{(\nu,\infty)}_{j,k}, \widetilde{c}^{(\infty)}_{j,k,l} \in \mathbb{C}^m \setminus \{0\}$ with $\lim_{\nu \rightarrow \infty} c^{(\nu)}_{j,k}/|c^{(\nu)}_{j,k}| = \lim_{\nu \rightarrow \infty} c^{(\nu,\infty)}_{j,k} = \widetilde{c}^{(\infty)}_{j,k,l}$), and similarly it requires only obvious modifications in case $\varphi^{(0)}_j$ and $\varphi^{(\nu,\infty)}_{j,k}$ are both identically zero (in which case $\psi^{(\nu)}$ might have both a zero component $\psi^{(\nu)}_{j,k,l}(X,0) = 0$ and nonzero components $\psi^{(\nu)}_{j,k,l}(X,0) = \op{Re}(a^{(\nu)}_{j,k,l} (x_1+ix_2)^{\alpha})$ for $a^{(\nu)}_{j,k,l} \in \mathbb{C}^m \setminus \{0\}$; hence if $\psi^{(\nu)}_{j,k,l}$ is identically zero then $\widetilde{\varphi}^{(\infty)}_{j,k,l}(X) = 0$ and otherwise $\widetilde{\varphi}^{(\infty)}_{j,k,l}(X) = \op{Re}(\widetilde{c}^{(\infty)}_{j,k,l} (x_1+ix_2)^{\alpha})$ where $\widetilde{c}^{(\infty)}_{j,k,l} = \lim_{\nu \rightarrow \infty} a^{(\nu)}_{j,k,l}/|a^{(\nu)}_{j,k,l}|$).

Now divide both sides of \eqref{holefilling0_eqn1} by $F_{\nu}^2$ and let $\nu \rightarrow \infty$ using Fatou's lemma, \eqref{holefilling0_eqn16}, and Lemma~\ref{gradient_convergence} of the appendix to conclude that $\widetilde{w}$ is homogeneous degree $\alpha$ in $B_{1/2}(0) \setminus B_{1/8}(0)$.  Hence the homogeneous degree $\alpha$ extension of $\left.\widetilde{w}\right|_{B_{1/2}(0) \setminus B_{1/8}(0)}$ is componentwise locally energy minimizing in ${\mathbb R}^{n} \setminus \{0\} \times {\mathbb R}^{n-2}$, so by unique continuation (Lemma~\ref{unique continuation lemma} of the appendix), $\widetilde{w}$ is homogeneous of degree $\alpha$ in $B_{1/2}(0)$.  Since $\widetilde{w} \in \mathfrak{B}$, we conclude from Theorem~\ref{homogrep_lemma} that $\widetilde{w} \in \mathfrak{L}$.  

We define $\widetilde{\psi}^{(\nu)} = (\widetilde{\psi}^{(\nu)}_{j,k}) \in \mathfrak{L}(\varphi^{(\nu,\infty)})$ as follows.  Recalling \eqref{holefilling0_eqn15}, if $\varphi^{(\nu,\infty)}_{j,k}(X) = \op{Re}(c^{(\nu,\infty)}_{j,k} (x_1+ix_2)^{\alpha})$ is not identically zero let 
\begin{equation*}
	\widetilde{\psi}^{(\nu)}_{j,k}(re^{i\theta},y,\op{Re}(c^{(\nu,\infty)}_{j,k} r^{\alpha} e^{i\alpha\theta})) 
	= \sum_{l=1}^{L_{j,k}} \sum_{h=1}^{m_{j,k,l}} \llbracket \op{Re}(a^{(\nu)}_{j,k,l} r^{\alpha} e^{i\alpha \theta}) 
		+ F_{\nu} \widetilde{w}_{j,k,l,h}(re^{i\theta},y,\op{Re}(\widetilde{c}^{(\infty)}_{j,k,l} r^{\alpha} e^{i\alpha\theta})) \rrbracket , 
\end{equation*}
noting that $\widetilde{\varphi}^{(\infty)}_{j,k,l} = \varphi^{(\nu,\infty)}_{j,k}$ for all $\nu$.  Similarly if $\varphi^{(\nu,\infty)}_{j,k}$ is identically zero, let 
\begin{align*}
	\widetilde{\psi}^{(\nu)}_{j,k}(re^{i\theta},y,0) 
	&= \sum_{l \,:\, \widetilde{\varphi}^{(\infty)}_{j,k,l} \equiv 0} \sum_{h=1}^{m_{j,k,l}} \llbracket F_{\nu} \widetilde{w}_{j,k,l,h}(re^{i\theta},y,0) \rrbracket
		\\&\hspace{5mm} + \sum_{l \,:\, \widetilde{\varphi}^{(\infty)}_{j,k,l} \not\equiv 0} \sum_{h=1}^{m_{j,k,l}} 
		\llbracket \op{Re}(a^{(\nu)}_{j,k,l} r^{\alpha} e^{i\alpha \theta}) + F_{\nu} \widetilde{w}_{j,k,l,h}(re^{i\theta},y,
		\op{Re}(\widetilde{c}^{(\infty)}_{j,k,l} r^{\alpha} e^{i\alpha\theta})) \rrbracket . 
\end{align*}
where we recall that $\widetilde{c}^{(\infty)}_{j,k,l} = \lim_{\nu \rightarrow \infty} a^{(\nu)}_{j,k,l}/|a^{(\nu)}_{j,k,l}|$.  


Now observe that for $\rho_1,\rho_2 \in (1/8,1/2)$ and $\omega \in \mathbb{S}^{n-1}$, 
\begin{equation*}
	\mathcal{G}\left( \frac{w^{(\nu)}_{j,k,l}(\rho_1 \omega)}{\rho_1^{\alpha}}, \frac{w^{(\nu)}_{j,k,l}(\rho_2 \omega)}{\rho_2^{\alpha}} \right) 
	\leq \int_{1/8}^{1/2} \left| \frac{\partial}{\partial R} \left( \frac{w^{(\nu)}_{j,k,l}(R \omega)}{R^{\alpha}} \right) \right| \,dR .
\end{equation*}
for each $j$, $k$, and $l$, where we let $\varphi^{(\infty)}_{j,k}(X) = \sum_{l=1}^{q_{j,k}} \llbracket \varphi^{(\infty)}_{j,k,l}(X) \rrbracket$ on the conical domain $K = \{ R\omega : R > 0, \, \omega \in B_1(\omega) \cap \mathbb{S}^{n-1} \}$, $w^{(\nu)}_{j,k,l}(X) = w^{(\nu)}_{j,k}(X,\varphi^{(\infty)}_{j,k,l}(X))$ on $K$, and $\psi^{(\nu)}_{j,k,l}(X) = \psi^{(\nu)}_{j,k}(X,\varphi^{(\infty)}_{j,k,l}(X))$ on $K$ (like in \eqref{varphi_infty_localized}, \eqref{w_localized}, and \eqref{psi_localized}).  Thus by the triangle inequality, homogeneity of $\psi^{(\nu)}$, and the Cauchy-Schwarz inequality, 
\begin{align*}
	\sum_{j=1}^J \sum_{k=1}^{p_j} \sum_{l=1}^{q_{j,k}} \mathcal{G}\left( \frac{w^{(\nu)}_{j,k,l}(\rho_1 \omega)}{\rho_1^{\alpha}}, 
		\frac{\widetilde{\psi}^{(\nu)}_{j,k,l}(\rho_1 \omega)}{\rho_1^{\alpha}} \right)^2 
	\leq{}& 2 \sum_{j=1}^J \sum_{k=1}^{p_j} \sum_{l=1}^{q_{j,k}} \mathcal{G}\left( \frac{w^{(\nu)}_{j,k,l}(\rho_2 \omega)}{\rho_2^{\alpha}}, 
		\frac{\widetilde{\psi}^{(\nu)}_{j,k,l}(\rho_2 \omega)}{\rho_2^{\alpha}} \right)^2 
	\\&+ 2 \int_{1/8}^{1/2} \sum_{j=1}^J \sum_{k=1}^{p_j} \sum_{l=1}^{q_{j,k}} 
		\left| \frac{\partial}{\partial R} \left( \frac{w^{(\nu)}_{j,k,l}(R \omega)}{R^{\alpha}} \right) \right|^2 \,dR ,
\end{align*}
Multiply both sides by $\rho_1^{n-1} \rho_2^{n-1}$, integrate over $\omega \in S^{n-1}$, $\rho_1 \in (1/8,1/2)$, and $\rho_2 \in (1/8,1/4)$, and sum over $j$ to get 
\begin{align*}
	\int_{B_{1/2}(0) \setminus B_{1/8}(0)} \sum_{j=1}^J \sum_{k=1}^{p_j} \sum_{l=1}^{q_{j,k}} \mathcal{G}(w^{(\nu)}_{j,k,l},\widetilde{\psi}^{(\nu)}_{j,k,l})^2 
	\leq{}& C \int_{B_{1/4}(0) \setminus B_{1/8}(0)} \sum_{j=1}^J \sum_{k=1}^{p_j} \sum_{l=1}^{q_{j,k}} 
		\mathcal{G}(w^{(\nu)}_{j,k,l},\widetilde{\psi}^{(\nu)}_{j,k,l})^2 
	\\&+ C \int_{B_{1/2}(0) \setminus B_{1/8}(0)} \sum_{j=1}^J \sum_{k=1}^{p_j} \sum_{l=1}^{q_{j,k}} 
		\left| \frac{\partial (w^{(\nu)}_{j,k,l}/R^{\alpha})}{\partial R} \right|^2 \nonumber 
\end{align*}
for some constant $C = C(n,\alpha) \in [1,\infty)$.  By adding $\int_{B_{1/8}(0)} \sum_{j=1}^J \sum_{k=1}^{p_j} \sum_{l=1}^{q_{j,k}} \mathcal{G}(w^{(\nu)}_{j,k,l},\widetilde{\psi}^{(\nu)}_{j,k,l})^2$ to both sides, 
\begin{align} \label{holefilling0_eqn17}
	\int_{B_{1/2}(0)} \sum_{j=1}^J \sum_{k=1}^{p_j} \sum_{l=1}^{q_{j,k}} \mathcal{G}(w^{(\nu)}_{j,k,l},\widetilde{\psi}^{(\nu)}_{j,k,l})^2 
	\leq{}& C \int_{B_{1/4}(0)} \sum_{j=1}^J \sum_{k=1}^{p_j} \sum_{l=1}^{q_{j,k}} 
		\mathcal{G}(w^{(\nu)}_{j,k,l},\widetilde{\psi}^{(\nu)}_{j,k,l})^2 
	\\&+ C \int_{B_{1/2}(0) \setminus B_{1/8}(0)} \sum_{j=1}^J \sum_{k=1}^{p_j} \sum_{l=1}^{q_{j,k}} 
		\left| \frac{\partial (w^{(\nu)}_{j,k,l}/R^{\alpha})}{\partial R} \right|^2 \nonumber 
\end{align}
By \eqref{holefilling0_hyp1}, the left-hand side of \eqref{holefilling0_eqn17} is bounded below by $F_{\nu}^2/M$.  By \eqref{holefilling0_eqn16}, the nonconcentration estimate \eqref{blowup_est2} with $w = w^{(\nu)}$ and $\psi = \widetilde{\psi}^{(\nu)}$, and \eqref{holefilling0_eqn1}, after dividing by $F_{\nu}^2$ the right-hand side of \eqref{holefilling0_eqn17} converges to 0 as $\nu \rightarrow \infty$. This contradiction proves the lemma. 
\end{proof}

\begin{proof}[Proof of Lemma~\ref{holefilling_lemma}] 
Let $w \in \mathfrak{B}$ and let $\psi$ be a projection of $w$ onto $\mathfrak{L}$ in $B_{1/2}(0)$.  Let $\varphi^{(\infty)} \in {\mathfrak D}$ such that $w \in \mathfrak{B}(\varphi^{(\infty)})$.  For $\nu =1, 2, 3, \ldots,$ there are $u^{(\nu)}$, $\varphi^{(\nu)}$ satisfying the requirements of Defintion~\ref{blowupclass_defn} such that $w$ is the blow-up of $u^{(\nu)}$ relative to $\varphi^{(\nu)}$ and excess $E_{\nu} = \left( \int_{B_1(0)} \mathcal{G}(u^{(\nu)},\varphi^{(\nu)})^2 \right)^{1/2}.$
Let $B$ be given by \eqref{B_matrix} where $b$ is as in Definition~\ref{homogclass_defn}.  
By the argument at the end of Section~\ref{sec:homogblowup_subsec1}, there exist $w^{\star} \in {\mathfrak B}$ and a number $c^{\star} \in [0, \infty)$ such that  $w^{\star}$ is the blow-up of $u^{(\nu)} \circ e^{-E_{\nu} B}$ relative to $\varphi^{(\nu)}$ and excess $\sqrt{\int_{B_{1}(0)} {\mathcal G}(u^{(\nu)} \circ e^{-E_{\nu}B}, \varphi^{(\nu)})^{2}},$ and 
$c^{\star}w^{\star} = w_{j,k,l}(X) - D\varphi^{(0)}_j(X) \cdot By.$ If $c^{\star} = 0$, Lemma~\ref{holefilling_lemma} trivially holds, so assume $c^{\star} > 0$. Since $\psi$ is a projection of $w$ onto ${\mathfrak L}$ in $B_{1/2}(0)$, it follows that $(c^{\star})^{-1}(\psi_{j,k,l}(X) - D\varphi^{(0)}_j(X) \cdot By) \in {\mathfrak L}_{0}(\varphi^{(\infty)})$ is a projection of $w^{\star}$ onto ${\mathfrak L}$ in $B_{1/2}(0).$ 
Hence in order to prove Lemma~\ref{holefilling_lemma}, we may, and shall, assume without loss of generality that 
$\psi \in \mathfrak{L}_0(\varphi^{(\infty)})$.  

Let $s \in \{s_0,\ldots,\lceil q/q_0 \rceil\}$ such that $\psi \in \mathfrak{L}_{0,s}(\varphi^{(\infty)})$.  For $i \in \{1,2,\ldots,\lceil q/q_0 \rceil - s_0+1\}$, inductively define $\beta_i$ and $C_i$ by $\beta_1 = \overline{\beta}$ and $C_1 = \overline{C}$ where $\overline{\beta}$ and $\overline{C}$ are as in Lemma~\ref{holefilling0_lemma} with $M = 1$ and for each $i \geq 2$, $\beta_i = \overline{\beta}$ and $C_i = \overline{C}$ where $\overline{\beta}$ and $\overline{C}$ are as in Lemma~\ref{holefilling0_lemma} with $M = 2^{i-1} (\beta_1 \beta_2 \cdots \beta_{i-1})^{-1}$.  

Observe that \eqref{holefilling0_hyp1} with $M = 1$ holds true since $\psi$ is a projection of $w$ onto $\mathfrak{L}$ in $B_{1/2}(0)$.  If either $s = s_0,$ or if $s > s_0$ and $w$ and $\psi$ satisfy \eqref{holefilling0_hyp2} with $\overline{\beta} = \beta_1$, then by Lemma~\ref{holefilling0_lemma}, $w$ and $\psi$ satisfy \eqref{holefilling_est} with $C = C_1$.  If instead $s > s_0$ and $w$ and $\psi$ do not satisfy \eqref{holefilling0_hyp2} with $\overline{\beta} = \beta_1$, for $i = 1,2,\ldots,i_0$ inductively select $\psi^{(i)} \in \mathfrak{L}_{0,s_i}(\varphi^{(\infty)})$ such that when $i = 1$ we have $s_0 \leq s_1 < s$ and 
\begin{equation*}
	\int_{B_{1/2}(0)} \sum_{j=1}^J \sum_{k=1}^{p_j} \sum_{l=1}^{q_{j,k}} \mathcal{G}(w_{j,k,l},\psi^{(1)}_{j,k,l})^2 
		< 2 \inf_{\psi' \in \bigcup_{s'=s_0}^{s-1} \mathfrak{L}_{0,s'}(\varphi^{(\infty)})} 
			\int_{B_{1/2}(0)} \sum_{j=1}^J \sum_{k=1}^{p_j} \sum_{l=1}^{q_{j,k}} \mathcal{G}(w_{j,k,l},\psi'_{j,k,l})^2 
\end{equation*}
and for each $i \geq 2$ we have $s_0 \leq s_i < s_{i-1}$ and 
\begin{equation*}
	\int_{B_{1/2}(0)} \sum_{j=1}^J \sum_{k=1}^{p_j} \sum_{l=1}^{q_{j,k}} \mathcal{G}(w_{j,k,l},\psi^{(i)}_{j,k,l})^2 
		< 2 \inf_{\psi' \in \bigcup_{s'=s_0}^{s_{i-1}-1} \mathfrak{L}_{0,s'}(\varphi^{(\infty)})} 
			\int_{B_{1/2}(0)} \sum_{j=1}^J \sum_{k=1}^{p_j} \sum_{l=1}^{q_{j,k}} \mathcal{G}(w_{j,k,l},\psi'_{j,k,l})^2 
\end{equation*}
and terminate either when $i$ equals the smallest $i_0$ for which $s_{i_0} > s_0$ and 
\begin{align} \label{holefilling_eqn1}
	&\int_{B_{1/2}(0)} \sum_{j=1}^J \sum_{k=1}^{p_j} \sum_{l=1}^{q_{j,k}} \mathcal{G}(w_{j,k,l},\psi^{(i_0)}_{j,k,l})^2 
		\\&\hspace{10mm} \leq \beta_{i_0+1} \inf_{\psi' \in \bigcup_{s'=s_0}^{s_{i_0}-1} \mathfrak{L}_{0,s'}(\varphi^{(\infty)})} 
			\int_{B_{1/2}(0)} \sum_{j=1}^J \sum_{k=1}^{p_j} \sum_{l=1}^{q_{j,k}} \mathcal{G}(w_{j,k,l},\psi'_{j,k,l})^2 \nonumber 
\end{align}
or (if no such $i_{0}$ exists) when $i$ equals the value $i_{0}$ for which $s_{i_{0}} = s_{0}.$   
By choice of $\psi^{(i)}$, one readily checks that $\psi^{(i_0)}$ satisfies 
\begin{align} \label{holefilling_eqn2}
	&\int_{B_{1/2}(0)} \sum_{j=1}^J \sum_{k=1}^{p_j} \sum_{l=1}^{q_{j,k}} \mathcal{G}(w_{j,k,l},\psi_{j,k,l})^2 
	\leq \int_{B_{1/2}(0)} \sum_{j=1}^J \sum_{k=1}^{p_j} \sum_{l=1}^{q_{j,k}} \mathcal{G}(w_{j,k,l},\psi^{(i_0)}_{j,k,l})^2 
	\\&\hspace{10mm} < \frac{2^{i_0}}{\beta_1 \beta_2 \cdots \beta_{i_0}} \int_{B_{1/2}(0)} 
		\sum_{j=1}^J \sum_{k=1}^{p_j} \sum_{l=1}^{q_{j,k}} \mathcal{G}(w_{j,k,l},\psi_{j,k,l})^2. \nonumber 
\end{align}
In view of \eqref{holefilling_eqn1} and the second inequality of \eqref{holefilling_eqn2}, we may apply Lemma~\ref{holefilling0_lemma} and  to conclude that \eqref{holefilling_est} holds true with $w$, $\psi^{(i_0)}$, and $C_{i_0+1}$ in place of $w$, $\psi$, and $C$.  Hence, by the first inequality of \eqref{holefilling_eqn2}, $w$ and $\psi$ satisfy \eqref{holefilling_est} with $C = C_{i_0+1}$.  Since $i_0 \leq \lceil q/q_0 \rceil - s_0$,  we conclude that $w$ and $\psi$ satisfy \eqref{holefilling_est} with $C = \max\{C_1,C_2,\ldots,C_{\lceil q/q_0 \rceil - s_0+1}\}\}$. 
\end{proof}

\begin{theorem} \label{blowupdecay_lemma}
Let $\vartheta \in (0,1/8]$ and $w \in \mathfrak{B}$ and for each $\rho \in (0,1/2]$ let $\psi^{(\rho)} = (\psi^{(\rho)}_{j,k})$ be a projection of $w$ onto $\mathfrak{L}$ in $B_{\rho}(0)$.  Then, 
\begin{align} \label{blowupdecay_eqn1}
	\vartheta^{-n-2\alpha} \int_{B_{\vartheta}(0)} \sum_{j=1}^J \sum_{k=1}^{p_j} \sum_{l=1}^{q_{j,k}} \mathcal{G}(w_{j,k,l},\psi^{(\vartheta)}_{j,k,l})^2 
	\leq C \vartheta^{2\mu} \int_{B_{1/2}(0)} \sum_{j=1}^J \sum_{k=1}^{p_j} \sum_{l=1}^{q_{j,k}} \mathcal{G}(w_{j,k,l},\psi^{(1/2)}_{j,k,l})^2 
\end{align}
for some constants $\mu \in (0,1)$ and $C \in (0,\infty)$ depending on $n$, $m$, $q$, $\alpha$ and $\varphi^{(0)}$, where $w_{j,k,l}(X) = w_{j,k}(X,\varphi^{(\infty)}_{j,k,l}(X))$ and $\psi^{(\rho)}_{j,k,l}(X) = \psi^{(\rho)}_{j,k}(X,\varphi^{(\infty)}_{j,k,l}(X))$ for $\varphi^{(\infty)}_{j,k,l}$ as in \eqref{varphi_infty_localized}.
\end{theorem}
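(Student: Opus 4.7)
The plan is to obtain \eqref{blowupdecay_eqn1} via a hole-filling iteration combining Lemma~\ref{holefilling_lemma} with the basic estimate \eqref{blowup_est1}. For $\rho \in (0,1/2]$ set
\[
I(\rho) = \int_{B_\rho(0)} \sum_{j,k,l} R^{2-n}\Bigl|\tfrac{\partial(w_{j,k,l}/R^{\alpha})}{\partial R}\Bigr|^2, \qquad
G(\rho) = \int_{B_\rho(0)} \sum_{j,k,l} \mathcal{G}(w_{j,k,l},\psi^{(\rho)}_{j,k,l})^2.
\]
The first observation is that Lemma~\ref{holefilling_lemma} can be applied at every scale $\rho \in (0,1/2]$ via the rescaling $w \mapsto \tilde w(X) = (2\rho)^{-\alpha} w(2\rho X)$: rescaling the defining sequence $(u^{(\nu)},\varphi^{(\nu)})$ by the same dilation preserves (after adjusting $\varepsilon_\nu,\beta_\nu,\delta_\nu$, all still $\to 0$) each of the conditions of Definition~\ref{blowupclass_defn}, while $\varphi^{(\nu)}$ and every element of $\mathfrak{L}(\varphi^{(\infty)})$, being homogeneous of degree $\alpha$, are invariant under this dilation, so that the projection of $\tilde w$ onto $\mathfrak{L}$ in $B_{1/2}$ is exactly $\psi^{(\rho)}$. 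Changing variables back and using $R \asymp \rho$ on the annulus $B_\rho \setminus B_{\rho/4}$, this yields
\[
G(\rho) \;\leq\; C\rho^{2\alpha+2} \int_{B_\rho \setminus B_{\rho/4}} \sum_{j,k,l} \Bigl|\tfrac{\partial(w_{j,k,l}/R^\alpha)}{\partial R}\Bigr|^2
\;\leq\; C_1 \rho^{n+2\alpha} \bigl[I(\rho) - I(\rho/4)\bigr].
\]

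Combining this with \eqref{blowup_est1} applied with $\gamma = 1/4$ and $\psi = \psi^{(\rho)}$, which gives $I(\rho/4) \leq C_2 \rho^{-n-2\alpha} G(\rho)$, I obtain $I(\rho/4) \leq C_1 C_2 [I(\rho) - I(\rho/4)]$, equivalently
\[
I(\rho/4) \;\leq\; \theta\, I(\rho), \qquad \theta \;:=\; \tfrac{C_1 C_2}{1+C_1 C_2} \;\in\; (0,1),
\]
with $\theta$ depending only on $n,m,q,\alpha,\varphi^{(0)}$. Iterating $k$ times from the initial bound $I(1/8) \leq C_0\, G(1/2)$ (which is \eqref{blowup_est1} at $\rho = 1/2,\, \gamma = 1/4$, with $\psi = \psi^{(1/2)}$), I get $I(4^{-k}/8) \leq C_0\, \theta^k\, G(1/2)$ for every $k \geq 0$. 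Fix $\mu \in (0,1)$ with $4^{-2\mu} \geq \theta$, e.g.\ $\mu = \min\{1/2, \tfrac{1}{2}\log_4(1/\theta)\}$; then for $\vartheta \in (0, 1/8]$, choosing $k \geq 0$ minimal with $4^{-k}/8 \geq \vartheta$ (so $4^{-k} \leq 32\vartheta$) yields
\[
I(\vartheta) \;\leq\; I(4^{-k}/8) \;\leq\; C_0\, \theta^k\, G(1/2) \;\leq\; C_0\, (32\vartheta)^{2\mu}\, G(1/2) \;\leq\; C'\, \vartheta^{2\mu}\, G(1/2).
\]

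A final application of the hole-filling inequality at scale $\vartheta$ gives $G(\vartheta) \leq C_1 \vartheta^{n+2\alpha} I(\vartheta) \leq C''\, \vartheta^{n+2\alpha+2\mu}\, G(1/2)$, which upon dividing by $\vartheta^{n+2\alpha}$ is precisely \eqref{blowupdecay_eqn1}.

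The real substance has already been expended in Lemma~\ref{holefilling_lemma}, which in turn relied on the classification of homogeneous blow-ups (Theorem~\ref{homogrep_lemma}); given that input, the remaining iteration is the classical Simon/De~Giorgi-type hole-filling argument. The only delicate point to verify for the present theorem is the scale invariance of $\mathfrak{B}(\varphi^{(\infty)})$ invoked above---specifically, checking that the gap condition \eqref{blowupclass_eqn1} and the almost-minimizing condition \eqref{blowupclass_eqn3} for $u^{(\nu)}$ on $B_1$ survive rescaling to the corresponding conditions on the shrinking balls $B_{2\rho} \subseteq B_1$, with adjusted sequences $\tilde\varepsilon_\nu,\tilde\beta_\nu,\tilde\delta_\nu$ still tending to $0$---which is routine but requires some bookkeeping.
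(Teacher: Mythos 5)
Your proposal is correct and follows essentially the same route as the paper's own proof: \eqref{blowup_est1} with $\gamma = 1/4$ combined with the rescaled version of Lemma~\ref{holefilling_lemma} (justified, as you note, by the scale-invariance of $\mathfrak{B}$ under dilations centered at the origin) gives the hole-filling inequality for $I(\rho)=\int_{B_\rho}R^{2-n}|\partial(w/R^\alpha)/\partial R|^2$, and iterating this yields geometric decay, which is then converted back to a bound on the projected $L^2$ deficit. The point you flag as delicate---that the defining sequences $(u^{(\nu)},\varphi^{(\nu)})$ and the non-degenerate limit survive rescaling---is exactly what the paper checks (it also verifies that the resulting rescaled blow-up $w_1$ is a nonzero multiple of $\rho^{-\alpha}w(\rho\,\cdot)$, which may vanish if $w|_{B_\vartheta}=0$, a trivial case it dispenses with first).
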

\begin{proof}
Let $\rho \in [\vartheta, 1/2]$. By \eqref{blowup_est1} with $\psi^{(\rho)}$ in place of $\psi$, 
\begin{equation} \label{blowupdecay_eqn2}
	\int_{B_{\rho/4}(0)} \sum_{j=1}^J \sum_{k=1}^{p_j} \sum_{l=1}^{q_{j,k}} R^{2-n} \left| \frac{\partial (w_{j,k,l}/R^{\alpha})}{\partial R} \right|^2 
	\leq C \rho^{-n-2\alpha} \int_{B_{\rho}(0)} \sum_{j=1}^J \sum_{k=1}^{p_j} \sum_{l=1}^{q_{j,k}} \mathcal{G}(w_{j,k,l},\psi^{(\rho)}_{j,k,l})^2 
\end{equation}
where $C = C(n,m,q,\alpha,\varphi^{(0)}) \in (0,\infty)$. Let $u^{(\nu)}$, $\varphi^{(\nu)}$ be as in Definition~\ref{blowupclass_defn} such that $w$ is the blow-up of $u^{(\nu)}$ relative to 
$\varphi^{(\nu)}$ by the excess $E_{\nu} = \sqrt{\int_{B_{1}(0)} {\mathcal G}(u^{(\nu)}, \varphi^{(\nu)})^{2}}.$ With the help of the argument of the proof of Corollary~\ref{graphical_cor}, we may readily verify that there exists $w_{1} \in {\mathfrak B}$ such that $w_{1}$ is the blow-up of the sequence ${u}^{(\rho, \nu)} \equiv \rho^{-\alpha}u^{(\nu)}(\rho(\cdot))$ relative to $\varphi^{(\nu)}$  by the excess $E_{\nu}^{(\rho)} = \sqrt{\int_{B_{1}(0)} {\mathcal G}(u^{(\rho, \nu)}, \varphi^{(\nu)})^{2}},$ and moreover that $c_{1}w_{1} = \rho^{-\alpha}w(\rho(\cdot))$ where $c_{1} = \lim_{\nu \to \infty} \, E_{\nu}^{-1}E_{\nu}^{(\rho)} \in [0, \rho^{-\alpha}]$. Since it suffices to prove the present theorem assuming that $\left.w\right|_{B_{\vartheta}} \neq 0$, we may assume that $c_{1}>0$. Hence by applying Lemma~\ref{holefilling_lemma} to $w_{1}$, we see that 
\begin{align} \label{blowupdecay_eqn3}
	&\rho^{-n-2\alpha} \int_{B_{\rho}(0)} \sum_{j=1}^J \sum_{k=1}^{p_j} \sum_{l=1}^{q_{j,k}} \mathcal{G}(w_{j,k,l},\psi^{(\rho)}_{j,k,l})^2 
	\\&\hspace{15mm} \leq C \int_{B_{\rho}(0) \setminus B_{\rho/4}(0)} \sum_{j=1}^J \sum_{k=1}^{p_j} \sum_{l=1}^{q_{j,k}} 
		R^{2-n} \left| \frac{\partial (w_{j,k,l}/R^{\alpha})}{\partial R} \right|^2 \nonumber 
\end{align}
where $C = C(n,m,q,\alpha,\varphi^{(0)}) \in (0,\infty)$.  Thus by \eqref{blowupdecay_eqn2} and \eqref{blowupdecay_eqn3}, 
\begin{align} \label{blowupdecay_eqn4}
	&\int_{B_{\rho/4}(0)} \sum_{j=1}^J \sum_{k=1}^{p_j} \sum_{l=1}^{q_{j,k}} R^{2-n} \left| \frac{\partial (w_{j,k,l}/R^{\alpha})}{\partial R} \right|^2	
	\\&\hspace{15mm} \leq C_0 \int_{B_{\rho}(0) \setminus B_{\rho/4}(0)} \sum_{j=1}^J \sum_{k=1}^{p_j} \sum_{l=1}^{q_{j,k}} 
		R^{2-n} \left| \frac{\partial (w_{j,k,l}/R^{\alpha})}{\partial R} \right|^2 \nonumber 
\end{align}
for all $\rho \in [\vartheta,1/2]$ and some constant $C_0 = C_0(n,m,q,\alpha,\varphi^{(0)}) \in (0,\infty)$.  By adding $C_0$ times the left-hand side of \eqref{blowupdecay_eqn4} to both sides of \eqref{blowupdecay_eqn4}, 
\begin{equation} \label{blowupdecay_eqn5}
	\int_{B_{\rho/4}(0)} \sum_{j=1}^J \sum_{k=1}^{p_j} \sum_{l=1}^{q_{j,k}} R^{2-n} \left| \frac{\partial (w_{j,k,l}/R^{\alpha})}{\partial R} \right|^2
	\leq \gamma \int_{B_{\rho}(0)} \sum_{j=1}^J \sum_{k=1}^{p_j} \sum_{l=1}^{q_{j,k}} R^{2-n} \left| \frac{\partial (w_{j,k,l}/R^{\alpha})}{\partial R} \right|^2
\end{equation}
for all $\rho \in [\vartheta,1/2]$, where $\gamma = C_0/(1 + C_0) \in (0,1)$.  Iteratively applying \eqref{blowupdecay_eqn5} with $\rho = 2^{-2i-1}$ for $i = 1,2,\ldots,N-1$, where $N$ is the positive integer such that $2^{-2N-3} < \vartheta \leq 2^{-2N-1}$, 
\begin{equation} \label{blowupdecay_eqn6}
	\int_{B_{\vartheta}(0)} \sum_{j=1}^J \sum_{k=1}^{p_j} \sum_{l=1}^{q_{j,k}} R^{2-n} \left| \frac{\partial (w_{j,k,l}/R^{\alpha})}{\partial R} \right|^2
	\leq C \vartheta^{2\mu} \int_{B_{1/8}(0)} \sum_{j=1}^J \sum_{k=1}^{p_j} \sum_{l=1}^{q_{j,k}} 
		R^{2-n} \left| \frac{\partial (w_{j,k,l}/R^{\alpha})}{\partial R} \right|^2, 
\end{equation}
where $\mu = -\log \gamma/\log 16$.  By combining \eqref{blowupdecay_eqn2} with $\rho = 1/2$, \eqref{blowupdecay_eqn3} with $\rho = \vartheta$, and \eqref{blowupdecay_eqn6}, we obtain \eqref{blowupdecay_eqn1}.
\end{proof}

\section{Excess decay lemmas} \label{sec:main_lemma_sec} 

We will first prove the following preliminary excess decay lemma.

\begin{lemma} \label{premain_lemma} 
Let $\varphi^{(0)}$ be as in Definition~\ref{varphi0_defn} and let $p \in \{p_0,p_0+1,\ldots, \lceil q/q_0 \rceil\}$.  Given $\vartheta \in (0,1/8)$, there exists $\overline{\delta} \in (0,1/4)$ and $\overline{\varepsilon}, \overline{\beta} \in (0,1)$ depending only on $n$, $m$, $q$, $\alpha$, $\varphi^{(0)}$, $p$, and $\vartheta$ such that if $u \in W^{1,2}(B_1(0);\mathcal{A}_q(\mathbb{R}^m))$ is an average-free, energy minimizing function and $\varphi \in \Phi_{\overline{\varepsilon},p}(\varphi^{(0)})$ such that $\mathcal{N}_u(0) \geq \alpha$, 
\begin{equation} \label{premain_hyp1} 
	\int_{B_1(0)} \mathcal{G}(u,\varphi)^2 < \overline{\varepsilon}^2
\end{equation}
and either (i) $p = p_0$ or (ii) $p > p_0$ and 
\begin{equation} \label{premain_hyp2}
	\int_{B_1(0)} \mathcal{G}(u,\varphi)^2 \leq \overline{\beta} \inf_{\varphi' \in \bigcup_{p'=p_0}^{p-1} \Phi_{3\overline{\varepsilon},p'}(\varphi^{(0)})} \int_{B_1(0)} \mathcal{G}(u,\varphi')^2, 
\end{equation}
then either 
\begin{enumerate}
\item[(i)] $B_{\overline{\delta}}(0,y_0) \cap \{ X \in B_{1/2}(0) \cap \Sigma_{u,q} : \mathcal{N}_u(X) \geq \alpha \} = \emptyset$ for some $y_0 \in B^{n-2}_{1/2}(0)$ or 

\item[(ii)] there is a $\widetilde{\varphi} \in \widetilde{\Phi}_{\overline{\gamma} \overline{\varepsilon}}(\varphi^{(0)})$ such that 
\begin{equation*}
	\vartheta^{-n-2\alpha} \int_{B_{\vartheta}(0)} \mathcal{G}(u,\widetilde{\varphi})^2 
	\leq \overline{C} \vartheta^{2\overline{\mu}} \int_{B_1(0)} \mathcal{G}(u,\varphi)^2, 
\end{equation*}
where $\overline{\gamma} \in [1,\infty)$ is a constant depending only on $n$, $m$, $q$, $\alpha$, $\varphi^{(0)}$, $p$, and $\vartheta$ and $\overline{\mu} \in (0,1)$ and $\overline{C} \in (0,\infty)$ are constants depending only on $n$, $m$, $q$, $\alpha$, $\varphi^{(0)}$, and $p$ (independent of $\vartheta$). 
\end{enumerate}
\end{lemma}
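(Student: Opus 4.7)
The plan is a contradiction argument driven by the compactness of the blow-up class $\mathfrak{B}$ constructed in Section~\ref{sec:blowupclass_sec} and the asymptotic decay statement Theorem~\ref{blowupdecay_lemma}, together with Lemma~\ref{blowup2L_lemma} used in reverse to lift decay of the blow-up back to decay of $u$. Suppose the lemma fails. Then for each $\nu = 1, 2, \ldots$ I can find $\varepsilon_\nu, \beta_\nu, \delta_\nu \downarrow 0$, an average-free, locally energy minimizing $u^{(\nu)} \in W^{1, 2}(B_1(0); \mathcal{A}_q(\mathbb{R}^m))$ with $\mathcal{N}_{u^{(\nu)}}(0) \geq \alpha$, and $\varphi^{(\nu)} \in \Phi_{\varepsilon_\nu, p}(\varphi^{(0)})$ verifying \eqref{premain_hyp1}--\eqref{premain_hyp2} with these parameters, for which conclusion~(i) fails with $\overline{\delta} = \delta_\nu$ and conclusion~(ii) fails for every $\widetilde{\varphi} \in \widetilde{\Phi}_{\overline{\gamma}\,\varepsilon_\nu}(\varphi^{(0)})$. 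The constants $\overline{\mu}, \overline{C}, \overline{\gamma}$ will be pinned down in the course of the argument.

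By construction the sequences $(u^{(\nu)}), (\varphi^{(\nu)})$ satisfy all of the requirements (a)--(e) of Definition~\ref{blowupclass_defn} (condition (c) being precisely the failure of conclusion~(i) with $\delta_\nu \downarrow 0$), so after passing to a subsequence the blow-up procedure of Section~\ref{blow-up-procedure} produces a blow-up $w = (w_{j,k}) \in \mathfrak{B}(\varphi^{(\infty)})$ of $u^{(\nu)}$ relative to $\varphi^{(\nu)}$ by the excess $E_\nu = \bigl(\int_{B_1(0)} \mathcal{G}(u^{(\nu)}, \varphi^{(\nu)})^2\bigr)^{1/2}$, for some $\varphi^{(\infty)} \in \mathfrak{D}$ assigned to $(\varphi^{(\nu)})$ as in Remark~\ref{blow-up-conditions}. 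Applying Theorem~\ref{blowupdecay_lemma} to $w$ with the given $\vartheta \in (0, 1/8)$ produces a projection $\psi = (\psi_{j,k})$ of $w$ onto $\mathfrak{L}(\varphi^{(\infty)})$ in $B_\vartheta(0)$ such that
\begin{equation*}
    \vartheta^{-n-2\alpha} \int_{B_\vartheta(0)} \sum_{j,k,l} \mathcal{G}(w_{j,k,l}, \psi_{j,k,l})^2 \leq C \vartheta^{2\mu} \int_{B_{1/2}(0)} \sum_{j,k,l} |w_{j,k,l}|^2 \leq C \vartheta^{2\mu},
\end{equation*}
using \eqref{blowup_norm_conv_eqn2} in the last step; I will take $\overline{\mu} = \mu$. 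Since $\psi$ is a projection, $\|\psi\|_{L^2(B_\vartheta(0))} \leq 2\|w\|_{L^2(B_\vartheta(0))} \leq 2$, and homogeneity then gives $\|\psi\|_{L^2(B_1(0))} \leq C(n,\alpha)\vartheta^{-\alpha-n/2}$.

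Next I run Lemma~\ref{blowup2L_lemma} in the reverse direction, with this $\psi$ and the sequence $(\varphi^{(\nu)}, E_\nu)$, to manufacture cylindrical competitors $\widetilde{\varphi}^{(\nu)} \in \widetilde{\Phi}_{\varepsilon_\nu + C_\vartheta E_\nu}(\varphi^{(0)})$, where $C_\vartheta = C(n, q, \alpha, \varphi^{(0)}) \|\psi\|_{L^2(B_1(0))}$. Because $E_\nu \leq \varepsilon_\nu$, for $\overline{\gamma} = 1 + C_\vartheta$ (depending on $\vartheta$) the containment $\widetilde{\varphi}^{(\nu)} \in \widetilde{\Phi}_{\overline{\gamma}\,\varepsilon_\nu}(\varphi^{(0)})$ is immediate for large $\nu$. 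The identity \eqref{blowup_norm_conv_eqn1} of Lemma~\ref{blowup_norm_conv_lemma} then promotes the blow-up decay to
\begin{equation*}
    \vartheta^{-n-2\alpha} \int_{B_\vartheta(0)} \mathcal{G}(u^{(\nu)}, \widetilde{\varphi}^{(\nu)})^2 \leq 2C \vartheta^{2\overline{\mu}} E_\nu^2 = 2C \vartheta^{2\overline{\mu}} \int_{B_1(0)} \mathcal{G}(u^{(\nu)}, \varphi^{(\nu)})^2
\end{equation*}
for all sufficiently large $\nu$. Setting $\overline{C} = 2C$ contradicts the assumption that conclusion~(ii) fails for every $\widetilde{\varphi} \in \widetilde{\Phi}_{\overline{\gamma}\,\varepsilon_\nu}(\varphi^{(0)})$.

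The delicate point, and the step most likely to obstruct a naive version of the argument, is ensuring that the convergence \eqref{blowup_norm_conv_eqn1} holds genuinely in $L^2(B_\vartheta(0))$ and not merely on $B_\vartheta(0) \cap \{r > \tau\}$ for each fixed $\tau > 0$, as is obvious from the pointwise construction of the blow-up. This is precisely where the failure of conclusion~(i) with $\delta_\nu \downarrow 0$ becomes essential: it is exactly the hypothesis required to invoke Corollary~\ref{nonconest_cor}, whose non-concentration estimate for $\int_{B_1(0)} \mathcal{G}(u^{(\nu)}, \varphi^{(\nu)})^2$ near the axis $\{0\} \times \mathbb{R}^{n-2}$ underwrites Lemma~\ref{blowup_norm_conv_lemma}. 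Without this non-concentration, concentration of the excess could wipe out the decay in the limit and destroy the contradiction.
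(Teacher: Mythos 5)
Your proof is correct and follows essentially the same route as the paper: contradiction via a blow-up sequence produced under Definition~\ref{blowupclass_defn}, decay for the blow-up from Theorem~\ref{blowupdecay_lemma}, construction of the competitor $\widetilde{\varphi}^{(\nu)}$ via Lemma~\ref{blowup2L_lemma}, and promotion of the decay to $u^{(\nu)}$ via Lemma~\ref{blowup_norm_conv_lemma}. Your closing observation about why failure of option (i) is indispensable (it is precisely hypothesis \eqref{nonconest_cor_eqn1} of Corollary~\ref{nonconest_cor}, which underwrites the $L^2_{\rm loc}$ convergence in Lemma~\ref{blowup_norm_conv_lemma}) correctly identifies the only subtle point.
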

\begin{proof}
Let $\vartheta \in (0,1/8)$ and $\varphi^{(0)}$ be fixed as in the lemma.  For $\nu = 1,2,3,\ldots$, let $0 < \varepsilon_{\nu} \leq \delta_{\nu} \downarrow 0$, $\beta_{\nu} \downarrow 0$, $u^{(\nu)} \in W^{1,2}(B_1(0);\mathcal{A}_q(\mathbb{R}^m))$ be an average-free, energy minimizing function and $\varphi^{(\nu)} \in \Phi_{\varepsilon_{\nu},p}(\varphi^{(0)})$ such that \eqref{premain_hyp1} and \eqref{premain_hyp2} hold true with $\overline{\varepsilon} = \varepsilon_{\nu}$, $\overline{\beta} = \beta_{\nu}$, $u = u^{(\nu)}$, and $\varphi = \varphi^{(\nu)}$ and option (i) of Lemma~\ref{premain_lemma} does not hold true with $\overline{\delta} = \delta_{\nu}$ and $u = u^{(\nu)}$.  We want to show that for some constants $\overline{\gamma} \in [1,\infty)$ depending only on $n$, $m$, $q$, $\alpha$, $\varphi^{(0)}$, $p$, and $\vartheta$ and $\overline{\mu} \in (0,1)$ and $\overline{C} \in (0,\infty)$ depending only on $n$, $m$, $q$, $\alpha$, $\varphi^{(0)}$, and $p$ and for infinitely many $\nu$, there exists $\widetilde{\varphi}^{(\nu)} \in \widetilde{\Phi}_{\overline{\gamma} \overline{\varepsilon}_k}(\varphi^{(0)})$ such that 
\begin{equation*}
	\vartheta^{-n-2\alpha} \int_{B_{\vartheta}(0)} \mathcal{G}(u^{(\nu)},\widetilde{\varphi}^{(\nu)})^2 
	\leq \overline{C} \vartheta^{2\overline{\mu}} \int_{B_1(0)} \mathcal{G}(u^{(\nu)},\varphi^{(\nu)})^2. 
\end{equation*}
By the arbitrariness of the sequences this will complete the proof of Lemma~\ref{premain_lemma}. 

By \eqref{premain_hyp1}, \eqref{premain_hyp2}, and the failure of option (i), let $w \in \mathfrak{B}$ be a blow-up of $u^{(\nu)}$ relative to $\varphi^{(\nu)}$ in $B_{3/4}(0)$ obtained via the blow-up procedure in Section \ref{sec:graphical_sec}.  By Lemma~\ref{blowupdecay_lemma}, 
\begin{equation} \label{premain_eqn1}
	\vartheta^{-n-2\alpha} \int_{B_{\vartheta}(0)} \sum_{j=1}^J \sum_{k=1}^{p_j} \sum_{l=1}^{q_{j,k}} \mathcal{G}(w_{j,k,l},\psi^{(\vartheta)}_{j,k,l})^2 
	\leq C \vartheta^{2\overline{\mu}} \int_{B_{1/2}(0)} \sum_{j=1}^J \sum_{k=1}^{p_j} \sum_{l=1}^{q_{j,k}} |w_{j,k,l}|^2 
	\leq C \vartheta^{2\overline{\mu}} 
\end{equation}
for some constants $\overline{\mu} \in (0,1)$ and $C \in (0,\infty)$ depending on $n$, $m$, $q$, $\alpha$, and $\varphi^{(0)}$ (and independent of $\vartheta$), where $w_{j,k,l}$ is as in \eqref{w_localized} and $\psi^{(\vartheta)}$ is as in \eqref{psi_localized} with $\psi = \psi^{(\vartheta)}$.  Since $\psi^{(\vartheta)}$ is homogeneous degree $\alpha$ and is a projection of $w$ onto $\mathfrak{L}$ in $B_{\vartheta}(0)$, 
\begin{align} \label{premain_eqn2}
	\int_{B_{1/2}(0)} \sum_{j=1}^J \sum_{k=1}^{p_j} \sum_{l=1}^{q_{j,k}} |\psi^{(\vartheta)}_{j,k,l}|^2 
	&= (2\vartheta)^{-n-2\alpha} \int_{B_{\vartheta}(0)} \sum_{j=1}^J \sum_{k=1}^{p_j} \sum_{l=1}^{q_{j,k}} |\psi^{(\vartheta)}_{j,k,l}|^2 
	\\&\leq 2 (2\vartheta)^{-n-2\alpha} \int_{B_{\vartheta}(0)} \sum_{j=1}^J \sum_{k=1}^{p_j} \sum_{l=1}^{q_{j,k}} 
		(|w_{j,k,l}|^2 + \mathcal{G}(w_{j,k,l},\psi^{(\vartheta)}_{j,k,l})^2) \nonumber 
	\\&\leq 4 (2\vartheta)^{-n-2\alpha} \int_{B_{\vartheta}(0)} \sum_{j=1}^J \sum_{k=1}^{p_j} \sum_{l=1}^{q_{j,k}} |w_{j,k,l}|^2 \nonumber 
	\\&\leq 4 (2\vartheta)^{-n-2\alpha}. \nonumber 
\end{align}
Define $\widetilde{\varphi}^{(\nu)}$ by Lemma~\ref{blowup2L_lemma} with $\psi = \psi^{(\vartheta)}$.  Note that by Lemma~\ref{blowup2L_lemma} and \eqref{premain_eqn2} we have $\widetilde{\psi}^{(\nu)} \in \Phi_{\overline{\gamma}\,\varepsilon_{\nu}}(\varphi^{(0)})$ for some constant $\overline{\gamma} = \overline{\gamma}(n,m,q,\alpha,\varphi^{(0)},\vartheta) \in [1,\infty)$.  By Lemma~\ref{blowup_norm_conv_lemma}, 
\begin{equation*}
	\lim_{\nu \rightarrow \infty} E_{\nu}^{-2} \int_{B_{\vartheta}(0)} \mathcal{G}(u^{(\nu)},\widetilde{\varphi}^{(\nu)})^2 
	= \int_{B_{\vartheta}(0)} \sum_{j=1}^J \sum_{k=1}^{p_j} \sum_{l=1}^{q_{j,k}} \mathcal{G}(w_{j,k,l},\psi^{(\vartheta)}_{j,k,l})^2 
\end{equation*}
and thus by \eqref{premain_eqn1}, 
\begin{equation*}
	\vartheta^{-n-2\alpha} E_{\nu}^{-2} \int_{B_{\vartheta}(0)} \mathcal{G}(u^{(\nu)},\widetilde{\varphi}^{(\nu)})^2 
	\leq 2C \vartheta^{2\overline{\mu}} 
\end{equation*}
for $\nu$ sufficiently large, completing the proof. 
\end{proof}

Notice that the hypothesis \eqref{premain_hyp2} plays an important role in the blow-up method used to prove Lemma~\ref{premain_lemma} above.  We will now  deduce Lemma~\ref{main_lemma} from Lemma~\ref{premain_lemma}, i.e.\  show that hypothesis \eqref{premain_hyp2} can be removed in favour of weakening the conclusion to allow excess improvement to occur at one of finitely many fixed scales. The argument is the same as that in Section 13 of~\cite{Wic14} and is included here for completion.

\begin{proof}[Proof of Lemma~\ref{main_lemma}]
Let us first consider the special case $\varphi \in \Phi_{\varepsilon_0}(\varphi^{(0)})$.  For each $p \in \{p_0,p_0+1,\ldots,\lceil q/q_0 \rceil\}$ and $\vartheta \in (0,1/8)$, let $\overline{\varepsilon} = \overline{\varepsilon}(p,\vartheta)$, $\overline{\beta} = \overline{\beta}(p,\vartheta)$, $\overline{\delta} = \overline{\delta}(p,\vartheta)$, $\overline{\gamma} = \overline{\gamma}(p)$, $\overline{\mu} = \overline{\mu}(p)$, and $\overline{C} = \overline{C}(p)$ be as in Lemma~\ref{premain_lemma}, where we omit the dependence on $n$, $m$, $q$, $\alpha$, and $\varphi^{(0)}$ to simplify notation.  For each $j = 1,2,\ldots, \lceil q/q_0 \rceil - p_0 + 1$, set 
\begin{equation*}
	\beta_j = \min \{ \overline{\beta}(p,\vartheta_j) : p = p_0,p_0+1,\ldots, \lceil q/q_0 \rceil \} .
\end{equation*}
Set  
\begin{align*}
	\delta_0 &= \min \{\overline{\delta}(p,\vartheta_j) : p = p_0,p_0+1,\ldots, \lceil q/q_0 \rceil, \, j = 1,2,\ldots, \lceil q/q_0 \rceil - p_0 + 1 \} , \\
	\varepsilon_0 &= \min \left\{ 3^{-j} \sqrt{\beta_1 \beta_2 \cdots \beta_{j-1}} \,\overline{\varepsilon}(p,\vartheta_j)
		 : p = p_0,p_0+1,\ldots, \lceil q/q_0 \rceil, \, j = 1,2,\ldots, \lceil q/q_0 \rceil - p_0 + 1 \right\} , \\
	\gamma &= \max \{ 3^j \overline{\gamma}(p,\vartheta_j) : p = p_0,p_0+1,\ldots, \lceil q/q_0 \rceil, \, j = 1,2,\ldots, \lceil q/q_0 \rceil - p_0 + 1 \} .
\end{align*}
Additionally, set 
\begin{align*}
	\mu &= \min \{\overline{\mu}(p) : p = p_0,p_0+1,\ldots, \lceil q/q_0 \rceil \} , \\
	C_1 &= \max \{ \overline{C}(p) : p = p_0,p_0+1,\ldots, \lceil q/q_0 \rceil \} , \\
	C_j &= \frac{2^{j-1} C_1}{\beta_1 \beta_2 \cdots \beta_{j-1}} \text{ for } j = 2,3,\ldots, \lceil q/q_0 \rceil - p_0 + 1, 
\end{align*}
and notice that $\mu$ is independent of $\vartheta_1, \vartheta_2, \ldots, \vartheta_{\lceil q/q_0 \rceil - p_0 + 1}$ and each $C_j$ is independent of 
$$\vartheta_j, \vartheta_{j+1}, \ldots, \vartheta_{\lceil q/q_0 \rceil - p_0 + 1}.$$

Suppose that $\varphi \in \Phi_{\varepsilon_0,p}(\varphi^{(0)})$ for some $p \in \{p_0,p_0+1,\ldots, \lceil q/q_0 \rceil\}$ and that option (i) of Lemma~\ref{main_lemma} does not hold true.  If 
\begin{equation} \label{multiscale_eqn2}
	\int_{B_1(0)} \mathcal{G}(u,\varphi)^2 \leq \beta_1 \inf_{\varphi' \in \bigcup_{p'=p_0}^{p-1} \Phi_{3\varepsilon_0,p'}(\varphi^{(0)})} \int_{B_1(0)} \mathcal{G}(u,\varphi')^2 
\end{equation}
then by Lemma~\ref{premain_lemma} with $\vartheta = \vartheta_1$, there exists $\widetilde{\varphi} \in \widetilde{\Phi}_{\gamma \varepsilon_0}(\varphi^{(0)})$ such that 
\begin{equation*}
	\vartheta_1^{-n-2\alpha} \int_{B_{\vartheta_1}(0)} \mathcal{G}(u,\widetilde{\varphi})^2 
	\leq C_1 \vartheta_1^{2\mu} \int_{B_1(0)} \mathcal{G}(u,\varphi)^2 . 
\end{equation*}
If instead \eqref{multiscale_eqn2} does not hold true, inductively select $s_k \in \{p_0,p_0+1,\ldots, \lceil q/q_0 \rceil \}$ and $\varphi^{(k)} \in \Phi_{3^k \varepsilon_0,s_k}(\varphi^{(0)})$ for $k = 1,2,3,\ldots,j$ as follows.  Set $s_1 = p$ and $\varphi^{(1)} = \varphi$.  For each $k \geq 1$, if 
\begin{equation} \label{multiscale_eqn3}
	\int_{B_1(0)} \mathcal{G}(u,\varphi^{(k)})^2 > \beta_k 
		\inf_{\varphi' \in \bigcup_{p'=p_0}^{s_k-1} \Phi_{3^k \varepsilon_0,p'}(\varphi^{(0)})} \int_{B_1(0)} \mathcal{G}(u,\varphi')^2
\end{equation}
select, $p_0 \leq s_{k+1} < s_k$ and $\varphi^{(k+1)} \in \Phi_{3^k \varepsilon_0,s_k}(\varphi^{(0)})$ such that 
\begin{equation} \label{multiscale_eqn4}
	\int_{B_1(0)} \mathcal{G}(u,\varphi^{(k+1)})^2 < 2 \inf_{\varphi' \in \bigcup_{p'=p_0}^{s_k-1} \Phi_{3^k \varepsilon_0,p'}(\varphi^{(0)})} \int_{B_1(0)} \mathcal{G}(u,\varphi')^2. 
\end{equation}
Otherwise, stop and set $j = k$ so that 
\begin{equation} \label{multiscale_eqn5}
	\int_{B_1(0)} \mathcal{G}(u,\varphi^{(j)})^2 \leq \beta_j 
		\inf_{\varphi' \in \bigcup_{p'=p_0}^{s_j-1} \Phi_{3^j \varepsilon_0,p'}(\varphi^{(0)})} \int_{B_1(0)} \mathcal{G}(u,\varphi')^2. 
\end{equation}
Notice that $\varphi^{(j)} \in \Phi_{3^j \varepsilon_0}(\varphi^{(0)})$ where $3^j \varepsilon_0 \leq \overline{\varepsilon}(s_j,\theta_j)$.  By \eqref{multiscale_eqn3} and \eqref{multiscale_eqn4}
\begin{equation} \label{multiscale_eqn6}
	\int_{B_1(0)} \mathcal{G}(u,\varphi^{(j)})^2 
	< \frac{2^{j-1}}{\beta_1 \beta_2 \cdots \beta_{j-1}} \int_{B_1(0)} \mathcal{G}(u,\varphi)^2 
	< \frac{2^{j-1}}{\beta_1 \beta_2 \cdots \beta_{j-1}} \varepsilon_0^2 \leq \overline{\varepsilon}(s_j,\theta_j)^2 . 
\end{equation}
By \eqref{multiscale_eqn5} and \eqref{multiscale_eqn6}, we can apply Lemma~\ref{premain_lemma} with $\vartheta = \vartheta_j$ to obtain $\widetilde{\varphi} \in \widetilde{\Phi}_{\gamma \varepsilon_0}(\varphi^{(0)})$ such that 
\begin{align*}
	\vartheta_j^{-n-2\alpha} \int_{B_{\vartheta_j}(0)} \mathcal{G}(u,\widetilde{\varphi})^2 
	&\leq C_1 \vartheta_j^{2\mu} \int_{B_1(0)} \mathcal{G}(u,\varphi^{(j)})^2 
	\leq \frac{2^{j-1} C_1}{\beta_1 \beta_2 \cdots \beta_{j-1}} \vartheta_j^{2\mu} \int_{B_1(0)} \mathcal{G}(u,\varphi)^2 
	\\&= C_j \vartheta_j^{2\mu} \int_{B_1(0)} \mathcal{G}(u,\varphi)^2 . 
\end{align*}
(Notice that $C_j$ depends on $\vartheta_1,\vartheta_2,\ldots,\vartheta_{j-1}$ but is independent of $\vartheta_j$.) 

In the general case of $\varphi \in \widetilde{\Phi}_{\varepsilon_0}(\varphi^{(0)})$, let $A$ be a skew-symmetric $n \times n$ matrix with $A_{ij} = 0$ for $i = 1,2$ and $j = 3,4,\ldots,n$ and $|A| < \varepsilon_0$ such that $\varphi \circ e^{-A} \in \Phi_{\varepsilon_0}(\varphi^{(0)})$.  Letting $\widehat{\varepsilon}_0$ and $\widehat{\delta}_0$ represent $\varepsilon_0$ and $\delta_0$ from the discussion above, let $\varepsilon_0 = \widehat{\varepsilon}_0/2$ and $\delta_0 = \widehat{\delta}_0 + \widehat{\varepsilon}_0/2$.  Obviously one can apply Lemma~\ref{main_lemma} to $u \circ e^{-A}$ and $\varphi \circ e^{-A}$ to conclude that either option (i) or option (ii) holds true with $\widehat{\delta}_0, u \circ e^{-A}, \varphi \circ e^{-A}$ in place of $\delta_0, u, \varphi$.  Suppose $u \circ e^{-A}$ satisfies option (i), that is for some $y_0 \in B^{n-2}_{1/2}(0)$ there exists $Z \in B_{\widehat{\delta}_0}(0,y_0) \cap B_{1/2}(0) \cap \Sigma_{u \circ e^{-A}}$ with $\mathcal{N}_{u \circ e^{-A}}(Z) \geq \alpha$.   Since $|A| < \varepsilon_0 = \widehat{\varepsilon}_0/2$, 
$$|e^{-A} Z - (0,y_0)| \leq |Z - (0,y_0)| + |e^{-A} Z - Z| \leq \widehat{\delta} + \widehat{\varepsilon}_0/2 = \delta_0.$$  
Thus $e^{-A} Z \in B_{\delta_0}(0,y_0) \cap B_{1/2}(0) \cap \Sigma_{u}$ with $\mathcal{N}_{u}(e^{-A} Z) \geq \alpha$.  Therefore, $u$ satisfies option (i).  If instead $u \circ e^{-A}$ and $\varphi \circ e^{-A}$ satisfy option (ii), it readily follows that $u$ and $\varphi$ satisfy option (ii). 
\end{proof}

\section{Proofs of the main results} \label{sec:finale_sec}

\begin{lemma} \label{decomposition_lemma}
Let $\varphi^{(0)}$ be as in Definition~\ref{varphi0_defn}.  There exist $\varepsilon, \delta, \overline{\mu} \in (0,1)$ depending only on $n$, $m$, $q$, $\alpha$, and $\varphi^{(0)}$ such that if $u \in W^{1,2}(B_2(0);\mathcal{A}_q(\mathbb{R}^m))$ is an average-free, energy minimizing function such that  
\begin{equation*}
	\int_{B_2(0)} \mathcal{G}(u,\varphi^{(0)})^2 < \varepsilon^2, 
\end{equation*}
then 
\begin{equation*}
	\{ X \in \Sigma_{u,q} \cap B_1(0) : \mathcal{N}_u(X) \geq \alpha \} = S \cup T
\end{equation*}
where $S$ is contained in a properly embedded $(n-2)$-dimensional $C^{1,\overline{\mu}}$ submanifold $\Gamma$ of $B_1(0)$ with $\mathcal{H}^n(\Gamma \cap B_1(0)) \leq \omega_{n-2}$ and $T \subseteq  \bigcup_{j=1}^{\infty} B_{\rho_j}(X_j)$ for a countable family of balls $B_{\rho_j}(X_j)$ with $\sum_{j=1}^{\infty} \rho_j^{n-2} \leq 1-\delta$.  Moreover, for $\mathcal{H}^{n-2}$-a.e.~$X \in S$, there exists a unique nonzero, average-free, homogeneous degree $\alpha$, cylindrical, energy minimizing function $\varphi^{(Z)} : \mathbb{R}^n \rightarrow \mathcal{A}_q(\mathbb{R}^m)$ such that 
\begin{equation*}
	\rho^{-n} \int_{B_{\rho}(0)} \mathcal{G}(u(Z+X),\varphi^{(Z)}(X))^2 \leq C \rho^{2\alpha+2\overline{\mu}}
\end{equation*}
for some constant $C \in (0,\infty)$ depending only on $n$, $m$, $q$, $\alpha$, and $\varphi^{(0)}$.
\end{lemma}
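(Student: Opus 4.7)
\textbf{Proof plan for Lemma~\ref{decomposition_lemma}.} The strategy is the standard iterative application of Lemma~\ref{main_lemma} along dyadic scales at each singular point, following closely the corresponding step in \cite{Sim93}. I would first fix a small enough sequence $\vartheta_j \in (0,1/8)$, $j = 1, \ldots, \lceil q/q_0\rceil - p_0 + 1$ (with $\vartheta_j < \vartheta_{j-1}/8$) and let $\delta_0, \varepsilon_0, \gamma, \mu$ and $C_1, \ldots, C_{\lceil q/q_0\rceil - p_0 + 1}$ be the corresponding constants from Lemma~\ref{main_lemma}. Choose $\vartheta_1$ small enough that $\max_j C_j\vartheta_j^{2\mu} \leq \vartheta_j^{2\overline\mu}$ for some fixed $\overline\mu \in (0,\mu)$. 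By the hypothesis $\int_{B_2(0)}\mathcal G(u,\varphi^{(0)})^2 < \varepsilon^2$ with $\varepsilon$ chosen sufficiently small, for each $Z \in \Sigma_{u,q} \cap B_1(0)$ with $\mathcal N_u(Z) \geq \alpha$ the rescaling $u(Z + (\cdot))/2^\alpha$ satisfies the hypothesis of Lemma~\ref{main_lemma} with $\varphi = \varphi^{(0)}$.

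Next, for each such $Z$, I would iteratively apply Lemma~\ref{main_lemma} to $u_Z^{(k)}(X) = r_k^{-\alpha} u(Z + r_k X)$ at scales $r_k = \vartheta_{j_1}\vartheta_{j_2}\cdots\vartheta_{j_k}$, starting from $r_0 = 1$ and at each step choosing the scale $\vartheta_{j_{k+1}}$ given by option (ii) (when available). Two alternatives arise: either option (i) occurs at some iteration step $k$, or option (ii) occurs at every step. Define $T \subset \Sigma_{u,q} \cap B_1(0)$ to consist of those $Z$ for which option (i) occurs at some step $k$ (so that within $B_{\delta_0 r_k}(Z + r_k (0, y_0))$ there are no points of $\Sigma_{u,q}$ with frequency $\geq \alpha$), and let $S$ be the complement. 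For $Z \in S$, iterating option (ii) produces a Cauchy sequence of cylindrical functions $\widetilde\varphi_k^{(Z)} \in \widetilde\Phi_{\gamma^k\varepsilon_0}(\varphi^{(0)})$ whose excess decays geometrically, yielding the unique tangent function $\varphi^{(Z)}$ and the asymptotic decay estimate $\rho^{-n}\int_{B_\rho(0)} \mathcal G(u(Z + X), \varphi^{(Z)}(X))^2\, dX \leq C\rho^{2\alpha + 2\overline\mu}$ via a standard interpolation argument on $\mathcal G(\widetilde\varphi_{k+1}^{(Z)}, \widetilde\varphi_k^{(Z)})$. The associated axes of the tangent functions provide a Hölder continuous field of $(n-2)$-dimensional affine subspaces indexed by $S$.

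The main obstacle will be Step 4, the construction of the $C^{1,\overline\mu}$ submanifold $\Gamma$ containing $S$. For $Z_1, Z_2 \in S$ close to each other, the iteration gives tangent functions whose axes $L_{Z_1}, L_{Z_2}$ lie within distance $C|Z_1 - Z_2|^{\overline\mu}$ of each other (in the Grassmannian), and the excess decay forces $Z_1 - Z_2$ to lie within angular error $C|Z_1 - Z_2|^{\overline\mu}$ of $L_{Z_1}$. This allows a Whitney-type extension (or a direct graph construction over the common axis of $\varphi^{(0)}$, since by continuity and the closeness of all $\varphi^{(Z)}$ to $\varphi^{(0)}$, the axes $L_Z$ project diffeomorphically onto $\{0\} \times \mathbb R^{n-2}$) to realize $S$ as a subset of the graph of a $C^{1,\overline\mu}$ map $\psi : \{0\}\times\mathbb R^{n-2} \cap B_1(0) \to \mathbb R^2$, with $\|\psi\|_{C^{1,\overline\mu}} \leq C\varepsilon$. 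The Hausdorff measure bound $\mathcal H^{n-2}(\Gamma \cap B_1(0)) \leq \omega_{n-2}$ then follows from the area formula by choosing $\varepsilon$ small.

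Finally, to control $T$, each $Z \in T$ comes equipped with a scale $\rho_Z = \delta_0 r_{k(Z)}$ and a ball $B_{\rho_Z}(Y_Z)$ (with $Y_Z = Z + r_{k(Z)}(0, y_0)$) that contains no points of the frequency-$\geq\alpha$ singular set. Applying the Besicovitch covering theorem to the family $\{B_{5\rho_Z/\delta_0}(Z)\}_{Z \in T}$ extracts a countable subfamily whose union covers $T$, and the dimensional deficit in each such ball (no frequency-$\geq\alpha$ points in a $(n-2)$-dimensional ``gap'' of definite size) together with a comparison to the known $(n-2)$-dimensional ``mass'' of the singular set (arising from the tangent cone $\varphi^{(0)}$ and the persistence of frequency) yields $\sum \rho_j^{n-2} \leq 1 - \delta$ for some $\delta = \delta(n,m,q,\alpha,\varphi^{(0)})$; this step is completely analogous to \cite[Section 5]{Sim93}, where the key point is that option (i) of Lemma~\ref{main_lemma} holds \emph{uniformly} with the fixed constant $\delta_0$.
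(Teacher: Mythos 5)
Your proposal follows essentially the same iterative approach as the paper's proof: apply Lemma~\ref{main_lemma} at the scales $r_k = \vartheta_{j_1}\cdots\vartheta_{j_k}$, split $\{X \in \Sigma_{u,q} : \mathcal N_u(X) \geq \alpha\}$ into the set where option (ii) persists at all scales (landing on a $C^{1,\mu/2}$ graph via the Cauchy/Hölder argument on the tangent functions and their rotations $Q_Z$) and the set where option (i) occurs at a finite step (handled by the Sim93 gap-covering argument), and cite the covering estimates from \cite[pp.\ 642--643]{Sim93} for the deficit $\sum \rho_j^{n-2} \le 1-\delta$. This is precisely the paper's proof.

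One small bookkeeping point worth noting: the paper works the iteration inside $B_{1/2}(0)$ (choosing $\Sigma^*_{u,q} = \{X \in B_{1/2}(0) \cap \Sigma_{u,q}: \mathcal N_u(X) \ge \alpha\}$) and separately absorbs the annular piece $\Sigma_{u,q} \cap B_1(0) \setminus B_{1/2}(0)$ into $T$; this matters because Lemma~\ref{main_lemma} requires a unit ball around each base point, which the hypothesis on $B_2(0)$ provides for $Z \in B_{1/2}(0)$ but only marginally for $Z$ near $\partial B_1(0)$. You should also justify that the translated-and-rescaled function remains close to $\varphi^{(0)}$ when the base point $Z$ is off-axis; this is exactly what Lemma~\ref{branchdist_lemma}(a) provides ($\text{dist}^2(Z, \{0\} \times \mathbb R^{n-2}) \leq C \int \mathcal G(u, \varphi)^2$), and the paper invokes this when verifying that $\varphi_i \in \widetilde\Phi_{\varepsilon_0}(\varphi^{(0)})$ persists through the iteration. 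Your condition $\max_j C_j \vartheta_j^{2\mu} \le \vartheta_j^{2\overline\mu}$ is equivalent in effect to the paper's simpler $C_j \vartheta_j^\mu \le 1$ (which yields $\overline\mu = \mu/2$).
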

\begin{proof}
Inductively choose $\vartheta_j \in (0,1/8)$ for $j = 1,2,\ldots, \lceil q/q_0 \rceil - p_0 + 1$ such that $\vartheta_j < \vartheta_{j-1}/8$ for all $j > 1$ and $C_j \vartheta_j^{\mu} \leq 1$ for all $j$, where $\mu \in (0,1)$ and $C_j = C_j(\vartheta_1,\ldots,\vartheta_{j-1}) \in (0,\infty)$ are as in Lemma~\ref{main_lemma}.  Let $\varepsilon_0$ and $\delta_0$ be as in Lemma~\ref{main_lemma}.  Define 
\begin{equation*}
	\Sigma_{u,q}^* = \{ X \in B_{1/2}(0) \cap \Sigma_{u,q} : \mathcal{N}_u(X) \geq \alpha \} .
\end{equation*}

If $u(Z+\rho X)$ satisfies option (i) of Lemma~\ref{main_lemma} for some $Z \in \Sigma_{u,q}^*$ and $\rho \in [\vartheta_{\lceil q/q_0 \rceil - p_0 + 1},1]$, then by Corollary \ref{graphical_cor}, $\Sigma_{u,q} \cap B_1(0) \subseteq B^2_{\tau(\varepsilon)}(0) \times \mathbb{R}^{n-2}$ for some $\tau(\varepsilon)$ such that $\tau(\varepsilon) \rightarrow 0$ as $\tau \downarrow 0$, hence we trivially have Lemma~\ref{decomposition_lemma} with $S = \emptyset$ and $T = \Sigma_{u,q} \cap B_1(0)$.  Thus we may assume that $u(Z+\rho X)$ does not satisfy option (i) of Lemma~\ref{main_lemma} for all $Z \in \Sigma_{u,q}^*$ and $\rho \in [\vartheta_{\lceil q/q_0 \rceil - p_0 + 1},1]$. 

For each $k \in \{1,2,3,\ldots\} \cup \{\infty\}$, we define the set $\Upsilon_k$ to be the set of points $Z \in \Sigma_{u,q}^*$ such that, letting $s_0 = 1$ and $\varphi_0 = \varphi^{(0)}$, for each integer $1 \leq i \leq k$ there exists $j(i) \in \{1,2,\ldots, \lceil q/q_0 \rceil - p_0 + 1\}$, a radius $s_i$ given by $s_i = \vartheta_{j(i)} s_{i-1}$, and $\varphi_i \in \widetilde{\Phi}_{\varepsilon_0}(\varphi^{(0)})$ such that $u(Z+ s_{i-1} X)$ does not satisfy option (i) of Lemma~\ref{main_lemma}, 
\begin{equation} \label{decomposition_eqn1}
	s_i^{-n-2\alpha} \int_{B_{s_i}(0)} \mathcal{G}(u(Z+X),\varphi_i(X))^2 dX \leq \vartheta_i^{\mu} s_{i-1}^{-n-2\alpha} \int_{B_{s_{i-1}}(0)} \mathcal{G}(u(Z+X),\varphi_{i-1}(X))^2 dX, 
\end{equation}
and either $k = \infty$ or $k < \infty$ and $u(Z+ s_k X)$ satisfies option (i) of Lemma~\ref{main_lemma}.  For every point $Z \in \Sigma_{u,q}^*$, we can inductively apply Lemma~\ref{main_lemma} to find $j(i)$ and $\varphi_i$ while $u(Z+s_{i-1} X)$ does not satisfy option (i) of Lemma~\ref{main_lemma} and thereby conclude that $Z \in \Upsilon_k$ for some $k \in \{1,2,3,\ldots\} \cup \{\infty\}$.  Note that for each integer $i \geq 1$ having found $j(l)$ and $\varphi_l$ for $l = 1,2,\ldots,i$, by \eqref{decomposition_eqn1} and Lemma~\ref{branchdist_lemma},
\begin{align} \label{decomposition_eqn2}
	s_i^{-n-2\alpha} \int_{B_{s_i}(0)} \mathcal{G}(u(Z+X),\varphi_i(X))^2 dX 
	&\leq s_i^{\mu} \int_{B_1(0)} \mathcal{G}(u(Z+X),\varphi^{(0)}(X))^2 dX \\
	&\leq C s_i^{\mu} \varepsilon^2 \nonumber 
\end{align}
for some constant $C \in (0,\infty)$ depending only on $n$, $m$, $q$, $\alpha$, and $\varphi^{(0)}$.  Hence, 
\begin{equation*}
	\int_{B_1(0)} \mathcal{G}(\varphi_i,\varphi_{i-1})^2 \leq C s_i^{\mu} \varepsilon^2
\end{equation*}
for all $i = 2,3,4,\ldots$ and some constant $C \in (0,\infty)$ depending only on $n$, $m$, $q$, $\alpha$, and $\varphi^{(0)}$.  By the triangle inequality, for $1 \leq i < j \leq k$, 
\begin{equation} \label{decomposition_eqn3}
	\left( \int_{B_1(0)} \mathcal{G}(\varphi_i,\varphi_j)^2 \right)^{1/2}
	\leq C \sum_{l=i+1}^j s_l^{\mu/2} \varepsilon 
	\leq C \sum_{l=i+1}^j \vartheta_1^{\mu (l-i)/2} s_i^{\mu/2} \varepsilon
	\leq C s_i^{\mu/2} \varepsilon 
\end{equation}
and in particular since $\varphi_0 = \varphi^{(0)}$, 
\begin{equation} \label{decomposition_eqn4}
	\int_{B_1(0)} \mathcal{G}(\varphi_i,\varphi^{(0)})^2 \leq C \varepsilon^2 
\end{equation}
for all $i = 1,2,3,\ldots$, where $C \in (0,\infty)$ are constants depending only on $n$, $m$, $q$, $\alpha$, and $\varphi^{(0)}$.  Therefore, provided $\varepsilon$ is small enough that $C^{1/2} \varepsilon < \varepsilon_0$ and $u(Z + s_i X)$ does not satisfy option (i) of Lemma~\ref{main_lemma}, we can apply Lemma~\ref{main_lemma} to find $j(i+1)$ and $\varphi_{i+1}$. 

We will now show that the conclusion of the lemma holds true with $S = \Upsilon_{\infty}$ and $T = (\Sigma_{u,q}^* \setminus \Upsilon_{\infty}) \cup (\Sigma_{u,q} \cap B_{1}(0) \setminus B_{1/2}(0))$.  Suppose $Z \in \Upsilon_{\infty}$.  By \eqref{decomposition_eqn3}, $\varphi_i$ is a Cauchy sequence in $L^2(B_1(0);\mathcal{A}_q(\mathbb{R}^m))$ and so $\varphi_i$ converges in $L^2(B_1(0);\mathcal{A}_q(\mathbb{R}^m))$ to some $\varphi^{(Z)} \in \widetilde{\Phi}_{\varepsilon_0}(\varphi^{(0)})$.  By letting $j \rightarrow \infty$ in \eqref{decomposition_eqn3},
\begin{equation*}
	\int_{B_1(0)} \mathcal{G}(\varphi_i,\varphi^{(Z)})^2 \leq C \varepsilon^2 s_i^{\mu} 
\end{equation*}
for all $i = 1,2,3,\ldots$ and some constant $C \in (0,\infty)$ depending only on $n$, $m$, $q$, $\alpha$, and $\varphi^{(0)}$ and so by \eqref{decomposition_eqn2} and the triangle inequality
\begin{equation*}
	s_i^{-n-2\alpha} \int_{B_{s_i}(0)} \mathcal{G}(u(Z+X),\varphi^{(Z)}(X))^2 dX \leq C \varepsilon^2 s_i^{\mu}  
\end{equation*}
for all $i = 0,1,2,3,\ldots$ and some constant $C \in (0,\infty)$ depending only on $n$, $m$, $q$, $\alpha$, and $\varphi^{(0)}$.  Given $\rho \in (0,1]$, choose an integer $i \geq 0$ such that $s_{i+1} < \rho \leq s_i$ to get 
\begin{equation} \label{decomposition_eqn5}
	\rho^{-n-2\alpha} \int_{B_{\rho}(0)} \mathcal{G}(u(Z+X),\varphi^{(Z)}(X))^2 dX \leq C \varepsilon^2 \rho^{\mu} 
\end{equation}
for some constant $C \in (0,\infty)$ depending only on $n$, $m$, $q$, $\alpha$, and $\varphi^{(0)}$.  Clearly $\varphi^{(Z)}$ is unique for each $Z \in \Upsilon_{\infty}$.  Since $\varphi^{(Z)}$ is a constant multiple of a blow-up of $u$ at $Z$, $\varphi^{(Z)}$ is energy minimizing.  By letting $i \rightarrow \infty$ in \eqref{decomposition_eqn4}, $\varphi^{(Z)} \in \widetilde{\Phi}_{C\varepsilon}(\varphi^{(0)})$ and thus there is a rotation $Q_Z$ of $\mathbb{R}^n$ such that $\varphi^{(Z)}(Q_Z X) \in \Phi_{C\varepsilon}(\varphi^{(0)})$ and $|Q_Z - I| \leq C \varepsilon$.  By the estimate on $|\xi|^2$ in Lemma~\ref{branchdist_lemma} and by \eqref{decomposition_eqn5}, 
\begin{equation*}
	\op{dist}(Q_Z^{-1} (\Sigma_{u,q}^* - Z) \cap B_{\rho}(0), \{0\} \times \mathbb{R}^{n-2}) \leq C \varepsilon \rho^{1+\mu/2}
\end{equation*}
for all $\rho \in (0,1/2]$ and some constant $C \in (0,\infty)$ depending only on $n$, $m$, $q$, $\alpha$, and $\varphi^{(0)}$.  Thus it follows from \eqref{decomposition_eqn5} that 
\begin{equation*}
	\int_{B_1(0)} \mathcal{G}(\varphi^{(Y)},\varphi^{(Z)})^2 \leq C \varepsilon^2 |Y - Z|^{\mu} 
\end{equation*}
for all $Y, Z \in \Sigma_{u,q}^*$ and so 
\begin{equation*}
	|Q_Y - Q_Z| \leq C \varepsilon |Z_1 - Z_2|^{\mu/2} 
\end{equation*}
for all $Y, Z \in \Sigma_{u,q}^*$, where $C \in (0,\infty)$ are constants depending only on $n$, $m$, $q$, $\alpha$, and $\varphi^{(0)}$.  Thus $\Upsilon_{\infty} \subseteq \op{graph} f \cap B_{1/2}(0)$ is contained in the graph of a function $f \in C^{1,\mu/2}(B^{n-2}_{1/2}(0); \mathbb{R}^{n-2})$ such that $\|f\|_{C^{1,\mu/2}(B^{n-2}_{1/2}(0))} \leq C\varepsilon$. 

Now suppose $Z \in \Upsilon_k$ for some integer $1 \leq k < \infty$.  Take $\varphi^{(Z)} = \varphi_k$.  Note that $\varphi^{(Z)}$ is no longer unique.  By the argument above, there is a rotation $Q_Z$ of $\mathbb{R}^n$ such that $\varphi^{(Z)}(Q_Z X) \in \Phi_{C\varepsilon}(\varphi^{(0)})$ and $|Q_Z - I| \leq C \varepsilon$ and \begin{equation*}
	\op{dist}(Q_Z^{-1} (\Sigma_{u,q}^* - Z) \cap B_{\rho}(0), \{0\} \times \mathbb{R}^{n-2}) \leq C \varepsilon\rho^{1+\mu/2}
\end{equation*}
for all $\rho \in [s_k,1/2]$, where $C \in (0,\infty)$ are constants depending only on $n$, $m$, $q$, $\alpha$, and $\varphi^{(0)}$.  Hence 
\begin{equation} \label{decomposition_eqn6}
	\op{dist}(\Sigma_{u,q}^* \cap B_{\rho}(Z), Z + \{0\} \times \mathbb{R}^{n-2}) \leq C \varepsilon\rho
\end{equation}
for some constant $C \in (0,\infty)$ depending only on $n$, $m$, $q$, $\alpha$, and $\varphi^{(0)}$.  By the definition of $\Upsilon_k$, $u(Y + s_k X)$ satisfies option (i) of Lemma~\ref{main_lemma}, i.e. 
\begin{equation} \label{decomposition_eqn7}
	\forall Z \in \Upsilon_k \text{ } \exists Y \in Z + \{0\} \times B^{n-2}_{s_k/2}(0) 
		\text{ such that } \Sigma_{u,q}^* \cap B_{\delta_0 s_k}(Y) \cap B_{s_k/2}(Z) = \emptyset
\end{equation}
Now arguing exactly as in pages 642-643 of~\cite{Sim93} (using \eqref{decomposition_eqn6}, \eqref{decomposition_eqn7} in place of (12), (13) on page 642 of~\cite{Sim93}), we obtain a covering of $\left( \bigcup_{1 \leq k < \infty} \Upsilon_k \right) \cup (\Sigma_{u,q} \cap B_{1}(0) \setminus B_{1/2}(0))$ by balls $B_{\rho_j}(X_j)$, $j = 1,2,3,\ldots$, such that $\sum_j \rho_j^{n-2} \leq 1-\delta$. 
\end{proof}

\begin{proof}[Proof of Theorem~\ref{main_thm}] 
The argument is similar to the proof of Theorem $2^{\prime}$ of~\cite{Sim93}, so we will only sketch it here.  Let $u \in W^{1,2}(\Omega;\mathcal{A}_q(\mathbb{R}^m))$ be a nonzero, average-free, energy minimizing $q$-valued function.  Since $\op{dim}_{\mathcal{H}} \Sigma_{u,q}^{(n-3)} \leq n-3$, it suffices to consider the set $\Sigma_{u,q,\alpha}$ of all points of $\Sigma_{u,q}$ at which $u$ has a homogeneous degree $\alpha$ cylindrical blow-up.  Let $Y_0 \in \Sigma_{u,q,\alpha}$ and $\varphi^{(0)}$ be a cylindrical blow-up of $u$ at $Y_0$.  By the definition of blow-ups and monotonicity of frequency functions, for every $\varepsilon > 0$ there exists $\sigma > 0$ such that $B_{\sigma R(\varepsilon)}(Y_0) \subset \Omega$ and 
\begin{equation*}
	\int_{B_1(0)} \mathcal{G}(u_{Y_0,\sigma},\varphi^{(0)})^2 < \varepsilon^2, \quad 
	N_{u_{Y_0,\sigma}}(R(\varepsilon)) - \alpha < \delta(\varepsilon), 
\end{equation*}
where $R(\varepsilon), \delta(\varepsilon)$ are as in Lemma~\ref{lemma2_4}.  Let $\overline{u} = u_{Y_0,\sigma}$.  For each $\rho_0 \in (0,1/2]$, define the outer measure $\mu_{\rho_0}$ on $B_1(0)$ by 
\begin{equation*}
	\mu_{\rho_0}(A) = \inf \sum_{i=1}^N \omega_{n-2} \sigma_i^{n-2}
\end{equation*}
for every set $A \subseteq B_1(0)$, where the infimum is taken over all finite covers of $A$ by open balls $B_{\sigma_i}(Y_i)$, $i = 1,2,\ldots,N$, with $\sigma_i \leq \rho_0$.  Choose a cover of $\Sigma^+_{\alpha} \cap \overline{B_1(0)}$ by a finite collection of open balls $B_{\sigma_i}(Y_i)$ such that 
\begin{equation*}
	\sum_{i=1}^N \omega_{n-2} \sigma_i^{n-2} \leq \mu_{\rho_0}(\Sigma^+_{\alpha}) + 1, 
\end{equation*}
where 
\begin{equation*}
	\Sigma^+_{\alpha} = \{ X \in \Sigma_{\overline{u},q} \cap B_1(0) : \mathcal{N}_{\overline{u}}(X) \geq \alpha \} .
\end{equation*}
Remove the balls $B_{\sigma_i}(Y_i)$ that do not intersect $\Sigma^+_{\alpha}$ from the collection.  For each $i$, let $Z_i \in B_{\sigma_i}(Y_i) \cap \Sigma^+_{\alpha}$.  By Lemma~\ref{lemma2_4}, either there exists a nonzero, cylindrical, homogeneous degree $\alpha$, energy minimizing $q$-valued function $\varphi_i : \mathbb{R}^n \rightarrow \mathcal{A}_q(\mathbb{R}^m)$ such that 
\begin{equation} \label{main_theorem_eqn1}
	\int_{B_1(0)} \mathcal{G}(\overline{u}_{Z_i,2\sigma_i},\varphi_i)^2 < \varepsilon^2 
\end{equation}
or there exists an $(n-3)$-dimensional subspace $L$ of $\mathbb{R}^n$ such that 
\begin{equation} \label{main_theorem_eqn2}	
	\{ X \in \Sigma_{\overline{u}} \cap \overline{B_{2\sigma_i}(Z_i)} : \mathcal{N}_{\overline{u}}(X) \geq \alpha \} \subset \{ X \in \mathbb{R}^n : \op{dist}(X,Z_i + L) < \varepsilon \} . 
\end{equation}
Note that we use the fact that the degree of homogeneity $\alpha$ of a cylindrical multivalued function must equal $\ell_0/q_0$ for some relatively prime positive integers $\ell_0, q_0$ with $q_0 \leq q$ and thus the set of all such $\alpha$ is discrete.  The conclusion of the theorem now follows by arguing exactly like in~\cite{Sim93}, iteratively applying Lemma~\ref{decomposition_lemma} using the fact that either \eqref{main_theorem_eqn1} or \eqref{main_theorem_eqn2} holds true.
\end{proof}

\begin{proof}[Proof of Theorem A of the Introduction]
Let $u \in W^{1,2}(\Omega;\mathcal{A}_q(\mathbb{R}^m))$ be a $q$-valued energy minimizing function.  Set $h = u_a$, the average of the values of $u$, which by~\cite[Theorem~2.6]{Almgren} is a single-valued harmonic function, and let $v = u-h$.  By Theorem~\ref{main_thm}(b), for $\mathcal{H}^{n-2}$-a.e. $Z \in \mathcal{B}_u$, there exists an average-free, homogeneous, cylindrical, energy minimizing $q$-valued function $\varphi^{(Z)}$ and $\rho_Z > 0$ such that 
\begin{equation*}
	\rho^{-n} \int_{B_{\rho}(0)} \mathcal{G}(v(Z+X),\varphi^{(Z)}(X))^2 \leq C_Z \rho_Z^{2\alpha+2\mu_Z}
\end{equation*}
for some constants $\mu_Z \in (0,\infty)$ and $C_Z \in (0,\infty)$.  Consequently, the desired conclusion of the theorem, including the $L^2$ estimate on the error term $\epsilon^{(Z)}_j$, holds true.  
\end{proof}

\begin{proof}[Proof of Theorem B of the Introduction] 
First observe that if for some $q$-valued energy minimizing function $u \in W^{1,2}(\Omega;\mathcal{A}_q(\mathbb{R}^m))$ and some closed ball $B \subset \Omega$, $\mathcal{H}^{n-2}(\mathcal{B}_u \cap B) = 0$, then $B \setminus \mathcal{B}_u$ is simply-connected (see the Appendix of~\cite{SimWic11}).  Since locally in $B \setminus \mathcal{B}_u$, $u$ decomposes into $q$ single-valued harmonic functions, it follows that $u$ decomposes into $q$ single-valued harmonic functions on $B$.  Hence $\mathcal{B}_u \cap B = \emptyset$.  

Let $B$ be a closed ball in $\Omega$.  By Theorem~\ref{main_thm}, there is a finite set $\{ \alpha_1, \alpha_2, \ldots, \alpha_k \}$ such that $B \cap \mathcal{B}_{u,q} \cap \Sigma_{u,q,\alpha_j}$ is nonempty for all $j = 1,2,\ldots,k$ and for each $j = 1,2,\ldots,k$ there exists an open set $V_{\alpha_j} \supset B \cap \Sigma_{u,q,\alpha_j}$ such that $V_{\alpha_j} \cap \{ X : \mathcal{N}_u(X) = \alpha_j \}$ has locally finite $\mathcal{H}^{n-2}$ measure in $V_{\alpha_j}$.  Set $\alpha_0 = 0$ and $\alpha_{k+1} = \infty$.  For $j = 0,1,2,\ldots,k$, let 
\begin{equation*}
	\Gamma_j = \{ X \in B \cap \Sigma_{u,q} : \alpha_j \leq \mathcal{N}_u(X) < \alpha_{j+1} \} \cap V_{\alpha_j}
\end{equation*}
so that $\Gamma_j$ has locally finite measure (in $V_{\alpha_j}$) and let 
\begin{equation*}
	\widetilde{\Gamma}_j = \{ X \in B \cap \Sigma_{u,q} : \alpha_j \leq \mathcal{N}_u(X) < \alpha_{j+1} \} \setminus V_{\alpha_j}.
\end{equation*}
Since $\widetilde{\Gamma}_j \subset \Sigma_{u,q}^{(n-3)}$, by Lemma~\ref{alm_fed_lemma}, $\widetilde{\Gamma}_j$ has Hausdorff dimension at most $n-3$.  Moreover, by upper semi-continuity of $\mathcal{N}_u$, each of $\Gamma_j, \widetilde{\Gamma}_j$ is the intersection of an open set and a closed set and hence is locally compact.  Of course, $\mathcal{B}_{u,q} \cap B = \bigcup_{j=0}^k \Gamma_j \cup \widetilde{\Gamma}_j$. 

Now let $k \in \{1,2,3,\ldots,q-1\}$ and $B$ be a closed ball in $\Omega \setminus \bigcup_{0 \leq l < k} \mathcal{S}_{u,l}$.  For each $Y \in \mathcal{S}_{u,k}$, there exists a $\rho \in (0,\op{dist}(Y,\partial \Omega))$ such that $u(X) = \sum_{j=1}^k u_j(X)$ on $B_{\rho}(Y)$ for $q_j$-valued energy minimizing functions $u_j$ and $Y \in \bigcup_{j=1}^k \mathcal{B}_{u_j,q_j}$.  Observe that $\mathcal{S}_{u,k} \cap B_{\rho}(Y) = \bigcup_{j=1}^k \mathcal{B}_{u_j,q_j}$.  By the above discussion applied to $u_j$, $\mathcal{B}_{u_j,q_j}$ is a union of finitely many pairwise disjoint, locally compact sets each of which is locally $(n-2)$-rectifiable.  By the compactness of $B$, it follows that $\mathcal{S}_{u,k}$ is a union of finitely many pairwise disjoint, locally compact sets each of which is locally $(n-2)$-rectifiable.
\end{proof}

\begin{proof}[Proof of Theorem C of the Introduction]
See \cite{KrumWic1}.
\end{proof}

\appendix
\section*{Appendix} 
\renewcommand{\thetheorem}{A\arabic{theorem}}
\renewcommand{\theequation}{A\arabic{equation}}

Here we will collect some elementary properties of Dirichlet energy minimizing functions and cylindrical functions that we have used above.  The first is a well-known compactness property. 

\begin{lemma} \label{compactness_lemma} 
Let $\Omega \subset \mathbb{R}^n$ be an open set and for $k = 1,2,3,\ldots$ let $u^{(k)} \in W^{1,2}(\Omega;\mathcal{A}_q(\mathbb{R}^m))$ be locally Dirichlet energy minimizing functions such that 
\begin{equation*}
	\sup_k \int_{\Omega'} |u^{(k)}|^2 < \infty \quad \text{for all } \Omega' \subset\subset \Omega .
\end{equation*}
Then there exists a subsequence $\{u^{(k')}\}$ of $\{u^{(k)}\}$ and a locally Dirichlet energy minimizing function $u \in W^{1,2}(\Omega;\mathcal{A}_q(\mathbb{R}^m))$ such that $u^{(k')} \rightarrow u$ uniformly on compact subsets of $\Omega$ and 
\begin{equation} \label{compactness_eqn1}
	\int_{\Omega'} |Du|^2 = \lim_{k' \rightarrow \infty} \int_{\Omega'} |Du^{(k')}|^2 \quad \textit{for all } \Omega' \subset\subset \Omega .
\end{equation}
\end{lemma}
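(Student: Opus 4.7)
The plan is to follow the standard compactness and lower-semicontinuity scheme for Dirichlet minimizers, adapted to the multi-valued setting as in Almgren. First I would extract a candidate limit by equicontinuity. By the interior H\"older estimate \eqref{regestimate} (applied to $u^{(k)}$ on any ball $B_\rho(X_0)$ with $\overline{B_\rho(X_0)} \subset \Omega$), the hypothesis $\sup_k \|u^{(k)}\|_{L^2(\Omega')} < \infty$ for each $\Omega' \subset\subset \Omega$ yields uniform bounds on $\sup_{B_{\rho/2}(X_0)} |u^{(k)}|$, $[u^{(k)}]_{\mu,B_{\rho/2}(X_0)}$, and $\|Du^{(k)}\|_{L^2(B_{\rho/2}(X_0))}$. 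A standard Arzel\`a--Ascoli argument (together with the bi-Lipschitz embedding $\bm{\xi}$ of $\mathcal{A}_q(\mathbb{R}^m)$ into $\mathbb{R}^N$) then produces a subsequence $u^{(k')}$ converging uniformly on compact subsets of $\Omega$ to a limit $u \in C^{0,\mu}_{\mathrm{loc}}(\Omega; \mathcal{A}_q(\mathbb{R}^m))$, and the uniform $W^{1,2}$ bounds (passing to a further subsequence) give $\bm{\xi}\circ u \in W^{1,2}_{\mathrm{loc}}(\Omega; \mathbb{R}^N)$ together with weak $L^2$ convergence $D(\bm{\xi}\circ u^{(k')}) \rightharpoonup D(\bm{\xi}\circ u)$ on each $\Omega' \subset \subset \Omega$.

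Next I would verify that $u$ is locally Dirichlet energy minimizing. Fix an open ball $B$ with $\overline{B} \subset \Omega$ and a competitor $v \in W^{1,2}(\Omega; \mathcal{A}_q(\mathbb{R}^m))$ with $v = u$ a.e.\ on $\Omega \setminus B$; it suffices, by considering slightly larger concentric balls and an exhaustion argument, to assume $v \equiv u$ outside a compactly contained smaller ball. The key step is to construct, for each $k'$, a competitor $v^{(k')} \in W^{1,2}(\Omega; \mathcal{A}_q(\mathbb{R}^m))$ with $v^{(k')} = u^{(k')}$ outside a slightly enlarged ball $B'$, such that
\begin{equation*}
\limsup_{k' \to \infty} \int_{B'} |Dv^{(k')}|^2 \leq \int_{B'} |Dv|^2.
\end{equation*}
This is accomplished by Almgren's interpolation (a Luckhaus-type argument in the metric space $\mathcal{A}_q(\mathbb{R}^m)$): on an annulus $B_{\rho_2}\setminus B_{\rho_1}$ where $u^{(k')} \to u$ uniformly and, by Fatou applied to the spherical energy integrand, a radius can be selected at which the $W^{1,2}$ slice energies of $u^{(k')} - u$ are uniformly controlled, one interpolates linearly (via $\bm{\xi}$ and the retraction $\bm{\rho}$) between $v$ on the inner sphere and $u^{(k')}$ on the outer sphere. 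Energy minimality of $u^{(k')}$ then yields
\begin{equation*}
\int_{B'} |Du^{(k')}|^2 \leq \int_{B'} |Dv^{(k')}|^2,
\end{equation*}
and by lower semicontinuity of the Dirichlet energy under weak $L^2$ convergence combined with the preceding estimate,
\begin{equation*}
\int_{B'} |Du|^2 \leq \liminf_{k' \to \infty} \int_{B'} |Du^{(k')}|^2 \leq \limsup_{k' \to \infty} \int_{B'} |Dv^{(k')}|^2 \leq \int_{B'} |Dv|^2,
\end{equation*}
proving the minimizing property of $u$.

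Finally, to obtain the energy identity \eqref{compactness_eqn1} for an arbitrary $\Omega' \subset \subset \Omega$, I would cover $\overline{\Omega'}$ by finitely many balls $\overline{B_j} \subset \Omega$ and apply the preceding interpolation argument with $v = u$ itself on each $B_j$: this gives $\limsup_{k'} \int_{B_j} |Du^{(k')}|^2 \leq \int_{B_j} |Du|^2$, which combined with weak lower semicontinuity yields $\int_{B_j} |Du^{(k')}|^2 \to \int_{B_j} |Du|^2$. A routine partition-of-unity / covering argument, possibly with slight shrinking of the balls to avoid double counting of overlaps (using absolute continuity of $\int |Du|^2$ with respect to Lebesgue measure), upgrades this to the desired identity on $\Omega'$. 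The main obstacle is unquestionably the construction of the competitor $v^{(k')}$ and the accompanying energy comparison on the interpolation annulus; everything else is standard Arzel\`a--Ascoli, weak compactness in $L^2$, and lower semicontinuity. Fortunately, this interpolation step is carried out in detail in \cite[Chapter 2]{Almgren} (see also \cite{DeLSpa11}), so one may invoke it directly.
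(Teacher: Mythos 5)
Your proposal is correct and takes essentially the same approach as the paper: the paper's proof is a one-line citation to \eqref{regestimate}, Arzel\`a--Ascoli, and Propositions 2.11 and 3.20 of De Lellis--Spadaro, which are precisely the uniform interior estimates, equicontinuity/compactness, and the interpolation-based comparison argument you spell out. The only difference is level of detail: you have unpacked the content of the cited De Lellis--Spadaro propositions (the Luckhaus-type interpolation on an annulus, lower semicontinuity, and the $v=u$ trick for energy convergence) rather than invoking them as black boxes, but the underlying argument is the same.
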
 
\begin{proof}
Immediate consequence of \eqref{regestimate}, the Arzela-Ascoli theorem, and~\cite[Propositions 2.11 and 3.20]{DeLSpa11}.
\end{proof}

In the case of single-valued (harmonic) functions, in addition to the conclusions of Lemma~\ref{compactness_lemma} we also have that $Du^{(k')} \rightarrow Du$ pointwise in $\Omega$.  In the case of multi-valued locally Dirichlet energy minimizers, the presence of singularties makes it more difficult to interpret and prove the statement $Du^{(k')} \rightarrow Du$ pointwise in $\Omega$.  Nonetheless, away from the singular set $\Sigma_u$ of the limit function $u$, we can show that $Du^{(k')} \rightarrow Du$ pointwise a.e.~in $\Omega$ in the precise sense of Lemma~\ref{gradient_convergence} below. 

\begin{lemma} \label{gradient_convergence} 
Let $\Omega \subset \mathbb{R}^n$ be an open set and suppose $u^{(k)}, u \in W^{1,2}(\Omega;\mathcal{A}_q(\mathbb{R}^m))$ are Dirichlet energy minimizing functions such that $u^{(k)} \rightarrow u$ uniformly on each compact subset of $\Omega$.  For $\mathcal{L}^n$ a.e.~$Y \in \Omega$ there exists $\rho > 0$ such that 
\begin{equation} \label{grad_conv_eqn1}
	u^{(k)}(X) = \sum_{i=1}^N u^{(k)}_i(X), \quad u(X) = \sum_{i=1}^N q_i \llbracket u_i(X) \rrbracket 
		\quad \text{for all } X \in B_{\rho}(Y) 
\end{equation}
and
\begin{gather} 
	u^{(k)}_i \rightarrow q_i \llbracket u_i \rrbracket \text{ uniformly on } B_{\rho}(Y) \nonumber \\
	Du^{(k)}_i \rightarrow q_i \llbracket Du_i \rrbracket \text{ in } L^2(B_{\rho}(Y);\mathcal{A}_{q_i}(\mathbb{R}^m)), \nonumber 
\end{gather}
for some positive integers $N$ and $q_i$ with $\sum_{i=1}^N q_i = q$, $q_i$-valued Dirichlet energy minimizing functions $u^{(k)}_i \in W^{1,2}(B_{\rho}(Y);\mathcal{A}_{q_i}(\mathbb{R}^m))$, and single-valued harmonic functions $u_i \in C^{\infty}(B_{\rho}(Y);\mathbb{R}^m)$ (with multiplicity $q_i$) such that $u_i(X) \neq u_j(X)$ for all $X \in B_{\rho}(Y)$ and $i \neq j$.  In particular, $Du^{(k)}_i \rightarrow q_i \llbracket Du_i \rrbracket$ pointwise a.e.~in $B_{\rho}(Y)$. 
\end{lemma}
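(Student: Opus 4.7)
Since the singular set $\Sigma_u$ has Hausdorff dimension at most $n-2$ and in particular Lebesgue measure zero, it suffices to establish the conclusion for every $Y \in \Omega \setminus \Sigma_u$. For such $Y$, the definition of the regular set yields $\rho_0 > 0$ and single-valued harmonic functions $u_1^{\star},\ldots,u_q^{\star}$ on $B_{\rho_0}(Y)$ with $u = \sum_{j=1}^q \llbracket u_j^{\star}\rrbracket$ there, any two of which are either identical or pointwise distinct. After relabeling and grouping, we write $u = \sum_{i=1}^N q_i \llbracket u_i\rrbracket$ with $u_1,\ldots,u_N$ pairwise distinct. Compactness then gives $\rho \in (0,\rho_0)$ and $\delta > 0$ such that $\min_{i \neq j}\inf_{\overline{B_\rho(Y)}} |u_i - u_j| \geq 2\delta$.

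By uniform convergence, for all sufficiently large $k$ we have $\sup_{\overline{B_\rho(Y)}} \mathcal{G}(u^{(k)}, u) < \delta/2$. Hence for each such $k$ and each $X \in \overline{B_\rho(Y)}$, the $q$ values of $u^{(k)}(X)$ partition uniquely into $N$ clusters of sizes $q_1,\ldots,q_N$, the $i$-th cluster lying in $B_{\delta/2}(u_i(X))$. This defines continuous $q_i$-valued functions $u^{(k)}_i : B_\rho(Y) \to \mathcal{A}_{q_i}(\mathbb{R}^m)$ satisfying $u^{(k)} = \sum_{i=1}^N u^{(k)}_i$ on $B_\rho(Y)$ and $\sup_{B_\rho(Y)} \mathcal{G}(u^{(k)}_i, q_i\llbracket u_i\rrbracket) \to 0$. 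The cluster structure gives $|Du^{(k)}|^2 = \sum_i |Du^{(k)}_i|^2$ a.e., so each $u^{(k)}_i \in W^{1,2}(B_\rho(Y);\mathcal{A}_{q_i}(\mathbb{R}^m))$. To prove each $u^{(k)}_i$ is Dirichlet energy minimizing, I would use the competitor argument: for any ball $B' \subset\subset B_\rho(Y)$ and any $\widetilde v \in W^{1,2}(B';\mathcal{A}_{q_i}(\mathbb{R}^m))$ agreeing with $u^{(k)}_i$ near $\partial B'$, the $q$-valued function $\widetilde v + \sum_{j \neq i} u^{(k)}_j$ (defined via the canonical addition of multi-valued functions) is a $W^{1,2}$ competitor for $u^{(k)}$ on $B'$. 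Additivity of the energy under this decomposition, together with minimality of $u^{(k)}$, yields $\int_{B'}|Du^{(k)}_i|^2 \leq \int_{B'}|D\widetilde v|^2$.

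For the $L^2$ gradient convergence I would use the following identity. Since $q_i\llbracket Du_i(X)\rrbracket$ has all sheets equal, for any $a = \sum_{j=1}^{q_i}\llbracket a_j\rrbracket \in \mathcal{A}_{q_i}(\mathbb{R}^{m \times n})$ we have
$\mathcal{G}(a, q_i\llbracket Du_i(X)\rrbracket)^2 = \sum_{j=1}^{q_i}|a_j - Du_i(X)|^2 = |a|^2 + q_i|Du_i(X)|^2 - 2 q_i\, a_{\rm avg}\!\cdot\! Du_i(X),$
where $a_{\rm avg} = q_i^{-1}\sum_j a_j$. Applying this pointwise with $a = Du^{(k)}_i(X)$ and integrating,
\begin{equation*}
\int_{B_\rho(Y)} \mathcal{G}(Du^{(k)}_i, q_i\llbracket Du_i\rrbracket)^2 = \int_{B_\rho(Y)}|Du^{(k)}_i|^2 + q_i\int_{B_\rho(Y)}|Du_i|^2 - 2 q_i\int_{B_\rho(Y)} D(u^{(k)}_i)_a \cdot Du_i,
\end{equation*}
where $(u^{(k)}_i)_a$ is the (single-valued) average of $u^{(k)}_i$. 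Lemma~\ref{compactness_lemma} applied to the Dirichlet minimizers $u^{(k)}_i$ gives $\int |Du^{(k)}_i|^2 \to q_i\int|Du_i|^2$. By Almgren's theorem that the average of a Dirichlet minimizer is harmonic (\cite[Theorem 2.6]{Almgren}), $(u^{(k)}_i)_a$ is a harmonic function that converges uniformly to $u_i$, so standard interior gradient estimates for harmonic functions give $D(u^{(k)}_i)_a \to Du_i$ uniformly on compact subsets of $B_\rho(Y)$; the cross term therefore converges to $q_i\int|Du_i|^2$. All three terms cancel in the limit, yielding the claimed $L^2$ gradient convergence, whence pointwise a.e.\ convergence along a subsequence follows by standard arguments.

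\textbf{Main obstacle.} The most delicate step is the cluster decomposition in the second paragraph, in particular verifying that the pointwise clustering of $u^{(k)}(X)$ assembles into a continuous (hence $W^{1,2}$) multi-valued function $u^{(k)}_i$ and that the natural additive decomposition of the Dirichlet energy along the clusters is valid. Both rest on the uniform separation $\delta$ between the sheets of $u$, which prevents the clusters of $u^{(k)}(X)$ from interchanging or merging as $X$ ranges over $\overline{B_\rho(Y)}$. Once this bookkeeping is in place, the remaining minimality and convergence arguments are largely routine.
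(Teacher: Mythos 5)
Your proof takes essentially the same approach as the paper: reduce to regular points of $u$, decompose $u^{(k)}$ into cluster components via the uniform separation of the sheets of $u$, and combine the energy convergence from Lemma~\ref{compactness_lemma} with the harmonicity of the average (from~\cite[Theorem 2.6]{Almgren}) and interior estimates for harmonic functions. Your Pythagorean identity is algebraically equivalent to the paper's split $Du^{(k)}_i = Du^{(k)}_{i;a} + Du^{(k)}_{i;f}$ into average and average-free parts (each piece killed separately); note only that applying Lemma~\ref{compactness_lemma} for energy convergence requires working on a ball compactly contained in the domain of the decomposition, so one should shrink to $B_{\rho/2}(Y)$ as the paper does, and your qualification that the pointwise a.e.\ convergence holds along a subsequence is the honest reading of the lemma's final ``in particular.''
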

\begin{proof}
By~\cite[Theorem 2.14]{Almgren}, the singular set $\Sigma_u$ of $u$ is a relatively closed subset of $\Omega$ of Hausdorff dimension at most $n-2$.  Take any point $Y \in \Omega \setminus \Sigma_u$.  Since $Y$ is a regular point of $u$ and $u^{(k)} \rightarrow u$ uniformly, there exists $\rho > 0$ such that we can represent $u^{(k)}$ and $u$ as in \eqref{grad_conv_eqn1} for some single-valued functions harmonic $u_i \in C^{\infty}(B_{\rho}(Y);\mathbb{R}^m)$ with multiplicity $q_i$ such that $u_i(X) \neq u_j(X)$ for all $X \in B_{\rho}(Y)$ and $i \neq j$ and some energy minimizing functions $u^{(k)}_i \in W^{1,2}(B_{\rho}(Y);\mathcal{A}_{q_i}(\mathbb{R}^m))$ converging to $q_i \llbracket u_i \rrbracket$ uniformly in $B_{\rho}(Y)$.  Let $u^{(k)}_{i;a}(X) = \frac{1}{q_j} \sum_{l=1}^{q_i} u^{(k)}_{i,l}(X)$ denote the average of $u^{(k)}_i$ and $u^{(k)}_{i;f}(X) = \sum_{l=1}^{q_i} \llbracket u^{(k)}_{i,l}(X) - u^{(k)}_{i;a}(X) \rrbracket$ denote the average-free part of $u^{(k)}_i$, where $u^{(k)}_i(X) = \sum_{l=1}^{q_i} \llbracket u^{(k)}_{i,l}(X) \rrbracket$.  By the compactness of single-valued harmonic functions, $u^{(k)}_{i;a} \rightarrow u_i$ in $C^1(B_{\rho/2}(Y))$ as $k \rightarrow \infty$.  By Lemma~\ref{compactness_lemma}, $\|Du^{(k)}_{i;f}\|_{L^2(B_{\rho/2}(Y))} \rightarrow 0$ as $k \rightarrow \infty$.  Therefore, for each $i = 1,2,\ldots,N$, $Du^{(k)}_i \rightarrow q_i \llbracket Du_i \rrbracket$ in $L^2(B_{\rho/2}(Y);\mathcal{A}_{q_i}(\mathbb{R}^m))$.
\end{proof}

Next, we have the following unique continuation of property of Dirichlet minimizing multi-valued functions. 

\begin{lemma} \label{unique continuation lemma} 
Let $\Omega \subset \mathbb{R}^n$ be a connected open set and $u,v \in W^{1,2}(\Omega;\mathcal{A}_q(\mathbb{R}^m))$ be Dirichlet energy minimizing functions.  Suppose there exists an open set $U \subset \Omega$ such that $u(X) = v(X)$ for every $X \in U$.  Then $u (X)= v(X)$ for every $X \in \Omega$. 
\end{lemma}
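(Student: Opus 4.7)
The plan is to reduce the lemma to the classical unique continuation property for single-valued harmonic functions, using the smallness of the singular sets $\Sigma_u, \Sigma_v$ to transfer identities across singular points via continuity.

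First I would define $\Omega_0 = \Omega \setminus (\Sigma_u \cup \Sigma_v)$, the common regular set. By Almgren's theorem (point (iv) in Section 3) $\Sigma_u$ and $\Sigma_v$ are relatively closed in $\Omega$ with Hausdorff dimension at most $n-2$, so $\Omega_0$ is open and dense in $\Omega$. The topological fact I would invoke is that for $n \geq 2$, removing a relatively closed subset of Hausdorff dimension $\leq n-2$ from a connected open subset of $\mathbb{R}^n$ leaves it connected — a standard perturbation argument (any continuous path between two points of $\Omega_0$ in $\Omega$ can be perturbed to a smooth path avoiding $\Sigma_u \cup \Sigma_v$, since sets of codimension $\geq 2$ can be generically avoided by one-dimensional paths) does the job. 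The case $n=1$ is trivial since then $\mathcal{B}_u = \mathcal{B}_v = \emptyset$ forces $u, v$ to decompose globally into $q$ affine functions on each component of $\Omega$, and then ordinary unique continuation applies.

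Next, let $S = \mathrm{int}\{X \in \Omega : u(X) = v(X)\}$. By hypothesis $U \subseteq S$, so $S$ is nonempty and open. I would claim that $S \cap \Omega_0$ is closed in $\Omega_0$. To verify this, take $Y \in \Omega_0 \cap \overline{S \cap \Omega_0}$; since $Y$ is a regular point of both $u$ and $v$ there is $\rho > 0$ with $B_\rho(Y) \subset \Omega_0$ on which we may write
\begin{equation*}
u = \sum_{i=1}^{N_u} q_i \llbracket u_i \rrbracket, \qquad v = \sum_{i=1}^{N_v} q_i' \llbracket v_i \rrbracket,
\end{equation*}
with $u_i, v_i$ single-valued harmonic functions such that $u_i \neq u_j$ and $v_i \neq v_j$ everywhere on $B_\rho(Y)$ for $i \neq j$. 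By assumption there is a nonempty open subset $W \subset B_\rho(Y)$ on which $u \equiv v$. Since distinct branches of $u$ (respectively $v$) never cross on $W$, the labelled multisets $\{(q_i, u_i|_W)\}$ and $\{(q_i', v_i|_W)\}$ must coincide, so after relabelling $N_u = N_v$, $q_i = q_i'$, and $u_i = v_i$ on $W$. Classical unique continuation for harmonic functions then yields $u_i \equiv v_i$ on the connected set $B_\rho(Y)$, so $u \equiv v$ on $B_\rho(Y)$ and $Y \in S$.

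Since $S \cap \Omega_0$ is open, closed and nonempty in the connected set $\Omega_0$, we conclude $S \cap \Omega_0 = \Omega_0$, i.e.\ $u(X) = v(X)$ for every $X \in \Omega_0$. By the continuity of $u$ and $v$ on $\Omega$ (Almgren's regularity, point (ii) of Section 3) and the density of $\Omega_0$ in $\Omega$, the identity extends to $u(X) = v(X)$ for all $X \in \Omega$.

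The main obstacle is really just the topological step — connectedness of $\Omega_0$ — but this is a standard consequence of $\mathcal{H}^{n-1}(\Sigma_u \cup \Sigma_v) = 0$. Apart from this, every step is routine: the nontrivial input is Almgren's Hausdorff dimension bound on the singular set, after which the lemma reduces entirely to the single-valued case.
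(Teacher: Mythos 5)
Your proof is correct and takes essentially the same approach as the paper's: pass to $\Omega \setminus (\Sigma_u \cup \Sigma_v)$, which is connected by Almgren's Hausdorff dimension bound, and run an open-closed argument on this regular set using the local decomposition into single-valued harmonic branches together with classical unique continuation for harmonic functions, finishing by continuity. The only cosmetic differences are that you make the connectedness of the regular complement explicit (the paper simply asserts it) and treat $n=1$ separately, which is unnecessary since dimension $\leq n-2$ already forces $\Sigma_u = \Sigma_v = \emptyset$ in that case.
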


\begin{proof}
By~\cite[Theorem 2.14]{Almgren}, the singular sets $\Sigma_u$ and $\Sigma_v$ are both relatively closed subsets of $\Omega$ of Hausdorff dimension at most $n-2$.  Thus $\Omega^* = \Omega \setminus (\Sigma_u \cup \Sigma_v)$ is a connected open set and, by the continuity of $u$ and $v$, it suffices to show that $u = v$ in $\Omega^*$.  Let $\Xi$ be the set of all points $Y \in \Omega^*$ such that there exists a $\delta > 0$ such that $B_{\delta}(Y) \subset \Omega^*$ and $u = v$ in $\Omega^*$.  Clearly $\Xi$ is open.  We want to show that $\Xi$ is relatively closed.  Then it will follow that since $\Omega^*$ is connected and $\Xi \neq \emptyset$ by assumption, we must have $\Xi = \Omega^*$, i.e. $u = v$ on $\Omega^*$.

Suppose $Y_k \in \Xi$ and $Y \in \Omega^*$ such that $Y_k \rightarrow Y$.  We want to show that $Y \in \Xi$.  Observe that $Y$ is a regular point of $u$ and $v$ and, since $Y \in \Xi$, $u(Y) = v(Y)$.  Thus, setting $\varepsilon = \frac{1}{3} \op{sep} u(Y)$, there exists $\rho > 0$ such that $B_{\rho}(Y) \subset \Omega$ and for each $X \in B_{\rho}(Y)$ 
\begin{equation*}
	u(X) = \sum_{i=1}^N m_i \llbracket u_i(X) \rrbracket , \quad v(X) = \sum_{i=1}^N m_i \llbracket v_i(X) \rrbracket
\end{equation*}
for some positive integers $N$ and $m_i$ with $\sum_{i=1}^N m_i =q$ and some single-valued harmonic functions $u_i,v_i : B_{\rho}(Y) \rightarrow \mathbb{R}^m$ such that $u_i(Y) = v_i(Y)$, $u_i(Y) \neq u_j(Y)$ for all $i \neq j$, and $|u_i(X) - u_i(Y)| < \varepsilon$ and $|v_i(X) - v_i(Y)| < \varepsilon$ for all $X \in B_{\rho}(Y)$.  For $k$ sufficiently large, $Y_k \in \Xi \cap B_{\rho/2}(Y)$ and thus there exists $\delta_k \in (0,\rho/2)$ such that $u = v$ on $B_{\delta_k}(Y_k)$, hence $u_i = v_i$ on $B_{\delta_k}(Y_k)$ for all $i$.  By the unique continuation of single-valued harmonic functions, $u_i = v_i$ on $B_{\rho}(Y)$ for all $i$ and therefore $u = v$ on $B_{\rho}(Y)$.  In other words, $Y \in \Xi$. 
\end{proof}

Finally, we address the following fact concerning the $L^2$-metric of cylindrical functions.  

\begin{lemma} \label{cylindrical norm lemma} 
Let $q \geq 1$ be an integers and $\alpha = k_0/q_0$ for some relatively prime positive integers $k_0,q_0$ with $q_0 \leq q$.  Let $\varphi, \psi : [0,2\pi] \rightarrow \mathcal{A}_q(\mathbb{R}^m)$ such that for each $\theta \in [0,2\pi]$
\begin{align} \label{cylindrical norm eqn1} 
	\varphi(\theta) &= (q - N q_0) \llbracket 0 \rrbracket + \sum_{j=1}^N \sum_{l=0}^{q_0-1} \llbracket \op{Re}(a_j e^{i\alpha (\theta + 2\pi l)}) \rrbracket, \\ 
	\psi(\theta) &= (q - N q_0) \llbracket 0 \rrbracket + \sum_{j=1}^N \sum_{l=0}^{q_0-1} \llbracket \op{Re}(b_j e^{i\alpha (\theta + 2\pi l)}) \rrbracket 
		\nonumber 
\end{align} 
for some integer $1 \leq N \leq \lceil q/q_0 \rceil$ and $a_j,b_j \in \mathbb{C}^m$.  Assume 
\begin{equation} \label{cylindrical norm eqn2} 
	\sum_{j=1}^N |a_j - b_j|^2 \leq \sum_{j=1}^N |a_j - e^{i2\pi l_j/q_0} b_{\sigma(j)}|^2
\end{equation}
for every integer $0 \leq l_j < q_0$ and permutation $\sigma$ of $\{1,2,\ldots,N\}$.  Then  
\begin{equation*} 
	\frac{1}{C} \sum_{j=1}^N |a_j - b_j|^2 \leq \int_0^{2\pi} \mathcal{G}(\varphi(\theta),\psi(\theta))^2 \,d\theta \leq C \sum_{j=1}^N |a_j - b_j|^2 
\end{equation*}
for some constant $C = C(n,m,q,\alpha) \in (0,\infty)$. 
\end{lemma}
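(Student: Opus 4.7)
For the upper bound I would test the infimum in $\mathcal{G}$ against the identity pairing, matching zeros with zeros and $\op{Re}(a_j e^{i\alpha(\theta+2\pi l)})$ with $\op{Re}(b_j e^{i\alpha(\theta+2\pi l)})$; this yields the pointwise inequality $\mathcal{G}(\varphi(\theta),\psi(\theta))^2 \leq \sum_{j=1}^{N} \sum_{l=0}^{q_0-1} |\op{Re}((a_j-b_j)\, e^{i\alpha(\theta+2\pi l)})|^2$. Integrating over $[0,2\pi]$ and applying the trigonometric identity
\begin{equation*}
\int_0^{2\pi} \sum_{l=0}^{q_0-1} |\op{Re}(c\, e^{i\alpha(\theta+2\pi l)})|^2\, d\theta = \pi q_0 |c|^2 \qquad (c \in \mathbb{C}^m),
\end{equation*}
which one verifies by expanding $|\op{Re}(c e^{i\psi})|^2 = \tfrac{1}{2}(|c|^2 + \op{Re}(c\cdot c\, e^{2i\psi}))$ and noting that the geometric sum $\sum_l e^{4\pi i\alpha l}$ vanishes when $q_0 \geq 3$ while the residual $\theta$-oscillatory terms integrate to zero when $q_0 \in \{1,2\}$ (because $2\alpha \in \mathbb{Z}_+$ in those cases), gives the upper bound with constant $C = \pi q_0$.

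For the lower bound I would set $F(a,b) := \int_0^{2\pi} \mathcal{G}(\varphi,\psi)^2\, d\theta$ and $G(a,b) := \sum_j |a_j-b_j|^2$; both are continuous, non-negative, and positively homogeneous of degree two on $(\mathbb{C}^m)^{2N}$, and the hypothesis \eqref{cylindrical norm eqn2} cuts out a closed cone $\mathcal{K}$. The key qualitative input is the implication that $(a,b) \in \mathcal{K}$ and $F(a,b) = 0$ together force $G(a,b) = 0$: indeed $F = 0$ combined with continuity of $\varphi,\psi$ forces $\varphi(\theta) = \psi(\theta)$ for every $\theta$, and since $\op{Re}(c\, e^{i\alpha\theta})$ as a function of $\theta$ uniquely determines $c \in \mathbb{C}^m$, the multisets $\{a_j \zeta^l\}_{j,l}$ and $\{b_j \zeta^l\}_{j,l}$ (with $\zeta = e^{2\pi i k_0/q_0}$) coincide, which forces $a_j = \zeta^{l_j} b_{\sigma(j)}$ for some permutation $\sigma$ and shifts $l_j$; then \eqref{cylindrical norm eqn2} gives $G(a,b) \leq 0$, as required. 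Given this, the desired quantitative estimate $F \geq cG$ on $\mathcal{K}$ (with $c = c(n,m,q,\alpha) > 0$) will follow by a scale-invariant compactness argument.

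The main obstacle is promoting the qualitative implication to the quadratic bound in the presence of non-compactness of $\mathcal{K} \cap \{G = 1\}$: one can let $|(a,b)| \to \infty$ while keeping $|a_j - b_j|$ bounded, and the hypothesis \eqref{cylindrical norm eqn2} is not invariant under the natural translations that would restore compactness. I would handle this by splitting into two regimes. On the bounded part of $\mathcal{K} \cap \{G = 1\}$, continuity and the qualitative implication above yield $F \geq c > 0$ by a standard compactness argument (together with a second-order expansion along the diagonal $\{a = b\}$ to handle $G \to 0$). On the unbounded part, writing $a_j = s_j + d_j/2$, $b_j = s_j - d_j/2$ and letting $|s| \to \infty$ with $|d| = O(1)$, I expect the optimal bijection in $\mathcal{G}(\varphi(\theta),\psi(\theta))$ to be rigidly forced to a block--shift pairing $(j,l) \mapsto (\sigma(j), l + t_j)$ outside a $\theta$-set of small measure, because any other pairing incurs a cost of order $|s|^2$ from the mismatch of the dominant $s$-part. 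On such block--shift pairings, $F$ reduces via the integration identity from the upper bound to $\pi q_0 \sum_j |a_j - \zeta^{t_j} b_{\sigma(j)}|^2$, which by \eqref{cylindrical norm eqn2} is bounded below by $\pi q_0 G$, completing the lower bound with $c = \pi q_0$ up to a negligible correction from the exceptional set.
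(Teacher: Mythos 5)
Your upper bound computation is correct, the qualitative implication ($F=0$ and \eqref{cylindrical norm eqn2} on $\mathcal{K}$ force $G=0$) is right, and you correctly observe that $\mathcal{K}\cap\{G=1\}$ is not compact because $s_j=(a_j+b_j)/2$ may escape to infinity. The gap is in your treatment of the unbounded regime. The pairing in $\mathcal{G}(\varphi(\theta),\psi(\theta))$ is \emph{not} forced to a block--shift pairing $(j,l)\mapsto(\sigma(j),l+t_j)$ by a cost of order $|s|^2$: if several $s_j$ coincide or cluster (say $s_1=s_2=T\to\infty$, $s_3=-2T$), there is no $|s|^2$ penalty for interchanging the $j=1$ and $j=2$ blocks, so the bijection is only forced to respect the partition into clusters of nearby $s_j$'s, within which it remains unconstrained. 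In particular when $q_0=1$ the ``block--shift pairings'' are \emph{all} permutations of $\{1,\ldots,N\}$, so your claimed reduction is circular. Furthermore, even where the pairing is confined to a block structure the optimal $(\sigma,t)$ can vary with $\theta$ when $q_0\le 2$ (the per-$\theta$ block cost $\sum_l |\op{Re}(ce^{i\alpha(\theta+2\pi l)})|^2$ is $\theta$-independent only when $q_0\ge 3$), so one cannot identify $F$ with $\pi q_0\sum_j|a_j-\zeta^{t_j}b_{\sigma(j)}|^2$ for a single $(\sigma,t)$. To make your idea rigorous you would have to recurse into each cluster --- re-centering, re-scaling, and again separating sub-clusters --- and that recursion is exactly what is missing.

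A second point: your stated reason for abandoning translation is off. When $q_0=1$, translating $a_j\mapsto a_j-\bar a$ and $b_j\mapsto b_j-\bar b$ by \emph{different} amounts does preserve \eqref{cylindrical norm eqn2}, because $\sum_j(a_j-b_{\sigma(j)})=N(\bar a-\bar b)$ for every permutation $\sigma$, so both sides of \eqref{cylindrical norm eqn2} shift by the same constant $-N|\bar a-\bar b|^2$. This is precisely the centering the paper uses. The paper then normalizes $\max\{\sum_j|a_j|^2,\sum_j|b_j|^2\}=1$ rather than $G=1$ --- which makes the constrained set compact --- extracts a limit in which $a_j=b_j$ for all $j$ while $a_j\ne a_{j'}$ for some $j\ne j'$, and on a $\theta$-interval where the distinct limiting values $\op{Re}(a_je^{i\alpha\theta})$ are separated, decomposes $\varphi_k,\psi_k$ into pieces with fewer than $N$ components, contradicting minimality of $N$. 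That minimal-counterexample/induction-on-components step is exactly what resolves the clustering difficulty your sketch leaves open, and it is the essential ingredient your proposal lacks.
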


\begin{proof}
We will in fact show that if either 
\begin{enumerate}
	\item[(a)] $q_0 = 1$ and $\alpha \in (0,\infty)$ is any positive real number or 
	\item[(b)] $\alpha = k_0/q_0$ for relatively prime positive integers $k_0,q_0$ with $q_0 \leq q$, 
\end{enumerate}
then for every $L \in (0,2\pi]$ and every function $\varphi, \psi : [0,L] \rightarrow \mathcal{A}_q(\mathbb{R}^m)$ given by \eqref{cylindrical norm eqn1} for some integer $1 \leq N \leq \lceil q/q_0 \rceil$ and some $a_j,b_j \in \mathbb{C}^m$ satisfying \eqref{cylindrical norm eqn2}, 
\begin{equation} \label{cylindrical norm eqn3} 
	\frac{1}{C} \sum_{j=1}^N |a_j - b_j|^2 \leq \frac{1}{L} \int_0^L \mathcal{G}(\varphi(\theta),\psi(\theta))^2 \,d\theta \leq C \sum_{j=1}^N |a_j - b_j|^2 
\end{equation}
for some constant $C = C(n,m,q,\alpha) \in (0,\infty)$.  Lemma~\ref{cylindrical norm lemma} then follows from case (b) with $L = 2\pi$.  The second inequality in \eqref{cylindrical norm eqn3} obviously holds true, so let us focus on proving the second inequality in \eqref{cylindrical norm eqn3}.  Without loss of generality we may assume $q = N q_0$.

First we will consider case (a) where $q_0 = 1$ and $\alpha \in (0,\infty)$.  By scaling, we may let $L = 2\pi$.  Suppose $N \geq 2$ is is the smallest integer such that there exists $\varphi_k, \psi_k : [0,2\pi] \rightarrow \mathcal{A}_N(\mathbb{R}^m)$ such that for each $\theta \in [0,2\pi]$ 
\begin{equation*} 
	\varphi_k(\theta) = \sum_{j=1}^N \llbracket \op{Re}(a_{j,k} e^{i\alpha \theta}) \rrbracket, \quad 
	\psi_k(\theta) = \sum_{j=1}^N \llbracket \op{Re}(b_{j,k} e^{i\alpha \theta}) \rrbracket
\end{equation*}
for some $a_{j,k},b_{j,k} \in \mathbb{C}^m$ such that \eqref{cylindrical norm eqn2} holds true with $a_j = a_{j,k}$ and $b_j = b_{j,k}$ and 
\begin{equation} \label{cylindrical norm eqn4} 
	\int_0^{2\pi} \mathcal{G}(\varphi_k(\theta),\psi_k(\theta))^2 \,d\theta \leq \frac{1}{k} \sum_{j=1}^N |a_{j,k} - b_{j,k}|^2.  
\end{equation}
By replacing $a_{j,k}$ with $a_{j,k} - \frac{1}{N} \sum_{l=1}^N a_{j,l}$ and $b_{j,k}$ with $b_{j,k} - \frac{1}{N} \sum_{l=1}^N b_{j,l}$ and scaling, we may assume that 
\begin{equation} \label{cylindrical norm eqn5} 
	\sum_{j=1}^N a_{j,k} = \sum_{j=1}^N b_{j,k} = 0, \quad \max\left\{\sum_{j=1}^N |a_{j,k}|^2, \sum_{j=1}^N |b_{j,k}|^2\right\} = 1. 
\end{equation}
After passing to a subsequence, let $a_{j,k} \rightarrow a_j$ and $b_{j,k} \rightarrow b_j$ as $k \rightarrow \infty$.  By \eqref{cylindrical norm eqn4}, $a_j = b_j$ for each $j$.  By \eqref{cylindrical norm eqn5}, there exists $j \neq j'$ such that $a_{j} \neq a_{j'}$.  Notice that there are only finitely many $\theta \in [0,1]$ such that $\op{Re}((a_j - a_{j'}) e^{i\alpha \theta}) = 0$ for some integers $1 \leq j,j' \leq N$ such that $a_{j} \neq a_{j'}$.  Let $I \subset [0,1]$ be a closed interval of positive length such that $\op{Re}((a_j - a_{j'}) e^{i\alpha \theta}) \neq 0$ for all $\theta \in I$ and all integers $1 \leq j,j' \leq N$ such that $a_{j} \neq a_{j'}$.  For $k$ sufficiently large, $\varphi_k |_I$ and $\psi_k |_I$ decompose into $N_j$-valued functions uniformly close to $N_j \llbracket \op{Re}(a_j e^{i\alpha \theta}) \rrbracket$ for each distinct $a_j$ with multiplicity $N_j < N$.  By the minimality of $N$, 
\begin{equation} \label{cylindrical norm eqn6} 
	\sum_{j=1}^N |a_{j,k} - b_{j,k}|^2 \leq C \int_I \mathcal{G}(\varphi_k(\theta),\psi_k(\theta))^2 \,d\theta 
		\leq C \int_0^{2\pi} \mathcal{G}(\varphi_k(\theta),\psi_k(\theta))^2 \,d\theta
\end{equation}
for all $k$ sufficiently large and some constant $C = C(n,m,N,\alpha,|I|) \in (0,\infty)$, contradicting \eqref{cylindrical norm eqn4}. 

Next we consider case (b) $q_0 \leq q$ and $\alpha = k_0/q_0$ where $k_0,q_0$ are relatively prime positive integers.  The cases where $q_0 = 1$ or $0 < L \leq 2\pi/q_0$ are already covered by case (a), so we may assume $q_0 \geq 2$ and $L \in [2\pi/q_0,2\pi]$.  Suppose $N \geq 1$ is is the smallest integer such that there exists $L_k \in [2\pi/q_0,2\pi]$ and $\varphi_k, \psi_k : [0,L_k] \rightarrow \mathcal{A}_N(\mathbb{R}^m)$ such that for each $\theta \in [0,L_k]$ 
\begin{equation*} 
	\varphi_k(\theta) = \sum_{j=1}^N \sum_{l=0}^{q_0-1} \llbracket \op{Re}(a_{j,k} e^{i\alpha (\theta + 2\pi l)}) \rrbracket, \quad 
	\psi_k(\theta) = \sum_{j=1}^N \sum_{l=0}^{q_0-1} \llbracket \op{Re}(b_{j,k} e^{i\alpha (\theta + 2\pi l)}) \rrbracket
\end{equation*}
for some $a_{j,k},b_{j,k} \in \mathbb{C}^m$ such that \eqref{cylindrical norm eqn2} holds true with $a_j = a_{j,k}$ and $b_j = b_{j,k}$ and 
\begin{equation} \label{cylindrical norm eqn7} 
	\int_0^{L_k} \mathcal{G}(\varphi_k(\theta),\psi_k(\theta))^2 \,d\theta \leq \frac{1}{k} \sum_{j=1}^N |a_{j,k} - b_{j,k}|^2.  
\end{equation}
By scaling, we may assume that 
\begin{equation} \label{cylindrical norm eqn8} 
	\max\left\{\sum_{j=1}^N |a_{j,k}|^2, \sum_{j=1}^N |b_{j,k}|^2\right\} = 1. 
\end{equation}
After passing to a subsequence, let $L_k \rightarrow L \in [2\pi/q_0,2\pi]$, $a_{j,k} \rightarrow a_j$, and $b_{j,k} \rightarrow b_j$ as $k \rightarrow \infty$.  By \eqref{cylindrical norm eqn7}, $a_j = b_j$ for each $j$.  By \eqref{cylindrical norm eqn8}, there exists $j$ such that $a_{j} \neq 0$.  Notice that there are only finitely many $\theta \in [0,1]$ such that $\op{Re}((a_j - e^{i2\pi l/q_0} a_{j'}) e^{i\alpha \theta}) = 0$ for some integers $1 \leq j,j' \leq N$ and $0 \leq l < q_0$ such that either $a_{j} \neq a_{j'}$ or $l \neq 0$.  Let $I \subset [0,1]$ be a closed interval of positive length such that $\op{Re}((a_j - e^{i2\pi l/q_0} a_{j'}) e^{i\alpha \theta}) \neq 0$ for all $\theta \in I$ and all integers $1 \leq j,j' \leq N$ and $0 \leq l < q_0$ such that either $a_{j} \neq a_{j'}$ or $l \neq 0$.  For $k$ sufficiently large, $\varphi_k |_I$ and $\psi_k |_I$ decompose into an $N_{j_1}$-valued function uniformly close to $N_{j_1} \llbracket 0 \rrbracket$ with multiplicity $N_{j_1} < N$ if $a_{j_1} = 0$ for some $j_1$ and $N_j$-valued functions close to $N_j \llbracket \op{Re}(a_j e^{i\alpha (\theta + 2\pi l)}) \rrbracket$ for each distinct nonzero $a_j$ with multiplicity $N_j$ and each $l = 0,1,2,\ldots,q_0-1$.  By case (a) and the minimality of $N$, \eqref{cylindrical norm eqn6} holds true for all $k$ sufficiently large and some constant $C = C(n,m,N,\alpha,|I|) \in (0,\infty)$, contradicting \eqref{cylindrical norm eqn7}. 
\end{proof}

\bigskip
\hskip-.2in\vbox{\hsize3in\obeylines\parskip -1pt 
  \small 
Brian Krummel
DPMMS
University of Cambridge
Cambridge CB3 0WB, United Kingdom
\vspace{4pt}
\emph{Current address}: 
Department of Mathematics
University of California, Berkeley
970 Evans Hall
Berkeley CA 94720-3840, USA
\vspace{4pt}
{\tt bkrummel@math.berkeley.edu}} 
\vbox{\hsize3in
\obeylines 
\parskip-1pt 
\small 
Neshan Wickramasekera
DPMMS 
University of Cambridge 
Cambridge CB3 0WB, United Kingdom
\vspace{4pt}
{\tt N.Wickramasekera@dpmms.cam.ac.uk}
\vspace{4pt}
\hfill
\hfill
\hfill
\hfill
\hfill}


\begin{thebibliography}{20}	
	\bibitem[AllAlm81]{AllardAlmgren}
		W.~Allard, F.~J.~Almgren, Jr. \textit{On the radial behavior of minimal surfaces and the uniqueness of their tangent cones.}  
		Annals of Math., {\bf 113} (1981), 215--265. 
	\bibitem[Alm83]{Almgren}
		F.~J.~Almgren, Jr. \textit{Almgren's big regularity paper: Q-valued functions minimizing Dirichlet?s integral and the regularity of area minimizing 
		rectifiable currents up to codimension two.}  World Scientific Monograph Series in Mathematics.  {\bf 1}  
		World Scientific Publishing Co. Inc., River Edge, NJ (2000). 
	\bibitem[DeLSpa11]{DeLSpa11}
		C.~De Lellis, E.~N.~Spadaro. \textit{Q-valued functions revisited.} Mem. Amer. Math. Soc., {\bf 211} (2011). 
	\bibitem[DeLSpa14]{DeLSpa-I}
		C.~De Lellis, E.~N.~Spadaro. \textit{Regularity of area minimizing currents I: gradient $L^p$ estimates.} 
		Geom.\ Funct.\ Anal., {\bf 24} (2014), 1831--1884.
	\bibitem[DeLSpa16a]{DeLSpa-II}
		C.~De Lellis, E.~N.~Spadaro. \textit{Regularity of area minimizing currents II: center manifold.}  
		Ann.\ of Math., {\bf 183} (2016), 499--575.
	\bibitem[DeLSpa16b]{DeLSpa-III}
		C.~De Lellis, E.~N.~Spadaro. \textit{Regularity of area minimizing currents III: blow-up.}  Ann. of Math., {\bf 183} (2016), 577--617.
	\bibitem[DMSV]{DMSV} 
		C.~De~Lellis, A.~Marchese, E.~Spadaro, D.~Valtorta. 
		\textit{Rectifiability and upper Minkowski bounds for singularities of harmonic $Q$-valued maps.}  arXiv:1612.01813 (2016). 
	\bibitem[Krum]{Krum}
		B.~Krummel. \textit{Constant frequency and real analyticity of branch sets.} arXiv:1410.7339 (2014).
	\bibitem[KrumWic-1]{KrumWic1}
		B.~Krummel, N.~Wickramasekera. \textit{Fine properties of branch point singularities: two-valued harmonic functions.} arXiv:1311.0923 (2013).
	\bibitem[KrumWic-2]{KrumWic2}
		B.~Krummel, N.~Wickramasekera. \textit{Fine properties of branch point singularities: two-valued minimal graphs.} preprint (2015).
	\bibitem[MicWhi]{MicWhi94}
		M.~Micallef, B.~White. \textit{The structure of branch points in minimal surfaces and in pseudoholomorphic curves.} 
		Annals of Math., {\bf 141} (1995), 35--85. 
	\bibitem[NabVal17]{NV16}
		A.~Naber, D.~Valtorta. \textit{Rectiflable-Reifenberg and the regularity of stationary and minimizing harmonic maps.}  
		Annals of Math., {\bf 185} (2017),131--227.  
 	\bibitem[NabVal]{NV}
		A.~Naber, D.~Valtorta. \textit{The singular structure and regularity of stationary and minimizing varifolds.}  arXiv:1505.03428 (2015).
	\bibitem[Sim83]{Sim83}
		L.~Simon. \textit{Asymptotics for a class of non-linear evolution equations, with applications to geometric problems.}  
		Annals of Math., {\bf 118}, 525--571 (1983).
	\bibitem[Sim93]{Sim93}
		L.~Simon. \textit{Cylindrical Tangent Cones and the singular set of minimal submanifolds.}  J.\ Diff.\ Geom., {\bf 38}, 585--652 (1993).
	\bibitem[Sim96]{Sim96}
		L.~Simon. \textit{Theorems on regularity and singularity of energy minimizing maps.}  Birkhauser (1996).
	\bibitem[Tes12]{Teschl} G.~Teschl. \textit{Ordinary differential equations and dynamical systems.}  
		Graduate Studies in Mathematics, {\bf 140}, AMS (2012).
        \bibitem[Wei98]{Wei} G.~Weiss. \textit{Partial regularity for weak solutions of an elliptic free boundary problem.} Comm.\ PDE, {\bf 23}, 439--457 (1998). 
	\bibitem[Whi83]{White83}
		B.~White.  \textit{Tangent cones to two-dimensional area-minimizing integral currents are unique.} Duke Math.\ J.\, {\bf 50}, 143--160 (1983).
	\bibitem[SimWic16]{SimWic11}
		L.~Simon, N.~Wickramasekera. \textit{A frequency function and singular set bounds for minimal immersions.}  
		Comm.\ Pure \& Appl.\ Math., {\bf 69}, 1213--1258 (2016).  
	\bibitem[Wic14]{Wic14}
		N.~Wickramasekera. \textit{A general regularity theory for stable codimension 1 integral varifolds.}  Annals of Math., {\bf 179}, 843--1007 (2014).
\end{thebibliography}
\end{document}